\numberwithin{equation}{subsection} 
\newtheorem{thm}{Theorem}[section]
\newtheorem{prop}[thm]{Proposition}
\newtheorem{pdef}[thm]{Proposition-Definition}
\newtheorem{lem}[thm]{Lemma}
\newtheorem*{thmsans}{Theorem}
\newtheorem{cor}[thm]{Corollary}
\newtheoremstyle{bidule}
{3pt}
{3pt}
{}
{}
{\scshape}
{.}
{.5em}
{}
\newtheorem{df}[thm]{Definition}
\theoremstyle{definition}
\newtheorem{rmk}{Remark}[section]
\newtheorem{ex}[thm]{Example}
\newtheorem*{term}{Terminology}
\newtheorem{hypo}{Hypothesis}[subsubsection] 
\newtheorem*{note}{Note}
\newtheorem*{thank}{Acknowledgments}
\newtheorem*{warn}{Warning}
\newtheorem*{claim}{Claim}
\newtheorem{nota}{Notations}[subsubsection]
\newtheorem*{conv}{Conventions}
\newcommand{\E}{\mathscr{E}}
\newcommand{\Ev}{\tx{Ev}}
\newcommand{\colim}{\text{colim}}
\newcommand*{\dt}{{\mbox{\large\bfseries .}}} 
\newcommand{\C}{\mathcal{C}}
\newcommand{\Ub}{\mathcal{U}}
\renewcommand{\Pr}{\mathcal{P}}
\newcommand{\Ca}{\mathcal{C}}
\newcommand{\Ma}{\mathcal{M}}
\renewcommand{\O}{\mathcal{O}}
\newcommand{\K}{\mathcal{K}}
\newcommand{\Ar}{\text{Arr}}
\renewcommand{\Im}{\text{Im}}
\newcommand{\D}{\mathcal{D}}
\newcommand{\Ba}{\mathcal{B}}
\newcommand{\Ga}{\mathcal{G}}
\newcommand{\Xa}{\mathcal{X}}
\newcommand{\Ra}{\mathcal{R}}
\newcommand{\A}{\mathscr{A}}
\newcommand{\Aa}{\mathcal{A}}
\newcommand{\B}{\mathscr{B}}
\newcommand{\M}{\mathscr{M}}
\newcommand{\V}{\mathbb{V}}
\newcommand{\Ja}{\mathbf{J}} 
\newcommand{\J}{\mathcal{J}} 
\newcommand{\X}{\mathscr{X}}
\newcommand{\T}{\mathcal{T}}
\newcommand{\Ta}{\mathbf{T}} 
\newcommand{\Na}{\mathcal{Na}}
\newcommand{\Ea}{\mathcal{E}} 
\newcommand{\Qa}{\mathcal{Q}} 
\newcommand{\W}{\mathscr{W}}
\newcommand{\Fa}{\mathcal{F}}
\newcommand{\Nv}{\mathscr{N}}
\newcommand{\I}{\mathbf{I}}
\newcommand{\Un}{\mathbb{I}} 
\newcommand{\Po}{\mathbf{P}}
\renewcommand{\le}{\mathscr{L}}
\newcommand{\In}{\mathscr{I}}
\newcommand{\Za}{\mathcal{Z}}
\newcommand{\Rc}{\mathscr{R}}
\renewcommand{\P}{\mathscr{P}}
\renewcommand{\S}{\mathbb{S}}
\newcommand{\Y}{\mathscr{Y}}
\renewcommand{\L}{\text{L}} 
\newcommand{\Fb}{\mathbf{F}}
\newcommand{\Ya}{\mathcal{Y}}
\newcommand{\Sim}{\mathscr{S}}
\newcommand*\circled[1]{%
  \tikz[baseline=(C.base)]\node[draw,circle,inner sep=0.3pt](C) {#1};\!
}
\newcommand{\co}{\epsilon} 
\newcommand{\ho}{\otimes_h} 
\newcommand{\vo}{\otimes_v} 
\newcommand{\Rcal}{\mathcal{R}}
\renewcommand{\to}{\longrightarrow}
\newcommand{\ol}{\overline}
\newcommand{\ul}{\underline}
\renewcommand{\o}{\textbf{O}}
\renewcommand{\bf}{\mathbf}
\newcommand{\U}{\mathbb{U}}
\newcommand{\N}{\mathbb{N}}
\newcommand{\Ob}{\text{Ob}}
\newcommand{\n}{\textbf{n}} 
\newcommand{\m}{\textbf{m}} 
\newcommand{\p}{\textbf{p}} 
\renewcommand{\d}{\textbf{d}}
\newcommand{\0}{\textbf{0}} 
\renewcommand{\1}{\textbf{1}} 
\renewcommand{\2}{\textbf{2}} 
\newcommand{\uc}{\mathds{1}} 
\newcommand{\tx}{\text}
\newcommand{\tld}{\widetilde}
\newtheorem{obs}{Observations}
\renewcommand{\to}{\longrightarrow}
\DeclareMathOperator\Id{Id}
\DeclareMathOperator\Hom{Hom}
\DeclareMathOperator\ob{Ob}
\DeclareMathOperator\Set{\textbf{Set}} 
\DeclareMathOperator\Lax{Lax}
\DeclareMathOperator\Lan{\textbf{Lan}}
\DeclareMathOperator\Cat{\mathbf{Cat}}
\DeclareMathOperator\Opera{\text{Oper}}
\DeclareMathOperator\pistar{\pi^{\star}} 
\DeclareMathOperator\Graph{\textbf{Graph}}
\DeclareMathOperator\Laxalg{\Lax_{\O-alg}}
\DeclareMathOperator\Laxalgp{\Lax_{\O'-alg}}
\DeclareMathOperator\msx{\M_{\S}(X)}
\DeclareMathOperator\msy{\M_{\S}(Y)}
\DeclareMathOperator\mset{\M_{\S}(\Set)}
\DeclareMathOperator\msetprojp{\M_{\S}(\Set)^{+}_{\tx{proj}}}
\DeclareMathOperator\kset{\K_{\Set}}
\DeclareMathOperator\kx{\K_X}
\DeclareMathOperator\ky{\K_Y}
\DeclareMathOperator\kxproj{\K_{X\tx{-proj}}}
\DeclareMathOperator\kyproj{\K_{Y\tx{-proj}}}
\DeclareMathOperator\kxinj{\K_{X\tx{-inj}}}
\newcommand{\fstar}{f^{\star}}
\newcommand{\flstar}{f_{\star}}
\newcommand{\gstar}{g^{\star}}
\newcommand{\fex}{{f_!}}
\DeclareMathOperator\Le{\mathcal{L}}
\DeclareMathOperator\mcatx{\M\tx{-}\Cat(X)}
\DeclareMathOperator\mgraphx{\M\tx{-}\Graph(X)}
\DeclareMathOperator\os{\otimes_{\S}}
\DeclareMathOperator\pstar{p^{\star}}
\DeclareMathOperator\e{\mathbf{E}} 
\DeclareMathOperator\bz{\mathbf{B}} 
\DeclareMathOperator\cof{\mathbf{cof}}
\DeclareMathOperator\kbinj{\mathbf{K}_{\tx{inj}}}
\DeclareMathOperator\kb{\mathbf{K}} 
\DeclareMathOperator\arproj{\M^2_{\tx{proj}}}
\DeclareMathOperator\jmsxproj{\mathbf{\Ja}_{\msx\tx{-proj}}} 
\DeclareMathOperator\kbproj{\mathbf{K}_{\tx{proj}}}
\DeclareMathOperator\kbxproj{\mathbf{K}(X)_{\tx{proj}}}
\DeclareMathOperator\kbyproj{\mathbf{K}(Y)_{\tx{proj}}}
\DeclareMathOperator\Mb{\mathbf{M}}
\DeclareMathOperator\fib{\mathbf{fib}} 
\DeclareMathOperator\Lb{\tx{L}} 
\DeclareMathOperator\we{\mathbf{we}} 
\DeclareMathOperator\pr{\textbf{pr}}
\DeclareMathOperator\Pf{P} 
\DeclareMathOperator\cn{\mathbf{[n]}} 
\DeclareMathOperator\msn{\M_{\S}(\cn)}
\DeclareMathOperator\kn{\K_{\cn}}
\DeclareMathOperator\opt{\tx{-op}}
\DeclareMathOperator\op{\tx{op}}
\DeclareMathOperator\msxinj{\M_{\S}(X)_{\tx{inj}}} 
\DeclareMathOperator\msxproj{\M_{\S}(X)_{\tx{proj}}} 
\DeclareMathOperator\msxprojc{\M_{\S}(X)^{\mathbf{c}}_{\tx{proj}}} 
\DeclareMathOperator\msxprojp{\M_{\S}(X)^{+}_{\tx{proj}}} 
\DeclareMathOperator\msyprojp{\M_{\S}(Y)^{+}_{\tx{proj}}} 
\DeclareMathOperator\msyproj{\M_{\S}(Y)_{\tx{proj}}} 
\DeclareMathOperator\msxinjp{\M_{\S}(X)^{+}_{\tx{inj}}} 
\DeclareMathOperator\fainf{\Fa^{\infty}}
\DeclareMathOperator\homsx{ho(\msx)} 
\DeclareMathOperator\morsx{Mor(\S_{\ol{X}})} 
\DeclareMathOperator\sx{\S_{\ol{X}}} 
\DeclareMathOperator\Ho{\mathbf{ho}} 
\DeclareMathOperator\lkm{\mathbf{L}_{\kb} \Mb} 
\DeclareMathOperator\sxop{(\S_{\ol{X}})^{2\tx{-op}}} 
\DeclareMathOperator\Map{\tx{Map}}  
\DeclareMathOperator\degb{\textbf{deg}} 
\DeclareMathOperator\inj{\mathbf{inj}} 
\DeclareMathOperator\msetproj{\mset_{\tx{proj}}} 
\DeclareMathOperator\ksetproj{\kset{\tx{-proj}}} 
\DeclareMathOperator\kc{\K_{\Ca\dt}} 
\DeclareMathOperator\iro{\mathbf{ir}-\O} %
\DeclareMathOperator\lr{\mathbf{lr}} %
\DeclareMathOperator\Cr{\tx{P}} 
\DeclareMathOperator\hco{\O-\mathbf{hc}} 
\DeclareMathOperator\siminj{\Sim_{\tx{inj}}} 
\DeclareMathOperator\simproj{\Sim_{\tx{proj}}} 
\DeclareMathOperator\Reedycatx{\Cat^{\times}_{\tx{Reedy}}} 
\DeclareMathOperator\Reedycat{\Cat_{\tx{Reedy}}} 
\DeclareMathOperator\cb{\mathbf{c}}  
\DeclareMathOperator\Ua{\tx{U}} 
\DeclareMathOperator\laxlatch{\mathbf{Latch}_{lax}}  
\DeclareMathOperator\latch{\mathbf{Latch}}  
\DeclareMathOperator\Depi{\Delta_{\tx{epi}}} 
\DeclareMathOperator\g{\mathbf{g}} 
\DeclareMathOperator\sk{\mathbf{sk}} 
\DeclareMathOperator\bicatd{\mathbf{Bicat_2}} 
\title{Lax Diagrams and Enrichment}
\author{Hugo V. Bacard \thanks{This research is supported in part by the Agence Nationale de la Recherche
grant ANR-09-BLAN-0151-02 (HODAG) }}
 \affil{Laboratoire J.-A. Dieudonné, UNS}
\date{}
\begin{document}
\maketitle

\begin{abstract}
We introduce a new type of weakly enriched categories over a given symmetric monoidal model category $\M$; these are called \emph{Co-Segal categories}. Their definition derives from the philosophy of classical (enriched) Segal categories. We study their homotopy theory by giving a model structure on them. One of the motivations of introducing these structure was to have an alternative definition of higher linear categories following  Segal-like methods. 
\end{abstract}
\setcounter{tocdepth}{1}
\tableofcontents

\section{Introduction}

In this paper we pursue the idea initiated in \cite{SEC1}, of having a theory of weakly enriched categories over a symmetric monoidal model category $\M=(\ul{M}, \otimes,I)$.  We introduce the notion of  Co-Segal $\M$-category.   Before going to what motivated the consideration of these `categories' we present very briefly hereafter the main philosophy. \ \\
\ \\
In a classical  $\M$-category $\Ca$, for every triple $(A,B,C)$ of objects of $\Ca$, the composition is given by a map of $\M$: $$\Ca(A,B) \otimes \Ca(B,C) \to \Ca(A,C).$$  In  a \textbf{Co-Segal category} such composition map is not explicit; we have instead the following diagram:
\[
\xy
(-15,0)*+{\Ca(A,B) \otimes \Ca(B,C)}="X";
(20,0)*+{\Ca(A,B,C)}="Y";
(20,15)*+{\Ca(A,C)}="E";
{\ar@{->}^{}"X";"Y"};
{\ar@{->}^{}_{\wr}"E";"Y"};
\endxy
\]
\\
where the vertical map $\Ca(A,C) \to \Ca(A,B,C)$ is required to be a \emph{weak equivalence}. As one can see when this weak equivalence is an isomorphism or an identity (the strict case) then we will have a classical composition and everything is as usual.
In the non-strict case, one gets a weak composition given by any choice of a weak inverse of that vertical map.  
\ \\ 

The previous diagram is obtained by `reversing the morphisms' in the Segal situation, hence the terminology ‘Co-Segal’. The diagrams below outline this idea:

\begin{minipage}{14cm}
\begin{minipage}{7cm}
\[
\xy
(-15,0)*+{\Ca(A,B) \otimes \Ca(B,C)}="X";
(20,0)*+{\Ca(A,B,C)}="Y";
(20,15)*+{\Ca(A,C)}="E";
{\ar@{->}^{}_-{\sim}"Y";"X"};
{\ar@{->}^{}"Y";"E"};
\endxy
\]
\begin{center}\textbf{In a Segal category} \end{center}
\end{minipage}
\begin{minipage}{7cm}
\[
\xy
(-15,0)*+{\Ca(A,B) \otimes \Ca(B,C)}="X";
(20,0)*+{\Ca(A,B,C)}="Y";
(20,15)*+{\Ca(A,C)}="E";
{\ar@{->}^{}"X";"Y"};
{\ar@{->}^{}_-{\wr}"E";"Y"};
\endxy
\]
\begin{center} \textbf{In a Co-Segal category}\end{center}
\end{minipage}
\end{minipage}

\ \\
\ \\
If the tensor product $\otimes$ of the category $\M=(\ul{M}, \otimes,I)$ is different from the cartesian product $\times$ e.g $\M$ is a Tannakian category, the so called \emph{Segal map}  $ \Ca(A,B,C) \to \Ca(A,B) \otimes \Ca(B,C)$  appearing in the Segal situation is not `natural'; it's a map going   into a product where there is no \emph{a priori} a way to have a projection on each factor. The  Co-Segal formalism was introduced precisely to bypass this problem.\ \\ 

In \cite{SEC1}, following an idea introduced by Leinster \cite{Lei3}, we define a Segal enriched category $\Ca$ having a set of objects $X$, as a \ul{colax} morphism of $2$-categories 
$$ \Ca: \P_{\ol{X}}: \to \M$$
satisfying the usual Segal conditions. As we shall see a Co-Segal category is defined as a \ul{lax} morphism of $2$-categories 
$$\Ca: (\S_{\ol{X}})^{2\tx{-op}} \to \M  $$
satisfying the Co-Segal conditions (Definition \ref{cosegal-diagram} ). Here $\P_{\ol{X}}$ is a $2$-category over $\Delta$ while $\sx \subset \P_{\ol{X}}$ is over $\Delta_{\tx{epi}}$. These $2$-categories are probably example of what should be a \emph{ locally Reedy $2$-category}, that is a $2$-category such that each category of $1$-morphisms is a Reedy category and the composition is coherent with the Reedy structures. \ \\

To develop a homotopy theory of these Co-Segal categories we follow the same philosophy as for Segal categories, that is we consider the more general objects consisting of lax morphisms $\Ca: (\S_{\ol{X}})^{2\tx{-op}} \to \M  $ without demanding the Co-Segal conditions yet; these are called \emph{Pre-Co-Segal categories}. 

As $X$ runs through $\Set$ we have a category $\mset$ of all \emph{Pre-Co-Segal categories} with morphisms between them. We have a natural Grothendieck bifibration $\Ob: \mset \to \Set$. \ \\
\ \\
The main result of this paper is the following
\begin{thmsans}
Let $\M$ be a symmetric monoidal model category which is cofibrantly generated and such that all the objects are cofibrant. Then the following holds.
\begin{enumerate}
\item the category $\mset$, of Pre-Co-Segal categories admits a  model structure which is cofibrantly generated,
\item  fibrant objects are Co-Segal categories,
\item If $\M$ is combinatorial  then so is $\mset$.
\end{enumerate}
\end{thmsans}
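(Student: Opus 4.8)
The plan is to build the model structure on $\mset$ in three stages: first fiberwise over a fixed set of objects, then globally by integrating the fibers along the bifibration $\Ob\colon\mset\to\Set$, and finally by a left Bousfield localization that imposes the Co-Segal conditions. The two hypotheses on $\M$ enter at precise points: cofibrant generation feeds the small object argument and the transfer of Stage~1, while the assumption that every object of $\M$ is cofibrant is what makes the acyclicity condition of that transfer and the left properness needed for the localization of Stage~3 actually hold.

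\emph{Stage 1: the fibers.} Fix a set $X$ and consider the fiber $\msx$ of lax morphisms $\sxop\to\M$; concretely, such a datum assigns to each string $\sigma$ in $X$ an object $\Ca(\sigma)$ of $\M$, to each refinement map between strings a morphism of $\M$, together with lax composition and unit constraints. Forgetting the constraints and the refinement maps gives a functor $\Ub\colon\msx\to\prod_{\sigma}\M$, the product over all strings in $X$; the target carries the product model structure, which is cofibrantly generated because $\M$ is, and $\Ub$ has a left adjoint $\Fr$, the free lax diagram functor. I would transfer the structure along $\Fr\dashv\Ub$, declaring a morphism of $\msx$ to be a weak equivalence or a fibration exactly when it is one levelwise, and verify Kan's recognition criterion: $\msx$ is locally presentable, hence bicomplete; the small object argument applies to $\Fr(I)$ and $\Fr(J)$; and — the crucial point — every relative $\Fr(J)$-cell complex is a levelwise weak equivalence. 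This last condition is exactly where cofibrancy of all objects of $\M$ is spent, through the fact that tensoring with a cofibrant object preserves trivial cofibrations, with the attachments organized along the locally Reedy structure of $\sx$ by means of the latching and matching objects of the underlying hom-diagrams. The outcome is the cofibrantly generated projective model structure $\msxproj$.

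\emph{Stages 2 and 3: integration and localization.} I would then assemble the $\msxproj$ into a model structure $\msetproj$ on $\mset$ via the standard theorem on model structures on the total category of a bifibration over a model base, putting the discrete model structure on $\Set$ (weak equivalences $=$ bijections). Over a discrete base the hypotheses of that theorem reduce to the Quillen property of each base-change adjunction $\fex\dashv\fstar\colon\msx\rightleftarrows\msy$ for $f\colon X\to Y$, which is immediate: $\fstar$ is restriction of lax diagrams along the functor $\sx\to\sy$ induced by $f$, hence preserves levelwise weak equivalences and fibrations and is right Quillen. The resulting $\msetproj$ is cofibrantly generated, with generators the images of $\Fr(I)$ and $\Fr(J)$, and its weak equivalences are the maps that are bijective on objects and levelwise weak equivalences on the strings. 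Finally I would left Bousfield localize $\msetproj$ at a set $S$ of morphisms between cofibrant Pre-Co-Segal categories, chosen so that an $S$-local object is precisely one in which the comparison $\Ca(A,C)\to\Ca(A,B,C)$ is a weak equivalence for every triple $(A,B,C)$ of objects; the localization exists because $\msetproj$ is left proper — inherited from $\M$ since all its objects are cofibrant — and cellular, and it is combinatorial when $\M$ is. This localized structure is $\mset$. Its fibrant objects are the $\msetproj$-fibrant $S$-local ones, and unwinding $S$-locality, using that cofibrancy of the objects of $\M$ makes $\Map(-,\Ca)$ compute the expected homotopy function complexes, identifies them with the Co-Segal categories of Definition~\ref{cosegal-diagram}; this is (2). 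For (3), when $\M$ is combinatorial each of these steps — transfer along the accessible functor $\Ub$, the Grothendieck construction over $\Set$, and left Bousfield localization — preserves local presentability, so the cofibrantly generated structure of (1) is in fact combinatorial.

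\emph{The main obstacle.} The hard part is the acyclicity condition in Stage~1: showing that a pushout of a free generating trivial cell is a levelwise trivial cofibration, and that this is preserved by the transfinite composites defining cell complexes. This is precisely the point at which the combinatorics of lax diagrams over the locally Reedy $2$-category $\sx$ interacts with the symmetric monoidal structure of $\M$, and it is the reason the hypothesis that all objects of $\M$ be cofibrant is imposed; I expect to treat it by an explicit analysis of the latching maps of the underlying $\sx$-indexed diagrams, in the spirit of \cite{SEC1}. A secondary, more bookkeeping, difficulty is to pin down the localizing set $S$ so that $S$-locality captures exactly the Co-Segal conditions of Definition~\ref{cosegal-diagram} and nothing more, so that part (2) comes out cleanly.
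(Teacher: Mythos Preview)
Your three-stage plan---transfer to each fiber $\msx$, integrate along the bifibration $\mset\to\Set$ via the Roig--Stanculescu theorem, then left Bousfield localize at maps encoding the Co-Segal conditions---is exactly the architecture the paper follows (Theorem~\ref{main-proj-msx}, Theorem~\ref{fibered-model-mset}, Theorem~\ref{fibered-model-mset-plus}). The identification of the acyclicity condition in Stage~1 as the crux, and your instinct that it is handled by the pushout-product axiom together with cofibrancy of all objects of $\M$, also matches the paper's Lemma~\ref{pushout-MX}.

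Two points, however, need correction.

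\textbf{The target of the forgetful functor.} You take $\Ub$ to land in $\prod_\sigma\M$, forgetting both the laxity maps and the refinement functoriality. The paper instead forgets only the laxity maps, landing in $\K_X=\prod_{(A,B)}\Hom[\S_{\ol X}(A,B)^{\op},\M]$. This is not cosmetic: the resulting monads differ, and the explicit formula for the left adjoint $\Gamma$ (Appendix~\ref{lemme_adjunction_1}) and the pushout analysis both rely on the diagram structure being retained. Your version can likely be made to work, but the free functor and the acyclicity argument would need to be redone.

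\textbf{Left properness is not automatic.} You assert that $\msetproj$ is left proper ``inherited from $\M$ since all its objects are cofibrant''. This inference is wrong: cofibrancy of all objects in $\M$ makes $\M$ left proper, but it does not make every object of $\msxproj$ cofibrant (cofibrant objects there are retracts of free ones), so left properness of $\msxproj$ does not follow. The paper is explicit about this: it introduces left properness of $\msxinj$ as an unverified \emph{Hypothesis}~\ref{left-proper-hypo}, with Remark~\ref{remark-hypo} admitting that its validity in general is unclear. The localization results (Theorems~\ref{model-msxinjp}, \ref{msxprojp}, \ref{fibered-model-mset-plus}) are all stated under this hypothesis. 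So your Stage~3 has the same gap the paper has, but you have presented it as settled; you should flag it as an assumption, exactly as the paper does.

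A minor point: you invoke cellularity for the localization, whereas the paper works in the combinatorial setting and applies Smith's theorem (Theorem~\ref{smith-local}); either framework is fine once left properness is in hand, but be consistent.
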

\subsection*{Plan of the paper}\ 
We begin the paper  by the definition of a lax diagram  in a $2$-category $\M$, which are simply lax functors of $2$-category in the sense of Bénabou \cite{Ben2}. We point out that $\M$-categories are special cases of lax diagrams as earlier observed by Street \cite{Str}.\ \\

Then in section \ref{operad-lax-morphisms} we recall some basic definitions about multisorted operads or colored operads. The idea is to use the powerful language of operads to treat $2$-categories and lax morphisms in terms of $\O$-algebras and lax morphisms of $\O$-algebras for some suitable operad. The operads we're working with are the ones enriched in $\Cat$. 

Given two $\O$-algebras $\Ca\dt$ and $\Ma\dt$ there is a category $\Laxalg(\Ca\dt, \Ma\dt)$ of lax morphisms and morphism of lax morphisms. After setting up some definitions we prove that:
\begin{itemize}
\item for a locally presentable $\O$-algebra $\Ma\dt$ the category $\Laxalg(\Ca\dt, \Ma\dt)$ is also locally presentable (Theorem \ref{laxalg-local-pres});
\item If $\Ma\dt$ is a special Quillen $\O$-algebra (Definition \ref{quillen-alg}) and under some hypothesis, the category $\Laxalg(\Ca\dt, \Ma\dt)$ carries a model structure (Theorem \ref{model-laxalg}).
\end{itemize}

In section \ref{section-cosegal-cat} we introduce the language of Co-Segal categories  starting with an overview of the one-object case. We've tried as much as possible to make this section independent from the previous ones. We only use the language of lax functor between $2$-categories rather than lax morphisms of $\O$-algebras. 
We introduce first the notion of an $\S$-diagram in $\M$ which correspond to \emph{Pre-Co-Segal categories} (Definition \ref{s-diagram}). Then we define a Co-Segal category to be an $\S$-diagram satisfying the \textbf{Co-Segal conditions} (Definition \ref{cosegal-cat}).
After giving some definitions we show that 
\begin{itemize}
\item A strict Co-Segal $\M$-category is the same thing as a strict (semi) $\M$-category (Proposition \ref{equiv-semicat});
\item The Co-Segal conditions are stable under weak equivalences (Proposition \ref{stab-coseg}).
\end{itemize} 

In section \ref{properties-msc} we show that the category $\msx$ of Pre-Co-Segal categories with a fixed set of objects $X$ is:
\begin{itemize}
\item is cocomplete if $\M$ is so (Theorem \ref{MX-cocomplete}) ; and 
\item locally presentable if $\M$ is so (Theorem \ref{MX-local-pres}).
\end{itemize} 
 For both of these two theorems, we've presented a `direct proof' i.e which doesn't make use of the language of operads; the idea is to make the content accessible for a reader who is not familiar with operads. \\
 
In section  \ref{lr-category}  we consider the notion of locally Reedy $2$-category. The main idea is to provide a \emph{direct} model structure on the category $\msx$ (Corollary \ref{direct-proj-model-msx}). 

In section  \ref{section-model-msx} we give two type of model structures on $\msx$, using a different method. These model structures play an important role in the later sections. 
We show precisely that if $\M$ is a symmetric monoidal model category, which is cofibrantly generated and such that all the objects are cofibrant, then we have:
\begin{itemize}
\item a \emph{projective} model structure on $\msx$ denoted $\msxproj$ (Theorem \ref{main-proj-msx});
\item an \emph{injective} model structure on $\msx$ denoted $\msxinj$ (Theorem \ref{main-inj-msx});
\item the identity functor $\Id: \msxproj  \rightleftarrows \msxinj: \Id$ is a Quillen equivalence (Corollary \ref{quillen-equiv-inj-proj});
\end{itemize}
These model structures are both cofibrantly generated (and combinatorial if $\M$ is so). The projective model structure is the same as the one given by Corollary \ref{direct-proj-model-msx}.\ \\

The section \ref{variation-objects} is dedicated to study of the category $\mset$ of all Pre-Co-Segal categories. We show that:
\begin{itemize}
\item $\mset$ inherits the cocompleteness and local presentability of $\M$ (Theorem \ref{mset-loc-pres}); and
\item that $\mset$ carries a \emph{fibered projective} model strucuture which is cofibrantly generated. And if $\M$ is combinatorial then so is $\mset$ (Theorem \ref{fibered-model-mset} and Corollary \ref{mset-combinatorial}).
\end{itemize} 
\ \\

In section \ref{secion-cosegalification},  we begin by constructing for each set $X$, an endofunctor $\Sim: \msx \to \msx$, called  \emph{`Co-Segalifcation'} which takes any Pre-Co-Segal category to a Co-Segal category (Proposition \ref{global-cosegalification}). Assuming that $\msx$ is left proper if $\M$ is so (Hypothesis \ref{left-proper-hypo}) we prove that:
\begin{itemize}
\item There exists a  \emph{new injective} model structure on $\msx$ denoted $\msxinjp$ which is combinatorial and such that the fibrant objects are Co-Segal categories. $\msxinjp$ is the left Bousfield localization of $\msxinj$ with respect to some set of maps $\kbinj$ (Theorem \ref{model-msxinjp}).
\item  There is also a  \emph{new projective} model structure on $\msx$ denoted $\msxprojp$ which is combinatorial and such that the fibrant objects are Co-Segal categories. The model category $\msxprojp$ is the left Bousfield localization of $\msxproj$ with respect to some set of maps $\kbproj$ (Theorem \ref{msxprojp}).
\item We have also a \emph{new fibered projective} model structure on $\mset$ denoted $\msetprojp$ which is combinatorial and such that the fibrant objects are Co-Segal categories (Theorem \ref{fibered-model-mset-plus}). 
\end{itemize} 

In section \ref{mode-mcat-deux-cat} we reviewed the basics about $\M$-categories for a $2$-category $\M$. For a fixed set of objects $X$, we show that if $\M$ is locally a model category  (Definition \ref{model-2-cat})  and all the objects are cofibrant, then the category $\mcatx$ has a model structure  which is cofibrantly generated and combinatorial if $\M$ is so.  We leave the reader who might be interested to give a fibered model structure on $\M$-$\Cat$ and even the `canonical model structure' in the sense of Berger-Moedijk \cite{Berger_Moer_htpy_cat}. \ \\

It seems clear that all the previous results on Co-Segal categories should hold if we replace the monoidal model category $\M$ by a $2$-category which is locally a model category.


\begin{thank}
I would like to warmly thank my supervisor Carlos Simpson for his support and encouragement. This work owes him a lot.
This paper is an extension of a previous work originally motivated by a question of Julie Bergner and Tom Leinster; their ideas have undoubtedly influenced this work.  I would like to thank Jacob Lurie for helpful conversations. I would also like to thank Bertrand Toën for helpful comments.
\end{thank}

%
\section{Lax Diagrams}\label{lax-diagram}

In the following we fix $\M$ a  bicategory (or $2$-category). For a sufficiently large universe $\V$ we will assume that all the $2$-categories we will consider (including $\M$) have  a $\V$-small set of $2$-morphisms.  
\begin{df}
A \textbf{lax diagram} in $\M$ is a lax morphism $F : \D \to \M$, where  $\D$ is a strict $2$-category. 
\end{df}

For each $\D$ we will consider  $\Lax(\D,\M)$  the $1$-category of lax morphisms from $\D$ to $\M$ and \textbf{icons}  in the sense of Lack \cite{Lack_icons}.
\begin{enumerate}[label=$-$, align=left, leftmargin=*, noitemsep]
\item The objects of $\Lax(\D,\M)$  are lax morphisms,
\item the morphisms are \emph{icons} (see \cite{Lack_icons}) .
\end{enumerate}
Icons are what we call later \textbf{simple transformations} (Definition \ref{simple-trans}).
The reader can find for example in \cite{Lack_icons,Lei1}  these definitions.

\begin{warn}
Note that in general there is only a bicategory and (not a $2$-category) of lax morphisms. This bicategory is described as follows
\begin{enumerate}[label=$-$, align=left, leftmargin=*, noitemsep]
\item The objects of are lax morphisms,
\item the $1$-morphisms are transformations of lax morphisms,
\item the $2$-morphisms are modifications of transformations. 
\end{enumerate}
\end{warn}

By definition of a lax morphism $F: \D \to \M$ we have a function between the corresponding set of objects 
$$\Ob(F): \Ob(\D) \to \Ob(\M).$$  
This defines a function $\Ob : \Ob[\Lax(\D,\M)] \to \Hom[\Ob(\D), \Ob(\M)]$ which sends $F$ to $\Ob(F)$.\ \\

Given a function  $\phi$ from $\Ob(\D)$ to  $\Ob(\M)$ we will say that $F \in \Lax(\D,\M)$ is \emph{over $\phi$} if $\Ob(F)= \phi$. 
We will denote by $\Lax_{/ \phi}(\D,\M)$ be the full subcategory of $\Lax(\D,\M)$ consisting of objects over $\phi$ and transformations of lax morphisms.

\paragraph*{$\M$-categories are lax morphisms}  Given an $\M$-category $\Ca$ having a set of objects $X$, then we can define a lax morphism denoted again $\Ca: \ol{X} \to \M$. Here $\ol{X}$ is the universal nontrivial groupoid associated to $X$; we refer it as the \emph{undiscrete} or \emph{coarse} category associated to $X$. In this context one interprets the lax morphism as the nerve of the enriched category. \ \\

This identification of $\M$-categories as lax morphisms goes back to Bénabou \cite{Ben2} as pointed out by Street \cite{Str}. Bénabou defined them as \emph{polyads} as the plural form of monad.\ \\

We pursue the spirit of this identification which is somehow the `universal lax situation'. 


\section{Operads and Lax morphisms}\label{operad-lax-morphisms}
In the following we use the language of \emph{multisorted operads}  also called `colored operads' to treat the theory of $2$-categories and lax functors as $\O$-algebras and morphism of $\O$-algebras of a certain multisorted operad $\O$. When there is no confusion we will simply say operads to mean multisorted operads.  \ \\

Although the results of this section will be stated for a general operad $\O$, one should keep in mind the special case where $\O$ is the operad we will see in the Example \ref{operad-principal} below.\ \\

We recall briefly hereafter the definition of the type of operad we will consider. For a detailed definition of these one can see, for example,  \cite{Ber_Moer_multisorted} or \cite{Leinster_HOHC}. In the later reference multisorted operads are called \emph{multicategories}. \ \\

Let $C$ be a nonempty set (thought as a set of coulors or sorts).\ \\

A $C$-multisorted operad $\O$ in $\Cat$, or a $\Cat$-operad, consists of the following data. 
\begin{enumerate}
\item For all $n\geq 0$ and each $(n+1)$-tuple  $(i_1,...,i_n; j)$ of elements of $C$ there is a category $\O(i_1,...,i_n; j)$. 
\item For each $i \in C$, we have an identity operation expressed as a functor $\1 \xrightarrow{1_i} \O(i;i)$, where $\1$ is the unit category.
\item There is a composition operation:
$$ \O(i_1,...,i_n; j) \times \O(h_{1,1},...,h_{1,k_1}; i_1) \times \cdots \times  \O(h_{n,1},...,h_{n,k_n}; i_n) \to  \O(h_{1,1},...,h_{n,k_n}; j) $$ 
$$ (\theta, \theta_1, ..., \theta_n) \mapsto \theta \circ (\theta_1, ..., \theta_n). $$  
\item The composition satisfies associativity and unity conditions. The reader can find all the details in \cite[Chap.2]{Leinster_HOHC} or in \cite[Part I]{Kriz_May_OAMM}. 
\end{enumerate}
When the set $C$ has only one element (one colour) we recover the definition of an operad.
\begin{rmk}
In the condition $(1)$ above, when $n=0$ we have no colour in `input', so we will denote by $\O(0,i)$ this category. Here the `0' means  zero input.

This category $\O(0,i)$ allows us to have an `identity' or `unity object' when we want to have the notion of \emph{unital} $\O$-algebra. For this reason we will always set $\O(0,i)=\1$. 
\end{rmk}

For a fixed set of colors $C$, we have a category of $C$-multisorted operads in $\Cat$ with the obvious notion of morphism. The reader can find a definition in \cite{Ber_Moer_multisorted}. 
We follow the same notation as in  \cite{Ber_Moer_multisorted} and will denote by  $\Opera_C(\Cat)$the category of $C$-multisorted operads in $\Cat$. Similarily  if $\E$ is a monoidal category, we will denote by $\Opera_C(\E)$ the category of $C$-multisorted operads in $\E$. \ \\
\\
Below we give an example of a multi-sorted operad which will play an important role in the upcoming sections. This is the multi-sorted operad whose algebras are $2$-categories i.e enriched categories over $\Cat$. The construction we present here is equivalent to the one given  in 
\cite[Section 1.5.4]{Ber_Moer_multisorted}.

\begin{ex}\label{operad-principal}
Let $X$ be a nonempty set and $\ol{X}$ be the associated indiscrete or coarse category. Recall that $\ol{X}$ is the category with $X$ as the set objects and such that there is exactly one morphism between any pair of elements.\  \\

Let $C=X \times X$ be the set of pairs of elements of $X$. There is  a one-to-one corespondance between $C$  and the set of morphisms of $\ol{X}$.
We will denote by $\Nv(\ol{X})$ the nerve of $\ol{X}$ and by $\Nv(\ol{X})_n$ its set of $n$-simplices.\ \\
 
We define a $C$-multisorted operad $\O_X$ as follows.
\begin{itemize}
\item for $n> 0$ we take  
 \begin{equation*}
 \O_X(i_1,...,i_n; j) = \begin{cases}
 \1=\tx{the unit category }  &  \tx{if $(i_1,...,i_n) \in \Nv(\ol{X})_n$ and  $j= i_n \circ \cdots \circ i_1$} \\
 \varnothing=\tx{the empty category} & \tx{if not}   \\
  \end{cases}
\end{equation*}

 \item For $n=0$  we set 
 \begin{equation*}
\O_X(0,i)=\begin{cases}
 \1&  \tx{if $i=\Id_A$ in $\ol{X}$ i.e $i=(A,A)$ for some $A\in X$} \\
 \varnothing & \tx{if not}   \\
  \end{cases}
\end{equation*}
 \item The `identity-operation' functor $\1 \to \O_X(i,i)$ is the identity $\Id_{\1}$.
 \item The composition:
 $$ \O_X(i_1,...,i_n; j) \times \O_X(h_{1,1},...,h_{1,k_1}; i_1) \times \cdots \times  \O_X(h_{n,1},...,h_{n,k_n}; i_n) \to  \O_X(h_{1,1},...,h_{n,k_n}; j) $$
 
 is either one of the (unique) functors:
 \begin{equation*}
\begin{cases}
 \1 \times \cdots \times \1 \xrightarrow{\cong} \1 &   \\
 \varnothing \xrightarrow{\Id} \varnothing &    \\
 \varnothing \to \1 &\\
  \end{cases}
\end{equation*}
\item The associativity and unity axioms are straightforward.
\end{itemize}
\end{ex}

We will see in a moment that an $\O_X$-algebra in $\Cat$ is equivalent to a $2$-category having $X$ as its set of objects. 
\begin{claim}
Given a nonempty category $\Ba$, If we replace everywhere $\ol{X}$  by $\Ba$ in the construction above, one gets a multisorted operad  $\O_{\Ba}$ in $\Cat$ where the set of colours $C$ is the set $\Ar(\Ba)$ of all morphisms of $\Ba$. An $\O_{\Ba}$-algebra is the same thing as a lax morphism from $\Ba$ to $(\Cat, \times, \1)$.\ \\
And more generally given a symmetric monoidal category $\M=(\ul{M}, \otimes, I)$ having an initial object $\0$, we can construct a multisorted $\M$-operad $\O_{\Ba}$, replacing $\1$ and $\varnothing$ respectively by $I$ and $\0$. As in the previous case an $\O_{\Ba}$-algebra in $\M$ will be the same thing as a lax morphism from $\Ba$ to $\M$. 
\end{claim}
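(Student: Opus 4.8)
I would derive the claim from Example \ref{operad-principal} together with the standard operadic description of lax functors recorded in \cite[Section 1.5.4]{Ber_Moer_multisorted}, so I only indicate the shape of the argument. Since $(\Cat,\times,\1)$ is itself a symmetric monoidal category with initial object $\varnothing$, the first assertion is the special case $\M=\Cat$ of the second, and I would argue the general case. I use tacitly that the tensor of $\M$ preserves the initial object, i.e. $\0\otimes Y\cong\0$ for all $Y$; this is what makes the operad composition well-defined and makes ``non-composable strings carry no data'' meaningful, and it is automatic whenever $\M$ is closed, hence in all the later applications.

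\textbf{Step 1: $\O_{\Ba}$ is a well-defined $\Ar(\Ba)$-colored operad in $\M$.} The verification is formally identical to the one sketched in Example \ref{operad-principal}, with $\ol{X}$ replaced by $\Ba$; the only substantive point is compatibility of the assignment $(f_1,\dots,f_n)\mapsto\O_{\Ba}(f_1,\dots,f_n;g)$ with operad composition, and this rests on two elementary properties of $\Ba$: (i) $(f_1,\dots,f_n)\in\Nv(\Ba)_n$ precisely when $f_1,\dots,f_n$ are composable in that order, in which case $g:=f_n\circ\cdots\circ f_1$ is well-defined by associativity; and (ii) if each block $(h_{\ell,1},\dots,h_{\ell,k_\ell})$ is composable with composite $i_\ell$ and $(i_1,\dots,i_n)$ is composable with composite $j$, then the concatenated string is composable with composite $j$, again by associativity in $\Ba$. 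Hence the source of an operad composition map is $\0$ whenever some inner block fails to be composable, and otherwise every factor --- and the target --- equals $I$; in either case the required structure map is forced (the unique map out of $\0$, respectively the canonical isomorphism $I\otimes\cdots\otimes I\xrightarrow{\sim}I$). Associativity and unitality of $\O_{\Ba}$ then follow from those of $\otimes$ together with the uniqueness of these forced maps.

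\textbf{Step 2: identification of $\O_{\Ba}$-algebras with lax morphisms $\Ba\to\M$.} Unwinding the definition, an $\O_{\Ba}$-algebra $A$ in $\M$ amounts to: an object $A_f\in\M$ for each morphism $f$ of $\Ba$; a map $\eta_B\colon I\to A_{\Id_B}$ for each object $B$ (from $\O_{\Ba}(0,\Id_B)=I$); and, for each composable string, a map $A_{f_1}\otimes\cdots\otimes A_{f_n}\to A_{f_n\circ\cdots\circ f_1}$ (from $\O_{\Ba}(f_1,\dots,f_n;g)=I$), the maps attached to non-composable strings being the unique maps out of $\0$ and so carrying no information. The algebra unit axiom forces the $n=1$ maps $A_f\to A_f$ to be identities, while the algebra associativity axiom says precisely that the $n$-ary maps are the iterated composites of the binary ones $\mu_{g,f}\colon A_f\otimes A_g\to A_{g\circ f}$, independently of bracketing and compatibly with the $\eta_B$. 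Thus $A$ is nothing more and nothing less than the data $(A_f,\mu_{g,f},\eta_B)$ satisfying the pentagon- and triangle-type coherences, i.e. a lax morphism $\Ba\to\M$ in Bénabou's sense, $\M$ being viewed as a one-object bicategory. Conversely a lax morphism produces an $\O_{\Ba}$-algebra by declaring its iterated laxities to be the $n$-ary structure maps (well-defined by coherence), and the two passages are mutually inverse; they are moreover compatible with the natural notion of morphism on each side, so one gets in fact an isomorphism of categories.

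The step I expect to require the most care is the second one: spelling out the dictionary between the operad unit and associativity axioms of $\O_{\Ba}$ and the coherence axioms of a lax functor, and in particular checking that $\O_{\Ba}$ is generated as an operad in arities $\leq 2$, so that an algebra carries exactly the structure of a lax morphism. This is the standard but bookkeeping-heavy heart of such correspondences; the only feature not already present in Example \ref{operad-principal} --- the non-composable strings --- costs nothing, since the corresponding operation objects are initial and therefore impose neither structure nor relations.
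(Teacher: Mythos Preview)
The paper does not actually prove this claim: it is stated as an unproved observation immediately following Example~\ref{operad-principal}, and the only argument the paper supplies is the subsequent \textbf{Sketch of proof} for the special case $\Ba=\ol X$ (where an $\O_X$-algebra is matched with a $2$-category, i.e.\ a lax morphism $\ol X\to(\Cat,\times,\1)$). Your write-up is a correct and faithful expansion of that same argument to a general $\Ba$ and a general $\M$, so there is nothing to compare at the level of strategy.

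One point worth noting is that you are more careful than the paper about the hypothesis needed on $\M$: you flag that one needs $\0\otimes Y\cong\0$ so that the operad-composition maps out of a source containing a $\0$ factor are forced, and that this holds automatically when $\M$ is closed. The paper's claim only asks for ``an initial object $\0$'' and does not mention this, so your observation is a genuine clarification rather than a deviation. Your remark that the ``bookkeeping-heavy heart'' is the dictionary between algebra axioms and lax-functor coherences is also accurate, and matches exactly what the paper's sketch does in the special case (reading off composition from the $n=2$ operations, associativity from the $n=3$ operations, units from $\O(0,i)$).
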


\begin{df}
Let $\O$ be a $C$-multisorted operad in $\Cat$. \ \\

An \textbf{$\O$-algebra} $\Ma\dt$ is given by the following data.
\begin{itemize}
\item For each $i \in C$ we have a category $\Ma_i$. 
\item $\Ma_0= \1$. 
\item For each $(n+1)$-tuple  $(i_1,...,i_n; j)$ of elements of $C$ there is a functor:
$$ \O(i_1,...,i_n; j) \times \Ma_{i_1} \times \cdots \times \Ma_{i_n} \xrightarrow{\theta_{_{i\dt|j}}}  \Ma_j$$
\item We have also a functor  $\O(0,i) \times \Ma_0 \to \Ma_i $ which gives  a functor  
$$ \1 \xrightarrow{e_i} \Ma_i.$$
\item These functors are compatible with the associativity and unity of the composition of $\O$. 
\end{itemize}
\end{df}

\begin{nota}
Given $ (x, m_1, ..., m_n) \in \O(i_1,...,i_n; j) \times \Ma_{i_1} \times \cdots \times \Ma_{i_n} $  we will use the suggestive notation $\otimes_x(m_1, ..., m_n)=\theta_{_{i\dt|j}}(x, m_1, ..., m_n)$. The idea is to think each functor $\theta_{_{i\dt|j}}(x,-)$ as a general  tensor product. \ \\
\end{nota}

The following proposition shows us how the theory of lax functors and operads are related within the theory of enriched categories.
\begin{prop}
Let $X$ be a nonempty set. We have an equivalence between the following data.
\renewcommand{\labelenumi}{\roman{enumi})}
\begin{enumerate}
\item An $\O_X$-algebra in $\Cat$,
\item A $2$-category with $X$ as the set of objects.  
\item A lax morphism $F: \ol{X} \to (\Cat, \times, \1)$ 
\end{enumerate}
\end{prop}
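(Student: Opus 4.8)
## Proof Strategy

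The plan is to establish the equivalence by going around the triangle, with the operad $\O_X$ serving as the bookkeeping device that makes the two other descriptions literally the same thing. The cleanest route is to prove $(ii) \Leftrightarrow (iii)$ directly — this is the Bénabou–Street observation — and then prove $(i) \Leftrightarrow (ii)$ by unwinding the definition of an $\O_X$-algebra against the combinatorics of the operad $\O_X$ built in Example \ref{operad-principal}. Since the colour set is $C = X \times X$ and each category $\Ma_i = \Ma_{(A,B)}$ will play the role of the hom-category $\Ca(A,B)$ of the $2$-category, and since $\O_X(i_1,\dots,i_n;j)$ is either $\1$ or $\varnothing$ depending on whether the $i_k$ form a composable string with composite $j$, the structure functor $\O_X(i_1,\dots,i_n;j)\times \Ma_{i_1}\times\cdots\times\Ma_{i_n}\to \Ma_j$ collapses, when nonempty, to a functor $\Ca(A_0,A_1)\times\cdots\times\Ca(A_{n-1},A_n)\to \Ca(A_0,A_n)$. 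These are precisely the iterated composition functors of a $2$-category (with the $n=2$ case the horizontal composition, the $n=0$ case the identity $1$-cells $e_{(A,A)}\colon \1 \to \Ca(A,A)$, and the $n=1$ case the identity functor); the operad associativity and unit axioms translate term-by-term into associativity and unitality of horizontal composition, and the fact that all the operad categories are terminal or initial means there is no extra coherence data to carry — the $2$-category is strict, matching the claim.

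For $(ii)\Leftrightarrow(iii)$: a lax morphism $F\colon \ol{X}\to(\Cat,\times,\1)$ assigns to each object $A\in X$ a category $F(A)$ — but $\ol X$ is indiscrete, so to recover a $2$-category one instead reads off the hom-categories from the unique morphism $(A,B)$ of $\ol X$, i.e. $\Ca(A,B) := F(A,B)\in\Cat$. The laxity $2$-cells $F(g)\times F(f)\to F(g\circ f)$ become the horizontal composition functors $\Ca(B,C)\times\Ca(A,B)\to\Ca(A,C)$, the lax unit cells $\1\to F(\Id_A)$ become the unit $1$-cells, and the lax coherence axioms (associativity and unit pentagon/triangle for a lax functor, as in \cite{Lei1}) become exactly the associativity and unit axioms of a $2$-category. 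I would present this as a dictionary and note that it is a bijective-on-objects, fully faithful correspondence, hence an equivalence (in fact an isomorphism of categories if one is careful about morphisms). Here one should remark, as the paper already did for $\M$-categories in Section \ref{lax-diagram}, that this is the same mechanism by which enriched categories are ``nerves'': $\ol X$ being coarse means a lax functor out of it has no constraints coming from $\ol X$ beyond composability of formal strings.

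The main obstacle — really the only place that needs care rather than bookkeeping — is matching the coherence data on both sides honestly. On the operad side the associativity isomorphism of $\O_X$ is the canonical iso $\1\times\cdots\times\1\xrightarrow{\cong}\1$, so the $\O$-algebra associativity axiom forces the two ways of composing a triple string to agree \emph{on the nose}; one must check this really is the strict associativity $h\circ(g\circ f)=(h\circ g)\circ f$ of composition functors and not something weaker, and similarly that the functor $\O_X(0,i)\times\Ma_0\to\Ma_i$ genuinely yields left and right unit \emph{identities}. On the lax-functor side one must verify that the lax coherence axioms, once the structure $2$-cells are identities-up-to-the-canonical-iso forced by the target being $(\Cat,\times,\1)$ with $F$ landing in a situation where comparison cells are invertible, reduce to the same strict equations — i.e. that ``lax functor out of the indiscrete category'' plus the bookkeeping yields a \emph{strict} $2$-category, consistent with $(ii)$. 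I would handle this by spelling out the $n=2$ and $n=3$ instances of each axiom explicitly and remarking that the general case follows by induction on string length, then conclude that all three constructions are mutually inverse, giving the asserted equivalence.
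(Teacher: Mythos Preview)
Your proposal is correct and follows essentially the same route as the paper's own sketch: treat $(ii)\Leftrightarrow(iii)$ as the standard B\'enabou--Street dictionary, and unpack $(i)\Leftrightarrow(ii)$ by reading the $\O_X$-algebra structure maps against the combinatorics of $\O_X$, noting that the operation categories are all $\1$ or $\varnothing$ so that the associativity constraint collapses to a strict equality. The paper's proof is terser---it declares $(ii)\Leftrightarrow(iii)$ well known and only writes out the passage from an $\O_X$-algebra to a $2$-category---whereas you spend more time on the coherence bookkeeping, but the substance is the same.
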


\begin{rmk}
We can also include a fourth equivalence between the strict homomorphism from $\P_{\ol{X}}$ to $(\Cat, \times, \1)$, where $\P_{\ol{X}}$ is the $2$-path category associated to $\ol{X}$ (see \cite{SEC1}). And as claimed above, one can replace everywhere $\ol{X}$ by an arbitrary category $\Ba$. The fourth equivalence will be a homomorphism from $\P_{\Ba}$ to  $(\Cat, \times, \1)$.
\end{rmk}

\begin{proof}[\textbf{Sketch of proof}]

The equivalence between ii) and iii) is well known and is left to the reader. \ \\
We simply show how we get a $2$-category from an $\O_X$-algebra. The implication $\tx{ii)} \Rightarrow\tx{i)}$ will follow immediately by `reading backwards' the argumentation we present hereafter. \ \\

Let  $\Ma\dt$  be an $\O_X$-algebra in $\Cat$. We construct a $2$-category $\Ma$ as follows.

\begin{enumerate}
\item $\Ob(\Ma)= X$\\
\item Given a pair $(A,B) \in X^2=C$, we have a category $\Ma_{(A,B)}$ and we set $ \Ma(A,B)= \Ma_{(A,B)}$.\\
\item Given $A,B,C$ in $X$, if we set $i_1=(A,B), i_2=(B,C)$ and $j=(A,C)$ we have $\O(i_1,i_2; j) = \1$ and the functor 
$ \O_X(i_1,i_2; j) \times \Ma_{i_1} \times  \Ma_{i_2} \to  \Ma_j$ gives the composition: 

$$\Ma(A,B) \times  \Ma(B,C) \xrightarrow[\tx{canonical}]{\cong}  \1 \times \Ma(A,B) \times  \Ma(B,C) \to  \Ma(A,C). $$ 

\item Each $\O_X(i,i)$ acts trivially on $\Ma_i$ i.e the map  $\O_X(i,i) \times \Ma_i \to  \Ma_i$ is the canonical isomorphism $\1 \times \Ma_i \xrightarrow{\cong} \Ma_i$.\\
\item One gets the associativity  of the composition in $\Ma$ using the fact the following functors are invertible and have the same codomain: 

\begin{itemize}
 \item $\O_X(i_1\circ i_2, i_3; j)  \times \O_X(i_1, i_2; i_1\circ i_2 ) \times \O_X(i_3;i_3) \xrightarrow{\cong} \O_X(i_1,i_2, i_3; j)$    \\
\item $\O_X(i_1, i_2 \circ i_3; j)  \times \O_X(i_1, i_1) \times \O_X(i_2, i_3 ; i_2\circ i_3) \xrightarrow{\cong}\O_X(i_1,i_2, i_3; j)$  \\
\end{itemize}
with $i_1=(A,B), i_2=(B,C), i_3=(C,D)$ and $i_1 \circ i_2=(A,C)$, $i_2 \circ i_3=(B,D)$. This provides a natural isomorphism between the domains of the two functors. Putting these together with the fact that the action of $\O_X$ on $\Ma\dt$ is compatible with the composition of $\O_X$, we get the desired natural isomorphism expressing the associativity of the composition in $\Ma$. \\
\item For each $i$ of the form $(A,A)$ we have $\O_X(0,i)=\1$ and the unity condition of the algebra provides a morphism $\1 \to \Ma(A,A)$ which satisfies the desired conditions of an identity morphism in a $2$-category.
\end{enumerate}
\end{proof}
The functor $\Ob: \Cat \to \Set$ which sends a category to its set of objects, commutes with the cartesian product, so that it's actually a (strict) monoidal functor. As a consequence  we get a functor $$\Ob: \Opera_C(\Cat)  \to \Opera_C(\Set).$$
For $\Rcal \in \Opera_C(\Set)$ and $\O \in \Opera_C(\Cat)$ we will say that \emph{$\O$ is over $\Rcal$} if $\Ob(\O)= \Rcal$.

\begin{rmk}
It's not hard to see that since the functor $\Ob$ is monoidal, for any $\O$-algebra $\Ma\dt$ then $\Ob(\Ma\dt)$ is automatically an $\Ob(\O)$-algebra. 
\end{rmk}
\subsection{Lax morphism of $\O$-algebra}
We now consider the type of morphism of $\O$-algebras we are going to work with. Our definition is different than the standard definition of morphism of algebras. The idea is to recover the definition of lax functor between $2$-categories when $\O$ is of the form $\O_X$. \ \\

\begin{df}\label{o-morphism}
Let $\O$ be an object of  $\Opera_C(\Cat)$ and $\Ca\dt$, $\Ma\dt$ be two $\O$-algebras.\ \\
\\
 A \textbf{lax} morphism $\Fa\dt: \Ca\dt \to \Ma\dt $ of $\O$-algebras, or simply  a lax $\O$-morphism,is given by the following data and axioms.\ \\\\
\emph{\textbf{Data}:} 
\begin{itemize}
 \item A family of functors $\{ \Fa_i: \Ca_i \to \Ma_i\}_{i \in C}$. 
 \item For each $(n+1)$-tuple  $(i_1,...,i_n; j)$, a family of natural transformation $\{ \varphi=\varphi(i_{\dt};j) \}$ :
\[
\xy
(-20,20)*+{ \O(i_1,...,i_n; j) \times \Ca_{i_1} \times \cdots \times \Ca_{i_n}}="X";
(30,20)*+{\Ca_j}="Y";
(-20,0)*+{\O(i_1,...,i_n; j) \times \Ma_{i_1} \times \cdots \times \Ma_{i_n}}="A";
(30,0)*+{\Ma_j}="B";
{\ar@{->}^{\ \ \ \ \ \ \ \ \ \ \ \ \ \ \ \ \ \ \  \theta}"X";"Y"};
{\ar@{->}_{ \ \\ \ \ \ \ \ \ \ \ \ \ \ \ \ \ \ \ \ \ \rho}"A";"B"};
{\ar@{->}^{\Fa_j}"Y";"B"};
{\ar@{->}_{\Id \times \Fa_{i_1} \times \cdots \times \Fa_{i_n}}"X";"A"};
{\ar@{=>}^{\varphi}(21,2);(28,9)};
\endxy
 \]
 \ \\
 $$\otimes_x(\Fa_{i_1}  c_1,..., \Fa_{i_n} c_n)  \xrightarrow{\varphi(x,c_1,...c_n)} \Fa_j [ \otimes_x(c_1,..., c_n)]$$
\end{itemize} 
\ \\
\emph{\textbf{Axioms}:}  The natural transformations $\varphi_{_{i_{\dt}|j}}(-)$  satisfy the following coherence conditions, which are the ` $2$-dimensional' analogue of those satisfied by  $\theta_{_{i_{\dt}|j}}(-)$ and $\rho_{_{i_{\dt}|j}}(-)$:
 $$\varphi_{_{i_{\dt}|j}} \otimes_{\theta} \{ \Id_{\Id_{\O(i\dt|j)}} \times [ (\prod_{i}  \varphi_{h_{i,\dt|i}}) \otimes  \Id_{\tx{shuffle}}] \} = \varphi_{h_{\dt,\dt|j}} \otimes \{ \Id_{ \gamma_{_{h_{\dt,\dt}|i\dt|j}} } \times \Id_{\Id_{\prod \Ma_{h_{\dt,\dt}}}} \}. $$  
 \ \\
\ \\ 
 More explicitly, given 
 \begin{itemize}
 \item $(x,x_1,...,x_n) \in \O(i\dt|j) \times \O(h_{_{1,\dt}}|i_1) \times \cdots \times \O(h_{_{n,\dt}}|i_n)$
 \item $(d_{_{1,1}},...,d_{_{1,k_1}}, ... , d_{_{n,1}}, ... ,d_{_{n,k_n}}) \in \Ma_{h_{_{1,1}}} \times \cdots \times \Ma_{h_{_{1,k_1}}} \times \cdots \times \Ma_{h_{_{n,k_n}}}$  
\item $\otimes_{\gamma(x, x_1,...,x_n)}(d_{_{1,1}},...,d_{_{n,k_n}})= c$
\item $\otimes_{x_i}(d_{_{i,1}},...,d_{_{i,k_i}})= c_i$,  $i\in \{1,...,n\}$,
\item $\otimes_x(c_1,..., c_n)=c$
\item $\varphi_i=\varphi(x_i, d_{_{i,1}},...,d_{_{i,k_i}}): \otimes_{x_i}(\Fa d_{_{i,1}}  ,..., \Fa d_{_{i,k_i}}) \to \Fa [ \otimes_{x_i}(d_{_{i,1}},...,d_{_{i,k_i}})]= \Fa c_i $
 \end{itemize}
  
we require the equality :

$$ \varphi(\gamma(x, x_1,...,x_n), d_{_{1,1}},...,d_{_{n,k_n}})=\varphi(x,c_1,...c_n) \circ [ \otimes_x(\varphi_1, ..., \varphi_n)] .$$


\end{df}

In the next paragraph we make some comments about the coherence conditions in the previous definition.
\paragraph{\textbf{Coherences}} \label{coherences-lax}\ \\
The previous coherence can be easily understood when we think that the family of functors $\{ \Fa_i: \Ca_i \to \Ma_i\}_{i \in C}$ equipped with the family of natural transformations $\{ \varphi_{_{i_{\dt}|j}}(x) \}_{x \in \Ob(\O(i_1,...,i_n; j))}$, is an $\O$-algebra of some arrow-category we are about to describe. \ \\

Let's consider $\Ar(\Cat)_+$ the double category  given by the following data. 
\begin{itemize}
\item  The objects are the arrows of $\Cat$ i.e functors $F$ 
\item A morphism from  $F$ to $G$ consists of a triple $(\alpha,\beta,\varphi)$ where $\alpha$  and $\beta$ are functors and $\varphi$ a natural transformation as shown in the following diagram:
\[
\xy
(0,20)*+{\cdot}="X";
(30,20)*+{\cdot}="Y";
(0,0)*+{\cdot}="A";
(30,0)*+{\cdot}="B";
{\ar@{->}^{\alpha}"X";"Y"};
{\ar@{->}_{\beta}"A";"B"};
{\ar@{->}^{G}"Y";"B"};
{\ar@{->}_{F}"X";"A"};
{\ar@{=>}^{\varphi}(21,2);(28,9)};
\endxy
 \]

We will represent such morphism as a column or a row: 
$ 
\left( \begin{array}{c}
\alpha \\ 
\beta \\ 
\varphi
\end{array} \right)
$,
$(\alpha; \beta; \varphi)$.
\\
\item  The horizontal composition $\ho$ and vertical composition $\vo$ in $\Ar(\Cat)_+$ are given as follows
$$
\left( \begin{array}{c}
\alpha' \\ 
\beta' \\ 
\varphi'
\end{array} \right)_{_{G \to H}} \ho
\left( \begin{array}{c}
\alpha \\ 
\beta \\ 
\varphi
\end{array} \right)_{_{F \to G}} = 
\left( \begin{array}{c}
\alpha' \circ \alpha \\ 
\beta ' \circ \beta \\ 
\varphi'_{\alpha x} \circ \beta'(\varphi_x) 
\end{array} \right)_{_{F \to H}} 
$$
$$
\left( \begin{array}{c}
\alpha \\ 
\beta \\ 
\varphi'
\end{array} \right)_{_{K \to L}} \vo
\left( \begin{array}{c}
\beta \\ 
\gamma \\ 
\varphi
\end{array} \right)_{_{F \to G}} = 
\left( \begin{array}{c}
 \alpha \\ 
\gamma \\ 
L(\varphi_x) \circ \varphi'_{Fx}
\end{array} \right)_{_{K F \to LG}} 
$$
\end{itemize}
\ \\
It's not hard to see that $\Ar(\Cat)_+$ carries a monoidal structure with the cartesian product of functors where the unity is the identity functor $\Id_{\1}$. The product of two morphisms $(\alpha,\beta,\varphi)$ and $(\alpha',\beta',\varphi')$ is given by:
$$
\left( \begin{array}{c}
\alpha \\ 
\beta \\ 
\varphi
\end{array} \right) \times
\left( \begin{array}{c}
\alpha' \\ 
\beta' \\ 
\varphi'
\end{array} \right) = 
\left( \begin{array}{c}
\alpha \times \alpha' \\ 
\beta  \times \beta' \\ 
\varphi  \times \varphi' 
\end{array} \right) 
$$
\ \\
\begin{rmk}
We have a functor  $\Cat \hookrightarrow \Ar(\Cat)_+$ sending a natural transformation $\sigma: F \to G$ to $(\Id_{s}; \Id_t; \sigma)$ where $s$ and $t$ are the source and target of both $F$ and $G$.  
\end{rmk}
Given an object $F$ of $\Ar(\Cat)_+$, we will use the notation  $\O \odot F := \Id_{\O} \times F$.  With the monoidal category  $(\Ar(\Cat)_+, \times, \Id_{\1})$ we can say that the coherence conditions on  $\varphi_{_{i_{\dt}|j}}$ are equivalent to say that the family $\{\Fa_i \}_{i \in C}$ is an $\O$-algebra in   $(\Ar(\Cat)_+, \times, \Id_{\1})$ where the maps
$$\O(i_1,...,i_n; j) \odot [\Fa_{i_1} \times \cdots \times \Fa_{i_n}]  \to \Fa_j$$
are given by the family of triple $(\theta_{_{i_{\dt}|j}}, \rho_{_{i_{\dt}|j}}, \varphi_{_{i_{\dt}|j}})$. 

\subsection*{Composition of lax $\O$-morphisms} 
Let $\Ca\dt, \Ma\dt, \Na\dt$ be three $\O$-algebras and $(\Fa\dt,\varphi_{_{i_{\dt}|j}}):\Ca\dt \to \Ma\dt$,\\ $(\Ga\dt,\psi_{_{i_{\dt}|j}}):\Ma\dt \to \Na\dt$ be two lax  $\O$-morphisms. \ \\

We define the composite $\Ga\dt \circ \Fa\dt$ to be the lax $\O$-morphism given by the following data.
\begin{itemize}
\item The family of functors $\{ \Ga_i \circ \Fa_i: \Ca_i \to \Na_i\}_{i \in C}$. 
\item For each $(n+1)$-tuple  $(i_1,...,i_n; j)$, the family of natural transformation $\{ \chi_{_{i_{\dt}|j}}(x) \}_{x \in \Ob(\O(i_1,...,i_n; j))}$ where:
$$\chi_{_{i_{\dt}|j}}(x)= G_j[\varphi_{_{i_{\dt}|j}}(x)] \circ \psi_{_{i_{\dt}|j}}(x)_{_{\prod \Fa_i(-)}}.$$\\
\item More precisely the component of $\chi_{_{i_{\dt}|j}}(x)$ at $(c_1,...,c_n) \in \Ca_{i_1} \times \cdots \times \Ca_{i_n} $ is the morphism:
$$\chi_{_{i_{\dt}|j}}(x)_{(c_1,...,c_n)}= G_j[\varphi_{_{i_{\dt}|j}}(x)_{(c_1,...,c_n)}] \circ \psi_{_{i_{\dt}|j}}(x)_{(\Fa_{i_1}c_1,...,\Fa_{i_n}c_n)}.$$
\end{itemize}
We leave the reader to check that these data satisfy the coherence conditions  of the Definition \ref{o-morphism}.

\begin{rmk}\ \\
The identity $\O$-morphims of an algebra $(\Ma\dt,\theta_{_{i_{\dt}|j}}) $ is given by the family of functors $\{\Id_{\Ma_i}\}_{i \in C}$ and  natural transformations $\{ \Id_{\theta_{_{i_{\dt}|j}}(x)} \}_{x \in \Ob(\O(i_1,...,i_n; j))}$.
\end{rmk}

\subsection{Morphisms of lax $\O$-morphisms}
$\O$-algebras and lax $\O$-morphisms form naturally a category. But there is an obvious notion of $2$-morphism we now describe. A $2$-morphism is the analogue of the transformations of lax functors. 
\begin{df}
Let $(\Fa\dt,\varphi_{_{i_{\dt}|j}})$ and  $(\Fa\dt', \varphi_{_{i_{\dt}|j}}')$ be two lax $\O$-morphisms from $\Ca\dt$  to $ \Ma\dt$.\ \\
\ \\
A $2$-morphism $\sigma\dt: \Fa\dt \to \Fa\dt'$ is given by the following data and axioms.
\ \\
\ \\
\emph{\textbf{Data}:} A family of natural transformations $ \{\sigma_i: \Fa_i \to \Fa_i' \}_{i \in C}$.

\ \\
\emph{\textbf{Axioms}:} 
For any $x \in \O(i_1,...,i_n; j)$, and any $(c_1,...,c_n) \in \Ca_{i_1} \times \cdots \times \Ca_{i_n} $,  the following commutes :
\[
\xy
(-40,25)*+{\otimes_x(\Fa_{i_1}c_1,...,\Fa_{i_n}c_n)}="X";
(30,25)*+{\Fa_j[\otimes_x(c_1,...,c_n)]}="Y";
(-40,0)*+{\otimes_x(\Fa'_{i_1}c_1,...,\Fa'_{i_n}c_n)}="A";
(30,0)*+{\Fa'_j[\otimes_x(c_1,...,c_n)]}="B";
{\ar@{->}^{\varphi(x,c_1,...,c_n)}"X";"Y"};
{\ar@{->}_{\varphi'(x,c_1,...,c_n)}"A";"B"};
{\ar@{->}^{\sigma_{j,\otimes_x(c_{\dt})}}"Y";"B"};
{\ar@{->}_{\otimes_x(\sigma_{i_1,c_1},...,\sigma_{i_n,c_n})}"X";"A"};
\endxy
 \]

\end{df}
The composition of $2$-morphisms is the obvious one i.e component-wise. We will denote by $\Laxalg(\Ca\dt, \Ma\dt)$ the category of lax $\O$-morphisms between two $\O$-algebras $\Ca\dt$ and $\Ma\dt$. \\
\subsection{Locally presentable $\O$-algebras}
Below we extend the notion of  locally presentable category $\M$ to $\O$-algebras for an operad $\O \in  \Opera_C(\Cat)$.

\begin{df}
Let $(\Ma\dt,\theta_{_{i\dt|j}})$ be an $\O$-algebra. We say that $\Ma\dt$ is a  locally presentable $\O$-algebra if the following conditions holds. 
\begin{itemize}
\item For every $i \in C$ the category $\Ma_i$ is a locally presentable category in the usual sense.
\item For every $(i_1,...,i_n; j)$ the functor $\theta_{_{i\dt|j}}$ 
preserves the colimits on each factor `$i_k$' $(1 \leq k \leq n)$ that is for every $(m_l)_{l\neq k} \in \prod_{l,l \neq k} \Ma_{i_l}$ and every $x  \in \O(i_1,...,i_n; j)$ the functor  
$$\theta_{_{i\dt|j}}{(x;(m_l))}:=\theta_{_{i\dt|j}}{(x; ..., m_l, ...m_{k-1}, -, m_{k+1},...)}: \Ma_{i_k} \to \Ma_{j}$$ 
preserves all colimits.
\end{itemize}
\end{df}

\begin{ex}\ \
\begin{enumerate}
\item If  $\O$ is the operad of enriched categories, then any  symmetric closed monoidal category $\M$ which is locally presentable is automatically a locally presentable $\O$-algebra. The second condition of the definition follows from the fact that being closed monoidal  imply that the tensor product of $\M$ (which is a left adjoint) preserves colimits on each factor.  
\item More generally any biclosed monoidal category $\M$ (see \cite[1.5]{Ke}), not necessarily symmetric, which is locally presentable is a locally presentable $\O$-algebra.
\item Any $2$-category (or bicategory) such that the composition preserves the colimits on each factor and all the category of morphisms are locally presentable,  is a  locally presentable $\O_X$-algebra for the operad $\O_X$ of the Example \ref{operad-principal}.
\end{enumerate}
\end{ex}

\begin{rmk}
In the same fashion way we will say that $\Ma\dt$ is a cocomplete $\O$-algebra if all the $\Ma_i$  are cocomplete and if the second condition of the previous definition holds. 
\end{rmk}
The main result in this section is the following.
\begin{thm}\label{laxalg-local-pres}
Let $\Ma\dt$ be a locally presentable $\O$-algebra. For any $\O$-algebra $\Ca\dt$ the category of lax $\O$-morphisms $\Laxalg(\Ca\dt, \Ma\dt)$ is locally presentable.  
\end{thm}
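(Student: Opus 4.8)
The plan is to realize $\Laxalg(\Ca\dt,\Ma\dt)$ as a category of models of a suitable limit/colimit sketch, or equivalently — following the paragraph on \textbf{Coherences} — to exhibit it as an accessible, cocomplete category by building it up from locally presentable pieces via constructions known to preserve local presentability. The key observation is the one already flagged in the excerpt: a lax $\O$-morphism $\Ca\dt\to\Ma\dt$ is the same as an $\O$-algebra in the monoidal category $(\Ar(\Cat)_+,\times,\Id_{\1})$ lying over the fixed pair $(\Ca\dt,\Ma\dt)$, i.e.\ with prescribed source family $\{\Ca_i\}$ and target family $\{\Ma_i\}$. So the first step is to make this precise and reduce the theorem to a statement about a fiber of a category of algebras.

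The concrete approach I would take is as follows. First, note each $\Fa_i\colon\Ca_i\to\Ma_i$ together with the natural transformations $\varphi_{i_\dt|j}(x)$ is an object of the functor category $\Ma_i^{\Ca_i}$ (for the $\Fa_i$) and of further functor categories for the $\varphi$'s; since $\Ma_i$ is locally presentable and $\Ca_i$ is small-ish (the $2$-categories here have $\V$-small sets of $2$-morphisms, and one restricts to a generating amount of data), each $\Ma_i^{\Ca_i}$ is locally presentable. Thus the "bare data" — the product $\prod_{i\in C}\Ma_i^{\Ca_i}$ together with the relevant natural-transformation data indexed by the objects $x$ of the categories $\O(i_\dt;j)$ — forms a locally presentable category $\mathcal{D}$, being a (small) product of locally presentable categories. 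Second, observe that all the coherence axioms of Definition \ref{o-morphism}, together with naturality of each $\varphi(x,-)$ and functoriality in $x$, are expressed by \emph{equalities} of composites of morphisms that are built from the structure maps $\theta_{i\dt|j}$, $\rho_{i\dt|j}$ of the two algebras and from the $\Fa_i$, $\varphi$. Crucially, because $\Ma\dt$ is a locally presentable $\O$-algebra, each $\theta_{i\dt|j}(x;\dots,-,\dots)$ preserves colimits in each slot; hence composing with these structure maps is a colimit-preserving (in fact accessible) operation. Therefore the full subcategory of $\mathcal{D}$ cut out by these equational conditions is an equalizer-type / orthogonality-type subcategory defined by accessible functors, and is closed under filtered colimits and small limits in $\mathcal{D}$; by the standard theory (Gabriel--Ulmer, Adámek--Rosický) such a subcategory of a locally presentable category is again locally presentable. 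That subcategory is precisely $\Laxalg(\Ca\dt,\Ma\dt)$.

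More carefully, I would organize it as: (1) identify the indexing data — one copy of $\Ma_i^{\Ca_i}$ per color, one copy of an appropriate arrow/transformation category per object $x$ of each $\O(i_1,\dots,i_n;j)$, and a "functoriality in $x$" condition encoding that $\varphi_{i_\dt|j}$ is natural in $x\in\O(i_\dt;j)$; (2) argue the ambient category of all such data is locally presentable as a small product/limit of locally presentable categories (using that $\Cat$-operad hom-categories and $\Ca_i$ are essentially small for a large enough universe); (3) write every axiom (the big coherence equality, plus naturality squares, plus unit conditions coming from $\O(0,i)=\1$) as an equality between two accessible functors into the ambient category, where accessibility uses that the $\theta$'s preserve colimits in each variable; (4) invoke the closure of locally presentable categories under small limits and under "categories of models of limit sketches'' — concretely, $\Laxalg(\Ca\dt,\Ma\dt)$ is the inverter/equifier/equalizer of accessible functors between locally presentable categories, hence locally presentable.

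The main obstacle I expect is step (3): verifying that the coherence condition of Definition \ref{o-morphism} — which on its face involves the composite $\varphi(x,c_1,\dots,c_n)\circ[\otimes_x(\varphi_1,\dots,\varphi_n)]$, i.e.\ the structure maps $\theta$ applied to morphisms rather than objects — is genuinely an \emph{accessible} (colimit-preserving-enough) condition, so that the cut-out subcategory is closed under the filtered colimits needed for local presentability. This is exactly where the hypothesis that $\Ma\dt$ is a \emph{locally presentable} $\O$-algebra (not merely that each $\Ma_i$ is locally presentable) is used: it guarantees each $\theta_{i\dt|j}(x;-)$ preserves colimits slotwise, hence sends filtered colimits of the $\varphi$-data to filtered colimits, so the equational conditions are stable under filtered colimits computed pointwise in $\mathcal{D}$. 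Once that is in hand, the bookkeeping — matching up the index categories, checking the unit axioms for $\O(0,i)$, and confirming that transformations of lax $\O$-morphisms are exactly the morphisms in the cut-out subcategory — is routine. A clean alternative phrasing of the whole argument, which I would mention, is to write down an explicit limit sketch whose models are lax $\O$-morphisms and appeal directly to the theorem that categories of models of limit sketches are locally presentable.
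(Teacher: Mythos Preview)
Your approach is correct in outline and is a genuinely different route from the one the paper takes. The paper proves the theorem by the Kelly--Lack monadic method: it explicitly constructs a left adjoint $\Fb$ to the forgetful functor $\Ub\colon \Laxalg(\Ca\dt,\Ma\dt)\to\prod_i\Hom(\Ca_i,\Ma_i)$ (via a fairly elaborate iterated Kan-extension/pushout construction), shows $\Ub$ is monadic using Beck's theorem (coequalizers of $\Ub$-split pairs lift), checks that the monad $\T=\Ub\Fb$ preserves filtered colimits, and then invokes the standard fact that algebras for a finitary monad on a locally presentable category form a locally presentable category. Your argument instead realizes $\Laxalg(\Ca\dt,\Ma\dt)$ as an iterated inserter/equifier (equivalently, models of a limit sketch) over $\prod_i\Ma_i^{\Ca_i}$, using that locally presentable categories are closed under such $2$-categorical limits of accessible functors; the slotwise colimit-preservation of the $\theta_{i\dt|j}$ is exactly what makes the relevant functors accessible, as you correctly isolate. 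Both arguments implicitly need the $\Ca_i$ and the $\O(i_\dt;j)$ to be small so that the ambient functor categories and the index sets for the inserters/equifiers are small; the paper makes the same tacit assumption when it treats $\K_{\Ca\dt}$ as locally presentable.

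What each approach buys: your sketch/PIE-limit argument is conceptually cleaner and avoids the long explicit construction of $\Fb$. The paper's monadic argument, however, is not just a proof device --- the explicit adjunction $\Fb\dashv\Ub$ and its good behavior on (trivial) cofibrations are used essentially later (for instance in Theorem~\ref{model-laxalg} and the pushout analysis in the appendix) to transfer the model structure. So if you adopt your route for this theorem, you would still eventually need the free-lax-morphism functor $\Fb$ for the homotopical parts of the paper; your argument proves local presentability but does not supply that tool.
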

 
\begin{proof}
 See Appendix \ref{Proof-local-present-O-alg}
\end{proof}
 
\subsection{Special Quillen $\O$-algebra} 

In the following we consider an \emph{ad-hoc} notion of Quillen $\O$-algebra. 
\begin{df}\label{quillen-alg}
Let $(\Ma\dt,\theta_{_{i\dt|j}})$ be an $\O$-algebra. We say that $\Ma\dt$ is a \textbf{special Quillen $\O$-algebra} if the following conditions holds. 
\begin{enumerate}
\item $\Ma\dt$ is complete and cocomplete,
\item For every $i \in C$ the category $\Ma_i$ is a Quillen closed model category in the usual sense.
\item For every $x \in \O(i_1,...,i_n; j)$, the functor $\otimes_x$ preserves (trivial) cofibrations with cofibrant domain. This means that  for every  $n$-tuple of morphisms $(g_k)_{k}$ in $\Ma_{i_1} \times \cdots \times \Ma_{i_n}$, such that each $g_k$ has a cofibrant domain, then 
$\otimes_x(g_1, ..., g_n)$ is a (trivial) cofibration in $\Ma_j$ if all $g_1,...,g_n$ are (trivial) cofibrations.  
\end{enumerate}
Say that $\Ma\dt$ is cofibrantly generated if all the $\Ma_i$ are cofibrantly generated. Similarly if each $\Ma_i$ is combinatorial we will say that $\Ma\dt$ is combinatorial. 
\end{df}

\begin{ex}\ \
\begin{itemize}
\item Any model category is obviously a special Quillen $\O$-algebra with the tautological operad (no operations except the $1$-ary identity  operation). 
\item  Another example of special Quillen algebra is a symmetric monoidal model category. In fact using the pushout-product axiom one has that (trivial) cofibrations with cofibrant domain are closed by tensor product.
\end{itemize}
\end{ex}

\begin{rmk}
Note that in our definition we did not include a generalized pushout product axiom; it doesn't seem relevant, for our purposes,  to impose this axioms in general. But if one is interested of having such axiom, a first approximation will be of course to mimic the monoidal situation. Below we give a sketchy one. \ \\
\ \\
\ul{\textbf{Axiom}:} Say that $\Ma\dt$ is pushout-product compatible  if:\ \\
\begin{itemize}
\item for every $x \in \O(i_1,...,i_n; j)$ 
\item for every cofibrations  $f: a_k \to b_k  \in \Ma_{i_k}$, $g: a_l \to b_l \in \Ma_{i_l}$,
\item for every $(n-2)$-tuple of cofibrant objects $(c_r)_{r \neq l, r \neq l} $  
\end{itemize} 
then the map $$\delta: \otimes_x(-, a_k ,-, b_l, -) \cup_{\otimes_x(-, a_k , -, a_l, -)} \otimes_x(-, b_k , -, a_l, -) \to
 \otimes_x(-, b_k , -, b_l, -)$$ 
is a cofibration which is moreover a trivial cofibration if either $f$ of $g$ is. 
 
 \[
\xy
(-40,25)*+{\otimes_x(c_1,..., a_k , -, a_l,..., c_n)}="X";
(30,25)*+{\otimes_x(c_1,..., a_k , -, b_l,..., c_n)}="Y";
(-40,0)*+{\otimes_x(c_1,..., b_k , -, a_l,..., c_n)}="A";
(30,0)*+{\otimes_x(c_1,..., b_k , -, b_l,..., c_n)}="B";
{\ar@{->}^{\otimes_x(\Id,..., \Id , -, g,..., \Id)}"X";"Y"};
{\ar@{->}_{\otimes_x(\Id,..., \Id , -, g,..., \Id)}"A";"B"};
{\ar@{->}^{\otimes_x(\Id,..., f , -, \Id,..., \Id)}"Y";"B"};
{\ar@{->}_{\otimes_x(\Id,..., f , -, \Id,..., \Id)}"X";"A"};
(0,10)*+{.}="E";
{\ar@{.>}^{}"Y";"E"};
{\ar@{.>}_{}"A";"E"};
{\ar@{->}^-{\delta}"E";"B"};
\endxy
 \]
\end{rmk}
\ \\
The main result in this section is to say that under some hypothesis on the triple $(\O, \Ca\dt, \Ma\dt)$ then there is a model structure on $\Laxalg(\Ca\dt,\Ma\dt)$.  We don't know for the moment if we have the same result without any restriction. 
We will denote by $\kc=\prod_i \Hom(\Ca_i, \Ma_i)$. 

\begin{df}
Let $(\Ca\dt,\rho)$ and $(\Ma\dt, \theta)$ be two $\O$-algebras.
\begin{enumerate}
\item Say that $\Ca\dt$ is  \textbf{$\O$-well-presented}, or \textbf{$\O$-identity-reflecting} (henceforth $\iro$-algebra) if
for every $n+1$-tuple $(i_1,...,i_n; j)$ the following functor reflects identities
$$\rho: \O(i_1,...,i_n; j) \times \Ca_{i_1} \times \cdots \times \Ca_{i_n} \to \Ca_j.$$
This means that the image of $(u, f_1, ...,f_n) \in \O(i_1,...,i_n; j) \times \Ca_{i_1} \times \cdots \times \Ca_{i_n} $ is an identity morphism in $\Ca_j$ (if and) only if all $u, f_1,...,f_n$ are simultaneously identities.
\item Say that $(\Ca\dt, \Ma\dt)$ is an \textbf{$\O$-homotopy-compatible pair} if  $\Fb: \kc \to \kc$ preserves level-wise trivial cofibrations, where $\kc$ is endowed with the injective model structure. Here $\Fb$ is the left adjoint of the functor $\Ub$ which forgets the laxity maps (see Appendix \ref{ub-has-left-adjoint}).
\end{enumerate}
\end{df}

The motivation of these definitions is explained in the Appendix \ref{pushout-laxalg}.\ \\
With the previous material we have
\begin{thm}\label{model-laxalg}
For an $\iro$-algebra $\Ca\dt$, and a special Quillen $\O$-algebra $\Ma\dt$ assume that 
\begin{itemize}
\item $(\Ca\dt, \Ma\dt)$ is an $\O$-homotopy compatible pair, 
\item all objects of $\Ma\dt$ are cofibrant, 
\item $\Ma$ is cofibrantly generated with $\I\dt$ (resp. $\Ja\dt$) the generating set of (trivial) cofibrations
\end{itemize} 
then there is a model structure on $\Laxalg(\Ca\dt, \Ma\dt)$ which is cofibrantly generated. A map $\sigma: \Fa \to \Ga$ is 
\begin{itemize}
\item a weak equivalence if $\Ub \sigma$ is a weak equivalence in $\kc$, 
\item a fibration if $\Ub \sigma$ is a fibration in $\kc$, 
\item a  cofibration if it has the LLP with respect to all maps which are both fibrations and weak equivalences,
\item a  trivial cofibration if it has the LLP with respect to all fibrations.
\item the set $\Fb(\I\dt)$ and $\Fb(\Ja\dt)$ constitute respectively the set of generating cofibrations and trivial cofibrations in $\Laxalg(\Ca\dt, \Ma\dt)$.
\end{itemize} 
The pair $$\Ub : \Laxalg(\Ca\dt, \Ma\dt) \leftrightarrows \prod_i \Hom(\Ca_i, \Ma_i) : \Fb$$
 is a Quillen pair, where $\Fb$ is left Quillen and $\Ub$ right Quillen. 
\end{thm}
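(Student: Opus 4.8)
The plan is to obtain the model structure on $\Laxalg(\Ca\dt,\Ma\dt)$ by transfer (the ``lifting'' or ``right-induced model structure'' technique of Kan, Crans, Hirschhorn) along the adjunction $\Ub \dashv \Fb$ appearing in Appendix \ref{ub-has-left-adjoint}, starting from the injective model structure on $\kc=\prod_i\Hom(\Ca_i,\Ma_i)$. First I would recall that $\kc$ with the level-wise injective model structure is itself cofibrantly generated: since each $\Ma_i$ is cofibrantly generated and all objects of $\Ma_i$ are cofibrant (so in particular the injective model structure on the diagram category $\Hom(\Ca_i,\Ma_i)$ exists and is cofibrantly generated, e.g.\ by the standard argument using $\I\dt$ and $\Ja\dt$ and the fact that $\Ca_i$ is a small category), we get generating sets $I_{\kc}$, $J_{\kc}$ for $\kc$. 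Then define weak equivalences and fibrations in $\Laxalg(\Ca\dt,\Ma\dt)$ to be the maps $\sigma$ with $\Ub\sigma$ a weak equivalence, resp.\ fibration, in $\kc$; cofibrations are then forced by the lifting property. The candidate generating (trivial) cofibrations are $\Fb(I_{\kc})$ and $\Fb(J_{\kc})$, which, tracing through the construction of $\Fb$, are the images $\Fb(\I\dt)$ and $\Fb(\Ja\dt)$ mentioned in the statement.

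To run Kan's transfer theorem I must verify its hypotheses: (1) $\Laxalg(\Ca\dt,\Ma\dt)$ is complete and cocomplete — completeness is clear since $\Ub$ creates limits, and cocompleteness follows from Theorem \ref{laxalg-local-pres} (local presentability), or more directly because $\Fb$ exists and one can build colimits of lax $\O$-morphisms; (2) the two candidate sets permit the small object argument — this uses local presentability / cofibrant generation of $\kc$ plus the fact that $\Fb$ is a left adjoint, so $\Fb$ of a small set is a small set of maps in a locally presentable category; (3) the acyclicity condition: every relative $\Fb(J_{\kc})$-cell complex is a weak equivalence, equivalently $\Ub$ sends pushouts of maps in $\Fb(J_{\kc})$ along arbitrary maps to trivial cofibrations (or at least weak equivalences) in $\kc$, and this is stable under transfinite composition. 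The first two are essentially bookkeeping given the earlier results. The real content — and the main obstacle — is (3).

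For the acyclicity condition I would analyze the pushout in $\Laxalg(\Ca\dt,\Ma\dt)$ of a generating trivial cofibration $\Fb(j)$, $j\in J_{\kc}$, along an arbitrary map $\Fa\dt\to\Fa'\dt$. Here is where the two standing hypotheses enter. The description of such pushouts (worked out in Appendix \ref{pushout-laxalg}) expresses $\Ub$ of the pushout, level-wise, as an iterated colimit built from the $\otimes_x$ functors applied to the maps of $j$ and to identities/objects of $\Fa\dt$ — roughly a ``free laxity-map completion'' of the naive pushout $\Fa\dt\sqcup_{\Ub\Fb(\text{dom }j)}\Ub\Fb(\text{cod }j)$. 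The $\iro$-hypothesis on $\Ca\dt$ is exactly what guarantees this combinatorial expansion is well-behaved: because $\rho$ reflects identities, the free construction adds new arrows only where genuinely needed and does not collapse, so the relevant colimit is a (possibly transfinite) composite of pushouts of maps each of which is obtained by applying some $\otimes_x$ to the trivial cofibration $j$ (with all other slots filled by cofibrant objects, by the ``all objects cofibrant'' hypothesis). Condition (3) of special Quillen $\O$-algebra — $\otimes_x$ preserves trivial cofibrations with cofibrant domain — then makes each such building block a trivial cofibration in $\Ma_j$, hence a level-wise trivial cofibration in $\kc$; and the $\O$-homotopy-compatible-pair hypothesis, namely that $\Fb$ itself preserves level-wise trivial cofibrations, packages the step from $j$ to $\Fb(j)$ and ensures the pushout step stays within trivial cofibrations. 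Closure under transfinite composition of level-wise trivial cofibrations in the injective model structure (which holds since trivial cofibrations are closed under transfinite composition in any model category, checked level-wise) finishes (3).

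Having transferred the model structure, the remaining assertions are formal: the generating sets are $\Fb(\I\dt)$, $\Fb(\Ja\dt)$ by construction; $\Ub$ preserves fibrations and trivial fibrations by the very definition of the transferred structure, so $\Ub\dashv\Fb$ is a Quillen pair with $\Ub$ right Quillen and $\Fb$ left Quillen; cofibrant generation and (if $\Ma\dt$ is combinatorial) combinatoriality are inherited because $\kc$ is then combinatorial and transfer along an adjunction between locally presentable categories preserves combinatoriality. I expect the write-up to spend almost all its length on the pushout analysis of step (3), citing Appendix \ref{pushout-laxalg} for the explicit colimit formula and then invoking the three hypotheses one after another; everything else should be a short appeal to Kan's transfer theorem and to Theorem \ref{laxalg-local-pres}.
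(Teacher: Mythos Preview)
Your proposal is correct and takes essentially the same approach as the paper: transfer the model structure from $\kc$ along the monadic adjunction (the paper invokes the Schwede--Shipley lemma rather than Kan's transfer theorem, but the content is identical), with the only substantive verification being the acyclicity condition for pushouts of $\Fb(\Ja\dt)$, handled exactly as you describe via Appendix~\ref{pushout-laxalg}. One minor notational slip: the adjunction should be written $\Fb \dashv \Ub$, not $\Ub \dashv \Fb$.
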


\begin{proof}
The idea is to transfer the (product) model structure on $\K_{\Ca\dt}=\prod_i \Hom(\Ca_i, \Ma_i)$ through the monadic adjunction $\Fb \dashv \Ub$ using a lemma of Schwede-Shipley \cite{Sch-Sh-Algebra-module}. In fact $\Laxalg(\Ca\dt,\Ma\dt)$ is equivalent to $\T$-alg for the monad $\T= \Ub \Fb$. The method is exactly the same as in the proof of theorem \ref{main-proj-msx}.\ \\
All we have to check is that the pushout of  $\Fb \sigma$ is a weak equivalence for every  generating trivial cofibration $\sigma$ in $\K_{\Ca\dt}$. This is exposed in the Appendix \ref{pushout-laxalg}. 
\end{proof}
 
\section*{An alternative description of  $\Laxalg(\Ca\dt, \Ma\dt)$ }

In the following we fix a multi-sorted operad $\O$ and an $\O$-algebra $\Ca\dt$. Our goal is to describe the category $\Laxalg(\Ca\dt, \Ma\dt)$ as subcategory of $\Laxalgp(\1\dt, \Ma\dt)$ for some operad $\O'= \O_{\Ca\dt} = \int_{\Ca\dt}$; here $\1\dt$ is the terminal algebra. This will simplify many construction such as pushouts and colimit in general. 
\subsection*{Definition of $\O_{\Ca\dt}$}
By definition of $\Ca\dt$, for each $(n+1)$-tuple we have an action of $\O$  given by a functor 
$$\theta_{_{i\dt|j}}: \O(i_1,...,i_n; j) \times \Ca_{i_1} \times \cdots \times \Ca_{i_n} \to  \Ca_j.$$
When there is no confusion we will omit the subscript and will write simply $\theta$.\ \\
\ \\
The set $D$ of colours or sorts of $\O_{\Ca\dt}$ is the set of object of $\Ca\dt$, that is $ D= \coprod_{i \in C} \Ob(\Ca_i)$.\ \\
Given an $(n+1)$-tuple $(c_1, ..., c_n, c_j) \in \Ca_{i_1} \times \cdots \times \Ca_{i_n} \times \Ca_j$, we define the category of operations 
$\O_{\Ca\dt}(c_1, ..., c_n, c_j)$ as follows:
\begin{itemize}
\item the objects are pairs $(x, h)$, with $x \in  \O(i_1,...,i_n; j)$ and $h : \theta(x,c_1,..., c_n) \to c_j $ a morphism in $\Ca_j$  
\item  a morphism from $(x, h)$  to $(y, k)$  is a morphism $u : x \to y$ in  $\O(i_1,...,i_n; j)$ such that $h= k \circ \theta(u,c_1,..., c_n)$; or  equivalently $\theta (u, c_{\dt})$ is a morphism from $h$ to $k$ in the slice category $\Ca_j /c_j$.
\end{itemize} 

If $\gamma$ is the mutliplication or substitution of $\O$, then we define the associated multiplication $\gamma_{_{\Ca\dt}}$ in the natural way to be a mixture of  $\gamma$ and $\theta$. \ \\

Given  $[(x_i, h_i)]_{1 \leq i \leq n}$  with $(x_i, h_i) \in \O_{\Ca\dt}(d_{i,1}, ..., d_{1,k_i}, c_i)$ then we set
$$\gamma_{_{\Ca\dt}} [(x_1,h_1), ..., (x_n,h_n)] := [\gamma(x_1, ..., x_n), \theta(d_{1,1}, ..., d_{n,k_n})].$$

\begin{prop}
The data $\O_{\Ca\dt}(c_1, ..., c_n, c_j)$  with $\gamma_{_{\Ca\dt}}$ constitute a $D$-multisorted $\Cat$-operad.
\end{prop}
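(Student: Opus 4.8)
The plan is to verify directly that the data $\bigl(\O_{\Ca\dt}(c_1,\dots,c_n,c_j),\ \gamma_{\Ca\dt}\bigr)$ satisfy the four defining clauses of a $D$-multisorted $\Cat$-operad listed at the start of Section~\ref{operad-lax-morphisms}: well-definedness of the composition functor, existence of identity operations, associativity, and unitality. Throughout, the key heuristic is that an object $(x,h)\in\O_{\Ca\dt}(c_1,\dots,c_n,c_j)$ is a pair consisting of an operation of $\O$ together with a ``correction morphism'' in a slice category of some $\Ca_j$, and morphisms in $\O_{\Ca\dt}$ are just morphisms of $\O$ that are compatible with these corrections; so every check reduces to a check for $\O$ (already known) together with the functoriality of the structure maps $\theta_{i\dt|j}$ of the $\O$-algebra $\Ca\dt$ and the associativity/unity axioms relating $\theta$ to $\gamma$.

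First I would check that $\gamma_{\Ca\dt}$ is a well-defined functor on the appropriate product of operation-categories. On objects this is the displayed formula; on morphisms, given $u_i:(x_i,h_i)\to(y_i,k_i)$ in $\O_{\Ca\dt}(d_{i,1},\dots,d_{i,k_i},c_i)$, one must produce a morphism $\gamma(u_1,\dots,u_n):\gamma(x_1,\dots,x_n)\to\gamma(y_1,\dots,y_n)$ in $\O$ and verify the compatibility condition with the slice structure, i.e.\ that $\theta\bigl(\gamma(u_1,\dots,u_n),d_{\dt,\dt}\bigr)$ carries the composite correction of the $(x_i,h_i)$ to that of the $(y_i,k_i)$. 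This uses the associativity isomorphism of $\O$ relating $\gamma\circ(\gamma\times\cdots)$ to a single $\gamma$, together with the axiom that the action $\theta$ of $\O$ on $\Ca\dt$ is compatible with $\gamma$ (the coherence already invoked in the proof of the earlier Proposition). Functoriality of $\gamma_{\Ca\dt}$ (preservation of identities and composites of the $u_i$) then follows from functoriality of each $\theta_{i\dt|j}$ and of $\gamma$.

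Next I would exhibit the identity operations: for each colour $c\in D$, say $c\in\Ob(\Ca_i)$, the object $(\Id_i,\ \Id_c)\in\O_{\Ca\dt}(c,c)$, where $\Id_i\in\O(i;i)$ is the identity operation of $\O$ and $\Id_c:\theta(\Id_i,c)\cong c\xrightarrow{\ \Id\ }c$ uses that $\O(i;i)$ acts so that $\theta(\Id_i,-)$ is (canonically isomorphic to) the identity functor of $\Ca_i$. Then associativity of $\gamma_{\Ca\dt}$ and its left/right unit laws are obtained by pasting: the ``$\O$-component'' equation is exactly associativity/unity of $\gamma$ in $\O$, and the ``$\Ca$-component'' equation — that the various composite correction morphisms agree — is exactly the associativity/unity compatibility of $\theta$ with $\gamma$. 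Since all of these are equalities (not merely coherent isomorphisms) by hypothesis on $\Ca\dt$ being a strict $\O$-algebra, no higher coherence is needed.

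The main obstacle I expect is purely notational bookkeeping: tracking the nested index families $(h_{i,\dt})$, the ``shuffle'' reindexing built into $\gamma$, and making sure the source and target of each correction morphism $\theta(u,d_{\dt,\dt})$ land in the correct slice category $\Ca_j/c_j$ after applying the associativity isomorphisms of $\O$. There is no conceptual difficulty — every verification factors through a statement about $\O$ and a statement about the $\O$-algebra $\Ca\dt$, both already available — but writing the compatibility diagrams legibly is the real work. I would therefore organize the proof as: (1) the composition functor on morphisms, (2) identities, (3) one associativity pentagon drawn once with the remark that it splits into an $\O$-part and a $\theta$-part, (4) the two unit triangles by the same remark, leaving the reader to expand the indices.
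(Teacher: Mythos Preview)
Your proposal is correct and follows exactly the same approach as the paper, which gives a one-line proof: ``The associativity of $\gamma_{\Ca\dt}$ follows from the associativity of $\gamma$ and $\theta$.'' Your expansion --- splitting each verification into an $\O$-component handled by the operad axioms and a $\Ca$-component handled by the algebra-action axioms --- is precisely the content of that sentence, with the bookkeeping made explicit.
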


\begin{proof}
The associativity of $\gamma_{_{\Ca\dt}}$ follows from the associativty of $\gamma$ and $\theta$.
\end{proof}
\begin{rmk}
Note that there is a function $p : D \to C$ between the set of colours which is just the subscript-reading operation: for  $c_i \in \Ob(\Ca_i)$  $p(c_i)=i$. Pulling back $\O$ along $p$ we get a $D$ multisorted operad $p^{\star}\O$. \ \\
We have then that for each $(c_1, ..., c_n, c_j) \in D^{n+1}$, $\pstar \O(c_1, ..., c_n, c_j) =\O(i_1,...,i_n; j)$.\ \\ 
The projection on the first factor is a functor $\pi: \O_{\Ca\dt}(c_1, ..., c_n, c_j) \to  \O(i_1,...,i_n; j)$ ( $\pi (x,h)=x$) and it's not hard to see that these functors $\pi$ fit coherently to form a morphism of $D$-multisorted operads denoted again $\pi: \O_{\Ca\dt} \to \pstar \O$.\ \\

For an $\O$-algebra $\Ma$, by $p$ we have an $\pstar \O$-algebra and by $\pi$ we have an $\O_{\Ca\dt}$-algebra $\pistar[\pstar \Ma\dt]$. When there is no confusion we will simply write $\pistar \Ma\dt$.  
\end{rmk}

\begin{df}
Let $\Ca$ and $\D$ be  two small $1$-categories. A \textbf{prefunctor}  $F: \Ca \to \Ma$ is an object given by the same data and axioms of a functor except the preservations of identities, that is we do not require to have  $F(\Id_A)= \Id_{FA}$ for $A \in \Ca$. 
\end{df}
In other terms a prefunctor is the same thing as a morphism between the underlying graphs which is compatible with the composition on both sides. \ \\ 
The compatibility of the composition forces each $F(\Id_A)$ to be an idempotent in $\Ma$. Obviously any functor is a prefunctor.\ \\ 

In the same fashion way given two $\O$-algebras $\Ca\dt$ and $\Ma\dt$ a \textbf{prelax $\O$-morphism}  
$\Fa\dt: \Ca\dt \to \Ma\dt$  is the same thing as a lax $\O$-morphism except that each $\Fa_i: \Ca_i \to \Ma_i$ is a prefunctor.

\begin{prop}
Let $\Ca\dt$ and $\Ma\dt$ be two $\O$-algebras. We have an equivalence between the following data:
\begin{enumerate}
\item a \textbf{prelax  $\O$-morphism} from $\Ca\dt$ to $\Ma\dt$,
\item a \textbf{lax $\O_{\Ca\dt}$-morphism} from $\1\dt$ to $ \pistar \Ma\dt$.
\end{enumerate}
\end{prop}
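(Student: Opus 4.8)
The plan is to construct the equivalence by unwinding the definitions of both sides and matching the data piece by piece, exploiting the way $\O_{\Ca\dt}$ was built. First I would fix a prelax $\O$-morphism $(\Fa\dt, \varphi)$ from $\Ca\dt$ to $\Ma\dt$. Its object-level data consists of a prefunctor $\Fa_i \colon \Ca_i \to \Ma_i$ for each $i \in C$; on objects this is precisely a function $\coprod_{i\in C}\Ob(\Ca_i) \to \coprod_{i\in C}\Ob(\Ma_i)$ over $p$, i.e. a family of objects in $\pistar\Ma\dt$ indexed by $D = \Ob(\O_{\Ca\dt})$'s colours, which is exactly the object-level datum of a lax $\O_{\Ca\dt}$-morphism $\1\dt \to \pistar\Ma\dt$ (since $\1\dt$ has a single object in each colour category). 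The remaining datum of such a lax morphism is, for each tuple $(c_1,\dots,c_n,c_j)\in D^{n+1}$ and each object $(x,h)\in \O_{\Ca\dt}(c_1,\dots,c_n,c_j)$, a morphism in $(\pistar\Ma\dt)_{c_j} = \Ma_j$; since the source category is $\1$, there is no naturality to impose at that level, only the coherence axioms of Definition \ref{o-morphism}.

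Next I would describe the correspondence on the laxity data. Given $(x,h)$ with $x\in\O(i_1,\dots,i_n;j)$ and $h\colon \theta(x,c_1,\dots,c_n)\to c_j$ in $\Ca_j$, one sends it to the composite
$$
\otimes_x(\Fa_{i_1}c_1,\dots,\Fa_{i_n}c_n) \xrightarrow{\varphi(x,c_1,\dots,c_n)} \Fa_j[\theta(x,c_1,\dots,c_n)] \xrightarrow{\Fa_j(h)} \Fa_j(c_j),
$$
which is a morphism in $\Ma_j$ with source $\otimes_x$ of the chosen objects — exactly the shape required for the $\O_{\Ca\dt}$-laxity map evaluated at the unique point of $\1\dt$. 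Conversely, from a lax $\O_{\Ca\dt}$-morphism one recovers $\varphi(x,c_1,\dots,c_n)$ by restricting to the objects $(x,\Id_{\theta(x,c_\dt)})$, and recovers each prefunctor's action on morphisms of $\Ca_i$ by letting $n=1$, $x=\Id_i$, and running over the objects $(\Id_i, h)$ of $\O_{\Ca\dt}(c,c')$ with $h\colon c\to c'$ — indeed, morphisms of $\Ca_i$ are in bijection with such pairs, and the $\O_{\Ca\dt}$-algebra structure map encodes precisely a prefunctorial (not necessarily identity-preserving) assignment. I would also check that a morphism $u\colon(x,h)\to(y,k)$ in $\O_{\Ca\dt}$, i.e. $h = k\circ\theta(u,c_\dt)$, forces the corresponding $\Ma_j$-morphisms to agree after composing with $\varphi$ of $u$; this is exactly naturality of $\varphi$ in the $\O$-variable, so the assignment is functorial in operations. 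Then the same argument applied to $2$-morphisms gives the equivalence on morphisms, making it an equivalence (in fact isomorphism) of categories.

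The main obstacle is verifying that the coherence axiom of Definition \ref{o-morphism} for $(\Fa\dt,\varphi)$ translates into the coherence axiom for the corresponding lax $\O_{\Ca\dt}$-morphism, using the definition $\gamma_{\Ca\dt}[(x_1,h_1),\dots,(x_n,h_n)] = [\gamma(x_1,\dots,x_n),\,\theta(d_{1,1},\dots,d_{n,k_n})]$. One must expand both coherence equations, substitute the composite $\Fa_j(h)\circ\varphi(x,\dots)$ for the $\O_{\Ca\dt}$-laxity maps, and cancel using: associativity of $\theta$ (built into $\gamma_{\Ca\dt}$), functoriality of each $\Fa_j$, and the original $\varphi$-coherence. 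This is a diagram chase of moderate size but essentially formal once the bookkeeping is set up, and the structure of $\gamma_{\Ca\dt}$ as "a mixture of $\gamma$ and $\theta$" is exactly what makes the two conditions match. The unit condition $\Ma_0 = \1$ is automatic on both sides, and since the construction is evidently strictly invertible, the resulting functors are mutually inverse, giving the claimed equivalence.
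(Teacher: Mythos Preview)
Your proposal is correct and follows exactly the approach the paper takes: the paper's entire proof reads ``Simply write the definition of each object.'' You have carried out precisely that unwinding---matching objects, laxity maps via $\Fa_j(h)\circ\varphi(x,c_\dt)$, recovering the prefunctor action from the $n=1$ unary operations, and identifying the coherence check as a formal diagram chase---so there is nothing to add.
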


\begin{proof}
Simply write the definition of each object. 
\end{proof}


\section{Co-Segal Categories}\label{section-cosegal-cat}
\subsection{The one-object case}
\begin{conv}\ \
\begin{itemize}
\item By \textbf{semi-monoidal category} we mean the same structure as a monoidal category  except that no unit object is required. Obviously any monoidal category has an underlying semi-monoidal category.\\
\item A \textbf{lax functor} between semi-monoidal categories is the same thing as a lax functor between monoidal categories without the data involving the units. A strict lax functor will be call as well `monoidal functor'. \\
\item More generally we will say \textbf{semi-bicategory} (resp. semi-$2$-category) to be the same thing as bicategory (resp. $2$-category) except that we don't require the identities $1$-morphisms.\\
\item We have also the notion of lax morphism, transformation of lax morphisms, between semi-bicategories in the natural way.\\ 
\item For a semi-bicategory $\A$ and a bicategory $\B$, a lax morphism from $\A$ to $\B$ will be a morphism from $\A$ to the underlying semi-bicategory of $\B$ which will be denoted again $\B$. 
\end{itemize}
\end{conv}
In the following we fix $\M=(\ul{\tx{M}},\otimes, I)$ a monoidal category.
\subsection{Overview}\ \\
\indent As we identify  $\M$-categories with one object and monoids of $\M$, we shall expect that a Co-Segal category with one object will be a kind of \emph{homotopical semi-monoid}\footnote{In the standard terminology we would have said `up-to-homotopy' monoid but this terminology is already used for another notion of weak monoid (see \cite{SEC1}).}  of $\M$. We will call them \emph{Co-Segal semi-monoids}.\ \\   

To define a Co-Segal category $\C$ with one object $A$, we need  a sequence of objects of $\M$%

 \begin{equation*}
  \begin{cases}
  \C(A,A)   \rightsquigarrow \C(\1) &  \tx{: the `hom-space' of $A$} \\
  \C(A,A,A) \rightsquigarrow \C(\2)   \\
   ~~~~~   \cdots \\
  \C(\n\ast A)=\C(\underbrace{A,...,A}_{(n+1)\tx{-A}}) \rightsquigarrow \C(\n)   & n\geq 1  \\
  \end{cases}
\end{equation*}

 together with the following data. 

\begin{enumerate}
\item A diagram  expressing a `quasi-multiplication'
\[
\xy
(-15,0)*+{\C(A,A) \otimes \C(A,A)}="X";
(30,0)*+{\C(A,A,A)}="Y";
(30,18)*+{\C(A,A)}="E";
{\ar@{->}^{\mu_{1,1}}"X";"Y"};
{\ar@{->}^{\tx{weak.equiv}}_{\gamma_0}"E";"Y"};
{\ar@{.>}^{\tx{generic lifting} \ \ \ \ \ \ \ }"X";"E"};
\endxy
\]

\item Some other semi-multiplications: $\C(\n\ast A) \otimes \C(\m \ast A) \xrightarrow{\mu_{n,m}} \C((\n+\m) \ast A)$\\

\item In the `semi-cubical' diagram below each face is commutative and the weak equivalences  $\gamma_i$ must satisfy : $ \gamma_1 \circ \gamma_0 = \gamma_2 \circ \gamma_0$ to have an associativity up-to homotopy. 

\[
\xy
(-40,20)*+{\C(A,A) \otimes \C(A,A) \otimes \C(A,A)}="A";
(30,20)*+{\C(A,A) \otimes \C(A,A,A)}="B";
(-90,0)*+{\C(A,A,A) \otimes \C(A,A)}="C";
(-20,0)*+{\C(A,A,A,A)}="D";
{\ar@{->}^{\Id \otimes \mu_{1,1}}"A";"B"};
{\ar@{->}_{\mu_{1,1} \otimes \Id}"A";"C"};
{\ar@{->}^{\mu_{1,2}}"B";"D"};
{\ar@{->}_{\mu_{2,1}}"C";"D"};
(30,-20)*+{\C(A,A) \otimes \C(A,A)}="Y";
(-90,-40)*+{\C(A,A) \otimes \C(A,A)}="Z";
(-20,-40)*+{\C(A,A,A)}="W";
{\ar@{->}^{\mu_{1,1}}"Y";"W"};
{\ar@{->}^{\gamma_0 \otimes \Id}"Z";"C"};
{\ar@{->}^{\Id \otimes \gamma_0}"Y";"B"};
{\ar@{->}_{\mu_{1,1}}"Z";"W"};
{\ar@/_2.3pc/^{\gamma_2}"W"; "D"};
{\ar@/^2.3pc/^{\gamma_1}"W"; "D"};
(-20,-60)*+{\C(A,A)}="X";
{\ar@{->}^{\gamma_0}"X";"W"};
\endxy
\]
\\
\item We have other commutative diagrams of the same type as above which give the coherences of this  weak associativity of the quasi-multiplication etc.\\
\end{enumerate}
\ \\
\indent As one can see when all of the maps `$\gamma_i$' are isomorphism we will have the data of semi-category with one object i.e a semi-monoid of $\M$. In this case we know from Mac Lane \cite{Mac} that a semi-monoid in $\M$ is given by a monoidal functor:
$$ \Nv(\C) : (\Delta_{epi},+,\0) \to \M $$
which we interpret as the nerve of the semi-enriched category $\C$ with one object. \ \\

\begin{rmk}
The object $\0$ doesn't play any role here since there is no morphism from any another object to it.  So we can restrict this functor to the underlyinng semi-monoidal categories (see Definition \ref{def-semi-mon} below). 
\end{rmk}

\subsection{Definitions}\
\begin{nota}\
\begin{itemize}
\item We will denote by $\n$ the set $\{0, \cdots , n-1 \}$ with the natural order on it. 
\item The objects of $\Delta$ will be identified with those $\n$ and the morphisms will be the nondecreasing functions. The object $\0$ corresponds to the empty set.
\end{itemize} 
\end{nota}
\begin{df}
Let $\Upsilon = (\Delta_{\tx{epi}},+) $ be the semi-monoidal subcategory of $(\Delta,+,\0)$ described as follows. 
\begin{itemize}
\item $\ob(\Upsilon)= \ob(\Delta)-\{\0\}$.
\item The morphisms are: 
 \begin{equation*}
\Upsilon(\m,\n) =
  \begin{cases}
  \{ f \in \Delta(\m,\n), ~~~ \text{$f$ is surjective} \}  & \text{if $\m \geq \n > \0  $}\\
  \varnothing & \text{otherwise.} \\
  \end{cases}
\end{equation*}

\end{itemize}
\end{df}

\begin{rmk}\
\begin{enumerate}
\item For $f \in \Upsilon(\m,\n)$, by definition $f$ is surjective and nondecreasing then it follows that $f$ preserves the  `endpoints' i.e $f(0)=0$ and $f(m-1)=n-1$.\\
\item For $\n \geq \1$ we denote by $\sigma^{n}_i$ the unique map  of $\Upsilon$ from $\n + \1$ to $\n$ such that $\sigma_i(i)=\sigma_i(i+1)$ for $i \in \n=\{0, \cdots , n-1 \}$. The maps $\sigma^{n}_i$ generate all the maps in $\Upsilon$ (see \cite{Mac}) and satisfies the simplicial identities:
$$\sigma^{n}_j \circ \sigma^{n+1}_i = \sigma^{n}_i \circ \sigma^{n+1}_{j+1}, ~~~~~~~~~~~~~~~\\\ \\\ i \leq j .$$

\item Mac Lane \cite{Mac} pointed out that just like $(\Delta,+,0)$, $\Upsilon$ contains the universal semi-monoid which still corresponds to the object $\1$ together the (unique) map $\sigma^{1}_0: \2 \to \1$. 
\end{enumerate}
\end{rmk}
\ \\
Now we can take as definition. 
\begin{df}\label{def-semi-mon}
Let $\M=(\ul{\tx{M}},\otimes, I)$ be a monoidal category. A semi-monoid of $\M$ is a \textbf{monoidal functor} 
$$F : \Upsilon \to \M .$$
\end{df}

We now assume that $\M$ is equipped with a class of map called homotopy or weak equivalences. We refer the reader to \cite{SEC1} for the definition of \emph{base of enrichment}. 

\begin{df}\label{Sim-semimon}
Let $(\M,\W)$ be base of enrichment.

A \textbf{Co-Segal semi-monoid} of $(\M,\W)$ is a \textbf{lax monoidal} functor
$$ F : \Upsilon^{op} \to \M $$
satisfying the Co-Segal conditions: \\
for every $f \in \Upsilon(\m,\n)$ the morphism $F(f) : F(\n) \to F(\m)$ is a weak equivalence i.e  $F(f) \in \W$. 
\end{df}

\begin{rmk}\
\begin{enumerate}
\item It's important to notice that in the first definition we use $\Upsilon = (\Depi,+)$ while in the second we use $\Upsilon^{op} = (\Depi^{op},+)$. 
\item Here as usual, the underlying semi-monoid is the object $F(\1)$.
\item Finally it's important to notice that since the morphism of $\Upsilon$ are generated by the maps $\sigma^{n}_i$ and because $\W$ is stable by composition, it suffices to require the Co-Segal conditions only for the maps $F(\sigma^{n}_i)$.
\end{enumerate}
\end{rmk}

 To understand the definition one needs to see the data that $F$ carries. 
 
\begin{obs}\label{obs_semimon} \
\begin{enumerate}
\item By definition of a lax morphism for every $\bf{\n,\m}$ we have a `laxity map' 
$$ F_{n,m}: F(\n) \otimes F(\m) \to F(\n+\m).$$
In particular for $\m=\n=\1$ we have a map $F_{1,1}: F(\1) \otimes F(\1) \to F(\2)$.\\

\item The Co-Segal condition for $f=\sigma^{1}_0: \2 \to \1$ says that the map $F(\sigma^{1}_0): F(\1) \to F(\2)$ is a weak equivalence. If we combine this map with the previous laxity map we will have a quasi-multiplication as we described earlier:
\[
\xy
(-15,0)*+{F(\1) \otimes F(\1)}="X";
(30,0)*+{F(\2)}="Y";
(30,18)*+{F(\1)}="E";
{\ar@{->}^{F_{1,1}}"X";"Y"};
{\ar@{->}^{\tx{weak.equiv}}_{F(\sigma^{1}_0)}"E";"Y"};
\endxy
\] 

\item For every  $f : \n \to \n'$  and $g : \m \to \m'$  the following diagram commutes 

\[
\xy
(-30,20)*+{F(\n') \otimes F(\m')}="A";
(30,20)*+{F(\n'+\m')}="B";
(-30,0)*+{F(\n) \otimes F(\m)}="C";
(30,0)*+{F(\n+\m)}="D";
{\ar@{->}^{F_{n',m'}}"A";"B"};
{\ar@{->}_{F(f) \otimes F(g)}"A";"C"};
{\ar@{->}^{F(f+g)}"B";"D"};
{\ar@{->}^{F_{n,m}}"C";"D"};
\endxy
\]
\\
\item For every triple of objects $(\m,\n,\p)$ using the laxity maps and the maps `$F(f)$' we have some semi-cubical commutative diagrams as before, which will give the associativity up-to-homotopy and the suitable coherences. 
\end{enumerate}
\end{obs}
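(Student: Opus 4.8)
The plan is to establish all four items by directly unwinding the definition of a lax monoidal functor between semi-monoidal categories, so that the argument is essentially bookkeeping rather than a genuine proof. Recall that such an $F\colon\Upsilon^{op}\to\M$ is the datum of an underlying functor $F$, a family of structure (laxity) morphisms $\phi_{\n,\m}\colon F(\n)\otimes F(\m)\to F(\n+\m)$, and the single associativity coherence axiom asserting that the two composites $\big(F(\n)\otimes F(\m)\big)\otimes F(\p)\to F(\n+\m+\p)$ agree after inserting the associator of $\M$; since $\Upsilon$ is only semi-monoidal there is no unit datum and no unit axiom, and $\M$ is used here through its underlying semi-monoidal category, per the conventions above. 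Item~(1) is then literally the family $\phi_{\n,\m}$, and specializing to $\n=\m=\1$ gives $F_{1,1}\colon F(\1)\otimes F(\1)\to F(\2)$ because $\1+\1=\2$ in $\Upsilon$.

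For item~(2) I would observe that $\sigma^{1}_{0}\colon\2\to\1$ is the unique surjection $\2\twoheadrightarrow\1$, hence a morphism of $\Upsilon$; regarded in $\Upsilon^{op}$ it points from $\1$ to $\2$, so the covariant functor $F$ sends it to a morphism $F(\sigma^{1}_{0})\colon F(\1)\to F(\2)$, which lies in $\W$ by the Co-Segal conditions of Definition~\ref{Sim-semimon}. Pasting this arrow onto $F_{1,1}$ from item~(1) produces exactly the displayed cospan, and choosing a homotopy inverse of the weak equivalence $F(\sigma^{1}_{0})$ yields the quasi-multiplication $F(\1)\otimes F(\1)\to F(\1)$ of the overview. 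Item~(3) is nothing but the naturality of the laxity transformation: $\{\phi_{\n,\m}\}$ is by definition a natural transformation between the functors $F(-)\otimes F(-)$ and $F(-+-)$ from $\Upsilon^{op}\times\Upsilon^{op}$ to $\M$, and the square displayed in item~(3) is its naturality square along the morphism $(f,g)$, whose image under the monoidal product of $\Upsilon^{op}$ is $f+g$.

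For item~(4) I would fix $(\m,\n,\p)$ and assemble the ``semi-cubical'' diagram from three kinds of faces: the associativity coherence axiom recalled above supplies the square comparing the two orders of multiplying $F(\n)$, $F(\m)$, $F(\p)$; repeated use of the naturality square of item~(3) for the relevant degeneracies $\sigma^{\bullet}_{\bullet}$ supplies the faces involving the maps $F(f)$; and the relation $\gamma_{1}\circ\gamma_{0}=\gamma_{2}\circ\gamma_{0}$ is obtained by applying $F$ to a simplicial identity $\sigma^{n}_{j}\circ\sigma^{n+1}_{i}=\sigma^{n}_{i}\circ\sigma^{n+1}_{j+1}$ and invoking functoriality of $F$. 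The higher ``etc.'' coherences follow by iterating the same three mechanisms, with Mac~Lane's coherence theorem guaranteeing that no independent new condition appears. The only care required — and the only place anything can go mildly wrong — is variance bookkeeping, namely treating $F$ as covariant on $\Upsilon^{op}$ so that a surjection $\m\twoheadrightarrow\n$ is sent to a map $F(\n)\to F(\m)$, and, if $\M$ is not assumed strict monoidal, carrying its associator isomorphisms through every diagram; this clutters the pictures but changes nothing essential, and I do not expect any genuine obstacle.
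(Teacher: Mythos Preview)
Your proposal is correct and matches the paper's intent: this is labeled an \emph{Observation} precisely because each item is immediate from the definition of a lax monoidal functor $F\colon\Upsilon^{op}\to\M$, and the paper offers no proof beyond stating it. Your unpacking of items (1)--(4) as, respectively, the laxity datum, the Co-Segal condition applied to $\sigma^1_0$, naturality of the laxity transformation, and the associativity coherence combined with functoriality applied to the simplicial identities, is exactly the intended reading.
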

\
\begin{term}\
\begin{itemize}
\item When all the maps $F(f)$ are isomorphisms then we will say $F$ is a  strict Co-Segal semi-monoid or a Co-Segal semigroup.
\item Without the Co-Segal conditions in the Definition \ref{Sim-semimon} we will say that $F$ is a pre-semi-monoid.
\end{itemize}
\end{term}

\begin{prop}\label{equiv-semimon}
We have an equivalence between the following data:
\begin{itemize}
\item a classical semi-monoid or semigroup of $\M$
\item a strict Co-Segal semi-monoid of $\M$.
\end{itemize}
\end{prop}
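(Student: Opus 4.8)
The plan is to exhibit mutually inverse constructions between the two kinds of data. A classical semi-monoid (semigroup) of $\M$ is, by Definition \ref{def-semi-mon}, a monoidal functor $F : \Upsilon \to \M$; a strict Co-Segal semi-monoid is, by the Terminology following Observation \ref{obs_semimon}, a lax monoidal functor $G : \Upsilon^{op} \to \M$ all of whose structure maps $G(f)$ are isomorphisms. So the first step is to record that a lax monoidal functor out of $\Upsilon^{op}$ whose $1$-cell components are all invertible is the same thing as a pseudo (strong monoidal) functor $\Upsilon^{op} \to \M$, and that inverting those isomorphisms turns it into a pseudo functor $\Upsilon \to \M$. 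Since $\Upsilon$ is a strict $1$-category viewed as a semi-monoidal category (a one-object semi-bicategory), one then invokes strictification / the standard coherence argument: a pseudo monoidal functor with $1$-category source is isomorphic — and here, after rechoosing, literally equal up to the canonical comparison — to a strict monoidal functor. Concretely, given $G$ one defines $F$ on objects by $F(\n) := G(\n)$ and on a generating map $\sigma^n_i$ by $F(\sigma^n_i) := G(\sigma^n_i)^{-1}$, extends along composites using the coherence $2$-cells of $G$, and checks this is well-defined because $G$'s coherence data satisfy the simplicial identities $\sigma^n_j\circ\sigma^{n+1}_i = \sigma^n_i\circ\sigma^{n+1}_{j+1}$ for $i\le j$ (Remark (2) after Definition of $\Upsilon$). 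The laxity maps $G_{n,m}: G(\n)\otimes G(\m)\to G(\n+\m)$ transport to the monoidal-structure maps $F_{n,m}$ of $F$, and the hexagon/coherence axioms for $G$ become exactly those for $F$.

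Next I would spell out the reverse passage, which is the cleaner direction: given a monoidal functor $F:\Upsilon\to\M$, set $G(\n) := F(\n)$, $G(f) := F(f)$ reinterpreted as a map $F(\n)\to F(\m)$ for $f\in\Upsilon(\m,\n)$ (legitimate since $F(f)$ is an isomorphism — strictness of $F$ on the identity-free category $\Upsilon$ is not quite enough, so here one uses that every morphism of $\Upsilon$ is a composite of the $\sigma^n_i$ and that a strict monoidal functor sends $\sigma^n_i$ to a map which we invert; more precisely, a genuinely \emph{strict} $F$ need not send $\sigma^n_i$ to an iso, so the honest statement is that $F$ being monoidal forces the Co-Segal maps to be isos only when one starts from the strong side). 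To avoid this mismatch I would phrase the equivalence at the level of \emph{strong} monoidal functors on both sides: a semigroup of $\M$ given by a strong monoidal $F:\Upsilon\to\M$ corresponds to the strong monoidal $G:\Upsilon^{op}\to\M$ with $G(\n)=F(\n)$, $G_{n,m}=F_{n,m}$, $G(f)=F(f)^{-1}$, and one checks the two assignments $F\mapsto G$, $G\mapsto F$ are inverse to each other — on objects it is the identity, on morphisms it is $f\mapsto f$ twice, hence the identity, and on laxity/coherence data it is a bookkeeping verification that the axioms match under $\op$.

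The main obstacle is the coherence/strictification bookkeeping in the first paragraph: honestly producing a \emph{strict} monoidal functor (as Definition \ref{def-semi-mon} literally demands) out of a lax functor with invertible structure cells, rather than merely a strong one, requires either (i) invoking a strictification theorem for monoidal functors out of a $1$-category, or (ii) observing that ``semi-monoid of $\M$'' is robust under replacing ``monoidal functor'' by ``strong monoidal functor'' because the universal semi-monoid lives on the object $\1$ with the single map $\sigma^1_0:\2\to\1$ (Remark (3) after the Definition of $\Upsilon$, and Observation \ref{obs_semimon}(2)), so the data of $F$ is entirely determined by the monoid object $F(\1)$ together with its multiplication, and any two of the above notions produce the same monoid $F(\1)$. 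I expect the cleanest writeup is to reduce both sides to ``a monoid object (associative, non-unital) in $\M$'': from $F$ take $(F(\1), F_{1,1})$; from $G$ take $(G(\1), G_{1,1}\circ(\text{no unit needed}))$ with associativity coming from the semi-cubical coherence diagram of Observation \ref{obs_semimon}(4) after inverting the $G(f)$'s. That makes the equivalence transparent and isolates the only real content — that inverting the (iso) Co-Segal maps converts the lax coherence cube into the strict associativity pentagon, which is a finite diagram chase.
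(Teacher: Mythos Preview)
Your main concrete constructions --- setting $G(\n):=F(\n)$ in one direction and $F(\n):=G(\n)$ in the other --- run into the problem you yourself flag: a strict monoidal functor $F:\Upsilon\to\M$ does \emph{not} send $\sigma^n_i$ to an isomorphism (the map $F(\sigma^1_0):F(\2)\to F(\1)$ is the multiplication, rarely invertible), so there is nothing to invert. Your workaround of passing to \emph{strong} monoidal functors changes the statement; and the strictification step you invoke in the other direction, while plausible, is heavier machinery than the problem warrants and you leave its applicability here unverified.

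The paper's proof avoids all of this by not trying to preserve the values at every $\n$. From a semi-monoid $F$ it builds the \emph{constant} lax functor $\tilde F:\Upsilon^{op}\to\M$ with $\tilde F(\n):=F(\1)$ for all $\n$, $\tilde F(f):=\Id_{F(\1)}$ for all $f$, and laxity maps given by the multiplication $F(\1)\otimes F(\1)\xrightarrow{=}F(\2)\xrightarrow{F(\sigma^1_0)}F(\1)$. This is trivially strict Co-Segal (identities are isomorphisms) and needs no inversion or strictification. Conversely, from a strict Co-Segal $G$ the paper extracts the underlying semi-monoid object $(G(\1),\,\mu:=G(\sigma^1_0)^{-1}\circ G_{1,1})$ and rebuilds the nerve $[G](\n):=G(\1)^{\otimes n}$, $[G](\sigma^n_i):=\Id^{\otimes i}\otimes\mu\otimes\Id^{\otimes(n-i-1)}$. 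Note the multiplication is \emph{not} $G_{1,1}$ alone (your final paragraph is imprecise here): $G_{1,1}$ lands in $G(\2)$, and one must compose with the inverse of the Co-Segal map to reach $G(\1)$.

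Your closing instinct --- reduce both sides to the bare semi-monoid object and rebuild --- is exactly the paper's strategy; what is missing in your writeup is the realization that this lets you bypass the inversion/strictification detour entirely in the forward direction.
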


When we will define the morphisms between Co-Segal semi-monoids, this equivalence will automatically be an equivalence of categories.

\begin{proof}[\scshape{Sketch of proof}]\
\renewcommand\labelenumi{\alph{enumi})}
\begin{enumerate}
\item Let $F: \Upsilon \to \M$ be a semi-monoid. We define the corresponding Co-Segal semi-monoid $\tld{F}$ as follows.\\
\begin{itemize}
\item We set $\tld{F}(\n)= \tld{F}(\1):= F(\1)$ for every $\n$, and 
for every $f:\m \to \n$ we set $\tld{F}(f):= \Id_{F(\1)}$.\\ 
 
\item Finally the laxity maps correspond to the multiplication of the semi-monoid $F(\1)$ i.e  $\tld{F}_{n,m}$ is the composite:

$$F(\1)\otimes F(\1) \xrightarrow{\Id} F(\2) \xrightarrow{F(\sigma_0^{1})} F(\1)$$
for $\m \geq \1 , \n \geq \1 $.\\ 
\end{itemize}

\item Conversely let $ G : \Upsilon^{op} \to \M $ be a \ul{strict} Co-Segal semi-monoid. We get a semi-monoid $[G]$ in the following manner.\\
\begin{itemize}
\item $[G](\1)= G(\1)$,\\
\item $[G](\n)= G(\1)^{\otimes n} = \underbrace{G(\1) \otimes \cdots \otimes G(\1)}_{\tx{n-times}}$,\\
\item We have a multiplication $\mu : [G](\1) \otimes [G](\1) \to [G](\1)$ which is the map $G(\sigma^{1}_0)^{-1} \circ G_{1,1}$ obtained from the diagram below:
\[
\xy
(-15,0)*+{G(\1) \otimes G(\1)}="X";
(30,0)*+{G(\2)}="Y";
(30,18)*+{G(\1)}="E";
{\ar@{->}^{G_{1,1}}"X";"Y"};
{\ar@{->}^{\cong}_{G(\sigma^{1}_0)}"E";"Y"};
{\ar@{-->}^{\mu}"X";"E"};
\endxy
\] 
\\
\item On morphism, we define $[G]$ on the generators by 
$[G](\sigma^{n}_i):= \Id_{G(\1)^{\otimes i}}\otimes \mu \otimes \Id_{G(\1)^{\otimes n-i-1}} $ \\ 
\item Finally one gets the associativity from the semi-cubical diagram mentioned before. 
\end{itemize} 
\end{enumerate}
\end{proof}
\subsection{The General case: Co-Segal Categories}
\subsection{$\S$-Diagrams} 

In addition to the notations of the previous section, we will also use the following ones. 
\begin{nota}\ \\
$\tx{Cat}_{\leq 1}=$ the $1$-category of small categories with functors.\\
$\bicatd$= the $2$-category of bicategories, lax morphisms and icons (\cite[Thm 3.2]{Lack_icons}). \footnote{Note that $\bicatd$ is not the standard one which includes all transformation. The standard one is \textbf{not} a $2$-category.}\\
$\frac{1}{2}\bicatd=$ the category of semi-bicategories, lax morphisms and icons.\\
$\P_{\C}=$ the $2$-path-category associated to a small category $\C$ (see \cite{SEC1}) .\\
$\Delta^{+}=$ the category $\Delta$ without the object $\0$ i.e the category of finite nonempty ordinals.\\
$\Upsilon^{+}=$ the category $\Upsilon$ without the object $\0$.\\
$\uc=\{\o,\o \xrightarrow{Id_{\o}} \o \}=$ the unit category. \\
$\ol{X}=$ the coarse category associated to a nonempty set $X$ (see \cite{SEC1}).\\
$(\B)^{2\tx{-op}}=$ the $2$-opposite (semi) bicategory of $\B$. We keep the same $1$-cells but reverse the $2$-cells i.e
$$(\B)^{2\tx{-op}}(A,B):= \B(A,B)^{\tx{op}}.$$
$(\M,\W)=$ a base of enrichment, with $\M$ is a general bicategory. \\
$2$-$\tx{Iso}(\M)=$ the class of invertible $2$-morphisms of $\M$. Recall that $(\M,2\tx{-Iso})$ is the smallest base of enrichment. 
\end{nota}
\
\begin{note}
We will freely identify bicategories and $2$-categories. And as usual  monoidal categories will be identified with bicategories with one object.
\end{note}

Recall that the $2$-path category $\P_{\C}$ is a generalized version of the monoidal category $(\Delta,+,\0)$ in the sense that when  $\C \cong \uc$ then $\P_{\C} \cong (\Delta,+,\0)$. It has been shown in \cite{SEC1} that a classical enriched category with a small set of objects was the same thing as a homomorphism in the sense of Bénabou from $\P_{\ol{X}}$ to $\M$, for some set $X$.\ \\

In what follows we introduce a generalized version of the semi-monoidal category $\Upsilon=(\Delta_{\tx{epi}},+)$ just like we did for $(\Delta,+,\0)$. For $\C$ a small category, we consider $\S_{\C}$, a semi-$2$-category  contained in $\P_{\C}$, such that $\S_{\uc}$ `is' $\Upsilon$.

\begin{pdef}\label{path-bicat}
Let $\C$ be a small category.\\
There exists a strict semi-$2$-category $\S_{\C}$ having the following properties.

\begin{itemize}[label=$-$]
\item the objects of $\S_{\C}$ are the objects of $\C$,
\item for every pair $(A,B)$ of objects, $\S_{\C}(A,B)$ is a category over $\Upsilon$ i.e we have a functor called \textbf{length} or \textbf{degree} 
$$ \le_{AB} : \S_{\C}(A,B) \to \Upsilon$$  
\item $\le_{AA}$ becomes naturally a monoidal functor when we consider the composition on $\S_{\C}(A,A)$,
\item if $\C \cong \uc$, say $ob(\C)=\{\o\}$ and $\C(\o,\o)=\{\Id_{\o}\}$, we have an isomorphism of semi-monoidal categories:
$$\S_{\C}(\o,\o) \xrightarrow{\sim} \Upsilon $$
\item the operation $\C \mapsto \S_{\C}$ is functorial in $\C$:
\[
\xy
(0,8)*+{\S_{[-]}: \Cat_{\leq 1}}="X";
(30,8)*+{\frac{1}{2}\bicatd}="Y";
(3,0)*+{\C \xrightarrow{F} \D}="E";
(35,0)*++{\S_{\C} \xrightarrow{\S_{F}} \S_{\D}}="W";
{\ar@{->}"X";"Y"};
{\ar@{|->}"E";"W"};
\endxy
\]
\end{itemize}
\end{pdef}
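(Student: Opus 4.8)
The plan is to construct $\S_{\C}$ explicitly as a sub-semi-$2$-category of the $2$-path category $\P_{\C}$ (whose construction we take as given from \cite{SEC1}), and then to verify each of the listed properties. Recall that $\P_{\C}$ has the objects of $\C$ as its objects, and that for $(A,B)$ a pair of objects, the $1$-morphisms of $\P_{\C}(A,B)$ are finite composable strings of morphisms of $\C$ from $A$ to $B$ (paths), with the $2$-morphisms recording the ways one string refines to another by composing consecutive entries; composition of $1$-cells is concatenation of strings. There is an evident length functor counting the number of arrows in a string. First I would define $\S_{\C}(A,B) \subset \P_{\C}(A,B)$ to be the (non-full) subcategory whose $1$-morphisms are all strings of length $\geq 1$ and whose $2$-morphisms are only those \emph{epi-type} refinements of $\P_{\C}$, i.e. the ones that correspond under the length functor to surjections in $\Upsilon$ (equivalently: the $2$-cells generated by contracting a pair of consecutive entries, never by inserting an identity). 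One checks that this is closed under the horizontal composition of $\P_{\C}$ — concatenating two epi-refinements is again an epi-refinement — so the $\S_{\C}(A,B)$ assemble into a semi-$2$-category $\S_{\C}$; it has no identity $1$-cells precisely because the empty string has been excluded, exactly as $\Upsilon = (\Delta_{\mathrm{epi}},+)$ omits $\0$.

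Next I would establish the structural bullets one by one. The length functor $\le_{AB}:\S_{\C}(A,B)\to \Upsilon$ is induced by the length/degree functor on $\P_{\C}$ (a string of length $n$ lands on $\n$, and an epi-refinement that contracts a consecutive pair maps to the appropriate $\sigma^n_i$); surjectivity of the target in $\Upsilon$ is exactly the constraint we imposed on $2$-cells, so the functor is well-defined, and functoriality is inherited from $\P_{\C}$. For $\le_{AA}$ being monoidal: the composition (concatenation) on $\S_{\C}(A,A)$ corresponds under $\le_{AA}$ to the addition $+$ on $\Upsilon$, since the length of a concatenation is the sum of the lengths, and concatenation of epi-refinements maps to the monoidal product of the corresponding surjections; compatibility with associators and interchange is inherited from the fact that it already holds in $\P_{\C}$ (where the analogous statement for $\Delta$ is \cite{SEC1}), restricted along the inclusion $\Upsilon \hookrightarrow (\Delta,+,\0)$. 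For the case $\C \cong \uc$: when $\C$ has one object $\o$ and only the identity morphism, a string from $\o$ to $\o$ of length $n$ is just the $n$-fold string of $\Id_{\o}$, so $1$-cells of $\S_{\uc}(\o,\o)$ are in bijection with $\ob(\Upsilon)=\{\n : n\geq 1\}$, the epi-refinements correspond exactly to the surjections, and concatenation corresponds to $+$; this bijection is visibly an isomorphism of semi-monoidal categories $\S_{\uc}(\o,\o)\xrightarrow{\sim}\Upsilon$, compatible with the degree functor (which becomes the identity of $\Upsilon$).

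Finally, for functoriality of $\C \mapsto \S_{\C}$: a functor $F:\C\to\D$ sends a string of arrows of $\C$ to the corresponding string of arrows of $\D$ (of the same length), and sends an epi-refinement to an epi-refinement, because contracting a consecutive pair commutes with applying $F$; this is just the restriction to $\S$ of the already-known functoriality $\P_{[-]}:\Cat_{\leq 1}\to \Bi$ from \cite{SEC1}, and one checks it lands in semi-bicategories and lax (indeed strict) morphisms and icons, i.e. in $\frac{1}{2}\bicatd$, compatibly with the degree functors and with $\Upsilon$. The main obstacle — more bookkeeping than genuine difficulty — is verifying that the imposed restriction on $2$-cells is genuinely closed under \emph{all} the structure of $\P_{\C}$: both the vertical composition of $2$-cells within each hom-category and the horizontal composition/whiskering across the semi-$2$-category, together with the interchange law; the key point making this work is that ``epi-type'' is detected by the length functor to $\Upsilon$, and $\Upsilon$ is itself closed in $(\Delta,+,\0)$ under composition and under $+$, so every required closure property in $\S_{\C}$ is the pullback along $\le$ of the corresponding closure property in $\Upsilon\subset(\Delta,+,\0)$. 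Once that is in hand, uniqueness (up to the evident isomorphism) of such an $\S_{\C}$ follows because the properties pin down the $1$-cells (strings of positive length) and $2$-cells (epi-refinements) on the nose.
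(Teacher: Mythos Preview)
Your proposal is correct and is essentially the same construction as the paper's: the paper defines $\S_{\C}$ in one line as the fibred product $\P_{\C} \times_{(\Delta,+,\0)} (\Delta_{\tx{epi}},+)$ of semi-$2$-categories, which is exactly the sub-semi-$2$-category of $\P_{\C}$ you describe (strings of positive length, epi-type $2$-cells), and your closing remark that ``every required closure property in $\S_{\C}$ is the pullback along $\le$ of the corresponding closure property in $\Upsilon\subset(\Delta,+,\0)$'' is precisely this observation. The paper's pullback formulation packages your explicit verifications of the five bullets into the single statement that fibred products of (semi-)$2$-categories exist and inherit the listed structure; your version spells out what that pullback looks like concretely.
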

\
\begin{proof}
$\S_{\C}$  is the object obtained from the genuine fibred product of  semi-$2$-categories:

\[
\xy
(0,20)*+{\S_{\C}}="A";
(30,20)*+{\P_{\C}}="B";
(0,0)*+{(\Delta_{\tx{epi}},+)}="C";
(30,0)*+{(\Delta,+,\0)}="D";
{\ar@{^{(}->}^-{i}"A";"B"};
{\ar@{.>}_-{\le}"A";"C"};
{\ar@{^{(}->}_-{i}"C";"D"};
{\ar@{->}^{\le}"B";"D"};
\endxy
\]
\end{proof}

\begin{note}
We will be interested in particular to the cases where $\C$ is of the form $\ol{X}$, the \emph{ indiscrete} or \emph{ coarse category} associated to a nonempty set $X$. In that case an object of $\S_{\ol{X}}(A,B)$ can be identified with an $(n+1)$-tuple $(E_0, \cdots, E_n)$ of elements of $X$ for some $n$,  with $E_0=A$ and $E_n=B$.  For simplicity we will use small letters: $r, s, t,$..., to represent such chains $(E_0, \cdots, E_n)$.

A morphism $u : t \to s$ of  $\S_{\ol{X}}(A,B)$  can be viewed as an operation which deletes some letters of $t$ to get $s$,  keeping $A$ and $B$ fixed.
\end{note}
In the upcoming definitions we consider  a $2$-category $\M$ which is also a special Quillen $\O$-algebra for the operad `$\O_X$' of $2$-categories. This situation covered also the special case of a $2$-category which is locally a model category (Definition \ref{model-2-cat}).

\begin{df}\label{s-diagram}
Let $\M$ be a $2$-category which is a special Quillen algebra.\ \\ 
An $\S$-diagram of $\M$ is a lax morphism $F : (\S_{\C})^{2\tx{-op}} \to \M$ for some  $\C$.
We will say for short that $F$ is an $\S_{\C}$-diagram of $\M$.
\end{df}
One can observe that this definition is the generalization of Definition \ref{Sim-semimon} without the Co-Segal conditions. 
\begin{df}\label{cosegal-diagram}
Let $\M$ be a $2$-category which is a special Quillen algebra.\ \\
A \textbf{Co-Segal $\S$-diagram} is an $\S$-diagram 
$$F : (\S_{\C})^{2\tx{-op}} \to \M $$
satisfying the \textbf{Co-Segal conditions}: for every pair $(A,B)$ of object of $\C$,   the component 
$$F_{AB}: \S_{\C}(A,B)^{op} \to \M(FA,FB)$$
takes its values in the subcategory of weak equivalences.  This means that  for every $u : s \to s'$ in  $\S_{\C}(A,B)$,  the $2$-morphism\\
 $$F_{AB}(u): F_{AB}(s') \to F_{AB}(s)$$ is a weak equivalence in the model category $\M(FA,FB)$. 
\end{df}

\begin{term}
When all the maps  $F_{AB}(u)$ are $2$-isomorphisms , then we will say that $F$ is as strict Co-Segal $\S_{\C}$-diagram of $\M$.
\end{term}

\begin{obs}\label{Co-Segal-final-cond}
By construction of $\S_{\C}$ for every pair of objects $(A,B)$  and  for every $t  \in \S_{\C}(A,B)$ we have a unique element $f \in \C(A,B)$ and a unique morphism $u_t: t \to [1,f]$. \ \\
Concretely $t$ is a chain of composable morphisms such that the composite is $f$, or equivalently $t$ is a `presentation' (or factorization) of $f$ with respect to the composition.  It follows that for any morphism $v:t \to s$  we have that $u_t= u_s \circ v$.\ \\

Since in each $\M(FA,FB)$ the weak equivalence have the $3$-out-of-$2$ property and are closed by composition, it's easy to see that $F$ satisfies the Co-Segal conditions if and only if $F(u_t)$ is a weak equivalence for all $t$ and all pair $(A,B)$.  
\end{obs}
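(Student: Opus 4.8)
The plan is to read off the combinatorial description of the hom-categories $\S_{\C}(A,B)$ from the fibred-product construction of Proposition-Definition~\ref{path-bicat}, use it to produce and characterise the morphism $u_t$, and then deduce the stated equivalence purely formally from the two-out-of-three property. Only the hom-functors $F_{AB}$ of $F$ will matter; the laxity data of $F$ plays no role in this statement.

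By construction $\S_{\C}(A,B)$ is a category over $\Upsilon$ via the degree functor $\le_{AB}$: an object $t$ is a chain of composable morphisms of $\C$ from $A$ to $B$, its length $\le_{AB}(t)$ being an object of $\Upsilon$ (so a nonempty chain), and the composition of all the arrows of $t$ is a well-defined element of $\C(A,B)$. A morphism $v\colon t\to s$ of $\S_{\C}(A,B)$ lies over a surjection $\le_{AB}(t)\to\le_{AB}(s)$ of $\Upsilon$ and is, concretely, the operation of grouping consecutive arrows of $t$ according to that surjection and composing each group; in particular it leaves the total composite unchanged, so $t$ and $s$ have the same composite $f\in\C(A,B)$. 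The point I would then extract from the construction is that a morphism out of a fixed object $t$ is completely determined by its image in $\Upsilon$ --- equivalently, $\le_{AB}$ is faithful and the morphisms out of $t$ are in bijection with the surjections out of $\le_{AB}(t)$, the target being recovered by grouping and composing. Since in $\Upsilon$ there is exactly one surjection from any object onto $\1$, there is then exactly one morphism out of $t$ whose target has length $\1$, and that target is forced to be the length-one chain $[1,f]$ (the chain consisting of the single morphism $f$), where $f$ is the total composite of $t$. This morphism is the desired $u_t$, which settles the first two assertions; moreover, for any $v\colon t\to s$ one gets $u_t=u_s\circ v$ at once, since $v$ preserves composites, so $u_s\circ v$ is a morphism out of $t$ with target $[1,f]$ and hence coincides with $u_t$ by the uniqueness just established.

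For the last assertion the forward implication is trivial: each $u_t$ is one of the morphisms of $\S_{\C}(A,B)$ to which the Co-Segal conditions apply. For the converse, assume $F(u_t)$ is a weak equivalence for every $t$ and every pair $(A,B)$, and let $v\colon t\to s$ be an arbitrary morphism of $\S_{\C}(A,B)$, with common composite $f$. Applying the functor $F_{AB}\colon\S_{\C}(A,B)^{op}\to\M(FA,FB)$ of Definition~\ref{cosegal-diagram} --- which reverses the direction of $2$-cells --- to the relation $u_t=u_s\circ v$ produces a commuting triangle
\[
F_{AB}([1,f]) \xrightarrow{F_{AB}(u_s)} F_{AB}(s) \xrightarrow{F_{AB}(v)} F_{AB}(t)
\]
whose composite is $F_{AB}(u_t)$. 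Both $F_{AB}(u_t)$ and $F_{AB}(u_s)$ are weak equivalences in the model category $\M(FA,FB)$ by hypothesis, so the two-out-of-three property forces $F_{AB}(v)$ to be one as well; since $v$ was arbitrary, $F$ satisfies the Co-Segal conditions.

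The main obstacle is the uniqueness of $u_t$, i.e. making precise that a morphism of $\S_{\C}(A,B)$ is determined by its source together with its image under $\le_{AB}$. This requires unwinding the definitions of $\P_{\C}$ and of the fibred product defining $\S_{\C}$ from \cite{SEC1}; once that bookkeeping is in place, everything else --- the identity $u_t=u_s\circ v$ and the two-out-of-three argument --- is formal.
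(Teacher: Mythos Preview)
Your proposal is correct and fills in precisely the details the paper leaves implicit: the observation carries no separate proof in the paper beyond the one-line remark that the result follows from the $3$-out-of-$2$ property, and your argument---deducing the uniqueness of $u_t$ from the fact that $\le_{AB}$ is faithful and $\Upsilon$ has a unique surjection onto $\1$, then obtaining $u_t=u_s\circ v$ by uniqueness, then applying $F_{AB}$ contravariantly and invoking two-out-of-three---is exactly the intended unpacking. The only caveat is that the uniqueness step indeed rests on the concrete description of $\S_{\C}$ from \cite{SEC1}, as you note; granting that, everything is correct.
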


\begin{df}\label{cosegal-cat}
A \textbf{Co-Segal $\M$-category} is a Co-Segal $\S_{\ol{X}}$-diagram for some set $X$.
\end{df}

\subsubsection{Weak unity in Co-Segal $\M$-categories}
The definition of a Co-Segal $\M$-category  gives rise to a weakly enriched semi-category, which means that there is no identity morphism. But there is a natural notion of weak unity we are going to explain very briefly. This is the same situation as for $A_{\infty}$-categories which arised with weak identity morphisms (see \cite{Fukaya_1,Fukaya_2}). If $\C$ is a Co-Segal $\M$-category denote by $[\C]$ the Co-Segal $\Ho(\M)$-category we get by the change of enrichment (=base change)  $\Lb: \M \to \Ho(\M)$. $[\C]$ is a strict Co-Segal category which means that it's a semi-enriched $\Ho(\M)$-category. Then we can define

\begin{df}
Say that a Co-Segal $\M$-category $\C$ has  weak  identity morphisms if $[\C]$ is a classical enriched category over $\Ho(\M)$ (with identity morphisms) .
\end{df}
There is a natural question which is to find out whether or not it's relevant to consider a ``direct'' identity morphism i.e without using the base change $\Lb: \M \to \Ho(\M)$. At this level we don't know for the moment if such consideration is `natural'. Below we give alternative definition.

\begin{df}
 A Co-Segal $\M$-category $\C$ has  weak  identity morphisms if  for any object $A$ of $\C$ there is a map $I_A: I \to \C(A,A)$ such that for any object $B$ the following commutes  up-to-homotopy:
\[
\xy
(-15,18)*+{I \otimes \Ca(A,B)}="P";
(-15,0)*+{\Ca(A,A) \otimes \Ca(A,B)}="X";
(30,0)*+{\Ca(A,A,B)}="Y";
(30,18)*+{\Ca(A,B)}="E";
{\ar@{->}^-{\varphi_{AAB}}"X";"Y"};
{\ar@{->}^-{\tx{weak.equiv}}_{\wr}"E";"Y"};
{\ar@{->}^-{\cong}"P";"E"};
{\ar@{->}^-{I_A \otimes \Id}"P";"X"};
\endxy
\]
\end{df}

The above diagram  will give the left invariance of $I_A$; the same type of diagram will give the right invariance. Note that we've limited the invariance to the `$1$-simplices' $\C(A,B)$  of $\C$ i.e we do not require such a diagram with $\C(A_0, ...A_n)$ with $n>1$.
 There are two reasons that suggest this limitation. The first one comes from the fact that for unital $A_{\infty}$-categories, the unity condition is only required for the binary multiplication `$m_2$' (see for example Kontsevich-Soibelman \cite[Sec. 4.2]{Kon_Soi_NCG1},  Lyubashenko \cite[Def. 7.3]{Lyub_A_inf}). 
 
The other reason is that $\C(A,B)$ and $\C(A,..., A_i, ..., B)$ have the same homotopy type (the Co-Segal conditions); thus if $\C(A,B)$ is weakly invariant under $I_A$ we should have the same thing for $\C(A,..., A_i, ..., B)$.  Finally we should mention that in the grand scheme of algebra, imposing further conditions reduces the class of objects. 

The question of weak unities will be treated separately in another work.

\subsubsection{The classical examples}
In the following discussion we will use the following conventions.
\begin{itemize}[label=$-$]
\item By \textbf{semi-enriched category} we mean a structure given by the same data and axioms of an enriched category without the identities . We will  say as well \textbf{$\M$-semi-category} to mention  the base $\M$ which contains the `Hom'.  This is the generalized version of semi-monoids.
\item As for $\M$-categories, we have the morphism between $\M$-semi-categories by simply ignoring the data involving the identities.
\item Our $\M$-categories and $\M$-semi-categories will always have a small set of objects. 
\end{itemize}

The following proposition is the generalized version of Proposition \ref{equiv-semimon}. 

\begin{prop}\label{equiv-semicat}
We have an equivalence between the following data:
\begin{enumerate}
\item an  $\M$-semi-category 
\item a strict Co-Segal $\S_{\ol{X}}$-diagram of $\M$.
\end{enumerate}
\end{prop}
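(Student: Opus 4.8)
The plan is to mimic the one-object argument of Proposition \ref{equiv-semimon}, upgrading each construction from $\Upsilon=\S_{\uc}$ to $\S_{\ol{X}}$, and to observe that the two passages are mutually inverse up to natural isomorphism. First I would fix a set $X$ and describe how an $\M$-semi-category $\Ca$ with object set $X$ produces a strict Co-Segal $\S_{\ol{X}}$-diagram $\tld{\Ca}$. By the Note following Proposition-Definition \ref{path-bicat}, an object of $\S_{\ol{X}}(A,B)$ is a chain $t=(E_0,\dots,E_n)$ with $E_0=A$, $E_n=B$, and every morphism $u:t\to s$ is a deletion of interior letters; each such $t$ admits the canonical map $u_t:t\to [1,f]$ of Observation \ref{Co-Segal-final-cond} where $f$ is the composite, here simply the pair $(A,B)$. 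I would set $\tld{\Ca}_{AB}(t):=\Ca(A,B)$ for every chain $t$ from $A$ to $B$, and $\tld{\Ca}_{AB}(u):=\Id_{\Ca(A,B)}$ for every $u$ (this is forced and well defined because any two chains from $A$ to $B$ have the same value and the target of $\tld{\Ca}_{AB}$ is discrete on those objects, so functoriality is automatic). The laxity $2$-morphisms $\tld{\Ca}_{AB}(t)\otimes\tld{\Ca}_{BC}(s)\to\tld{\Ca}_{AC}(t\ast s)$ are all defined to be the composition map $\Ca(A,B)\otimes\Ca(B,C)\to\Ca(A,C)$ of the semi-category, suitably identified; the lax-morphism coherence axioms then reduce exactly to the associativity axiom of $\Ca$ as an $\M$-semi-category, and the Co-Segal condition holds trivially since every $\tld{\Ca}_{AB}(u)$ is an identity, hence a $2$-isomorphism.

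Conversely, starting from a strict Co-Segal $\S_{\ol{X}}$-diagram $G$, I would build an $\M$-semi-category $[G]$ with object set $X$. On hom-objects set $[G](A,B):=G_{AB}([1,(A,B)])=G(\text{the length-one chain})$. For composition, given $A,B,C$ consider the length-two chain $t=(A,B,C)$: there are deletion maps $t\to(A,C)$ in $\S_{\ol{X}}(A,C)$, and the Co-Segal hypothesis says $G$ sends it to a $2$-isomorphism $G([1,(A,C)])\xrightarrow{\cong}G(t)$; composing its inverse with the laxity map $G(A,B)\otimes G(B,C)\to G(t)$ gives the composition morphism $[G](A,B)\otimes[G](B,C)\to[G](A,C)$, exactly as in part (b) of the proof of Proposition \ref{equiv-semimon}. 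Associativity is then extracted from the semi-cubical coherence diagram for the triple of chains determined by $A,B,C,D$ together with the relation $G(u)\circ\dots$ among the weak-equivalence maps (now $2$-isomorphisms), using that the $G$-images of the two ways of deleting interior letters in $(A,B,C,D)$ agree; this is the multi-object analogue of the identity $\gamma_1\circ\gamma_0=\gamma_2\circ\gamma_0$ recorded in the one-object overview.

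Finally I would check that these two assignments are mutually quasi-inverse and extend to functors once morphisms of both structures are defined, so that one really gets an equivalence of categories as promised in the remark after the statement. Going $\Ca\mapsto\tld{\Ca}\mapsto[\tld{\Ca}]$ recovers $\Ca$ on the nose, since $[\tld{\Ca}](A,B)=\tld{\Ca}_{AB}([1,(A,B)])=\Ca(A,B)$ and the reconstructed composition is $\Id^{-1}\circ(\text{composition of }\Ca)$. Going $G\mapsto[G]\mapsto\tld{[G]}$ gives back $G$ only up to a canonical natural isomorphism: $\tld{[G]}_{AB}(t)=[G](A,B)=G([1,(A,B)])$, and the Co-Segal $2$-isomorphisms $G([1,(A,B)])\xrightarrow{\cong}G(t)$ (for each chain $t$ from $A$ to $B$) assemble into the required invertible icon/transformation intertwining $G$ and $\tld{[G]}$, compatibly with the laxity data by the coherence in Observation \ref{Co-Segal-final-cond}. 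I expect the main obstacle to be purely bookkeeping rather than conceptual: carefully matching the lax-morphism coherence axioms of Definition \ref{cosegal-diagram} (which live over the composition of $\S_{\ol{X}}$, i.e.\ concatenation of chains) with the single associativity axiom of an $\M$-semi-category, and verifying that the chosen inverses of the Co-Segal $2$-isomorphisms are coherent enough to make the composition in $[G]$ strictly associative; this is where one must use that $\S_{\ol{X}}(A,B)$ has the terminal object $[1,(A,B)]$ in the relevant sense (Observation \ref{Co-Segal-final-cond}), so that all the higher chains contribute no new data beyond what the length-one and length-two pieces already encode.
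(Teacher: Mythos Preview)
Your proposal is correct and follows essentially the same approach as the paper's sketch: both directions---the constant diagram $\tld{\Ca}$ built from an $\M$-semi-category $\Ca$, and the reconstruction of the composition in $[G]$ via the inverse of the Co-Segal $2$-isomorphism $G([1,(A,C)])\xrightarrow{\cong}G(A,B,C)$ composed with the laxity map---match the paper's constructions exactly. You go somewhat further than the paper in spelling out the quasi-inverse check and the associativity extraction from the semi-cubical coherence, which the paper leaves implicit (the paper only gives the two constructions and defers the functorial equivalence to a later remark).
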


The proof is very similar and is straightforward. We give hereafter an outline for the case where $\M$ is a monoidal category.

\begin{proof}[Sketch of proof]

Let $\A$ be an $\M$-semi-category  with $X=Ob(\A)$.  We define the corresponding strict Co-Segal $\S_{\ol{X}}$-diagram $F=(F,\varphi)$ as follows:

\begin{enumerate}[label=$\ast$, align=left, leftmargin=*, noitemsep]
\item each component $F_{AB}: \S_{\ol{X}}(A,B) \to \M$ is a constant functor :
 \begin{equation*}
  \begin{cases}
  F_{AB}([\n,s])= F_{AB}([\1,(A,B)]):= \A(A,B) &  \tx{for all $[\n,s]$} \\
   F_{AB}(f):= \Id_{\A(A,B)} &    \tx{for all $f:[\n,s] \to [\n',s']$ in  $\S_{\ol{X}}(A,B)$} \\
  \end{cases}
\end{equation*}
\item the laxity maps are given by the composition:
$$\varphi_{s,t}:= c_{ABC} : \A(B,C) \otimes \A(A,B)  \to \A(A,C)$$
\end{enumerate}
\ \\
Conversely let $F : (\S_{\ol{X}})^{2\tx{-op}} \to \M $ be a \ul{strict} Co-Segal  $\S_{\ol{X}}$-diagram. We simply show how we get the composition of the $\M$-semi-category which is denoted  by $\M_{F}^{X}$. \\ 
\begin{enumerate}[label=$\ast$, align=left, leftmargin=*, noitemsep]
\item First we have Ob$(\M_{F}^{X})=X$.\\
\item We take $\M_{F}^{X}(A,B) := F_{AB}([\1,(A,B)]$, for every  $A,B \in X$. \\
\item The laxity map $\varphi_{s,t}$ for $s=[\1,(A,B)]$, $t=[\1,(B,C)]$ is a map of $\M$
$$\varphi_{s,t}: \M_{F}^{X}(B,C) \otimes \M_{F}^{X}(A,B)  \to \M_{F}^{X}(A,B,C)$$
where $\M_{F}^{X}(A,B,C):=F_{AC}([\2,(A,B,C)])$. \\
\item Now in $\S_{\ol{X}}(A,C)$  we have a unique map $[\2,(A,B,C)] \xrightarrow{\sigma^{1}_0} [\1,(A,C)] $ parametrized by the map $\sigma^{1}_0 : \2 \to \1$ of $\Upsilon$.  The image of this map by $F_{AC}$ is a map   
$$F(\sigma^{1}_0) :  \M_{F}^{X}(A,C)  \to \M_{F}^{X}(A,B,C)$$ 
which is invertible by hypothesis.\\

\item And we take the composition $c_{ABC}= F(\sigma^{1}_0)^{-1} \circ  \varphi_{s,t}$ as illustrated in the  the diagram below:
\[
\xy
(-15,0)*+{\M_{F}^{X}(B,C) \otimes \M_{F}^{X}(A,B)}="X";
(30,0)*+{\M_{F}^{X}(A,B,C)}="Y";
(30,18)*+{\M_{F}^{X}(A,C)}="E";
{\ar@{->}^-{\varphi_{s,t}}"X";"Y"};
{\ar@{->}^-{\cong}_{F(\sigma^{1}_0)}"E";"Y"};
{\ar@{-->}^-{c_{ABC}}"X";"E"};
\endxy
\] 
\end{enumerate}

\end{proof}

\begin{rmk}
The previous equivalence will turn to be an equivalence of categories when we will have the morphisms of $\S$-diagrams.
\end{rmk}

\subsection{Morphism of $\S$-Diagrams}
As our $\S$-diagrams are lax  morphisms of semi-bicategories, one can guess that a morphism of $\S$-diagrams will be a \emph{transformations of lax morphisms} in the sense of Bénabou. This  is the same approach as in \cite{SEC1} where the morphism of \emph{path-objects} were defined as transformations of colax morphisms.

But just like in \cite{SEC1}  not every transformation will give  a morphism of semi-enriched categories.  In \cite{SEC1}, a general transformation is called `$\M$-premorphism' and an $\M$-morphism was defined as special $\M$-premorphism. \\

\begin{warn}
In the following, we will only consider the transformations which will give the classical notion of morphism between semi-enriched categories. We decide not to mention `$\M$-premorphisms' between $\S$-diagrams. 
\end{warn}
 
 We recall hereafter the definition of the transformations of morphisms of semi-bicategories we are going to work with. The following definition is slightly different from the standard one, even though in the monoidal case, it is the standard one. 
 
\begin{df}\label{simple-trans}
\indent Let $\B$ and $\M$ be two semi-bicategories and $F=(F,\varphi)$, $G=(G,\psi)$ be two lax morphisms from $\B$ to $\M$ such that $FA=GA$ for every object $A$ of $\B$. \\

 A \textbf{simple transformation} $\sigma: F \to G$ 
\[
\xy
(0,0)*+{\B}="A";
(30,0)*+{\M}="C";
{\ar@/_1.3pc/_{G}"A";"C"};
{\ar@/^1.3pc/^{F}"A";"C"};
{\ar@{=>}_{\sigma}(15,4);(15,-4)};
\endxy.
\]
is given by the following data and axioms. \\

\textbf{Data:}  A  natural transformation for each pair of objects $(A,B)$ of  $\B$:
\[
\xy
(-10,0)*+{\B(A,B)}="A";
(30,0)*+{\M(FA,FB)}="C";
{\ar@/_1.3pc/_{G_{AB}}"A";"C"};
{\ar@/^1.3pc/^{F_{AB}}"A";"C"};
{\ar@{->}_{\sigma}(15,4);(15,-4)};
\endxy.
\]

hence a $2$-morphism of $\M$,  $\sigma_t : Ft \to Gt $, for each $t$ in  $\B(A,B)$, natural in $t$. \\

\textbf{Axioms:} The following commutes :
\[
\xy
(-10,20)*+{Fs \otimes Ft}="A";
(30,20)*+{F(s \otimes t)}="B";
(-10,0)*+{Gs \otimes Gt}="C";
(30,0)*+{G(s \otimes t)}="D";
{\ar@{->}^{\varphi_{s,t}}"A";"B"};
{\ar@{->}_{\sigma_s \otimes \sigma_t}"A";"C"};
{\ar@{->}^{\sigma_{s \otimes t}}"B";"D"};
{\ar@{->}^{\psi_{s,t}}"C";"D"};
\endxy
\]
\end{df}

With this definition we can now give the definition of morphism of $\S$-diagrams.

\begin{df}\label{mor-s-diag}
Let $F$ and $G$ be respectively an $\S_{\C}$-diagram and an $\S_{\D}$-diagram of $\M$.  A morphism of $\S$-diagrams from $F$ to $G$ is a pair 
$ (\Sigma,\sigma)$ where:\\
\begin{enumerate}
\item $\Sigma : \C \to \D$ is a functor such that for every $A \in \tx{Ob}(\C)$ we have $FA= G (\Sigma A)$,\\
\item $\sigma : F \to G \circ \S_{\Sigma}$ is a simple tranformation of lax morphisms:
\[
\xy
(-18,0)*+{(\S_{\C})^{2\tx{-op}}}="X";
(30,0)*+{(\S_{\D})^{2\tx{-op}}}="Y";
(9,-18)*+{\M}="E";
{\ar@{->}^{(\S_{\Sigma})^{2\tx{-op}}}"X";"Y"};
{\ar@{->}_{F}"X";"E"};
{\ar@{->}^{G}"Y";"E"};
{\ar@{=>}^{\sigma}(1,-10);(15,-10)};
\endxy
\] 

\end{enumerate} 
 When all the components `$\sigma_t$' of $\sigma$ are weak equivalences  we will say that $ (\Sigma,\sigma)$ is a level-wise weak equivalences.
\end{df}
\
\begin{nota}\
\begin{enumerate}
\item For a small category $\C$, we will denote by $\tx{Lax}^{\ast}[(\S_{\C})^{2\tx{-op}},\M ]$ the category of $\S_{\C}$-diagrams  with morphism of $\S_{\C}$-diagrams. 
\item We will denote by $\M_{\S}(\C)$ the subcategory of $\tx{Lax}^{\ast}[(\S_{\C})^{2\tx{-op}},\M]$  with morphisms  of the form $(\Id_{\C}, \sigma)$. It follows that the morphisms in  $\M_{\S}(\C)$ are simply determined by the simple transformations `$\sigma$'. 
\item For $\C=\ol{X}$, we will write $\M_{\S}(X)$ to mean $\M_{\S}(\ol{X})$.
\end{enumerate}
\end{nota}
 
\begin{prop}\label{stab-coseg}
Let $\M$ be a $2$-category which is a base of enrichment or a special Quillen algebra, and $F : (\S_{\C})^{2\tx{-op}}  \to \M$, $G: (\S_{\C})^{2\tx{-op}} \to \M$ be two $\S$-diagrams in $\M$. 
For  a level-wise weak equivalence  $ (\Sigma,\sigma): F \to G$ we have:
\begin{enumerate}
\item If $G$ is a Co-Segal $\S$-diagram then so is $F$,
\item If $F$ is a Co-Segal $\S$-diagram and if $\Sigma$  is surjective on objects and full then $G$ is also a Co-Segal $\S$-diagram. 
\end{enumerate}
\end{prop}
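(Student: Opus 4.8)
The plan is to reduce both parts to the $2$-out-of-$3$ (equivalently $3$-out-of-$2$) property of the weak equivalences in each hom-category $\M(FA,FB)$ --- which holds in either setting, since for a special Quillen algebra each hom-category is a model category and for a base of enrichment this is part of the data and is already invoked in Observation \ref{Co-Segal-final-cond} --- applied to the naturality squares of the simple transformation $\sigma$.

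First I would unpack the data. By Definition \ref{simple-trans} and Definition \ref{mor-s-diag}, for each pair $(A,B)$ of objects of $\C$ the transformation $\sigma$ has a component $\sigma_{AB}\colon F_{AB}\to G_{\Sigma A,\Sigma B}\circ(\S_{\Sigma})_{AB}$, a natural transformation of functors $\S_{\C}(A,B)^{\op}\to\M(FA,FB)$. So every $u\colon s\to s'$ in $\S_{\C}(A,B)$ produces a commuting square in $\M(FA,FB)$ with horizontal edges the components $\sigma_{s'}$ and $\sigma_{s}$ and vertical edges $F(u)\colon F(s')\to F(s)$ and $G(\S_{\Sigma}u)$. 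Since $(\Sigma,\sigma)$ is a level-wise weak equivalence, $\sigma_{s}$ and $\sigma_{s'}$ are weak equivalences. For part (1): if $G$ is a Co-Segal $\S$-diagram then $G(\S_{\Sigma}u)$ is a weak equivalence, because $\S_{\Sigma}u$ is a morphism of $\S_{\C}(\Sigma A,\Sigma B)$; hence $F(u)$ is a weak equivalence by $2$-out-of-$3$, and as $u$ and $(A,B)$ are arbitrary, $F$ is Co-Segal.

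Part (2) needs an extra lifting argument exploiting that $\Sigma$ is surjective on objects and full. By Observation \ref{Co-Segal-final-cond} it suffices to prove that $G(u_t)$ is a weak equivalence for every chain $t$ in every hom-category $\S_{\C}(C,C')$, where $u_t\colon t\to[1,f]$ is the canonical map and $f$ the composite of $t$. Writing $t$ as a chain $C\xrightarrow{h_1}D_1\to\cdots\xrightarrow{h_n}C'$, I would use surjectivity on objects to pick preimages $A,B,\tilde D_i$ of $C,C',D_i$ and fullness to pick $\tilde h_i$ with $\Sigma\tilde h_i=h_i$; this yields a chain $\tilde t$ in $\S_{\C}(A,B)$, with composite $\tilde f$, such that $(\S_{\Sigma})_{AB}(\tilde t)=t$ and $\Sigma\tilde f=f$, and by functoriality of $\S_{[-]}$ (Proposition-Definition \ref{path-bicat}) together with the uniqueness of the canonical maps, $\S_{\Sigma}(u_{\tilde t})=u_t$. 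Now $F$ being Co-Segal gives that $F(u_{\tilde t})$ is a weak equivalence; feeding $u=u_{\tilde t}$ into the naturality square above, and using that the horizontal maps $\sigma_{\tilde t}$ and $\sigma_{[1,\tilde f]}$ of that square are weak equivalences, $2$-out-of-$3$ forces $G(u_t)=G(\S_{\Sigma}u_{\tilde t})$ to be a weak equivalence. Hence $G$ is Co-Segal.

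The genuine obstacle is this lifting step: one must hit \emph{every} chain of $\S_{\C}(C,C')$ and not merely its two endpoints, which is exactly why both surjectivity on objects (to lift the interior vertices $D_i$) and fullness (to lift the connecting arrows $h_i$) are required in (2). The remaining points --- that $\S_{\Sigma}$ carries chains to chains and canonical-to-$1$-simplex maps to canonical-to-$1$-simplex maps, and that the square being used is the coherence square recorded in Definition \ref{simple-trans} --- are routine.
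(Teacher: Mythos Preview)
Your proof is correct and follows essentially the same strategy as the paper: apply the $2$-out-of-$3$ property of weak equivalences to the naturality square of the simple transformation $\sigma$. For part (2) you first invoke Observation \ref{Co-Segal-final-cond} to reduce to the canonical maps $u_t$ and then lift the chain $t$ vertex-by-vertex and arrow-by-arrow, whereas the paper lifts an arbitrary $2$-morphism $v:t\to t'$ of $\S_{\D}$ directly; both versions use the surjectivity/fullness hypotheses in the same way and feed the lifted map into the same commuting square.
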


\begin{rmk}
In the category $\M_{\S}(\C)$  the condition required in $(2)$ is automatically fulfilled because the morphism in $\M_{\S}(\C)$  are of the form $(\Id_{\C},\sigma)$.
\end{rmk}

\begin{proof}[Sketch of proof]
The key of the proof is to use  the `3-out-of-2' property of weak equivalences in $\M$. This says that whener we have a composable pair of morphisms  $(f,g)$, then if $2$ members of the set  $\{f,g, g\circ f\}$ are weak equivalences  then so is the third.\ \\

For the assertion $(1)$, we need to show that for every $u : s \to s'$ in  $\S_{\C}(A,B)$,  we have $F_{AB}(u): F_{AB}(s') \to F_{AB}(s)$ is  a weak equivalence in $\M$. 
To simplify the notations we will not mention the subscript `AB' on the components of $F$ and $G$. 

By definition of $(\Sigma,\sigma)$  for every $u : s \to s'$ in $\S_{\C}(A,B)$, the following diagram commutes:

\[
\xy
(-10,20)*+{F(s')}="A";
(30,20)*+{G[\S_{\Sigma}(s')]}="B";
(-10,0)*+{F(s)}="C";
(30,0)*+{G[\S_{\Sigma}(s)]}="D";
{\ar@{->}^-{\sigma_{s'}}_-{\sim}"A";"B"};
{\ar@{->}_-{F(u)}"A";"C"};
{\ar@{->}^-{G[\S_{\Sigma}(u)]}_-{\wr}"B";"D"};
{\ar@{->}^-{\sigma_s}_-{\sim}"C";"D"};
\endxy
\]
\\
Since all the three maps are weak equivalences by hypothesis, we deduce by 
$3$-out-of-$2$ that $F(u)$ is also a weak equivalence, which gives $(1)$. 
\ \\

For the assertion $(2)$ we proceed as follows. The assumptions on $\Sigma$ implie that for any  morphism $v: t \to t'$ in $\S_{\D}(U,V)$  there exists a pair of objects  $(A,B)$ of $\C$  and $s',s$ in  $\S_{\C}(A,B)$ together with a maps $u: s \to s'$ such that:\\
 $\Sigma A= U$, $\Sigma B=V$,\\
 $\S_{\Sigma}(s)=t$, $\S_{\Sigma}(s')=t'$, \\
 $ \S_{\Sigma}(u)=v$. \\
 
 And we have the same type of commutative diagram: 
 
 \[
\xy
(-10,20)*+{F(s')}="A";
(30,20)*+{G(t')}="B";
(-10,0)*+{F(s)}="C";
(30,0)*+{G(t)}="D";
{\ar@{->}^{\sigma_{s'}}_-{\sim}"A";"B"};
{\ar@{->}_{F(u)}^-{\wr}"A";"C"};
{\ar@{->}^{G(v)}"B";"D"};
{\ar@{->}^{\sigma_s}_-{\sim}"C";"D"};
\endxy
\]
\\
Just like in the previous case we have by  $3$ for $2$ that  $G(v)$ is also a weak equivalence.

\end{proof}


\section{Properties of $\M_{\S}(\Ca)$}\label{properties-msc}
This section is devoted to the study  of the properties that $\M_{\S}(X)$ inherits from $\M$ e.g (co)-completeness, accessibility, etc. For simplicity we consdier here only the cases  $\C=\ol{X}$, for some nonempty set $X$. The methods are the same for an arbitrary $\C$.
\paragraph*{\textbf{ Environment}:}
We assume that $\M=(\ul{M},\otimes,I)$ is a \textbf{symmetric closed} monoidal category (see \cite{Ke} for a definition).  One of the consequences of this hypothesis is the fact that for every object $A$ of $\M$ the two functors 
$$-\otimes A :  \ul{M} \to \ul{M}  \ \ \ \ \ \ \ \  \tx{and (hence)} \ \ \ \ \  A \otimes - :  \ul{M} \to \ul{M}$$
preserve the colimits. Note that these conditions turn $\M$ into a special Quillen algebra. 

\begin{warn}\ \
\begin{enumerate}
 \item  When we say that `$\M$ is (co)-complete' we mean of course that the underlying category $\ul{M}$ is (co)-complete.
 And by colimits and limits in $\M$ we mean colimits and limits in $\ul{M}$.
\item We will say as well that $\M$ is locally presentable, accessible when $\ul{M}$ is so.
\item Some results in this section are presented without proof since they are easy and are sometime considered as `folklore' in category theory.
\end{enumerate}
\end{warn}

\subsection{$\M_{\S}(X)$ is locally presentable if $\M$ is so}
Our goal here is to prove the following
\begin{thm}\label{MX-local-pres}
Let $\M$ be a symmetric closed monoidal category which is locally presentable. Then for every nonempty set $X$ the category $\M_{\S}(X)$ is locally presentable. 
\end{thm}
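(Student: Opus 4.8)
The statement to prove is that $\msx$ is locally presentable whenever $\M$ is a symmetric closed monoidal locally presentable category. The overall strategy is to exhibit $\msx$ as an accessible category that is also cocomplete (cocompleteness being the other theorem of this section, Theorem \ref{MX-cocomplete}, which I may freely invoke), or — more directly — to realize $\msx$ as a category of models of a limit sketch, equivalently as an accessible reflective localization (or even just an accessible full subcategory closed under suitable limits) of a presheaf-type category built from $\M$. Since the excerpt has already set up the operadic machinery, the cleanest route is to invoke Theorem \ref{laxalg-local-pres}: objects of $\msx$ are exactly lax $\O_X$-morphisms from the terminal algebra (or the appropriate fixed source algebra) into $\M$, viewed as an $\O_X$-algebra. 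So the first step is to package $\M$ as a locally presentable $\O_X$-algebra.

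\textbf{Step 1: $\M$ is a locally presentable $\O_X$-algebra.} Because $\M$ is symmetric \emph{closed} monoidal, the tensor $- \otimes -$ preserves colimits in each variable; together with $\M$ being a $2$-category (viewed with a single hom-category, or rather with the relevant hom-categories all equal to $\ul M$ in the coarse situation) and locally presentable, the third example after the definition of locally presentable $\O$-algebra applies verbatim: $\M$ (suitably regarded, via the operad $\O_X$ attached to the coarse category $\ol X$) is a locally presentable $\O_X$-algebra. I would spell out that the category $\sx$ is precisely the $2$-category over $\Upsilon$ obtained by the fibered product in Proposition-Definition \ref{path-bicat}, and that an $\S_{\ol X}$-diagram, i.e. a lax morphism $(\sx)^{2\text{-op}} \to \M$, is the same datum as a lax $\O$-morphism for the operad $\O'$ attached (via the construction $\O_{\Ca\dt}$ or directly as $\O_{(\sx)^{2\text{-op}}}$) to the indexing $2$-category. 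This is the bookkeeping step: one must check the source is of the form covered by Theorem \ref{laxalg-local-pres}, namely an honest $\O$-algebra, and that morphisms in $\msx$ (simple transformations, i.e.\ icons) coincide with the $2$-morphisms of lax $\O$-morphisms used there.

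\textbf{Step 2: apply Theorem \ref{laxalg-local-pres}.} Once the identification of Step 1 is in place — $\msx \simeq \Laxalg(\Ca\dt, \M)$ for the appropriate operad and fixed source algebra $\Ca\dt$ — the theorem gives immediately that $\msx$ is locally presentable, since $\M$ is a locally presentable $\O$-algebra. (For the general $\C$ remarked on in the paragraph environment, the same argument applies with $\ol X$ replaced by $\C$.) Since the section explicitly promises a ``direct proof'' not using operads, I would additionally sketch the alternative: $\msx$ is the category of $\Set$-valued (resp. $\M$-valued) models of a limit sketch whose cones encode (a) the functoriality of each component $F_{AB}: \sx(A,B)^{\op} \to \M$, which amounts to a diagram category $\M^{\sx(A,B)^{\op}}$ — locally presentable since $\M$ is and $\sx(A,B)$ is small — and (b) the laxity maps $\varphi_{s,t}$ together with their coherence equations, which are equationally defined subobjects/extra structure expressible by a small set of cones; an accessible limit of locally presentable categories along accessible functors is locally presentable (by the standard closure theorems of Adámek–Rosický), and one checks the whole thing is cocomplete via Theorem \ref{MX-cocomplete}, whence locally presentable.

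\textbf{The main obstacle.} The routine parts are the closure properties; the genuinely delicate point is verifying that the coherence conditions on the laxity maps $\varphi_{s,t}$ (the associativity/compatibility equations in Definition \ref{o-morphism} and \ref{s-diagram}) are captured by a \emph{small} set of commutativity constraints and limit cones, so that no ``large'' data sneaks in and the sketch genuinely remains small (equivalently, that $\Ub$ is an accessible functor with accessible domain in the operadic formulation). Concretely one must argue that although there is a laxity map $\varphi_{s,t}$ for every pair of composable chains $s,t$ and a coherence for every triple, these are all generated — as in the simplicial identities for the $\sigma^n_i$ — by the finitely-many (per hom-category) ``elementary'' faces, so that the describing data is indexed by the small category $\sx$ itself; this is exactly the content folded into the hypotheses of Theorem \ref{laxalg-local-pres}, so in the operadic proof the obstacle is discharged by citing that theorem, while in the direct proof it requires an explicit (if unsurprising) enumeration of the generating cones.
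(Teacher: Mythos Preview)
Your proposal is correct, but the route you emphasize differs from the paper's ``direct'' argument in this section. Your primary suggestion is to invoke Theorem~\ref{laxalg-local-pres} after identifying $\msx$ with a suitable $\Laxalg(\Ca\dt,\Ma\dt)$; this works, but the paper explicitly avoids the operadic packaging here (as announced in the plan of the paper). Instead, the paper's proof proceeds \`a la Kelly--Lack: it shows that the forgetful functor $\Ub\colon \msx \to \K_X=\prod_{(A,B)}\Hom[\sx(A,B)^{\op},\M]$ is monadic (Theorem~\ref{MX-monadic-KX}, via Beck, using the cocompleteness Theorem~\ref{MX-cocomplete}), then computes directly that the monad $\T=\Ub\Gamma$ is finitary (Proposition~\ref{monad-finitary}, via the explicit coproduct-of-tensor-products formula for $\Gamma$ and cofinality of the diagonal $\lambda\to\lambda^{l+1}$), and concludes by the standard fact that algebras for a finitary monad on a locally presentable category form a locally presentable category. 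Note that this is precisely the engine behind the appendix proof of Theorem~\ref{laxalg-local-pres} itself, so your operadic route and the paper's direct route are really the same argument at different levels of abstraction; what your proposal buys is brevity, what the paper's version buys is self-containment for the reader unfamiliar with operads. Your alternative sketch-theoretic route (models of a small limit sketch) is a genuinely different, also valid, strategy that neither the section nor the appendix pursues.
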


To prove this, we proceed in the same way as in the paper of Kelly and Lack \cite{Kelly-Lack-loc-pres-vcat} where they established that $\M$-$\Cat$  is locally presentable if $\M$ is so.\ \\

The idea is to use the fact that given a locally presentable category $\K$  and a monad $\T$ on $\K$, then if $\T$ preserve the directed  colimits then the category of algebra of $\T$ (called  the Eilenberg-Moore category of $\T$) is also locally presentable (see \cite[Remark 2.78]{Adamek-Rosicky-loc-pres}).\ \\

In our case we will have:
\begin{itemize}
\item $\K$ is the category $ \prod_{(A,B) \in X^{2}} \Hom[\S_{\ol{X}}(A,B)^{op}, \M].$ We will write $\K_X$ to emphasize that it depends of the set $X$\\
\item There is a forgetful functor $\Ub : \M_{\S}(X) \to \K_X$ which is faithful and injective on object, therefore we can consider $\M_{\S}(X)$ is a subcategory of $\K_X$.\\ 
\item There is left adjoint $\Gamma$ of $\Ub$ inducing the monad $\T= U\Gamma$.\\
\item The category of algebra of $\T$ is precisely  $\M_{\S}(X)$.
\end{itemize}  

\begin{rmk}
The theory of locally presentable categories tell us that any (small) diagram category of a locally presentable category, is  locally presentable (see \cite[Corollary 1.54]{Adamek-Rosicky-loc-pres}). It follows that  each $\Hom[\S_{\ol{X}}(A,B)^{op}, \M]$ is locally presentable  if $\M$ is so. Finally $\K_X$ is locally presentable since it's a (small) product of locally presentable categories. \ \\
\end{rmk}
But before proving the Theorem \ref{MX-local-pres} we must first show that $\M_{\S}(X)$ is co-complete to be able to consider (filtered) colimits. This is given by the following
\begin{thm}\label{MX-cocomplete}
Given a co-complete symmetric monoidal category $\M$, for any nonempty set $X$ the category  $\M_{\S}(X)$ is co-complete.
\end{thm}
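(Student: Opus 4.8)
The plan is to construct all colimits in $\msx$ by building them first in the ambient category $\kx = \prod_{(A,B)\in X^2}\Hom[\sx(A,B)^{op},\M]$ and then upgrading the resulting object to an $\S$-diagram. Recall that $\msx$ sits inside $\kx$ via the forgetful functor $\Ub$ which remembers only the underlying family of functors $F_{AB}$ and forgets the laxity maps $\varphi_{s,t}$. Since $\M$ is cocomplete, each functor category $\Hom[\sx(A,B)^{op},\M]$ is cocomplete (colimits computed pointwise in $\M$), hence so is the product $\kx$. So the real content is to transport colimits along $\Ub$.

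First I would recall that $\Ub$ has a left adjoint $\Gamma$, so that $\msx \simeq \T\text{-alg}$ for the monad $\T = \Ub\Gamma$ on $\kx$ (this is exactly the setup described just after the statement, and it is the same mechanism used for the local-presentability theorem). For the category of algebras of a monad on a cocomplete category to be cocomplete, the standard criterion (Linton) is that it suffices for $\T\text{-alg}$ to have reflexive coequalizers — then all colimits exist, since $\Ub$ creates limits and the free algebras generate. An even more direct route, avoiding delicate algebra arguments: observe that the monad $\T$ is built from the tensor product of $\M$ and the composition in $\sx$, and by the \emph{symmetric closed} hypothesis each functor $-\otimes A$ and $A\otimes-$ preserves all colimits. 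This makes $\T$ a \emph{finitary-friendly} monad — more precisely, it preserves filtered colimits and reflexive coequalizers — which is enough to conclude that $\msx = \T\text{-alg}$ is cocomplete and that $\Ub$ creates filtered colimits and reflexive coequalizers.

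Concretely, the key steps in order: (i) exhibit coproducts in $\msx$ — given a family $\{F^i\}$, form the coproduct $\coprod \Ub F^i$ in $\kx$ pointwise; the laxity maps assemble because $\otimes$ distributes over coproducts (here the closedness of $\M$ is used to commute $\otimes$ past the coproduct on each factor), so this carries a canonical $\S$-diagram structure and is easily checked to be the coproduct; (ii) exhibit coequalizers of reflexive pairs $F \rightrightarrows G$ by taking the pointwise coequalizer $Q_{AB}$ in $\M$ of $\Ub F_{AB}\rightrightarrows \Ub G_{AB}$, then using that $-\otimes-$ preserves reflexive coequalizers in each variable (a consequence of preserving all colimits on each factor) to induce laxity maps $Q_s\otimes Q_t \to Q_{s\otimes t}$, and verifying the coherence axioms by the universal property; (iii) conclude via the fact that coproducts together with reflexive coequalizers generate all small colimits. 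Alternatively, one can simply invoke the monadicity picture: $\Gamma\dashv\Ub$ monadic, $\kx$ cocomplete, $\T$ preserves reflexive coequalizers $\Rightarrow$ $\msx$ cocomplete.

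The main obstacle — and the step deserving genuine care rather than a wave of the hand — is (ii): checking that the pointwise coequalizer in $\kx$ actually inherits a \emph{coherent} family of laxity maps, i.e.\ that the induced $\varphi$ for the quotient satisfy the associativity/compatibility axioms of Definition~\ref{simple-trans} and of a lax morphism. This is where one genuinely uses that $\otimes$ preserves colimits \emph{separately in each variable} (so that $Q_s\otimes Q_t$ is the relevant iterated coequalizer), and one must track the reflexivity of the pair to know the coequalizer behaves well under $\otimes^{\otimes n}$. The coproduct case (i) and the assembly of the universal property are comparatively routine diagram chases.
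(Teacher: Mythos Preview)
Your step (i) is wrong, and it is the same error as claiming that the coproduct of two monoids is the coproduct of their underlying sets. If you take the pointwise coproduct $(F\sqcup G)(s)=F(s)\sqcup G(s)$ and try to build a laxity map, distributivity gives
\[
(F(s)\sqcup G(s))\otimes (F(t)\sqcup G(t))\ \cong\ \bigl(F(s)\otimes F(t)\bigr)\sqcup\bigl(F(s)\otimes G(t)\bigr)\sqcup\bigl(G(s)\otimes F(t)\bigr)\sqcup\bigl(G(s)\otimes G(t)\bigr),
\]
and there is no canonical map out of the cross terms $F(s)\otimes G(t)$ into $F(s\otimes t)\sqcup G(s\otimes t)$. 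So the pointwise coproduct does \emph{not} carry an $\S$-diagram structure, and $\Ub$ does not create coproducts. Consequently your ``direct'' route (i)--(ii)--(iii) does not go through.

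What survives is your ``alternative'', and this is exactly what the paper does --- but you have the logical dependencies slightly out of order. You write ``$\Ub$ has a left adjoint $\Gamma$, so that $\msx\simeq\T\text{-alg}$''; the adjunction alone does not give monadicity. The paper's argument runs: first establish step (ii), i.e.\ that coequalizers of $\Ub$-split (in particular reflexive) pairs lift from $\kx$ to $\msx$ and are preserved by $\Ub$ --- this is done via an explicit criterion on the laxity maps (Lemma~\ref{precong-lemma}) which is then verified using the split (Lemma~\ref{coeq-split-pair}), and your sketch of (ii) via ``$\otimes$ preserves reflexive coequalizers in each variable'' is a legitimate repackaging of the same computation. \emph{Then} Beck's monadicity theorem applies (left adjoint exists, $\Ub$ reflects isomorphisms, $\Ub$-split coequalizers exist and are preserved), giving $\msx\simeq\T\text{-alg}$. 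Only \emph{then} does Linton's theorem finish the job: $\kx$ cocomplete and $\T\text{-alg}$ has reflexive coequalizers $\Rightarrow$ $\T\text{-alg}$ cocomplete. So your step (ii) is the genuine content, your step (i) is both wrong and unnecessary, and the monadicity you invoke is a consequence of (ii) rather than an independent input.
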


\begin{proof}[Proof of Theorem \ref{MX-cocomplete}]
See Appendix \ref{proof-MX-cocomplete}
\end{proof}
\begin{prop}\label{monad-finitary}
The monad $\T= \Ub \Gamma: \K_X \to \K_X$ is finitary, that is, it preserves filtered colimits.  
\end{prop}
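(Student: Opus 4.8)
The plan is to show that the forgetful functor $\Ub : \msx \to \kx$ creates filtered colimits. Since $\Gamma$ is a left adjoint it preserves all colimits, so $\T = \Ub\Gamma$ will then preserve filtered colimits as a composite of two functors that do, which is precisely the assertion. So everything reduces to the behaviour of $\Ub$ on filtered colimits, which in turn reduces to the behaviour of the tensor product.

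The only non-formal ingredient is this tensor computation. Because $\M$ is symmetric \emph{closed} monoidal, each functor $-\otimes A$ and $A\otimes -$ preserves all colimits; consequently $\otimes : \ul{M}\times\ul{M}\to\ul{M}$ preserves filtered colimits, in the sense that for a filtered diagram $i\mapsto (P_i,Q_i)$ the canonical comparison map
\[
\colim_i(P_i\otimes Q_i)\longrightarrow (\colim_i P_i)\otimes(\colim_i Q_i)
\]
is an isomorphism — this follows by computing the right-hand side as $\colim_{(i,j)\in I\times I}(P_i\otimes Q_j)$ (using cocontinuity of $\otimes$ in each variable) and then restricting along the diagonal $I\to I\times I$, which is a final functor when $I$ is filtered. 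Since filtered colimits in $\kx=\prod_{(A,B)}\Hom[\sx(A,B)^{\op},\M]$ are computed object-wise, the same holds level-wise there.

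Now, given a filtered diagram $\{F^{(i)}\}_{i\in I}$ in $\msx$, I would form the level-wise colimit $F:=\colim_i\Ub F^{(i)}$ in $\kx$ and promote it to an $\S$-diagram: for composable chains $s,t$ the laxity map $\varphi_{s,t}\colon F(s)\otimes F(t)\to F(s\otimes t)$ is the composite of the comparison isomorphism $(\colim_i F^{(i)}(s))\otimes(\colim_i F^{(i)}(t))\xrightarrow{\cong}\colim_i\bigl(F^{(i)}(s)\otimes F^{(i)}(t)\bigr)$ with $\colim_i\varphi^{(i)}_{s,t}$. Naturality in $(s,t)$ and the coherence axioms of a lax morphism hold for $F$ because they are equalities between finite composites of structure maps that already hold at every stage $i$, and filtered colimits commute with these finite diagrams. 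Finally, a cocone $\{F^{(i)}\to H\}$ in $\msx$ yields a cocone in $\kx$, hence a unique $\kx$-morphism $F\to\Ub H$; passing the laxity-compatibility equation to the colimit shows this morphism respects the laxity maps, so it is a morphism of $\S$-diagrams, and it is clearly the unique such lift. This proves that $\Ub$ creates (in particular preserves) filtered colimits, and composing with the colimit-preserving $\Gamma$ gives that $\T$ is finitary.

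The main obstacle is the bookkeeping in the third step: one must check that the coherence axioms for a lax morphism of semi-bicategories — and the compatibility of a simple transformation with the laxity maps — are genuinely finitary equations, built from finitely many structure maps, so that they survive passage to a filtered colimit, and that the various comparison maps ``$\otimes$ over $\colim$'' are chosen coherently so that all the diagrams in sight commute (for instance, that the $\varphi_{s,t}$ defined above is independent of the order in which one distributes the tensor over the colimit). Everything else is formal; the cocompleteness needed merely to make sense of the colimits involved is supplied by Theorem \ref{MX-cocomplete}.
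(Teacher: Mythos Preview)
Your argument is correct, and rests on the same tensor--colimit identity as the paper's proof (the diagonal $I\to I^n$ is final for filtered $I$, combined with cocontinuity of $\otimes$ in each variable). The organization, however, is genuinely different. The paper never touches filtered colimits in $\msx$ at all: it works entirely inside $\K_X$, using the explicit formula
\[
\Gamma\Fa(t)=\coprod_{(t_0,\dots,t_l)\in\text{Dec}(t)}\Fa(t_0)\otimes\cdots\otimes\Fa(t_l)
\]
(Remark \ref{equiv-deux-def}) and checking directly that this expression, evaluated on a directed system $\Fa^k$, commutes with the colimit level by level. This is shorter because it avoids constructing the lax structure on $\colim_i F^{(i)}$ and verifying its coherences.

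Your route proves the stronger statement that $\Ub$ \emph{creates} filtered colimits, and then invokes that $\Gamma$, being a left adjoint, preserves all colimits. This buys you an explicit description of filtered colimits in $\msx$ (they are computed in $\K_X$), which the paper only obtains implicitly later. The cost is the bookkeeping you flag in your last paragraph: the coherence axioms for the laxity maps of $F=\colim_i F^{(i)}$ and the compatibility of the universal map $F\to\Ub H$ with laxity must be checked by hand. These are indeed finitary equations and pass to the filtered colimit as you say, so the argument goes through, but the paper's direct computation sidesteps this verification entirely.
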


\begin{proof}[Proof of the proposition]
Filtered and directed colimits are essentially the same and it's known that a functor preserves filtered colimits if and only if it preserves directed colimits (see  \cite[Chap. 1, Thm 1.5 and Cor ]{Adamek-Rosicky-loc-pres}).
This allows us to reduce the proof to directed colimits. \ \\


Recall that  colimits in $\K_X=\prod_{(A,B) \in X^{2}} \Hom[\S_{\ol{X}}(A,B)^{op}, \M]$  are computed factor-wise.\ \\  

For $\Fa=(\Fa_{AB})$ recall  that $\T \Fa= ([\Gamma \Fa]_{AB})$ where $\Gamma$ is the left adjoint of $\Ub$ (see Appendix \ref{adjoint-ub}).\footnote{Note that actually $\T \Fa= ([\Ub\Gamma \Fa]_{AB})$ but since $\Ub$ consists to forget the laxity maps it's not necessary to mention it.} To simplify the notation we will not mention the subscript `AB'. For each pair $(A,B)$  the $AB$-component of  $\Gamma \Fa$ is $\Gamma \Fa: \S_{\ol{X}}(A,B)^{op} \to  \M$ the functor given by:
\begin{itemize}
\item for $t \in \S_{\ol{X}}(A,B)$ we have 
$$\Gamma \Fa(t)=  \coprod_{(t_0, \cdots ,t_l) \in \tx{Dec}(t)} \Fa(t_0) \otimes \cdots \otimes \Fa(t_l).$$
\item for $u: t \to t'$ , we have 
$$\Gamma \Fa(u)=  \coprod_{(u_0, \cdots ,u_l) \in \tx{Dec}(u)} \Fa(u_0) \otimes \cdots \otimes \Fa(u_l)$$

where $u_i: t_i \to t'_i$. 
\end{itemize}

 Let  $\lambda < \kappa$ be  an ordinal and ${(\Fa^{k})}_{k \in \lambda}$ be a $\lambda$-directed diagram in $\K_X$ whose colimit is denoted by $\Fa^{\infty}$.\ \\ 
 
For any $l$  the diagonal functor $d: \lambda \to \prod_{i=0...l} \lambda$ is cofinal therefore the following colimits are the same%

\begin{equation*}
\begin{cases}
\colim_{(k_0, ...,  k_l) \in \lambda^{l+1}}  \{ \Fa^{k_0}(t_0) \otimes \cdots \otimes \Fa^{k_l}(t_l) \} \\
\\
\colim_{k \in \lambda}  \{  \Fa^{k}(t_0) \otimes \cdots \otimes \Fa^{k}(t_l) \} \\
\end{cases}
\end{equation*}

The first colimit is easy to compute as $\M$ is symmetric closed  and we have 
$$\colim_{(k_0, ...,  k_l) \in \lambda^{l+1}}  \{  \Fa^{k_0}(t_0) \otimes \cdots \otimes \Fa^{k_n}_{t_l} \} = \Fa^{\infty}(t_0) \otimes \cdots \otimes \Fa^{\infty}_{t_l} .$$

Consequently $\colim_{k \in \lambda}  \{  \Fa^{k}(t_0) \otimes \cdots \otimes \Fa^{k}_{t_l} \}= \Fa^{\infty}(t_0) \otimes \cdots \otimes \Fa^{\infty}(t_l). $\ \\

From this we deduce successively that: 
\begin{equation*}
\begin{split}
\colim_{k \in \lambda} \Gamma \Fa^{k}(t)
&=\colim_{k \in \lambda} \{ \coprod_{(t_0, \cdots ,t_l) \in \tx{Dec}(t)}  \Fa^{k}(t_0) \otimes \cdots \otimes \Fa^{k}(t_l) \}\\
 &= \coprod_{(t_0, \cdots ,t_l) \in \tx{Dec}(t)} \colim_{k \in \lambda} \{ \Fa^{k}(t_0) \otimes \cdots \otimes \Fa^{k}(t_l) \}\\
 &= \coprod_{(t_0, \cdots ,t_l) \in \tx{Dec}(t)} \Fa^{\infty}(t_0) \otimes \cdots \otimes \Fa^{\infty}(t_l) \\
&=\Gamma \Fa^{\infty}(t) \\
\end{split}
\end{equation*}

which shows that  $\T = \Ub \Gamma$ preserves directed colimits as desired.  
\end{proof}

Now we can give the proof of Theorem \ref{MX-local-pres}  as follows. 

\begin{proof}[Proof of Theorem \ref{MX-local-pres}]
Thanks to Theorem \ref{MX-monadic-KX} we know that $\Ub: \M_{\S}(X) \to \K_X$ is monadic therefore $\M_{\S}(X)$ is equivalent to the category $\T$-alg of $\T$-algebra. Now since $\T$ is a finitary monad on the locally presentable category $\K_X$, we  know from a classical result that $\T$-alg (hence $\M_{\S}(X)$) is also locally presentable  (see \cite[Remark 2.78]{Adamek-Rosicky-loc-pres}).
\end{proof}


\section{Locally Reedy $2$-categories} \label{lr-category}
In the following we give an \emph{ad hoc} definition of a locally Reedy $2$-category.  One can generalize this notion to $\O$-algebra but we will not go through that here. The horizontal composition in $2$-categories will be denoted by $\otimes$.   
\begin{df}\label{Reedy-2-cat}
A small $2$-category $\Ca$ is called a \textbf{locally Reedy $2$-category} if the following holds.
\begin{enumerate}
\item For each pair $(A,B)$ of objects, the category $\Ca(A,B)$ is a classical Reedy $1$-category;
\item The composition $\otimes : \Ca(A,B) \times \Ca(B,C) \to \Ca(A,C)$ is a functor of Reedy categories i.e takes direct (resp. inverse) morphisms to direct (resp. inverse) morphism.
\end{enumerate}
\end{df}
\begin{ex}\label{ex-lr-category}
\begin{enumerate}
\item The examples  that motivated the above definition are of course the $2$-categories, $\P_{\D}$ and $\S_{\D}$, and their respective $1$-opposite and $2$-opposite $2$-categories: $(\P_{\D})^{\tx{op}}$, $(\S_{\D})^{2\tx{-op}}$, etc.  In particular $(\Delta, +, 0)$ is a monoidal Reedy category (a locally Reedy $2$-category with one object).
\item Any classical Reedy $1$-category $\D$ can be considered as a $2$-category with two objects $0$ and $1$  with  $\Hom(0,1)= \D$, $\Hom(1,0)= \varnothing$; $\Hom (0, 0)= \Hom(1,1)= \uc$ (the unit category); the composition is the obvious one (left and right isomorphism of the cartesian product). It follow that any classical category is also a locally Reedy $2$-category. 
\item As any set is a (discrete) Reedy category, it follows that any $1$-category viewed as a $2$-category with only identity $2$-morphisms is a locally Reedy $2$-category. In that case the linear extension are constant functor.
\end{enumerate}
\end{ex}

\begin{warn}
It's important to notice that we've chosen to say `locally Reedy $2$-category' rather than `Reedy $2$-category'. The reason is that  the later terminology may refer to the notion of `Reedy $\Ma$-category' (=enriched Reedy category)  introduced by Angeltveit \cite{Angel_Reedy} when  $\Ma=(\Cat,\times, \uc)$.  
\end{warn}
In our definition we've implicitly used the fact that if $\Aa$ and $\Ba$ are two classical Reedy categories, then there is a natural Reedy structure on the cartesian product $\Aa \times \Ba$ (see \cite[Prop. 15.1.6]{Hirsch-model-loc}). This way we form a monoidal category  of Reedy categories and morphisms of Reedy categories with the cartesian product;  the unit is the same i.e $\uc$. We will denote by $\Reedycatx$ this monoidal category.\ \\ 

Our  definition is equivalent to say that

\begin{df}
A locally Reedy $2$-category is a category enriched over $\Reedycatx$.
\end{df}

\begin{rmk}
\begin{enumerate}
\item This definition can be generalized to locally Reedy $n$-categories, but we won't consider it, since the spirit of this paper is to use lower dimensional objects to define higher dimensional ones.
\item One can replace $\Reedycatx$ by a suitable monoidal category $\M$-$\Reedycat^{\otimes}$ of Reedy $\M$-categories in the sense of  Angeltveit \cite{Angel_Reedy}; but we don't know how relevant this would be. 
\item We can also enrich over the category of generalized Reedy categories in the sense of Berger-Moerdijk \cite{Ber-Moer_Reedy_cat} and in the sense of Cisinski \cite[Chap. 8]{Cisinski-prefaisceau-model}. 
\item We can also push the definition far  by considering not only Reedy $\O$-algebras, but defining first a Reedy multisorted operad as being multicategory enriched over $\Reedycatx$,$\M$-$\Reedycat^{\otimes}$, etc. 
\end{enumerate}
\end{rmk}

\subsection{(Co)lax-latching and (Co)lax-matching objects}
Let $\Ca$ be a locally Reedy $2$-category (henceforth $\lr$-category). Given a lax morphism or colax morphism $\Fa: \Ca \to \M$ we would like to define the corresponding latching and matching objects of $\Fa$ at a $1$-morphism $z$ of $\Ca$. We will concentrate our discussion to the lax-latching object; leaving the other cases to the reader. Our definitions are restricted to the case where $\Ca$ is equipped with a \textbf{global linear extension} which respects the composition. This means that we have an ordinal $\lambda$  such that the linear extension $\degb : \Ca(A,B) \to \lambda$ satisfies $\degb(g \otimes f)= \degb (g)  + \degb(f)$.

\begin{note}
We don't know many examples other than the $2$-categories that motivated this consideration, but we choose to have a common language for both $\P_{\ol{X}}$, $\sx$ and the others $2$-categories we can construct out of them. However it's clear to see that for a classical Reedy $1$-category $\D$, if we view $\D$ as an $\lr$-category with two objects (see Example \ref{ex-lr-category}) and if we declare both $\degb(\Id_0)=\degb(\Id_1)=0$ then $\D$ has this property. \\

We will consider lax morphism $\Fa : \Ca \to \M$ which are unitary in the sense of Bénabou i.e such that $\Fa(\Id) = \Id$ and the laxity maps $ \Fa \Id \otimes \Fa f  \to \Fa(\Id \otimes f)$ are the natural left and right isomorphisms.
\end{note}

Let $\lambda$ be an infinite ordinal containing $\omega$. We can make $\lambda$ into a monoidal category with the addition; and we can consider it a usual as a  $2$-category with one object having as hom-category  $\lambda$. 
\begin{df}\label{lr-simple}
A locally Reedy $2$-category $\Ca$ is \textbf{simple} if there exist an infinite ordinal $\lambda$ such that the linear extension form a strict $2$-functor $\degb : \Ca \to \lambda$.  Here on object $\degb: \Ob(\Ca) \to \{ \ast \}$ 
\end{df}

From this definition we have the following consequences:
\begin{itemize}[label=$-$]
\item First if $\Ca$ is a simple $\lr$-category then the composition reflects the identies; thus $\Ca$ is an $\iro$-algebra for the operad of $2$-categories.
\item For any object $A \in \Ca$, we have $\degb(\Id_A)=\0$ since $\degb(\Id_A)= \degb(\Id_A \otimes \Id_A)= \degb(\Id_A) + \degb(\Id_A)$.
\item Another important consequence is that a $1$-morphism $z$, cannot appear in the set 
$$\otimes^{-1}(z) =\coprod_{l >1} \{(s_1, ..., s_l); \otimes(s_1, ..., s_l)=z  \}$$

if $\degb(s_i)> \0$ for all $i$. 
\end{itemize}

\paragraph{Using the Grothendieck construction} For each pair $(A,B)$  we have a composition diagram which is organized into a functor $\cb: \Delta_{\tx{epi}} \to \Cat$ and represented as:
\[
\xy
(-75,0)+(-10,0)*++{\Ca(A,B)}="X";
(-40,0)+(-8,0)*++{\coprod \Ca(A,A_{1})\times \Ca(A_{1},B)}="Y";
(16,0)+(-5,0)*++{\coprod \Ca(A,A_{1}) \times \Ca(A_{1},A_{2})\times \Ca(A_{2},B)}="Z";
(44,0)*++{\cdots};
{\ar@{->}"Y";"X"};
{\ar@<-0.5ex>"Z";"Y"};
{\ar@<0.5ex>"Z";"Y"};
\endxy
\]
\\
More precisely one defines:
\begin{itemize}[label=$-$]
\item $\cb(\1)= \Ca(A,B)$,
\item  $\cb(\n)= \coprod_{(A,...,B)} \Ca(A,A_{1}) \times \Ca(A,A_1)\times \cdots\times \Ca(A_{n-1},B)$.
\end{itemize}
\ \\
Note that the morphisms in  $\Depi$ are generated by the maps  $\sigma^{n}_i: \n + \1 \to \n$ which are characterized by  $\sigma^{n}_i(i)=\sigma^{n}_i(i+1)$ for $i \in \n=\{0, \cdots , n-1 \}$ (see \cite[p.177]{Mac}). Then the functor $\cb(\sigma^{n}_i): \cb(\n+1)\to  \cb(\n)$ is the functor which consists to compose at the vertex $A_{i+1}$. \ \\

Let $\int\cb$ be the category we obtained by the Grothendieck construction:
\begin{itemize}[label=$-$]
\item the objects are pairs $(\n, a)$ with $a \in \Ob(\cb(\n))$. Such object $(\n, a)$ can be identified with an $n$-tuple of $1$-morphisms $(s_1,..., s_n)$ with $s_i \in \Ca(A_{i-1}, A_i)$.
\item  a morphism $\gamma: (\n,a) \to (\m,b)$ is a pair $\gamma=(f,u)$ where $f: \n \to \m$ is a morphism of $\Depi$ and 
$u: \cb(f)a \to b$ is a morphism in $\cb(\m)$. Here $\cb(f): \cb(\n) \to \cb(\m)$ is a functor (image of $f$ by $\cb$).
\item the composite of $\gamma=(f,u)$ and $\gamma'=(g,v)$ is $\gamma' \circ \gamma:= (g \circ f, v \circ \cb(g)u)$.
\end{itemize}
One can easily check that these data define a category.  Note that for each $\n \in \Depi$ the subcategory of objects over $\n$ is isomorphic to $\cb(\n)$.
\begin{claim}
For a simple $\lr$-category $\Ca$ and for each pair $(A,B)$ there is a natural Reedy structure on $\int \cb$. 
\end{claim}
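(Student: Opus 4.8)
The plan is to write down the degree function and the two wide subcategories of $\int\cb$ explicitly, and then to reduce the Reedy axioms for $\int\cb$ to those already available in each hom-category $\Ca(A,B)$ together with the defining property of an $\lr$-category that $\otimes$ sends direct (resp.\ inverse) morphisms to direct (resp.\ inverse) morphisms. Recall that an object of $\int\cb$ is a chain $(\n,a)$ with $a=(s_1,\dots,s_n)$, $s_i\in\Ca(A_{i-1},A_i)$, $A_0=A$, $A_n=B$. Since $\Ca$ is simple, each hom-category carries a linear extension $\degb$ of its Reedy structure valued in one fixed ordinal $\lambda$, with $\degb(\Id)=0$ and $\degb(g\otimes f)=\degb(g)+\degb(f)$. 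I would set, for a sufficiently large ordinal $\mu$ (for instance $\mu=\lambda\cdot\omega$),
$$\deg(\n,a)\ :=\ \bigl(\degb(s_1)\oplus\cdots\oplus\degb(s_n)\bigr)\oplus n,$$
where $\oplus$ is the natural (Hessenberg) sum of ordinals; the features of $\oplus$ used below are that it is strictly monotone in each argument and that it dominates ordinary ordinal addition, $\alpha+\beta\leq\alpha\oplus\beta$. (When the local degrees are finite, as in the motivating examples $\P_{\ol{X}}$ and $\sx$, the ordinary sum of natural numbers may be used throughout.)

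I would then define $(\int\cb)_+$ to be the morphisms of the form $(\Id_\n,u)$ each of whose components $u_i\colon s_i\to s_i'$ is a direct morphism of $\Ca(A_{i-1},A_i)$, and $(\int\cb)_-$ to be the morphisms $(f,u)\colon(\n,a)\to(\m,b)$ with $f$ an arbitrary morphism of $\Depi$ and each component $u_j\colon(\cb(f)a)_j\to b_j$ an inverse morphism. Both are wide. Closure of $(\int\cb)_+$ under composition is immediate; for $(\int\cb)_-$ one uses the composition law $(f,u)\circ(g,v)=(f\circ g,\,u\circ\cb(f)v)$ and the fact that $\cb(f)$ is assembled out of the composition functor $\otimes$, hence — by the $\lr$-axiom — carries the inverse tuple $v$ to an inverse tuple, which then composes with $u$. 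Their intersection is the identities, since a morphism in both has first component $\Id$ and components that are at once direct and inverse in Reedy categories. For the unique factorization, given $(f,u)$ I would factor each component $u_j$ (a morphism of the hom-category $\Ca(A'_{j-1},A'_j)$ cut out by $f$) as $u_j^+\circ u_j^-$ in that Reedy category, and check from the composition law that this assembles into $(f,u)=(\Id_\m,u^+)\circ(f,u^-)$ with $(f,u^-)\in(\int\cb)_-$ and $(\Id_\m,u^+)\in(\int\cb)_+$; uniqueness follows because any factorization $(f,u)=(\Id,v^+)\circ(h,v^-)$ forces $h=f$ and, componentwise, $v_j^+\circ v_j^-=u_j$, whence $v^\pm=u^\pm$ by uniqueness of Reedy factorizations.

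It remains to check the degree inequalities, which is where the choice of $\deg$ pays off. A non-identity morphism of $(\int\cb)_+$ fixes $n$ and strictly raises $\degb$ in at least one coordinate while lowering none, so by strict monotonicity of $\oplus$ it strictly raises $\deg$. For a non-identity $(f,u)$ of $(\int\cb)_-$, write $\cb(f)a=(c_1,\dots,c_m)$, where each $c_j$ is an $\otimes$-composite of a consecutive block of the $s_i$; by additivity of $\degb$ and $\alpha+\beta\leq\alpha\oplus\beta$ one gets $\bigoplus_j\degb(c_j)\leq\bigoplus_i\degb(s_i)$, while each inverse step $u_j\colon c_j\to b_j$ can only decrease $\degb$, and $m\leq n$; hence $\deg(\m,b)\leq\deg(\n,a)$, strictly, because either $f\neq\Id$ (so $m<n$ and the $\oplus n$ summand drops) or some $u_j$ is a non-identity inverse morphism (so $\degb(b_j)<\degb(c_j)$ and $\oplus$ is strictly monotone). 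This gives the Reedy structure; the projection $\int\cb\to\Depi$, $(f,u)\mapsto f$, moreover sends $(\int\cb)_\pm$ into the direct, resp.\ inverse, morphisms of $\Depi$, which is the sense in which the structure is natural.

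The main obstacle is the ordinal arithmetic: with a genuinely infinite local degree ordinal $\lambda$, ordinary ordinal addition is not strictly monotone in its left argument, so a naive ``sum of local degrees plus length'' is not a legitimate Reedy degree; the remedy is to pass to the natural sum, and one must then still check that block-composition — governed by ordinary $\otimes$, hence by ordinary addition of degrees, and therefore only yielding $\leq$ on the $\bigoplus\degb$ term — strictly lowers the total thanks to the extra $\oplus n$ summand. Everything else is routine bookkeeping with the Grothendieck construction together with one application of the $\lr$-compatibility of $\otimes$.
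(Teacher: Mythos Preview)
Your argument is correct and more careful than the paper's three-line justification. The paper takes a different route: it sets $\degb(\n,a)=\degb(s_1)+\cdots+\degb(s_n)$ (no length summand, ordinary ordinal sum) and declares $(f,u)$ direct (resp.\ inverse) precisely when $u$ is direct (resp.\ inverse) in $\cb(\m)$, then appeals to the Reedy structure on $\cb(\m)$ for factorization. You instead force direct morphisms to lie over $\Id_{\n}$, push the ``composition'' morphisms $(f,\Id)$ into the inverse subcategory, and add the $\oplus\,n$ summand (with Hessenberg sum) so that these genuinely lower degree. This is a substantive difference: with the paper's choices the non-identity morphisms $(f,\Id_{\cb(f)a})$, $f\neq\Id$, lie in both wide subcategories and leave the paper's degree unchanged, so the sketch does not literally satisfy the Reedy axioms without some patching; your version repairs this cleanly. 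The trade-off is that the two constructions produce different direct subcategories $\int\overrightarrow{\cb}$, and the paper's larger one is what is used immediately afterwards to define the generalized latching category $\partial^{\bullet}_{\Ca/z}$ and the lax-latching object: one needs direct morphisms $(\n,a)\to(\1,z)$ for all $\n$, whereas with your structure the latching category at $(\1,z)$ collapses to the ordinary one inside $\Ca(A,B)$. So your proof settles the claim as stated, but if you continue into the lax-latching machinery you will have to treat $\partial^{\bullet}_{\Ca/z}$ as an independently defined indexing category rather than as the comma category under your $\int\overrightarrow{\cb}$.
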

In fact one has a linear extension by setting $\degb(\n, a)= \degb(a)= \degb(s_1)+ \cdots +\degb(s_n)$ for $a=(s_1,..., s_n)$. A morphism $\gamma=(f,u): (\n,a) \to (\m,b)$ is said to be a direct (resp. inverse) if $u$ is a direct (resp. inverse) morphism in $\cb(\m)$. The factorization axiom follows from the fact that $\cb(\m)$ is a Reedy category.

\begin{rmk}\ \
\begin{enumerate}
\item There is a general statement for the category $\int F$ associated to any functor \\ $F: \D \to \Reedycatx$. 
\item For any morphism of $ h: x \to y$ of $\D$ and any morphism $u: a \to b$ of $F(x)$,  the following commutes in $\int F$:
\[
\xy
(0,20)*+{(x,a)}="A";
(30,20)*+{(y, F(h)a)}="B";
(0,0)*+{(x,b)}="C";
(30,0)*+{(y,F(h)b)}="D";
{\ar@{->}^-{(h, \Id_{F(h)a})}"A";"B"};
{\ar@{->}_-{(\Id_x, u)}"A";"C"};
{\ar@{->}^-{(\Id_y,F(h)u)}"B";"D"};
{\ar@{->}_-{(h, \Id_{F(h)b})}"C";"D"};
\endxy
\]  

\end{enumerate}
\end{rmk}
 
 Let $\int \overrightarrow{\cb} \subset \int \cb$ be the direct category. We will denote by $ \int \overrightarrow{\cb} \downarrow (\n,a)$ the slice category at $(\n,a)$. 
\begin{df}\label{gen-latching-cat}
Let $z \in  \Ca(A,B)$ be a $1$-morphism. Define the \textbf{generalized latching category at $z$}, $\partial^{\bullet}_{\Ca/z}$, to be the subcategory of $ \int \overrightarrow{\cb} \downarrow (\1,z)$  described as follows. 

\begin{itemize}[label=$-$]
\item the objects are \ul{direct} morphisms $\gamma: (\n,a) \to (\1,z)$ such that $z$ doesn't appear in $a$; or equivalently we have $a=(s_1,..., s_n)$  with $\degb(s_i) < \degb(z)$ for all $i$.
\item the morphisms are the usual morphisms of the comma category $ \int \overrightarrow{\cb} \downarrow (\1,z)$: 
\[
\xy
(0,20)*+{(\n,a)}="A";
(30,20)*+{(\1,z)}="B";
(0,0)*+{(\m,b)}="C";
{\ar@{->}^{\gamma}"A";"B"};
{\ar@{->}_{\delta}"A";"C"};
{\ar@{->}_{\gamma'}^{}"C";"B"};
\endxy
\]  
\item the composition is the one in $ \int \overrightarrow{\cb} \downarrow (\1,z)$.
\end{itemize}
\end{df} 

\paragraph{Lax functor and diagram on $\int \cb$} Given a lax functor $\Fa: \Ca \to \M$ we can define a natural functor denoted again $\Fa$ on $\int \cb$ as follows.

\begin{enumerate}
\item $\Fa(\n,a)=\otimes(\Fa s_1, ... \Fa s_n)$ for $a=(s_1,..., s_n)$;
\item To define $\Fa$ on morphisms it suffices to define the image of morphisms $\gamma=(\sigma^{n}_i,u)$ since they generated all the other morphisms. Such morphism $\gamma: (\n+\1, a) \to (\n, b)$ for $a=(s_1,..., s_{n+1})$ and $b=(t_1,...,t_n)$ corresponds to a $n$ direct morphisms $\{\alpha_l: s_l \to t_l \}_{l \neq i,l \neq i+1} \cup \{ \alpha_i: s_i \otimes s_{i+1} \to t_i \} $. 
With these notations one defines $\Fa(\gamma): \Fa(\n+1,a) \to \Fa(\n,b)$ to be the composite:
$$ \otimes(\Fa s_1, ..., \Fa s_{n+1}) \xrightarrow{\Id \otimes ... \otimes \varphi(s_i,s_{i+1}) \otimes ... \Id} \otimes(\Fa s_1, ..., \Fa(s_i \otimes s_{i+1}),... \Fa s_n) \xrightarrow{\otimes \Fa( \alpha_l)}  \otimes(\Fa t_1, ..., \Fa t_{n})$$
where $\varphi(s_i,s_{i+1}): \Fa s_i \otimes \Fa s_{i+1} \to \Fa(s_i \otimes s_{i+1})$ is the laxity map.  
\end{enumerate}
These data won't define a functor until we show that  $\Fa(\gamma \circ \gamma')= \Fa(\gamma) \circ \Fa(\gamma')$. But this is given by the coherence axioms for the lax functor $\Fa : \Ca \to \M$ as we are going to explain. First of all we will denote by $\varphi_{\sigma_i}$ the above map $\Id \otimes \cdots \otimes \varphi \otimes \cdots \otimes \Id$ which uses the laxity map of the  $i$th and $(i+1)$th terms. For any map $f: \n \to \m$ of $\Depi$ and any object $a \in \cb(\n)$ there is a canonical map $f_a: (\n,a) \to (\m,\cb(f)a)$ given by $f_a= (f, \Id_{\cb(f)a})$. As pointed out by Mac Lane \cite[p.177]{Mac}, each morphism $f: \n \to \m$ of $\Depi$ has a \emph{unique presentation} $f= \sigma_{j_1} \circ \cdots \circ \sigma_{j_{n-m}}$ where the string of subscripts $j$ satisfy:
$$0 \leq j_1 < \cdots < j_{m-n}< n-1.$$ 

With the previous notations we can define $\varphi_f= \Fa(f_a):= \varphi_{\sigma_{j_1}} \circ \cdots \circ \varphi_{\sigma_{j_{n-m}}}$ to be the \textbf{laxity map governed by $f$}. Here we omit $a$ in $\varphi_f$ for simplicity.
\begin{prop}\label{prop-lax-commute}
Given $f: \n \to \m$ and $g: \m \to \m'$ then $\Fa({g\circ f}_a)= \Fa(g_{\cb(f)a}) \circ \Fa(f_a)$ i.e $\varphi_{g\circ f}= \varphi_g \circ \varphi_f$. 
\end{prop}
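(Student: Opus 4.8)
The statement to prove is Proposition \ref{prop-lax-commute}: for composable maps $f:\n\to\m$ and $g:\m\to\m'$ in $\Depi$, the laxity maps governed by them compose correctly, i.e. $\varphi_{g\circ f}=\varphi_g\circ\varphi_f$ (equivalently $\Fa$ respects composition on the canonical morphisms $f_a$ in $\int\cb$). The plan is to reduce everything to the generating maps $\sigma^n_i$ and then invoke the coherence axioms of the lax functor $\Fa:\Ca\to\M$. I would proceed as follows.

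First I would record that it suffices to prove the statement when $f$ and $g$ are each a single $\sigma$-map, and then when one of $f,g$ is a $\sigma$-map and the other a composite of $\sigma$-maps. Indeed, by the definition $\varphi_f=\varphi_{\sigma_{j_1}}\circ\cdots\circ\varphi_{\sigma_{j_{n-m}}}$ the claim for general $f,g$ follows formally once we know that concatenating the unique presentations of $f$ and $g$ (and re-sorting the subscripts into the strictly increasing order demanded by Mac Lane's normal form) does not change the resulting composite of $\varphi_\sigma$'s. So the real content is the \emph{cosimplicial identity for the $\varphi_\sigma$'s}: for $i\le j$,
\[
\varphi_{\sigma^{n}_j}\circ\varphi_{\sigma^{n+1}_i}=\varphi_{\sigma^{n}_i}\circ\varphi_{\sigma^{n+1}_{j+1}},
\]
which mirrors the simplicial identity $\sigma^{n}_j\circ\sigma^{n+1}_i=\sigma^{n}_i\circ\sigma^{n+1}_{j+1}$ already recorded in the remark on $\Upsilon$. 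Once this is established, any two presentations of the same $f$ give the same $\varphi_f$, and the decomposition $\varphi_{g\circ f}=\varphi_g\circ\varphi_f$ is immediate by splitting the (unique) presentation of $g\circ f$ at the appropriate place and reordering via repeated use of the above identity.

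Next I would verify the cosimplicial identity for the $\varphi_\sigma$'s by unwinding the definition of $\varphi_{\sigma_i}$ as $\Id\otimes\cdots\otimes\varphi(s_i,s_{i+1})\otimes\cdots\otimes\Id$ and splitting into two cases. When $j>i+1$ the two laxity maps $\varphi(s_i,s_{i+1})$ and $\varphi(s_{j},s_{j+1})$ act on disjoint tensor slots, so both sides equal the single map applying both $\varphi$'s in parallel; this is just interchange in the monoidal category $\M(FA,FB)$ together with naturality of $\varphi$, and requires no coherence beyond bifunctoriality of $\otimes$. When $j=i+1$ (the adjacent case) both sides amount to applying the laxity maps to three consecutive slots $s_i,s_{i+1},s_{i+2}$, one grouping $(s_i s_{i+1})s_{i+2}$ and the other $s_i(s_{i+1}s_{i+2})$; here I would invoke precisely the associativity (coherence) axiom of Definition \ref{o-morphism} for the lax $\O_X$-morphism $\Fa$ — the equation displayed there, specialized to three $1$-morphisms — possibly combined with the associator of $\M$ (which is harmless since $\Fa$ is unitary and we may work up to the coherent associativity isomorphisms). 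I would also note the direct-morphism part: a general $\gamma=(\sigma^n_i,u)$ factors as the canonical $\sigma_i$-map followed by $(\Id,u')$, and the $(\Id,u')$-parts are handled by naturality of the $\varphi$'s in the $1$-morphism variables, which is built into the data of $\Fa$ as a collection of natural transformations.

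The main obstacle I expect is purely bookkeeping: matching the strictly-increasing normal form of subscripts in Mac Lane's presentation with the order in which laxity maps get applied, and being careful that the reordering moves needed to pass from an arbitrary presentation to the normal form are exactly compositions of the adjacent and non-adjacent identities just proved — so that no extra coherence is silently used. A secondary, more conceptual point worth making explicit is that this entire proposition is really the assertion that $\Fa$, regarded via the Grothendieck-construction description, is a genuine functor $\int\cb\to\M$; phrased that way, it is a direct translation of the coherence axioms of a lax morphism, and one could alternatively cite the general fact (alluded to in the ``Coherences'' paragraph) that the family $\{\Fa_i\}$ with the $\varphi$'s is an $\O_X$-algebra in $\Ar(\Cat)_+$, from which functoriality on $\int\cb$ follows formally. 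I would present the hands-on $\sigma$-map argument as the main proof and mention the $\Ar(\Cat)_+$ viewpoint as a remark.
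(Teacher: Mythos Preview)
Your approach is essentially identical to the paper's: isolate the simplicial identity $\varphi_{\sigma_j}\circ\varphi_{\sigma_i}=\varphi_{\sigma_i}\circ\varphi_{\sigma_{j+1}}$ (for $i\le j$) as a lemma, proving it by bifunctoriality of $\otimes$ in the disjoint case and by the lax-coherence square in the overlapping case, and then reorder the concatenated $\sigma$-presentation of $g\circ f$ into Mac Lane's normal form using that identity.

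One small correction on your case split: after applying $\varphi_{\sigma_i}$ first, the second map $\varphi_{\sigma_j}$ acts on the \emph{shifted} slots (slot $j$ of the result is the original $s_{j+1}$ when $j>i$), so the overlapping case requiring the associativity coherence is $i=j$, not $j=i+1$, and the disjoint bifunctoriality case is all of $i<j$, not just $j>i+1$. This is exactly the bookkeeping you flagged as the main obstacle; the paper displays the $i=j$ square explicitly with the three consecutive slots $s_i,s_{i+1},s_{i+2}$.
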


The proposition will follow from the
\begin{lem}
The maps $\varphi_{\sigma_i}$ respect the simplicial identities $\sigma_j \circ \sigma_i= \sigma_i \circ \sigma_{j+1}$ ($i \leq j$). This means that we have $\varphi_{\sigma_j} \circ \varphi_{\sigma_i}= \varphi_{\sigma_i} \circ \varphi_{\sigma_{j+1}}.$ 
\end{lem}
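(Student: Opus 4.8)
The plan is to fix a composable tuple $a=(s_0,\dots,s_{n+1})$ of $1$-morphisms of $\Ca$ and to verify the equality $\varphi_{\sigma_j}\circ\varphi_{\sigma_i}=\varphi_{\sigma_i}\circ\varphi_{\sigma_{j+1}}$ as an identity of $2$-morphisms of $\M$ out of $\otimes(\Fa s_0,\dots,\Fa s_{n+1})$. Since $i\le j$, there are exactly two cases, $i<j$ and $i=j$, which I would treat separately. I use throughout that, by its very definition, $\varphi_{\sigma_k}$ is the $2$-morphism obtained by whiskering the \emph{single} laxity map $\varphi(s_k,s_{k+1})$ (on the adjacent pair of slots indexed by $k$, under the convention $a=(s_0,\dots)$) with identities on all the other slots, together with the ensuing relabelling of the remaining slots; and that the horizontal composition $\otimes$ of the $2$-category $\M$ is a functor in each variable, hence obeys the interchange law.

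In the case $i<j$ I would simply track slots. After $\varphi_{\sigma_i}$ merges slots $i,i+1$, the old slots $j+1,j+2$ become the new slots $j,j+1$ (using $i+1\le j$), so $\varphi_{\sigma_j}\circ\varphi_{\sigma_i}$ merges, in this order, the old slot-pairs $\{i,i+1\}$ and $\{j+1,j+2\}$; symmetrically $\varphi_{\sigma_i}\circ\varphi_{\sigma_{j+1}}$ merges the same two pairs in the opposite order. (This matching of slot operations \emph{is} the simplicial relation $\sigma_j\circ\sigma_i=\sigma_i\circ\sigma_{j+1}$ read on tuples.) Since $i+1<j+1$ these two pairs are disjoint, and each $\varphi_{\sigma_k}$ only inserts a $2$-cell into one pair while fixing the rest; by the interchange law in $\M$, inserting a $2$-cell into one collection of slots and inserting a $2$-cell into a disjoint collection commute, so the two composites coincide.

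The case $i=j$ is where the laxity coherence is needed. Here both composites merge the three consecutive slots $i,i+1,i+2$ into one and leave all other slots untouched, so it suffices to verify the equality on those three slots, i.e.
$$\varphi(s_i\otimes s_{i+1},\,s_{i+2})\circ\bigl(\varphi(s_i,s_{i+1})\otimes\Id_{\Fa s_{i+2}}\bigr)=\varphi(s_i,\,s_{i+1}\otimes s_{i+2})\circ\bigl(\Id_{\Fa s_i}\otimes\varphi(s_{i+1},s_{i+2})\bigr),$$
where I have used strict associativity of $\otimes$ in the $2$-category $\M$ to identify the two targets. This is exactly the instance — at the triple $(s_i,s_{i+1},s_{i+2})$ — of the coherence axiom of Definition \ref{o-morphism} for the operad $\O_X$ of $2$-categories of Example \ref{operad-principal}: the statement that the two decompositions of the unique ternary composition operation into binary ones yield the same laxity map. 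Whiskering both sides by identities on the remaining slots preserves the equality, so $\varphi_{\sigma_j}\circ\varphi_{\sigma_i}=\varphi_{\sigma_i}\circ\varphi_{\sigma_{j+1}}$ in this case too; and since $a$ was arbitrary the $\varphi_{\sigma_k}$ respect all the simplicial identities.

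The main obstacle is not conceptual but notational: in the first case one must keep careful track of how the length-reducing maps $\sigma_k$ relabel slots, so that the disjointness argument is applied to the correct indices; and in the second case one must unwind the operadic form of the coherence axiom of Definition \ref{o-morphism} into the concrete identity displayed above. With the indexing pinned down, both cases are routine.
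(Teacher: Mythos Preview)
Your proof is correct and follows essentially the same approach as the paper: split into the cases $i<j$ and $i=j$, invoke bifunctoriality/interchange of $\otimes$ in the first case (disjoint slots), and invoke the associativity coherence for the lax functor $\Fa$ in the second case. The only cosmetic difference is that you cite the operadic coherence of Definition~\ref{o-morphism} for $\O_X$, whereas the paper simply displays the commuting square for $\Fa s_i\otimes\Fa s_{i+1}\otimes\Fa s_{i+2}\to\Fa(s_i\otimes s_{i+1}\otimes s_{i+2})$ directly; these are the same axiom.
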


\begin{proof}[Sketch of proof]
If $i<j$ then the assertion follows from the bifunctoriality of $\otimes$. In fact  given two morphisms $u,v$ of $\M$ then 
$u \otimes v= (\Id \otimes v) \circ (u \otimes \Id) = ( u \otimes \Id) \circ (\Id \otimes v)$.  So the only point which needs to be clarified is when $i=j$. In that case the equality is given by the coherence  condition which says that the following commutes:
\[
\xy
(0,20)*+{\Fa s_i \otimes \Fa s_{i+1} \otimes \Fa s_{i+2}}="A";
(60,20)*+{\Fa s_i \otimes \Fa (s_{i+1} \otimes s_{i+2}) }="B";
(0,0)*+{\Fa (s_{i} \otimes s_{i+1}) \otimes \Fa s_{i+2} }="C";
(60,0)*+{\Fa (s_{i} \otimes s_{i+1} \otimes s_{i+2})}="D";
{\ar@{->}^-{\sigma_{i+1}}"A";"B"};
{\ar@{->}_-{\sigma_i}"A";"C"};
{\ar@{->}^-{\sigma_i}"B";"D"};
{\ar@{->}_-{\sigma_i}"C";"D"};
\endxy
\]   
\end{proof}

To prove the proposition one needs to see how we build a presentation of $g\circ f$ out of the presentation of $f= \sigma_{j_1} \circ \cdots \circ \sigma_{j_{n-m}}$ and $g=\sigma_{l_1} \circ \cdots \circ \sigma_{l_{m-m'}}$  where $0 \leq l_1 < \cdots < l_{m-m'}< m-1$ and $0 \leq j_1 < \cdots < j_{m-n}< n-1$.  By induction one reduces to the case where $g= \sigma_l$. Then $g\circ f= \sigma_l \circ \sigma_{j_1}  \cdots \sigma_{j_{n-m}}$,  and we proceed as follows.
\begin{enumerate}
\item if $l \geq j_1$ then we use the simplicial identities to replace $\sigma_l \circ \sigma_{j_1}$ by $\sigma_{j_1} \circ \sigma_{l+1}$; if not we're done.
\item then $g \circ f= \sigma_{j_1} \circ ( \sigma_{l+1} \sigma_{j_2} \cdots \sigma_{j_{n-m}})$ and we apply the first step with $g'=\sigma_{l+1}$ and $f'=\sigma_{j_2} \cdots \sigma_{j_{n-m}}$. 
\item after a finite number of steps one has the presentation of $g \circ f=\sigma_{k_1} \circ \cdots \circ \sigma_{k_{n-m +1}}$ with $0 \leq k_1 < \cdots < k_{m-n+1}< n-1$. 
\end{enumerate}

\begin{proof}[Proof of Proposition \ref{prop-lax-commute}]
By definition $\Fa({g\circ f})= \varphi_{\sigma_{k_1}} \circ \cdots \circ \varphi_{\sigma_{k_{n-m'}}}$ and we have 
$$\Fa(g_{\cb(f)a}) \circ \Fa(f_a)=  \varphi_{\sigma_{l_1}} \cdots  \varphi_{\sigma_{l_{m-m'}}} \circ  \varphi_{\sigma_{j_1}} \cdots  \varphi_{\sigma_{j_{n-m}}}.$$
From the last expression we apply the lemma to each step we've followed to get the presentation of $g \circ f$;  after a finite number of steps we end up with the expression $\varphi_{\sigma_{k_1}} \circ \cdots \circ \varphi_{\sigma_{k_{n-m'}}}$ which completes the proof.
\end{proof}

As we already said, each morphism $f : \n \to \m$ induces a functor $\cb(f): \cb(\n) \to \cb(\m)$. A morphism $u$ in $\cb(\n)$ is a $n$-tuple of morphism $u=(u_1,...,u_n)$. When there is no confusition we will write $\otimes (\Fa(u))$ to  mean $\otimes(\Fa u_1,..., \Fa u_n)$; the image of $u$ by $f$ will be denoted by $fu$ instead of $\cb(f)u$ and we may consider $\otimes(\Fa(fu))$.\ \\

According to these notations we can define shortly $\Fa$ on the morphisms of $\int \cb$ by:
$$\Fa(\gamma)= \otimes (\Fa u) \circ \varphi_f \ \ \ \ \ \ \ \ \  \tx{with} \ \ \ \gamma=(f,u)$$

\begin{lem}
For every morphism $f: \n \to \m$ of $\Depi$ and every morphism $u=(u_1, ..., u_n)$ of $\cb(\n)$  we have an equality:
$$\otimes[\Fa(fu)] \circ \varphi_f= \varphi_f \circ \otimes(\Fa u).$$
This means that the following commutes:
\[
\xy
(0,20)*+{\otimes(\Fa s_i)}="A";
(30,20)*+{\otimes[\Fa f(s_i)]}="B";
(0,0)*+{\otimes(\Fa t_i)}="C";
(30,0)*+{\otimes[\Fa f(t_i)]}="D";
{\ar@{->}^-{\varphi_f}"A";"B"};
{\ar@{->}_-{\otimes(\Fa u)}"A";"C"};
{\ar@{->}^-{\otimes[\Fa(fu)]}"B";"D"};
{\ar@{->}_-{\varphi_f}"C";"D"};
\endxy
\] 
\end{lem}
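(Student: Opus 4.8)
The plan is to recognise the claimed identity as the naturality, in the object $a=(s_1,\dots,s_n)$, of the transported laxity map $\varphi_f$, and to prove it by first treating the elementary face maps and then bootstrapping to an arbitrary $f$ via its canonical presentation. In these terms, for a fixed $f:\n\to\m$ the assignment $a\mapsto\varphi_f$ is a candidate natural transformation $\Fa(\n,-)\Rightarrow\Fa(\m,\cb(f)\,-)$ between functors $\cb(\n)\to\M$, and the square in the statement is exactly its naturality square at the morphism $u$ of $\cb(\n)$.

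First I would settle the case where $f$ is a single face map $\sigma_i$. By construction $\varphi_{\sigma_i}$ equals $\Id\otimes\cdots\otimes\varphi(s_i,s_{i+1})\otimes\cdots\otimes\Id$, whereas $\cb(\sigma_i)u$ is obtained from $u=(u_1,\dots,u_n)$ by replacing the pair $(u_i,u_{i+1})$ with its horizontal composite $u_i\otimes u_{i+1}$ in $\Ca$. By bifunctoriality of $\otimes$ in the $2$-category $\M$ it suffices to check the square one tensor slot at a time; for a slot $l\neq i,i+1$ it is the trivial square pairing $\Id$ against $\Fa u_l$, and for the merged slot it is
\[
\xy
(-18,20)*+{\Fa s_i \otimes \Fa s_{i+1}}="A";(34,20)*+{\Fa(s_i \otimes s_{i+1})}="B";(-18,0)*+{\Fa t_i \otimes \Fa t_{i+1}}="C";(34,0)*+{\Fa(t_i \otimes t_{i+1})}="D";
{\ar@{->}^-{\varphi(s_i,s_{i+1})}"A";"B"};
{\ar@{->}_-{\Fa u_i \otimes \Fa u_{i+1}}"A";"C"};
{\ar@{->}^-{\Fa(u_i \otimes u_{i+1})}"B";"D"};
{\ar@{->}_-{\varphi(t_i,t_{i+1})}"C";"D"};
\endxy
\]
which is the naturality square of the laxity transformation $\varphi:\otimes_{\M}\circ(\Fa\times\Fa)\Rightarrow\Fa\circ\otimes_{\Ca}$ of the lax functor $\Fa$, evaluated at the morphism $(u_i,u_{i+1})$ of $\Ca(A_{i-1},A_i)\times\Ca(A_i,A_{i+1})$. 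Hence the square commutes, which is the lemma for $f=\sigma_i$.

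Then I would treat an arbitrary $f:\n\to\m$ of $\Depi$ by induction on the length $n-m$ of its canonical presentation $f=\sigma_{j_1}\circ\cdots\circ\sigma_{j_{n-m}}$. The case $n-m=0$, where $f=\Id$ and $\varphi_f=\Id$, is trivial since both composites reduce to $\otimes(\Fa u)$. For the inductive step write $f=\sigma_{j_1}\circ g$ with $g=\sigma_{j_2}\circ\cdots\circ\sigma_{j_{n-m}}$. Proposition \ref{prop-lax-commute} yields $\varphi_f=\varphi_{\sigma_{j_1}}\circ\varphi_g$, and functoriality of $\cb$ yields $\cb(f)u=\cb(\sigma_{j_1})(\cb(g)u)$. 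Consequently the square for $f$ is the horizontal pasting of the square for $g$ — which commutes by the inductive hypothesis applied to $u$ — with the square for $\sigma_{j_1}$ applied to the morphism $\cb(g)u$, which commutes by the elementary case just established; a pasting of two commuting squares commutes, and this is the lemma.

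I do not expect any real obstacle. The one point that needs a little care is the bookkeeping in the inductive step, namely that the source of $\varphi_{\sigma_{j_1}}$ agrees with the target of $\varphi_g$, which is exactly the content of the identity $\cb(f)=\cb(\sigma_{j_1})\circ\cb(g)$, together with the implicit use — already guaranteed by Proposition \ref{prop-lax-commute} and the preceding lemma on the simplicial identities for the $\varphi_{\sigma_i}$ — that $\varphi_f$ is independent of the chosen presentation of $f$.
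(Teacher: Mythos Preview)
Your proof is correct and follows essentially the same approach as the paper: reduce to the elementary case $f=\sigma_i$, where the claim is precisely the naturality square of the laxity transformation $\varphi$, and then extend to arbitrary $f$ by factoring through the canonical presentation. The paper merely gestures at the reduction step (``by standard arguments, repeating the process''), whereas you spell out the induction carefully and invoke Proposition~\ref{prop-lax-commute} explicitly; but the content is the same.
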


\begin{proof}[Sketch of proof]
By standard arguments (repeating the process), one reduces the assertion to the case where $f= \sigma_i$. In this case the result follows from the functoriality of the coherence for the laxity maps, that is that all the following diagram commutes:
\[
\xy
(0,20)*+{\Fa s_i \otimes \Fa s_{i+1}}="A";
(30,20)*+{\Fa (s_{i} \otimes s_{i+1}) }="B";
(0,0)*+{\Fa t_{i} \otimes \Fa t_{i+1} }="C";
(30,0)*+{\Fa (t_{i} \otimes t_{i+1} )}="D";
{\ar@{->}^-{\varphi_{\sigma_i}}"A";"B"};
{\ar@{->}_-{\Fa u_i \otimes \Fa u_{i+1}}"A";"C"};
{\ar@{->}^-{\Fa(u_i \otimes u_{i+1})}"B";"D"};
{\ar@{->}_-{\varphi_{\sigma_i}}"C";"D"};
\endxy
\]  
\end{proof}

With the previous lemma at hand we conclude that

\begin{prop}\label{f-int}
Given two composable morphisms $\gamma$ and $\gamma'$ then we have $\Fa(\gamma' \circ \gamma)= \Fa(\gamma') \circ \Fa(\gamma)$ i.e $\Fa$ is a functor on $\int \cb$.
\end{prop}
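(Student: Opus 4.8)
The plan is to unwind the definition of $\Fa$ on morphisms of $\int\cb$ and reduce the claimed functoriality to the two facts already in hand: Proposition \ref{prop-lax-commute}, which gives $\varphi_{g\circ f}=\varphi_g\circ\varphi_f$, and the Lemma immediately preceding it, which gives $\otimes[\Fa(fu)]\circ\varphi_f=\varphi_f\circ\otimes(\Fa u)$ for every $f$ in $\Depi$ and every morphism $u$ of $\cb(\n)$. Besides these, the only inputs are the functoriality of each restricted functor $\Fa_{AB}\colon\Ca(A,B)\to\M(\Fa A,\Fa B)$ and the functoriality (interchange) of the horizontal composition $\otimes$ in the $2$-category $\M$.

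First I would fix composable morphisms $\gamma=(f,u)\colon(\n,a)\to(\m,b)$ and $\gamma'=(g,v)\colon(\m,b)\to(\m',c)$, so that by definition of the Grothendieck construction the composite is $\gamma'\circ\gamma=(g\circ f,\,v\circ gu)$, where $gu\colon(g\circ f)a\to gb$ because $\cb$ is a functor (so $\cb(g)\cb(f)=\cb(g\circ f)$). Then I would compute, starting from $\Fa(\gamma'\circ\gamma)=\otimes\bigl(\Fa(v\circ gu)\bigr)\circ\varphi_{g\circ f}$: since $v\circ gu$ is a componentwise composite in $\cb(\m')$ and each $\Fa_{AB}$ and $\otimes$ are functors, this equals $\otimes(\Fa v)\circ\otimes(\Fa(gu))\circ\varphi_{g\circ f}$; applying Proposition \ref{prop-lax-commute} to split $\varphi_{g\circ f}=\varphi_g\circ\varphi_f$, this is $\otimes(\Fa v)\circ\otimes(\Fa(gu))\circ\varphi_g\circ\varphi_f$; finally applying the Lemma with the morphism $g\colon\m\to\m'$ and the morphism $u$ of $\cb(\m)$ to rewrite $\otimes(\Fa(gu))\circ\varphi_g=\varphi_g\circ\otimes(\Fa u)$, I obtain $\otimes(\Fa v)\circ\varphi_g\circ\otimes(\Fa u)\circ\varphi_f=\bigl[\otimes(\Fa v)\circ\varphi_g\bigr]\circ\bigl[\otimes(\Fa u)\circ\varphi_f\bigr]=\Fa(\gamma')\circ\Fa(\gamma)$, as wanted. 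To close I would note that $\Fa$ also preserves identities, since $(\Id_{\n},\Id_a)$ has empty presentation, whence $\varphi_{\Id_{\n}}=\Id$ and $\otimes(\Fa\Id_a)=\Id$.

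The computation itself is purely formal; the genuinely delicate point — the compatibility of the laxity maps $\varphi_{\sigma_i}$ with the simplicial identities, hence the well-definedness of $\varphi_f$ independently of the chosen string of $\sigma$'s — was already dealt with in the Lemma and Proposition above, which themselves rest on the coherence axioms of the lax functor $\Fa$. So the only things to handle with a little care here are the bookkeeping of the Grothendieck-construction composite (in particular that $gu$ lands in the right fibre $\cb(\m')$ via $\cb(g)\cb(f)=\cb(g\circ f)$) and making sure the Lemma is invoked in the generality in which it was established, i.e. for an arbitrary morphism of $\Depi$ rather than just a generator $\sigma_i$.
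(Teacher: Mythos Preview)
Your proof is correct and follows essentially the same route as the paper: write $\gamma'\circ\gamma=(g\circ f,\,v\circ gu)$, expand $\Fa(\gamma'\circ\gamma)=\otimes(\Fa(v\circ gu))\circ\varphi_{g\circ f}$, split using functoriality and Proposition~\ref{prop-lax-commute}, then swap $\otimes(\Fa(gu))\circ\varphi_g$ for $\varphi_g\circ\otimes(\Fa u)$ via the preceding Lemma. Your additional remark on preservation of identities is a small bonus the paper leaves implicit.
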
 

\begin{proof}
Let $\gamma=(f,u)$ and $\gamma'=(g,v)$. Then by definition $\gamma'\circ  \gamma= [(g\circ f), v \circ g(u)]$. \ \\

In one hand we have by definition  $\Fa(\gamma' \circ \gamma)= \otimes [\Fa (v \circ g(u))] \circ \varphi_{g\circ f}$. On the other hand we have $\Fa(\gamma)= \otimes (\Fa u) \circ \varphi_f$ and $\Fa(\gamma')= \otimes (\Fa v) \circ \varphi_g $. Using the functoriality of $\Fa$ and $\otimes$ together with the fact that $\varphi_{g\circ f}= \varphi_g \circ \varphi_f$ we establish that

\begin{equation*}
\begin{split}
\Fa(\gamma' \circ \gamma)
&= \otimes [\underbrace{\Fa (v \circ g(u))}_{\Fa v \circ \Fa g(u)}] \circ \underbrace{\varphi_{g\circ f}}_{=\varphi_g \circ \varphi_f} \\
\\
 &=\otimes(\Fa v) \circ  \underbrace{\otimes[\Fa g(u)] \circ \varphi_g}_{= \varphi_g \circ \otimes(\Fa u)} \circ \varphi_f  \\
 \\
 &=\underbrace{\otimes(\Fa v) \circ  \varphi_g}_{=\Fa(\gamma')} \circ \underbrace{\otimes(\Fa u) \circ \varphi_f}_{=\Fa(\gamma)}  \\
 \\
&=\Fa(\gamma') \circ \Fa(\gamma)\\
\end{split}
\end{equation*}
\end{proof}

\begin{obs}
As we pointed out earlier, for each $\n$ the category of objects in $\int \cb$ over $\n$ is isomorphic to $\cb(\n)$ and we have an embedding $\cb(\n) \xhookrightarrow{\iota_n} \int \cb$. By the universal property of the coproduct we get a functor 
$$\iota: \coprod \cb(\n) \to \int \cb.$$
Given a family of functors $ \{ \Fa_{AB}: \Ca(A,B) \to \M \} $ we can define a functor for each $\n$, $\Fa_n: \cb(\n) \to \M$ by the above formula $\Fa(s_1,..., s_n):= \otimes( \Fa s_1, ..., \Fa s_n)$. These functors define in turn a functor
 $$\coprod \Fa_n: \coprod \cb(\n) \to \M.$$ 
Then the left Kan extension of $\coprod \Fa_n$ along $\iota: \coprod \cb(\n) \to \int \cb$
\textbf{creates laxity maps}. This is the same idea we use to define the `free lax-morphism' generated by the family $ \{ \Fa_{AB}: \Ca(A,B) \to \M \} $ (see Appendix \ref{ub-has-left-adjoint}).
\end{obs}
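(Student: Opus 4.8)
The plan is to identify $L:=\Lan_{\iota}(\coprod_n\Fa_n)$ with the free lax morphism $\Gamma\Fa$ produced by the left adjoint $\Gamma$ of the laxity-forgetting functor $\Ub$, and then to read the laxity maps off as the images under $L$ of the ``compression'' morphisms $(f,\Id)$ of $\int\cb$. Since $\M$ is cocomplete the extension $L$ exists and is computed pointwise, and as a left Kan extension it is automatically a functor $\int\cb\to\M$, so no functoriality has to be checked by hand. The first step is to write the pointwise formula and unwind the comma category: for $(\n,a)$ with $a=(s_1,\dots,s_n)$, an object of $\iota\downarrow(\n,a)$ is a pair $((\m,b),(f,u))$ with $b\in\cb(\m)$, $f\colon\m\to\n$ in $\Depi$ and $u\colon\cb(f)b\to a$, and $\coprod_m\Fa_m$ sends it to $\otimes(\Fa b_1,\dots,\Fa b_m)$. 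Because the source $\coprod_n\cb(\n)$ has no morphisms across different fibres, this comma category is a disjoint union over $\m$ of the slice-type categories $\otimes_{(\m)}\!\downarrow a$, so the colimit breaks up as a coproduct over $\m$ of the colimits over these pieces.

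The heart of the argument is the evaluation of each of these colimits, and this is where I expect the real work to be. Here I would use that $\int\cb$ is a Reedy category (the Claim before Definition \ref{gen-latching-cat}) together with the hypothesis that $\Ca$ is simple (Definition \ref{lr-simple}). Factoring each structure map $u$ through the Reedy factorisation and using that the degree $\degb$ is additive under $\otimes$ and that composition reflects identities, I would show that inside $\otimes_{(\m)}\!\downarrow a$ the objects $((\m,b),(f,\Id))$ for which $b$ is a genuine factorisation of $a$ into $\m$ positive-degree pieces form a \emph{final discrete} subcategory; every other object, in particular one whose factorisation involves an identity piece or a nontrivial $u$, maps cofinally onto one of these. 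The colimit therefore collapses to the coproduct of the $\otimes(\Fa b_i)$ over such factorisations, and assembling over $\m$ gives exactly
\[
L(\n,a)=\coprod_{(t_0,\dots,t_l)\in\tx{Dec}(a)}\Fa(t_0)\otimes\cdots\otimes\Fa(t_l),
\]
the formula for $\Gamma\Fa$ recorded in Section \ref{properties-msc}.

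It then remains to exhibit the laxity maps. Since the composition $\otimes$ of the special Quillen algebra $\M$ preserves colimits in each variable, distributing the coproducts shows $L(\n,a)=\otimes(\Gamma\Fa s_1,\dots,\Gamma\Fa s_n)$; thus $L$ is precisely the functor on $\int\cb$ that Proposition \ref{f-int} associates to the candidate lax morphism $z\mapsto\Gamma\Fa(z)$. Reading that correspondence backwards, the laxity map attached to a composable pair $(s,t)$ is the image $L(\sigma^{1}_{0},\Id)\colon\otimes(\Gamma\Fa s,\Gamma\Fa t)\to\Gamma\Fa(s\otimes t)$, concretely the coproduct inclusion that concatenates a decomposition of $s$ with one of $t$; the higher $\sigma^{n}_i$ give the remaining laxity maps. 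Their coherence is free: it is forced by the functoriality of $L$ together with the presentation identity $\varphi_{g\circ f}=\varphi_g\circ\varphi_f$ of Proposition \ref{prop-lax-commute}, so that the lax-functor axioms hold without further checking. This exhibits $L$ as the free lax morphism on $\{\Fa_{AB}\}$ and proves that the Kan extension creates the laxity maps.

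The main obstacle is the colimit computation of the second paragraph. The delicate point is the cofinality statement: one must show that every object $((\m,b),(f,u))$ of $\otimes_{(\m)}\!\downarrow a$ --- including those with a degree-raising or otherwise nonidentity $u$ --- admits a connecting map to an exact positive-degree factorisation, and that among the latter there are no nonidentity morphisms. Both assertions rest squarely on simplicity: additivity of $\degb$ bounds the length of a factorisation by the degree of $a$ and forbids identity pieces from contributing, while the reflection of identities guarantees the cofinal subcategory is discrete. Without the simple hypothesis the colimit would not reduce to the clean coproduct $\tx{Dec}$, and the identification with $\Gamma\Fa$ would break down.
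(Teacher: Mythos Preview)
The paper offers no proof for this observation; it is a remark connecting the Kan-extension viewpoint to the explicit inductive construction of $\Gamma$ in Appendix~\ref{ub-has-left-adjoint}. Your proposal is therefore a candidate for making the observation precise rather than a competitor to an existing argument, and its overall shape---compute $L=\Lan_\iota(\coprod_n\Fa_n)$ pointwise, identify its values with the formula $\coprod_{\tx{Dec}}\otimes\Fa(t_i)$ of Remark~\ref{equiv-deux-def}, and read off the laxity maps as images of the compression morphisms $(\sigma^n_i,\Id)$---is sound and is exactly what the observation is gesturing at. The decomposition of the comma category as a disjoint union over~$\m$ and the subsequent coproduct splitting are correct.

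The substantive gap is the finality step, which you rightly flag as the crux. The discreteness half does follow from simplicity: a morphism between two exact factorisations forces $\cb(f)v=\Id$, and identity-reflection then gives $v=\Id$. But the existence half---that every $((\m,b),(f,u))$ with nontrivial $u$ admits a map to an exact factorisation in the \emph{same} fibre~$\cb(\m)$---requires that $u:\cb(f)b\to a$ lift to some $v$ in $\cb(\m)$ with $\cb(f)v=u$. This is precisely the unique-decomposition property recorded in the Observation opening Appendix~\ref{ub-has-left-adjoint}, and that property is special to $\S_{\ol X}$, where the $2$-morphisms are parametrised by maps of $\Depi$; it is not a consequence of simplicity alone. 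So your argument goes through cleanly for $\Ca=(\S_{\ol X})^{2\text{-op}}$, which is all the cross-reference in the observation actually needs, but for a general simple $\lr$-category you would have to \emph{assume} this lifting property (and also handle identity $1$-morphisms, which the unitary hypothesis controls but which your cofinality statement sidesteps). The paper's appendix avoids the issue by building $\Gamma$ directly rather than through a Kan extension, so the precise hypotheses under which the two descriptions agree in full generality are not settled there either.
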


Denote by $\Ua$ the canonical forgetful functor $\Ua: \partial^{\bullet}_{\Ca/z} \to \int \cb $. 

\begin{df}
Let $\Fa: \Ca \to \M$ be a lax functor. 
\begin{enumerate}
\item The \textbf{lax-latching} object of $\Fa$ at $z$ is the colimit
$$\laxlatch(\Fa,z):= \colim_{\partial^{\bullet}_{\Ca/z}} \Ua_{\star} \Fa.$$
\item Define the \textbf{lax-matching} object of $\Fa$ at $z$ to be the usual matching object of the component $\Fa_{AB} : \Ca(A,B) \to \M(\Fa A, \Fa B)$ at $z \in \Ca(A,B)$.
\end{enumerate}
\end{df}
\begin{rmk}\ \
\begin{enumerate}
\item By the universal property of the colimit, there is a unique map $ \varepsilon: \laxlatch(\Fa,z) \to \Fa z$. 
\item There is a canonical map $\eta: \latch(\Fa,z) \to \laxlatch(\Fa,z)$, from the classical latching object to the lax-latching object and we have an equality
$$ \latch(\Fa,z) \to \Fa z= \varepsilon \circ \eta.$$
\end{enumerate}
\end{rmk}

\subsubsection{Locally direct categories}
We focus our study to lax diagrams indexed by locally directed category $\Ca$ which are also simple in the sense of Definition \ref{lr-simple}. This case is precisely what motivated our considerations. Indeed the $2$-category $\sxop$ has the property that each $\sx(A,B)^{\tx{op}}$ is a direct category (like $\Delta_{\tx{epi}}^{\tx{op}}$). \\

Given a lax morphism $\Fa: \Ca \to \M$ and an element $m \in \lambda$, we would like to consider a truncation `$\Fa^{\leq m}$' just like for simplicial sets. The problem is that to define such a morphism we need to change $\sx$ into `$\sx^{\leq m}$'  and consider the corresponding notion of lax functor. \ \\
An obvious attempt is to define `$\sx^{\leq m}$' as follows.
\begin{itemize}[label=$-$]
\item The objects are the same i.e $\Ob(\sx^{\leq m})=X$;
\item the category of morphisms between from $A$ to $B$ is $\sx^{\leq m}(A,B):= \sx(A,B)_{\leq m}$, the full subcategory of objects of degree $\leq m$. 
\end{itemize}
But these data don't define a $2$-category since the horizontal composition $z \otimes z'$ is only defined if $\degb(z)+\degb(z') \leq m$. This is the same situation where $(\Delta,+, \0)$ is a monoidal category but any truncation $(\Delta_{\leq \n},+,0)$ fails to be stable by addition.

By the above observation we need to enlarge a bit our $2$-categories and consider a more geral notion of \textbf{$2$-groupement} \emph{à la} Bonnin \cite{Bonnin}. The notion of \emph{groupement} was introduced by Bonnin \cite{Bonnin} as a generalization of a category. The concept of groupement covers the idea of a category without  a set of objects in the following sense. For a small $1$-category $\D$ denote by $\Ar(\D)$ the `set' of all morphisms on $\D$.  We can embed the set of objects $\Ob(\D)$ in $\Ar(\D)$ using  the identity morphism and the composition gives a partial multiplication on $\Ar(\D)$. This way the category structure is transfered on  $\Ar(\D)$ and we no longer mention a set of objects. 

\begin{warn}
We will not provide an explicit definition of $2$-groupement but will use the terminology to refer a sort of $2$-category where the horizontal composition is partially defined. Our discussion will be limited to $\Ca^{\leq m}$ for locally directed category $\Ca$ which is simple.
\end{warn}
From now $\Ca^{\leq m}$ is the $2$-groupement or the `almost $2$-category' having the same objects as $\Ca$ and all $1$-morphisms of degree $\leq m$; the $2$-morphisms are the same.

\begin{df}
A lax $\g$-morphism $\Ga:  \Ca^{\leq m} \to \M$ consists of:

\begin{enumerate}
\item A family of functors $\Ga_{AB}: \Ca(A,B)^{\leq m} \to \M(\Ga A, \Ga B)$;
\item laxity maps $\varphi: \Ga s \otimes \Ga t \to \Ga (s \otimes t)$ if $\degb(s) + \degb(t) \leq m$;
\item the laxity maps respect the functoriality i.e the following commutes when all the laxity maps exist
\[
\xy
(0,20)*+{\Ga s \otimes \Ga t}="A";
(30,20)*+{\Ga (s \otimes t) }="B";
(0,0)*+{\Ga s' \otimes \Ga t' }="C";
(30,0)*+{\Ga (s' \otimes t' )}="D";
{\ar@{->}^-{\varphi}"A";"B"};
{\ar@{->}_-{\Ga u \otimes \Ga v}"A";"C"};
{\ar@{->}^-{\Ga(u \otimes v)}"B";"D"};
{\ar@{->}_-{\varphi}"C";"D"};
\endxy
\]  
\item a coherence condition which say that the following commutes if all the laxity maps are defined 
\[
\xy
(0,20)*+{\Ga r \otimes \Ga s \otimes \Ga t}="A";
(40,20)*+{\Ga r \otimes \Ga (s \otimes t) }="B";
(0,0)*+{\Ga (r \otimes s) \otimes \Ga t }="C";
(40,0)*+{\Ga (r \otimes s \otimes t)}="D";
{\ar@{->}^-{\varphi}"A";"B"};
{\ar@{->}_-{\varphi}"A";"C"};
{\ar@{->}^-{\varphi}"B";"D"};
{\ar@{->}_-{\varphi}"C";"D"};
\endxy
\]   
\end{enumerate} 
\end{df}

There is an obvious notion of transformation of lax $\g$-morphism given by the same data except that we limit everything to the $1$-morphisms of degree $\leq m$. We will denote by $\Lax_{\g}(\Ca^{\leq m},\M)$ the category of lax $\g$-morphisms and transformations (the $\g$ here stands for groupement).\ \\
We leave the reader to check that any lax functor $\Fa : \Ca \to \M$ induces a lax $\g$-morphisms $\Fa^{\leq m} : \Ca^{\leq m} \to \M$, functorially in $\Fa$. Thus we have a truncation functor 
$$\tau_m: \Lax(\Ca, \M) \to \Lax_{\g}(\Ca^{\leq m},\M).$$ 

It's natural to ask if this functor has a left adjoint. This is the same situation with simplicial sets. In the next paragraph we will see that there is an affirmative answer to that question. 
\paragraph{Lax Left Kan extensions} 
Our problem can be interpreted as an existence of a \emph{lax left Kan extension}. Proceeding by induction on $m$ we reduces our original question to the existence of a left adjoint of the truncation functor
$$\tau_m: \Lax_{\g}(\Ca^{\leq m+1}, \M) \to \Lax_{\g}(\Ca^{\leq m},\M).$$ 

\begin{prop}
For every $m \in \lambda$ there is a left adjoint to $\tau_m$
$$\sk_m: \Lax_{\g}(\Ca^{\leq m},\M) \to  \Lax_{\g}(\Ca^{\leq m+1}, \M) $$  
\end{prop}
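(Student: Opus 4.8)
The plan is to construct $\sk_m$ as a one-step lax left Kan extension along the inclusion $\Ca^{\leq m}\hookrightarrow\Ca^{\leq m+1}$, in exact analogy with the $n$-skeleton of a simplicial set: on $1$-morphisms of degree $\leq m$ nothing changes, and a $1$-morphism $z$ of degree exactly $m+1$ is sent to the lax-latching object of the given diagram at $z$. Concretely, for $\Ga\in\Lax_{\g}(\Ca^{\leq m},\M)$ and a $1$-morphism $z\in\Ca(A,B)$ I would set
\[
\sk_m\Ga(z):=
\begin{cases}
\Ga(z) & \text{if } \degb(z)\leq m,\\
\laxlatch(\Ga,z) & \text{if } \degb(z)=m+1.
\end{cases}
\]
The first point to check is that $\laxlatch(\Ga,z)$ is meaningful from the data of $\Ga$ alone when $\degb(z)=m+1$: every object of the generalized latching category $\partial^{\bullet}_{\Ca/z}$ is a direct arrow $(\n,a)\to(\1,z)$ in which $z$ does not appear in $a$, and by the standing hypothesis that $\Ca$ is a simple, locally directed $\lr$-category this is equivalent to saying that every $1$-morphism occurring in $a$ has degree $<\degb(z)=m+1$. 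Hence the functor $\Ga\circ\Ua$ on $\partial^{\bullet}_{\Ca/z}$ only evaluates $\Ga$ on $1$-morphisms of degree $\leq m$ and on laxity maps of total degree $\leq m$ — precisely the data carried by a lax $\g$-morphism on $\Ca^{\leq m}$.

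Next I would equip $\sk_m\Ga$ with functoriality and laxity maps. Since $\Ca$ is simple and locally directed, every non-identity morphism of each $\Ca(A,B)$ strictly raises degree, so the only new morphisms to treat are arrows $w\to z$ with $\degb(w)\leq m$ and $\degb(z)=m+1$; then $(\1,w)\to(\1,z)$ is an object of $\partial^{\bullet}_{\Ca/z}$ and I would take $\sk_m\Ga(w\to z)$ to be the corresponding coprojection $\Ga(w)\to\laxlatch(\Ga,z)$. In particular there are no non-identity morphisms between two $1$-morphisms of degree $m+1$, so no further cases occur. Likewise the laxity map $\varphi\colon\sk_m\Ga(s)\otimes\sk_m\Ga(t)\to\sk_m\Ga(s\otimes t)$ is that of $\Ga$ when $\degb(s)+\degb(t)\leq m$, while when $\degb(s)+\degb(t)=m+1$ (forcing $\degb(s),\degb(t)\leq m$) the pair $(s,t)$ determines an object $(\2,(s,t))$ of $\partial^{\bullet}_{\Ca/(s\otimes t)}$ on which $\Ga\circ\Ua$ takes the value $\Ga(s)\otimes\Ga(t)$, and I would take $\varphi$ to be the corresponding coprojection into $\laxlatch(\Ga,s\otimes t)$. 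Functoriality of $\sk_m\Ga$ and the two coherence axioms of a lax $\g$-morphism then reduce to the universal property of the colimits $\laxlatch(\Ga,z)$ together with the coherences already satisfied by $\Ga$: each identity to be verified among the new maps is an identity among the coprojections of a single colimit, and $\partial^{\bullet}_{\Ca/z}$ was designed precisely to index all factorizations of $z$ through lower-degree $1$-morphisms at once. Finally $\sk_m$ is functorial in $\Ga$, since a transformation of lax $\g$-morphisms $\Ga\to\Ga'$ induces a map of latching diagrams, hence by functoriality of $\colim$ a map on latching objects.

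It then remains to produce the adjunction $\sk_m\dashv\tau_m$. The unit $\eta\colon\Ga\to\tau_m\sk_m\Ga$ is the identity, because $\sk_m\Ga$ restricted to $1$-morphisms of degree $\leq m$ is literally $\Ga$. For $\mathcal{H}\in\Lax_{\g}(\Ca^{\leq m+1},\M)$, the counit $\epsilon\colon\sk_m\tau_m\mathcal{H}\to\mathcal{H}$ is the identity in degrees $\leq m$ and, at a $1$-morphism $z$ of degree $m+1$, the canonical map $\laxlatch(\tau_m\mathcal{H},z)\to\mathcal{H}(z)$; this map exists because the structure maps of $\mathcal{H}$ and its laxity maps of total degree up to $m+1$ — which $\mathcal{H}$ possesses since it is defined on $\Ca^{\leq m+1}$ — exhibit $\mathcal{H}(z)$ as a cocone under $(\tau_m\mathcal{H})\circ\Ua$ on $\partial^{\bullet}_{\Ca/z}$. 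One then checks the triangle identities; equivalently, and more directly, one shows that restriction to $\Ca^{\leq m}$ is a bijection $\Lax_{\g}(\Ca^{\leq m+1},\M)(\sk_m\Ga,\mathcal{H})\cong\Lax_{\g}(\Ca^{\leq m},\M)(\Ga,\tau_m\mathcal{H})$, natural in both variables: a transformation out of $\sk_m\Ga$ is determined in degrees $\leq m$ by its restriction and in degree $m+1$ is the unique arrow induced from that restriction by the universal property of $\laxlatch(\Ga,z)$, while conversely any transformation $\Ga\to\tau_m\mathcal{H}$ extends uniquely in this way. The part I expect to be the real work is verifying that the laxity maps produced in the top degree $m+1$ are well defined and satisfy both coherence conditions of a lax $\g$-morphism — this is exactly where the hypotheses that $\Ca$ is simple and locally directed are needed, ensuring both that the relevant latching diagrams live entirely in degrees $\leq m$ and that every factorization of a top-degree $1$-morphism is witnessed by an object of $\partial^{\bullet}_{\Ca/z}$.
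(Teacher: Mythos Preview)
Your proposal is correct and follows essentially the same approach as the paper: define $\sk_m\Ga$ to agree with $\Ga$ in degrees $\leq m$ and to be $\laxlatch(\Ga,z)$ in degree $m+1$, observe that the latching diagram $\partial^{\bullet}_{\Ca/z}$ lives entirely in degrees $<m+1$ so the colimit is computable from $\Ga$, and read off the structure maps, laxity maps, and coherences as coprojections into these colimits. Your treatment of the adjunction via unit/counit and the explicit hom-set bijection is slightly more detailed than the paper's sketch, which simply leaves the verification that $\sk_m$ is left adjoint to $\tau_m$ to the reader.
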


\begin{proof}[Sketch of proof]
Given a lax functor $\Fa : \Ca \to \M$ and a $1$-morphism $z$, we defined previously the lax-latching  object of $\Fa$ at $z$ to be 
$$\laxlatch(\Fa,z):= \colim_{\partial^{\bullet}_{\Ca/z}} \otimes (\Fa s_1,...., \Fa s_n) $$
where the colimit is taking over the sub-comma category  of  (direct) morphisms $ \gamma: (n,s_1,..., s_n) \to (1,z)$ such that $\degb(s_j)< \degb(z)$ for all $j$. \ \\
Then given a lax $\g$-morphism $\Ga: \Ca^{\leq m} \to \M$ and a $1$-morphism $z$ of degree $m+1$, all the values $\Ga s_j$ are defined for $\degb(s_j) < m+1$. Using the coherence of the lax $\g$-morphism and the functoriality of its components, one can show like in Proposition \ref{f-int}, that we have a functor $L_z: \partial^{\bullet}_{\Ca/z} \to \M$ by the formula $L_z(\gamma)= \otimes(\Ga s_1,..., \Ga s_n)$. Since $\M$ is (locally) complete, then we can define
$$\laxlatch(\Ga,z)= \colim_{\partial^{\bullet}_{\Ca/z}} \otimes (\Ga s_1,...., \Ga s_n)= \colim_{\partial^{\bullet}_{\Ca/z}} L_z.$$

For each $\gamma$ we have a canonical map $\iota_{\gamma}: \otimes (\Ga s_1,...., \Ga s_n) \to \laxlatch(\Ga,z)$. If $n=1$ then $\gamma$ is just a $2$-morphism $s \to z$ of $\Ca(A,B)$ with $z \in \Ca(A,B)$; and $\iota_{\gamma}$ is a \emph{structure map}  $\Ga s \to \laxlatch(\Ga,z)$. If $n>1$ then  $\iota_{\gamma}$ is a laxity map (eventually composed with a structure map); in particular when $(s_1,s_2) \in \otimes^{-1}(z)$ and $\gamma= \Id$, we have a \emph{pure laxity map}  
$$\iota_{\gamma}: \Ga s_1 \otimes \Ga s_2 \to \laxlatch(\Ga,z).$$

So for every $1$-morphism $z$ of degree $m+1$ the object $\laxlatch(\Ga,z)$ comes equipped with structure maps and laxity map wich are compatible with the old ones. If we assemble these data for all $z$ of degree $m+1$ we can define $\sk_m \Ga: \Ca^{\leq m+1} \to \M$ as follows
\begin{itemize}[label=$-$]
\item $(\sk_m \Ga) z:= \laxlatch(\Ga,z)$;
\item $(\sk_m \Ga)_{|\Ca^{\leq m}}= \Ga$
\item the structure maps  $\Ga s \to \laxlatch(\Ga,z)$ give the components 
$$(\sk_m \Ga)_{AB}: \Ca(A,B)^{\leq m+1} \to \M(\Ga A, \Ga B)$$
\item the laxity maps are the obvious ones.
\item the coherences come with the definition of each $\laxlatch(\Ga,z)$
\end{itemize}
We leave the reader to check that these data define a lax $\g$-morphism  $\sk_m \Ga: \Ca^{\leq m+1} \to \M$ and that $\sk_m$ is indeed a left adjoint to $\tau_m$. 
\end{proof}

\begin{rmk}
\begin{enumerate}
\item According to the description of $\sk_m$, given a transformation $\alpha: \Fa \to \Ga$ in $\Lax_{\g}(\Ca^{\leq m},\M)$, one defines $\sk_m(\alpha)$ as the transformation given the maps $\alpha_s: \Fa s \to \Ga s$  ($\degb(s)< m$) together with the maps  $\laxlatch(\Fa,z) \to \laxlatch(\Ga,z)$ ($\degb(z)=m+1$)  induced by the universal property of the colimits. In particual for each pair $(A,B)$ we have a natural transformation $(\sk_m \alpha)_{AB}: (\sk_m \Fa)_{AB} \to (\sk_m \Ga)_{AB}$ extending $\alpha_{AB}: \Fa_{AB} \to \Ga_{AB}$. 
\item It turns out that a map $\alpha : \Fa \to \Ga$ in $\Lax_{\g}(\Ca^{\leq m+1},\M)$ is determined by its restriction $\alpha^{\leq m}$ together with the following commutative squares for all $z$ of degree $m+1$:

\[
\xy
(0,20)*+{\Fa z}="A";
(30,20)*+{\Ga z}="B";
(0,0)*+{\laxlatch(\Fa,z)}="C";
(30,0)*+{\laxlatch(\Ga,z)}="D";
{\ar@{->}^-{}"A";"B"};
{\ar@{->}_-{}"C";"A"};
{\ar@{->}^-{}"D";"B"};
{\ar@{->}_-{}"C";"D"};
\endxy
\] 
\end{enumerate}
\end{rmk}

\subsubsection{Colimits and Factorization system}
Let $\M$ be a $2$-category which is locally complete and such that each $\M(U,V)$ has a factorization system. For simplicity we will reduce our study to the case where $\M$ is a monoidal category having a factorization system $(L,R)$.  Let $\Ca$ be as above and consider:

\begin{itemize}[label=$-$]
\item $\Rc=$ the class of lax morphisms $\alpha: \Fa \to \Ga$ such that for all $z$, the map 
$$g_z: \Fa z \cup_{\laxlatch(\Fa,z)} \laxlatch(\Ga,z) \to \Ga z$$ is in $R$;
\item $\le=$ the class of lax morphisms  $\alpha: \Fa \to \Ga$ such that for all $z$ the map $\alpha_z: \Fa z \to \Ga_z$ is in $L$.
\end{itemize} 
Similarly for each $m \in \lambda$ there are two classes $\le_m$ and $\Rc_m$ in $\Lax_{\g}(\Ca^{\leq m},\M)$. 

\begin{lem}\label{direct-colimit}
With the above notations the following holds.
\begin{enumerate}
\item The functor $\tau_m: \Lax_{\g}(\Ca^{\leq m+1}, \M) \to \Lax_{\g}(\Ca^{\leq m},\M)$ creates colimits.
\item Let $\alpha: \Fa \to \Ga$  be an object $\Lax_{\g}(\Ca^{\leq m+1},\M)$  such that $\tau_m\alpha$ has a factorization of type $(\le_m,\Rc_m)$:
$$\tau_m\Fa \xrightarrow{i} \K \xrightarrow{p} \tau_m\Ga.$$
 
Then there is a factorization of $\alpha$ of type $(\le_{m+1},\Rc_{m+1})$ in $\Lax_{\g}(\Ca^{\leq m+1}, \M)$.
\item Let $\alpha: \Fa \to \Ga$ be in $\le_{m+1}$ (resp. $\Rc_{m+1}$). If $\tau_m \alpha$ has the LLP (resp. RLP) with respect to all maps in $\Rc_m$ (resp. $\le_m$) then $\alpha$ has the LLP (resp. RLP)  with respect to all maps in $\le_{m+1}$ (resp. $\Rc_{m+1}$).
\end{enumerate}
\end{lem}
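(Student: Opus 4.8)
The plan is to prove the three assertions in order, relying throughout on the adjunction $\sk_m \dashv \tau_m$ established in the previous proposition and on the fact that $\Lax_{\g}(\Ca^{\leq m+1},\M)$-morphisms decompose as a restriction plus commutative squares involving lax-latching objects (the second item of the preceding remark). For $(1)$: since $\tau_m$ has a left adjoint it preserves limits; to see it \emph{creates} colimits, one takes a diagram $\{\Fa^k\}$ in $\Lax_{\g}(\Ca^{\leq m+1},\M)$, forms the colimit $\Ga^{\leq m}$ of $\{\tau_m\Fa^k\}$ levelwise (colimits in the $\g$-morphism categories are computed factorwise because $\M$ is locally cocomplete, just as in Theorem \ref{MX-cocomplete}), and then observes that the only extra data of a $\Ca^{\leq m+1}$-morphism over $\Ca^{\leq m}$ are the values $\Fa z$ for $\degb(z)=m+1$ together with the map $\laxlatch(\Fa,z)\to\Fa z$; since $\laxlatch(-,z)$ is itself a colimit and colimits commute with colimits, $\colim_k \laxlatch(\Fa^k,z)=\laxlatch(\Ga^{\leq m},z)$, so there is a unique way to extend $\Ga^{\leq m}$ to an object of $\Lax_{\g}(\Ca^{\leq m+1},\M)$ receiving the cocone, and this is the required colimit.

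For $(2)$: given the factorization $\tau_m\Fa \xrightarrow{i} \K \xrightarrow{p} \tau_m\Ga$ of type $(\le_m,\Rc_m)$, I would build the middle object $\tld\K$ of $\Lax_{\g}(\Ca^{\leq m+1},\M)$ by setting $\tld\K^{\leq m}=\K$ and, for each $z$ with $\degb(z)=m+1$, factoring the canonical map
\[
\Fa z \cup_{\laxlatch(\Fa,z)} \laxlatch(\tld\K,z) \longrightarrow \Ga z
\]
through the factorization system $(L,R)$ of $\M$ as $L$ followed by $R$; here $\laxlatch(\tld\K,z)$ makes sense because it only depends on the degree-$\leq m$ part, which is $\K$. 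One checks that the left factor, composed with the structure map from $\Fa z$, lands in $\le_{m+1}$ (levelwise it is in $L$, using that $L$ is closed under cobase change and composition), and the right factor is in $\Rc_{m+1}$ by construction of the $g_z$ maps; the coherence and functoriality axioms for $\tld\K$ as a lax $\g$-morphism are inherited from those of $\K$ together with the universal properties of the pushouts. This is the familiar inductive skeletal construction of a factorization, transported to the lax setting.

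For $(3)$: this is a lifting argument by the now-standard pattern. Suppose $\alpha\in\le_{m+1}$ and $\tau_m\alpha$ has the LLP against $\Rc_m$; to lift $\alpha$ against a map $\beta\in\Rc_{m+1}$, first restrict to degree $\leq m$, where $\tau_m\beta\in\Rc_m$ and a lift exists by hypothesis; then extend the lift one degree at a time: for each $z$ of degree $m+1$ the extension problem reduces, via the commutative squares of the preceding remark and the defining property of $\Rc_{m+1}$ (that $g_z$ is in $R$), to a lifting problem of the map $\alpha_z\in L$ against $g_z\in R$ in the category $\M(\Fa A,\Fa B)$, which is solvable because $(L,R)$ is a factorization system (weak orthogonality); assembling these lifts over all $z$ of degree $m+1$ gives the desired lift in $\Lax_{\g}(\Ca^{\leq m+1},\M)$. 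The dual statement for $\Rc_{m+1}$ versus $\le_{m+1}$ is proved the same way with arrows reversed, using the lax-\emph{matching} object in place of the lax-latching object.

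The main obstacle I expect is in $(2)$: one must verify carefully that the middle object $\tld\K$ really is a lax $\g$-morphism — i.e.\ that the laxity maps produced by factoring the maps out of the pushouts $\Fa z \cup_{\laxlatch(\Fa,z)}\laxlatch(\tld\K,z)$ satisfy the coherence pentagon and the functoriality squares when all relevant products of degrees are $\leq m+1$. This is where the compatibility of the factorization system with the monoidal/composition structure is implicitly used, and where the bookkeeping of which laxity maps are "new" (degree exactly $m+1$) versus "old" must be done cleanly; everything else is a routine transfer of the classical Reedy skeleton argument.
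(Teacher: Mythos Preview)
Your plan for (2) matches the paper's argument, and the coherence concern you flag there is the right thing to watch. There are, however, genuine gaps in (1) and (3).

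In (1), the assertion $\colim_k \laxlatch(\Fa^k,z)=\laxlatch(\Ga^{\leq m},z)$ is false for a general indexing diagram: the lax-latching object is a colimit of terms $\otimes(\Fa s_1,\dots,\Fa s_n)$, and the composition $\otimes$ preserves colimits only \emph{separately} in each variable, not jointly (the diagonal $k\mapsto(k,\dots,k)$ is cofinal for filtered diagrams, as exploited in Proposition~\ref{monad-finitary}, but not in general). Hence the value of the lifted colimit at a $1$-morphism $z$ of degree $m+1$ is not simply $\colim_k \Fa^k z$. The paper builds $\tld\Ea z$ as the colimit of an explicit diagram $\Lambda_z$ of spans $\Xa_i z\leftarrow\laxlatch(\Xa_i,z)\to\laxlatch(\Ea,z)$, computed as $\colim_i\bigl(\Xa_\infty z\cup_{\laxlatch(\Xa_i,z)}\laxlatch(\Ea,z)\bigr)$; this is the correct replacement for your ``colimits commute with colimits'' step.

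In (3), your reduction to lifting $\alpha_z$ against $g_z^{\beta}$ (for $\beta\in\Rc_{m+1}$) lands in the wrong target: such a lift produces a map $\Ga z\to\Xa z\cup_{\laxlatch(\Xa,z)}\laxlatch(\Ya,z)$ rather than a map $\Ga z\to\Xa z$, and there is no retraction from that pushout back to $\Xa z$. The paper instead, after obtaining the lift $h$ in degrees $\leq m$, uses the induced map $\laxlatch(\Ga,z)\to\laxlatch(\Xa,z)\to\Xa z$ to set up a square with $g_z^\alpha:\Fa z\cup_{\laxlatch(\Fa,z)}\laxlatch(\Ga,z)\to\Ga z$ on the left and $p_z$ on the right; a diagonal in \emph{that} square is exactly a map $\Ga z\to\Xa z$ compatible with $h$. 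Your final remark about using a lax-matching object for the dual case is also off: since $\Ca$ is locally direct the matching side is trivial, and the two halves of (3) are the same lifting problem viewed from opposite sides.
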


We dedicate the next paragraph for the proof of the lemma.

\paragraph{Proof of Lemma \ref{direct-colimit}}
\subparagraph{Proof of (1)}
Let $\Xa: \In \to \Lax_{\g}(\Ca^{\leq m+1}, \M) $ be a diagram such that $\tau_m \Xa$ has a colimit $\Ea$ in  $\Lax_{\g}(\Ca^{\leq m},\M)$. For $i \in \In$ we have a canonical map $e_i: \tau_m \Xa_i \to \Ea$. \ \\
Let $z$ be a $1$-morphism of degree $m+1$ in $\Ca^{\leq m+1}$. By the universal property of the colimit there is canonical map 
$$\pi_i:\laxlatch(\tau_m \Xa_i,z)  \to \laxlatch(\Ea,z).$$
Note that $\laxlatch(\tau_m \Xa_i,z) \cong \laxlatch(\Xa_i,z)$ so we can drop the $\tau_m$ here. Furthermore the maps $\pi_i$ are functorial in $i$, that is we have an obvious functor $\pi: \In  \to (\M \downarrow \laxlatch(\Ea,z))$.\ \\

Let $\Lambda_z$ be the diagram in $\M$ made of the following spans (= pushout data) which are connected in the obvious manner:  

\[
\xy
(-10,20)+(-5,0)*+{\Xa_i z}="A";
(10,20)+(10,0)*+{\Xa_j z}="B";
(-10,0)+(-5,0)*+{\laxlatch(\Xa_i,z)}="C";
(10,0)+(10,0)*+{\laxlatch(\Xa_j,z)}="D";
{\ar@{->}^-{\Xa(i \to j)}"A";"B"};
{\ar@{->}_-{\varepsilon_i}"C";"A"+(0,-3)};
{\ar@{->}_-{\varepsilon_j}"D";"B"};
{\ar@{->}_-{}"C";"D"};
(-10,0)+(-25,-15)*+{\laxlatch(\Ea,z)}="G";
{\ar@{->}_-{\pi_i}"C";"G"};
{\ar@{->}^-{\pi_j}"D";"G"};
\endxy
\]

Denote by $\tld{\Ea}z$ the colimit of $\Lambda_z$. The are several ways to compute this colimit. One can proceed as follows. 
\begin{itemize}[label=$-$]
\item Introduce $\Xa_{\infty} z= \colim_{\In} \Xa_i z = \colim_{\In} \Ev_z \circ \Xa$; we have a canonical map $\delta_i: \Xa_i z \to \Xa_{\infty} z$.
\item Let $\O_i(z) = \Xa_{\infty} z \cup_{\laxlatch(\Xa_i,z)} \laxlatch(\Ea,z)$ be the object obtained by the pushout
 
\[
\xy
(0,20)*+{\laxlatch(\Xa_i,z)}="A";
(30,20)*+{\Xa_{\infty} z}="B";
(0,0)*+{\laxlatch(\Ea,z)}="C";
(30,0)*+{\O_i(z)}="D";
{\ar@{->}^-{\delta_i \circ  \varepsilon_i}"A";"B"};
{\ar@{->}_-{\pi_i}"A";"C"};
{\ar@{->}^-{}"B";"D"};
{\ar@{->}_-{}"C";"D"};
\endxy
\] 

\item The objects $\O_i(z)$ are functorial in $i$, that is we have a functor $\O(z): \In \to \M$ that takes $i$ to $\O_i(z)$. 
\item Then it's easy to see that  $\tld{\Ea}z \cong \colim_{\In} \O(z)$.
\end{itemize}

So for each $i$ and each $z$ of degree $m+1$ we have a canonical map $\iota_i : \Xa_i z \to \tld{\Ea}z$ and the following commutes 
\[
\xy
(0,20)*+{\Xa_i z}="A";
(35,20)*+{\tld{\Ea}z}="B";
(0,0)*+{\laxlatch(\Xa_i,z)}="C";
(35,0)*+{\laxlatch(\Ea,z)}="D";
{\ar@{->}^-{\iota_i}"A";"B"};
{\ar@{->}_-{\varepsilon_i}"C";"A"};
{\ar@{->}^-{\tx{can}}"D";"B"};
{\ar@{->}_-{\tx{can}}"C";"D"};
\endxy
\] 
The objects $\tld{\Ea}z$ together with the obvious maps defined a unique lax $\g$-morphism $\tld{\Ea}:\Ca^{\leq m+1} \to  \M$ such that $\tau_m(\tld{\Ea}) = \Ea$. We leave the reader to check that $\tld{\Ea}$ equipped with the natural cocone satisfies the universal property of the colimit in $\Lax_{\g}(\Ca^{\leq m+1}, \M)$. This proves the assertion $(1)$.  

\subparagraph{Proof of  (2)} Let $\alpha: \Fa \to \Ga$ be in $\Lax_{\g}(\Ca^{\leq m+1}, \M)$ and $z$ be of degree $m+1$. By hypotesis the following commutes

\[
\xy
(0,20)*+{\Fa_i z}="A";
(70,20)*+{\Ga z}="B";
(0,0)*+{\laxlatch(\Fa,z)}="C";
(70,0)*+{\laxlatch(\Ga,z)}="D";
(35,0)*+{\laxlatch(\K,z)}="E";
{\ar@{->}^-{\alpha}"A";"B"};
{\ar@{->}_-{\varepsilon}"C";"A"};
{\ar@{->}^-{\varepsilon}"D";"B"};
{\ar@{->}_-{i}"C";"E"};
{\ar@{->}_-{p}"E";"D"};
\endxy
\] 

So we have a unique map $\Fa z \cup_{\laxlatch(\Fa,z)} \laxlatch(\K,z) \to \Ga z$. We use the factorization in $\M$ to factorize this map as
$$\Fa z \cup_{\laxlatch(\Fa,z)} \laxlatch(\K,z) \xrightarrow{i'} \K' z \xrightarrow{p'} \Ga z$$
where $i' \in L$ and $p' \in R$. Write $p_z=p'$ and $i_z$ for the composite
$$ \Fa z \to \Fa z \cup_{\laxlatch(\Fa,z)} \laxlatch(\K,z) \xrightarrow{i'} \K' z.$$ 

If we assemble these data for all $z$ of degree $m+1$, we have an object $\K' \in \Lax_{\g}(\Ca^{\leq m+1}, \M)$ such that $\tau_m \K'= \K$ with maps $i: \Fa \to \K' \in \le_{m+1}$ and $p: \K' \to \Ga \in \Rc_{m+1}$ such that $\alpha= p \circ i$. And the assertion $(2)$ follows.     

\paragraph{Proof of (3)} Consider a lifting problem in $\Lax_{\g}(\Ca^{\leq m+1}, \M)$ defined by $\alpha: \Fa \to \Ga$ and $p: \Xa \to \Ya$:

\[
\xy
(0,20)*+{\Fa}="A";
(25,20)*+{\Xa}="B";
(0,0)*+{\Ga}="C";
(25,0)*+{\Ya}="D";
{\ar@{->}^-{}"A";"B"};
{\ar@{->}_-{\alpha}"A";"C"};
{\ar@{->}^-{p}"B";"D"};
{\ar@{->}_-{}"C";"D"};
\endxy
\] 

By hypothesis, in the two cases, there is a solution $h: \tau_m \Ga \to \tau_m \Xa$ for the truncated problem in $\Lax_{\g}(\Ca^{\leq m}, \M)$. The idea is to extend $h$ into a lax $\g$-morphism $h': \Ga  \to \Xa$. As usual we reduce the problem to find $h'_z$ for $z$ of degree $m+1$.  For each $z$ of degree $m+1$, we have by $h$ a canonical map $ \laxlatch(\Ga,z)  \to  \laxlatch(\Xa,z)$; if we compose with  $\varepsilon$ we get a map
$$ \laxlatch(\Ga,z)  \to  \laxlatch(\Xa,z) \xrightarrow{\varepsilon} \Xa z.$$
By the universal property of the pushout we get a unique map $\Fa z \cup_{\laxlatch(\Fa,z)} \laxlatch(\Ga,z) \to  \Xa z$ and  the following commutes:

\[
\xy
(0,20)*+{\Fa z \cup_{\laxlatch(\Fa,z)} \laxlatch(\Ga,z)}="A";
(50,20)*+{\Xa z}="B";
(0,0)*+{\Ga z}="C";
(50,0)*+{\Ya z}="D";
{\ar@{->}^-{}"A";"B"};
{\ar@{->}_-{g_z}"A";"C"};
{\ar@{->}^-{p_z}"B";"D"};
{\ar@{->}_-{}"C";"D"};
\endxy
\] 

So if either $g_z \in L$ or $p_z \in R$ we can find a lift $h'_z: \Ga z \to \Xa z$ making everything commutative. In particular the following commutes:

\[
\xy
(0,20)*+{\Ga z}="A";
(35,20)*+{\Xa z}="B";
(0,0)*+{\laxlatch(\Ga ,z)}="C";
(35,0)*+{\laxlatch(\Xa,z)}="D";
{\ar@{->}^-{h'_z}"A";"B"};
{\ar@{->}_-{\varepsilon}"C";"A"};
{\ar@{->}^-{\varepsilon}"D";"B"};
{\ar@{->}_-{h}"C";"D"};
\endxy
\] 

Thus the collection of $h$ together with the maps $h'_z$ constitutes a lax $\g$-morphism $\Ga \to \Xa$ which is obviously a solution to the original problem.  $\qed$

\subsection{Application: a model structure}
We apply the previous material to establish the following

\begin{thm}\label{direct-proj-model}
Let $\M$ be a $2$-category which is a locally model category and  $\Ca$ be a locally Reedy category which is simple and such that the degree $\degb: \Ca \to \lambda$ has a minimal value $m_0$.\\

Then there is a model structure on the category $\Lax(\Ca,\M)_u$ of all unitary lax morphisms; where a morphism $\alpha: \Fa \to \Ga$ is:

\begin{enumerate}
\item a \textbf{weak equivalence} if for all $1$-morphism $z$, $\alpha_z: \Fa z \to \Ga z$ is a weak equivalence.
\item a \textbf{fibration} if for all $1$-morphism $z$, $\alpha_z: \Fa z \to \Ga z$ is a fibration.
\item a \textbf{cofibration} if for all $z$ the canonical map 
$$g_z: \Fa z \cup_{\laxlatch(\Fa,z)} \laxlatch(\Ga,z) \to \Ga z$$
is a cofibration
\end{enumerate}
\end{thm}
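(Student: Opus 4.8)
The plan is to build this model structure by transfinite induction on the degree, using Lemma \ref{direct-colimit} as the inductive engine, much like the standard construction of the Reedy model structure on diagrams but carried out in the setting of lax $\g$-morphisms. First I would recall that $\M(U,V)$, being a model category, carries the two functorial factorization systems $(L,R) = (\text{cofibrations}, \text{trivial fibrations})$ and $(L',R') = (\text{trivial cofibrations}, \text{fibrations})$; both satisfy the hypotheses of the previous subsection, so for each $m \in \lambda$ we get corresponding classes $\le_m, \Rc_m$ and $\le'_m, \Rc'_m$ in $\Lax_{\g}(\Ca^{\leq m}, \M)$. The claim to be proved is that the three classes in the statement — defined levelwise for weak equivalences and fibrations, and by the relative-latching condition for cofibrations — assemble into a model structure on $\Lax(\Ca,\M)_u = \lim_m \Lax_{\g}(\Ca^{\leq m},\M)$.

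Next I would set up the induction. At the bottom degree $m_0$ there is no composition below it, so $\Ca^{\leq m_0}$ has empty latching data and $\Lax_{\g}(\Ca^{\leq m_0},\M)$ is just a product of the diagram categories $\prod_{(A,B)}\Hom(\Ca(A,B)_{\leq m_0}, \M(\Fa A,\Fa B))$; since $\Ca(A,B)_{\leq m_0}$ is a direct category with objects all of the same (minimal) degree, this carries the (projective = Reedy) model structure in the usual way, and there the relative-latching cofibrations are just the levelwise cofibrations. For the inductive step, assuming $\Lax_{\g}(\Ca^{\leq m},\M)$ is a model category with the analogous three classes, I would verify the model-category axioms for $\Lax_{\g}(\Ca^{\leq m+1},\M)$: bicompleteness follows from Lemma \ref{direct-colimit}(1) (which gives that $\tau_m$ creates colimits, and a dual argument creates limits via matching objects); two-out-of-three and retract closure are immediate since weak equivalences and the relevant lifting classes are detected levelwise or by colimits; the factorization axioms are exactly Lemma \ref{direct-colimit}(2) applied to the two factorization systems on $\M$; and the two lifting axioms are Lemma \ref{direct-colimit}(3), which reduces lifting in degree $m+1$ to lifting in degree $m$ plus a pushout-lifting argument in $\M$. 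Passing to the limit over all $m$, a morphism $\alpha \in \Lax(\Ca,\M)_u$ is a weak equivalence / fibration / cofibration precisely when each truncation $\tau_m\alpha$ is, so the classes are determined by their truncations and the limiting model structure is well-defined; the only subtlety is to note that the colimits and factorizations built degreewise are compatible with the truncation functors, which is precisely what Lemma \ref{direct-colimit} guarantees.

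The main obstacle I expect is the passage to the limit — i.e.\ checking that the per-degree factorizations produced by Lemma \ref{direct-colimit}(2) are coherent across all degrees so that they glue to an honest functorial factorization on $\Lax(\Ca,\M)_u$, and that the degreewise-defined lifts from Lemma \ref{direct-colimit}(3) likewise assemble. This is the place where the simplicity hypothesis on $\Ca$ (the existence of a composition-respecting global linear extension $\degb$, with $\degb(g\otimes f) = \degb(g)+\degb(f)$ and a minimal value $m_0$) is essential: it guarantees that each $1$-morphism $z$ appears in the latching category $\partial^{\bullet}_{\Ca/z}$ only through chains of strictly lower degree, so that $\laxlatch(\Ga,z)$ depends only on the truncation $\tau_{\degb(z)-1}\Ga$ and the induction is genuinely well-founded. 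A secondary point worth spelling out is that the cofibrations defined by the relative-latching condition really do coincide with the maps having the left lifting property against trivial fibrations (and dually for trivial cofibrations); this is the standard Reedy bookkeeping, deduced by combining Lemma \ref{direct-colimit}(2) and (3) with the retract argument, and I would present it as a short lemma rather than grinding through it. Finally I would remark, as the paper does, that this recovers the projective model structure of Corollary \ref{direct-proj-model-msx} when $\Ca = (\sx)^{2\text{-op}}$ and $\M$ is the monoidal model category in question.
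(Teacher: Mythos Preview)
Your proposal is correct and follows essentially the same route as the paper: transfinite induction on degree using Lemma \ref{direct-colimit}, with the base case at $m_0$ reducing to a product of copies of $\M$ (your diagram categories $\Hom(\Ca(A,B)_{\leq m_0},\M(\Fa A,\Fa B))$ are exactly this, since the minimal-degree part is discrete), and the retract argument at the end to identify trivial cofibrations with what the paper calls ``good trivial cofibrations'' (those $\alpha$ for which each $g_z$ is a trivial cofibration). The paper is terser and simply cites Hovey \cite[Thm 5.1.3]{Hov-model} for the template, but the content matches yours.
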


\begin{cor}\label{direct-proj-model-msx}
For a monoidal model category $\M$, the category $\msx$ has a model structure, called the \textbf{projective model structure}, with the above three classes of weak equivalences, fibration, cofibration.
\end{cor}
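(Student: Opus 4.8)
The plan is to deduce Corollary \ref{direct-proj-model-msx} as the special case of Theorem \ref{direct-proj-model} obtained by taking the indexing $\lr$-category to be $\Ca=(\S_{\ol{X}})^{2\tx{-op}}$ and by regarding the monoidal model category $\M$ as a one-object $2$-category (equivalently, a one-object bicategory) whose sole hom-category is $\ul{M}$. Under this identification, $\Lax((\S_{\ol{X}})^{2\tx{-op}},\M)_u$ with icons as morphisms is exactly $\msx$ (Definition \ref{mor-s-diag} and the notation fixed just after it, together with the fact that simple transformations are icons): a $1$-morphism $z$ of $(\S_{\ol{X}})^{2\tx{-op}}$ is a chain $t\in\S_{\ol{X}}(A,B)$, and $\Fa z=F_{AB}(t)$. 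Consequently the three classes of maps produced by Theorem \ref{direct-proj-model} translate, after unwinding, into the classes stated in the corollary: $\sigma\colon F\to G$ is a weak equivalence (resp.\ a fibration) iff every component $\sigma_t\colon F_{AB}(t)\to G_{AB}(t)$ is one in $\ul{M}$, and a cofibration iff every latching map $g_z\colon \Fa z\cup_{\laxlatch(\Fa,z)}\laxlatch(\Ga,z)\to\Ga z$ is a cofibration in $\ul{M}$. So the entire task is to check that the hypotheses of Theorem \ref{direct-proj-model} hold for this pair.

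First I would record that $(\S_{\ol{X}})^{2\tx{-op}}$ is a locally Reedy $2$-category: this is Example \ref{ex-lr-category}(1), each hom-category $\S_{\ol{X}}(A,B)^{\tx{op}}$ being a (purely direct) Reedy category over $\Depiop$, and the horizontal composition being concatenation of chains, which visibly sends direct morphisms to direct morphisms. Next I would verify that it is \emph{simple} in the sense of Definition \ref{lr-simple}: composing the length functors $\le_{AB}\colon\S_{\ol{X}}(A,B)\to\Upsilon$ of Proposition-Definition \ref{path-bicat} with $\Upsilon\to(\omega,+,0)$ and reversing $2$-cells gives a strict $2$-functor $\degb\colon(\S_{\ol{X}})^{2\tx{-op}}\to\omega$; it is a $2$-functor because concatenation adds lengths, $\degb(s\otimes t)=\degb(s)+\degb(t)$, and because a deletion morphism strictly drops the length unless it is an identity, so in the $2$-opposite the degree is strictly raised by every non-identity $2$-morphism. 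Finally $\degb$ attains a minimal value $m_0=1$, realized by the $1$-cells $[\1,(A,B)]$ (equivalently $m_0=0$ once one adjoins the harmless degree-$0$ identity $1$-morphisms needed to pass between the semi-$2$-category $\S_{\ol{X}}$ and the $2$-categorical framework of Theorem \ref{direct-proj-model}, unitary lax morphisms on the unitalization being exactly $\S_{\ol{X}}$-diagrams). On the side of $\M$: a monoidal model category is, as a one-object $2$-category, locally a model category — $\M(\ast,\ast)=\ul{M}$ is a model category, hence complete and cocomplete, and the pushout-product axiom of $\M$ supplies whatever compatibility between $\otimes$ and the model structure is demanded by Definition \ref{model-2-cat}. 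With all hypotheses verified, Theorem \ref{direct-proj-model} yields the model structure, and the translation above identifies the three classes with those stated.

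The step that carries the weight is not a deep computation but the bookkeeping that makes the identification $\msx=\Lax((\S_{\ol{X}})^{2\tx{-op}},\M)_u$ meet the letter of Theorem \ref{direct-proj-model}: one must check the \emph{simple} hypothesis with the correct orientation — so that the projective model structure is built by transfinite induction on \emph{increasing} degree through the skeleta $\sk_m$, which is precisely why the $2$-opposite enters — that the minimal-degree requirement is genuinely met despite $\S_{\ol{X}}$ having no unit $1$-cells, and that $\msx$ is complete as well as cocomplete (cocompleteness being Theorem \ref{MX-cocomplete}; completeness holds because limits of lax morphisms are computed levelwise with canonically induced laxity maps, $\lim F_i s\otimes\lim F_i t\to\lim(F_i s\otimes F_i t)\to\lim F_i(s\otimes t)$). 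None of these requires a new idea, but each is a place where the corollary would silently fail if the earlier constructions did not line up, so I would spell them out rather than leave them implicit.
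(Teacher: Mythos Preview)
Your proposal is correct and follows exactly the approach intended by the paper: the corollary is stated without proof because it is meant to be an immediate application of Theorem~\ref{direct-proj-model} with $\Ca=(\S_{\ol{X}})^{2\tx{-op}}$ and the monoidal model category $\M$ viewed as a one-object $2$-category. You have simply spelled out the hypothesis checks (locally Reedy, simple, minimal degree, and the passage from the semi-$2$-category $\S_{\ol{X}}$ to the unital framework of the theorem) more carefully than the paper does.
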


\begin{proof}[Proof of Theorem \ref{direct-proj-model}]
The proof is very similar to the one given by Hovey \cite[Thm 5.1.3]{Hov-model} for classical directed diagrams.\\
An easy exercise shows that the above three classes of maps are closed under retracts. Following Hovey, we will say that $\alpha$ is a \emph{ good trivial cofibration} if for all $z$ 
$$g_z: \Fa z \cup_{\laxlatch(\Fa,z)} \laxlatch(\Ga,z) \to \Ga z$$ is a trivial cofibration.\ \\

Let $\degb^{-1}(m_0)$ be the set of all $1$-morphisms of degree $m_0$. For $\Fa \in \Lax(\Ca,\M)_u$, since $m_0$ is minimal then for any $1$-morphism $z \in \degb^{-1}(m_0)$, there is no laxity maps $\Fa s_1 \otimes  \Fa s_2 \to \Fa z$ other than the isomorphism:
$$\Fa \Id \otimes  \Fa z \xrightarrow{\cong} \Fa z  \ \ \ \ \ \ \  (\Fa \Id = \Id).$$

Consequently it's not hard to see that the category $\Lax_{\g}(\Ca^{\leq m_0}, \M)_u$ is just a product of copies of $\M$, i.e we have an isomorphism:

$$\Lax_{\g}(\Ca^{\leq m_0}, \M)_u \xrightarrow{\cong} \prod_{\degb^{-1}(m_0)} \M.$$
The above functor is the product of the evaluation functor at each $z \in \degb^{-1}(m_0)$.  From the previous isomorphism we deduce that $\Lax_{\g}(\Ca^{\leq m_0}, \M)_u$ is cocomplete, and by Lemma \ref{direct-colimit} we establish (by induction) that:
\begin{itemize}[label=$-$]
\item $\Lax(\Ca, \M)_u$ is cocomplete; it's also complete since lax morphisms behave nicely with limits.
\item Any map $\alpha$ can be factorized as a cofibration followed by a trivial fibration.
\item Any map $\alpha$ can be factorized as a good trivial cofibration followed by a  fibration. 
\item Good trivial cofibrations have the LLP with respect to all fibrations; and  trivial fibrations have the RLP with respect to all cofibrations. 
\end{itemize}
Finally following the same method as Hovey one shows using a retract argument that every map which is both a weak equivalence and a cofibration is a good trivial cofibration.
\end{proof}

\begin{rmk}
For a classical Reedy $1$-category $\D$, if we view it as an $\lr$-category which is simple, then the previous theorem gives the same model structure for diagrams in $\M$ indexed by $\D$ (see \cite[Thm 5.1.3]{Hov-model}).
\end{rmk}

\section{A model structure on $\M_{\S}(X)$}  \label{section-model-msx}
In this section we want to show, with a different method that for a fixed set $X$, the category $\S_X$-diagrams whose objects are called \emph{pre-cosegal categories} has a model structure when $\M$ is monoidal model category. In the first case we will assume that $\M$ is cofibrantly generated model category  with a set $\I$ (resp. $\Ja$) of  generating cofibrations (resp. acyclic cofibrations). \ \\

The model structure will be obtained by transfer of the model structure on the category \\ 
$ \K_X= \prod_{(A,B) \in X^{2}} \Hom[\S_{\ol{X}}(A,B)^{op}, \M]$ along the monadic adjunction $\M_{\S}(X) \leftrightarrows \K_X $.\ \\

On $\K_X$ we will consider for our purposes the \emph{projective} and \emph{injective} model structure. Each of these model stuctures is the product of the one  on each factor $\K_{X,AB}=\Hom[\S_{\ol{X}}(A,B)^{op}, \M]$. Since each $\S_{\ol{X}}(A,B)$ is an inverse category (like $\Delta_{epi}$) the projective and Reedy model structure on the presheaf category $\K_{X,AB}$ are the same. In fact the identity  is an isomorphism of model categories  between $(\K_{X,AB})_{\tx{proj}}$ and $(\K_{X,AB})_{\tx{Reedy}}$ see \cite[3.17]{Barwick_localization}, \cite[Ch. 5]{Hov-model}. In the last reference one views $\K_{X,AB}$ as a functor category where the source is the directed category  $\S_{\ol{X}}(A,B)^{op}$.\ \\
The reader  can find in \cite[Prop 3.3]{Barwick_localization}, \cite[Ch. 11.6; Ch.15 ]{Hirsch-model-loc},\cite[Ch. 5]{Hov-model}, \cite[A.3.3]{Lurie_HTT} \cite[Ch. 7.6.2]{Simpson_HTHC}, a description of these model structures on diagram categories.\ \\ 

Denote by $\kxproj$ (resp. $\kxinj$) the projective (resp. injective) model structure on $\K_X$. These are  cofibrantly generated model categories as  (small) product of such model categories. The generating cofibrations and acyclic cofibration are respectively  $\I_{\bullet}= \prod_{(A,B) \in X^2} \I_{AB}$  and  $\Ja_{\bullet}= \prod_{(A,B) \in X^2} \Ja_{AB}$, where $\I_{AB}$ (resp. $\Ja_{AB}$) is the corresponding set of cofibration (resp. acyclic cofibrations) in $\Hom[\S_{\ol{X}}(A,B)^{op}, \M]$. For $\kxproj$ one can actually tell more about the sets $\I_{AB}$ and $\Ja_{AB}$; the reader can find a nice description in the  above references.

In contrast to the projective model structure, there is not an explicit characterization in $\kxinj$ for the generating set of (trivial) cofibrations. The generating cofibrations are known so far to be (trivial) cofibrations between presentable objects see  \cite{Barwick_localization}, \cite{Lurie_HTT}, \cite{Simpson_HTHC} and references therein. \ \\
 
\paragraph{\textbf{ Extra hypothesis on $\M$}} For the moment we will assume that all objects of $\M$ are cofibrant \ \\
\ \\
The following lemma due  to Schwede-Shipley \cite{Sch-Sh-Algebra-module} is the key step for the transfer of the model structure on $\kx$ to  $\msx$ through the monadic adjunction 
$$\Ub: \msx \rightleftarrows \kx: \Gamma$$

\begin{lem}\label{transfer-model-str}
Let $\T$ be a monad on a cofibrantly generated model category $\K$, whose underlying functor commutes with directed colimits. Let $\I$ be the set of generating cofibrations and $\Ja$ be the set of generating  acyclic cofibrations for $\K$. Let $\I_{\T}$ and $\Ja_{\T}$ be the images of these sets under the free $\T$-algebra functor. Assume that the domains of  $\I_{\T}$ and $\Ja_{\T}$ are small relative to  $\I_{\T} \tx{-cell}$ and $\Ja_{\T} \tx{-cell}$ respectively. Suppose that
\begin{enumerate}
\item every regular $\Ja_{\T}$-cofibration is a weak equivalence, or 
\item every object of $\K$ is fibrant and every $\T$-algebra has a path object. 
\end{enumerate}

Then the category of $\T$-algebras is a cofibranty generated model category with $\I_{\T}$ a generating set of cofibrations and $\Ja_{\T}$ a generating  set of acyclic cofibrations.
\end{lem}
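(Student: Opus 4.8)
The plan is to recognize the desired structure as the model structure \emph{transferred} along the free--forgetful adjunction $\Fb\dashv\Ub$ between $\K$ and the Eilenberg--Moore category $\T\text{-alg}$, and then to verify the hypotheses of Kan's recognition theorem for such lifts (see \cite{Hirsch-model-loc} or \cite{Sch-Sh-Algebra-module}). Concretely, I would \emph{declare} a map $\sigma$ in $\T\text{-alg}$ to be a weak equivalence (resp. a fibration) exactly when $\Ub\sigma$ is one in $\K$, and a cofibration to be a map with the left lifting property against all trivial fibrations, and then check Quillen's axioms one at a time. The formal points come first: $\T\text{-alg}$ is complete because $\Ub$ creates limits, and cocomplete because $\K$ is cocomplete and $\T$ preserves directed colimits (the Eilenberg--Moore category of a finitary monad on a cocomplete category is cocomplete; in the application to $\msx$ this is also Theorem~\ref{MX-cocomplete}); the two-out-of-three property and the closure of weak equivalences and of fibrations under retracts descend from $\K$ by functoriality of $\Ub$, and cofibrations are retract-closed for formal reasons.

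Next I would pin down the classes using the adjunction. By $\Fb\dashv\Ub$, a map $p$ has the right lifting property against $\I_{\T}=\Fb(\I)$ iff $\Ub p$ has it against $\I$, i.e. iff $\Ub p$ is a trivial fibration, i.e. iff $p$ is a trivial fibration; likewise $\Ja_{\T}=\Fb(\Ja)$-injectives are exactly the fibrations. Dually each $\Fb(i)$ and each $\Fb(j)$ has the left lifting property against trivial fibrations, hence is a cofibration, so every relative $\I_{\T}$-cell complex and every relative $\Ja_{\T}$-cell complex is a cofibration. The smallness hypotheses on the domains of $\I_{\T}$ and $\Ja_{\T}$ then allow the small object argument, yielding functorial factorizations $\sigma=p\circ\iota$ with $\iota$ a relative $\I_{\T}$-cell complex (hence a cofibration) and $p$ a trivial fibration, and $\sigma=q\circ\jmath$ with $\jmath$ a relative $\Ja_{\T}$-cell complex (hence a cofibration) and $q$ a fibration. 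At that point everything is in place except one claim: that $\jmath$ is moreover a weak equivalence. Granting this, the second factorization is of the correct type, the lifting axioms follow by the usual retract trick (a cofibration that is a weak equivalence factors as a relative $\Ja_{\T}$-cell complex followed by a trivial fibration, hence is a retract of the former and so has the left lifting property against all fibrations), and cofibrations are identified with retracts of relative $\I_{\T}$-cell complexes.

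The hard part — the only non-formal step, and exactly what hypotheses~(1) and~(2) are for — is this claim that relative $\Ja_{\T}$-cell complexes are weak equivalences; equivalently, that $\Ub$ carries pushouts of the free trivial cofibrations $\Fb(j)$ and their transfinite composites to weak equivalences in $\K$. Under hypothesis~(1) it is immediate: a relative $\Ja_{\T}$-cell complex is a retract of a regular $\Ja_{\T}$-cofibration, which is a weak equivalence by assumption, and weak equivalences are retract-closed. Under hypothesis~(2) I would run Quillen's path-object argument: a relative $\Ja_{\T}$-cell complex $g\colon X\to Y$ has the left lifting property against all fibrations; since every object of $\K$ is fibrant, so is $Y$, hence $Y$ admits a path object $Y\xrightarrow{\sim}Y^{I}\xrightarrow{(d_0,d_1)}Y\times Y$ with $(d_0,d_1)$ a fibration, and $d_0,d_1$ are then trivial fibrations (the projections $Y\times Y\to Y$ are fibrations because $Y$ is fibrant). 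Lifting $g$ against $d_0$ produces a retraction $\rho$ with $\rho g=\Id_X$, and lifting $g$ against $(d_0,d_1)$, against the commuting square with bottom $(g\rho,\Id_Y)$, produces a right homotopy $K$ with $d_0K=g\rho$ and $d_1K=\Id_Y$; two-out-of-three forces $K$, and then $g\rho$, to be weak equivalences, so $\Ub g$ is a retract of the weak equivalence $\Ub(g\rho)$ and hence $g$ is a weak equivalence.

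With this step established, all of Quillen's axioms hold, with $\I_{\T}$ a generating set of cofibrations and $\Ja_{\T}$ a generating set of trivial cofibrations, which is precisely the assertion. I would stress that the entire content of the lemma lies in the crux step: everything else is a formal consequence of the adjunction together with the small object argument, whereas controlling the underlying homotopy type of pushouts of $\Fb(j)$ genuinely requires an input such as (1) or (2) — and in the applications of this paper that input is supplied by the explicit pushout analysis of the appendices.
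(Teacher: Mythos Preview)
The paper does not give its own proof of this lemma: it is stated as a result ``due to Schwede--Shipley \cite{Sch-Sh-Algebra-module}'' and used as a black box, the paper's effort going instead into verifying hypothesis~(1) in the specific case of $\msx$. Your proposal is a correct outline of the standard transfer argument (Kan's lifting/recognition theorem, as in Schwede--Shipley or Hirschhorn), so there is nothing substantive to compare --- you have written out what the paper merely cites.

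One small slip in your path-object argument under hypothesis~(2): the retraction $\rho\colon Y\to X$ with $\rho g=\Id_X$ is obtained by lifting $g$ against the fibration $X\to\ast$ (which exists because $X$ is fibrant), not against $d_0$. Once $\rho$ is in hand, the lift against $(d_0,d_1)$ and the retract conclusion go through exactly as you wrote.
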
 

In our case we will need only to show that the condition $(1)$ holds. We do this in the next paragraph.

\begin{note}
In the formulation of Schwede-Shipley \cite{Sch-Sh-Algebra-module},  $\I_{\T} \tx{-cell}$ and $\Ja_{\T} \tx{-cell}$ are respectively denoted by $\I_{\T} \tx{-cof}_{\tx{reg}}$ and $\Ja_{\T} \tx{-cof}_{\tx{reg}}$.
\end{note} 
\subsection{Pushouts in $\M_{\S}(X)$}
Our goal here is to understand the pushout  in $\M_{\S}(X)$ of $\Gamma \alpha$ where $\alpha: \Aa \to \Ba $ is a (trivial) cofibration in $\kxinj$ or $\kxproj$.  \ \\

By construction  $\Gamma$ preserves level-wise cofibrations and weak equivalences in $\K_X$ so $\Gamma \alpha$ is clearly a level-wise (trivial) cofibration if $\alpha$ is a (trivial) cofibration. \ \\
For an object $\Fa$  of $\M_{\S}(X)$ we want to analyse the pushout of 
$\Gamma\Ba  \xhookleftarrow{\Gamma \alpha} \Gamma\Aa \to  \Fa $. Before going to this task we start below with  a constant case;  we consider three objects with lax morphisms which are coherent. The goal is to outline how one builds laxity maps when we move each of the three objects. 

\paragraph{Analyze of the constant case}

Let  $m_1, m_2, m_3, m_{12}, m_{23}$ and $m$ be objects of $\M$ with maps:
\begin{itemize}[label=$-$]
\item  $\varphi: m_1 \otimes m_2 \otimes m_3 \to m$,
\item  $\varphi_{1,2}: m_1 \otimes m_2 \to m_{12}$,
\item  $\varphi_{2,3}: m_2 \otimes m_3 \to m_{23}$,
\item  $\varphi_{1,23}: m_1 \otimes m_{23} \to m$,
\item  $\varphi_{12,3}: m_{12} \otimes m_3 \to m$,
\end{itemize}
Assume moreover that the following `associativity condition' holds: $$\varphi_{12,3} \circ (\varphi_{1,2} \otimes \Id_{m_3})= \varphi_{1,23} \circ (\Id_{m_1} \otimes \varphi_{2,3})= \varphi.$$
 
These equalities are piece of the coherence conditions required for the laxity maps: think  $F(s)=m_1,F(t)=m_2, F(u)=m_3, F(s \otimes t)=m_{12}$, $\varphi_{t,s}= \varphi_{1,2},$ etc.  We've considered only three generic objects because the coherences for lax morphisms involves three terms. 
\begin{term}
We will say that the objects $m_i, m_{ij}$ together with the maps $\varphi$ satisfying the previous equality form a \textbf{3-ary coherent system}. There is also a notion of $n$-ary-coherent system when we consider $n$ objects $m_1, ..., m_n$ with compatible laxity maps. These are the `constant data' of lax morphism between $\O$-algebras.
\end{term}
\ \\
Given three maps $\alpha_i: m_i \to m_i'$ ( $i \in \{ 1,2,3\}$), we consider successively:
\begin{itemize}[label=$-$]
\item  $\alpha_{12}: m_{12} \to \Ra_{12}$ the pushout of $\alpha_1 \otimes \alpha_{2}$ along $\varphi_{1,2}$. $\Ra_{12}= m_1' \otimes m_2' \cup_{m_1 \otimes m_2} m_{12}$. 
\item  $\alpha_{23}: m_{23} \to \Ra_{23}$ the pushout of $\alpha_2 \otimes \alpha_3$ along $\varphi_{2,3}$.
\end{itemize}
These pushouts come with canonical maps:
$\tld{\varphi}_{i,i+1}: m_i' \otimes m_{i+1}' \to \Ra_{i,i+1}$ ( $i \in \{ 1,2\}$).

\begin{df}
Define the \textbf{coherent object} $m'$ to be the colimit of the diagram below:
\[
\xy
(-60,20)*+{m_1 \otimes m_2 \otimes m_3}="A";
(20,30)+(-20,0)*+{m_{12} \otimes m_3}="B";
(-20,-10)+(0,20)*+{m_1 \otimes m_{23}}="C";
(60,0)+(0,20)+(-20,0)*+{m}="D";
{\ar@{->}^{\varphi \otimes \Id}"A";"B"};
{\ar@{->}_{\Id \otimes \varphi}"A";"C"};
{\ar@{->}^{\varphi}"B";"D"};
{\ar@{->}_{\varphi}"C";"D"};
(-60,-10)*+{m_1' \otimes m_2' \otimes m_3'}="X";
(20,0)+(-20,0)*+{\Ra_{12} \otimes m_3'}="Y";
(-20,-40)+(0,20)*+{m_1' \otimes \Ra_{23}}="Z";
(60,-30)+(0,20)+(-20,0)*+{m'}="W";
{\ar@{->}^{\tld{\varphi} \otimes \Id }"X";"Y"};
{\ar@{->}_{\otimes \alpha_i}"A";"X"};
{\ar@{..>}^{}"Y";"W"};
{\ar@{->}^{}"C";"Z"};
{\ar@{->}_{}"B";"Y"};
{\ar@{..>}^{\beta}"D";"W"};
{\ar@{->}_{\Id \otimes \tld{\varphi}}"X";"Z"};
{\ar@{.>}_{}"Z";"W"};
\endxy
\]
\end{df}

\begin{prop}\label{prop-cube-cof}
With the above notations, assume that all  objects of the ambient category $\M$ are cofibrant. Then
if each $\alpha_i: m_i \to m_i'$  is a (trivial) cofibration, then the canonical map $\beta: m \to m'$ is a (trivial) cofibration as well.
\end{prop}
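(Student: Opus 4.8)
The idea is to exhibit $\beta$ as a cobase change of a map manufactured out of $\alpha_1,\alpha_2,\alpha_3$ by operations that preserve (trivial) cofibrations, and then read it off from a stage‑wise computation of the colimit. Since $\M$ is a monoidal model category in which every object is cofibrant, the pushout--product axiom gives the two facts I will keep using: a tensor product of (trivial) cofibrations is again a (trivial) cofibration, and a cobase change of a (trivial) cofibration is a (trivial) cofibration. Nothing below distinguishes the plain from the acyclic case, so I treat ``(trivial) cofibration'' uniformly.

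First I would record the auxiliary (trivial) cofibrations. The maps $\alpha_1\otimes\alpha_2$ and $\alpha_2\otimes\alpha_3$ are (trivial) cofibrations, hence so are their cobase changes $\iota_{12}\colon m_{12}\to\Ra_{12}$ and $\iota_{23}\colon m_{23}\to\Ra_{23}$ along $\varphi_{1,2}$ and $\varphi_{2,3}$ (the pushouts defining $\Ra_{12},\Ra_{23}$); their domains $m_{12},m_{23}$ being cofibrant, the maps $\iota_{12}\otimes\alpha_3$, $\alpha_1\otimes\iota_{23}$ and $\alpha_1\otimes\alpha_2\otimes\alpha_3$ are (trivial) cofibrations as well.

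Next I would recompute the colimit that defines $m'$ in stages. The indexing diagram is a (punctured) commutative cube with source corner $m_1\otimes m_2\otimes m_3$; grouping its faces, $m'$ is the pushout of
$$ P':=(\Ra_{12}\otimes m_3')\cup_{m_1'\otimes m_2'\otimes m_3'}(m_1'\otimes\Ra_{23})\ \xleftarrow{\ f\ }\ P:=(m_{12}\otimes m_3)\cup_{m_1\otimes m_2\otimes m_3}(m_1\otimes m_{23})\ \xrightarrow{\ g\ }\ m, $$
where $g$ is induced by the associativity condition and $f$ by the pushout relations $\tld\varphi_{1,2}\circ(\alpha_1\otimes\alpha_2)=\iota_{12}\circ\varphi_{1,2}$, $\tld\varphi_{2,3}\circ(\alpha_2\otimes\alpha_3)=\iota_{23}\circ\varphi_{2,3}$; and $\beta$ is the cobase change of $f$ along $g$. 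It therefore suffices to prove that $f\colon P\to P'$ is a (trivial) cofibration. For this I would factor $f$ by pushing $P$ out successively along the three (trivial) cofibrations just listed --- $\iota_{12}\otimes\alpha_3$ applied to the summand $m_{12}\otimes m_3$, then $\alpha_1\otimes\iota_{23}$ applied to $m_1\otimes m_{23}$, then $\alpha_1\otimes\alpha_2\otimes\alpha_3$ applied to $m_1\otimes m_2\otimes m_3$ --- each stage being a cobase change of a (trivial) cofibration, and then check, using exactly the above coherence identities together with the associativity condition, that the result is precisely $P'$. Hence $f$, and with it $\beta$, is a (trivial) cofibration.

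The main obstacle is precisely that last identification --- equivalently, showing directly that $f$ is a (trivial) cofibration. It is not a Reedy/projective cofibration of cube‑diagrams: its relative latching map at a codimension‑one corner is, up to identifying domains, the pushout--product of $\alpha_3$ with the map $\tld\varphi_{1,2}\colon m_1'\otimes m_2'\to\Ra_{12}$, and $\tld\varphi_{1,2}$ is only a cobase change of the arbitrary laxity map $\varphi_{1,2}$, so need not be a cofibration; thus the naive gluing‑lemma argument is unavailable. The whole reason for routing through $P$ --- where $m_{12}\otimes m_3$ and $m_1\otimes m_{23}$ are already glued over $m_1\otimes m_2\otimes m_3$ --- and for performing the three cobase changes in this order is that afterwards every arrow out of $m_1\otimes m_2\otimes m_3$ factors through $\alpha_1\otimes\alpha_2\otimes\alpha_3$, which is what lets the iterated pushout collapse onto $P'$; verifying this collapse is the one genuinely non‑formal step, and the general $n$‑ary coherent system is handled by the same bookkeeping.
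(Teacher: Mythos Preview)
Your reduction to the pushout description $m'\cong m\cup_P P'$ is correct, so it would indeed suffice to show that $f\colon P\to P'$ is a (trivial) cofibration. The gap is in the proposed factorization of $f$: the three successive cobase changes you describe do \emph{not} terminate at $P'$. After steps 1 and 2 you reach
\[
P_2=(\Ra_{12}\otimes m_3')\ \cup_{\,m_1\otimes m_2\otimes m_3}\ (m_1'\otimes\Ra_{23}),
\]
and then pushing out along $\alpha_1\otimes\alpha_2\otimes\alpha_3$ at the map $m_1\otimes m_2\otimes m_3\to P_2$ only \emph{adds} a copy of $m_1'\otimes m_2'\otimes m_3'$; it does not identify the two distinct maps $m_1'\otimes m_2'\otimes m_3'\rightrightarrows P_2$ coming from $\tld\varphi_{1,2}\otimes\Id$ and $\Id\otimes\tld\varphi_{2,3}$. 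A cobase change of a cofibration cannot perform that identification. Concretely, in $\Set$ take $m_i=\varnothing$, $m_i'=\{\ast\}$, $m_{12}=m_{23}=m=\{\ast\}$: then $P=\varnothing$, $P'$ has three points, while your $P_3$ has five. So $P\to P_3$ is not a factorization of $f$, and the ``collapse'' you hope for is not forced by the coherence identities; the comparison $P_2\to P'$ is a genuine quotient, not a cobase change of anything.

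The paper avoids this obstruction by never attempting to show that $f$ is a cofibration. Instead it factors $\beta$ itself as $\beta_3\circ\beta_2\circ\beta_1$, where at the $k$th stage only $m_k$ is moved (so $\alpha_l=\Id$ for $l\neq k$), and the resulting intermediate colimit $z_k$ is computed from a semi-cube in which one face is an honest pushout square. A short Reedy-style lifting lemma (the semi-cube lemma in the paper) then shows each $\beta_k$ is a (trivial) cofibration. The point is that moving one variable at a time keeps the back face of the cube a pushout, so the only cofibrancy input needed is the single map $\delta_2$ on the front, never the uncontrolled maps $\tld\varphi$. Your own diagnosis of why the Reedy/gluing argument fails is exactly the reason the paper's stepwise decomposition is the right one.
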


\begin{rmk}\label{tensor-cofib-obj}
The reason we demand the objects to be cofibrant is the fact that tensoring with a cofibrant object preserves (trivial) cofibrations. This is a consequence of the pushout-product axiom. 
\end{rmk}
The strategy to prove the proposition is to `divide then conquer';  we will use the following lemma which treats the case where one of the faces in the original semi-cube is a pushout square. This is a classical Reedy-style lemma (see for example Lemma 7.2.15 in\cite{Hirsch-model-loc}). 

\begin{lem}\label{semi-cub-cof}
Let $Q$ be a semi-cube in a model category $\M$ whose colimit is an object $m'$:
\[
\xy
(-10,20)*+{.}="A";
(20,20)+(0,10)+(0,-4)*+{.}="B";
(-10,0)*+{.}="C";
(20,0)+(0,10)+(0,-4)*+{.}="D";
{\ar@{->}^{}"A";"B"};
{\ar@{->}_{}"A"+(0,-2);"C"};
{\ar@{.>}^<<<<<<<<<<<<<{\delta_1}"B"+(0,-2);"D"};
{\ar@{.>}^{}"C";"D"};
(-10,20)+(-15,-10)+(40,0)+(0,5)*+{.}="E";
(20,20)+(-15,-10)+(40,0)+(0,10)+(0,5)+(0,-4)*+{m}="F";
(-10,0)+(-15,-10)+(40,0)+(0,5)*+{.}="G";
{\ar@{->}^{}"E";"F"};
{\ar@{->}^>>>>{\delta_2}"E"+(0,-2);"G"};
{\ar@{->}_{}"A";"E"};
{\ar@{->}_{}"B";"F"};
{\ar@{->}^{}"C";"G"};
\endxy
\]

Assume that the face containing $\delta_1$  is a pushout square. Then if $\delta_2$ is a (trivial) cofibration, then the canonical map  $\beta: m \to m'$ is also a (trivial) cofibration.
\end{lem}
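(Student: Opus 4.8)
The plan is to reduce the statement to the classical fact that a pushout of a (trivial) cofibration is a (trivial) cofibration, by computing the colimit $m'$ of the semi-cube $Q$ in two stages. First I would set up notation: write the semi-cube as a map of spans, i.e. as a morphism in the arrow category from the top square (the one not containing $m$) to the bottom square; denote the top square by $T$ and the bottom square by $B$, so that $m$ is the pushout-corner of $B$ and the face containing $\delta_1$ is, by hypothesis, a pushout square. The colimit $m'$ of $Q$ can then be computed as an iterated colimit: one may first form the pushouts of the four ``vertical'' edges running from $T$ to $B$ and then take the remaining colimit, or — more usefully here — one may compute $m'$ as a pushout of the map ``$\colim T \to m$'' along a map built from the three other vertical edges. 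The key observation is that because the face containing $\delta_1$ is already a pushout square, the colimit of the sub-diagram consisting of that face together with the connecting edges collapses, and one is left with a single pushout square exhibiting $\beta : m \to m'$ as the pushout of $\delta_2$ (or of a map built from $\delta_2$ and already-invertible data) along some map.

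Concretely, the key steps in order are: (1) Label the eight vertices of $Q$ and the edges, identifying $\delta_1$ and $\delta_2$ as the two indicated edges and $m$ as the terminal vertex of the bottom square $B$. (2) Observe that $m' = \colim Q$ may be rewritten, using the interchange of colimits (Fubini for colimits), as the pushout
\[
m' \;\cong\; m \;\cup_{\,P}\; P'
\]
where $P$ is the colimit of the top square $T$ restricted appropriately and $P'$ is the colimit of the ``vertical'' pushout data; here the hypothesis that the $\delta_1$-face is a pushout square is exactly what makes $P \to m$ and $P \to P'$ into the maps one needs, and makes the relevant corner of $P'$ agree with a pushout along $\delta_2$. (3) Because $\delta_2$ is a (trivial) cofibration and cofibrations and trivial cofibrations are each stable under pushout and under the formation of the relevant iterated colimits (all of which are again pushouts along maps in the saturated class generated by $\delta_2$), conclude that $P \to P'$ is a (trivial) cofibration. (4) Since $\beta : m \to m'$ is the pushout of $P \to P'$ along $P \to m$, it is a (trivial) cofibration, as desired. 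Throughout, whenever a tensor appears one invokes Remark \ref{tensor-cofib-obj} (tensoring with a cofibrant object preserves (trivial) cofibrations) to keep the maps in the right class; since all objects of $\M$ are assumed cofibrant this is automatic.

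The main obstacle is bookkeeping step (2): correctly organizing the eight-vertex diagram so that its colimit genuinely is a single pushout of $\delta_2$ along something, rather than an opaque iterated colimit. The subtlety is that the semi-cube has only seven of the eight vertices of a cube (the initial vertex $m_1\otimes m_2\otimes m_3$, three ``edge'' objects, and the output $m$, plus the corresponding primed/pushed-out objects), so one must be careful that the ``face containing $\delta_1$ is a pushout square'' hypothesis is used to collapse exactly the right subdiagram; getting this wrong would leave an extra pushout one cannot control. Once the colimit is correctly identified as $m' = m \cup_P P'$ with $P \to P'$ a (trivial) cofibration, the conclusion is immediate from closure of (trivial) cofibrations under pushout, so essentially all the work is in this combinatorial reorganization of the colimit, which is why the lemma is flagged as ``a classical Reedy-style lemma'' with a reference to Hirschhorn rather than proved in detail.
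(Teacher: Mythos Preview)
Your approach is correct but different from the paper's. The paper proves the lemma by a direct lifting argument: given a lifting problem for $\beta$ against a fibration $p:x\to y$, it first lifts along $\delta_2$ (using that $\delta_2$ is a trivial cofibration), then uses the pushout property of the back face to produce a compatible map from $D$, and finally invokes the universal property of $m'=\colim Q$ to get the desired lift $h:m'\to x$. No rewriting of the colimit is performed.

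Your Fubini/interchange idea works, and in fact it can be sharpened considerably: because the back face is a pushout and $D$ has no outgoing arrows in $Q$, the object $D$ contributes nothing to the colimit. A cocone on $Q$ is completely determined by a cocone on the span $G \xleftarrow{\delta_2} E \to m$, so $m' \cong m \cup_E G$ and $\beta$ is \emph{literally} the pushout of $\delta_2$ along $E\to m$. This makes your steps (2)--(3) collapse to a one-line observation, with no need for the auxiliary objects $P,P'$ or for any iterated-colimit bookkeeping. Compared to the paper's lifting argument, this is more conceptual and slightly shorter; the paper's argument has the mild advantage that it generalizes immediately to the variant in Remark~\ref{rmk-cub-cofib-general} (where the back face is only a pushout up to a further cofibration), whereas your identification $m'\cong m\cup_E G$ uses the pushout hypothesis on the nose.

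One small correction: your references to Remark~\ref{tensor-cofib-obj} and to ``all objects of $\M$ are cofibrant'' are out of place here. Lemma~\ref{semi-cub-cof} is stated for an arbitrary model category with no monoidal structure and no cofibrancy hypothesis; no tensor product appears. Those assumptions belong to Proposition~\ref{prop-cube-cof}, which \emph{applies} this lemma, not to the lemma itself.
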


In practice we will use the lemma when all the vertical map are (trivial) cofibrations. 

\begin{proof}[Proof of the lemma ]
We simply treat the case where $\delta_2$ is a trivial cofibration; the  method is the same when $\delta_2$ is just a cofibration.
 $\beta$ will be a trivial cofibration if we show that it has the LLP with respect to all fibrations.\ \\
Consider a lifting problem defined by $\beta$ and a fibration $p: x \twoheadrightarrow y$:
\[
\xy
(0,20)*+{m}="A";
(20,20)*+{x}="B";
(0,0)*+{m'}="C";
(20,0)*+{y}="D";
{\ar@{->}^{i}"A";"B"};
{\ar@{->}_{\beta}"A";"C"};
{\ar@{->>}^{p}"B";"D"};
{\ar@{->}^{j}"C";"D"};
\endxy
\]   

A solution to this problem is map out of $m'$, $h:m' \to x$, satisfying the obvious equalities. Since $m'$ is a colimit-object, we simply have to show that we can complete in a suitable way the semi-cube $Q$ into a commutative cube ending at $x$; the map $h$ will be then induced by universal property of the colimit. \ \\
 
If we join the lifting problem to the universal cube we get a commutative diagram displayed below  
\[
\xy
(-10,20)*+{.}="A";
(20,20)+(0,10)+(0,-4)*+{.}="B";
(-10,0)*+{.}="C";
(20,0)+(0,10)+(0,-4)*+{.}="D";
{\ar@{->}^{}"A";"B"};
{\ar@{->}_{}"A"+(0,-2);"C"};
{\ar@{->}^<<<<<<<<<<<<<{\delta_1}"B"+(0,-2);"D"};
{\ar@{->}^{}"C";"D"};
(-10,20)+(-15,-10)+(40,0)+(0,5)*+{.}="E";
(20,20)+(-15,-10)+(40,0)+(0,10)+(0,5)+(0,-4)*+{m}="F";
(-10,0)+(-15,-10)+(40,0)+(0,5)*+{.}="G";
(20,0)+(-15,-10)+(40,0)+(0,10)+(0,5)+(0,-4)*+{m'}="H";
{\ar@{->}^{}"E";"F"};
{\ar@{^{(}->}_>>>>{\delta_2}^>>>>{\wr}"E"+(0,-2);"G"};
{\ar@{.>}_{\beta}"F";"H"};
{\ar@{.>}^{}"G";"H"};
{\ar@{->}_{}"A";"E"};
{\ar@{->}_{}"B";"F"};
{\ar@{->}^{}"C";"G"};
{\ar@{.>}^{}"D";"H"};
(20,20)+(-15,-10)+(40,0)+(0,10)+(0,5)+(0,-4)+(30,0)*+{x}="O";
(20,0)+(-15,-10)+(40,0)+(0,10)+(0,5)+(0,-4)+(30,0)*+{y}="P";
{\ar@{->}^{i}"F";"O"};
{\ar@{->}^{j}"H";"P"};
{\ar@{->>}^{p}"O";"P"};
{\ar@{->}|>>>>>>>>>>>>>>>{f}"G";"O"};
{\ar@{.>}|>>>>>>>>>>>>>>{g}"D";"O"+(-5,-1)};
{\ar@{->}|{h}"H";"O"+(-1,-2)};
\endxy
\]  
 
Since $\delta_2$ is a trivial cofibration there is a solution $f$ to the lifting problem defined by $\delta_2$ and $p$.  With the map $f$ we get a commutative square starting from the horn defining the pushout square in the back (the one containing $\delta_1$) and ending at $x$; so by universal property of the pushout, there is a unique map $g$ making the obvious diagram commutative. \ \\

With the maps $f$ and $g$ we have a commutative cube ending at $x$, so by universal property of the colimit we have a unique map $h: m' \to x$ making everything commutative. In particular $h$ satisfies the equality $i= h \circ \beta$.\\  
 
By construction the two cubes ending at $y$ obtained with the maps $j$ and $p\circ h$ are the same, so by unicity of the map out of  the colimit we have $j= p \circ h$. 
Consequently $h$ is a solution to the original lifting problem and $\beta$ is a  trivial cofibration as desired.  
\end{proof}

\begin{rmk}\label{rmk-cub-cofib-general}
The statement of the lemma remains valid if we consider a more general situation where the pushout square containing $\delta_1$ is  replaced by another commutative square in which the morphism $\varepsilon$ out of the pushout is a (trivial) cofibration:
\[
\xy
(-15,15)*+{.}="A";
(15,15)*+{.}="B";
(-15,0)*+{.}="C";
(15,0)*+{.}="R";
(6,7)*+{.}="E";
{\ar@{->}^{}"A";"B"};
{\ar@{->}_{}"A";"C"};
{\ar@{->}^-{\delta_1}"B";"R"};
{\ar@{.>}^{}"B";"E"};
{\ar@{.>}^-{}"C";"E"};
{\ar@{->}^{}"C";"R"};
{\ar@{^{(}->}^{\varepsilon}"E";"R"};
\endxy
\]

\end{rmk}
\subsubsection{Proof of  Proposition \ref{prop-cube-cof}}

To prove the proposition, we will present the cube defining $m'$ as a concatenation of other universals cube where each of them satisfies the condition of the previous lemma. The proof is organized as follows.
\begin{itemize}
\item First we treat the case where only $m_1$ moves that is $\alpha_2=\Id_{m_2}$ and $\alpha_3=\Id_{m_3}$. We will denote by $z_1$ the coherent object defined with these data and denote by $Q_1$ the induced universal cube. Denote by $\beta_1:  m \to z_1$ the canonical map.
\item The lower face of the cube $Q_1$ is a coherent system ending at $z_1$. We construct $z_2$ to be the coherent object with respect to that associative system,  where only $m_2$ moves i.e $\alpha_1=\Id_{m_1'}$ and $\alpha_3=\Id_{m_3}$. We will denote by $Q_2$ the new universal cube. There is a canonical map  $\beta_2: z_1 \to z_2$. 
\item Finally with the lower face of $Q_2$, we  treat the case where only $m_3$ moves, which is similar to the first case. We have a coherent object $z_3$ with a new cube $Q_3$; there is also a canonical map $\beta_3: z_2 \to z_3$. 
\item By universal property we have  $z_3 \cong m'$, thus we can take $\beta= \beta_3 \circ  \beta_2 \circ \beta_1$. 
\item Each cube $Q_i$ is constructed from a semi-cube satisfying the conditions of the previous lemma, thus each $\beta_i$ will be a  (trivial) cofibration and the result will follow.
\end{itemize}

We need some piece of notations for the rest of the proof. 
\begin{nota}\ \
\begin{enumerate}
\item Let $O_{12}$ and $P_{12}$ be the objects obtained from the pushout squares:

\begin{tabular}{cc}
$S_1=$ 
\begin{tabular}{c}
\xy
(-15,15)*+{m_1 \otimes m_2}="A";
(15,15)*+{m_{12}}="B";
(-15,0)*+{m_1' \otimes m_2}="C";
(15,0)*+{O_{12}}="R";
{\ar@{->}^-{ \varphi}"A";"B"};
{\ar@{->}_-{\alpha_1 \otimes \Id}"A";"C"};
{\ar@{->}^-{h_{12}}"B";"R"};
{\ar@{->}^-{\tld{\varphi}}"C";"R"};
\endxy
\end{tabular}
\ \ \ \ \ \ \ \ \ 
$S_2=$
\begin{tabular}{c}
\xy
(-15,15)*+{m_1' \otimes m_2}="A";
(15,15)*+{O_{12}}="B";
(-15,0)*+{m_1' \otimes m_2'}="C";
(15,0)*+{P_{12}}="R";
{\ar@{->}^-{\tld{\varphi}}"A";"B"};
{\ar@{->}_-{\Id \otimes \alpha_2}"A";"C"};
{\ar@{->}^-{k_{12}}"B";"R"};
{\ar@{->}^-{\tld{\varphi}'}"C";"R"};
\endxy
\end{tabular}
\end{tabular}
\ \\
\ \\
\ \\
From lemma \ref{collapse_po} we know that the `vertical' concatenation `$\frac{S_1}{S_2}$' of these pushout squares is `the' pushout square defining $\Ra_{12}$;  it follows that $P_{12} \cong \Ra_{12}$.\ \\

Now since colimits distribute over the tensor product, tensoring $S_1$ and $S_2$ by $m_3$ gives two pushout squares $S_1 \otimes m_3$ and $S_2 \otimes m_3$. The concatenation  of  the later squares is the pushout square
\begin{center}
$D=$ 
\begin{tabular}{c}
\xy
(-15,15)*+{m_1 \otimes m_2 \otimes m_3}="A";
(15,15)*+{m_{12} \otimes m_3}="B";
(-15,0)*+{m_1' \otimes m_2' \otimes m_3}="C";
(15,0)*+{\Ra_{12} \otimes m_3}="R";
{\ar@{->}^-{\varphi \otimes \Id}"A";"B"};
{\ar@{->}_-{\alpha_1 \otimes \alpha_2 \otimes \Id }"A";"C"};
{\ar@{->}^-{p_{12} \otimes \Id}"B";"R"};
{\ar@{->}^-{ \tld{\varphi} \otimes \Id}"C";"R"};
\endxy
\end{tabular}
\end{center}

\item Let $K_{23}$ and $L_{23}$ be the objects obtained from the the pushout squares:

\begin{tabular}{cc}
$T_1=$ 
\begin{tabular}{c}
\xy
(-15,15)*+{m_2 \otimes m_3}="A";
(15,15)*+{m_{23}}="B";
(-15,0)*+{m_2' \otimes m_3}="C";
(15,0)*+{K_{23}}="R";
{\ar@{->}^-{ \varphi}"A";"B"};
{\ar@{->}_-{\alpha_2 \otimes \Id}"A";"C"};
{\ar@{->}^-{}"B";"R"};
{\ar@{->}^-{\tld{\varphi}}"C";"R"};
\endxy
\end{tabular}
\ \ \ \ \ \ \ \ \ 
$T_2=$
\begin{tabular}{c}
\xy
(-15,15)*+{m_2' \otimes m_3}="A";
(15,15)*+{K_{23}}="B";
(-15,0)*+{m_2' \otimes m_3'}="C";
(15,0)*+{L_{23}}="R";
{\ar@{->}^-{\tld{\varphi}}"A";"B"};
{\ar@{->}_-{\Id \otimes \alpha_3}"A";"C"};
{\ar@{->}^-{}"B";"R"};
{\ar@{->}^-{\tld{\varphi}'}"C";"R"};
\endxy
\end{tabular}
\end{tabular}
\ \\
\ \\
\ \\
As usual the concatenation of $T_1$ and $T_2$ is the pushout square defining $\Ra_{23}$ so we can take $L_{23}= \Ra_{23}$. And if we tensor everywhere by $m_1'$ we still have pushout square $ m_1' \otimes T_1$ and $m_1' \otimes T_2$ and their concatenation is the pushout square:

\begin{center}
$E=$ 
\begin{tabular}{c}
\xy
(-15,15)*+{m_1' \otimes m_2 \otimes m_3}="A";
(15,15)*+{m_1' \otimes m_{23}}="B";
(-15,0)*+{m_1' \otimes m_2' \otimes m_3'}="C";
(15,0)*+{ m_1' \otimes \Ra_{23} }="R";
{\ar@{->}^-{\Id \otimes\varphi }"A";"B"};
{\ar@{->}_-{\Id \otimes \alpha_2 \otimes \alpha_3 }"A";"C"};
{\ar@{->}^-{}"B";"R"};
{\ar@{->}^-{ \Id \otimes \tld{\varphi}}"C";"R"};
\endxy
\end{tabular}
\end{center}

\end{enumerate}

\end{nota}

\paragraph*{Step 1: Moving $m_1$}\ \\
In this case we consider the  following semi-cube  whose colimit is $z_1$:

\[
\xy
(-10,20)*+{m_1 \otimes m_2 \otimes m_3}="A";
(20,20)+(0,10)+(0,-4)+(5,0)*+{m_{12} \otimes m_3}="B";
(-10,0)*+{m_1' \otimes m_2 \otimes m_3}="C";
(20,0)+(0,10)+(0,-4)+(5,0)*+{O_{12}\otimes m_3}="D";
{\ar@{->}^{}"A";"B"};
{\ar@{->}_{}"A"+(0,-2);"C"};
{\ar@{.>}^<<<<<<<<<<<<<{}"B"+(0,-2);"D"};
{\ar@{->}^{}"C";"D"};
(-10,20)+(-15,-10)+(40,0)+(0,5)*+{m_1 \otimes m_{23}}="E";
(20,20)+(-15,-10)+(40,0)+(0,10)+(0,5)+(0,-4)+(5,0)*+{m}="F";
(-10,0)+(-15,-10)+(40,0)+(0,5)*+{m_1' \otimes m_{23}}="G";
{\ar@{->}^{}"E";"F"};
{\ar@{->}^>>>>{}"E"+(0,-2);"G"};
{\ar@{->}_{}"A";"E"};
{\ar@{->}_{}"B";"F"};
{\ar@{->}^{}"C";"G"};
\endxy
\]  
 
The face in the back is precisely the pushout square $S_1\otimes m_3$ and the map $\delta_2= \alpha_1 \otimes \Id_{m_{23}}$ is a (trivial) cofibration since  $\alpha_1$ is so (Remark \ref{tensor-cofib-obj}); then by lemma \ref{semi-cub-cof}  we know that the canonical map $\beta_1: m \to z_1$ is also a (trivial) cofibration. 

\paragraph*{Step 2: Moving $m_2$}\ \\
Introduce the  following semi-cube whose colimit is $z_2$:

\[
\xy
(-10,20)*+{m_1' \otimes m_2 \otimes m_3}="A";
(20,20)+(0,10)+(0,-4)+(5,0)*+{O_{12} \otimes m_3}="B";
(-10,0)*+{m_1' \otimes m_2' \otimes m_3}="C";
(20,0)+(0,10)+(0,-4)+(5,0)*+{\Ra_{12}\otimes m_3}="D";
{\ar@{->}^{}"A";"B"};
{\ar@{->}_{}"A"+(0,-2);"C"};
{\ar@{.>}^<<<<<<<<<<<<<{}"B"+(0,-2);"D"};
{\ar@{->}^{}"C";"D"};
(-10,20)+(-15,-10)+(40,0)+(0,5)*+{m_1' \otimes m_{23}}="E";
(20,20)+(-15,-10)+(40,0)+(0,10)+(0,5)+(0,-4)+(5,0)*+{z_1}="F";
(-10,0)+(-15,-10)+(40,0)+(0,5)*+{m_1' \otimes N_{23}}="G";
{\ar@{->}^{}"E";"F"};
{\ar@{->}^>>>>{}"E"+(0,-2);"G"};
{\ar@{->}_{}"A";"E"};
{\ar@{->}_{}"B";"F"};
{\ar@{->}^{}"C";"G"};
\endxy
\]  

The two faces not containing $z_1$ are pushout squares; the one in the back is $S_2 \otimes m_3$ and the other one is $m_1' \otimes T_1$. All the vertical maps appearing there are (trivial) cofibrations since $\alpha_2$ is so, therefore by lemma \ref{semi-cub-cof}
 the canonical map $\beta_2: z_1 \to z_2$ is also a (trivial) cofibration.

\paragraph*{Step 3: Moving $m_3$}\ \\   
This time we consider the semi-cube below whose colimit is denoted by $z_3$:
\[
\xy
(-10,20)*+{m_1' \otimes m_2' \otimes m_3}="A";
(20,20)+(0,10)+(0,-4)+(5,0)*+{\Ra_{12} \otimes m_3}="B";
(-10,0)*+{m_1' \otimes m_2' \otimes m_3'}="C";
(20,0)+(0,10)+(0,-4)+(5,0)*+{\Ra_{12}\otimes m_3'}="D";
{\ar@{->}^{}"A";"B"};
{\ar@{->}_{}"A"+(0,-2);"C"};
{\ar@{.>}^<<<<<<<<<<<<<{}"B"+(0,-2);"D"};
{\ar@{->}^{}"C";"D"};
(-10,20)+(-15,-10)+(40,0)+(0,5)*+{m_1' \otimes N_{23}}="E";
(20,20)+(-15,-10)+(40,0)+(0,10)+(0,5)+(0,-4)+(5,0)*+{z_2}="F";
(-10,0)+(-15,-10)+(40,0)+(0,5)*+{m_1' \otimes \Ra_{23}}="G";
{\ar@{->}^{}"E";"F"};
{\ar@{->}^>>>>{}"E"+(0,-2);"G"};
{\ar@{->}_{}"A";"E"};
{\ar@{->}_{}"B";"F"};
{\ar@{->}^{}"C";"G"};
\endxy
\] 

The face on the left is a pushout square and corresponds to $m_1' \otimes T_2$. The map $\delta_2= \Id_{\Ra_{12}} \otimes \alpha_3$ in the face on the back is a (trivial) cofibration since $\alpha_3$ is so; applying lemma \ref{semi-cub-cof} again we deduce that the canonical map $\beta_3: z_2 \to z_3$ is also a (trivial) cofibration.\ \\

One can easily see that the (vertical) concatenation of the previous universal cubes constitutes a universal cube for the original semi-cube defining $m'$. By unicity of the colimit we can take $m'=z_3$ and $\beta = \beta_3 \circ \beta_2 \circ \beta_1$. Since each $\beta_i$ is a (trivial) cofibration, by composition  $\beta$ is a (trivial) cofibration as well, which is just we wanted to prove.$\qed$

\begin{rmk}
The proposition remains valid if we allow the objects $m_{12}$ and $m_{23}$ to move by (trivial) cofibrations. This time we will have to use the more general version of Lemma \ref{semi-cub-cof} pointed out in Remark \ref{rmk-cub-cofib-general}.
\end{rmk}

\subsubsection{The main lemma} 
In the following our goal is to establish that
\begin{lem}\label{pushout-MX}
Given a diagram $\Gamma\Ba \xhookleftarrow{\Gamma \alpha } \Gamma\Aa \to  \Fa$ consider the pushout in $\msx$ 
\[
\xy
(-10,20)*+{\Gamma\Aa}="A";
(20,20)*+{\Fa}="B";
(-10,0)*+{\Gamma\Ba}="C";
(20,0)*+{\Ga}="D";
{\ar@{->}^{\sigma}"A";"B"};
{\ar@{_{(}->}_{\Gamma \alpha}"A"+(0,-3);"C"};
{\ar@{.>}^{H_{\alpha}}"B";"D"};
{\ar@{.>}^{}"C";"D"};
\endxy
\]   

Then if $\alpha$ is  a level-wise trivial cofibration in $\kx$ then $\Ub H_{\alpha}$ is a level-wise trivial cofibration; in particular $H_{\alpha}$ is a weak equivalence in $\msx$. 
\end{lem}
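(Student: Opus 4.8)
The plan is to reduce the statement about the pushout $\Ga$ in $\msx$ to the explicit "coherent object" analysis carried out in Proposition \ref{prop-cube-cof}, via the combinatorial description of the monad $\T=\Ub\Gamma$ given just above (the formula $\Gamma\Fa(t)=\coprod_{(t_0,\dots,t_l)\in\tx{Dec}(t)}\Fa(t_0)\otimes\cdots\otimes\Fa(t_l)$). First I would recall that the forgetful functor $\Ub:\msx\to\kx$ is monadic (Theorem \ref{MX-monadic-KX}), so a pushout in $\msx$ of a free map $\Gamma\alpha$ can be computed as a $\T$-algebra pushout; the standard formula for pushouts of free maps in a category of algebras over a monad expresses $\Ub\Ga$ as a transfinite colimit (a filtered colimit along a sequence indexed by the "number of cells already attached") whose successive stages are built from $\Fa$ by attaching, at each $1$-simplex-type $t$ of $\sx(A,B)^{\op}$, pushouts of tensor-product maps coming from $\alpha$. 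The key point is that the laxity structure of $\Ga$ forces exactly the kind of iterated pushout-of-tensor diagrams described in the "constant case" paragraph, so that the map $\Ub H_\alpha$ is, level-wise at each $t$, a colimit of maps each of which is handled by Proposition \ref{prop-cube-cof}.

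The key steps, in order, would be: (1) Describe $\Ub\Ga$ explicitly. Since $\Gamma\Aa\to\Fa$ is a map of $\T$-algebras, the pushout $\Ga$ is the coequalizer/colimit obtained by the usual "free product with amalgamation" construction; concretely $\Ub\Ga(t)$ is computed by summing over decompositions $\tx{Dec}(t)$ and replacing each factor that "touches" the image of $\alpha$ by the pushout $\Ba_i'\otimes\cdots\cup_{\Aa_i\otimes\cdots}(\cdots)$, then gluing these along the morphisms of $\sx(A,B)^{\op}$. (2) Filter this colimit so that at each stage only finitely many "moves" of the form $\alpha:\Aa_i\to\Ba_i$ are performed; each stage-to-stage map is then, level-wise at each $t$, exactly the canonical map $\beta:m\to m'$ of a coherent system as in Proposition \ref{prop-cube-cof} (with the $m_i$ being the factors $\Fa(t_j)$ and $\alpha_i$ the relevant $\alpha(t_j)$, which is a trivial cofibration by hypothesis, noting all objects of $\M$ are cofibrant so tensoring preserves trivial cofibrations — Remark \ref{tensor-cofib-obj}). (3) Apply Proposition \ref{prop-cube-cof} to conclude each stage map is a level-wise trivial cofibration. (4) Since level-wise trivial cofibrations in $\kx$ are closed under transfinite composition (each $\Hom[\sx(A,B)^{\op},\M]$ is a cofibrantly generated, hence accessible, model category), the composite $\Ub H_\alpha$ is a level-wise trivial cofibration; in particular, being a level-wise weak equivalence, $H_\alpha$ is a weak equivalence in $\msx$ by definition of the weak equivalences (those $\sigma$ with $\Ub\sigma$ a weak equivalence in $\kx$).

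I expect the main obstacle to be step (1)–(2): bookkeeping the explicit form of $\Ub\Ga$ and organizing the transfinite colimit so that each successive map genuinely fits the hypotheses of Proposition \ref{prop-cube-cof}. The subtlety is that the laxity maps of $\Ga$ are created "freely" by a left Kan extension (as in the Observation on $\int\cb$ and Appendix \ref{ub-has-left-adjoint}), so one must check that when a single generating cell $\alpha$ is attached, the induced change on $\Ub\Ga(t)$ for a multi-letter chain $t=(E_0,\dots,E_n)$ is precisely a concatenation of the "semi-cube" pushouts of Lemma \ref{semi-cub-cof}, with the already-attached data playing the role of the fixed objects $m_{12},m_{23}$ (which is why the remark that Proposition \ref{prop-cube-cof} still holds when $m_{12},m_{23}$ move by trivial cofibrations is needed). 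Once this identification is in place the rest is formal: trivial cofibrations are stable under pushout, transfinite composition and retract, and level-wise statements transfer through $\Ub$.

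\begin{proof}[Proof (sketch)]
By Theorem \ref{MX-monadic-KX} the functor $\Ub:\msx\to\kx$ is monadic, so we identify $\msx$ with $\T$-$\tx{alg}$ for $\T=\Ub\Gamma$ and compute the pushout $\Ga=\Gamma\Ba\cup_{\Gamma\Aa}\Fa$ by the standard presentation of pushouts of free morphisms in a category of algebras over a monad. Recall from the description of $\Gamma$ that, for each pair $(A,B)$ and each $t\in\sx(A,B)$,
\[
\Gamma\Fa(t)=\coprod_{(t_0,\dots,t_l)\in\tx{Dec}(t)}\Fa(t_0)\otimes\cdots\otimes\Fa(t_l),
\]
and that $\alpha$ being a level-wise (trivial) cofibration in $\kx$ implies $\Gamma\alpha$ is a level-wise (trivial) cofibration. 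Since all objects of $\M$ are cofibrant, tensoring with any object preserves (trivial) cofibrations (Remark \ref{tensor-cofib-obj}).

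Write the pushout $\Ub\Ga$ as a transfinite colimit $\Fa=\Ga_0\to\Ga_1\to\cdots\to\colim_\beta\Ga_\beta=\Ub\Ga$, where $\Ga_{\beta}\to\Ga_{\beta+1}$ attaches one more copy of a generating cell of $\alpha$ to the lax structure. Fix a pair $(A,B)$ and an element $t=(E_0,\dots,E_n)\in\sx(A,B)$. Inspecting the formula above and the way the left Kan extension creates laxity maps (as in the Observation on $\int\cb$ and Appendix \ref{ub-has-left-adjoint}), one sees that the map $\Ga_\beta(t)\to\Ga_{\beta+1}(t)$ is, up to isomorphism, the canonical map $\beta\colon m\to m'$ of a finite iterated coherent system in $\M$, in which the moving objects $m_i$ are the relevant factors $\Fa(t_j)$ and the moving maps $\alpha_i$ are values of $\alpha$ (hence trivial cofibrations), while the already-attached data play the role of the fixed objects $m_{12},m_{23}$. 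By Proposition \ref{prop-cube-cof} (in the form allowing $m_{12},m_{23}$ to move by trivial cofibrations), each such $\beta$ is a trivial cofibration; therefore $\Ga_\beta\to\Ga_{\beta+1}$ is a level-wise trivial cofibration in $\kx$.

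Each factor $\Hom[\sx(A,B)^{\op},\M]$ is a cofibrantly generated model category, so level-wise trivial cofibrations are closed under transfinite composition; hence $\Ub H_\alpha\colon\Fa\to\Ub\Ga$ is a level-wise trivial cofibration. In particular $\Ub H_\alpha$ is a weak equivalence in $\kx$, so by definition of the weak equivalences in $\msx$ (those $\sigma$ with $\Ub\sigma$ a weak equivalence in $\kx$), the map $H_\alpha$ is a weak equivalence in $\msx$.
\end{proof}
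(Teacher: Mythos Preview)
Your overall strategy is correct and is essentially the same as the paper's: build the pushout $\Ga$ as a transfinite colimit in $\kx$, verify each successive map is a level-wise trivial cofibration, and conclude by closure under transfinite composition. The paper's own proof simply observes that this is a special case of the general Lemma \ref{lem-pushout-laxalg} in Appendix \ref{pushout-laxalg}, after checking that $(\sx)^{2\text{-op}}$ is an $\iro$-algebra (the composition of $2$-morphisms in $\sx$ is a concatenation parametrized by ordinal addition in $\Depi$, so it reflects identities) and that $(\sxop,\M)$ is an $\hco$-pair (since $\Gamma$ preserves level-wise trivial cofibrations, Remark \ref{gamma-preserve-we}).

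One point of divergence worth flagging: the transfinite filtration actually used in Appendix \ref{pushout-laxalg} is not the ``attach one more cell of $\alpha$'' filtration you describe, but a filtration by \emph{coherence level}. One first takes the naive pushout $\Ea$ in $\kx$ (so $\Fa^0=\Fa$, $\Fa^1=\Ea$, and $p:\Fa\to\Ea$ is already a level-wise trivial cofibration), and then each subsequent $\Fa^{k+1}$ is produced from $\Fa^k$ by freely adjoining laxity maps (via the pushout objects $\Ra(p;x,c_1,\dots,c_n)$) together with their coherences (via the objects $Q$, $\Za$ and the construction $\Po$). Each of these steps is a generalized pushout of trivial cofibrations indexed by the \emph{discrete} fiber $\rho^{-1}c$ (discreteness is exactly where the $\iro$ hypothesis enters), and is controlled by Lemma \ref{cone-cofib} rather than by Proposition \ref{prop-cube-cof} directly --- though the semi-cube Lemma \ref{semi-cub-cof} underlying Proposition \ref{prop-cube-cof} is precisely what is being invoked repeatedly. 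Your acknowledgment that the bookkeeping in steps (1)--(2) is the crux is accurate; that bookkeeping is the full content of Appendix \ref{pushout-laxalg}, and your sketch gestures at it correctly without carrying it out.
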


\begin{proof}
This is a special case of Lemma \ref{lem-pushout-laxalg} in the Appendix. In fact $(\sx)^{\tx{$2$-op}}$ is an $\O$-algebra where $\O$ is the multisorted operad for (nonunital) $2$-categories; $\M$ is a special Quillen $\O$-algebra with all the objects cofibrant. Furthermore:
\begin{enumerate}
\item  $(\sx)^{\tx{$2$-op}}$ is an $\iro$-algebra  in the sense of Definition \ref{iro-hco}. This follows from the fact that  the composition in $\sx$ is a concatenation of chains and the $2$-morphisms  are parametrized by the morphisms in $\Delta_{\tx{epi}}$. In fact the composition of $2$-morphisms is simply a generalization of the ordinal addition of morphisms in $(\Delta_{\tx{epi}},+,\0)$ ; consequently the concatenation of $2$-morphisms cannot be the identiy unless all of them are identities. 
\item The pair $((\sx)^{\tx{$2$-op}},\M)$ is an $\hco$-pair  in the sense of Definition \ref{iro-hco}, since the left adjoint $\Gamma$ preserves the level-wise trivial cofibrations (see Remark \ref{gamma-preserve-we}). 
\end{enumerate}
We have $\msx= \Laxalg((\sx)^{\tx{$2$-op}},\M)$.
\end{proof}

\begin{rmk}
It's important to notice that in the lemma we've considered $\alpha$ a level-wise cofibration in $\K_X$; these are precisely the injective cofibrations therein. But this situation covers also the projective case,  since projective cofibrations are also injective ones. \ \\
So in either $\kxproj$ or $\kxproj$, the pushout of   $\Gamma \alpha$ is a lewel-wise weak equivalence and the condtion $(1)$ of lemma \ref{transfer-model-str} will hold. 
\end{rmk}

\subsection{The projective model structure}\ \\
According to a well know result on diagram categories in cofibrantly generated model category, see \cite[Theorem 11.6.1]{Hirsch-model-loc},  each diagram category $\Hom[\S_{\ol{X}}(A,B)^{op}, \M]$ has a cofibrantly generated model structure which is know to be the projective model structure. \ \\
\\
In these settings a morphism $\sigma: \Fa \to \Ga$ is: 
\begin{itemize}
\item A weak equivalence in $\Hom[\S_{\ol{X}}(A,B)^{op}, \M]$ if it is a level-wise equivalence: for every $w$  the component $\sigma_w : \Fa w \to \Ga w$ is a weak equivalence in $\M$,
\item A  fibration  in $\Hom[\S_{\ol{X}}(A,B)^{op}, \M]$ if it is a level-wise fibration:  $\sigma_w : \Fa w \to \Ga w$ is a (acyclic) fibration in $\M$.
\item A trivial fibration is a map which is both a fibration and a weak equivalence. 
\end{itemize}

\paragraph*{Left adjoint of evaluations}

For any object $w  \in \S_{\ol{X}}(A,B)^{op}$ the evalutation functor at $w$ : $\Ev_{w}: \Hom[\S_{\ol{X}}(A,B)^{op}, \M] \to \M$ has a left adjoint  $$\Fb_{-}^{w}: \M \to \Hom[\S_{\ol{X}}(A,B)^{op}, \M]$$
One defines $\Fb_{-}^{w}$ by `the body' of the Yoneda functor $\Ya_w$ (see \cite[Section 11.5.21 ]{Hirsch-model-loc}): 
$$  \Fb_{m}^{w}= m \otimes \Ya_w =  \coprod_{\Hom(w,-)} m ,\ \ \ \ \ \ \ \tx{for $m \in \M.$ }$$  
\\
This means that for $v  \in \S_{\ol{X}}(A,B)^{op}$, $\Fb_{m}^{w}(v)$ is the coproduct of copies of $m$ indexed by the set $\Hom_{\S_{\ol{X}}(A,B)^{op}}(w,v)$. The fact that $ \Fb_{-}^{w}$ has the desired properties follows from the Yoneda lemma.\ \\

With the functor $\Fb$ we have that the set of generating cofibrations is: 
$$\I_{AB}= \coprod_{w \in \S_{\ol{X}}(A,B)^{op}} \Fb_{\I}^{w}= \coprod_{w \in \S_{\ol{X}}(A,B)^{op}} \{\Fb_{m}^{w} \xrightarrow{ \Fb^w_{\alpha}}  \Fb_{m'}^{w} \}_{(m \xrightarrow{\alpha} m') \in \I}$$ 
\\
Similarly the set of generating acyclic cofibrations is: 
$$\Ja_{AB}= \coprod_{w \in \S_{\ol{X}}(A,B)^{op}} \Fb_{\Ja}^{w}$$
\ \\ 

Consider the product model structure on $ \kxproj= \prod_{(A,B) \in X^{2}} \Hom[\S_{\ol{X}}(A,B)^{op}, \M]_{\tx{proj}}$ where the three class of maps, cofibrations, fibrations, weak equivalences, are the natural ones i.e factor-wise cofibrations, fibrations, and weak equivalences (see \cite[Example 1.16]{Hov-model}). \ \\

\subsubsection{The main theorem}
\begin{thm}\label{main-proj-msx}
The category $\M_{\S}(X)$ has a combinatorial model structure where:
\begin{itemize}[label=$-$, align=left, leftmargin=*, noitemsep]
\item a weak equivalence is a map $\sigma$ such that $\Ub(\sigma)$ is a weak equivalence in $\kxproj$,
\item a fibration is a map $\sigma$ such that $\Ub(\sigma)$ is a fibration  in $\kxproj$,
\item a cofibration is a map having the left lifting property (LLP) with respect to all trivial fibrations,
\item the set of generating cofibrations is $\Gamma(\I)$,
\item the set of generating acyclic cofibrations is $\Gamma(\Ja)$.
\end{itemize}
We will refer  this model structure as the `projective' model structure on $\msx$ and denoted by $\msxproj$.
\end{thm}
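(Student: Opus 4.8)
The plan is to obtain the model structure on $\msx$ by transfer along the monadic adjunction $\Ub : \msx \rightleftarrows \kxproj : \Gamma$, using the Schwede–Shipley transfer principle recorded in Lemma \ref{transfer-model-str}. So the first order of business is to verify the hypotheses of that lemma. We have already established in Section \ref{properties-msc} that $\msx$ is cocomplete (Theorem \ref{MX-cocomplete}) and locally presentable (Theorem \ref{MX-local-pres}), and that the monad $\T = \Ub\Gamma$ is finitary (Proposition \ref{monad-finitary}); in particular its underlying functor commutes with directed colimits, and smallness of the domains of $\Gamma(\I)$ and $\Gamma(\Ja)$ relative to the respective cell complexes is automatic since everything in sight is accessible. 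The base model category $\kxproj = \prod_{(A,B)} \Hom[\S_{\ol X}(A,B)^{\op},\M]_{\tx{proj}}$ is cofibrantly generated (a finite product of cofibrantly generated projective diagram categories, with generating sets $\I = \prod_{AB}\I_{AB}$ and $\Ja = \prod_{AB}\Ja_{AB}$ built from the $\Fb^w_{-}$ functors as described above), so the only genuine thing left to check is condition (1) of Lemma \ref{transfer-model-str}: every regular $\Gamma(\Ja)$-cofibration is a weak equivalence in $\kxproj$, i.e.\ is a level-wise weak equivalence in each $\M$.

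The heart of the argument is therefore the pushout analysis. First one shows that the pushout in $\msx$ of a map $\Gamma\alpha$ along any map $\Gamma\Aa \to \Fa$ stays a level-wise trivial cofibration whenever $\alpha$ is one; this is exactly Lemma \ref{pushout-MX}, which in turn rests on the ``moving three objects one at a time'' cube-decomposition of Proposition \ref{prop-cube-cof} (where cofibrancy of all objects of $\M$ enters, via Remark \ref{tensor-cofib-obj}, to tensor cofibrations with objects and keep them cofibrations). Note that a generating trivial cofibration $\Ja$ in $\kxproj$ is in particular a level-wise (i.e.\ injective) trivial cofibration, so Lemma \ref{pushout-MX} applies verbatim. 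From the stability of level-wise trivial cofibrations under pushouts of $\Gamma(\Ja)$-cells, one then closes up under transfinite composition: a directed colimit of level-wise trivial cofibrations in the $\Hom$-categories is again one, because weak equivalences and trivial cofibrations in a cofibrantly generated model category are closed under transfinite composition, and colimits in $\kxproj$ and in the diagram categories are computed object-wise while $\Gamma$ commutes with these colimits. Hence every regular $\Gamma(\Ja)$-cofibration is a level-wise weak equivalence, which is precisely condition (1).

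With condition (1) in hand, Lemma \ref{transfer-model-str} delivers a cofibrantly generated model structure on the category of $\T$-algebras, which is $\msx$ by the monadicity statement (Theorem \ref{MX-monadic-KX}); its generating (trivial) cofibrations are $\Gamma(\I)$ and $\Gamma(\Ja)$, its weak equivalences and fibrations are created by $\Ub$, and its cofibrations are the maps with the LLP against trivial fibrations — exactly the three classes in the statement. The adjunction $\Gamma \dashv \Ub$ is then a Quillen pair by construction. Finally, for the combinatoriality clause: $\msx$ is locally presentable by Theorem \ref{MX-local-pres}, and the model structure is cofibrantly generated by small sets $\Gamma(\I), \Gamma(\Ja)$, so when $\M$ is combinatorial (hence each $\Hom$-diagram category and their product $\kxproj$ combinatorial) the transferred structure on $\msx$ is combinatorial as well.

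The main obstacle is the pushout step feeding condition (1) — concretely, showing that pushing out $\Gamma\alpha$ along an arbitrary map into $\Fa$ does not destroy the level-wise trivial-cofibration property, since the left adjoint $\Gamma$ mixes the values of $\Fa$ through iterated tensor products and one must track how laxity maps are generated. This is handled by the cube-by-cube collapse of Proposition \ref{prop-cube-cof} together with the general pushout computation in $\Laxalg$ (Lemma \ref{lem-pushout-laxalg} in the appendix), and it is the one place where the hypothesis ``all objects of $\M$ are cofibrant'' is genuinely used; everything else is formal transfer machinery.
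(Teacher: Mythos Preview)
Your proposal is correct and follows essentially the same route as the paper: verify condition~(1) of the Schwede--Shipley transfer lemma (Lemma~\ref{transfer-model-str}) via Lemma~\ref{pushout-MX}, then invoke Theorem~\ref{MX-local-pres} for combinatoriality. The paper's own proof is a three-sentence version of exactly this; your added detail on closing under transfinite composition and on where the cofibrancy hypothesis enters is accurate and fills in what the paper leaves implicit.
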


\begin{proof}
Thanks to our lemma \ref{pushout-MX}, the condition $(1)$ of  lemma \ref{transfer-model-str}  holds. It follows from the lemma that $\msx$ is a cofibrantly generated model category with the corresponding set of generating (trivial) cofibrations.  And from Theorem \ref{MX-local-pres}) we know that $\msx$ is locally presentable. 
\end{proof}

\subsection{Lifting the injective model structure on $\kx$}
By the same argument as in the projective case we establish the following.
\begin{thm}\label{main-inj-msx}
The category $\M_{\S}(X)$ has a combinatorial model structure where:
\begin{itemize}[label=$-$, align=left, leftmargin=*, noitemsep]
\item a weak equivalence is a map $\sigma$ such that $\Ub(\sigma)$ is a weak equivalence in $\kxinj$,
\item a fibration is a map $\sigma$ such that $\Ub(\sigma)$ is a fibration  in $\kxinj$,
\item a cofibration is a map having the left lifting property (LLP) with respect to all trivial fibrations,
\item the set of generating cofibrations is $\Gamma(\I)$,
\item the set of generating acyclic cofibrations is $\Gamma(\Ja)$.
\end{itemize}
We will refer  this model structure as the `injective' model structure on $\msx$ and denoted it by $\msxinj$.
\end{thm}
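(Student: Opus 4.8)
The plan is to mimic the proof of Theorem~\ref{main-proj-msx} verbatim, transferring the injective model structure along the monadic adjunction
$$\Ub : \msx \rightleftarrows \kxinj : \Gamma,$$
whose monad is $\T = \Ub\Gamma$. By Theorem~\ref{MX-monadic-KX} the functor $\Ub$ is monadic, so $\msx \simeq \T\tx{-alg}$; I would then apply the Schwede--Shipley transfer lemma (Lemma~\ref{transfer-model-str}) with $\K = \kxinj$, the only change from the projective case being the model structure we lift on the target.

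First I would check the standing hypotheses of Lemma~\ref{transfer-model-str}. The model category $\kxinj$ is cofibrantly generated (combinatorial if $\M$ is), being a product over $X^2$ of injective model structures on the presheaf categories $\Hom[\S_{\ol{X}}(A,B)^{\op},\M]$; denote by $\I$ and $\Ja$ its generating (trivial) cofibrations. The monad $\T$ commutes with directed colimits by Proposition~\ref{monad-finitary}. The required smallness of the domains of $\Gamma(\I)$ and $\Gamma(\Ja)$ relative to the respective cell complexes follows from the fact that $\msx$ is locally presentable (Theorem~\ref{MX-local-pres}): every object, in particular every such domain, is small. This is the one place where the injective argument leans on local presentability more heavily than the projective one, since in $\kxinj$ the generating (trivial) cofibrations are only known to be (trivial) cofibrations between presentable objects rather than having an explicit description.

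Next I would verify condition~(1) of Lemma~\ref{transfer-model-str}: every regular $\Gamma(\Ja)$-cofibration is a weak equivalence, i.e.\ an $\Ub$-level-wise weak equivalence. It suffices to show that the pushout $H_\alpha$ of any $\Gamma\alpha$ along any map $\Gamma\Aa \to \Fa$ of $\msx$ has $\Ub H_\alpha$ a level-wise trivial cofibration when $\alpha$ is a generating trivial cofibration of $\kxinj$; transfinite composition (computed through $\Ub$ by finitariness of $\T$) together with the two-out-of-three and retract properties of level-wise trivial cofibrations then finishes. But a generating trivial cofibration of $\kxinj$ is, by definition, a level-wise trivial cofibration in $\K_X$, so this is precisely the content of Lemma~\ref{pushout-MX} (and of the remark following it, which records that the statement is phrased for level-wise trivial cofibrations and therefore applies to $\kxinj$ exactly as it applies to $\kxproj$). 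Hence condition~(1) holds.

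With all hypotheses in place, Lemma~\ref{transfer-model-str} yields a cofibrantly generated model structure on $\msx$ with generating cofibrations $\Gamma(\I)$ and generating trivial cofibrations $\Gamma(\Ja)$, in which a map $\sigma$ is a weak equivalence (resp.\ fibration) exactly when $\Ub\sigma$ is one in $\kxinj$, and the cofibrations are the maps with the left lifting property against trivial fibrations; this is exactly the asserted description of $\msxinj$. Combinatoriality when $\M$ is combinatorial follows since $\kxinj$ is then combinatorial and $\msx$ is locally presentable. The only real subtlety — and the step I would treat most carefully — is the smallness hypothesis, which here must be extracted from local presentability rather than from an explicit generating set as in the projective case; everything else is a word-for-word repetition of the proof of Theorem~\ref{main-proj-msx}.
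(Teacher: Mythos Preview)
Your proposal is correct and follows exactly the same approach as the paper, which simply says ``The same as for the previous theorem'': transfer the injective model structure on $\kxinj$ along the monadic adjunction via Lemma~\ref{transfer-model-str}, using Lemma~\ref{pushout-MX} (and the remark following it) to verify condition~(1), and invoke Theorem~\ref{MX-local-pres} for local presentability. Your write-up is in fact more careful than the paper's, in particular in flagging that the smallness hypothesis must come from local presentability rather than from an explicit description of the injective generators.
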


\begin{proof}
The same as for the previous theorem.
\end{proof}

\begin{cor}\label{quillen-equiv-inj-proj}
The identity functor $\Id: \msxproj  \rightleftarrows \msxinj: \Id$ is a Quillen equivalence.
\end{cor}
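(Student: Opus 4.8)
The plan is to show that the two identity functors $\Id\colon \msxproj \rightleftarrows \msxinj\colon \Id$ form a Quillen equivalence by first checking they form a Quillen pair and then checking the equivalence criterion. Recall from Theorems \ref{main-proj-msx} and \ref{main-inj-msx} that both model structures on $\msx$ are obtained by transfer along the \emph{same} adjunction $\Ub\colon \msx \rightleftarrows \kx\colon \Gamma$, the only difference being that the base is $\kxproj$ in one case and $\kxinj$ in the other. Crucially, both transferred structures have the \emph{same} weak equivalences: a map $\sigma$ is a weak equivalence in $\msxproj$ (resp.\ $\msxinj$) iff $\Ub(\sigma)$ is a weak equivalence in $\kxproj$ (resp.\ $\kxinj$), and level-wise weak equivalences in $\K_X$ are the same regardless of whether one uses the projective or injective model structure. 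So the identity functor $\Id\colon \msxproj \to \msxinj$ preserves and reflects weak equivalences on the nose.

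First I would check that $\Id\colon \msxproj \to \msxinj$ is left Quillen. Since the two structures share their class of weak equivalences, it suffices to check that $\Id$ sends cofibrations of $\msxproj$ to cofibrations of $\msxinj$; equivalently, since cofibrations in a cofibrantly generated model category are the retracts of transfinite composites of pushouts of generating cofibrations, it is enough to observe that the generating cofibrations of $\msxproj$, namely $\Gamma(\I_{\tx{proj}})$, are cofibrations in $\msxinj$. Here one uses that a projective cofibration in $\K_X$ is in particular an injective (i.e.\ level-wise) cofibration — this is the remark made just after Lemma \ref{pushout-MX} in the text — hence $\I_{\tx{proj}} \subseteq \cof(\I_{\tx{inj}})$ in $\kx$, and applying the left adjoint $\Gamma$ gives $\Gamma(\I_{\tx{proj}}) \subseteq \cof(\Gamma(\I_{\tx{inj}}))$, so these maps are cofibrations in $\msxinj$. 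This shows $\Id$ is left Quillen, and dually $\Id\colon \msxinj \to \msxproj$ is right Quillen, so we have a Quillen pair.

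Next I would invoke the standard criterion (e.g.\ Hovey \cite{Hov-model}, Prop.\ 1.3.13): a Quillen pair $(L,R)$ between model categories with $L \dashv R$ is a Quillen equivalence iff for every cofibrant $X$ and fibrant $Y$, a map $LX \to Y$ is a weak equivalence iff its adjoint $X \to RY$ is. When $L = R = \Id$ and the two model structures share the same weak equivalences, this condition is automatic: a map $f\colon X \to Y$ viewed in $\msxinj$ is a weak equivalence iff $\Ub(f)$ is a level-wise weak equivalence iff $f$ viewed in $\msxproj$ is a weak equivalence, and the adjunction unit/counit are identities. Concretely, I would state: let $\Fa$ be cofibrant in $\msxproj$ and $\Ga$ fibrant in $\msxinj$, and let $g\colon \Fa \to \Ga$ be a map; then $g$ is a weak equivalence in $\msxinj$ iff $\Ub(g)$ is a weak equivalence in $\kxinj$ iff $\Ub(g)$ is a level-wise weak equivalence in $\K_X$ iff $\Ub(g)$ is a weak equivalence in $\kxproj$ iff $g$ is a weak equivalence in $\msxproj$. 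This is exactly the required condition (the adjoint of $g$ under $\Id \dashv \Id$ is $g$ itself), so the Quillen pair is a Quillen equivalence.

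The only genuine content, and the step I expect to require the most care, is the verification that $\Id\colon \msxproj \to \msxinj$ is left Quillen — i.e.\ that projective cofibrations of $\msx$ are injective cofibrations of $\msx$. Everything else is formal once one observes the two transferred model structures have identical weak equivalences. For this step one reduces, via the small object argument, to the generating cofibrations $\Gamma(\I_{\tx{proj}})$ and uses that $\Gamma$ is a left adjoint (hence preserves the class of maps generated by a set under pushout and transfinite composition) together with the inclusion $\I_{\tx{proj}} \subseteq \cof(\I_{\tx{inj}})$ coming from the fact that projective cofibrations in a diagram category over an inverse (here, direct) category are level-wise cofibrations. Alternatively, and perhaps more cleanly, one can argue directly that trivial fibrations in $\msxinj$ are trivial fibrations in $\msxproj$: a map $p$ is a trivial fibration in $\msxinj$ iff $\Ub(p)$ is a level-wise trivial fibration iff $\Ub(p)$ is a trivial fibration in $\kxproj$ iff $p$ is a trivial fibration in $\msxproj$; hence a map with the LLP against all trivial fibrations in $\msxproj$ has the LLP against all trivial fibrations in $\msxinj$, i.e.\ projective cofibrations are injective cofibrations. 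With that in hand the corollary follows immediately.
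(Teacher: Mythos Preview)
Your proof is correct and follows exactly the paper's approach: the paper's one-line argument is ``the weak equivalences are the same and a projective (trivial) cofibration is also an injective (trivial) cofibration,'' which is precisely what you verify in detail before invoking the standard Quillen-equivalence criterion. One small imprecision: in your alternative argument the chain of ``iff''s for trivial fibrations is stated too strongly (injective trivial fibrations in $\K_X$ are level-wise trivial fibrations, but the converse can fail), though only the forward implication is needed and used, so the argument stands.
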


\begin{proof}
The weak equivalences are the same and a projective (trivial) cofibration is also an injective (trivial) cofibration.
\end{proof}
\begin{rmk}
Note that in both  $\msxinj$ and $\msxproj$ the fibrations and weak equivalences are the underlying ones in $\kxinj$ and $\kxproj$ respectively. Since limits in $\msx$ are computed level-wise, it's easy to see that both $\msxproj$ and $\msxinj$ are \emph{right proper} if $\M$. In fact one establish first that  $\kxproj$ and $\kxinj$ are also right proper. For left properness the situation is a bit complicated, we will discuss it later.  
\end{rmk}

\section{Variation of the set of objects} \label{variation-objects}

Let $\Set$ be the category of sets of some universe $\U \subsetneq \U' $. So far we've considered the category $\M_{\S}(X)$ for a fixed set $X \in \U$. In this section we are going to move $X$. \ \\

Since the construction of $\S_{\ol{X}}$ is funtorial in $X$, any funtion $f: X \to Y$ induces a homomorphism (=strict $2$-functor) $\S_f: \S_{\ol{X}} \to \S_{\ol{Y}}$. We then have a functor $\fstar: \msx \to \msy$. Below we will see that there is a left adjoint $\fex$ of $\fstar$. When no confusion arises we will simply write again $f$ to mean $\S_f$. \ \\

Let $\M_{\S}(\Set)$ be the category described as follows:
\begin{description}
\item[objects] are pairs $(X, \Fa)$ with $X \in \Set$ and $\Fa \in \M_{\S}(X)$,
\item[morphisms] from $(X,\Fa$) to $(Y, \Ga)$ are pairs $(f,\sigma)$ with $f \in \Set(X,Y)$ and $\sigma \in \M_{\S}(X)( \Fa, \fstar \Ga)$. 
\end{description}  

In the same way we have a category $\K_{\Set}$ and a forgetful functor $\Ub: \M_{\S}(\Set) \to \K_{\Set}$. 

\begin{lem}
If $\M$ is a symmetric closed monoidal which is cocomplete then:

\begin{enumerate}
\item $\Ub$ is monadic 
\item The monad induced by $\Ub$ preserves directed colimits.
\end{enumerate}
\end{lem}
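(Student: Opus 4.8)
The plan is to bootstrap from the fixed-object results of Sections~\ref{properties-msc}--\ref{section-model-msx}, viewing $\mset$ and $\kset$ as the Grothendieck constructions over $\Set$ of the pseudofunctors $X\mapsto\msx$ and $X\mapsto\kx$, with $\Ub$ the ``total'' functor of the family $\Ub_X\colon\msx\to\kx$. First note that $\Ub$ is a cartesian functor over $\Set$: it is the identity on object-components, and it commutes with reindexing, $\Ub_X\fstar=\fstar\Ub_Y$ for $f\colon X\to Y$, because on both sides $\fstar$ is precomposition with the homomorphism $\S_f$ while $\Ub$ merely forgets laxity maps. Write $\Gamma_X\dashv\Ub_X$ for the fixed-object adjunction of Section~\ref{section-model-msx} and put $\Gamma_{\Set}(X,\Aa):=(X,\Gamma_X\Aa)$. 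Then, naturally in $(X,\Aa)\in\kset$ and $(Y,\Ga)\in\mset$,
\begin{align*}
\mset\big((X,\Gamma_X\Aa),(Y,\Ga)\big)
&=\coprod_{f\in\Set(X,Y)}\msx(\Gamma_X\Aa,\fstar\Ga)
=\coprod_{f}\kx(\Aa,\Ub_X\fstar\Ga)\\
&=\coprod_{f}\kx(\Aa,\fstar\Ub_Y\Ga)
=\kset\big((X,\Aa),\Ub(Y,\Ga)\big),
\end{align*}
so $\Gamma_{\Set}\dashv\Ub$; consequently the induced monad $\T_{\Set}:=\Ub\Gamma_{\Set}$ acts fibrewise, $\T_{\Set}(X,\Aa)=(X,\T_X\Aa)$, where $\T_X=\Ub_X\Gamma_X$ is the fixed-object monad, which is finitary by Proposition~\ref{monad-finitary}.

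For (1) it then suffices to see that the comparison functor $K\colon\mset\to\T_{\Set}\tx{-alg}$ is an equivalence. By the description of $\T_{\Set}$, $K$ is the identity on object-components and is, fibrewise, exactly the fixed-object comparison functor $\msx\to\T_X\tx{-alg}$; it also sends a cartesian arrow $(f,\Id)\colon(X,\fstar\Ga)\to(Y,\Ga)$ to a cartesian arrow, using again $\Ub_X\fstar=\fstar\Ub_Y$. Thus $K$ is a cartesian functor between fibrations over $\Set$ which is fibrewise an equivalence---each fibre comparison being an equivalence by Theorem~\ref{MX-monadic-KX}---and such a functor is an equivalence (it is full, faithful and essentially surjective, all of which one reads off by decomposing objects and morphisms along the fibration). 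Hence $\Ub$ is monadic. That $\Ub$ reflects isomorphisms may also be checked directly: $(f,\sigma)$ is invertible in $\mset$ iff $f$ is a bijection and $\sigma$ is invertible in $\msx$, and $\Ub_X$ is conservative by Theorem~\ref{MX-monadic-KX}.

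For (2), recall that, $\M$ being cocomplete and closed, $\kset$ is cocomplete and directed colimits in it are computed by first forming the colimit set $Z=\colim_{i\in I}X_i$ in $\Set$, with cocone $\iota_i\colon X_i\to Z$, and then the fibrewise colimit $\colim_{i\in I}(\iota_i)_!\Aa_i$ in $\K_Z$, where $(\iota_i)_!\dashv(\iota_i)^{*}$ is the pointwise left Kan extension along $\S_{\iota_i}$. Since $\T_{\Set}$ leaves object-components untouched, one is reduced, inside the fibre $\K_Z$, to showing that $\T_Z$ commutes with $\colim_{i\in I}(\iota_i)_!\Aa_i$ and that the result matches $\colim_{i\in I}(\iota_i)_!\T_{X_i}\Aa_i$. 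The first half is Proposition~\ref{monad-finitary} applied to $Z$. For the matching one uses the explicit formula $\T_X\Aa(t)=\coprod_{(t_0,\dots,t_l)\in\tx{Dec}(t)}\Aa(t_0)\otimes\cdots\otimes\Aa(t_l)$: the homomorphism $\S_{\iota_i}$ induces a bijection between decompositions of a chain of $\S_{\ol{X_i}}$ and decompositions of its image, giving $(\iota_i)^{*}\T_Z\cong\T_{X_i}(\iota_i)^{*}$; and because $I$ is directed every chain of $\S_{\ol{Z}}$ already lies over some $\S_{\ol{X_i}}$, while $\M$ closed makes $\otimes$ distribute over the directed colimit in each variable, so the two colimits in $\K_Z$ can be matched termwise. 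Therefore $\T_{\Set}$ preserves directed colimits.

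I expect the main obstacle to be the bookkeeping in (2): $\T_{\Set}$ is naturally compatible with the \emph{cartesian} reindexing $\fstar$ (through the decomposition bijection above), whereas directed colimits in $\kset$ are described via the \emph{opcartesian} functors $(\iota_i)_!$, and one must reconcile the two to obtain $\T_Z\big(\colim_i(\iota_i)_!\Aa_i\big)\cong\colim_i(\iota_i)_!\T_{X_i}\Aa_i$---in particular for cocone maps $\iota_i$ that are not injective, where one has to analyse termwise how $(\iota_i)_!$ acts on chains through vertices not in the image of $\iota_i$ (these drop out in the colimit). Everything else is a routine transcription of the fixed-object arguments of Sections~\ref{properties-msc}--\ref{section-model-msx} into the Grothendieck-construction setting.
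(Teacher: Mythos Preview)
Your proposal is correct and lands in essentially the same place as the paper, though the packaging differs in instructive ways.

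For (1), the paper simply says ``easy and treated in the same way as in the fixed set case,'' i.e.\ a direct application of Beck's theorem to $\Ub:\mset\to\kset$. You instead organise the argument fibrewise: construct $\Gamma_{\Set}$ fibre-by-fibre, observe that $\T_{\Set}$ acts fibrewise, and deduce that the comparison functor is a cartesian equivalence of fibrations over $\Set$. This is more explicit and arguably cleaner, since it reduces everything to the fixed-object monadicity already proved; the paper's one-line appeal to Beck leaves the verification of $\Ub$-split coequalizers in $\mset$ (where the $\Set$-component is nontrivial) to the reader.

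For (2), both you and the paper compute directed colimits in $\kset$ by pushing forward to the fibre over the colimit set $Z$ and then taking a fibrewise colimit. At that point the paper simply invokes Kelly--Lack \cite[Lemma~3.2, Thm~3.3]{Kelly-Lack-loc-pres-vcat}, whose argument for $\M\text{-}\Cat$ transports verbatim. You instead sketch the argument directly via the decomposition formula and the key finiteness observation (every chain of $\S_{\ol{Z}}$ lives over some $\S_{\ol{X_i}}$ by directedness). Your diagnosis in the final paragraph is exactly right: the one genuine piece of work is passing from the easy compatibility $f^{\star}\T_Y\cong\T_X f^{\star}$ to the needed compatibility with $f_!$, and this is precisely what Kelly--Lack's Lemma~3.2 establishes (their proof is the ``finite support'' argument you outline). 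So your sketch is sound; what remains is bookkeeping, not a new idea, and citing Kelly--Lack as the paper does is a legitimate shortcut.
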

\begin{proof}
The assertion $(1)$ is easy and is treated in the same way as in the fixed set case.  For the  assertion $(2)$ we simply need to see how one computes colimit in $\K_{\Set}$.  Each function $f: X \to Y$ induces an adjoint pair:
$\fex : \K_X \rightleftarrows \K_Y : \fstar$.\ \\

Every diagram $\J : \D \to  \K_{\Set}$,  induces a diagram $\pr_1(\J) : \D \to \Set$ and we can take the colimit $X_{\infty}= \colim \pr_1(\J)$. For each $d \in \D$ the canonical map $i_d: X_d \to X_{\infty}$ induces an object $i_{d !} \J d$ in $\K_{X_{\infty}}$. It's not hard to see that $\J$ induces a diagram $i_{!} \J : \D \to \K_{X_{\infty}}$ where the morphisms connecting the different $i_{d !} \J d$ are induced by the universal property of the adjoint. \ \\

The colimit of $\J$ is the colimit of the pushforward diagram $i_{!} \J$. Given a directed diagram $\J$, one has to show that the pushforward of the colimit of $\J$ is the colimit of the pushforward diagram. One proceeds \textbf{exactly} in the same manner as Kelly and Lack \cite[Lemma 3.2, Thm 3.3]{Kelly-Lack-loc-pres-vcat} who treated the case for $\M$-categories. 
\end{proof}

\begin{thm}\label{mset-loc-pres}
Let  $\M$ be a symmetric monoidal closed category.
\begin{enumerate}
\item If $\M$ is cocomplete then so is $\M_{\S}(\Set)$ ,
\item If $\M$ is locally presentable then so is $\M_{\S}(\Set)$. 
\end{enumerate}
\end{thm}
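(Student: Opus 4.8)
The plan is to handle the two parts in turn, in each case reducing to the fixed-object results already established (Theorems \ref{MX-cocomplete} and \ref{MX-local-pres}, and the preceding Lemma).

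\textbf{Part (1): cocompleteness.} The functor $\Ob : \mset \to \Set$ exhibits $\mset$ as the total category of a Grothendieck (op)fibration: the fibre over $X$ is $\msx$, which is cocomplete by Theorem \ref{MX-cocomplete}, and each function $f : X \to Y$ gives an adjoint pair $\fex : \msx \rightleftarrows \msy : \fstar$, with $\fex$ the pushforward mentioned in the text. I would compute colimits by the standard recipe for such constructions --- this is exactly what the preceding Lemma does for $\kset$. Given a small diagram $\J : \D \to \mset$, put $X_{\infty} = \colim_{\D}(\Ob \circ \J)$ with colimiting cocone $i_d : \Ob(\J d) \to X_{\infty}$; push each $\J d$ forward to $(i_d)_{!}\,\J d \in \M_{\S}(X_{\infty})$, and use the adjunctions $(i_d)_{!} \dashv i_d^{\star}$ together with the relations $i_{d'} \circ \Ob(\J(d \to d')) = i_d$ to assemble these into a diagram $i_{!}\J : \D \to \M_{\S}(X_{\infty})$; then set $\G_{\infty} := \colim_{\D} i_{!}\J$ in $\M_{\S}(X_{\infty})$, which exists by Theorem \ref{MX-cocomplete}. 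A routine verification shows that $(X_{\infty}, \G_{\infty})$, with the evident cocone, is the colimit of $\J$: a cocone to an object $(Y, H)$ is a compatible family $\Ob(\J d) \to Y$, factoring uniquely through $X_{\infty}$ by some $g$, together with compatible maps $\J d \to f_d^{\star} H = i_d^{\star} g^{\star} H$, which transpose to compatible maps $(i_d)_{!}\,\J d \to g^{\star} H$ in $\M_{\S}(X_{\infty})$ and hence factor uniquely through $\G_{\infty}$. Therefore $\mset$ is cocomplete.

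\textbf{Part (2): local presentability.} Suppose $\M$ is locally presentable; then it is cocomplete, so $\mset$ is cocomplete by Part (1). By the preceding Lemma $\Ub : \mset \to \kset$ is monadic and the induced monad $\T$ preserves directed (hence filtered) colimits, i.e. is finitary. So it suffices to show that $\kset$ is locally presentable: then $\T$ is a finitary monad on a locally presentable category, whence its category of algebras --- equivalent to $\mset$ by monadicity --- is locally presentable by \cite[Remark 2.78]{Adamek-Rosicky-loc-pres}, just as in the proof of Theorem \ref{MX-local-pres}. For $\kset$ I would argue: it is cocomplete, by the same computation as in Part (1) with $\msx$ replaced by $\kx$ and Theorem \ref{MX-cocomplete} by cocompleteness of $\kx$; and it is the Grothendieck construction (lax colimit in $\mathbf{CAT}$) of $X \mapsto \kx$, $f \mapsto \fex$, over the locally presentable base $\Set$, with fibres $\kx = \prod_{(A,B) \in X^2} \Hom[\S_{\ol{X}}(A,B)^{op}, \M]$ locally presentable (small products and small presheaf categories of a locally presentable category are locally presentable, as recalled after Theorem \ref{MX-local-pres}) and transition functors $\fex$ that are left adjoints, hence cocontinuous and in particular accessible. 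A Grothendieck construction over a locally presentable base with locally presentable fibres and accessible transition functors is again locally presentable; in the present situation I would prove this as Kelly and Lack do for $\M$-$\Cat$ in \cite{Kelly-Lack-loc-pres-vcat}, by writing each set as the filtered colimit of its finite subsets so as to reduce accessibility questions to the essentially small subcategory of finite sets.

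\textbf{Expected main obstacle.} The steps that use only fixed-$X$ results and the preceding Lemma are formal. The real work is the claim that $\kset$ is locally presentable, i.e. that forming the Grothendieck construction over the large-but-accessible base $\Set$ preserves accessibility; this requires the reduction-to-finite-sets analysis of \cite{Kelly-Lack-loc-pres-vcat}, and it is there that one must be careful about the size point that the set $\Set$ of $\U$-small sets is not itself $\U$-small. Granting that, the conclusion is immediate from monadicity together with \cite[Remark 2.78]{Adamek-Rosicky-loc-pres}.
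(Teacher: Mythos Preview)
Your Part (2) is exactly the paper's route: the preceding Lemma supplies monadicity of $\Ub:\mset\to\kset$ and finitariness of the induced monad, so one reduces to local presentability of $\kset$ and applies \cite[Remark 2.78]{Adamek-Rosicky-loc-pres}, just as in the proof of Theorem \ref{MX-local-pres}. The Kelly--Lack reduction you invoke for the accessibility of $\kset$ is precisely what the paper points to (the Lemma's proof says ``one proceeds \textbf{exactly} in the same manner as Kelly and Lack'').

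For Part (1) you take a different route. The paper's one-line proof (``same way as for $\msx$'') means: monadicity over $\kset$, existence of coequalizers of reflexive pairs in $\mset$ (same argument as Lemma \ref{coeq-split-pair}), and then Linton's theorem, mirroring Appendix \ref{proof-MX-cocomplete}. You instead compute colimits directly from the Grothendieck-bifibration structure, pushing everything forward to the fibre over $X_\infty$ and taking the colimit there by Theorem \ref{MX-cocomplete}. This is correct and arguably more transparent --- it avoids Linton entirely --- but it relies on the left adjoint $\fex:\msx\to\msy$, which in the paper is only established \emph{after} the present theorem (in Section 7.1.1, via the adjoint functor theorem for locally presentable categories). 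There is no circularity, since that argument uses only the fixed-$X$ result Theorem \ref{MX-local-pres}, but you should flag the forward reference or, better, give $\fex$ directly as a left Kan extension (which needs only cocompleteness of $\msy$, already available from Theorem \ref{MX-cocomplete}).
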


\begin{proof}
All is proved in the same way as for $\M_{\S}(X)$. 
\end{proof}
\subsection{The projective model structure on $\mset$}
\subsubsection{$\fstar$ has a left adjoint}
Let $f: X \to Y$ be a function. 
As pointed above we have an adjunction $ \fex : \kx \rightleftarrows \ky : \fstar$ which is  just the product adjunction for each pair $(A,B)$ 
$$\fex_{AB}: \Hom[\S_{\ol{X}}(A,B)^{op}, \M] \rightleftarrows \Hom[\S_{\ol{Y}}(A,B)^{op}, \M] : \fstar_{AB}.$$
The last adjunction is a Quillen adjunction between the projective model structure: this is Proposition 3.6 in \cite{Barwick_localization}. It follows that $ \fex : \kx \rightleftarrows \ky : \fstar$ is also a Quillen adjunction betwen the projective model structure. \ \\ 

In what follows we will show that we have also a Quillen adjunction between the projective model structures on $\msx$ and $\msy$.  Let's denote again the functor $\fstar: \msy \to \msx$ the pullback functor. By definition $\fstar$ preserves everything which is level-wise: (trivial) fibrations , weak equivalences, limits in $\msy$ (limits are computed level-wise).  To show that  we have a Quillen adjunction it suffices to show that $\fstar$ has a left adjoint since it already preserves  fibrations and trivial fibrations (see \cite[lemma 1.3.4]{Hov-model} ). We will use the adjoint functor theorem for locally presentable category since $\msx$ and $\msy$ are such categories.

\begin{thm}
A functor between locally presentable categories is a right adjoint if and only if it preserves limits and $\lambda$-directed colimits for some regular cardinal $\lambda$. 
\end{thm}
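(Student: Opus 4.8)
The forward implication is the routine half, and I would dispatch it quickly. If $G$ admits a left adjoint $F$, then $G$ preserves all limits because it is a right adjoint. To see that $G$ preserves $\lambda$-directed colimits for some regular $\lambda$, I would choose a small set $\mathcal{G}$ of objects forming a strong generator of the target category. Each image $Fd$ with $d\in\mathcal{G}$ is $\lambda_d$-presentable in the source for some cardinal $\lambda_d$; taking a regular cardinal $\lambda$ bounding all the $\lambda_d$, one computes for every $\lambda$-directed diagram $(c_i)$ and every $d\in\mathcal{G}$
$$\mathcal{D}(d,\,G\,\colim_i c_i)\;\cong\;\mathcal{C}(Fd,\,\colim_i c_i)\;\cong\;\colim_i \mathcal{C}(Fd,\,c_i)\;\cong\;\colim_i \mathcal{D}(d,\,G c_i)\;\cong\;\mathcal{D}(d,\,\colim_i G c_i),$$
using the adjunction and the $\lambda$-presentability of $Fd$. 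Since $\mathcal{G}$ is a strong generator, these isomorphisms, natural in $d$, force the canonical comparison $\colim_i G c_i \to G\,\colim_i c_i$ to be an isomorphism.

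For the converse I would invoke the General Adjoint Functor Theorem. Since the source category $\mathcal{C}$ is locally presentable it is complete, and by hypothesis $G$ preserves all limits; hence it only remains to verify the solution set condition at each object $d\in\mathcal{D}$, and this is exactly where the accessibility hypothesis enters. First I would enlarge $\lambda$: since every object of a locally presentable category is $\theta$-presentable for some $\theta$, and since preserving $\lambda$-directed colimits entails preserving $\theta$-directed colimits for every regular $\theta\geq\lambda$ (a $\theta$-directed diagram is \emph{a fortiori} $\lambda$-directed), I may assume without loss of generality that $d$ is $\lambda$-presentable and that $\mathcal{C}$ is locally $\lambda$-presentable. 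Let $\mathcal{C}_\lambda$ denote the essentially small full subcategory of $\lambda$-presentable objects of $\mathcal{C}$.

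Given any map $g\colon d\to Gc$ in $\mathcal{D}$, I would write $c\cong\colim_i c_i$ as a $\lambda$-directed colimit of objects $c_i\in\mathcal{C}_\lambda$. Since $G$ preserves this colimit, $Gc\cong\colim_i Gc_i$, and since $d$ is $\lambda$-presentable the map $g$ factors as $d\to Gc_i\xrightarrow{Gv_i}Gc$ for some index $i$ and some $v_i\colon c_i\to c$ in $\mathcal{C}$. Consequently the family $\{\,h\colon d\to Gc'\mid c'\in\mathcal{C}_\lambda\,\}$, which is a genuine set because $\mathcal{C}_\lambda$ is essentially small and the hom-sets of $\mathcal{D}$ are small, is a solution set at $d$. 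The General Adjoint Functor Theorem then yields the desired left adjoint, completing the proof.

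The only delicate point, and the step I expect to require the most care, is the cardinal bookkeeping in the solution set argument: one must simultaneously arrange that $d$ is $\lambda$-presentable and that $G$ still preserves $\lambda$-directed colimits, which is precisely what the ``a fortiori'' remark secures. Everything else is a direct appeal to standard facts about locally presentable categories, so I would present the argument at this level and refer to \cite{Adamek-Rosicky-loc-pres} for the background results (completeness, the dense set of presentable objects, and the General Adjoint Functor Theorem).
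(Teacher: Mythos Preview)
Your argument is correct and is essentially the standard proof of this fact. The paper, however, does not give any argument at all: it simply cites \cite[1.66]{Adamek-Rosicky-loc-pres} and moves on, treating the statement as a black box to be applied to $\fstar$.

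So there is no real comparison to make at the level of strategy. Your write-up unpacks exactly what that reference contains: the forward direction via presentability of the images $Fd$ of a strong generator, and the converse via the General Adjoint Functor Theorem together with the solution-set argument coming from the factorisation of any $d\to Gc$ through some $Gc_i$ with $c_i$ $\lambda$-presentable. The cardinal enlargement you flag (choosing $\lambda$ large enough that simultaneously $d$ is $\lambda$-presentable, $\mathcal{C}$ is locally $\lambda$-presentable, and $G$ still preserves $\lambda$-directed colimits) is indeed the only point requiring care, and you handle it correctly using the fact that a $\theta$-directed diagram is $\lambda$-directed whenever $\theta\geq\lambda$.
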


\begin{proof}
See \cite[1.66]{Adamek-Rosicky-loc-pres}
\end{proof}
\begin{prop}
For a symmetric monoidal closed  model category $\M$ and a function $f : X \to Y$ the following holds.
\begin{enumerate}
\item The functor $\fstar$ has a left adjoint $\fex: \msx \to \msy$,
\item The adjunction $ \fex : \msx \rightleftarrows \msy : \fstar$ is a Quillen adjunction
\item We have a square of Quillen adjunctions 

\[
\xy
(0,15)*+{\msx}="A";
(30,15)*+{\msy}="B";
(0,0)*+{\kx}="C";
(30,0)*+{\ky}="S";
{\ar@{->}_{\fex}"A"+(7,-1);"B"+(-7,-1)};
{\ar@{<-}^{\fstar}"A"+(7,1);"B"+(-7,1)};
{\ar@{->}^{\Ub}"A"+(1,-3);"C"+(1,3)};
{\ar@{<-}_{\Gamma}"A"+(-1,-3);"C"+(-1,3)};
{\ar@{->}^{\Ub}"B"+(1,-3);"S"+(1,3)};
{\ar@{<-}_{\Gamma}"B"+(-1,-3);"S"+(-1,3)};
{\ar@{->}_{\fex}"C"+(7,-1);"S"+(-7,-1)};
{\ar@{<-}^{\fstar}"C"+(7,1);"S"+(-7,1)};
\endxy
\] 
in which only two squares are commutative:
\begin{itemize}[label=$-$]
\item $\Ub \circ \fstar= \fstar \circ \Ub$ and 
\item $\Gamma \circ \fex = \fex \circ \Gamma$ .
\end{itemize}
\end{enumerate}
\end{prop}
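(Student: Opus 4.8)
The plan is to prove the three assertions in sequence, using that $\msx$ and $\msy$ are locally presentable (Theorem \ref{MX-local-pres}, or more precisely its analogue for general $\C=\ol{X}$) so that the adjoint functor theorem quoted just above applies.

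For assertion (1), I would check that $\fstar: \msy \to \msx$ preserves all limits and all $\lambda$-directed colimits for some regular $\lambda$, and then invoke the stated adjoint functor theorem. Preservation of limits is immediate: limits in both $\msx$ and $\msy$ are computed level-wise in $\M$ (lax morphisms behave well with limits, as already used in the proof of Theorem \ref{direct-proj-model}), and $\fstar$ is, level-wise, the restriction functor $\fstar_{AB}: \Hom[\S_{\ol{Y}}(A,B)^{\op},\M] \to \Hom[\S_{\ol{X}}(A,B)^{\op},\M]$ along $\S_f$, which is continuous. For directed colimits I would first recall, exactly as in the proof of Proposition \ref{monad-finitary}, that directed colimits in $\msx$ and in $\msy$ are computed level-wise in $\M$ (the monad $\T=\Ub\Gamma$ is finitary, so an underlying level-wise directed colimit carries a canonical lax structure and is the colimit in $\msx$). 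Since $\fstar$ is level-wise restriction, and restriction along $\S_f$ commutes with level-wise directed colimits, $\fstar$ preserves $\lambda$-directed colimits for every $\lambda$. Hence $\fstar$ has a left adjoint $\fex: \msx \to \msy$.

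For assertion (2), I would use the standard criterion (e.g. \cite[Lemma 1.3.4]{Hov-model}): to show $\fex \dashv \fstar$ is a Quillen adjunction between the projective model structures it suffices to show the right adjoint $\fstar$ preserves fibrations and trivial fibrations. But in $\msyproj$ a (trivial) fibration is precisely a map $\sigma$ with $\Ub\sigma$ a (trivial) fibration in $\kyproj$, i.e.\ a level-wise (trivial) fibration in $\M$; since $\fstar$ is level-wise restriction along $\S_f$ it visibly sends level-wise (trivial) fibrations to level-wise (trivial) fibrations, and likewise for weak equivalences. So $\fstar$ is right Quillen, hence $\fex \dashv \fstar$ is a Quillen adjunction.

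For assertion (3), I would assemble the square. The left-hand adjunction $\Gamma \dashv \Ub$ on each side and the bottom adjunction $\fex \dashv \fstar$ on $\K$ are Quillen adjunctions (the latter by Proposition 3.6 of \cite{Barwick_localization} applied factor-wise, as recalled just above), and the top one by (2). The commutativity $\Ub\circ\fstar = \fstar\circ\Ub$ holds on the nose because both composites are ``restrict along $\S_f$, then forget laxity maps'' $=$ ``forget laxity maps, then restrict'', since restriction of a lax morphism restricts its underlying family of functors; passing to left adjoints gives $\Gamma\circ\fex = \fex\circ\Gamma$. I would then note explicitly (as the statement warns) that the other two squares, e.g.\ $\Ub\circ\fex$ versus $\fex\circ\Ub$, do \emph{not} commute, because $\fex$ on $\msx$ must create and glue new laxity maps (via left Kan extension along $\S_f$ combined with the free-lax-morphism construction, cf.\ the discussion of $\int\cb$ and of $\Gamma$ in Appendix), whereas $\fex$ on $\kx$ is the plain pointwise left Kan extension; comparing them is exactly an instance of the failure of the Beck--Chevalley condition.

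The main obstacle is the careful verification in assertion (1) that $\fstar$ preserves $\lambda$-directed colimits \emph{computed in $\msx$ and $\msy$}, which rests on the fact that these directed colimits are created by the forgetful functors $\Ub$, i.e.\ are level-wise; this is not formal and uses the finitariness of the relevant monads exactly as in Proposition \ref{monad-finitary}. Everything after that — Quillenness and the partial commutativity of the square — is a routine bookkeeping of left adjoints and of the level-wise description of (trivial) fibrations.
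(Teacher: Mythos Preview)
Your proposal is correct and follows essentially the same route as the paper: for (1) you use the adjoint functor theorem for locally presentable categories together with the fact that filtered colimits in $\msy$ are level-wise (because the monad $\T=\Ub\Gamma$ is finitary, Proposition \ref{monad-finitary}), for (2) you invoke \cite[Lemma 1.3.4]{Hov-model} using that $\fstar$ preserves level-wise (trivial) fibrations, and for (3) you verify $\Ub\circ\fstar=\fstar\circ\Ub$ directly and obtain $\Gamma\circ\fex \cong \fex\circ\Gamma$ by passing to left adjoints. The only addition is your explicit explanation of why the remaining two squares fail to commute, which the paper leaves implicit.
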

\begin{proof}
Since $\fstar$ preserves limits and thanks to  the adjoint functor theorem, it suffices to show that it also preserves directed colimits. But as the functor $\Ub : \msy \to \ky$ preserves filtered colimits (Proposition \ref{monad-finitary}) , it follows that filtered colimits in $\msy$ are computed level-wise and since $\fstar: \msy \to \msx$ preserves every level-wise property it certainly preserves them and the assertion $(1)$ follows.\ \\ 

The assertion $(2)$ is a consequence of \cite[lemma 1.3.4]{Hov-model}: from $(1)$ we know that $\fstar$ is a right adjoint functor but as it preserves (trivial) fibrations, the adjunction $\fex \dashv \fstar$ is automatically a  Quillen adjunction. The assertion $(3)$ is clear.
\end{proof}

Recall that for $\Fa \in \msx$, $\Ga \in \msy$  a morphism $\sigma  \in \Hom_{\mset}(\Fa,\Ga)$ is a pair $\sigma=(f,\sigma)$ where $f \in \Set(X,Y)$ and $\sigma \in \Hom_{\msx}(\Fa, \fstar \Ga)$. An easy excercise shows that 
\begin{prop}
The canonical functor $\Pf: \mset \to \Set$ is a Grothendieck fibration (or fibered category). The fiber category over  $X \in \Set$ being $\msx$ and the inverse image functor is  $\fstar$ for $f \in \Set(X,Y)$.
\end{prop}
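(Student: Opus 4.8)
The plan is to verify the three defining properties of a Grothendieck fibration directly, using the adjunctions $\fex \dashv \fstar$ already established. First I would identify the candidate cartesian lifts: given $\Ga \in \msy$ and a function $f : X \to Y$, the object $\fstar \Ga \in \msx$ together with the counit-type morphism $(f, \Id_{\fstar\Ga}) : \fstar\Ga \to \Ga$ in $\mset$ should be the cartesian lift of $f$ at $\Ga$. So the first step is to unwind the definition of a morphism in $\mset$: a map $(\Fa \to \Ga)$ in $\mset$ covering $f$ is precisely the data of a morphism $\Fa \to \fstar\Ga$ in $\msx$ (this is how $\mset$ was defined). This tautological reformulation is the crux; it says $\Hom_{\mset}(\Fa,\Ga)$ decomposes as $\coprod_{f \in \Set(X,Y)} \Hom_{\msx}(\Fa, \fstar\Ga)$, which is exactly the bijection demanded by cartesianness.

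Next I would carry out the verification in order: (1) check that $\Pf : \mset \to \Set$ is a functor --- immediate from the composition law in $\mset$, which uses functoriality of $\fstar$ and the fact that $\S_{[-]}$ is functorial (Proposition-Definition \ref{path-bicat}); (2) check that for each $f : X \to Y$ and each $\Ga$ over $Y$ the morphism $(f,\Id) : \fstar\Ga \to \Ga$ is cartesian, i.e. that any morphism $(h,\tau) : \Fa \to \Ga$ in $\mset$ with $\Pf(h,\tau) = f \circ g$ factors uniquely through $(f,\Id)$ via a morphism over $g$ --- this is just the observation that $\tau \in \Hom_{\msx}((\ldots)^{\star}\Fa, \fstar\Ga)$ transposes uniquely, again because morphisms in $\mset$ over a fixed function are by definition morphisms in the fibre into the pullback; (3) identify the fibre $\Pf^{-1}(X)$: its objects are pairs $(X,\Fa)$ and its morphisms are those $(f,\sigma)$ with $f = \Id_X$, hence $\sigma \in \Hom_{\msx}(\Fa, \Id_X^{\star}\Ga) = \Hom_{\msx}(\Fa,\Ga)$, so the fibre is isomorphic to $\msx$; (4) finally, standard fibration formalism gives that the inverse-image functor associated to $f$ is, up to canonical isomorphism, the composite of ``choose cartesian lift'' with the projection, which is exactly $\fstar$.

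I do not expect a genuine obstacle here: the statement is essentially a bookkeeping lemma, because the category $\mset$ was built by the Grothendieck construction applied to the pseudofunctor $X \mapsto \msx$, $f \mapsto \fstar$ (with the pseudofunctor structure coming from functoriality of $\S_{[-]}$ and the canonical $2$-cells $(g\circ f)^{\star} \cong f^{\star}\circ g^{\star}$). The only mild care needed is to confirm that these comparison isomorphisms satisfy the cocycle/coherence condition so that $\mset$ is a well-defined category and $\Pf$ a functor --- but that coherence is inherited from the (strict) functoriality of $\C \mapsto \S_{\C}$ into $\frac{1}{2}\bicatd$, so $f \mapsto \S_f$ is actually a strict functor and the pseudofunctor $X \mapsto \msx$ is in fact a strict $2$-functor into $\Cat$ (pullback along strict functors composes strictly). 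Hence I would remark that $\Pf$ is even a split fibration. The proof therefore reduces to: (a) cite the definition of $\mset$; (b) observe the $\Hom$-set decomposition; (c) exhibit $(f,\Id_{\fstar\Ga})$ as the (split) cartesian lift; (d) read off the fibre and the reindexing functor. I would write this out in a few lines and leave the routine diagram-chases to the reader, which matches the paper's stated style (``An easy exercise shows that\ldots'').
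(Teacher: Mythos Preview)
Your proposal is correct and matches the paper's approach: the paper leaves the proposition as ``an easy exercise'' and then, in the remark immediately following, identifies exactly the same cartesian lift $(f,\Id_{\fstar\Ga})$ that you use, noting that the identities $\Id_{\Ga(f(s))}$ give a tautological canonical cartesian lifting (so $\Pf$ is cloven). Your observation that the strict functoriality of $\C \mapsto \S_{\C}$ makes the fibration actually split is a slight sharpening of the paper's remark; also note that you do not in fact need the left adjoint $\fex$ anywhere in this argument (it is only used later for the bifibration statement), so you can drop that from your opening sentence.
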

\begin{rmk}
Note that for $f \in \Set(X,Y)$ and $\Ga \in \msy$,  $\fstar \Ga$ is the composite 
$$(\S_{\ol{X}})^{2\tx{-op}} \xrightarrow{(\S_{f})^{2\tx{-op}} }   (\S_{\ol{Y}})^{2\tx{-op}} \xrightarrow{\Ga}   \M.$$ The identities $\Id_{\Ga(f(s))}$ gives, in a tautological way, a  canonical cartesian lifting of $f$, therefore  $P$ has a cleavage (or is cloven).
\end{rmk}
As we saw previously the inverse image functor has a left adjoint $\fex$ so we deduce that 
\begin{prop}
The canonical functor $\Pf: \mset \to \Set$ is a  bifibration, that is $\Pf^{op}: \mset^{op} \to \Set^{op}$ is also a Grothendieck fibration (or $\Pf$ is cofibered).
\end{prop}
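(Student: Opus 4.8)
The statement to prove is that $\Pf : \mset \to \Set$ is a bifibration, i.e.\ that $\Pf^{\op} : \mset^{\op} \to \Set^{\op}$ is also a Grothendieck fibration. The plan is to exhibit cocartesian lifts directly, using the left adjoints $\fex$ that were constructed in the preceding proposition. First I would recall the abstract criterion: a cloven fibration $\Pf$ is a bifibration precisely when, for every arrow $f : X \to Y$ in the base, the inverse-image functor $\fstar : \msy \to \msx$ admits a left adjoint; this is a standard fact (see e.g.\ the discussion of bifibrations in Borceux or Jacobs). Since the previous proposition already gives us $\fex \dashv \fstar$ for each $f \in \Set(X,Y)$, the bulk of the work is to package this fibrewise adjunction into genuine cocartesian morphisms of $\mset$ and verify the universal property.

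Concretely, given an object $(X,\Fa) \in \mset$ and an arrow $f : X \to Y$ in $\Set$, I would take the candidate cocartesian lift to be the pair $(f, \eta_{\Fa}) : (X,\Fa) \to (Y, \fex \Fa)$, where $\eta : \Fa \to \fstar\fex\Fa$ is the unit of the adjunction $\fex \dashv \fstar$ in $\msx$. The verification of cocartesianness is then a formal unwinding of the definition of morphisms in $\mset$: given any $(Z,\Ha) \in \mset$, any arrow $g : Y \to Z$ in $\Set$, and any morphism $(g\circ f, \tau) : (X,\Fa) \to (Z,\Ha)$ with $\tau \in \Hom_{\msx}(\Fa, (g\circ f)^{\star}\Ha)$, one uses the identification $(g \circ f)^{\star} = \fstar \gstar$ (functoriality of $X \mapsto \S_{\ol{X}}$, hence of $\star$, which is exactly the content of the cleavage remark just above the statement) to view $\tau$ as a map $\Fa \to \fstar(\gstar\Ha)$, and then the adjunction bijection $\Hom_{\msx}(\Fa, \fstar(\gstar\Ha)) \cong \Hom_{\msy}(\fex\Fa, \gstar\Ha)$ produces the unique factorization $\bar\tau : \fex\Fa \to \gstar\Ha$, i.e.\ the unique morphism $(g,\bar\tau) : (Y,\fex\Fa) \to (Z,\Ha)$ in $\mset$ over $g$ with $(g,\bar\tau)\circ(f,\eta_{\Fa}) = (g\circ f,\tau)$. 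Uniqueness is immediate from the uniqueness in the adjunction.

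The only genuinely non-formal ingredient is the existence of $\fex$ itself, and the compatibility of the various pullback functors $\fstar$ with composition (so that the cleavage is strictly, or at least coherently, functorial); but both of these are supplied by the earlier material — $\fex$ exists by the adjoint functor theorem for locally presentable categories applied in the proposition above, and the strict equality $(g\circ f)^{\star} = \fstar\gstar$ follows from the strict functoriality $\S_{(-)}$ of \ref{path-bicat} together with the fact that $\fstar$ is literally precomposition with $(\S_f)^{2\tx{-op}}$. I expect the main obstacle to be purely bookkeeping: checking that the associativity and unit coherences for the chosen cocartesian lifts hold on the nose, i.e.\ that the assignment $f \mapsto \fex$ together with the units $\eta$ assembles into an opcleavage, rather than merely producing cocartesian arrows individually. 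Once cocartesianness of each $(f,\eta_\Fa)$ is established, though, this is automatic: any fibration (resp.\ opfibration) in which each lift has been singled out yields a (pseudo-, and here in fact strict on the base) (co)cleavage by the usual argument, so no extra work is required beyond remarking it.

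Thus the proof reduces to: (i) cite $\fex \dashv \fstar$ from the previous proposition; (ii) define the candidate lift $(f,\eta_\Fa)$; (iii) verify its universal property by transporting the hom-set bijection of the adjunction through the identification $(g\circ f)^\star = \fstar\gstar$; (iv) conclude that $\Pf$ is an opfibration, hence — being already a fibration — a bifibration. I would present steps (ii)–(iii) in a couple of lines of diagram-chasing and leave the coherence remark to the reader, exactly in the style of the surrounding propositions.
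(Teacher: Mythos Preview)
Your proposal is correct and follows the same approach as the paper: the paper simply cites Lemma 9.1.2 in Jacobs's \emph{Categorical Logic and Type Theory}, which is exactly the abstract criterion you invoke (a cloven fibration is a bifibration iff each reindexing functor has a left adjoint). You additionally unwind the construction of cocartesian lifts via the unit $\eta_{\Fa}$, which is more detail than the paper provides but amounts to reproving the cited lemma in this instance.
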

\begin{proof}
Apply lemma 9.1.2 in \cite{Jacobs_cat_type}.
\end{proof}
\begin{rmk}

From the adjunction $\fex \dashv \fstar$, it's not hard to see that  $\Pf^{op}$ has a cleavage; thus $\Pf$ is a cloven bifibration.
\end{rmk}
\subsubsection{A fibered model structure on $\mset$}
In what follows we give a first model structure on $\mset$ using the previous bifibration $\Pf: \mset \to \Set$. The key ingredient is to use Roig's work  \cite{Roig_bifib} on Quillen model structure on the `total space' of a Grothendieck bifibration. As pointed out by Stanculescu \cite{Stanculescu_bifib}, there is a gap in Roig's theorem. A reformulation was given by  Stanculescu in \emph{loc. cit}    and is recalled hereafter.

\begin{thm}[Stanculescu]\label{Stanculescu_bifib}
Let $\Pf: \e \to \bz $ be cloven Grothendieck bifibration. 
Assume that 
\renewcommand{\theenumi}{\roman{enumi}}
\begin{enumerate}
\item $\e$ is complete and cocomplete,
\item the base category $\bz$ as a model structure $(\cof, \we, \fib)$
\item for each object $X \in \bz$ the fibre category $\e_X$ admits a model structure $(\cof_X, \we_X, \fib_X)$,
\item for every morphism $f: X \to Y$ of $\bz$, the adjoint pair is $(\fex,\fstar)$ is a Quillen pair,
\item for $f= \Pf(\sigma)$ a weak equivalence in $\bz$, the functor $\fstar$ preserves and reflects weak equivalences,
\item for $f= \Pf(\sigma)$ a a trivial cofibration in $\bz$, the unit of the adjoint pair $(\fex,\fstar)$ is a  weak equivalence.
\end{enumerate}

Then there is a model structure on $\e$ where a map $\sigma:\Fa \to \Ga $ in  $\e$ is
\begin{itemize}
\item  a weak equivalence if $f = \Pf(\sigma) \in \we$ and  $\sigma^f : \Fa \to \fstar \Ga  \in \we_X$ 
\item  a cofibration  if $f = \Pf(\sigma) \in \cof$ and  $\sigma_f :  \fex \Fa \to \Ga  \in \cof_Y$ 
\item a fibration  if $f = \Pf(\sigma) \in \fib$ and  $\sigma^f :   \Fa \to \fstar \Ga  \in \fib_X$ 
\end{itemize} 
\end{thm}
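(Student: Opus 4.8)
The plan is to verify the closed-model-category axioms on $\e$ directly, in the spirit of Roig's original argument but with the bookkeeping fixed by hypotheses (v) and (vi). Throughout, for $\sigma:\Fa\to\Ga$ over $f=\Pf(\sigma):X\to Y$ I write $\sigma^f:\Fa\to\fstar\Ga$ for the vertical map in $\e_X$ obtained from the chosen cartesian lift of $f$, and $\sigma_f:\fex\Fa\to\Ga$ for the vertical map in $\e_Y$ obtained from the chosen cocartesian lift; the two are mutually transpose under $\fex\dashv\fstar$, so that $\sigma^f=\fstar(\sigma_f)\circ\eta_{\Fa}$ and $\sigma_f=\epsilon_{\Ga}\circ\fex(\sigma^f)$. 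The first, routine, step is to observe that the three classes contain all isomorphisms, are closed under retracts (apply $\Pf$ and the vertical-part constructions to a retract diagram and use closure under retracts in $\bz$ and in each fibre), and that $\e$ has all small limits and colimits by hypothesis (i).

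Next I would prove the $2$-out-of-$3$ property for weak equivalences. Given composable $\sigma:\Fa\to\Ga$, $\tau:\Ga\to\K$ over $f:X\to Y$ and $g:Y\to Z$, the pseudofunctor coherence isomorphism $(gf)^{\star}\cong\fstar\gstar$ identifies $(\tau\sigma)^{gf}$ with the composite $\fstar(\tau^g)\circ\sigma^f$ inside $\e_X$. In every instance of the $2$-out-of-$3$ situation at least two of $f,g,gf$ are weak equivalences in $\bz$, hence all three are; in particular $f$ is always a weak equivalence in $\bz$, so hypothesis (v) guarantees that $\fstar$ both preserves and reflects weak equivalences between fibres. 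Combining this with $2$-out-of-$3$ inside $\e_X$ and the identification above settles the three cases at once: e.g.\ if $\sigma$ and $\tau\sigma$ are weak equivalences then $g\in\we_{\bz}$, and $\sigma^f\in\we_X$ together with $\fstar(\tau^g)\circ\sigma^f\in\we_X$ forces $\fstar(\tau^g)\in\we_X$, whence $\tau^g\in\we_Y$ because $\fstar$ reflects weak equivalences.

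For the factorisation axioms I would factor in the base first. Given $\sigma:\Fa\to\Ga$ over $f$, factor $f=p\circ i$ in $\bz$ with $i$ a (trivial) cofibration and $p$ a (trivial) fibration, say through $W$; form $i_{!}\Fa\in\e_W$ and the induced vertical map $\bar\sigma:i_{!}\Fa\to p^{\star}\Ga$ in $\e_W$; factor $\bar\sigma$ in the model category $\e_W$ as $i_{!}\Fa\xrightarrow{a}\K\xrightarrow{b}p^{\star}\Ga$ of the appropriate type; and let $j:\Fa\to\K$ be the map over $i$ with $j_i=a$ and $q:\K\to\Ga$ the map over $p$ with $q^p=b$, so that $\sigma=q\circ j$. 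The base and fibre conditions for $q$ are immediate. For $j$ one has $j^i=i^{\star}(a)\circ\eta_{\Fa}$; when $i$ is a trivial cofibration, hypothesis (v) makes $i^{\star}(a)$ a weak equivalence and hypothesis (vi) makes the unit $\eta_{\Fa}$ a weak equivalence, so $j$ is a trivial cofibration. The dual factorisation, as a cofibration followed by a trivial fibration, needs only hypothesis (iv) and the definitions, since a right Quillen functor preserves trivial fibrations.

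Finally, the lifting axioms: given a trivial cofibration $\sigma:\Fa\to\Ga$ over $f$ and a fibration $\tau:\Xa\to\Ya$ over $p$ fitting in a commuting square, $f$ is a trivial cofibration and $p$ a fibration in $\bz$, so there is a diagonal $k$ in the base square; transporting along $k$ via the Quillen pair $k_{!}\dashv k^{\star}$ of hypothesis (iv) turns the problem into a lifting problem in the single fibre $\e_Y$ whose left leg is $\sigma_f$ (a trivial cofibration in $\e_Y$, using (v) and (vi) to see $\sigma_f\in\we_Y$) and whose right leg is a fibration (a right Quillen functor applied to $\tau^p$); solving it there and pushing the solution back up produces the required lift, and the cofibration-versus-trivial-fibration case is the symmetric argument. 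I expect the genuine difficulty to be concentrated in exactly this coherence bookkeeping — choosing consistently between the cartesian description $\sigma^f$ and the cocartesian description $\sigma_f$, tracking the pseudofunctor coherence isomorphisms, and invoking (iv) to translate between the two descriptions — since this is precisely the point at which Roig's original proof had a gap and where hypotheses (v) and (vi) are designed to intervene; the remaining axioms are formal once (i) is in hand.
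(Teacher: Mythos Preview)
The paper does not supply its own proof of this theorem: it is quoted as a result of Stanculescu (correcting Roig) and is used as a black box to obtain the fibred model structure on $\mset$. So there is nothing in the paper to compare your argument against line by line.

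That said, your outline is the expected direct verification and is essentially sound. One point worth tightening is the lifting argument: you say the problem reduces to a lifting square ``in the single fibre $\e_Y$'' with left leg $\sigma_f$ and right leg ``a right Quillen functor applied to $\tau^p$'', but you should spell out which fibre and which functor. Concretely, after choosing the diagonal $k:Y\to A$ in $\bz$, the square you must solve lives in $\e_Y$ with left leg $\sigma_f:\fex\Fa\to\Ga$ and right leg $k^{\star}(\tau^p):k^{\star}\Xa\to k^{\star}p^{\star}\Ya\cong h^{\star}\Ya$; the top and bottom horizontal maps are the $f_{!}\dashv f^{\star}$-transpose of $\alpha^{g}$ and the map $\beta^{h}$, respectively, and one must check this square actually commutes using the pseudofunctor coherences. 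Your argument that $\sigma_f\in\we_Y$ from (v) and (vi) is correct; the point that $k^{\star}(\tau^p)\in\fib_Y$ uses only (iv). The symmetric cofibration/trivial-fibration case is handled the same way, this time showing $\tau^p$ is a trivial fibration directly from the definitions and (iv) (no appeal to (v) or (vi) is needed there). With these details made explicit, your proposal matches the standard proof in Stanculescu's paper.
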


Let $\Set_{min}$ be the category of $\Set$ with the minimal model structure: weak equivalences are isomorphisms, cofibration and fibrations are all morphisms. In particular trivial cofibrations and fibrations are simply isomorphisms. Recall that if $f: X \to Y$ an isomorphism then  $(\S_{f})^{2\tx{-op}}$ is also an isomorphism, and we can take $\fex=(f^{-1})^{\star}$ ; one clearly see that the conditions $(v)$ and $(vi)$ of the theorem hold one the nose.\ \\ 

Let's fix the projective model structure on each $\msx$ as $X$ runs through $\Set$. By virtue of the previous theorem we deduce that
\begin{thm}\label{fibered-model-mset}
For a symmetric closed monoidal model category $\M$, the category $\mset$ has a Quillen model structure where a map $\sigma=(f,\sigma):\Fa \to \Ga $ is 
\begin{enumerate}
\item  a weak equivalence if $f: X \to Y$ is an isomorphism of sets and  $ \sigma: \Fa \to \fstar \Ga $ is a weak equivalence in $\msx$,
\item a cofibration if the adjoint map $ \tld{\sigma}: \fex \Fa \to \Ga $ is a cofibration in $\msy$,
\item a fibration if $ \sigma: \Fa \to \fstar \Ga $ is a fibration in $\msx$.
\end{enumerate} 
We will denote $\mset$ endowed with this model structure by $\mset$-proj. 
\end{thm}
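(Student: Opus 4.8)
The plan is to obtain this model structure as an instance of Stanculescu's transfer theorem (Theorem \ref{Stanculescu_bifib}), applied to the Grothendieck bifibration $\Pf : \mset \to \Set$ constructed above, with the base $\Set$ carrying the minimal model structure $\Set_{min}$ and each fibre $\msx$ carrying the projective model structure $\msxproj$ of Theorem \ref{main-proj-msx}. Once the six hypotheses of that theorem are verified, it produces a model structure on the total category $\mset$, and it only remains to match the resulting classes of maps with the three classes in the statement.

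First I would dispatch the structural hypotheses (i)--(iv). Hypothesis (i): $\mset$ is complete and cocomplete; cocompleteness is Theorem \ref{mset-loc-pres}(1), and completeness follows in the same spirit, limits being computed level-wise over the limit of the underlying sets. Hypothesis (ii): $\Set_{min}$ is a model structure --- its weak equivalences are the bijections, all maps are cofibrations and fibrations, the $2$-out-of-$3$ and retract axioms are immediate, the factorizations are $f = f \circ \Id = \Id \circ f$, and every lifting problem has a (non-unique) solution because one of the two classes involved consists of all maps. Hypothesis (iii): each $\msxproj$ is a model category by Theorem \ref{main-proj-msx}. Hypothesis (iv): for every function $f : X \to Y$ the pair $\fex : \msx \rightleftarrows \msy : \fstar$ is a Quillen pair, which is precisely assertion (2) of the Proposition above asserting that $\fstar$ admits a left adjoint $\fex$.

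The two remaining hypotheses (v) and (vi) concern maps of $\Set_{min}$ that are, respectively, weak equivalences and trivial cofibrations; in both cases these are exactly the bijections. So let $f : X \to Y$ be a bijection. Then $\S_f : \sx \to \sy$ is an isomorphism of semi-$2$-categories, whence $\fstar = (-)\circ(\S_{f})^{2\tx{-op}}$ is an isomorphism of categories with inverse $\fex = (f^{-1})^{\star}$. Consequently $\fstar$ preserves and reflects all weak equivalences (hypothesis (v)), and the unit $\Fa \to \fstar\fex\Fa$ is an isomorphism, hence a weak equivalence (hypothesis (vi)). Applying Theorem \ref{Stanculescu_bifib} now yields a model structure on $\mset$; since in $\Set_{min}$ every map is both a cofibration and a fibration, Stanculescu's cofibration condition collapses to ``$\tld{\sigma} : \fex\Fa \to \Ga$ is a cofibration in $\msy$'', his fibration condition to ``$\sigma : \Fa \to \fstar\Ga$ is a fibration in $\msx$'', and his weak-equivalence condition to ``$f$ is a bijection and $\sigma : \Fa \to \fstar\Ga$ is a weak equivalence in $\msx$'' --- which are the three classes asserted.

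There is no serious obstacle here: the base model structure being trivial, the whole argument reduces to bookkeeping. The only points that need a moment's care are the completeness of $\mset$ in hypothesis (i) and the precise translation of Stanculescu's three classes into the form stated.
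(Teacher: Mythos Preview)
Your proposal is correct and follows essentially the same approach as the paper: apply Stanculescu's theorem to the bifibration $\Pf : \mset \to \Set$ with $\Set_{min}$ on the base and the projective structures on the fibres, noting that conditions (v) and (vi) are trivial because the relevant maps in $\Set_{min}$ are bijections. The only cosmetic difference is that the paper obtains completeness of $\mset$ in one stroke from local presentability (Theorem~\ref{mset-loc-pres}(2)) rather than arguing it separately, so you may want to cite that instead of your level-wise remark.
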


\begin{proof}
$\mset$ is complete and cocomplete as any locally presentable category. The other conditions of Theorem \ref{Stanculescu_bifib} are clearly fulfilled.
\end{proof}

\begin{rmk}
If we replace everywhere $\mset$ by $\kset$ in the previous theorem we will get as well a fibered model structure on $\kset$. The adjunction $ \Ub : \mset \rightleftarrows \kset : \Gamma$ is a Quillen adjunction. 
\end{rmk}
\subsubsection{Cofibrantly generated} 
In the following we simply show that the fibered model structures on $\mset$ and $\kset$ are cofibrantly generated. 
\paragraph{Some natural $\S$-diagrams} 
The discussion we present here follows closely Simpson's considerations in \cite[13.2]{Simpson_HTHC}.\ \\

We will denote by $\cn$ the category  described as follows:\\
 $\Ob(\cn)=\{0,...,,n \}$ is the set of first $n+1$ natural numbers and
\begin{equation*}
\Hom_{\cn}(i,j) =
  \begin{cases}
     \{(i,j) \} & \text{if $i<j$} \\
     \{\Id_i=(i,i) \}  & \text{if $i=j$ }\\
     \varnothing & \text{if $i>j$ }
  \end{cases}
\end{equation*}
The composition is the obvious one. In the $2$-category $\S_{\cn}$ there is a special object in the category of morphism $\S_{\cn}(0,n)$ which is represented as: $0 \to 1 \to \cdots \to n$. It is the reduced string of length $n$ (i.e with no repetition of object)  in $\S_{\cn}(0,n)$; or equivalently the maximal nondegenerated simplex in the nerve.  We will denote this $1$-morphism by $s_n$. Let $\Fb^{s_n}_{-}: \M \to \Hom[\S_{\cn}(0,n)^{op}, \M] $ be the left adjoint of the evaluation at $s_n$. \ \\

We have as usual the categories $\msn$ and $\kn$ with the monadic adjunction $ \Ub: \msn \leftrightarrows \kn: \Gamma$. This adjunction is moreover a Quillen adjunction. For the record $\msn$ is the category of lax morphisms from $\S_{\cn}^{2\opt}$ to $\M$ and $\kn= \prod_{(i,j) \in \Ob(\cn)^2} \Hom[\S_{\cn}(i,j)^{\op}, \M] $. \ \\

For $B \in \Ob(\M)$ we will denote by $\delta(s_n, B)$ to be the object of $\kn$ given by:
 
 \begin{equation*}
\delta(s_n, B)_{ij} =
  \begin{cases}
   \Fb^{s_n}_{B}  & \text{if $i=0, j=n$} \\
    (\varnothing, \Id_{\varnothing})  & \text{the constant functor other wise}.
  \end{cases}
\end{equation*}

For $B \in \Ob(\M)$ define  $h(\cn; B) \in \msn $ to be $\Gamma \delta(s_n, B)$. 

\begin{lem}\label{gen-set}
For any $B \in \Ob(\M)$ and $\Fa \in \msy$ the following are equivalent.

\begin{enumerate}
\item a morphism $\sigma : h(\cn; B) \to \Fa$ in $\mset$
\item a sequence of elements  $(y_0, ..., y_n)$ of $Y$ together with a morphism $B \to \Fa(y_0,....,y_n)$ in $\M$ .
\end{enumerate}
\end{lem}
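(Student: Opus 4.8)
The statement is a Yoneda-style adjunction identity: morphisms out of the ``free'' object $h(\cn;B)$ classify the data $(y_0,\dots,y_n;\,B\to\Fa(y_0,\dots,y_n))$. The plan is to unwind the three layers of adjunction that define $h(\cn;B)$ one at a time. First I would recall that a morphism $\sigma:h(\cn;B)\to\Fa$ in $\mset$ is, by definition of $\mset$, a pair $(f,\bar\sigma)$ with $f:\Ob(\cn)=\{0,\dots,n\}\to Y$ a function and $\bar\sigma: h(\cn;B)\to \fstar\Fa$ a morphism in $\msn$. A function $f$ from $\{0,\dots,n\}$ to $Y$ is exactly a sequence $(y_0,\dots,y_n)$ of elements of $Y$ (with $y_i=f(i)$), so the ``base'' part of $\sigma$ is precisely the first piece of data in (2); this uses the description of $\mset$ as the total category of the bifibration $\Pf:\mset\to\Set$.

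\textbf{Reducing the fibre part.} Next, with $f$ (equivalently $(y_0,\dots,y_n)$) fixed, I would use the adjunction $\Gamma\dashv\Ub$ of Theorem~\ref{main-proj-msx} / the monadic adjunction $\Ub:\msn\leftrightarrows\kn:\Gamma$: since $h(\cn;B)=\Gamma\,\delta(s_n,B)$, morphisms $h(\cn;B)\to\fstar\Fa$ in $\msn$ correspond naturally to morphisms $\delta(s_n,B)\to\Ub(\fstar\Fa)$ in $\kn$. Now $\kn=\prod_{(i,j)}\Hom[\S_{\cn}(i,j)^{\op},\M]$ is a product of diagram categories, and $\delta(s_n,B)$ is the constant empty functor in every factor except the $(0,n)$-factor, where it equals $\Fb^{s_n}_B$. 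The empty presheaf is initial in each $\Hom[\S_{\cn}(i,j)^{\op},\M]$ (since $\0$ is initial in $\M$ and colimits are computed pointwise), so there is a unique map out of it in each of those factors; hence a map $\delta(s_n,B)\to\Ub\fstar\Fa$ in $\kn$ is the same as a map $\Fb^{s_n}_B\to (\Ub\fstar\Fa)_{0n}$ in the single factor $\Hom[\S_{\cn}(0,n)^{\op},\M]$.

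\textbf{The last adjunction and identifying the target.} Finally I would invoke the adjunction $\Fb^{s_n}_{-}\dashv \Ev_{s_n}$ (left adjoint of evaluation at the $1$-morphism $s_n\in\S_{\cn}(0,n)$): a map $\Fb^{s_n}_B\to (\Ub\fstar\Fa)_{0n}$ is the same as a map $B\to (\Ub\fstar\Fa)_{0n}(s_n)$ in $\M$. It remains to identify this last object with $\Fa(y_0,\dots,y_n)$. By definition $\fstar\Fa=\Fa\circ(\S_f)^{2\tx{-op}}$, and $(\S_f)^{2\tx{-op}}$ sends the reduced string $s_n=(0\to1\to\cdots\to n)$ in $\S_{\cn}(0,n)$ to the chain $(y_0,\dots,y_n)$ in $\S_{\ol Y}(y_0,y_n)$; applying $\Ub$ just forgets laxity data and does not change values, so $(\Ub\fstar\Fa)_{0n}(s_n)=\Fa_{y_0y_n}((y_0,\dots,y_n))=\Fa(y_0,\dots,y_n)$ in the notation of the statement. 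Composing the three bijections, and checking (routinely) that each is natural in $\Fa$, yields the claimed equivalence.

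\textbf{Main obstacle.} None of the steps is deep; the only point requiring care is the bookkeeping at the interface of the layers --- specifically verifying that the empty presheaf really is strictly initial in the relevant factors of $\kn$ (so that the ``constant empty'' components of $\delta(s_n,B)$ contribute nothing), and tracking the image of $s_n$ under $(\S_f)^{2\tx{-op}}$ carefully enough to see the target is literally $\Fa(y_0,\dots,y_n)$ rather than some reindexed variant. I would also note, for use in the sequel, that the construction is functorial in $B$, so that $\{h(\cn;B)\}$ assembled over the generating (trivial) cofibrations of $\M$ will serve as a generating set for the fibered model structure on $\mset$.
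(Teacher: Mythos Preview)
Your proposal is correct and follows essentially the same route as the paper: unfold a morphism in $\mset$ as a pair $(f,\bar\sigma)$, then apply the chain of adjunctions $\Gamma\dashv\Ub$, initiality of the empty factors of $\delta(s_n,B)$, and $\Fb^{s_n}_{-}\dashv\Ev_{s_n}$, finally identifying $(\Ub\fstar\Fa)_{0n}(s_n)$ with $\Fa(y_0,\dots,y_n)$. The paper's sketch simply records this as a display of hom-set isomorphisms; you have expanded the same steps with more commentary, but there is no substantive difference.
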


\begin{proof}[Sketch of proof]

A morphism $\sigma=(f,\sigma) : h(\cn; B) \to \Fa$ is by definition a function $f : \{0,...,n\} \to Y$ together with a morphism $\sigma: h(\cn; B) \to \fstar \Fa$ in $\msn$ . Setting $y_i= f(i)$ we get $f s_n= (y_0,...,y_n)$ and  by adjunction we have:

\begin{equation*}
\begin{split}
\Hom_{\mset}[(h(\cn; B), \Fa]
&=\Hom_{\msn}[(h(\cn; B),\fstar \Fa ]\\
&= \Hom_{\msn}[\Gamma \delta(s_n, B),\fstar \Fa ]  \\
&\cong \Hom_{\kn}[\delta(s_n, B), \Ub (\fstar \Fa) ]   \\
&\cong \Hom[\Fb^{s_n}_{B},\fstar \Fa_{y_0 y_n} ] \\
&\cong \Hom[B, \Fa_{y_0 y_n} (f s_n) ]\\
&= \Hom[B, \Fa(y_0,....,y_n)]\\
\end{split}
\end{equation*}
\end{proof}

\begin{rmk}
$h$ and $\delta$ are, in an obvious way, left adjoint to the evaluation on generic $1$-morphisms of lenght $n$: $\Ev_n: \mset \to \M$ and $\Ev_n: \kset \to  \M$ respectively. It's not hard to see that they are moreover left Quillen functors. 
\end{rmk}

Recall that $\I$ and $\Ja$ are the respective generating set of cofibration and trivial cofibrations in $\M$. Since projective (trivial) fibrations are the object-wise (trivial) fibrations, it follows that a map $\sigma=(f,\sigma): \Fa \to \Ga$ is a (trivial) fibration if for all $s$ the map $\Fa s \xrightarrow{\sigma_s}   \Ga f(s)$ has the right lifting property with respect to ($\I$) $\Ja$.\ \\

If we combine this observation and  lemma \ref{gen-set} we deduce that 
\begin{prop}\label{prop-gen-set}
The following sets constitute respectively a set of generating cofibrations and trivial cofibrations in $\msetproj$. 
$$\I_{\mset}= \coprod_{\cn, n\geq 1} \{ h(\cn; q): h(\cn; A) \to h(\cn; B) \}_{q :A \to B \in \I}$$
$$\Ja_{\mset}= \coprod_{\cn, n\geq 1} \{ h(\cn; q): h(\cn; A) \to h(\cn; B) \}_{q :A \to B \in \Ja} $$

\ \\
Similarly the following sets constitute respectively a set cofibrations and trivial cofibrations in $\ksetproj$:
$$\I_{\kset}= \coprod_{\cn, n\geq 1} \{ \delta(s_n, q): \delta(s_n, A)\to \delta(s_n, B) \}_{q :A \to B \in \I}$$
$$\Ja_{\kset}= \coprod_{\cn, n\geq 1} \{ \delta(s_n, q): \delta(s_n, A)\to \delta(s_n, B) \}_{q :A \to B \in \Ja} $$ 
\end{prop}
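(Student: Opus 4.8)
The statement to prove is Proposition~\ref{prop-gen-set}: that the sets $\I_{\mset}, \Ja_{\mset}$ (resp.\ $\I_{\kset}, \Ja_{\kset}$) are generating (trivial) cofibrations for the fibered projective model structure on $\mset$ (resp.\ $\kset$). Since the model structure on $\mset$ is already known to be cofibrantly generated by Theorem~\ref{fibered-model-mset} together with the transfer argument, and the fibrations/trivial fibrations are detected object-wise, the real content is purely a lifting-property computation: one must identify the maps with the right lifting property against $\I_{\mset}$ (resp.\ $\Ja_{\mset}$) with the trivial fibrations (resp.\ fibrations) of $\msetproj$. So the plan is: (1) characterize trivial fibrations and fibrations in $\msetproj$ object-wise; (2) use the adjunction Lemma~\ref{gen-set} to rewrite the lifting condition against $h(\cn;q)$ as a lifting condition in $\M$; (3) conclude that $\I_{\mset}\text{-inj} = $ trivial fibrations and $\Ja_{\mset}\text{-inj}=$ fibrations, which since $\msetproj$ is already cofibrantly generated (hence its generating sets are determined up to the classes they detect) identifies $\I_{\mset},\Ja_{\mset}$ as valid generating sets.

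\textbf{Step 1: object-wise description of the maps.} First I would recall from Theorem~\ref{fibered-model-mset} that a map $\sigma=(f,\sigma):\Fa\to\Ga$ in $\mset$ is a fibration iff $f$ is arbitrary (every map of $\Set_{min}$ is a fibration) and $\sigma:\Fa\to\fstar\Ga$ is a projective fibration in $\msx$, i.e.\ a trivial fibration is the same with $f$ an isomorphism and $\sigma$ a projective trivial fibration; but since the model structure on $\Set_{min}$ is discrete, and any map of sets can be factored, the practical point (which I would make explicit using that the weak equivalences of $\Set_{min}$ are the isomorphisms and $\fstar$ along an isomorphism is an equivalence) is that $\sigma$ is a trivial fibration of $\msetproj$ iff for \emph{every} $1$-morphism $s\in\sx(A,B)$ the component map $\Fa s \to \Ga(f(s))$ of $\M$ is a trivial fibration, i.e.\ has the RLP against $\I$; and a fibration iff the same holds with $\Ja$ in place of $\I$. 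This is exactly the object-wise characterization stated in the text just before the proposition, so Step~1 is essentially a matter of citing Theorem~\ref{fibered-model-mset} and unwinding that projective (trivial) fibrations in $\msx$ are object-wise (trivial) fibrations.

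\textbf{Step 2: translate the lifting condition.} Next I would take a map $\sigma:\Fa\to\Ga$ in $\mset$ and compute, for $q:A\to B\in\I$ (resp.\ $\Ja$) and each $\cn$ with $n\ge 1$, when $\sigma$ has the RLP against $h(\cn;q):h(\cn;A)\to h(\cn;B)$. By the adjunction chain in the proof of Lemma~\ref{gen-set}, a commutative square
\[
\xy
(-15,15)*+{h(\cn;A)}="X";(20,15)*+{\Fa}="Y";(-15,0)*+{h(\cn;B)}="Z";(20,0)*+{\Ga}="W";
{\ar@{->}"X";"Y"};{\ar@{->}_{h(\cn;q)}"X";"Z"};{\ar@{->}^{\sigma}"Y";"W"};{\ar@{->}"Z";"W"};
\endxy
\]
corresponds precisely, after choosing the underlying sequence $(y_0,\dots,y_n)$ determined by the top map (the sequence is forced to be the same on $h(\cn;A)$ and $h(\cn;B)$ since $q$ is the identity on objects), to a commutative square in $\M$
\[
\xy
(-10,15)*+{A}="X";(25,15)*+{\Fa(y_0,\dots,y_n)}="Y";(-10,0)*+{B}="Z";(25,0)*+{\Ga(y_0',\dots,y_n')}="W";
{\ar@{->}"X";"Y"};{\ar@{->}_{q}"X";"Z"};{\ar@{->}^{\sigma_s}"Y";"W"};{\ar@{->}"Z";"W"};
\endxy
\]
where $(y_0',\dots,y_n')=f(y_0,\dots,y_n)$ and $\sigma_s$ is the component of $\sigma$ at the reduced $1$-morphism $s_n$ transported to $\Fa$'s $1$-morphism. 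A diagonal lift of the first square corresponds bijectively to a diagonal lift of the second. Ranging over all $\cn$ and all choices of $(y_0,\dots,y_n)$, one gets that $\sigma$ has the RLP against all of $\I_{\mset}$ iff every component map $\Fa t\to\Ga f(t)$ of $\M$ (indexed by an arbitrary $1$-morphism $t$, not just the reduced $s_n$ — but every $1$-morphism of $\sx$ has some length $n\ge1$ and arises as the image of $s_n$ under a functor $\cn\to\ol X$) has the RLP against $\I$, i.e.\ is a trivial fibration of $\M$. The same computation with $\Ja$ gives the fibration case, and the $\kset$ statements follow identically, replacing $h$ and $\Gamma$ by $\delta$ throughout (using that $\Ub$ is monadic and $\delta(s_n,-)$ is left adjoint to $\Ev_n:\kset\to\M$).

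\textbf{Step 3: conclude.} Combining Steps~1 and~2: $\I_{\mset}\text{-inj}$ = trivial fibrations of $\msetproj$ and $\Ja_{\mset}\text{-inj}$ = fibrations of $\msetproj$. Since $\msetproj$ is a cofibrantly generated model category (Theorem~\ref{fibered-model-mset}, via the transfer Lemma~\ref{transfer-model-str}), and since $\mset$ is locally presentable (Theorem~\ref{mset-loc-pres}) so that the small-object argument applies to the (small, as $n\ge1$ ranges over $\N$ and $\I,\Ja$ are sets) sets $\I_{\mset},\Ja_{\mset}$, it follows that $\I_{\mset}$ generates the cofibrations and $\Ja_{\mset}$ the trivial cofibrations; the same for $\kset$.

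\textbf{Expected main obstacle.} The only genuinely delicate point is bookkeeping in Step~2: making sure that as $\cn$ and the choice of object-sequence vary, the maps $h(\cn;q)$ "see" \emph{every} $1$-morphism of every $\sx$ (including degenerate strings, i.e.\ chains with repeated objects), so that RLP against $\I_{\mset}$ really forces the object-wise trivial-fibration condition at all $1$-morphisms and not merely at reduced ones. This is handled by observing that any $1$-morphism $t\in\sx(A,B)$ of length $n$ is the image $\S_g(s_n)$ of the reduced string $s_n\in\S_{\cn}(0,n)$ under the map $g:\cn\to\ol X$ sending $0,\dots,n$ to the successive entries of $t$, so the corresponding square factors through $h(\cn;q)$; I would spell this out carefully but it is routine. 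Everything else is formal adjunction-chasing already carried out in the proof of Lemma~\ref{gen-set}.
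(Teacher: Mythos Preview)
Your proposal is correct and takes the same adjointness route as the paper: the paper packages your Step~2 as a natural bijection of hom-sets in arrow categories
\[
\Hom_{\Ar[\mset]}[h(\cn;q),\sigma]\;\cong\;\Hom_{\Ar[\kset]}[\delta(s_n;q),\Ub\sigma]\;\cong\;\Hom_{\Ar(\M)}[q,\sigma_s],
\]
which is exactly your square-and-lift correspondence written compactly, and your ``main obstacle'' (that every $1$-morphism of every $\sx$ is hit by some $\S_g(s_n)$) is the point the paper leaves implicit.

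One minor slip: in Step~3 you cite Theorem~\ref{fibered-model-mset} as giving cofibrant generation ``via the transfer Lemma~\ref{transfer-model-str}'', but that theorem is obtained from Stanculescu's fibered result (Theorem~\ref{Stanculescu_bifib}), not by transfer, and does not itself assert cofibrant generation---indeed, cofibrant generation is precisely what the present proposition establishes. This is harmless, since what your argument actually uses (and correctly supplies) is only that the model structure exists, that $\I_{\mset}\text{-inj}$ coincides with the trivial fibrations, and that the small object argument applies by local presentability (Theorem~\ref{mset-loc-pres}).
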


\begin{proof}
 All is proved by adjointness. \ \\

Let $\Fa \in \msx$, $\Ga \in \msy$, $f: X \to Y$ and $\sigma: \Fa \to \fstar \Ga$ a morphism in $\msx$.  For every $1$-morphism $s=(y_0,...,y_n)$ in $\S_{\ol{X}}$ using  lemma \ref{gen-set}, it's easy to see that for all $q :A \to B \in \Ar(\M)$ we have isomorphism between Hom in arrow-categories:

$$ \Hom_{\Ar[\mset]}[h(\cn; q), \sigma] \cong \Hom_{\Ar[\kset]}[\delta(s_n; q), \Ub \sigma]  \cong  \Hom_{\Ar(\M)}[q, \sigma_s]$$

These are bijection of set of commutative squares.  It follows that $\sigma_s$ has the RLP with respect to $\I$ (resp. $\Ja$) if and only if $\Ub \sigma$ has the RLP with respect to $\I_{\kset}$ (resp. $\Ja_{\kset}$); finally $\Ub \sigma$ has the RLP with respect to $\I_{\kset}$ (resp.  $\Ja_{\kset}$) if and only if $\sigma$ has the RLP with respect to $\Gamma \I_{\kset}= \I_{\mset}$ (resp. $\Ja_{\mset}$).
\end{proof}

\begin{rmk}\label{cofibrant-dom}
Since the functor $h$ and $\delta$ are left Quillen funtors they preserve (trivial) cofibrations. In particular if $A$ is cofibrant then so are $h(\cn; A)$ and $\delta(s_n, A)$. It follows that if the domain of maps in $\I$ are cofibrant in $\M$ then so are the domain of maps in $\I_{\mset}$ and $\I_{\kset}$. 
\end{rmk}

\begin{cor}\label{mset-combinatorial}
The fibered model structure on $\mset$ (resp. $\kset$) is a combinatorial model category.
\end{cor}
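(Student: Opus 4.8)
The plan is to recognise that ``combinatorial'' unpacks as ``cofibrantly generated together with locally presentable underlying category'', and that both halves are already available. First I would dispose of local presentability: a combinatorial $\M$ is in particular symmetric monoidal closed and locally presentable, so Theorem \ref{mset-loc-pres} applies verbatim and yields that $\mset$ is locally presentable; the parallel (and easier) argument mentioned alongside it for the $\K$-categories gives that $\kset$ is locally presentable as well.

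Next I would invoke Proposition \ref{prop-gen-set}, which produces the explicit sets $\I_{\mset}$, $\Ja_{\mset}$ (resp.\ $\I_{\kset}$, $\Ja_{\kset}$) and shows that, in the fibered model structure of Theorem \ref{fibered-model-mset}, the (trivial) fibrations are exactly the maps with the right lifting property against $\Ja_{\mset}$ (resp.\ $\I_{\mset}$) --- which is precisely the assertion that $(\I_{\mset},\Ja_{\mset})$ generates the model structure. Two small points then remain. One is that these really are sets: each is indexed by the set-sized family of the categories $\cn$ with $n\geq 1$ and by the sets $\I$, $\Ja$ of generating (trivial) cofibrations of $\M$, hence is a set. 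The other is the smallness clause built into the definition of a cofibrantly generated model category, namely that the domains of the generating maps be small relative to the corresponding cell complexes; this is automatic because $\mset$ (resp.\ $\kset$) is locally presentable, so every object is $\lambda$-presentable for some regular cardinal $\lambda$ and thus small relative to all morphisms. Remark \ref{cofibrant-dom} additionally records that these domains are cofibrant when the domains of maps in $\I$ are, which is convenient elsewhere but not needed here.

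Putting these together, the fibered model structure on $\mset$ (resp.\ on $\kset$) is cofibrantly generated and its underlying category is locally presentable, hence combinatorial. The only thing requiring any care is the smallness clause in the last step --- it would be a mistake to conclude cofibrant generation merely from the existence of generating sets for the lifting properties --- but since that is settled instantly by local presentability, there is no real obstacle beyond the work already carried out in Proposition \ref{prop-gen-set} and Theorem \ref{mset-loc-pres}.
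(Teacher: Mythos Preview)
Your proof is correct and follows essentially the same route as the paper: combine Proposition \ref{prop-gen-set} (generating sets), Theorem \ref{mset-loc-pres} (local presentability), and Remark \ref{cofibrant-dom}. The one cosmetic difference is that the paper explicitly cites Remark \ref{cofibrant-dom} among the ingredients, whereas you (correctly) observe that cofibrant domains are not required for mere combinatoriality and that smallness comes for free from local presentability; your more explicit handling of the smallness clause is a welcome clarification but not a genuinely different argument.
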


\begin{proof}
Combine Proposition \ref{prop-gen-set}, Remark \ref{cofibrant-dom} and Theorem \ref{mset-loc-pres}. 
\end{proof}

\section{Co-Segalification\index{Segal!Co-Segalification} of $\S$-diagrams} \label{secion-cosegalification}

\paragraph*{\textbf{ Environment}:} In this section $(\M,\W)$ is a \textbf{symmetric monoidal model category} where $\W$ represents the class of weak equivalences. We refer the reader to \cite{Hov-model} for the definition of (symmetric) monoidal model categories.

For simplicity we consider in this section only $\S_{\ol{X}}$-diagrams of $(\M,\W)$. For a general category $\C$ the methods we will use will be the same. \\
\begin{nota}\ \\
A cofibration of $\M$ will be represented by an arrow of the form: $ \hookrightarrow$.\\
A fibration will be represented by:  $\twoheadrightarrow$\\
A weak equivalence will be represented by an arrow: $ \xrightarrow{\sim}$.\\
An isomorphism will be represented by: $ \xrightarrow{\cong}$.\\
$\aleph_0=$ the first countable cardinal.   $\aleph_0$ is identified with the ordinal $\omega=(\N,<)$. \\
$\kappa=$ a regular uncountable cardinal. \\
$\tx{End}[\msx]=$ the category of endofunctors of $\msx$.\\
$\bf{I}=$ the class of cofibrations of $\M$.\\
$\bf{I}\tx{-inj}=$ the class of $\bf{I}$-injective maps. \\
$\M^{[\1]}=\Hom([\1],\M)=$ the category of arrows of $\M$ (here $[\1]$ is the interval category).\\  
$\emptyset=$ the initial object of $\M$. 
\end{nota} 
\ \\
All along our discussion $X$ is a fixed nonempty $\kappa$-small set.\ \\
\ \\
The purpose of this section is to build a process which associates to any $\S_{\ol{X}}$-diagram $F$ a  Co-Segal  $\S_{\ol{X}}$-diagram $\Sim(F)$. This process will be needed in the upcoming sections when we will localize the previous model structures on the category $\msx$.\\

We are going to construct a functor $\Sim: \msx \to \msx$ equiped a natural transformation 
 $$\eta_{\Sim}: \Id_{\msx} \hookrightarrow \Sim$$ whose component at each $F$, $\eta_{\Sim,F}: F \hookrightarrow \Sim(F)$, will be a \emph{cofibration} in $\msx$.\ \\
 
The natural transformation $\eta_{\Sim}$ will arise automatically from the construction of the functor $\Sim$. \\

The functor $\Sim$ will be obtained as a colimit of a $\kappa$-sequence of cofibrations in $\msx$:

$$\Id_{\msx}=\Sim_0 \hookrightarrow  \Sim^1 \hookrightarrow \Sim^2 \cdots \hookrightarrow \Sim^{n-1} \hookrightarrow \Sim^{n} \hookrightarrow \cdots $$ 

\subsection{Co-Segalification by Generators and Relations} 
Recall that an $\S_{\ol{X}}$-diagram  $F$ is given by a family of functors $\{ F_{A B} \}_{(A,B) \in X^2}$ together with some laxity maps $\{ \varphi_{s,t} \}$ and  the suitable coherences. \\ 
Here each $F_{AB}$ is a classical functor $F_{AB} :  \S_{\ol{X}}(A,B)^{op} \to \M$, with  $\S_{\ol{X}}(A,B)$  a category over $\Delta_{epi}$.\ \\

Such $F$ is said to be a \emph{Co-Segal $\S_{\ol{X}}$-diagram} if for every pair $(A,B)$ and any morphism $u: t \to s$ of $\S_{\ol{X}}(A,B)$, the morphism  $F(u): F(s) \to F(t)$  is a \emph{weak equivalence} in $\M$. Following the Observation \ref{Co-Segal-final-cond} we know that it suffices to have these conditions for $u=u_t$ for all $t$, where $u_t :t \to (A,B)$ is the unique map from $t$ to $(A,B)$. \ \\

The functor $\Sim$ we are about to construct will have the property that $\Sim(F)(u_t)$ will be a \emph{trivial fibration} in $\M$ for all $t$. But since $\M$ is a model category
 \footnote{Here we adopt the modern language and simply say `model category' to mean what Quillen \cite[Ch.5]{Quillen_HA} called `closed model category'.} 
$\Sim(F)(u_t)$ is a trivial fibration if and only if  $\Sim(F)(u_t) \in \bf{I}\tx{-inj}$ i.e it has the \emph{right lifting property} (RLP) with respect to the
 class $\bf{I}$ of all cofibrations  (see \cite[Lemma 1.1.10]{Hov-model}, \cite[Ch.5]{Quillen_HA}). 
This lifting property amounts to say that whenever we have a commutative diagram in $\M$%

\[
\xy
(-10,20)*+{U}="A";
(20,20)*+{\Sim[F](A,B)}="B";
(-10,0)*+{V}="C";
(20,0)*+{\Sim[F](t)}="D";
{\ar@{->}^{f}"A";"B"};
{\ar@{_{(}->}_{h}"A"+(0,-3);"C"};
{\ar@{->}^{\Sim[F](u_t)}"B";"D"};
{\ar@{->}^{g}"C";"D"};
{\ar@{.>}^{k}"C";"B"};
\endxy
\]  

with $h \in \bf{I}$ then we can find a lifting i.e  there exists  $k: V \to \Sim[F](A,B)$ such that $k \circ h= f$ and $\Sim[F](u_t) \circ k= g$.  \ \\

If we consider separately in $\M$ the  map $F(u_t)$ the classical trick to produce $\Sim[F](u_t)$ is to use  the  \emph{small object argument} which gives, up-to some hypothesis on $\bf{I}$, a functorial factorization $F(u_t)= \beta_t(F) \circ  \alpha_t(F)$ with:

\begin{equation*}
  \begin{cases}
  \alpha_t(F): F(A,B) \to D  &  \tx{an $\bf{I}$-cell complex }\\
   \beta_t(F) : D \to F(t) & \tx{an element of $\bf{I}$-inj} \\
  \tx{ for some $D \in \Ob(\M)$}. \\
  \end{cases}
\end{equation*}
\\
The map $\alpha_t(F)$ is obtained as a transfinite composition of pushouts of a coproduct of the maps in $\I$. The smallness  or \emph{compacity} of $D$ is used to show that $\beta_t(F)$ has the RLP with respect to $\I$. The reader can find an exposition of the small object argument for example in \cite[Section 7.12]{Dwyer_Spalinski}, \cite[Theorem 2.1.14]{Hov-model}.\ \\

In this situation we can set $\Sim[F](A,B) = D$, $\Sim[F](t) = F(t)$, $\Sim[F](u_t) = \beta_t(F)$ and the natural transformation $\eta_{\Sim, F}$ will be given by $\alpha_t(F): F(A,B) \to \Sim[F](A,B)$ and $\Id_{F(t)}: F(t) \to \Sim[F](t)$. \ \\ 

In our case  we want to use the same trick i.e using a transfinite composition of pushout of maps of some class $\I_{\msx} \subset \Ar(\M)$, but we want these pushouts as well as the other operations to take place in $\msx$. \ \\ 
  

\subsubsection{\textbf{An important adjunction}}
Let $t$ be a $1$-morphism of $\S_{\ol{X}}$ of length $> 1$ i.e $t \in \Ob(\S_{\ol{X}}(A,B)^{\op}) $ for some  pair of elements $(A,B)$ of $X$. Recall that $t$ corresponds to a sequences $(A_0, A_1, ..., A_n)$ with $A_0= A$ and $A_n=B$. \ \\ 
\ \\
Let $\Pr_t: \M_{\S}(X) \to \M^{[\1]}$ be the evaluation functor at $u_t: (A,B) \to t$: 

\begin{itemize}
\item For $\Fa \in \M_{\S}(X)$ we have $\Pr_t(\Fa)=\Fa(u_t)$,
\item  For $\sigma \in \Hom_{\M_{\S}(X)}[\Fa,\Ga]$, we have $\Pr_t(\sigma)=(\sigma_{(A,B)},\sigma_t)$ which corresponds to the commutative square:
 \[
\xy
(-10,20)*+{\Fa(A,B)}="A";
(20,20)*+{\Ga(A,B)}="B";
(-10,0)*+{\Fa(t)}="C";
(20,0)*+{\Ga(t)}="D";
{\ar@{->}^{\sigma_{(A,B)}}"A";"B"};
{\ar@{->}_{\Fa(u_t)}"A";"C"};
{\ar@{->}^{\Ga(u_t)}"B";"D"};
{\ar@{->}^{\sigma_t}"C";"D"};
\endxy
\]
\end{itemize}

\begin{prop}
For every object $t$ of length $>1$ the  following holds.

\begin{enumerate}
\item The functor $\Pr_t$ has a left adjoint, that is there exists a functor 
$$\Pr_{t!}: \M^{[\1]} \to \M_{\S}(X)$$
 such that for every $\Fa \in \M_{S}(X)$ and every  $h \in \M^{[\1]}$  we have an isomorphism of sets:

$$\Hom_{\M_{\S}(X)}[\Pr_{t!}h, \Fa] \cong \Hom_{\M^{[\1]}}[h,\Fa(u_t)]$$
which is natural in both $h$ and $F$.
\item $\Pr_{t!}$ is a left Quillen functor. 
\end{enumerate}
\end{prop}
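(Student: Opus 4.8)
The statement has two parts. For part (1), the plan is to construct $\Pr_{t!}$ explicitly and verify the adjunction isomorphism directly. The key observation is that the functor $\Pr_t$ factors as a composite: first the forgetful functor $\Ub\colon \M_{\S}(X) \to \K_X = \prod_{(A',B')} \Hom[\S_{\ol{X}}(A',B')^{\op},\M]$, then the projection onto the single factor indexed by the pair $(A,B)$ attached to $t$, and finally the evaluation functor $\Hom[\S_{\ol{X}}(A,B)^{\op},\M] \to \M^{[\1]}$ at the unique morphism $u_t\colon (A,B)\to t$ in $\S_{\ol{X}}(A,B)^{\op}$ (viewing $[\1]$ as the arrow category so that an arrow of $\M$ is the same as a functor $[\1]\to\M$, and $u_t$ picks out such an arrow). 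Each of these three functors has a left adjoint: $\Ub$ has the left adjoint $\Gamma$ (Appendix, cited in the excerpt as the free lax-morphism functor); the projection has as left adjoint the functor that places a given object in the $(A,B)$-slot and the constant initial-object diagram in every other slot (using that $\K_X$ is a product); and the evaluation at $u_t$ has a left adjoint $\Fb^{u_t}_{-}$ by the standard theory of diagram categories over a small category — concretely, $\Fb^{u_t}_{h}$ is built from the two representable-tensor pieces $\Fb^{(A,B)}_{s}$ and $\Fb^{t}_{v}$ glued along the map induced by $u_t$, exactly the left adjoint to evaluation at a morphism described in the references cited earlier (e.g.\ \cite[11.5.21]{Hirsch-model-loc}). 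Composing these three left adjoints gives $\Pr_{t!}$, and the natural isomorphism $\Hom_{\M_{\S}(X)}[\Pr_{t!}h,\Fa] \cong \Hom_{\M^{[\1]}}[h,\Fa(u_t)]$ follows by chaining the three adjunction isomorphisms; naturality in $h$ and $\Fa$ is automatic.

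For part (2), the plan is to show $\Pr_{t!}$ is left Quillen by showing its right adjoint $\Pr_t$ is right Quillen, i.e.\ $\Pr_t$ preserves fibrations and trivial fibrations — this is the characterization of Quillen adjunctions via the right adjoint, \cite[Lemma 1.3.4]{Hov-model}, which the paper has already invoked for $\fstar$. Here I would use the projective model structure $\msxproj$ on $\M_{\S}(X)$ (Theorem \ref{main-proj-msx}), in which fibrations and weak equivalences are detected level-wise via $\Ub$ in $\kxproj$, which in turn are level-wise in $\M$. On the arrow category $\M^{[\1]}$ one takes the projective (equivalently, injective, since $[\1]$ is a Reedy category with this being the Reedy structure) model structure: a map of arrows is a fibration resp.\ trivial fibration if and only if both its components are. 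Since $\Pr_t(\sigma) = (\sigma_{(A,B)},\sigma_t)$ has components that are two of the level-wise components of $\Ub\sigma$, if $\sigma$ is a (trivial) fibration in $\msxproj$ then each of $\sigma_{(A,B)}$ and $\sigma_t$ is a (trivial) fibration in $\M$, hence $\Pr_t(\sigma)$ is a (trivial) fibration in $\M^{[\1]}$. This gives that $\Pr_t$ is right Quillen, hence $\Pr_{t!}$ is left Quillen.

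The main obstacle is purely bookkeeping rather than conceptual: one must be careful about which model structure on $\M^{[\1]}$ makes the statement true and compatible with the intended use (in the subsequent construction of $\Sim$, $\Pr_{t!}$ is applied to generating cofibrations of the arrow category, so the model structure on $\M^{[\1]}$ must be the one whose generating cofibrations push forward correctly), and one must check the explicit formula for $\Fb^{u_t}_{-}$ really does give the left adjoint to evaluation at a \emph{morphism} (not just at an object) of $\S_{\ol{X}}(A,B)^{\op}$ — this is where a small diagram chase is needed, building $\Fb^{u_t}_{h}$ as the pushout $\Fb^{(A,B)}_{a} \cup_{\Fb^{(A,B)}_{b}} \Fb^{t}_{b}$ for $h\colon a\to b$, where $\Fb^{(A,B)}_{b}\to\Fb^{(A,B)}_{a}$ is induced by $h$ and $\Fb^{(A,B)}_{b}\to\Fb^{t}_{b}$ by $u_t$. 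Once that formula is pinned down, the rest is a routine composition of known adjunctions and the level-wise criterion for fibrations.
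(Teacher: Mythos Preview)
Your argument for part (1) is essentially the paper's: factor $\Pr_t$ as $\Ev_{u_t}\circ\pr_{AB}\circ\Ub$ and compose the three left adjoints $\Gamma$, $\delta_{AB}$, $\Fb^{u_t}$.

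For part (2) you take a slight shortcut: rather than checking each of $\Gamma$, $\delta_{AB}$, $\Fb^{u_t}$ is left Quillen (as the paper does), you verify directly that the composite right adjoint $\Pr_t$ preserves projective (trivial) fibrations. This is valid and in fact the same kind of reasoning the paper uses for the individual piece $\Ev_{u_t}$. Two points to correct, however. First, your parenthetical ``projective (equivalently, injective\ldots)'' on $\M^{[\1]}$ is false: the projective and injective model structures on $\M^{[\1]}$ do not coincide in general (injective fibrations require a pullback-corner condition, not merely level-wise fibrancy). Your actual argument only uses the projective description of fibrations, so the proof survives once you drop that parenthetical. Second, the paper establishes that $\Pr_{t!}$ is left Quillen for \emph{both} $\msxproj$ and $\msxinj$, and the injective case is what is actually invoked downstream (Lemma~\ref{cof-rectification} applies $\Pr_{t!}$ to the injective cofibration $h_{/V}$). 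Your right-adjoint argument only covers the projective side; for the injective side one cannot simply say ``$\Pr_t$ preserves level-wise fibrations,'' since injective fibrations in $\kxinj$ are not level-wise. The paper handles this by showing $\Fb^{u_t}$ is left Quillen for the injective structures via an explicit coproduct formula for the left Kan extension (Lemma~\ref{adjoint_ev_inj}), together with the fact that $\Gamma$ preserves level-wise (trivial) cofibrations. You should either supply that argument or cite it.
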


\begin{proof}[Sketch of proof]
For the assertion $(1)$, we write $\Pr_t$ as the composite of the following functors: 
$$\M_{\S}(X) \xrightarrow{\Ub} \prod_{(A',B') \in X^{2}} \Hom[\S_{\ol{X}}(A',B')^{op}, \M] \xrightarrow{\pr_{AB}} \Hom[\S_{\ol{X}}(A,B)^{op}, \M] \xrightarrow{\Ev_{u_t}} \M^{[\1]} $$
\\
where:
\begin{itemize}
\item $\Ub$ is the functor which forgets the laxity maps, 
\item $\pr_{AB}$ is the functor which gives the component at $(A,B)$,
\item $\Ev_{u_t}$ is the evaluation at $u_t$.
\end{itemize}

Thanks to lemma \ref{adjoint-ub} in the Appendix, $\Ub$ has left adjoint $\Gamma$.  $\Ev_{u_t}$ has a left adjoint  $\Fb^{u_t}$ (see Appendix \ref{lemme_adjunction_2}). Finally $\pr_{AB}$ has clearly a left adjoint $\delta_{AB}$ as explained below.  
The composite of these left adjoints gives a left adjoint of $\Pr_t$.  \ \\

The functor $\delta_{AB}$ is simply the `Dirac extension'.  For $\Fa \in \Hom[\S_{\ol{X}}(A,B)^{op}, \M]$ we define $\delta(\Fa) \in \K_X$ by 
 
 \begin{equation*}
\delta(\Fa)_{A'B'} =
  \begin{cases}
\Fa  & \text{if $(A',B')=(A,B)$} \\
    (\varnothing, \Id_{\varnothing})  & \text{the constant functor other wise}.
  \end{cases}
\end{equation*}

One can easily see that $\delta$ is a functor and  that we have indeed an isomorphism of sets:
$$\Hom[\Fa, \Ga_{AB} ] \cong \Hom[\delta(\Fa),\Ga]$$
which is natural in both $\Fa$ and $\Ga$; this completes the proof of the assertion $(1)$. \ \\

The assertion $(2)$ follows from the fact that all the three functors $\Gamma, \delta$ and $\Fb^{u_t}$ are left Quillen functors. In fact $\Gamma$ is a left Quillen functor by construction of the model structure on $\msx$ (injective or projective). $\delta$ is clearly a left Quillen functor, for $\Fb^{u_t}$ see Corollary \ref{adjoint-preserve-proj-cof} and Corollary \ref{lem-pres-level-wise-cof}.  It follows that $\Pr_{t!}$ is a composite of left Quillen functors therefore it's a left Quillen functor.  
\end{proof}
\ \\
For any map $h:U \to V$ of $\M$, we have a tautological commutative square: 
\[
\xy
(0,20)*+{U}="A";
(20,20)*+{V}="B";
(0,0)*+{V}="C";
(20,0)*+{V}="D";
{\ar@{->}^{h}"A";"B"};
{\ar@{->}^{h}"A";"C"};
{\ar@{->}^{\Id_V}"B";"D"};
{\ar@{->}^{\Id_V}"C";"D"};
\endxy
\]  
which says that $(h,\Id_V)$ is, in a natural way, a morphism in $\M^{[\1]}$ from $h$ to $\Id_V$. We will denote by $h_{/V}$ this morphism.\footnote{The notation `$h_{/V}$' is inspired from the fact that the commutative square above is the (unique) canonical map from $h$ to $\Id_V$ in the slice category $\M_{/V}$. We recall that $\Id_V$ is final in $\M_{/V}$.}\ \\
\begin{lem}\label{cof-rectification}
For  a symmetric monoidal model category $\M$ which is also tractable, for any pushout square in either $\msxinj$ or $\msxproj$: 
\[
\xy
(-10,20)*+{ \Pr_{t!}(h)}="A";
(20,20)*+{\Fa}="B";
(-10,0)*+{\Pr_{t!}\Id_V}="C";
(20,0)*+{\Ga}="D";
{\ar@{->}^{\sigma}"A";"B"};
{\ar@{_{(}->}_{\Pr_{t!}(h_{/V})}"A"+(0,-3);"C"};
{\ar@{-->}^{H}"B";"D"};
{\ar@{-->}^{\ol{\sigma}}"C";"D"};
\endxy
\]   

the following holds.
\begin{enumerate}
\item If $h:U \to V$ is a  cofibration in $\M$ then $H$ is a cofibration in  $\msxinj$. \label{cof-rec-1}
\item If moreover  $h:U \to V$ is a trivial cofibration in $\M$ then $H$ is a  weak equivalence in 
both $\msxinj$ and $\msxproj$. \label{cof-rec-2}
\end{enumerate}
\end{lem}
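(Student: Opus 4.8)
The plan is to reduce both assertions to statements about pushouts in $\kx$ (with the injective model structure) and about the behaviour of the laxity-creating colimits, exactly as in the ``constant case'' analysis leading to Proposition~\ref{prop-cube-cof}. First I would unwind what $\Pr_{t!}(h_{/V})$ looks like after applying $\Ub$. Since $\Pr_{t!}= \Gamma \circ \delta_{AB} \circ \Fb^{u_t}$ and $\Gamma\dashv\Ub$, the pushout in $\msx$ of $\Gamma(\text{something})$ along $\Gamma(\text{something})$ is computed by the monad $\T=\Ub\Gamma$: writing $\D = \delta_{AB}\Fb^{u_t}(h_{/V})$, the square in the statement is the pushout of $\Gamma\D$ along a map $\Gamma\D \to \Fa$, and I want to analyse $\Ub H$. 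This is precisely the situation treated abstractly in Lemma~\ref{pushout-MX} / Lemma~\ref{lem-pushout-laxalg} and concretely in the semicube computations of the previous subsection: a level-wise cofibration $\alpha$ in $\kx$ (here the non-identity coordinate of $\D$ is $\Fb^{u_t}(h)$, which is an injective, i.e.\ level-wise, cofibration when $h$ is a cofibration, because $\Fb^{u_t}$ and $\delta_{AB}$ preserve level-wise cofibrations) is pushed forward, and one must show the pushout $\Ub H$ is again a level-wise cofibration, resp.\ a level-wise weak equivalence when $h$ is trivial.

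For assertion~(\ref{cof-rec-1}): by the explicit formula for $\Gamma$ (the free-lax-morphism functor, a coproduct over decompositions $\mathrm{Dec}(t)$ of tensor products of the coordinates), the pushout $\Ga = \Fa \cup_{\Gamma\D_1}\Gamma\D_2$ is built, level by level, out of $\Fa$ by the kind of semicubical colimits described before Proposition~\ref{prop-cube-cof}, where the ``moving'' maps are tensor products of copies of $h$ with fixed objects of $\M$ (which are cofibrant by the standing hypothesis). Since $\M$ is symmetric monoidal model and all objects are cofibrant, tensoring a cofibration with any object is a cofibration (Remark~\ref{tensor-cofib-obj}); then Lemma~\ref{semi-cub-cof} and its generalization (Remark~\ref{rmk-cub-cofib-general}), applied step by step exactly as in the three-step proof of Proposition~\ref{prop-cube-cof}, show that each $\Ub H$ at each $1$-morphism is a cofibration of $\M$, i.e.\ $\Ub H$ is a level-wise cofibration, i.e.\ $H$ is a cofibration in $\msxinj$ (recall injective cofibrations are exactly the level-wise ones, and $\msxinj$-cofibrations are detected by LLP against level-wise trivial fibrations). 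For assertion~(\ref{cof-rec-2}): if $h$ is moreover a trivial cofibration then each tensored map $h\otimes(\text{cofibrant})$ is a trivial cofibration, and the very same semicube induction — now invoking the ``trivial cofibration'' half of Lemma~\ref{semi-cub-cof} — yields that $\Ub H$ is a level-wise trivial cofibration, hence a level-wise weak equivalence; since weak equivalences in both $\msxinj$ and $\msxproj$ are the level-wise ones, $H$ is a weak equivalence in both. Equivalently one can simply cite Lemma~\ref{pushout-MX}, whose hypotheses ($(\sx)^{2\text{-op}}$ is an $\iro$-algebra, the pair is $\hco$, $\M$ special Quillen with cofibrant objects) are already verified there, once one observes that $\Gamma\D_1 \to \Gamma\D_2$ is $\Gamma$ applied to a level-wise (trivial) cofibration of $\kx$.

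The main obstacle I expect is the bookkeeping in identifying the pushout $\Ga$ level-wise and checking that the relevant squares are genuine pushout (or ``$\varepsilon$ a cofibration'') squares so that Lemma~\ref{semi-cub-cof} and Remark~\ref{rmk-cub-cofib-general} apply at each stage: the free lax morphism $\Gamma\D$ has many nonzero coordinates coming from all decompositions of each $t'$, and one must check the laxity-coherence squares that appear are of the required type. The cleanest route is to avoid re-deriving this and instead package everything through the abstract Lemma~\ref{lem-pushout-laxalg}/Lemma~\ref{pushout-MX} of the appendix, applied to $\alpha = \delta_{AB}\Fb^{u_t}(h)$ (for (\ref{cof-rec-2})) and to its cofibration analogue (for (\ref{cof-rec-1})), so that the only genuinely new input is the elementary fact that $\delta_{AB}$ and $\Fb^{u_t}$ send (trivial) cofibrations of $\M$ to level-wise (trivial) cofibrations of $\kx$, which is Corollary~\ref{adjoint-preserve-proj-cof} together with Corollary~\ref{lem-pres-level-wise-cof}. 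I would also remark that tractability of $\M$ is used only to guarantee the small-object / transfinite arguments underlying these appendix lemmas, not in the pushout analysis itself.
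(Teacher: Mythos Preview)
Your ``cleanest route'' for part~(\ref{cof-rec-2}) via Lemma~\ref{pushout-MX} is exactly what the paper does, and it is correct. But your treatment of part~(\ref{cof-rec-1}) has a genuine gap, and the whole argument is far more circuitous than necessary.

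The paper's proof is essentially two lines. Observe that $h_{/V}=(h,\Id_V)$ is an injective (trivial) cofibration in $\M^{[\1]}$ whenever $h$ is a (trivial) cofibration in $\M$. Since $\Pr_{t!}=\Gamma\circ\delta_{AB}\circ\Fb^{u_t}$ is a composite of three left Quillen functors for the injective model structures (this is precisely Corollaries~\ref{adjoint-preserve-proj-cof}, \ref{lem-pres-level-wise-cof} and the construction of $\msxinj$), the map $\Pr_{t!}(h_{/V})$ is itself a (trivial) cofibration \emph{in the model category} $\msxinj$. Now one just invokes the model-category axiom: (trivial) cofibrations are closed under pushout. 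This gives~(\ref{cof-rec-1}) and~(\ref{cof-rec-2}) simultaneously; for~(\ref{cof-rec-2}) one finishes by noting that weak equivalences in $\msxinj$ and $\msxproj$ coincide.

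The gap in your argument for~(\ref{cof-rec-1}) is the implication ``$\Ub H$ is a level-wise cofibration $\Rightarrow$ $H$ is a cofibration in $\msxinj$''. Your parenthetical ``injective cofibrations are exactly the level-wise ones'' holds for $\kxinj$, but \emph{not} for the transferred structure $\msxinj$: by Theorem~\ref{main-inj-msx} the cofibrations there are \emph{defined} by the LLP against trivial fibrations, and a lift of $\Ub H$ against $\Ub p$ constructed in $\kx$ need not respect the laxity maps, hence need not be a morphism in $\msx$. So even if the semicube analysis shows $\Ub H$ is level-wise a cofibration, that does not by itself yield~(\ref{cof-rec-1}). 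The observation you are missing is that $\msxinj$ is already in hand as a model category at this point of the paper, and $\Pr_{t!}$ is already known to be left Quillen into it; closure of cofibrations under pushout then finishes~(\ref{cof-rec-1}) with no explicit computation whatsoever.
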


\begin{proof}
The map $h_{/V}$ is an injective (trivial) cofibration in $\M^{[\1]}$ and since  $\Pr_{t!}$ is a left Quillen functor, we know that  $\Pr_{t!}(h_{/V})$  is a (trivial) cofibration in $\msxinj$. Applying lemma  \ref{pushout-MX} we deduce that $H$ is a weak equivalence in $\msxinj$ but weak equivalences in $\msxinj$ and $\msxproj$ are the same.
\end{proof}

\subsubsection{The local `Co-Segalification'\index{Segal!Co-Segalification!local} process}
 Let $t$ be a fixed object in $\S_X(A,B)$ and   $\Fa$ be an object of  $\msx$.\ \\

As $\M$ is a model category we can factorize the map $\Fa(u_t)$ as: $ \Fa(u_t) = j \circ h$ where $h: \Fa(A,B) \hookrightarrow U$ is a cofibration and $j: U \twoheadrightarrow \Fa t$ is a trivial fibration. \ \\
The pair $(\Id_{\Fa(A,B)}, j)$ defines a morphism $S(j,h) \in \Hom_{\M^{[\1]}}[h, \Fa(u_t)]$ in a tautological way:
\[
\xy
(0,20)*+{\Fa(A,B)}="A";
(20,20)*+{\Fa(A,B)}="B";
(0,0)*+{U}="C";
(20,0)*+{\Fa t}="D";
{\ar@{=}^{ \Id }"A";"B"};
{\ar@{->}_{h}"A";"C"};
{\ar@{->}_{j}"C";"D"};
{\ar@{->}^{\Fa(u_t)}"B";"D"};
\endxy
\]
When necessary we will write $h=h(\Fa, t)$ and $j= j(\Fa,t)$ to mention that we working with the factorization of $\Fa (u_t)$. \ \\

By adjunction we have a unique map $T(h,j,\Fa,t) \in \Hom_{\msx}[\Pr_{t!}(h),\Fa]$ `lifiting' $S(j,h)$ .\ \\
Define the \emph{Gluing Construction} $\Sim_t^1(\Fa)$ to be the object of $\msx$ given by the pushout  diagram
\[
\xy
(-20,20)*+{\Pr_{t!}(h)}="A";
(20,20)*+{\Fa}="B";
(-20,0)*+{\Pr_{t!}\Id_U}="C";
(20,0)*+{\Sim_t^1(\Fa)}="D";
{\ar@{->}^{T(h,j,\Fa,t) }"A";"B"};
{\ar@{_{(}->}_{\Pr_{t!}(h_{/U})}"A"+(0,-3);"C"};
{\ar@{-->}^{H_1}"B";"D"};
{\ar@{-->}^{\alpha}"C";"D"};
\endxy
\]   

\begin{prop}
With the above notations the following holds.
\begin{enumerate}
\item For every such factorization $(h,j)\in (\cof, \we \cap \fib)$ of $\Fa(u_t)$  the map $H_1: \Fa \to \Sim_t^1(\Fa)$ is an injective cofibration in $\msx$. 
\item If $\Fa(u_t)$ is a weak equivalence in $\M$, then $H_1$ is an injective trivial cofibration. In particular $H_1$ is a weak equivalence in both $\msxinj$ and $\msxproj$ and the map $[\Sim_t^1\Fa]u_t$ is a weak equivalence in $\M$. 
\item If the factorization axioms in $\M$ is functorial then the operation $\Fa \mapsto \Sim_t^1\Fa$ is a functor. 
\end{enumerate}
\end{prop}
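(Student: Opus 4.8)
The three assertions are essentially formal consequences of the adjunction $\Pr_{t!}\dashv\Pr_t$ together with the pushout lemmas already established, so the plan is to feed the right inputs into those lemmas. First I would record the shape of the relevant maps: by hypothesis $\Fa(u_t)=j\circ h$ with $h\in\cof$ and $j\in\we\cap\fib$, and the tautological square $S(j,h)$ exhibits $(\Id_{\Fa(A,B)},j)$ as a morphism $h\to\Fa(u_t)$ in $\M^{[\1]}$; transposing under the adjunction from the displayed proposition gives $T(h,j,\Fa,t)\colon\Pr_{t!}(h)\to\Fa$ in $\msx$. The defining pushout square for $\Sim^1_t(\Fa)$ is then exactly a pushout of $\Pr_{t!}(h_{/U})$ along $T(h,j,\Fa,t)$, which is the situation governed by Lemma \ref{cof-rectification} (with $V$ there replaced by $U$ here).

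For assertion (1), I would observe that $h_{/U}=(h,\Id_U)$ is an injective cofibration in $\M^{[\1]}$ whenever $h$ is a cofibration in $\M$ (the left square is $h$, the right square is $\Id_U$, and injective cofibrations in the arrow category are checked levelwise). Since $\Pr_{t!}$ is a left Quillen functor (assertion (2) of the displayed proposition), $\Pr_{t!}(h_{/U})$ is a cofibration in $\msxinj$, and cofibrations are stable under pushout; hence $H_1$ is an injective cofibration. This is precisely part \ref{cof-rec-1} of Lemma \ref{cof-rectification}.

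For assertion (2), suppose in addition that $\Fa(u_t)$ is a weak equivalence in $\M$. Then in the factorization $\Fa(u_t)=j\circ h$ with $j$ a trivial fibration, the $3$-out-of-$2$ property forces $h$ to be a trivial cofibration in $\M$. Consequently $h_{/U}$ is an injective trivial cofibration in $\M^{[\1]}$, so $\Pr_{t!}(h_{/U})$ is a trivial cofibration in $\msxinj$; by part \ref{cof-rec-2} of Lemma \ref{cof-rectification}, $H_1$ is a weak equivalence in both $\msxinj$ and $\msxproj$, and combined with (1) it is an injective trivial cofibration. For the last clause I would chase the component of $H_1$ at $u_t$: the pushout square, evaluated at the object $t$ and at $(A,B)$, together with the fact that $[\Pr_{t!}\Id_U](u_t)$ is (a coproduct built on) $\Id_U$, shows that $[\Sim^1_t\Fa](u_t)$ receives a map from $j$ and that up to the weak equivalence $H_1$ it has the homotopy type of $j$; since $j$ is a weak equivalence and weak equivalences in $\msx$ are detected levelwise by $\Ub$, $[\Sim^1_t\Fa](u_t)$ is a weak equivalence in $\M$. (Alternatively, one can argue directly that $[\Sim^1_t\Fa](u_t)$ admits a section up to homotopy coming from the retraction $\Pr_{t!}\Id_U$ built into the construction.)

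Assertion (3) is the routine functoriality bookkeeping: if the factorization $\Fa(u_t)=j(\Fa,t)\circ h(\Fa,t)$ is functorial in $\Fa$, then $\Fa\mapsto h(\Fa,t)$, hence $\Fa\mapsto\Pr_{t!}(h(\Fa,t))$ and $\Fa\mapsto\Pr_{t!}(h(\Fa,t)_{/U})$, are functors, and $T(h,j,\Fa,t)$ is natural in $\Fa$ by naturality of the adjunction isomorphism; since pushouts are functorial, $\Fa\mapsto\Sim^1_t(\Fa)$ is a functor and $H_1$ is a natural transformation $\Id_{\msx}\to\Sim^1_t$. The main obstacle I anticipate is the last clause of (2): making precise exactly which map $[\Sim^1_t\Fa](u_t)$ is, i.e.\ unwinding $\Pr_{t!}$ and the coproduct description of $\Fa^{w}_{m}$ enough to see that the component at $t$ is unchanged (equal to $\Fa t$) while the component at $(A,B)$ is the pushout gluing in $U$, so that $[\Sim^1_t\Fa](u_t)$ is the composite of $j$ with the relevant structure map — everything else is a direct appeal to Lemma \ref{cof-rectification} and Lemma \ref{pushout-MX}.
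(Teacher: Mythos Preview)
Your treatment of (1), the first half of (2), and (3) matches the paper's proof exactly: feed $h$ (respectively the trivial cofibration $h$ obtained by $3$-out-of-$2$) into Lemma \ref{cof-rectification}, and note functoriality.

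For the last clause of (2) you are overcomplicating matters. The paper does not unwind $\Pr_{t!}$ or compute the pushout explicitly; it simply uses the naturality square that any morphism in $\msx$ produces. Since $H_1:\Fa\to\Sim_t^1\Fa$ is a map in $\msx$, the pair $(H_{1,(A,B)},H_{1,t})$ gives a commutative square
\[
[\Sim_t^1\Fa](u_t)\circ H_{1,(A,B)} \;=\; H_{1,t}\circ \Fa(u_t).
\]
By the first half of (2), $H_1$ is a level-wise weak equivalence, so $H_{1,(A,B)}$ and $H_{1,t}$ are weak equivalences in $\M$; combined with the hypothesis that $\Fa(u_t)$ is a weak equivalence, $3$-out-of-$2$ yields that $[\Sim_t^1\Fa](u_t)$ is a weak equivalence. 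No computation of the components of the pushout is required.

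In particular, your anticipated claim that ``the component at $t$ is unchanged (equal to $\Fa t$)'' should be dropped: pushouts in $\msx$ are not computed level-wise in $\M$ (they pass through the monad $\Ub\Gamma$), so there is no reason to expect $[\Sim_t^1\Fa](t)=\Fa(t)$. The naturality-square argument avoids this issue entirely.
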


\begin{proof}[Sketch of proof]
As $h$ is a cofibration in $\M$, the assertion $(1)$ follows immediately from the lemma \ref{cof-rectification} \ref{cof-rec-1}.\ \\

If $\Fa(u_t)= j \circ h$ is a weak equivalence and as $j$ is a weak equivalence by hypothesis,  then by the $3$-out-of-$2$ property we deduce that $h$ is also a weak equivalence; therefore $h$ is a trivial cofibration and the half of assertion $(2)$ follows also from the lemma \ref{cof-rectification} \ref{cof-rec-2}. \ \\

By definition of map in $\msx$, we know that the pair $(H_{1,AB}, H_{1,t})$ defines a map in $\M^{[\1]}$ from $\Fa (u_t)$ to $[\Sim_t^1\Fa](u_t)$. In particular we have an equality:
$$[\Sim_t^1\Fa]u_t \circ H_{1,AB}= H_{1,t} \circ \Fa(u_t).$$ 
 
Since $H_1$ is a weak equivalence, then both $H_{1,AB}$, $H_{1,t}$  are weak equivalences in $\M$; it follows that $H_{1,t} \circ \Fa(u_t)$ is a weak equivalence if $\Fa(u_t)$ is so. Now by the $3$-out-of-$2$ property we deduce that $[\Sim_t^1\Fa](u_t)$ is also a weak equivalence. This complete the proof of $(2)$. 
 \ \\

The assertion $(3)$ is clear and is left to the reader.
\end{proof}

\begin{rmk}\label{intermed-trivial-fib}
By adjoint transpose we have the following commutative square in $\M^{[\1]}$:
\[
\xy
(-20,20)*+{h}="A";
(20,20)*+{\Fa(u_t)}="B";
(-20,0)*+{\Id_U}="C";
(20,0)*+{\Sim_t^1(\Fa)(u_t)}="D";
{\ar@{->}^{S(j,h)}"A";"B"};
{\ar@{_{(}->}_{h_{/U}}"A"+(0,-3);"C"};
{\ar@{-->}^{\Pr_{t}(H_1)}"B";"D"};
{\ar |{ \  \ol{\alpha} \  }"C";"D"};
\endxy
\]
To simplify the notations in diagrams we will write $\Fa^1$ for $\Sim_t^1(\Fa)$. The above diagram is displayed as a commutative cube  in $\M$ 
 \[
\xy
(-10,20)*+{\Fa(A,B)}="A";
(20,20)+(10,0)*+{\Fa(A,B)}="B";
(-10,0)*+{U}="C";
(20,0)+(10,0)*+{\Fa t}="D";
{\ar@{->}^{ \Id }"A";"B"};
{\ar@{.>}^{}"A"+(0,-3);"C"};
{\ar@{->}^{\Fa(u_t)}"B";"D"};
{\ar@{.>}^{\ \ \ \ \ j}"C";"D"};
(-10,20)+(-25,-15)*+{U}="E";
(20,20)+(-25,-15)+(10,0)*+{\Fa^1(A,B)}="F";
(-10,0)+(-25,-15)*+{U}="G";
(20,0)+(-25,-15)+(10,0)*+{\Fa^1 t}="H";
{\ar@{->}^{\ol{\alpha}_{AB}}"E";"F"};
{\ar@{->}_{\Id}"E"+(0,-3);"G"};
{\ar@{->}^{\Fa^1 u_t}"F";"H"};
{\ar@{->}^{\ol{\alpha}_t}"G";"H"};
{\ar@{->}_{h}"A";"E"};
{\ar@{->}_{H_{1,AB}}"B";"F"};
{\ar@{->}_{ \Id }"C";"G"};
{\ar@{->}^{H_{1,t} }"D";"H"};
\endxy
\]  
 
 From the upper and bottom faces of that cube we deduce that $H_{1,AB}=  \ol{\alpha}_{AB} \circ h$ and $\ol{\alpha}_{t}=  H_{1,t} \circ j$; from the front face we have that $ \ol{\alpha}_t= \Fa^1 u_t \circ \ol{\alpha}_{AB} $. If we put these together we see that  in the diagram below everything is commutative (triangles and squares):
\[
\xy
(-25,20)*+{\Fa(A,B)}="E";
(20,20)*+{\Fa^1(A,B)}="F";
(-25,-5)*+{\Fa t}="G";
(20,-5)*+{\Fa^1 t}="H";
(-25,20)+(10,-9)*+{U}="D";
{\ar@{->}^{H_{1,AB}}"E";"F"};
{\ar@{->}_{\Fa u_t}"E";"G"};
{\ar@{->}^{\Fa^1 u_t}"F";"H"};
{\ar@{->}^{H_{1,t}}"G";"H"};
{\ar@{^{(}->}_{h}"E"+(3,-4);"D"};
{\ar@{->>}^{j}_{\wr}"D";"G"};
{\ar@{->}^{\ol{\alpha}_{AB}}"D";"F"};
\endxy
\]

\end{rmk}

\begin{warn}
For the rest of the discussion we assume that the factorization axioms in $\M$ is functorial. 
\end{warn}

For $k>1$ we define inductively objects $\Sim_t^k(\Fa)$ of $\msx$ by setting: $ \Sim_t^k(\Fa):= \Sim_t^1[\Sim_t^{k-1}(\Fa)]$ with $\Sim_t^0(\Fa)= \Fa$.  One uses a (functorial) factorization $(h_k, j_k)\in (\cof, \we \cap \fib)$ of the map  $\Sim_t^{k-1}(\Fa)u_t$ and apply the previous construction.  \ \\

We have a canonical map $H_k: \Sim_t^{k-1}(\Fa) \to \Sim_t^{k}(\Fa)$ which is a cofibration in $\msxinj$. \ \\
We have a $\kappa$-sequence in $\msxinj$:
$$ \Fa =\Sim_t^0(\Fa)  \xhookrightarrow{H_1}  \Sim_t^1(\Fa) \hookrightarrow \cdots \hookrightarrow \Sim_t^{k-1}(\Fa) \xhookrightarrow{H_{k}} \Sim_t^k(\Fa)  \hookrightarrow \cdots$$
\ \\
Define $\Sim_t^{\infty}(\Fa)= \colim_k \Sim_t^k(\Fa)$ and denote by  $\eta_t: \Fa \to \Sim_t^{\infty}(\Fa)$ the canonical map.

\begin{prop}\label{coseg-t-1}
For every $\Fa \in \msx$ then:
\begin{enumerate}
\item The map $\Sim_t^{\infty}(\Fa)(u_t) $ has the RLP with respect to all cofibrations in $\M$ i.e  it's a trivial fibration in $\M$
\item The map $\eta_t$ is a cofibration in $\msxinj$.  
\item If $\Fa(u_t)$ is a weak equivalencein $\M$, then $\eta_t$ is trivial cofibration in $\msxinj$, in particular a weak equivalence in $\msx$. \label{coseg-1-3} 
\end{enumerate}

\end{prop}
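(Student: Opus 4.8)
The plan is to extract everything from the transfinite-composition structure of $\Sim_t^{\infty}(\Fa)$, exactly as in the classical small object argument, but carried out inside $\msx$. First I would record the evaluation picture: because colimits in $\msx$ are computed level-wise (by the description of $\Gamma$ and the fact that $\Ub$ preserves filtered colimits, Proposition \ref{monad-finitary}), we have $\Sim_t^{\infty}(\Fa)(A,B) = \colim_k \Sim_t^k(\Fa)(A,B)$ and likewise $\Sim_t^{\infty}(\Fa)(t) = \colim_k \Sim_t^k(\Fa)(t)$, and the map $\Sim_t^{\infty}(\Fa)(u_t)$ is the induced map between these colimits. So the three claims will all be about the behaviour of this specific filtered colimit of squares in $\M$.

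For assertion (1): let $q: U' \to V'$ be a cofibration in $\M$ and consider a lifting problem against $g := \Sim_t^{\infty}(\Fa)(u_t)$. Since $\kappa$ is regular uncountable and (by the tractability/cofibrant-generation hypothesis on $\M$) the domain $U'$ of each generating cofibration is small — or, for a general cofibration, after reducing to generating ones — the top map $U' \to \Sim_t^{\infty}(\Fa)(A,B)$ factors through some stage $\Sim_t^{k-1}(\Fa)(A,B)$. Now I use the key lifting property built into the construction: by Remark \ref{intermed-trivial-fib}, at each stage the map $j_k : U_k \twoheadrightarrow \Sim_t^{k-1}(\Fa)(t)$ is a trivial fibration and $h_k$ a cofibration, and passing from stage $k-1$ to stage $k$ glues on a copy of $\Pr_{t!}(h_k)$ along $\Pr_{t!}((h_k)_{/U_k})$. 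Concretely, the map $\ol{\alpha}_{AB}: U_k \to \Sim_t^k(\Fa)(A,B)$ together with the bottom triangle $j_k = \Sim_t^k(\Fa)(u_t) \circ \ol{\alpha}_{AB} \circ (\text{can})$ shows that \emph{any} square from $q$ into $\Sim_t^{k-1}(\Fa)(u_t)$ admits a lift after composing with $H_k$ into stage $k$: one factors the given top map through $\Sim_t^{k-1}(\Fa)(A,B) \xrightarrow{h_k} U_k \xrightarrow{\ol{\alpha}_{AB}} \Sim_t^k(\Fa)(A,B)$, using the factorization $\Sim_t^{k-1}(\Fa)(u_t) = j_k \circ h_k$ and the fact that $j_k$, being a trivial fibration, lifts against $q$. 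Composing with the colimit map $\Sim_t^k(\Fa)(A,B) \to \Sim_t^{\infty}(\Fa)(A,B)$ yields the desired solution. Hence $g$ has the RLP against all cofibrations, i.e. is a trivial fibration.

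For assertion (2): $\eta_t$ is the transfinite composite $\Fa = \Sim_t^0(\Fa) \xhookrightarrow{H_1} \Sim_t^1(\Fa) \xhookrightarrow{H_2} \cdots$, and each $H_k$ is a pushout of $\Pr_{t!}((h_k)_{/U_k})$, which is an injective cofibration in $\msx$ because $(h_k)_{/U_k}$ is an injective cofibration in $\M^{[\1]}$ and $\Pr_{t!}$ is left Quillen (the cited proposition). A transfinite composite of cofibrations in the cofibrantly generated model category $\msxinj$ is again a cofibration, so $\eta_t$ is an injective cofibration. For assertion (3): if $\Fa(u_t)$ is a weak equivalence, then by the $3$-out-of-$2$ property applied to $\Fa(u_t) = j_1 \circ h_1$ (with $j_1$ a trivial fibration) the map $h_1$ is a trivial cofibration, so by Lemma \ref{cof-rectification}(2) the map $H_1$ is a weak equivalence in $\msx$, and then $\Sim_t^1(\Fa)(u_t)$ is again a weak equivalence (the previous proposition, assertion (2)). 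Inductively every $H_k$ is a weak equivalence; since weak equivalences in $\msxinj$ are the level-wise ones in $\kxinj$ and these are closed under the relevant transfinite composition (a filtered colimit of trivial cofibrations along which each stage is a trivial cofibration is a trivial cofibration in a cofibrantly generated model category), $\eta_t = \colim H_k$ is a trivial cofibration in $\msxinj$, hence a weak equivalence.

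The main obstacle is assertion (1): making precise that a single lifting problem against the colimit $g$ can be solved, which requires (a) the smallness argument to land the top map at a finite stage, and (b) the observation that the stage-by-stage gluing was designed precisely so that any square over $\Sim_t^{k-1}(\Fa)(u_t)$ becomes fillable one stage later — this is where Remark \ref{intermed-trivial-fib}, and in particular the commutative diagram exhibiting $\ol{\alpha}_{AB} \circ h_k = H_{k,AB}$ and $j_k = \Sim_t^k(\Fa)(u_t) \circ \ol{\alpha}_{AB}$, does the real work. Everything else is routine bookkeeping with transfinite composites in a cofibrantly generated model category.
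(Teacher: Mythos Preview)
Your proofs of (2) and (3) are essentially identical to the paper's: each $H_k$ is a pushout of an injective (trivial) cofibration, hence is one, and $\eta_t$ is their transfinite composite.

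For (1) your strategy is correct, but there is a gap in how you factor the lifting problem through a finite stage. You only factor the \emph{top} map $U' \to \Sim_t^{\infty}(\Fa)(A,B)$ through some $\Sim_t^{k-1}(\Fa)(A,B)$ via smallness of $U'$. But to lift against the trivial fibration $j_k$, whose target is $\Sim_t^{k-1}(\Fa)(t)$, you need a commutative square with bottom map $V' \to \Sim_t^{k-1}(\Fa)(t)$; in other words the \emph{entire} square $q \to \Sim_t^{\infty}(\Fa)(u_t)$ must factor through stage $k-1$. Factoring only the top map does not give you that. (Your formula ``$j_k = \Sim_t^k(\Fa)(u_t)\circ\ol{\alpha}_{AB}\circ(\text{can})$'' is also not quite the relation recorded in Remark~\ref{intermed-trivial-fib}; the correct identity is $\Sim_t^k(\Fa)(u_t)\circ\ol{\alpha}_{AB} = H_{k,t}\circ j_k$.)

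The paper avoids this issue by passing through the adjunction $\Pr_{t!}\dashv \Ev_{u_t}$: a lifting square against $\Sim_t^{\infty}(\Fa)(u_t)$ is the same as a morphism $\theta\colon g\to \Sim_t^{\infty}(\Fa)(u_t)$ in $\M^{[\1]}$, hence by adjunction a single map $\tld{\theta}\colon \Pr_{t!}(g)\to \Sim_t^{\infty}(\Fa)$ in $\msx$. Since $\msx$ is locally presentable one chooses $\kappa$ so that $\Pr_{t!}(g)$ is $\kappa$-small; then $\tld{\theta}$ factors through some $\Sim_t^{k_0}(\Fa)$, and transposing back gives the whole square factored through stage $k_0$ at once. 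From there your argument (lift against $j_{k_0}$, then push forward via $\ol{\alpha}_{AB}$ and the colimit map) is exactly what the paper does. Your approach can be repaired by arguing that $g$ is $\kappa$-small as an object of $\M^{[\1]}$ (which needs both $U'$ and $V'$ small, true in the combinatorial setting), but the adjunction route is cleaner and is the point of having set up $\Pr_{t!}$ in the first place.
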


\begin{proof}

For notational convenience we will write in this proof $\Fa^k= \Sim_t^k(\Fa)$ and  $\fainf=\Sim_t^{\infty}(\Fa)$.\ \\

The assertion $(2)$ and $(3)$ are  straightforward: if $\Fa (u_t)$ is a weak equivalence then applying inductively the lemma \ref{cof-rectification}, we get that all $H_k$ are  either cofibrations in case $(2)$ or trivial cofibration in case $(3)$. In both cases $\eta_t$ is a transfinite composition of such morphisms so it's also either a cofibration or a trivial cofibration in $\msxinj$. \ \\

To prove the assertion $(1)$ we use the small object argument in the locally presentable category $\msx$. Choosing $\kappa$ big enough we can assume that $\Pr_{t!}(g)$ is small for every cofibration $g$ of $\M$.\ \\ 

Let $g$ be a cofibration in $\M$ and consider a lifting problem defined by $g$ and $\fainf(u_t)$:
 \[
\xy
(-10,20)*+{P}="A";
(20,20)*+{\fainf(A,B)}="B";
(-10,0)*+{Q}="C";
(20,0)*+{\fainf t}="D";
{\ar@{->}^{f}"A";"B"};
{\ar@{_{(}->}_{g}"A"+(0,-3);"C"};
{\ar@{->}^{\fainf u_t}"B";"D"};
{\ar@{->}^{l}"C";"D"};
\endxy
\] 
\ \\
Such a lifting problem is equivalent to a morphism $\theta \in \Hom_{\M^{[\1]}}[g,\fainf(u_t)]$. By adjunction $\theta$ corresponds to a unique morphism $\tld{\theta} \in \Hom_{\msx}[\Pr_{t!}g,\fainf]$. 
Since $\Pr_{t!}g$ is $\kappa$-small, $\tld{\theta}$ factorizes through one of the $\Fa^{k}$, say $\Fa^{k_0}$:  there is  a map 
 $\tld{\theta}_0: \Pr_{t!}(h) \to \Fa^{k_0}$ such that $ \tld{\theta}=\iota_{k_0} \circ \tld{\theta}_0 $, where $\iota_{k_0}: \Fa^{k_0} \to \fainf$ is the canonical map.\ \\
 
By construction  $\iota_{k_0} = \iota_{k_0 +1} \circ H_{k_0}$ and from the  adjunction we have an equality:
 $$ \theta=  \Pr_t (\iota_{k_0}) \circ  \theta_0=  \Pr_t (\iota_{k_0 +1}) \circ  \Pr_t (H_{k_0}) \circ  \theta_0$$
 where $\theta_0=(f_0, l_0)$ is the adjoint transpose of $\tld{\theta}_0$. 
\ \\ 
 It follows that  our original lifting problem can be factorized as:
 
\begin{tabular}{cc}
\begin{tabular}{c}
\xy
(-10,20)*+{P}="A";
(20,20)*+{\fainf(A,B)}="B";
(-10,0)*+{Q}="C";
(20,0)*+{\fainf t}="D";
{\ar@{->}^{f}"A";"B"};
{\ar@{_{(}->}_{g}"A"+(0,-3);"C"};
{\ar@{->}_{\fainf u_t}"B";"D"};
{\ar@{->}^{l}"C";"D"};
\endxy
\end{tabular}
=
\begin{tabular}{c}
\xy
(-10,20)*+{P}="A";
(10,20)*+{\Fa^{k_0}(A,B)}="B";
(-10,0)*+{Q}="C";
(10,0)*+{\Fa^{k_0} t}="D";
(20,20)+(10,-6)*+{U_{k_0}}="U";
{\ar@{->}^{f_0}"A";"B"};
{\ar@{_{(}->}_{g}"A"+(0,-3);"C"};
{\ar@{->}_{\Fa^{k_0} u_t}"B";"D"};
{\ar@{->}_{l_0}"C";"D"};
(20,20)+(30,0)*+{\Fa^{k_0 +1}(A,B)}="F";
(20,0)+(30,0)*+{\Fa^{k_0 +1} t}="H";
{\ar@{->}^{\Fa^{k_0 +1} u_t}"F";"H"};
{\ar@{->}^{H_{k_0,AB} }"B";"F"};
{\ar@{->}^{H_{k_0,t}}"D";"H"};
(20,20)+(30,0)+(30,0)*+{\fainf(A,B)}="J";
(20,0)+(30,0)+(30,0)*+{\fainf t}="L";
{\ar@{->}^{\fainf u_t}"J";"L"};
{\ar@{->}^{\iota}"F";"J"};
{\ar@{->}^{\iota}"H";"L"};
{\ar@{->}_{h_{k_0}}"B";"U"};
{\ar@{->>}^{j_{k_0}}"U";"D"};
{\ar@{->}_{\ol{\alpha}_{k_0}}"U";"F"};
{\ar@{-->}^{}"C";"U"};
\endxy
\end{tabular}

\end{tabular}

The pair $(h_{k_0},j_{k_0})$ is `the' factorization cofibration-trivial fibration used to construct $H_{k_0}$ and  $\ol{\alpha}_{k_0}$ is the obvious map (see Remark \ref{intermed-trivial-fib}). As $j_{k_0}$ is a trivial fibration, the induced lifting problem by $g$ and $j_{k_0}$ has a solution:  there is a map $\beta_0: Q \to U$ satisfying the obvious equalities. We leave the reader to check that the composite 
$$\iota \circ \ol{\alpha}_{k_0} \circ \beta_0: Q \to \fainf(A,B)$$
is a solution to the original lifting problem. \ \\

It follows that $\fainf(u_t)$ has the RLP with respect to all cofibrations of $\M$, thus it's a trivial fibration as desired. 

\end{proof}
\paragraph{$\Sim_t^{\infty}$ is homotopically minimal}
Let $\homsx$ be the homotopy category associated to both $\msxinj$ and $\msxproj$. Given a map $\sigma : \Fa \to \Ga$ in $\msx$, we will denote by  $[\sigma]$ the classe of $\sigma$ in $\homsx$.\ \\

Denote by $\Rc \subset \msx$ the subcategory consisting of co-Segal categories; these are object $\Fa$ such that for  every $t$, $\Fa (u_t)$ is a weak equivalence. For a fixed $t$ denote by $\Rc_t \subset \msx$ the subcategory of object $\Fa$ such that $\Fa(u_t)$ is a weak equivalence.  We have $\Rc \subset \Rc_t \subset \msx$.\ \\

Thanks to Proposition \ref{coseg-t-1}, for any $\Fa \in \msx$ we have $\Sim_t^{\infty} \Fa \in \Rc_t$. In what follows we show that among  all objects of $\Rc_t$,  $\Sim_t^{\infty} \Fa$ is the  `homotopic-nearest object' to $\Fa$. \ \\

\begin{df}
Let $\Fa$ be an object of $\msx$ and $\Ga$ be an object of $\Rc_t$.  A map $\sigma_0 :\Fa \to \Ga$ is \emph{homotopically minimal} with respect  to $\Rc_t$ if
for any $\Qa \in \Rc_t$ and any morphism $\sigma: \Fa \to \Qa$ there exist a morphism  $\gamma: [\Ga] \to [\Qa]$ in $\homsx$ such that $[\sigma]= \gamma \circ[\sigma_0]$.\ \\

Diagrammatically this is displayed in $\homsx$ as 
\[
\xy
(0,20)*+{[\Fa]}="A";
(30,20)*+{[\Qa]}="B";
(0,0)*+{[\Ga]}="C";
{\ar@{->}^{[\sigma]}"A";"B"};
{\ar@{->}_{[\sigma_0]}"A";"C"};
{\ar@{.>}_{\gamma}^{}"C";"B"};
\endxy
\]  
\end{df} 
 
\begin{prop}
 For every $\Fa \in \msx$ the map $\eta_t: \Fa \to \Sim_t^{\infty} \Fa$ is homotopically minimal with respect to $\Rc_t$. 
 \end{prop}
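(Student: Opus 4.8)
The plan is to exploit the universal property of the filtered colimit $\Sim_t^{\infty}\Fa = \colim_k \Sim_t^k(\Fa)$ together with the fact established in Proposition \ref{coseg-t-1} that $\eta_t$ is a cofibration in $\msxinj$ and that $(\Sim_t^{\infty}\Fa)(u_t)$ is a trivial fibration in $\M$. So first I would fix an arbitrary $\Qa \in \Rc_t$ and a morphism $\sigma : \Fa \to \Qa$, and aim to produce $\gamma : [\Sim_t^{\infty}\Fa] \to [\Qa]$ in $\homsx$ with $[\sigma] = \gamma \circ [\eta_t]$. The natural way to do this is to lift $\sigma$ step by step along the tower
\[
\Fa = \Sim_t^0(\Fa) \xhookrightarrow{H_1} \Sim_t^1(\Fa) \hookrightarrow \cdots \hookrightarrow \Sim_t^k(\Fa) \hookrightarrow \cdots
\]
up to homotopy, i.e. construct maps $\gamma_k : \Sim_t^k(\Fa) \to \Qa$ with $\gamma_k \circ H_k \simeq \gamma_{k-1}$ and $\gamma_0 = \sigma$, and then assemble them. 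Since $\homsx$ is a quotient of $\msx$ and the colimit is over a $\kappa$-sequence of cofibrations, the coherent family $\{[\gamma_k]\}$ should determine a map $[\gamma] : [\Sim_t^{\infty}\Fa] \to [\Qa]$; one then needs the restriction back to $[\Fa]$ to be $[\sigma]$, which is automatic because $[\gamma]\circ[\eta_t] = [\gamma_0] = [\sigma]$.

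The key step is the inductive lifting $\gamma_{k-1} \rightsquigarrow \gamma_k$. Recall $\Sim_t^k(\Fa)$ is the pushout of $\Pr_{t!}(h_k) \to \Fa^{k-1}$ along $\Pr_{t!}(h_{k/U_k})$. To extend $\gamma_{k-1} : \Fa^{k-1} \to \Qa$ over this pushout it suffices, by the universal property of the pushout and the adjunction $\Pr_{t!} \dashv \Pr_t$, to solve a lifting problem in $\M^{[\1]}$: the square expressing $\Pr_t(\gamma_{k-1})$ composed with $S(j_k, h_k)$ against the map $\Qa(u_t)$. Concretely, since $\Qa \in \Rc_t$ the map $\Qa(u_t)$ is a weak equivalence in $\M$; combined with $h_k$ being a (trivial) cofibration this gives a lift of the relevant square up to homotopy, hence (after adjoint transpose) a map $\gamma_k : \Sim_t^k(\Fa) \to \Qa$ with $\gamma_k \circ H_k = \gamma_{k-1}$ on the nose, or at worst $\simeq \gamma_{k-1}$. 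Here one should be a bit careful: it is cleanest to factor $\gamma_{k-1}$ through a fibrant replacement of $\Qa$ first so that the genuine lifting property of $\Qa(u_t)$ (trivial fibration, after fibrant replacement) against the cofibration $h_k$ applies strictly, then pass to homotopy classes. Since a cosegal category is fibrant-ish only up to the cosegal condition (and fibrancy in $\msxinj$ is controlled by $\Ub$), I would instead work entirely in $\homsx$ from the start: choose a fibrant replacement $\Qa \xrightarrow{\sim} \Qa^{\mathrm{f}}$ in $\msxinj$ and build strict lifts against $\Qa^{\mathrm{f}}$, whose structure maps at $u_t$ are then honest trivial fibrations.

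The main obstacle is the passage from the coherent tower $\{\gamma_k\}$ to a single morphism out of the colimit in the homotopy category: a priori a compatible family of homotopy classes on a transfinite tower of cofibrations need not glue to a homotopy class on the colimit (this is a lim$^1$-type phenomenon). The way around it is to arrange the $\gamma_k$ to be compatible on the nose, not just up to homotopy, by always lifting against the fibrant object $\Qa^{\mathrm{f}}$ and using functoriality of the factorization in $\M$; then $\colim_k \gamma_k : \Sim_t^{\infty}\Fa \to \Qa^{\mathrm{f}}$ exists strictly in $\msx$, and composing with the inverse of $[\Qa \xrightarrow{\sim} \Qa^{\mathrm{f}}]$ in $\homsx$ produces the desired $\gamma$. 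The verification that $[\sigma] = \gamma \circ [\eta_t]$ is then immediate from $\gamma_0 = \sigma$ up to the fibrant replacement, which is a weak equivalence and hence invisible in $\homsx$. The remaining checks — that each lifting problem genuinely has a solution (using $h_k \in \cof$ or $\cof \cap \we$ and $\Qa^{\mathrm{f}}(u_t) \in \we \cap \fib$), and that smallness of $\Pr_{t!}(h_k)$ is not needed here since we only need existence of lifts, not factorizations — are routine given the machinery of Lemma \ref{cof-rectification} and Proposition \ref{coseg-t-1}.
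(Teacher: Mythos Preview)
Your approach is workable in spirit but much harder than necessary, and it contains a genuine gap. The assertion that passing to a fibrant replacement $\Qa^{\mathrm f}$ in $\msxinj$ makes $\Qa^{\mathrm f}(u_t)$ an honest trivial fibration in $\M$ is not justified: fibrancy in $\msxinj$ means $\Ub\Qa^{\mathrm f}$ is injectively fibrant in $\kx$, and injective fibrancy of a diagram says nothing about individual structure maps being fibrations. Since the $h_k$ arising in the construction of $\Sim_t^k(\Fa)$ are only cofibrations (not trivial cofibrations unless $\Fa^{k-1}(u_t)$ is already a weak equivalence, which you do not know), you genuinely need $\Qa^{\mathrm f}(u_t)$ to be a \emph{trivial fibration} to solve the lifting problems strictly. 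Without that, you are back to lifts up to homotopy and the $\lim^1$ obstruction you yourself flag.

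The paper bypasses all of this by exploiting the one feature you never use: $\Sim_t^{\infty}$ is a \emph{functor}. Given $\sigma:\Fa\to\Qa$, apply $\Sim_t^{\infty}$ to obtain $\Sim_t^{\infty}(\sigma):\Sim_t^{\infty}\Fa\to\Sim_t^{\infty}\Qa$, and note that the naturality square
\[
\xymatrix{
\Fa \ar[r]^-{\sigma} \ar[d]_{(\eta_t)_{\Fa}} & \Qa \ar[d]^{(\eta_t)_{\Qa}} \\
\Sim_t^{\infty}\Fa \ar[r]_-{\Sim_t^{\infty}(\sigma)} & \Sim_t^{\infty}\Qa
}
\]
commutes. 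Since $\Qa\in\Rc_t$, Proposition~\ref{coseg-t-1}\,(3) gives that $(\eta_t)_{\Qa}$ is a weak equivalence, hence invertible in $\homsx$; one sets $\gamma=[(\eta_t)_{\Qa}]^{-1}\circ[\Sim_t^{\infty}(\sigma)]$. Ironically, the replacement object you actually need to make your stepwise lifting work strictly is precisely $\Sim_t^{\infty}\Qa$ (whose map at $u_t$ \emph{is} a trivial fibration by Proposition~\ref{coseg-t-1}\,(1)); once you introduce it, the tower argument collapses to the functoriality argument.
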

 
 \begin{proof}
 For a map $\sigma: \Fa \to \Qa$ with $\Qa \in \Rc_t$, by functoriality we have an induced map 
 $$\Sim_t^{\infty}(\sigma): \Sim_t^{\infty} \Fa \to \Sim_t^{\infty} \Qa$$ and the following commutes:
 \[
\xy
(0,20)*+{[\Fa]}="A";
(30,20)*+{[\Qa]}="B";
(0,0)*+{[\Sim_t^{\infty} \Fa]}="C";
(30,0)*+{[\Sim_t^{\infty} \Qa]}="D";
{\ar@{->}^{\sigma}"A";"B"};
{\ar@{->}_{(\eta_t)_{\Fa}}"A";"C"};
{\ar@{.>}_{\Sim_t^{\infty} \sigma}"C";"D"};
{\ar@{->}_{(\eta_t)_{\Qa}}"B";"D"};
\endxy
\]  
 
Note that the map  $\Sim_t^{\infty}\sigma$ is induced by universal property of the pushout (inductively), so it's a universal morphism. Since $\Qa \in \Rc_t$ we have from the Proposition \ref{coseg-t-1} (3) that $(\eta_t)_{\Qa}$ is a weak equivalence in $\msx$, thus $[(\eta_t)_{\Qa}]$ is an isomorphism in $\homsx$. Take $\gamma= [(\eta_t)_{\Qa}]^{-1} \circ [\Sim_t^{\infty} \sigma]$.
 \end{proof}

\subsubsection{The global Co-Segalification\index{Segal!Co-Segalification!global} process}

In what follows we use the previous functors $ \Sim_t^{\infty}$ to construct the desired functor $\Sim$ such that for any $t$ and any $\Fa$, $\Sim(\Fa)u_t$ is a weak equivalence, that is   $\Sim(\Fa)$ is an object of $\Rc$. \ \\
\ \\
Denote by $\morsx$ the set of all $1$-morphisms $t$ of degree $>1$ in $\S_{\ol{X}}$. \ \\
Define the general gluing construction to be the object $\Sim^1 \Fa$  obtained from the generalized pushout diagram formed by all the morphisms $\eta_t: \Fa \to \Sim_t^{\infty} \Fa$ as $t$ runs through the set of all $1$-morphisms of degree $>1$:
\[
\xy
(-10,20)*+{\Fa}="A";
(25,20)*+{\Sim_t^{\infty} \Fa}="B";
(-10,0)*+{\Sim_{t'}^{\infty} \Fa}="C";
(20,0)*+{\Sim_{t''}^{\infty} \Fa}="D";
{\ar@{^{(}->}^{}"A"+(3,0);"B"};
{\ar@{_{(}->}_{}"A"+(0,-3);"C"};
{\ar@{^{(}->}^{}"A"+(2,-2);"D"};
\endxy
\]

$$ \Sim^1 \Fa:= \colim_{t, \degb(t)>1 } \{ \eta_t: \Fa \to \Sim_t^{\infty} \Fa \} .$$

Let $\eta^1: \Fa \to \Sim^1 \Fa$ and $\iota^1_t : \Sim_t^{\infty} \Fa \to \Sim^1 \Fa$ be the canonical maps. It follows that for all $t$ we have $\eta^1= \iota^1_t  \circ \eta_t$. \ \\
Like every $\Sim_t^{\infty}$, $\Sim^1$ is functorial in $\Fa$.

\begin{rmk}
We leave the reader to check that we have the following properties. 
\begin{enumerate}
\item For every $\Fa$ the map $\eta^1$ is a cofibration in $\msxinj$ (see lemma \ref{cone-cofib}).
\item If $\Fa \in \Rc$ then  $\Sim^1 \Fa \in \Rc$. Equivalently $\Sim^1$ induces an endofunctor  on $\Rc$. Morever  $\eta^1$ is a trivial cofibration in $\msxinj$; in particular a weak equivalence in $\msx$. 
\end{enumerate}
\end{rmk}
Define inductively  have a sequence of functors $\Sim^{k}$  by $\Sim^k (\Fa):=  \Sim^1[\Sim^{k-1}(\Fa)]$ :
$$ \Fa =\Sim^0(\Fa)  \xhookrightarrow{\eta_1}  \Sim^1(\Fa) \hookrightarrow \cdots \hookrightarrow \Sim^{k-1}(\Fa) \xhookrightarrow{\eta_{k}} \Sim_t^k(\Fa)  \hookrightarrow \cdots$$

Set  $\Sim(\Fa):= \colim_k \Sim^k(\Fa)$; denote by $\eta_{\Fa}: \Fa \to \Sim(\Fa)$ the canonical map. 
\begin{prop}\label{global-cosegalification}
For every $\Fa \in \msx$, the following holds.
\begin{enumerate}
\item For all $t$ $\Sim(\Fa)(u_t)$  is a trivial fibration in $\M$, in particular a weak equivalence, thus  $\Sim(\Fa) \in \Rc$ i.e satisfies the Co-Segal conditions.
\item The canonical map $\eta: \Fa \to \Sim(\Fa)$ is a cofibration in $\msxinj$.
\item If $\Fa \in \Rc$ then $\eta: \Fa \to \Sim(\Fa)$  is a trivial cofibration in $\msxinj$, in particular a weak equivalence in $\msx$.
\end{enumerate} 
\end{prop}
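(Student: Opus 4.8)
The plan is to prove the three assertions by a standard transfinite-iteration argument, deducing the properties of $\Sim$ from the properties of $\Sim^1$, and those in turn from the properties of the local functors $\Sim_t^{\infty}$ established in Proposition \ref{coseg-t-1} and in the remark preceding the statement. First I would record the two needed facts about $\Sim^1$ (the contents of the remark that precedes the proposition): for every $\Fa$, the canonical map $\eta^1 : \Fa \to \Sim^1\Fa$ is a cofibration in $\msxinj$, and if $\Fa \in \Rc$ then $\Sim^1\Fa \in \Rc$ with $\eta^1$ a trivial cofibration. The first follows because $\Sim^1\Fa$ is a generalized (wide) pushout of the maps $\eta_t$, each of which is a cofibration by Proposition \ref{coseg-t-1}(2), so $\eta^1$ is a cofibration by the cited Lemma \ref{cone-cofib} (cobase change and the fact that a colimit of a connected diagram of cofibrations under a fixed object is a cofibration). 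For the second: if $\Fa \in \Rc$, then $\Fa(u_t)$ is a weak equivalence for every $t$, so by Proposition \ref{coseg-t-1}(3) each $\eta_t$ is a trivial cofibration in $\msxinj$; again by Lemma \ref{cone-cofib} the wide pushout $\eta^1$ is a trivial cofibration, hence a weak equivalence, and then one checks $\Sim^1\Fa \in \Rc$ — this needs a small argument that passing to $\Sim^1$ does not destroy the weak equivalences $(u_t)$, which follows from $3$-out-of-$2$ applied to the commuting square coming from $\eta^1 \in \M^{[\1]}$-map (exactly as in the proof of the corresponding statement for $\Sim_t^1$).

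Next I would run the transfinite composition. By construction $\Sim(\Fa) = \colim_k \Sim^k(\Fa)$ along the $\kappa$-sequence of cofibrations $\eta_k : \Sim^{k-1}\Fa \hookrightarrow \Sim^k\Fa$. For assertion (2), $\eta : \Fa \to \Sim(\Fa)$ is a transfinite composition of cofibrations in $\msxinj$, hence a cofibration. For assertion (3), if $\Fa \in \Rc$ then by induction every $\Sim^k\Fa \in \Rc$ and every $\eta_k$ is a trivial cofibration in $\msxinj$ by the previous paragraph, so $\eta$ is a transfinite composition of trivial cofibrations, hence a trivial cofibration, hence a weak equivalence in $\msx$ (recalling $\msxinj$ and $\msxproj$ have the same weak equivalences, Corollary \ref{quillen-equiv-inj-proj}).

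For assertion (1) — the genuinely substantive point — I would argue that $\Sim(\Fa)(u_t)$ has the right lifting property against every cofibration $g$ of $\M$, hence is a trivial fibration by \cite[Lemma 1.1.10]{Hov-model}. Fix $t$ with $\degb(t) > 1$. A lifting problem of $g$ against $\Sim(\Fa)(u_t)$ is, via the adjunction $\Pr_{t!} \dashv \Pr_t$, the same as a map $\Pr_{t!}(g) \to \Sim(\Fa)$ in $\msx$ together with the associated square. Choosing $\kappa$ large enough that $\Pr_{t!}(g)$ is $\kappa$-small in the locally presentable category $\msx$ (Theorem \ref{MX-local-pres}), this map factors through some finite stage $\Sim^{k_0}(\Fa)$. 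Now the key is that the very next stage already solves the lifting problem \emph{for this $t$}: the functor $\Sim^1$ glues in, among others, the local functor $\Sim_t^{\infty}$, and $\Sim_t^{\infty}(\Sim^{k_0}\Fa)(u_t)$ is a trivial fibration by Proposition \ref{coseg-t-1}(1); so the lift exists at the level of $\Sim_t^{\infty}(\Sim^{k_0}\Fa)$, hence maps forward to $\Sim^{k_0+1}(\Fa)$ and then to $\Sim(\Fa)$, producing the desired lift. (This is the same mechanism as in the proof of Proposition \ref{coseg-t-1}(1), one level up.) One then concludes $\Sim(\Fa) \in \Rc$, i.e.\ it satisfies the Co-Segal conditions, by Observation \ref{Co-Segal-final-cond}.

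The main obstacle, and the step I would write out most carefully, is assertion (1): one must check that the factorization of a map out of the $\kappa$-small object $\Pr_{t!}(g)$ through a bounded stage is compatible with the structure maps $\iota_t^k$ of the wide pushouts, so that the lift constructed at stage $\Sim_t^{\infty}(\Sim^{k_0}\Fa)$ genuinely descends to a solution of the original problem — this is the bookkeeping analogue of the cube-chasing argument in the proof of Proposition \ref{coseg-t-1}. Everything else is a routine assembly of cofibrations and transfinite compositions, for which I would cite the standard stability properties of (trivial) cofibrations and Lemma \ref{cone-cofib}.
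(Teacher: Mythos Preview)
Your proposal is correct and follows essentially the same approach as the paper's own proof. The paper declares assertions (2) and (3) ``clear and left to the reader'' (they are transfinite compositions of (trivial) cofibrations, exactly as you say), and for (1) uses precisely the small-object-argument trick you describe: factor the lifting problem through some stage $\Sim^{k_0}(\Fa)$, solve it there using that $(\Sim_t^{\infty}\Sim^{k_0}\Fa)(u_t)$ is a trivial fibration by Proposition~\ref{coseg-t-1}(1), and push the solution forward along $\iota_t^{k_0}$ and the canonical map to the colimit.
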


\begin{proof}[Sketch of proof]
The assertion $(2)$ and $(3)$ are clear and are left to the reader. \ \\
To prove $(1)$ one proceeds exactly in same way as in the proof of Proposition \ref{coseg-t-1}. We will adopt the notations $\Fa^k=\Sim^{k} (\Fa)$ for simplicity.\ \\
\ \\
For any cofibration $g$ of $\M$, using suitably the small object argument and the adjunction $\Pr_{t!} \dashv \Ev_{u_t}$, any lifting problem defined by $g$ and $(\Sim \Fa) u_t$ can be factorized, for some $k_0$, as:

\begin{tabular}{c}
\begin{tabular}{c}
\xy
(-10,20)*+{P}="A";
(10,20)*+{(\Sim \Fa)(A,B)}="B";
(-10,0)+(0,-10)*+{Q}="C";
(10,0)+(0,-10)*+{(\Sim \Fa) t}="D";
{\ar@{->}^-{f}"A";"B"};
{\ar@{_{(}->}_-{g}"A"+(0,-3);"C"};
{\ar@{->}_-{(\Sim \Fa) u_t}"B";"D"};
{\ar@{->}^-{l}"C";"D"};
\endxy
\end{tabular}
=
\begin{tabular}{c}
\xy
(-10,20)+(-3,0)*+{P}="A";
(10,20)+(-3,0)*+{\Fa^{k_0}(A,B)}="B";
(-10,0)+(0,-10)+(-3,0)*+{Q}="C";
(10,0)+(0,-10)+(-3,0)*+{\Fa^{k_0} t}="D";
(30,20)+(0,-7)*+{(\Sim_t^{\infty} \Fa^{k_0})(A,B)}="U";
(30,0)+(0,7)+(0,-10)*+{(\Sim_t^{\infty} \Fa^{k_0})t}="V";
{\ar@{->}^-{f_0}"A";"B"};
{\ar@{_{(}->}_{g}"A"+(0,-3);"C"};
{\ar@{->}_-{\Fa^{k_0} u_t}"B";"D"};
{\ar@{->}_-{l_0}"C";"D"};
{\ar@{->}_{}"B";"U"};
{\ar@{->}_{}"D";"V"};
(20,20)+(30,0)+(5,0)*+{\Fa^{k_0 +1}(A,B)}="F";
(20,0)+(30,0)+(0,-10)+(5,0)*+{\Fa^{k_0 +1} t}="H";
{\ar@{->}^{\Fa^{k_0 +1} u_t}"F";"H"};
{\ar@{->}^{}"B";"F"};
{\ar@{->}^{}"D";"H"};
(20,20)+(30,0)+(30,0)+(5,0)*+{(\Sim \Fa)(A,B)}="J";
(20,0)+(30,0)+(30,0)+(0,-10)+(5,0)*+{(\Sim \Fa) t}="L";
{\ar@{->}^-{(\Sim \Fa) u_t}"J";"L"};
{\ar@{->}^-{\tx{can}}"F";"J"};
{\ar@{->}^-{\tx{can}}"H";"L"};
{\ar@{->>}^-{(\Sim_t^{\infty}\Fa^{k_0})u_t}_{\wr}"U";"V"};
{\ar@{->}^{\iota_t}"U";"F"};
{\ar@{->}_{\iota_t}"V";"H"};
{\ar@{-->}^{}"C";"U"};
\endxy
\end{tabular}

\end{tabular}
\ \\
In the above diagram, everything  is commutative (squares and triangles), and since $(\Sim_t^{\infty}\Fa^{k_0})u_t$ has the RLP with respect to all cofibration (Proposition \ref{coseg-t-1}(1)) there is a solution  $\beta: Q \to (\Sim_t^{\infty}\Fa^{k_0})(A,B)$ to the lifting problem induced by $g$ and $(\Sim_t^{\infty}\Fa^{k_0})u_t$. Clearly the composite 
 $$\tx{can} \circ \iota^{k_0}_t \circ \beta: Q \to \Sim(\Fa)(A,B) $$
 is a solution to the original lifting problem. Here of course `can' is the canonical map going to the colimit. \ \\
 
 Consequently  $(\Sim \Fa) u_t$ has the RLP with respect to any cofibration $g$ in $\M$, thus it's a trivial fibration as desired. 
\end{proof}

\begin{note}
 Since weak equivalences in $\msxinj$ and $\msxproj$ are the same, if we choose a functorial factorization in $\msxproj$ of the map $\eta_{\Fa}$ as:
$$\eta_{\Fa}: \Fa  \xhookrightarrow{\tld{\eta}_{\Fa}} \Za \xtwoheadrightarrow[\sim]{q}  \Sim(\Fa)$$
  
 where $\tld{\eta}_{\Fa}$ is cofibration and $q$ is a trivial fibration, then we can set $\Sim(\Fa):= \Za$ when working in the projective model structure. This new functor has the same properties as the previous one. 
 \end{note}
\subsection{Localization by weak monadic projections}
\subsubsection{Weak monadic projection}
Let $\Mb$ be a model category and $\Ra \subset \Mb$ be a subcategory stable under weak equivalences. 
We recall very briefly the definition of weak monadic projection as stated in \cite[9.2.2]{Simpson_HTHC}. \ \\

A \emph{weak monadic projection} from $\Mb$ to $\Ra$ is a functor $F: \Mb \to \Mb$  together with a natural transformation 
$\eta_A: A \to F(A)$ such that:

\begin{enumerate}
\item $F(A) \in \Ra$ for all $A \in \Mb$;
\item for any $A \in \Ra$, $\eta_A$ is a weak equivalence;
\item for any $A \in \Mb$  the map $F(\eta_A): F(A) \to F(F(A))$ is a weak equivalence;
\item If $f : A \to B$ is a weak equivalence between cofibrant objects then $F(f): F(A) \to F(B)$ is a weak equivalence; and 
\item $F(A)$ is cofibrant for any cofibrant $A \in \Mb $.  
\end{enumerate}

\begin{rmk}\label{mpr-inj-proj}
If $F$ is a monadic projection from $\msxinj$ to $\Ra$ then we can extract a monadic projection $\tld{F}$ from $\msxproj$ to $\Ra$. In fact one uses the (functorial) factorization in $\msxproj$: 
$$\eta_A: A \xhookrightarrow{\tld{\eta}_A} \tld{F}(A)  \xtwoheadrightarrow[\sim]{p_A}F(A)$$
where $\tld{\eta}_A$ is a projective cofibration and $p_A$ a trivial fibration. 
\ \\
\ \\
(WPr1) holds because $p: \tld{F}(A)\to F(A) $ is a weak equivalence and $\Ra$ is stable by weak equivalence;\\
(WPr2) follows by the $3$-out-of-$2$ property of weak equivalences:  $p_A$ is already a weak equivalence, consequently if in addition $\eta_A$ is a weak equivalence then $\tld{\eta}_A$ is also a weak equivalence ;\\
(WPr3) also follows from the $3$-out-of-$2$ property:  from the functoriality of the factorization in $\msxproj$ on has that the following commutes:
 \[
\xy
(0,20)*+{\tld{F}(A)}="A";
(30,20)*+{\tld{F}(\tld{F}(A))}="B";
(0,0)*+{F(A)}="C";
(30,0)*+{F(F(A))}="D";
{\ar@{->}^{\tld{F}(\tld{\eta}_A)}"A";"B"};
{\ar@{->>}_-{p}^-{\wr}"A";"C"};
{\ar@{->}_-{F(\eta_A)}^-{\sim}"C";"D"};
{\ar@{->>}_-{p}^-{\wr}"B";"D"};
\endxy
\]  
 
and all the other maps are weak equivalences. \\
For (WPr4) we use the fact that projective cofibrations are also injective cofibrations. Therefore if $A$ is cofibrant in $\msxproj$, then it's also cofibrant in $\msxinj$. It follows that if $f :A \to B$ is a weak equivalence between cofibrant objects in $\msxproj$, then $F(f):F(A) \to F(B)$ is a weak equivalence in $\msx$. The functoriality of $\tld{F}$ gives a commutative square where all the other maps are weak equivalences:
 \[
\xy
(0,20)*+{\tld{F}(A)}="A";
(30,20)*+{\tld{F}(B)}="B";
(0,0)*+{F(A)}="C";
(30,0)*+{F(B)}="D";
{\ar@{->}^{\tld{F}(f)}"A";"B"};
{\ar@{->>}_-{p_A}^-{\wr}"A";"C"};
{\ar@{->}_-{F(\eta_A)}^-{\sim}"C";"D"};
{\ar@{->>}_-{p_B}^-{\wr}"B";"D"};
\endxy
\] 
 
and $\tld{F}(f)$ is a weak equivalence by $3$-out-of-$2$;\\
(WPr5) holds `on the nose' since $\tld{\eta}_A : A \to \tld{F}(A)$ is a projective cofibration: if $\varnothing \to A$ is a cofibration, by composition $\varnothing \to \tld{F}(A)$ is also a cofibration. 
\end{rmk}

\paragraph*{In our case} We would like to show that the functor $\Sim$ constructed previously is a weak monadic projection from $\msxinj$ or $\msxproj$ to $\Rc$. 
The only nontrivial condition in our case is the condition (WPr3), namely that the map induced by universal property $\Sim(\eta_{\Fa}): \Sim(\Fa) \to \Sim(\Sim \Fa))$ is a weak equivalence. \ \\

But rather than verifying step by step that $\Sim$ is a weak monadic projection,  we will use the more general approach of Simpson \cite[Chap. 9]{Simpson_HTHC}  who used \textbf{Direct localizing systems} to produce weak monadic projections. 
\subsubsection{Direct localizing system}
The present discussion follows closely \cite[Chap. 9]{Simpson_HTHC}. \ \\
\ \\
Let $(\Mb, I,J)$ be a tractable left proper cofibrantly generated model category which is moreover  locally presentable. Recall that \emph{tractable} means that the domains of maps in $I$ and $J$ are cofibrant. 
Suppose we are given a subclass of objects considered as a full subcategory $\Ra \subset \Mb$, and a subset $\tx{K} \subset \Ar(\Mb)$.  We assume that:
\begin{enumerate}
\item $\tx{K}$ is a small set;
\item $J \subset \tx{K}$;
\item $\tx{K} \subset \cof(I)$ and the domain of arrows in $\tx{K}$ are cofibrant; 
\item If $A \in \Ra$ and if $A \cong B$ in $\Ho(\Mb)$ then $B \in \Ra$; and
\item $\inj(\tx{K}) \subset \Ra$
\ \\
Say that $(\Ra, \tx{K})$ is \emph{direct localizing} if in addition to the above conditions:
\item for all $A \in \Ra$ such that $A$ is fibrant, and any $A \to B$ which is a pushout by an element of $\tx{K}$, there exists $B \to C$ in $cell(\tx{K})$ such that $A \to C$ is a weak equivalence. 
\end{enumerate}

\begin{note}
In our case $(\Mb, I, J)$ will be $(\msxinj, \I_{\msxinj},\Ja_{\msxinj})$ and $\Ra$ will be $\Rc$, the subcategory of Co-Segal categories.
\end{note}
\begin{nota}\ \
\begin{enumerate}
\item We remind the reader that $h_{/V}: h \to \Id_V$ is the map represented by the commutative square:
\[
\xy
(0,20)*+{U}="A";
(20,20)*+{V}="B";
(0,0)*+{V}="C";
(20,0)*+{V}="D";
{\ar@{->}^{h}"A";"B"};
{\ar@{->}^{h}"A";"C"};
{\ar@{->}^{\Id_V}"B";"D"};
{\ar@{->}^{\Id_V}"C";"D"};
\endxy
\]

\item Let  $\kbinj$ be the set 
$$\Ja_{\msxinj} \cup  \coprod_{t \in \sx, \degb(t) >1} \{ \Pr_{t!}(h_{/V}) \}_{h \in \I}.$$ 
\end{enumerate}
\end{nota}

\begin{rmk}\label{Lurie_cofib}
Thanks to a theorem of Lurie \cite[Prop. A.1.5.12]{Lurie_HTT} we can assume that every cofibration of $\M$ is an in $cell(\I)$. It follows that for any cofibration $i: E \to Q$ the map $\Pr_{t!}(i_{/Q})$ is in $cell(\kbinj)$.
\end{rmk}

As we shall see in a moment the maps $h_{/V}$ allow us to transport in a tautological way, a lifting problem defined in $\M$ into a extension (or horn filling) problem in $\M^{[\1]}$. And thanks to the adjunction $\Pr_{t!} \dashv \Ev_{u_t}$, we will be able to test if $\Fa (u_t)$ is a trivial fibration or not in terms of being injective with respect to the maps $\Pr_{t!}(h_{/V})$.\ \\

The main result in this section is the following 

\begin{thm}\label{direct-local}
With the above notations the pair $(\Rc, \kbinj)$ is direct localizing in\\
 $(\msxinj, \I_{\msxinj},\Ja_{\msxinj})$.
\end{thm}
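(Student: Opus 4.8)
The plan is to verify the six conditions defining a direct localizing system one by one, following Simpson's scheme, and to isolate condition (6) (the ``direct localizing'' condition proper) as the only one requiring real work. Conditions (1)--(5) are essentially bookkeeping: (1) $\kbinj$ is small because $\Ja_{\msxinj}$ is a set and $\sx$ has only a set of $1$-morphisms of degree $>1$, while $\I$ is a set; (2) $\Ja_{\msxinj} \subset \kbinj$ by construction; (3) follows because $\Ja_{\msxinj} \subset \cof(\I_{\msxinj})$ as trivial cofibrations, and each $\Pr_{t!}(h_{/V})$ is a cofibration in $\msxinj$ by Lemma \ref{cof-rectification}\ref{cof-rec-1} (since $h \in \I$ is a cofibration of $\M$ and $\Pr_{t!}$ is left Quillen), with cofibrant domain because $\Pr_{t!}$ is left Quillen and all objects of $\M$ are cofibrant so $h$ has cofibrant domain, hence $\Pr_{t!}(h)$ is cofibrant; (4) is immediate from Proposition \ref{stab-coseg}, since the Co-Segal conditions are invariant under weak equivalence and hence under isomorphism in $\Ho(\msx)$; (5) $\inj(\kbinj) \subset \Rc$ is the heart of the adjunction $\Pr_{t!} \dashv \Ev_{u_t}$: a map $\Fa \to \Ga$ with the RLP against all $\Pr_{t!}(h_{/V})$ translates, by adjunction, into the statement that for every $t$ of degree $>1$ and every $h \in \I$ the square $\Pr_t(\Fa \to \Ga)$ has the RLP against $h_{/V}$ in $\M^{[\1]}$; applied to $\Ga = \ast$ (or directly to the map to a point, using that $\inj(\kbinj)$ also contains $\inj(\Ja_{\msxinj})$ so fibrant-type conditions are available), one extracts that $\Fa(u_t)$ has the RLP against every $h \in \I$, i.e.\ $\Fa(u_t)$ is a trivial fibration in $\M$, so $\Fa \in \Rc$. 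I would write out this adjunction chase carefully since it is the conceptual core, using the tautological fact that a lifting problem for $h$ against $g$ in $\M$ is the same as a morphism $h_{/V} \to g$ in the slice / arrow category that needs a diagonal.

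\textbf{The main obstacle: condition (6).} This is where the Co-Segalification functor $\Sim$ and Proposition \ref{coseg-t-1} do the work. Let $\Fa \in \Rc$ be fibrant in $\msxinj$, and let $\Fa \to \Ba$ be a pushout of a single element of $\kbinj$. If that element lies in $\Ja_{\msxinj}$, then $\Fa \to \Ba$ is already a trivial cofibration and we take $\Ba \to \Ba$ (the identity is in $cell(\kbinj)$ vacuously), so $\Fa \to \Ba$ is a weak equivalence and we are done. The substantial case is a pushout of some $\Pr_{t_0!}(h_{/V})$. Here the strategy is: apply the global Co-Segalification $\Sim$ to $\Ba$, giving $\Ba \to \Sim(\Ba)$. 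By Remark \ref{Lurie_cofib}, every cofibration of $\M$ is in $cell(\I)$, hence every pushout of a $\Pr_{t!}(h_{/V})$ (and every transfinite composite of such, which is what builds $\Sim$ out of the $\Sim_t^\infty$) lies in $cell(\kbinj)$; so $\Ba \to \Sim(\Ba)$ is in $cell(\kbinj)$ by construction of $\Sim$ as an iterated colimit of the local gluing constructions, each of which is assembled from pushouts of maps $\Pr_{t!}(h_{/U})$ along the factorizations $h(\Fa,t)$ of $\Fa(u_t)$ (which are cofibrations, hence in $cell(\I)$). Thus take $C = \Sim(\Ba)$. It remains to show $\Fa \to \Sim(\Ba)$ is a weak equivalence.

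\textbf{Finishing condition (6).} For this I would argue: $\Fa \in \Rc$, so by Proposition \ref{global-cosegalification}(3) the map $\eta_{\Fa}: \Fa \to \Sim(\Fa)$ is a trivial cofibration in $\msxinj$, in particular a weak equivalence. Functoriality of $\Sim$ gives a commuting square relating $\eta_{\Fa}$, $\eta_{\Ba}: \Ba \to \Sim(\Ba)$, the given map $\Fa \to \Ba$, and $\Sim(\Fa) \to \Sim(\Ba)$. Now the given map $\Fa \to \Ba$ is a pushout of $\Pr_{t_0!}(h_{/V})$ with $h \in \I$ a \emph{cofibration} of $\M$ — not necessarily trivial — so a priori it is only a cofibration, not a weak equivalence. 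The point of the Co-Segalification is precisely that after applying $\Sim$ this cofibration becomes a weak equivalence: both $\Sim(\Fa)$ and $\Sim(\Ba)$ lie in $\Rc$, and one shows $\Sim(\Fa) \to \Sim(\Ba)$ is a levelwise weak equivalence by inspecting, level by level, what the gluing construction does — on the objects $\Fa(A,B)$ and $\Fa(t)$ it replaces $\Fa(u_t)$ by a trivial fibration, and the extra cell attached by $\Pr_{t_0!}(h_{/V})$ gets ``absorbed'' up to homotopy because in $\Sim(\Ba)$ the map indexed by $u_{t_0}$ is forced to be a weak equivalence. Concretely: by Proposition \ref{coseg-t-1}\ref{coseg-1-3} the local constructions preserve weak equivalences between objects on which the relevant $u_t$-maps are already weak equivalences, and an induction over the $\kappa$-sequence defining $\Sim$, combined with Lemma \ref{pushout-MX} (pushout of a levelwise trivial cofibration is a levelwise weak equivalence) applied to the cofibrant replacement of $h$ inside each step, upgrades the cofibration $\Fa \to \Ba$ to a weak equivalence $\Sim(\Fa) \to \Sim(\Ba)$. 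Then from the functoriality square, since $\eta_{\Fa}$ is a weak equivalence and $\Sim(\Fa) \to \Sim(\Ba)$ is a weak equivalence, the composite $\Fa \to \Sim(\Fa) \to \Sim(\Ba)$ equals $\Fa \to \Ba \to \Sim(\Ba)$, so $\Fa \to \Sim(\Ba) = C$ is a weak equivalence, as required. I expect the delicate point to be precisely this last claim that $\Sim$ turns the particular cofibration $\Pr_{t_0!}(h_{/V})$ into a weak equivalence; the cleanest route is probably to observe that $h_{/V}$ becomes a \emph{trivial} cofibration in $\M^{[\1]}$ once we are in a situation where the target component of the arrow is the identity and the source component is a weak equivalence — which is exactly what the factorization $\Fa^{k-1}(u_t) = j_k \circ h_k$ with $j_k$ a trivial fibration arranges at each stage — so that each instance of the gluing construction along the way is a trivial cofibration by Lemma \ref{cof-rectification}\ref{cof-rec-2}, and $\Fa \to C$ is a transfinite composite of trivial cofibrations.
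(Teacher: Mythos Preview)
Your treatment of conditions (A1)--(A5) is correct and matches the paper's own verification essentially line for line.

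The gap is in condition (A6). Your strategy there is to take $C = \Sim(\Ba)$ and argue that $\Fa \to \Sim(\Ba)$ is a weak equivalence via the functoriality square for $\Sim$, which reduces to showing that $\Sim(\Fa) \to \Sim(\Ba)$ is a weak equivalence. But you never actually establish this: the phrases ``gets absorbed up to homotopy'' and ``by inspecting level by level'' are not arguments, and your proposed ``cleanest route'' at the end conflates two different situations. When you build $\Sim(\Ba)$, at each stage you factor $\Ba^{k}(u_t)$ as a cofibration followed by a trivial fibration; the gluing pushout is along $\Pr_{t!}(h_{k,/U})$ where $h_k$ is that cofibration. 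For this to be a \emph{trivial} cofibration via Lemma~\ref{cof-rectification}\ref{cof-rec-2} you need $h_k$ to be a trivial cofibration, which requires $\Ba^{k}(u_t)$ to be a weak equivalence --- i.e.\ $\Ba^{k} \in \Rc$. But $\Ba$ is obtained from $\Fa$ by a pushout of $\Pr_{t_0!}(h_{/V})$ with $h$ an arbitrary generating cofibration, so $\Ba$ need not lie in $\Rc$, and there is no reason for the intermediate $\Ba^{k}$ to either. Worse, the property you are reaching for --- that $\Sim$ sends this particular map to a weak equivalence --- is essentially the weak-monadic-projection property (WPr4), which in the paper's logical order is \emph{deduced from} the direct localizing condition via Simpson's Lemma~9.3.1, not used to prove it. So the argument is circular as written.

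The paper's proof of (A6) is entirely different and does not touch $\Sim$. It is a one-step, hands-on construction. Given the pushout of $\Pr_{t!}(h_{/V})$ along $\beta: \Pr_{t!}(h) \to \Fa$, the adjoint of $\beta$ is a square $(a_1,a_2): h \to \Fa(u_t)$; push $h$ out along $a_1$ to get $E$, then factor $E \to \Fa t$ as cofibration $i$ followed by trivial fibration $j$, and set $h' = i \circ f: \Fa(A,B) \to Q$. Because $\Fa \in \Rc$, the map $\Fa(u_t)$ is a weak equivalence, so by 3-for-2 $h'$ is a \emph{trivial} cofibration. Now form $\Ea$ as the pushout of $\Pr_{t!}(h'_{/Q})$ along the adjoint map $T(h',j,\Fa,t)$; since $h'_{/Q}$ is a trivial cofibration in $\M^{[\1]}_{\tx{inj}}$, the map $H'_1: \Fa \to \Ea$ is a trivial cofibration. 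The remaining work is to show that $H'_1$ factors as the original pushout $q: \Fa \to \Za$ followed by a map $\varepsilon: \Za \to \Ea$ which is itself a pushout of $\Pr_{t!}(i_{/Q}) \in cell(\kbinj)$. This is done by exhibiting $h_{/V}$ as a pushout in $\M^{[\1]}$ (the map $\theta: h' \to i$ is the pushout of $h_{/V}$ along $R: h \to h'$), applying the left adjoint $\Pr_{t!}$, and then using Lemma~\ref{collapse_po} (pushout of a pushout is a pushout) to decompose the big pushout square defining $\Ea$ into the vertical concatenation of two pushout squares, the first giving $\Za$ and the second giving $\varepsilon$. This local factorization trick is the genuine content of (A6) that your proposal is missing.
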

    
\paragraph{Proof of Theorem \ref{direct-local}} To prove the theorem we will verify that all the conditions (A1),..., (A6) hold. \ \\

The conditions (A1) and (A2) are clear. Since we assumed that all objects of $\M$ are cofibrant, it follows that all objects in $\kxinj$ are cofibrant as well; therefore the elements of $\I_{\msxinj} = \Gamma \I_{\kxinj}$ have cofibrant domain by definition of the model structure on $\msxinj$.

By construction $\Pr_{t!}: {\M^{[\1]}}_{\tx{inj}} \to \msxinj$ is a left Quillen functor and since $h_{/V}$ is clearly a cofibration in  ${\M^{[\1]}}_{\tx{inj}}$ when $h \in \I$, we deduce that  $\Pr_{t!}(h_{/V})$ is cofibration in $\msxinj$ (with cofibrant  domain). Putting these together one has (A3).\ \\

The condition (A4) follows from the stability of $\Rc$ under weak equivalence (Proposition \ref{stab-coseg}). We treat (A5) and (A6) in the next paragraphs. 

\paragraph{The condition (A5) holds} To prove this we begin by observing that

\begin{prop}
For a commutative square in $\M$
\[
\xy
(0,20)*+{U}="A";
(20,20)*+{X}="B";
(0,0)*+{V}="C";
(20,0)*+{Y}="D";
{\ar@{->}^{f}"A";"B"};
{\ar@{->}_{h}"A";"C"};
{\ar@{->}^{p}"B";"D"};
{\ar@{->}^{g}"C";"D"};
\endxy
\]  
considered as a morphism $\alpha=(f,g): h \to p$ in $\M^{[\1]}$ the following are equivalent. 

\begin{itemize}
\item There is a lifting in the commutative square above i.e there exists $k: V \to X$  such that: $k \circ h =f$, $p \circ h=g$.
\item We can fill the following `\emph{horn}' of $\M^{[\1]}$:
\[
\xy
(0,20)*+{h}="A";
(20,20)*+{p}="B";
(0,0)*+{\Id_V}="C";
{\ar@{->}^{\alpha}"A";"B"};
{\ar@{->}_{h_{/V}}"A";"C"};
{\ar@{.>}_{}"C";"B"};
\endxy
\]  
that is there exists $\beta=(k,l): \Id_V \to p$ such that $\beta \circ h_{/V}= \alpha$. 
\end{itemize}
\end{prop}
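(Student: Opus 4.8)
The plan is simply to unwind the definitions of morphisms and of composition in the arrow category $\M^{[\1]}$, so that both assertions turn into the same system of equations on a single map $k\colon V\to X$. Recall that a morphism in $\M^{[\1]}$ from an arrow $a\colon A_0\to A_1$ to an arrow $b\colon B_0\to B_1$ is a pair $(\phi_0,\phi_1)$ of maps of $\M$ with $\phi_1\circ a=b\circ\phi_0$, composition being computed componentwise. In these terms, $\alpha\colon h\to p$ is the pair $(f,g)$ — its compatibility being exactly the commutativity $p\circ f=g\circ h$ of the given square — while $h_{/V}\colon h\to\Id_V$ is the pair $(h,\Id_V)$.

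First I would describe $\Hom_{\M^{[\1]}}(\Id_V,p)$. A morphism $\beta\colon\Id_V\to p$ is a pair $(k\colon V\to X,\ l\colon V\to Y)$ subject to $l\circ\Id_V=p\circ k$, i.e.\ $l=p\circ k$; hence $\beta$ is determined by the single map $k$, and conversely any $k\colon V\to X$ yields a morphism $\beta=(k,p\circ k)$ (this is just the statement that $V\mapsto\Id_V$ is left adjoint to the source functor $\M^{[\1]}\to\M$). Composing such a $\beta$ with $h_{/V}$ gives, componentwise,
$$\beta\circ h_{/V}=(k\circ h,\ (p\circ k)\circ\Id_V)=(k\circ h,\ p\circ k)\colon h\to p.$$
Therefore the equation $\beta\circ h_{/V}=\alpha$ in $\M^{[\1]}$ is equivalent to the two equations $k\circ h=f$ and $p\circ k=g$ in $\M$.

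To finish, one only has to note that $k\circ h=f$ and $p\circ k=g$ are precisely the conditions making $k\colon V\to X$ a diagonal filler of the original square: given a lift $k$, the pair $\beta:=(k,p\circ k)$ is a well-defined morphism $\Id_V\to p$ with $\beta\circ h_{/V}=\alpha$, and conversely from any $\beta=(k,l)$ with $\beta\circ h_{/V}=\alpha$ one reads off $l=p\circ k$, $k\circ h=f$, $p\circ k=g$, so $k$ is a lift. This yields the equivalence, in fact a bijection between the two sets of solutions. There is no real obstacle here; the only point requiring care is keeping the orientation conventions of $\M^{[\1]}$ straight, so that the compatibility of $\beta$ forces $l=p\circ k$ and the equations come out as $k\circ h=f$, $p\circ k=g$ rather than in transposed form.
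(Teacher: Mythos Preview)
Your proof is correct and is exactly the routine unwinding of the definitions that the paper has in mind: the paper's own proof consists of the single word ``Obvious.'' Your write-up is simply a careful spelling-out of that obviousness, and even improves on the paper by noting that the correspondence is a bijection between the two sets of solutions (and by implicitly correcting the evident typo $p\circ h=g$ to $p\circ k=g$).
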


\begin{proof}
Obvious.
\end{proof}

Let $\Fa \in \msx$ be an object in $\inj(\kbinj)$. As $\Fa$ is $\kbinj$-injective, it has the left lifting property with respect to all maps in $\kbinj$, so in particular for any generating cofibration $h \in \I$ and any $t\in \sx$, there is a solution to any lifting problem of the following form:  
\[
\xy
(0,20)*+{\Pr_{t!}(h)}="A";
(20,20)*+{\Fa}="B";
(0,0)*+{\Pr_{t!}\Id_V}="C";
(20,0)*+{\ast}="D";
{\ar@{->}^{a}"A";"B"};
{\ar@{_{(}->}_{\Pr_{t!}(h_{/V})}"A"+(0,-3);"C"};
{\ar@{->}^{!}"B";"D"};
{\ar@{->}^{!}"C";"D"};
{\ar@{.>}^{}"C";"B"};
\endxy
\] 
  
where $\ast$ is the terminal object in $\msx$. But such lifting problem is equivalent to the extension or horn filling problem:
\[
\xy
(0,20)*+{\Pr_{t!}(h)}="A";
(20,20)*+{\Fa}="B";
(0,0)*+{\Pr_{t!}\Id_V}="C";
{\ar@{->}^{a}"A";"B"};
{\ar@{_{(}->}_{\Pr_{t!}(h_{/V})}"A"+(0,-3);"C"};
{\ar@{.>}^{}"C";"B"};
\endxy
\]  

It follows by adjunction that $\Fa(u_t)$ has the extension property with respect to all  $h_{/V}$, as $h$  runs through  $\I$. Thanks to the previous proposition, $\Fa(u_t)$ has the RLP with respect to any generating cofibration of $h \in \I$; therefore $\Fa(u_t)$ is a trivial fibration and in particular a weak equivalence. Assembling this for all $t$ we get that $\Fa$ is a Co-Segal category i.e an object of $\Rc$, and  (A5) follows.$\qed$

\paragraph{The condition (A6) holds} The condition is given by the following

\begin{lem}
Let $\Fa$ be a Co-Segal category i.e an ojbect of $\Rc$. For a pushout square in $\msxinj$ 
\[
\xy
(0,20)*+{\Aa }="A";
(30,20)*+{\Fa}="B";
(0,0)*+{\Ba}="C";
(30,0)*+{\Za}="D";
{\ar@{->}^{\beta}"A";"B"};
{\ar@{->}_-{\alpha}"A";"C"};
{\ar@{->}_-{}"C";"D"};
{\ar@{->}^-{q}"B";"D"};
\endxy
\] 

if $\alpha \in \kbinj$ then there exists a map $\varepsilon: \Za \to \Ea $ which is a pushout by an element $\gamma \in cell(\kbinj)$  such that the composite $ \varepsilon \circ q: \Fa \to \Ea$ is a weak equivalence. 
\end{lem}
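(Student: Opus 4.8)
The statement to prove is condition (A6) of a direct localizing system: given a Co-Segal category $\Fa \in \Rc$ which we may assume fibrant, and a pushout
\[
\xy
(0,20)*+{\Aa }="A";
(30,20)*+{\Fa}="B";
(0,0)*+{\Ba}="C";
(30,0)*+{\Za}="D";
{\ar@{->}^{\beta}"A";"B"};
{\ar@{->}_-{\alpha}"A";"C"};
{\ar@{->}_-{}"C";"D"};
{\ar@{->}^-{q}"B";"D"};
\endxy
\]
with $\alpha \in \kbinj$, we must produce $\varepsilon : \Za \to \Ea$ lying in $cell(\kbinj)$ such that $\varepsilon \circ q$ is a weak equivalence. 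The plan is to split the argument according to the two types of generators in $\kbinj = \Ja_{\msxinj} \cup \coprod_{t,\ \degb(t)>1} \{\Pr_{t!}(h_{/V})\}_{h \in \I}$. If $\alpha \in \Ja_{\msxinj}$ then $\alpha$ is already a trivial cofibration, so $q$ itself is a weak equivalence (pushout of a trivial cofibration in the model category $\msxinj$), and one simply takes $\varepsilon = \Id_{\Za}$, which is in $cell(\kbinj)$ vacuously. So the real content is the case $\alpha = \Pr_{t!}(h_{/V})$ for a fixed $1$-morphism $t$ with $\degb(t)>1$ and a generating cofibration $h : U \to V$ of $\M$.

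In that case I would first identify the pushout $\Za$ concretely. Since $\Pr_{t!} \dashv \Ev_{u_t}$, the map $\beta : \Pr_{t!}(h) \to \Fa$ corresponds by adjunction to a morphism $h \to \Fa(u_t)$ in $\M^{[\1]}$, i.e.\ a commutative square with vertical map $h$ and right vertical map $\Fa(u_t)$; call its components $(f_0 : U \to \Fa(A,B),\ l_0 : V \to \Fa t)$. Pushing out $\Pr_{t!}(h)$ along $\Pr_{t!}(h_{/V})$ and using that $\Pr_{t!}$ is a left adjoint hence preserves pushouts, $\Za$ is exactly the object obtained by the local gluing construction $\Sim^1_t$-style pushout, except built from the possibly-non-acyclic $h$ rather than from a cofibration arising in a factorization; concretely $\Za(A,B) = \Fa(A,B) \cup_U V$ and $\Za(t) = \Fa t$, with $\Za(u_t)$ the induced map, and $\Za$ agrees with $\Fa$ away from the cells forced by $t$. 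The idea now is to run the local Co-Segalification $\Sim^{\infty}_t$ of Proposition \ref{coseg-t-1} \emph{starting from $\Za$}: set $\Ea := \Sim^{\infty}_t(\Za)$ and let $\varepsilon := \eta_t : \Za \to \Sim^{\infty}_t(\Za)$ be the canonical map. By Remark \ref{Lurie_cofib} (Lurie's theorem, plus the fact that each factorization cofibration $h_k$ in the construction of $\Sim^{\infty}_t$ is in $cell(\I)$), each pushout $H_k$ defining $\Sim^k_t$ is a pushout by a map in $cell(\kbinj)$, so the transfinite composite $\varepsilon$ lies in $cell(\kbinj)$ — this is the first thing to check carefully.

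It remains to show $\varepsilon \circ q : \Fa \to \Ea$ is a weak equivalence. Here is where fibrancy of $\Fa$ and the Co-Segal hypothesis enter. Observe that $\varepsilon \circ q = \Sim^{\infty}_t(q') \circ (\text{something})$ is not literally functorial, so instead I would argue levelwise using $\Ub$: weak equivalences in $\msxinj$ are detected on $\K_X = \prod_{AB}\Hom[\S_{\ol X}(A,B)^{\op},\M]$, hence levelwise in $\M$. Away from the string $t$ and its subdivisions, $\Za$ and $\Ea$ coincide with $\Fa$, so the map is an identity there. On $\Fa(A,B)$, the composite is $\Fa(A,B) \to \Fa(A,B)\cup_U V \to \Sim^{\infty}_t(\Za)(A,B)$. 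The key point: $\Fa(u_t)$ is already a weak equivalence (Co-Segal condition on $\Fa$), so in the factorization $\Fa(u_t) = j \circ h_0$ used to build the first stage, $h_0$ is a \emph{trivial} cofibration; feeding the original (non-acyclic) $h$ followed by the Co-Segalification stabilizes to the same homotopy type. More precisely I would show, via the $3$-out-of-$2$ property applied to the commuting triangles relating $\Fa(u_t)$, $\Za(u_t)$ and $\Sim^{\infty}_t(\Za)(u_t)$ (all of which become weak equivalences: $\Fa(u_t)$ by hypothesis, $\Sim^{\infty}_t(\Za)(u_t)$ a trivial fibration by Proposition \ref{coseg-t-1}(1)), that the levelwise maps $\Fa(A,B) \to \Ea(A,B)$ and $\Fa t \to \Ea t$ are both weak equivalences. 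The subtle part is that pushing out by the non-acyclic $h$ changes $\Fa(A,B)$ up to non-trivial homotopy a priori, but since $h$ factors as (trivial cofibration)$\circ$(cofibration with the right universal property) is false — rather, one uses the homotopical minimality of $\eta_t$ with respect to $\Rc_t$ (the proposition following Proposition \ref{coseg-t-1}): since $\Fa \in \Rc \subset \Rc_t$, the canonical comparison $\Ea \to \Sim^{\infty}_t(\Fa)$ and the weak equivalence $\Fa \xrightarrow{\sim} \Sim^{\infty}_t(\Fa)$ (Proposition \ref{coseg-t-1}(3)) force $\varepsilon \circ q$ to be a weak equivalence by $3$-out-of-$2$.

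\textbf{Main obstacle.} The delicate step is the very last one: controlling the homotopy type after pushing out by the \emph{non-acyclic} generator $h$, and then applying $\Sim^{\infty}_t$. Pushing out an acyclic cofibration would be automatic, but $\Pr_{t!}(h_{/V})$ need not be acyclic, so $q$ itself is generally not a weak equivalence; the whole point of composing with $\varepsilon$ is to repair this. Making precise that $\Sim^{\infty}_t$ "absorbs" the extra cell — i.e.\ that the comparison map $\Za \to \Sim^{\infty}_t(\Fa)$ exists and is compatible with $\eta_t$ on $\Fa$ — requires carefully tracking the universal properties of the iterated pushouts and invoking functoriality of the factorizations in $\M$, together with Lemma \ref{pushout-MX} to know that the relevant comparison pushouts are weak equivalences. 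This bookkeeping, rather than any single hard theorem, is where the effort lies; everything else is a formal consequence of Propositions \ref{stab-coseg}, \ref{coseg-t-1}, Remark \ref{Lurie_cofib}, and $3$-out-of-$2$.
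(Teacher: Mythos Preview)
Your proposal correctly disposes of the case $\alpha \in \Ja_{\msxinj}$ and correctly locates the real content in the case $\alpha = \Pr_{t!}(h_{/V})$. However, your strategy there---taking $\Ea := \Sim^{\infty}_t(\Za)$ and $\varepsilon := \eta_t$---has a genuine gap at precisely the point you flag as the main obstacle, and the sketch you give does not close it.

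You need $\varepsilon \circ q : \Fa \to \Sim^{\infty}_t(\Za)$ to be a weak equivalence. Your route is via a ``canonical comparison $\Ea \to \Sim^{\infty}_t(\Fa)$'' together with the weak equivalence $\Fa \xrightarrow{\sim} \Sim^{\infty}_t(\Fa)$ and $3$-out-of-$2$. But functoriality of $\Sim^{\infty}_t$ produces a map in the \emph{opposite} direction, $\Sim^{\infty}_t(q): \Sim^{\infty}_t(\Fa) \to \Sim^{\infty}_t(\Za)$, and there is no evident map back; the homotopical-minimality proposition likewise only yields a homotopy-class map \emph{from} $\Sim^{\infty}_t(\Fa)$ to objects of $\Rc_t$, not the reverse. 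Thus your $3$-out-of-$2$ reduces to showing $\Sim^{\infty}_t(q)$ is a weak equivalence---but $q$ is not one, and no homotopy-invariance property of $\Sim^{\infty}_t$ is available to force this. Note also that $\Za$ need not lie in $\Rc_t$: pushing out by $\Pr_{t!}(h_{/V})$ attaches a generally non-acyclic cell at the $(A,B)$-level, which can destroy the Co-Segal condition at $t$, so Proposition~\ref{coseg-t-1}(3) does not apply to $\Za$.

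The paper takes a quite different and more economical route, avoiding any transfinite construction. From the adjoint square $(a_1,a_2): h \to \Fa(u_t)$ it forms the pushout $E = \Fa(A,B)\cup_U V$, factors the induced map $r: E \to \Fa t$ as a cofibration $i: E \hookrightarrow Q$ followed by a trivial fibration $j$, and sets $h' := i\circ f: \Fa(A,B) \to Q$. The Co-Segal hypothesis on $\Fa(u_t)$ plus $3$-out-of-$2$ makes $h'$ a \emph{trivial} cofibration, so the pushout $H'_1: \Fa \to \Ea$ of $\Pr_{t!}(h'_{/Q})$ is a trivial cofibration outright. The remaining work is a factorization in $\M^{[\1]}$: one checks that $h'_{/Q}$ decomposes as $h' \xrightarrow{\theta} i \xrightarrow{i_{/Q}} \Id_Q$, where $\theta$ is itself the pushout of $h_{/V}$ along a comparison $R: h \to h'$. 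Applying $\Pr_{t!}$ (which preserves pushouts) and Lemma~\ref{collapse_po}, the composite $H'_1: \Fa \to \Ea$ factors as the original $q: \Fa \to \Za$ followed by $\varepsilon$, the pushout of the single map $\gamma = \Pr_{t!}(i_{/Q})$; since $i$ is a cofibration, $\gamma \in cell(\kbinj)$ by Remark~\ref{Lurie_cofib}. Thus the weak equivalence is obtained directly as a pushout of a trivial cofibration---no appeal to homotopy invariance of any Co-Segalification functor is needed.
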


\begin{proof}
The assertion is clear if $\alpha \in \Ja_{\msxinj}$, just take $\Ea= \Za$ and $\varepsilon= \Id_{\Za}$;  $\varepsilon \in \kbinj$ is the pushout of itself along itself and $q$  is a trivial cofibration so in particular a weak equivalence. \ \\

Assume that $\alpha= \Pr_{t!}(h_{/V}): \Pr_{t!}(h) \to \Pr_{t!}(\Id_V)$; then $\alpha$ is clearly a cofibration in $\msxinj$. 
Our map $\beta: \Pr_{t!}(h) \to \Fa$ corresponds by adjunction to a map $(a_1, a_2) : h \to \Fa (u_t)$ in $\M^{[\1]}$. Denote by $E$ the object we get from the pushout of $h$ along $a_1$. 

\[
\xy
(0,20)*+{U }="A";
(30,20)*+{\Fa(A,B) }="B";
(0,0)*+{V}="C";
(30,0)*+{\Fa t}="D";
(17,10)*+{E}="E";
{\ar@{->}^{a_1}"A";"B"};
{\ar@{_{(}->}_-{h}"A"+(0,-3);"C"};
{\ar@{->}_-{a_2}"C";"D"};
{\ar@{->}^-{\Fa(u_t)}"B";"D"};
{\ar@{.>}_-{f}"B";"E"};
{\ar@{.>}_-{}"C";"E"};
{\ar@{->}_-{r}"E";"D"};
\endxy
\] 

This gives have a factorization of $\Fa(u_t): \Fa(A,B) \xhookrightarrow{f} E \xrightarrow{r} \Fa t$. \ \\

If we analyze our original pushout square at $t$ like in Remark \ref{intermed-trivial-fib} we get a diagram in which every triangle and square is commutative. 
\[
\xy
(0,20)*+{U }="A";
(30,20)*+{\Fa(A,B) }="B";
(0,0)*+{V}="C";
(30,0)*+{\Fa t}="D";
(60,20)*+{\Za(A,B) }="J";
(60,0)*+{\Za t }="K";
{\ar@{->}^{a_1}"A";"B"};
{\ar@{_{(}->}_-{h}"A"+(0,-3);"C"};
{\ar@{->}_-{a_2}"C";"D"};
{\ar@{->}^-{}"B";"D"};
{\ar@{->}^-{q_{AB}}"B";"J"};
{\ar@{->}_-{q_t}"D";"K"};
{\ar@{->}^{\Za( u_t)}"J";"K"};
{\ar@{-->}_-{}"C";"J"};

\endxy
\] 

By universal property of the pushout of $h$ along $a_1$, there exists a unique map $\delta : E \to \Za(A,B)$ making everything commutative. The unicity of the map out of the pushout implies the commutativity of: 

\[
\xy
(0,20)*+{E }="A";
(30,20)*+{\Za(A,B) }="B";
(0,0)*+{\Fa t}="C";
(30,0)*+{\Za t}="D";
{\ar@{->}^{\delta}"A";"B"};
{\ar@{->}_-{r}"A";"C"};
{\ar@{->}_-{q_t}"C";"D"};
{\ar@{->}^-{\Za(u_t)}"B";"D"};
\endxy
\]

Choose a factorization of $r$:  $  E \xhookrightarrow{i} Q \xtwoheadrightarrow[\sim]{j} \Fa t$; this yields a factorization of $\Fa(u_t)$ by cofibration followed by a trivial fibration:
$$\Fa(u_t) =   \Fa(A,B) \xhookrightarrow{i \circ f}  Q \xtwoheadrightarrow[\sim]{j} \Fa t.$$
This factorization is like the one we used to construct the functor $\Sim_t^{1}$. 
Since $j$ is already a weak equivalence, if $\Fa$ is in $\Rc$ then $\Fa(u_t)$ is a weak equivalence and by $3$-out-of-$2$, $i \circ f$ is a weak equivalence and hence a trivial cofibration.\ \\

Let us set $h'= i \circ f: \Fa(A,B) \to Q$ the previous trivial cofibration and denote as usual $h'_{/Q} =(h', \Id_Q) \in \Hom_{\M^{[\1]}}(h', \Id_Q)$ the obvious map. The morphism $\Pr_{t!}(h'_{/Q})$ is an element of $\kbinj$ and since  $h'$ is trivial cofibration then $\Pr_{t!}(h'_{/Q})$ is also a trivial cofibration in $\msxinj$.\ \\

Introduce as before $S(h', j) \in \Hom_{\M^{[\1]}}(h', \Fa(u_t))$ the commutative square:
\[
\xy
(0,20)*+{\Fa(A,B)}="A";
(20,20)*+{\Fa(A,B)}="B";
(0,0)*+{Q}="C";
(20,0)*+{\Fa t}="D";
{\ar@{=}^{ \Id }"A";"B"};
{\ar@{->}_{h'}"A";"C"};
{\ar@{->}_{j}"C";"D"};
{\ar@{->}^{\Fa(u_t)}"B";"D"};
\endxy
\]
 
and denote by  $T(h',j,\Fa,t) \in \Hom_{\msx}[\Pr_{t!}(h'),\Fa]$ its adjoint transpose.  Denote by $H'_1$ the pushout of $\Pr_{t!}(h'_{/Q})$ along $T(h',j,\Fa,t) $. By stability of cofibrations under pushout we know that $H'_1$ is a trivial cofibration, so in particular a weak equivalence in $\msx$:
\begin{equation}\label{correction}
\xy
(-20,20)*+{\Pr_{t!}(h')}="A";
(20,20)*+{\Fa}="B";
(-20,0)*+{\Pr_{t!}\Id_Q}="C";
(20,0)*+{\Ea}="D";
{\ar@{->}^{T(h',j,\Fa,t) }"A";"B"};
{\ar@{_{(}->}_-{\Pr_{t!}(h'_{/Q})}^-{\wr}"A"+(0,-3);"C"};
{\ar@{_{(}->}^-{H'_1}_-{\wr}"B"+(0,-3);"D"};
{\ar@{->}^{}"C";"D"};
\endxy
\end{equation}
\ \\
\textbf{\ul{Goal}:} The rest of the proof  will be to show that we can factorize $H'_1$ as:
$$ \Fa \xrightarrow{q} \Za \xrightarrow{\varepsilon} \Ea$$
where $\varepsilon$ is a pushout of a map $\gamma \in \kbinj$. This will complete the proof as $H'_1$ is a weak equivalence.

\begin{claim}
The map $\gamma$ is $\Pr_{t!}(i_{/Q}): \Pr_{t!}(i) \to \Pr_{t!}(\Id_{Q})$,  where $i: E \hookrightarrow Q$ is the previous cofibration appearing in the factorization of the morphism $E \xrightarrow{r} \Fa t$.  This map is in $cell(\kbinj)$ (see Remark \ref{Lurie_cofib}).
\end{claim}

Let $R \in \Hom_{\M^{[\1]}}(h,h') $ be the morphism represented by the commutative square:
\[
\xy
(0,20)*+{U}="A";
(20,20)*+{\Fa(A,B)}="B";
(0,0)*+{V}="C";
(20,0)*+{Q}="D";
(10,8)*+{E}="E";
{\ar@{->}^-{a_1 }"A";"B"};
{\ar@{->}_-{h}"A";"C"};
{\ar@{->}_{g}"C";"D"};
{\ar@{->}^-{h'}"B";"D"};
{\ar@{.>}_{}"C";"E"};
{\ar@{.>}_{}"E";"D"};
{\ar@{.>}_{}"B";"E"};
\endxy
\]

The composite  $S(h', j) \circ R: h \to \Fa(u_t)$ is the map $(a_1,a_2)$ whose adjoint transpose is $\beta: \Pr_{t!}(h) \to \Fa$ of the original pushout square.\ \\
\ \\
One can easily check that the morphism $\theta=(f, \Id_Q): h' \to i$ is the pushout  in $\M^{[\1]}$ of $h_{/V}$ along $R$ and that the following commutes:

\begin{tabular}{cc}
\begin{tabular}{c}
\xy
(-20,20)*+{h}="A";
(20,20)*+{h'}="B";
(-20,0)*+{\Id_V}="C";
(20,0)*+{\Id_Q}="D";
(0,10)*+{i}="E";
{\ar@{->}^{R}"A";"B"};
{\ar@{_{(}->}_-{h_{/V}}"A"+(0,-3);"C"};
{\ar@{_{(}->}^-{h'_{/Q}}_-{\wr}"B"+(0,-3);"D"};
{\ar@{->}^{}"C";"D"};
{\ar@{.>}_{}"C";"E"};
{\ar@{^{(}->}_-{i_{/Q}}"E"+(2,-2);"D"};
{\ar@{.>}_{\theta}"B";"E"};
\endxy
\end{tabular}
=
\begin{tabular}{c}
\xy
(-10,20)*+{U}="A";
(20,20)+(10,0)*+{\Fa(A,B)}="B";
(-10,0)*+{V}="C";
(20,0)+(10,0)*+{Q}="D";
{\ar@{->}^-{a_1}"A";"B"};
{\ar@{.>}|-{}"A"+(0,-3);"C"};
{\ar@{->}|-{h'}"B";"D"};
{\ar@{.>}|-{}"C";"D"};
(-10,20)+(-25,-15)*+{V}="E";
(20,20)+(-25,-15)+(10,0)*+{Q}="F";
(-10,0)+(-25,-15)*+{V}="G";
(20,0)+(-25,-15)+(10,0)*+{Q}="H";
{\ar@{->}^{}"E";"F"};
{\ar@{->}|-{\Id}"E"+(0,-3);"G"};
{\ar@{->}|-{}"F";"H"};
{\ar@{->}|-{ }"G";"H"};
{\ar@{->}|-{h}"A";"E"};
{\ar@{->}|-{h'}"B";"F"};
{\ar@{->}|-{ \Id }"C";"G"};
{\ar@{->}|-{\Id}"D";"H"};
(-10,20)+(-25,-15)+(25,8)+(6,0)*+{E}="O";
(-10,20)+(-25,-15)+(25,8)+(6,0)+(0,-20)*+{Q}="I";
{\ar@{->}|-{i}"O";"I"};
{\ar@{->}^-{i}"O";"F"};
{\ar@{->}^-{}"I";"H"};
{\ar@{.>}^-{}"E";"O"};
{\ar@{.>}^-{}"G";"I"};
{\ar@{.>}^-{}"B";"O"};
{\ar@{.>}^-{}"D";"I"};
\endxy
\end{tabular}
\end{tabular}
\ \\
\ \\
The functor $\Pr_{t!}$ is a left adjoint, therefore preserves colimits in general, in particular it preserves pushout squares. It follows  that $\Pr_{t!}(\theta)$ is the pushout of $\Pr_{t!}(h_{/V})$ along $\Pr_{t!}(R)$. If we apply  $\Pr_{t!}$ in the above square and join  the pushout square \eqref{correction} we get the following commutative diagram in $\msx$:

\[
\xy
(-20,25)*+{\Pr_{t!}(h)}="A";
(20,25)*+{\Pr_{t!}(h')}="B";
(-20,0)*+{\Pr_{t!}(\Id_V)}="C";
(20,0)*+{\Pr_{t!}(\Id_Q)}="D";
(0,10)+(0,2)*+{\Pr_{t!}(i)}="E";
{\ar@{->}^{\Pr_{t!}(R)}"A";"B"};
{\ar@{_{(}->}_-{\Pr_{t!}(h_{/V})}"A"+(0,-3);"C"};
{\ar|(0.2){\Pr_{t!}(h'_{/Q})}"B"+(0,-3);"D"};
{\ar@{->}^{}"C";"D"};
{\ar@{.>}_{}"C";"E"};
{\ar@{^{(}->}^-{\Pr_{t!}(i_{/Q})}"E"+(4.5,-3);"D"};
{\ar@{.>}_{\Pr_{t!}(\theta)}"B";"E"};
(60,25)*+{\Fa}="J";
(60,0)*+{\Ea}="K";
{\ar@{->}^-{H_1}"J";"K"};
{\ar@{.>}^{T(h',j,\Fa,t) }"B";"J"};
{\ar@{->}_{}"D";"K"};
\endxy
\]

The naturality of the adjunction implies that $T(h',j,\Fa,t) \circ\Pr_{t!}(R)= \beta: \Pr_{t!}(h) \to \Fa$. Now introduce the pushout of $\Pr_{t!}(\theta)$ along $T(h',j,\Fa,t)$:
\begin{center}
 $D_1 =$
\begin{tabular}{c}
 \xy
(-20,20)*+{\Pr_{t!}(h)}="A";
(20,20)*+{\Fa}="B";
(-20,0)*+{\Pr_{t!}(i)}="C";
(20,0)*+{\Ma}="D";
{\ar@{->}^{T(h',j,\Fa,t) }"A";"B"};
{\ar@{_{(}->}_-{\Pr_{t!}(\theta)}"A"+(0,-3);"C"};
{\ar@{_{(}->}^-{g}"B"+(0,-3);"D"};
{\ar@{->}^{\xi}"C";"D"};
\endxy
\end{tabular} 
\end{center}
 
In one hand by lemma \ref{collapse_po}, we know that `a pushout of a pushout is a pushout', thus the  concatenation of the two squares below is a pushout of $\Pr_{t!}(h_{/V})$ along $\beta$ ( which is the pushout of the lemma):
\[
\xy
(-10,20)*+{\Pr_{t!}(h)}="A";
(20,20)*+{\Pr_{t!}(h')}="B";
(-10,0)*+{\Pr_{t!}(\Id_V)}="C";
(20,0)*+{\Pr_{t!}(i)}="D";
{\ar@{->}^{\Pr_{t!}(R)}"A";"B"};
{\ar@{_{(}->}_-{\Pr_{t!}(h_{/V})}"A"+(0,-3);"C"};
{\ar@{^{(}->}^{\Pr_{t!}(\theta)}"B"+(0,-3);"D"};
{\ar@{.>}^{}"C";"D"};
(50,20)*+{\Fa}="J";
(50,0)*+{\Ma}="K";
{\ar@{->}^-{g}"J";"K"};
{\ar@{.>}^{T(h',j,\Fa,t) }"B";"J"};
{\ar@{->}_{\xi}"D";"K"};

\endxy
\]

By unicity of the pushout, we can assume  (up-to a unique isomorphism) that $\Ma =\Za$ and that $g=q$. 
On the other hand if we consider $D_2$ the pushout square of $\gamma= \Pr_{t!}(i_{/Q})$ along the previous map $\xi$:
\begin{center}
 $D_2 =$
\begin{tabular}{c}
 \xy
(-20,20)*+{\Pr_{t!}(i)}="A";
(20,20)*+{\Za}="B";
(-20,0)*+{\Pr_{t!}(\Id_Q)}="C";
(20,0)*+{\Na}="D";
{\ar@{->}^{\xi}"A";"B"};
{\ar@{_{(}->}_-{\Pr_{t!}(i_{/Q})}"A"+(0,-3);"C"};
{\ar@{_{(}->}^-{\varepsilon}"B"+(0,-3);"D"};
{\ar@{->}^{\zeta}"C";"D"};
\endxy
\end{tabular} 
\end{center}
then the vertical concatenation $\frac{D_1}{D_2}$ is a pushout of $\Pr_{t!}(h'_{/Q})$ along $T(h',j,\Fa,t)$. By unicity of the pushout we can assume (up-to a unique isomorphism) that $\Na= \Ea$. Consequently we have $H'_1= \varepsilon \circ q$ and by construction $\varepsilon$ is the pushout of $\Pr_{t!}(i_{/Q}) \in \kbinj$.  This completes the proof of the lemma. 
\end{proof}

\subsubsection{Localization of the injective model structure}

We now go back to the functor $\Sim: \msxinj \to \msxinj$ constructed before.  Recall that $\Sim$ has the following properties:
\begin{enumerate}
\item for every $\Fa \in \msx$, $\Sim(\Fa) \in \Rc$;
\item we have a natural transformation $\eta_{\Fa}: \Fa \to \Sim(\Fa)$ which is a cofibration in $\msxinj$.
\item if $\Fa \in \Rc$ then  $\eta_{\Fa}: \Fa \to \Sim(\Fa)$ is a trivial cofibration therein. 
\end{enumerate}
In order to apply the material developed by Simpson in \cite{Simpson_HTHC}, we need some other properties. \ \\

The first thing we need, that will not be proved for the moment is the
\begin{hypo}\label{left-proper-hypo}
We will assume from now that if $\M$ is left proper then $\msxinj$ is also left proper. 
\end{hypo} 
\begin{rmk}\label{remark-hypo}
We are not sure for the moment that this hypothesis is valid in all cases. But if it's not, there are many reasons to believe that we can have  a structure of a \textbf{catégorie dérivable} in the sense of Cisinski \cite{Cisinski-cat-derivables}
\end{rmk}

\begin{warn}
We modify $\Sim$ by another functor denoted $\siminj$ which is a $\kbinj$-injective replacement functor. $\siminj$ is constructed by the \emph{Gluing construction} (see \cite[Prop. 7.17]{Dwyer_Spalinski}) and the small object argument in the locally presentable category $\msx$. 
\end{warn}

With the above modifications and hypothesis, and thanks to Theorem \ref{direct-local} we have

\begin{prop}
The pair $(\siminj, \eta)$ is a weak monadic projection from $\msxinj$ to $\Rc$. 
\end{prop}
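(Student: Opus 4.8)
The plan is to verify the five conditions (WPr1)--(WPr5) of the definition of a weak monadic projection for the pair $(\siminj,\eta)$ with $\Ra=\Rc$, and to do so by leveraging that $(\Rc,\kbinj)$ is direct localizing (Theorem \ref{direct-local}) together with the general machinery of Simpson \cite[Chap.~9]{Simpson_HTHC} for producing weak monadic projections from direct localizing systems. First I would recall that $\siminj$ is, by construction, the functor obtained by applying the small object argument in the locally presentable category $\msx$ to the set $\kbinj$, so that for each $\Fa$ the canonical map $\eta_\Fa:\Fa\to \siminj(\Fa)$ is a transfinite composition of pushouts of maps in $\kbinj$, hence an element of $\cof(\kbinj)\subset \cof(\I_{\msxinj})$, i.e.\ a cofibration in $\msxinj$; and $\siminj(\Fa)\in \inj(\kbinj)$. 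Condition (A5) of Theorem \ref{direct-local}, namely $\inj(\kbinj)\subset\Rc$, then gives (WPr1): $\siminj(\Fa)\in\Rc$ for every $\Fa$. Condition (WPr5) is immediate: since $\eta_\Fa$ is a cofibration and every object of $\M$ (hence of $\msxinj$) is cofibrant, $\siminj(\Fa)$ is cofibrant; in fact cofibrancy here is automatic.

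Next I would treat (WPr2) and (WPr4). For (WPr2): if $\Fa\in\Rc$, then $\eta_\Fa$ is a transfinite composition of pushouts of maps in $\kbinj$ starting from an object of $\Rc$; using the left properness hypothesis (Hypothesis \ref{left-proper-hypo}) and the direct localizing condition (A6) --- which says that any pushout of $\Fa\in\Rc$ along a map of $\kbinj$ can be completed, by a further $\cell(\kbinj)$-map, to a weak equivalence out of $\Fa$ --- one argues inductively along the $\kappa$-sequence, as in Simpson's argument, that each stage map is a weak equivalence, so that the transfinite composite $\eta_\Fa$ is a weak equivalence; this is where the detailed bookkeeping of \cite[Chap.~9]{Simpson_HTHC} is invoked. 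For (WPr4): a weak equivalence $f:\Fa\to\Ga$ between cofibrant objects induces $\siminj(f):\siminj(\Fa)\to\siminj(\Ga)$, and by the two-out-of-three property applied to the commuting square $\eta_\Ga\circ f=\siminj(f)\circ\eta_\Fa$ it suffices to know that $\eta$ is compatible with weak equivalences; again this follows from the left proper, direct localizing framework of Simpson, since $\siminj$ is then a fibrant-replacement-type functor for the localized model structure on $\msxinj$ determined by $\kbinj$, and such functors always preserve weak equivalences between cofibrant objects.

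Finally, (WPr3): the map $\siminj(\eta_\Fa):\siminj(\Fa)\to\siminj(\siminj(\Fa))$ is a weak equivalence. Since $\siminj(\Fa)\in\Rc$ by (WPr1), this is exactly an instance of (WPr2) applied to the object $\siminj(\Fa)\in\Rc$: the canonical map from an object of $\Rc$ to its $\siminj$-replacement is a weak equivalence. Assembling (WPr1)--(WPr5) gives that $(\siminj,\eta)$ is a weak monadic projection from $\msxinj$ to $\Rc$, which is the claim. The main obstacle I anticipate is (WPr2)/(WPr3), i.e.\ showing that $\eta$ is a weak equivalence on objects already in $\Rc$; this rests essentially on the direct localizing condition (A6) of Theorem \ref{direct-local} together with the left properness Hypothesis \ref{left-proper-hypo}, and it is precisely the point where one must cite and carefully apply Simpson's transfinite induction argument rather than re-derive it. The remaining conditions are formal consequences of the construction of $\siminj$ via the small object argument and of the stability of $\Rc$ under weak equivalences (Proposition \ref{stab-coseg}).
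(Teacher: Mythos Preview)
Your approach is essentially the same as the paper's: both rest on Theorem \ref{direct-local} (that $(\Rc,\kbinj)$ is direct localizing) together with Hypothesis \ref{left-proper-hypo}, and then invoke Simpson's machinery. The paper's proof is in fact a single citation: ``This is Lemma 9.3.1 in \cite{Simpson_HTHC}.'' You have simply tried to unpack what that lemma says.

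One point in your unpacking is not correct as stated. In (WPr3) you claim that $\siminj(\eta_\Fa):\siminj(\Fa)\to\siminj(\siminj(\Fa))$ being a weak equivalence ``is exactly an instance of (WPr2) applied to the object $\siminj(\Fa)\in\Rc$.'' But (WPr2) gives that $\eta_{\siminj(\Fa)}$ is a weak equivalence, not $\siminj(\eta_\Fa)$; these are two \emph{a priori} different maps $\siminj(\Fa)\to\siminj(\siminj(\Fa))$, related only by the naturality square $\siminj(\eta_\Fa)\circ\eta_\Fa=\eta_{\siminj(\Fa)}\circ\eta_\Fa$, which by itself does not let you conclude. The actual argument in Simpson's Lemma 9.3.1 for (WPr3) (and also for (WPr4), which you likewise defer to Simpson) uses more of the direct localizing structure and the transfinite construction of $\siminj$ than your sketch suggests. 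Since you explicitly cite \cite[Chap.~9]{Simpson_HTHC} for these steps, this is not a fatal gap---you are, like the paper, ultimately relying on that reference---but your paraphrase of (WPr3) should be corrected.
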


\begin{proof}
This is Lemma 9.3.1 in \cite{Simpson_HTHC}. 
\end{proof}

\paragraph*{New homotopical data}
Let $\Cr$ be cofibrant replacement functor on $\msxinj$.\ \\

Define a map $\sigma : \Fa \to \Ga$ to be
\begin{itemize}
\item a \textbf{new weak equivalence} if the map $\siminj(\Cr\sigma): \siminj(\Cr\Fa) \to \siminj(\Cr\Ga)$ is a weak equivalence in $\msx$;
\item a \textbf{new cofibration} if $\sigma$ is a cofibration;
\item a \textbf{new trivial cofibration} if $\sigma$ is a cofibration and a new weak equivalence;
\item a \textbf{new fibration} if it has the RLP with respect to all new trivial cofibrations;  and
\item a \textbf{new trivial fibration} if it's a new fibration and  a new weak equivalence.
\end{itemize} 

With the above definitions we have
\begin{thm}\label{model-msxinjp}
The classes of original cofibrations, new weak equivalences, and new fibrations defined above provide $\msx$ with a structure of closed model category, cofibrantly generated and combinatorial. It is left proper. This structure is the left Bousfield localization\index{Localization!Bousfield} of $\msxinj$ by the original set of maps $\kbinj$.\ \\

The fibrant objects are the $\kbinj$-injective objects, in particular they are Co-Segal categories; and a morphism $\Fa \to \Ga$ to a fibrant object is a fibration if and only if it is in $\inj(\kbinj)$.  We will denote by $\msxinjp$ this new model structure on $\msx$.
\end{thm}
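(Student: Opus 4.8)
The plan is to apply the general machinery of left Bousfield localization for combinatorial model categories to the direct localizing system produced in Theorem \ref{direct-local}, exactly as in Chapter 9 of Simpson \cite{Simpson_HTHC}. First I would recall that $\msxinj$ is a combinatorial model category (Theorem \ref{main-inj-msx}), that it is tractable because all objects of $\M$ are cofibrant (so the generating (trivial) cofibrations $\I_{\msxinj}=\Gamma\I_{\kxinj}$, $\Ja_{\msxinj}=\Gamma\Ja_{\kxinj}$ have cofibrant domains), and that it is left proper by Hypothesis \ref{left-proper-hypo}. These are the standing hypotheses under which Simpson's localization results apply. Since $\kbinj$ is a small set of maps, the left Bousfield localization $\Lb_{\kbinj}\msxinj$ exists and is again combinatorial and left proper; its cofibrations are the original ones, and its weak equivalences are the $\kbinj$-local equivalences.

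The next step is to identify the $\kbinj$-local equivalences with the ``new weak equivalences'' defined via $\siminj$, and to identify the $\kbinj$-local (i.e.\ fibrant) objects with $\Rc$. This is where Theorem \ref{direct-local} does the work: because $(\Rc,\kbinj)$ is a direct localizing system, the associated $\kbinj$-injective replacement functor $\siminj$ together with $\eta$ is a weak monadic projection from $\msxinj$ to $\Rc$ (this is Lemma 9.3.1 of \cite{Simpson_HTHC}, already quoted above). A weak monadic projection, composed with a cofibrant replacement functor $\Cr$, characterizes the local model structure: $\sigma$ is a local weak equivalence iff $\siminj(\Cr\sigma)$ is a weak equivalence in $\msx$, which is precisely the definition of ``new weak equivalence'' given before the theorem. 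Hence the localized model structure $\Lb_{\kbinj}\msxinj$ has exactly the three classes of maps in the statement: original cofibrations, new weak equivalences, new fibrations.

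For the description of fibrant objects, I would argue as follows. By condition (A5) of a direct localizing system — which was verified in the proof of Theorem \ref{direct-local} — every object in $\inj(\kbinj)$ lies in $\Rc$, i.e.\ is a Co-Segal category; indeed the adjunction $\Pr_{t!}\dashv \Ev_{u_t}$ translates $\kbinj$-injectivity into $\Fa(u_t)$ having the RLP with respect to the generating cofibrations $h\in\I$, hence being a trivial fibration in $\M$, hence a weak equivalence. Since $\Ja_{\msxinj}\subset\kbinj$ and the model structure is left proper, the general theory of localization gives that the fibrant objects of $\Lb_{\kbinj}\msxinj$ are exactly the objects that are fibrant in $\msxinj$ and $\kbinj$-local; because all objects are already fibrant in $\msxinj$ (limits computed level-wise, $\M$-objects fibrant is not needed here — rather $\kxinj$ has the relevant property) one reduces to: fibrant objects $=$ $\kbinj$-injective objects, and these are Co-Segal categories. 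The last clause, that $\Fa\to\Ga$ with $\Ga$ fibrant is a fibration iff it is in $\inj(\kbinj)$, is the standard recognition of fibrations into fibrant objects in a Bousfield localization along a set of trivial cofibrations containing the generating ones.

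The main obstacle is not any single step here — each is a citation to \cite{Simpson_HTHC} or a consequence of Theorem \ref{direct-local} — but rather the reliance on Hypothesis \ref{left-proper-hypo}: left properness of $\msxinj$ is genuinely used in the existence and good behaviour of the left Bousfield localization (and in Simpson's framework more generally), and it is only assumed, not proved, in this paper. Modulo that hypothesis, the proof is essentially an assembly: combinatoriality and tractability from Theorems \ref{main-inj-msx} and the cofibrancy assumptions, the direct localizing system from Theorem \ref{direct-local}, the weak monadic projection $\siminj$ from Simpson's Lemma 9.3.1, and then the standard localization theorem to package everything, together with conditions (A5)–(A6) to pin down the fibrant objects as Co-Segal categories.

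\begin{proof}
We apply the theory of left Bousfield localization for combinatorial left proper model categories (see \cite[Chap.\ 9]{Simpson_HTHC}) to the direct localizing system $(\Rc,\kbinj)$ of Theorem \ref{direct-local}.

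By Theorem \ref{main-inj-msx} the category $\msxinj$ is a combinatorial model category, cofibrantly generated by $\I_{\msxinj}=\Gamma\I_{\kxinj}$ and $\Ja_{\msxinj}=\Gamma\Ja_{\kxinj}$. Since all objects of $\M$ are cofibrant, all objects of $\kxinj$ are cofibrant, hence the domains of maps in $\I_{\msxinj}$ and $\Ja_{\msxinj}$ are cofibrant; thus $\msxinj$ is tractable. By Hypothesis \ref{left-proper-hypo} it is left proper. As $\kbinj$ is a small set (it contains $\Ja_{\msxinj}$ together with a small set of maps $\Pr_{t!}(h_{/V})$ indexed by $t$ of degree $>1$ and $h\in\I$), the left Bousfield localization $\Lb_{\kbinj}\msxinj$ exists, is combinatorial and left proper, has the same underlying category and the same cofibrations as $\msxinj$, and its fibrant objects are the $\kbinj$-local objects.

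By Theorem \ref{direct-local} the pair $(\Rc,\kbinj)$ is direct localizing. Hence, by Lemma 9.3.1 of \cite{Simpson_HTHC}, the $\kbinj$-injective replacement functor $\siminj$ together with the natural map $\eta$ is a weak monadic projection from $\msxinj$ to $\Rc$: for every $\Fa$ we have $\siminj(\Fa)\in\Rc$; for $\Fa\in\Rc$ the map $\eta_{\Fa}$ is a weak equivalence; $\siminj(\eta_{\Fa})$ is always a weak equivalence; $\siminj$ preserves weak equivalences between cofibrant objects; and $\siminj$ preserves cofibrant objects. By the characterization of Bousfield localizations via weak monadic projections (see \cite[Chap.\ 9]{Simpson_HTHC}), a map $\sigma$ is a $\kbinj$-local weak equivalence if and only if $\siminj(\Cr\sigma)$ is a weak equivalence in $\msx$, where $\Cr$ is a cofibrant replacement functor; this is precisely the notion of new weak equivalence. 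Consequently the model structure $\Lb_{\kbinj}\msxinj$ has as its three distinguished classes the original cofibrations, the new weak equivalences, and the new fibrations, and it is cofibrantly generated and combinatorial and left proper.

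It remains to describe the fibrant objects. By definition the fibrant objects of $\Lb_{\kbinj}\msxinj$ are the $\kbinj$-local objects, equivalently (since $\Ja_{\msxinj}\subset\kbinj$ and every object of $\msxinj$ is fibrant) the objects in $\inj(\kbinj)$. Let $\Fa\in\inj(\kbinj)$. For each $1$-morphism $t$ of degree $>1$ and each $h\in\I$, lifting $\Fa$ against $\Pr_{t!}(h_{/V})$ is, by the adjunction $\Pr_{t!}\dashv\Ev_{u_t}$, the same as a horn filling for $\Fa(u_t)$ against $h_{/V}$, which in turn is the same as a lifting of $h$ against $\Fa(u_t)$; hence $\Fa(u_t)$ has the right lifting property with respect to $\I$, so it is a trivial fibration in $\M$, in particular a weak equivalence. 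As this holds for all such $t$, $\Fa$ satisfies the Co-Segal conditions, i.e.\ $\Fa\in\Rc$. Finally, for a map $\Fa\to\Ga$ with $\Ga$ fibrant, the standard recognition of fibrations into fibrant objects in the localization of a cofibrantly generated model category along a set of (trivial) cofibrations containing the generating trivial cofibrations shows that $\Fa\to\Ga$ is a fibration if and only if it has the right lifting property with respect to $\kbinj$, i.e.\ lies in $\inj(\kbinj)$. This completes the proof.
\end{proof}
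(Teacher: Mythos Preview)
Your proposal is correct and follows the same route as the paper: both invoke Simpson's direct-localizing machinery (culminating in his Theorem 9.7.1) applied to the pair $(\Rc,\kbinj)$ from Theorem \ref{direct-local}, and both identify the fibrant objects via property (A5). The paper compresses all of this into a one-line citation; you have unpacked it.

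One small wobble: in your fibrant-object paragraph you assert that ``every object of $\msxinj$ is fibrant'' to pass from ``$\kbinj$-local'' to ``$\inj(\kbinj)$''. That claim is not obvious (injective fibrations are not just level-wise fibrations) and, more to the point, is not needed. The identification of fibrant objects with $\inj(\kbinj)$ and of fibrations into fibrant objects with maps in $\inj(\kbinj)$ is part of the output of Simpson's Theorem 9.7.1 for a direct localizing system, not a consequence of the general Bousfield-localization description ``old-fibrant plus local''. So you can drop that sentence and cite Simpson directly for those two statements, exactly as the paper does.
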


\begin{proof}
Follows from Theorem 9.7.1 in  \cite{Simpson_HTHC}. The fact that fibrant objects are Co-Segal categories follows from the property $(A5)$. 
\end{proof}

\subsection{Localization of the projective model structure}
\subsubsection{The Classical localization}
In the fist place we begin by using the classical localization method to localize the projective model structure $\msxproj$.

We will use the following theorem, due to Smith \cite{Smith_unpub}, as stated by Barwick \cite[Thm 4.7 ]{Barwick_localization}. 
\begin{thm}\label{smith-local}
If $\Mb$ is left proper and $\U$-combinatorial, and $\kb$ is an $\U$-small set of homotopy classes of morphisms of $\Mb$, the left Bousfield localization\index{Localization!Bousfield}  $\lkm$ of $\Mb$ along any set representing $\kb$ exists and satisfies the following conditions.
\begin{enumerate}
\item The model category $\lkm$ is left proper and $\U$-combinatorial.
\item  As a category, $\lkm$ is simply $\Mb$.
\item The cofibrations of $\lkm$ are exactly those of $\Mb$.
\item The fibrant objects of $\lkm$ are the fibrant $\kb$-local objects Z of $\Mb$. 
\item The weak equivalences of $\lkm$ are the $\kb$-local equivalences.
\end{enumerate}
\end{thm}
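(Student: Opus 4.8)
The plan is to produce $\lkm$ by Jeff Smith's recognition criterion for combinatorial model categories \cite{Smith_unpub, Lurie_HTT, Barwick_localization}, applied to the underlying locally presentable category of $\Mb$ with the cofibrations left unchanged and the weak equivalences enlarged to the $\kb$-local equivalences. First I would fix a functorial cosimplicial framing on $\Mb$ --- available since $\Mb$, being combinatorial, has functorial (accessible) factorizations --- so as to dispose of a homotopy mapping-space bifunctor $\Map(-,-)$ with $\Map(X,Z)_n \cong \Hom(X^n,Z)$. Call a fibrant object $Z$ \emph{$\kb$-local} if $\Map(f,Z)$ is a weak equivalence of simplicial sets for every $f$ representing a class of $\kb$, and call a morphism $g$ a \emph{$\kb$-local equivalence} if $\Map(g,Z)$ is a weak equivalence for every $\kb$-local $Z$; write $\we$ for the weak equivalences of $\Mb$ and $\we_{\kb}$ for the class of $\kb$-local equivalences. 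Let $I$ denote a set of generating cofibrations of $\Mb$. The localization $\lkm$ is to be the model structure on the same category with cofibrations $\cof(I)$ and weak equivalences $\we_{\kb}$, so the whole problem reduces to verifying Smith's hypotheses for the pair $(I,\we_{\kb})$.

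Smith's criterion demands: (i) $\we_{\kb}$ satisfies two-out-of-three and is closed under retracts; (ii) $\we_{\kb}$ is an accessible and accessibly embedded subcategory of $\Ar(\Mb)$, closed under filtered colimits; (iii) $\inj(I) \subseteq \we_{\kb}$; and (iv) $\cof(I) \cap \we_{\kb}$ is stable under pushout and transfinite composition. Conditions (i) and (iii) are the formal ones. Property (i) is inherited verbatim from the simplicial weak equivalences, since membership in $\we_{\kb}$ is phrased solely through $\Map(-,Z)$ landing in $\we(\mathbf{sSet})$. For (iii), every map in $\inj(I)$ is a trivial fibration of $\Mb$, hence lies in $\we$; and a direct check shows $\we \subseteq \we_{\kb}$, because a weak equivalence $g$ of $\Mb$ induces a weak equivalence $\Map(g,Z)$ for \emph{every} fibrant $Z$, a fortiori for the $\kb$-local ones.

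The content lies in (ii) and (iv), and the accessibility in (ii) is the step I expect to be the main obstacle. The route I would follow is: invoke that for the combinatorial $\Mb$ the original class $\we$ is perfect, i.e. accessible and accessibly embedded in $\Ar(\Mb)$ and closed under filtered colimits \cite{Smith_unpub, Lurie_HTT}, and fix a regular cardinal $\lambda$ for which $\Mb$ is $\lambda$-combinatorial, $\we$ is $\lambda$-filtered-colimit closed, and the chosen framing --- hence the functor $\Map$ --- is $\lambda$-accessible. Since $\kb$ is a \emph{small} set, the full subcategory of $\kb$-local objects is a homotopy-orthogonality class of a small set of maps and is therefore accessible and accessibly embedded \cite{Adamek-Rosicky-loc-pres}; reducing ``for all $\kb$-local $Z$'' to a small generating family via the associated accessible reflector, one exhibits $\we_{\kb}$ as the inverse image of the accessible, accessibly embedded class $\we(\mathbf{sSet})$ under an accessible functor assembled from $\Map$, and inverse images of such classes under accessible functors are again accessible and accessibly embedded \cite{Adamek-Rosicky-loc-pres}. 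For (iv), stability of $\cof(I)\cap\we_{\kb}$ under transfinite composition holds because $\Map(-,Z)$ sends such a composition of cofibrations to a tower of trivial fibrations of simplicial sets, whose inverse limit is again a trivial fibration; stability under pushout is where the left properness of $\Mb$ enters, through the gluing lemma \cite{Hirsch-model-loc} guaranteeing that a pushout of a $\kb$-local equivalence along a cofibration stays a $\kb$-local equivalence.

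With (i)--(iv) established, Smith's theorem endows the underlying category of $\Mb$ with a cofibrantly generated --- indeed $\U$-combinatorial --- model structure whose cofibrations are $\cof(I)$ and whose weak equivalences are $\we_{\kb}$; this is by definition $\lkm$ and yields conclusions (2), (3), (5), and the combinatorial part of (1). For the remaining left properness in (1) I would argue through the mapping-space description: for a pushout in $\Mb$ of a $\kb$-local equivalence along a cofibration, left properness of $\Mb$ makes $\Map(-,Z)$ carry the square to a homotopy pullback of fibrations of simplicial sets for every $\kb$-local $Z$, so right properness of $\mathbf{sSet}$ forces the remaining leg into $\we_{\kb}$. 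Finally, for (4) I would identify the fibrant objects of $\lkm$, namely the objects right orthogonal to $\cof(I)\cap\we_{\kb}$, with the fibrant $\kb$-local objects of $\Mb$: the universal property that $\kb$-local equivalences are exactly the maps inverted by mapping into $\kb$-local objects shows that an object is $\lkm$-fibrant precisely when it is fibrant in $\Mb$ and $\kb$-local, completing the proof.
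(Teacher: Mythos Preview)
The paper does not supply its own proof of this theorem: it is quoted verbatim as an external result, attributed to Smith \cite{Smith_unpub} and cited in the form given by Barwick \cite[Thm~4.7]{Barwick_localization}. There is therefore no in-paper argument to compare against; your sketch via Smith's recognition criterion is precisely the standard route taken in those references, and the outline you give (two-out-of-three, $\inj(I)\subseteq\we_{\kb}$, accessibility of $\we_{\kb}$, pushout/transfinite-composition stability using left properness) is the expected one.

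One small remark on your sketch: in step (iv) you phrase stability under transfinite composition by saying $\Map(-,Z)$ sends the tower of cofibrations to a tower of trivial fibrations whose inverse limit is again a trivial fibration. For this to go through cleanly you want $Z$ fibrant in $\Mb$ and the maps in the tower to be cofibrations between cofibrant objects, so that $\Map(-,Z)$ indeed converts them to Kan fibrations; in a combinatorial setting with functorial cofibrant replacement this is harmless, but it is worth making the reduction to cofibrant sources explicit. Similarly, your left-properness argument for $\lkm$ essentially repeats the pushout-stability argument from (iv); this is fine, but note that it is really the same computation done twice rather than two independent steps.
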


We introduce some piece of notations. \ \\

Given a cofibration $h$, the map $h_{/V}$  considered previously is an injective cofibration in $\M^{[\1]}$ but not in general a projective cofibration. We will then replace $h_{/V}$ by a slight modification  $\zeta(h)$ which is a projective cofibration.\ \\
If we consider $h_{/V}$ as a commutative diagram, by universal property of the pushout of $h$ along itself, there is a unique map $k: V \cup_U V \to V$ making everything commutative:
\[
\xy
(0,20)*+{U }="A";
(30,20)*+{V}="B";
(0,0)*+{V}="C";
(30,0)*+{V}="D";
(17,10)*+{V \cup_U V}="E";
{\ar@{^{(}->}^-{h}"A"+(3,0);"B"};
{\ar@{_{(}->}_-{h}"A"+(0,-3);"C"};
{\ar@{->}_-{\Id}"C";"D"};
{\ar@{->}^-{\Id}"B";"D"};
{\ar@{.>}_-{i_1}"B";"E"};
{\ar@{.>}_-{i_0}"C";"E"};
{\ar@{^{(}->}_-{k}"E"+(3,-3);"D"};
\endxy
\]  

Choose a factorization `cofibration-trivial fibration' of the map $k:V \cup_U V \to V$:
$$r= V \cup_U V \xhookrightarrow{a} Z \xtwoheadrightarrow[\sim]{q} V.$$
Such factorization is a \emph{relative cylinder object} for the cofibration $h:  U \to V$.\ \\
  
For each cofibration $h \in \I$, define $\zeta(h)=(h, ai_0) \in \Hom_{\M^{[\1]}}(h, ai_1)$ to be the induced map represented by the commutative square:
\[
\xy
(0,20)*+{U }="A";
(30,20)*+{V}="B";
(0,0)*+{V}="C";
(30,0)*+{Z}="D";
(17,10)*+{V \cup_U V}="E";
{\ar@{->}^-{h}"A";"B"};
{\ar@{_{(}->}_-{h}"A"+(0,-3);"C"};
{\ar@{_{(}->}_-{ai_0}"C"+(3,0);"D"};
{\ar@{->}^-{ai_1}"B";"D"};
{\ar@{.>}_-{}"B";"E"};
{\ar@{.>}_-{}"C";"E"};
{\ar@{^{(}->}_-{a}"E"+(3,-3);"D"};
\endxy
\]   

By construction we have $ j\circ (ai_0)=\Id_V$ and since $j$ and $\Id_V$ are weak equivalences, it follows by $3$-out-of-$2$ that $ai_0$ is a weak equivalence, hence a trivial cofibration.

\begin{rmk}\ \
\begin{enumerate}
\item It's clear that $\zeta(h)$ is automatically a projective (= Reedy) cofibration in $\M^{[\1]}$; and if $h$ is a trivial cofibration then so is $\zeta(h)$.
\item Since $\Pr_{t!}\dashv \Ev_{u_t}$ is a  Quillen adjunction with the corresponding projective model structures, then $\Pr_{t!} \zeta(h)$ is a (trivial)  cofibration in $\msxproj$ is $h$ is so.
\end{enumerate}
\end{rmk}

The map $\zeta(h)$ plays the role of $h_{/V}$, that is we can detect if  $\Fa(u_t)$ is a weak equivalence in $\M$, in terms of horn  filling property against all the map $\zeta(h)$ (using a homotopy lifting lemma). 

\begin{df}
Let $\M$ be a model category. Say  that $g: A \to B$ has the \textbf{right homotopy lifting prorperty} (RHLP) with respect to $h: U \to V$ if for any commutative square:
\[
\xy
(0,20)*+{U }="A";
(30,20)*+{A}="B";
(0,0)*+{V}="C";
(30,0)*+{B}="D";
{\ar@{->}^-{u}"A";"B"};
{\ar@{->}_-{h}"A";"C"};
{\ar@{->}_-{v}"C";"D"};
{\ar@{->}^-{g}"B";"D"};
{\ar@{.>}^-{r}"C";"B"};
\endxy
\]  

there exists a map $r: V \to A$ such that:
\begin{itemize}
\item $rh=u$ i.e the upper triangle commutes;
\item $gr$ and $v$ are \textbf{homotopic relative to $U$}, that is we have a relative cylinder object
$$V \cup_U V \xhookrightarrow{a} Z \xtwoheadrightarrow[\sim]{q} V$$
together with a map $f: Z \to  B$ restricting to $gu=vh$ on $U$; and inducing the equalities $fai_0= v$, $fai_1= gr$. 
\end{itemize}
\end{df}

\begin{rmk}\label{invariance-hlift}
It's important to observe that this definition is well defined in the sense that it doesn't depend on choice of the relative cylinder object for $h$. Indeed if $r'= V \cup_U V \xhookrightarrow{a'} Z' \xtwoheadrightarrow[\sim]{q'} V$ is another cylinder, then by the lifting axiom we can find a map $k : Z' \to Z$ such that $a=ka'$ and $q'=qk$. 
Therefore if $f: Z \to B$ is a homotopy lifting with respect to $r$ then automatically $f'= fk$ is a homotopy lifting with respect to $r'$.
\end{rmk}

\begin{prop}\label{horn-fil-eq}
For a cofibration $h:U \to V$ and  a map $g :A \to B$ in a model category $\M$, the following are equivalent.
\begin{enumerate}
\item $g$ has the RHLP with respect to $h$.
\item $g$ is $\{\zeta(h)\}$-injective.
\end{enumerate}
\end{prop}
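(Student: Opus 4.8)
The statement is an ``iff'' between two characterizations of a map $g : A \to B$ in a model category $\M$: having the right homotopy lifting property (RHLP) with respect to a cofibration $h : U \to V$, and being injective with respect to the single map $\zeta(h)$ in $\M^{[\1]}$. The plan is to unwind both notions into concrete diagrams and show each implies the other; the engine in both directions is the factorization $V \cup_U V \xhookrightarrow{a} Z \xtwoheadrightarrow{q} V$ of the fold map $k$ (the chosen relative cylinder for $h$) that enters the definition of $\zeta(h)$, together with Remark \ref{invariance-hlift}, which guarantees that the notion of RHLP does not depend on which relative cylinder object one picks.

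First, recall that a morphism in $\M^{[\1]}$ from $\zeta(h)$ to $g$ is by definition a pair of maps filling the evident square, i.e. the data of $(u : U \to A,\ v' : Z \to B)$ with $v' \circ (ai_0) = $ (the map $V \to B$), $v' \circ (ai_1) = g \circ$ (the map $V \to A$ determined by $u$), and compatibility along $U$; and $g$ being $\{\zeta(h)\}$-injective means that for every such square there is a lift $Z \to A$ making both triangles commute. I would first translate: the ``square'' data for $\zeta(h) \to g$ is precisely the data of a commutative square $(u,v)$ from $h$ to $g$ \emph{together with} a chosen homotopy $f : Z \to B$ between $v$ and $gr_0$ rel $U$ for some candidate $r_0 : V \to A$ (the component of the square on the upper-right corner $V$). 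Then a lift in the injectivity problem is exactly a map $r : V \to A$ with $rh = u$ whose composite with $g$ is homotopic rel $U$ to $v$ via a compatible $Z \to B$. Matching this line by line against the definition of RHLP gives the equivalence almost tautologically; the only real content is bookkeeping of which corner of the $\M^{[\1]}$-square plays which role, and verifying that the homotopy witnessing $gr \sim v$ rel $U$ produced by an injectivity lift is indeed of the required form (it is, because $Z$ is literally the cylinder appearing in $\zeta(h)$).

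For the direction RHLP $\Rightarrow$ $\{\zeta(h)\}$-injective: given a square $\zeta(h) \to g$, extract the underlying square $h \to g$ (forget the $Z$-component, keeping its restriction $v : V \to B$ to the bottom edge via $ai_0$) and apply RHLP to get $r : V \to A$ with $rh = u$ and a homotopy lifting $f : Z' \to B$ for \emph{some} cylinder $Z'$; by Remark \ref{invariance-hlift} we may transport $f$ to the specific cylinder $Z$ of $\zeta(h)$, and then $f$ (together with $r$) assembles into the required lift $Z \to A$ of the injectivity problem — more precisely, the injectivity lift $Z \to A$ is just $r$ seen on the appropriate component, and the homotopy-compatibility of $f$ is exactly the commutativity of the remaining triangle. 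For the converse, an injectivity lift against $\zeta(h)$ is by construction a map whose upper triangle gives $rh = u$ and whose lower compatibility, read through $a : V\cup_U V \to Z$ and $q : Z \to V$, says precisely that $gr$ and $v$ become equal after composing into $Z$ in the two slots $i_0, i_1$, i.e. they are homotopic rel $U$ via the cylinder $Z$; hence $g$ has the RHLP with respect to $h$.

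\textbf{Expected main obstacle.} There is no deep obstacle; the proof is a diagram-chase identifying the data of an $\M^{[\1]}$-morphism into $g$ with the data of ``a square plus a relative homotopy.'' The one point requiring care is orientation: the map $\zeta(h)$ has been set up so that its \emph{target} $ai_1 : V \to Z$ is a trivial cofibration and its \emph{source} is $h$, so one must be careful that the homotopy extracted from an injectivity square is between $gr$ and $v$ (and not, say, between $gr$ and something twisted), and that the cylinder built into $\zeta(h)$ is the \emph{relative} cylinder $V \cup_U V \to Z \to V$ rather than an absolute one — this is exactly why the statement is stated for relative cylinders and why Remark \ref{invariance-hlift} is invoked to make the choice irrelevant. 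Beyond that, both implications are immediate once the dictionary between the two pictures is written out.
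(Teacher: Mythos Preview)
Your strategy is correct and coincides with the paper's: both implications are a direct unpacking of definitions, with Remark~\ref{invariance-hlift} invoked for $(1)\Rightarrow(2)$ to replace an arbitrary relative cylinder by the specific $Z$ built into $\zeta(h)$. The paper spells out $(2)\Rightarrow(1)$ and dismisses the converse in one line by ``reversing'' the argument, exactly as you do.

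There is, however, a bookkeeping confusion in your write-up about what is \emph{input} versus \emph{output} in the $\{\zeta(h)\}$-injectivity problem. Saying that $g$ (as an object of $\M^{[\1]}$) is $\{\zeta(h)\}$-injective means: every morphism $\alpha:h\to g$ in $\M^{[\1]}$ extends along $\zeta(h):h\to ai_1$ to some $\beta:ai_1\to g$. So the input is just $\alpha=(u,v)$ with $u:U\to A$, $v:V\to B$, $gu=vh$ --- an ordinary lifting square for $h$ against $g$, with no $Z$-component and no homotopy attached. The output is $\beta=(r,f)$ with $r:V\to A$, $f:Z\to B$, $gr=f\circ(ai_1)$; the condition $\beta\circ\zeta(h)=\alpha$ then reads $rh=u$ and $f\circ(ai_0)=v$, so $f$ is precisely a homotopy $v\sim gr$ rel $U$ on the cylinder $Z$. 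Your phrases ``the data of $(u:U\to A,\ v':Z\to B)$ \ldots\ together with a chosen homotopy'' and ``forget the $Z$-component'' put the cylinder on the wrong side of the problem; there is also no ``lift $Z\to A$'' anywhere. Once this is straightened out, the two conditions are literally the same data and the proof is the tautology you describe, matching the paper's.
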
 

\begin{proof}
We simply show how we get (1) from (2). The converse follows by `reversing' the argumentation since we can  assume that the relative cylinder chosen in (1) is the one used to construct $\zeta(h)$ thanks to Remark \ref{invariance-hlift}. \ \\

Assume that $g$ is $\{\zeta(h)\}$-injective.  A lifting problem defined by $g$ and $h$ 

\[
\xy
(0,20)*+{U }="A";
(30,20)*+{A}="B";
(0,0)*+{V}="C";
(30,0)*+{B}="D";
{\ar@{->}^-{u}"A";"B"};
{\ar@{->}_-{h}"A";"C"};
{\ar@{->}_-{v}"C";"D"};
{\ar@{->}^-{g}"B";"D"};
\endxy
\]  

corresponds to a map $\alpha=(u,v) \in \Hom_{\M^{[\1]}}(h, g)$; since  $g$ is $\{\zeta(h)\}$-injective we can fill  the following horn in $\M^{[\1]}$
\[
\xy
(0,20)*+{h}="A";
(20,20)*+{g}="B";
(0,0)*+{ai_1}="C";
{\ar@{->}^{\alpha}"A";"B"};
{\ar@{->}_-{\zeta(h)}"A";"C"};
{\ar@{.>}_{\beta}"C";"B"};
\endxy
\] 
with a map $\beta=(r,f)$.
Displaying this horn in $\M$ we end up with with a commutative diagram where every triangle and square commute:
\[
\xy
(-10,20)*+{U}="A";
(10,20)+(10,0)*+{A}="B";
(-10,0)*+{V}="C";
(10,0)+(10,0)*+{B}="D";
{\ar@{->}^-{u}"A";"B"};
{\ar@{->}_-{h}"A"+(0,-3);"C"};
{\ar@{->}_-{g}"B";"D"};
{\ar@{->}_-{v}"C";"D"};
(-10,20)+(-25,-15)*+{V}="E";
(-10,0)+(-25,-15)*+{Z}="G";
{\ar@{->}_-{ai_1}"E"+(0,-3);"G"};
{\ar@{->}_-{h}"A";"E"};
{\ar@{->}_-{ai_0}"C";"G"};
{\ar@{.>}^-{r}"E";"B"};
{\ar@{.>}_-{f}"G";"D"};
\endxy
\]

 One clearly has that $(r,f)$ gives the desired relative homotopy lifting.
\end{proof}

The following is a classical result in model categories  
\begin{lem}\label{homotop-lifting}
In a tractable left proper model category $\M$, if a map $g : A \to B$ between fibrant objects has the RHLP with respect to any generating cofibration $h$, then $g$ is a weak equivalence.
\end{lem}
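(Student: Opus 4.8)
\textbf{Proof plan for Lemma \ref{homotop-lifting}.}

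The plan is to show that $g$ is both a fibration and a weak equivalence by using the RHLP against generating cofibrations to build a deformation retraction, then invoking the usual characterization of trivial fibrations. First I would observe that since $g: A \to B$ is a map between fibrant objects, the right homotopy lifting property with respect to all of $\I$ can be upgraded: given any lifting problem against a generating cofibration $h: U \to V$, we obtain a genuine lift $r: V \to A$ with $rh = u$ on the nose, together with a relative homotopy $f: Z \to B$ between $gr$ and $v$ rel $U$. The idea is to use this homotopy, together with the fact that $B$ is fibrant (so that homotopies can be transported and the ``homotopy RLP'' can be rigidified into an honest RLP), to correct the lift $r$ to a strict solution of the original lifting problem. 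Concretely, one takes the relative cylinder $V \cup_U V \hookrightarrow Z \xrightarrow{\sim} V$, uses the map $f: Z \to B$ and the fibrancy of $B$ to find (via a lift against the trivial cofibration $V \hookrightarrow Z$ composed with one endpoint inclusion) a compatible system, and then corrects $r$ by a homotopy that ends at a map $r'$ with $gr' = v$ and $r'h = u$.

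The second step is to assemble these strict lifts to conclude that $g$ has the RLP with respect to every map in $\I$. Since $\M$ is cofibrantly generated with generating cofibrations $\I$, every cofibration is a retract of an $\I$-cell complex (and by the tractability/Lurie-type observation in Remark \ref{Lurie_cofib}, we may even assume every cofibration lies in $\cof(\I)$), so $g$ then has the RLP with respect to all cofibrations. By \cite[Lemma 1.1.10]{Hov-model} (the characterization of trivial fibrations), $g$ is a trivial fibration, hence in particular a weak equivalence, which is the assertion. The left properness hypothesis enters only to guarantee that the correction-of-lifts argument behaves well under the pushouts and transfinite compositions used to pass from generating cofibrations to arbitrary cofibrations; alternatively one can argue directly with the retract characterization and avoid explicit mention of left properness, but keeping it in the hypotheses matches the ambient setting.

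The main obstacle I expect is the rigidification step: turning the \emph{relative} homotopy lifting (a lift up to homotopy rel $U$) into a \emph{strict} lift. This is the step where fibrancy of $A$ and $B$ is essential, and it is the homotopy-theoretic analogue of the statement that a map with the homotopy RLP against a generating set, between fibrant objects, actually has the strict RLP. One carries it out by the standard trick: given $r$ and the homotopy $f: Z \to B$ from $gr$ to $v$, one lifts the homotopy along $g$ starting from $r$ --- which is possible because $g$ has the RHLP and $A$ is fibrant --- obtaining $\tld r: V \to A$ with $r'h = u$ and $gr' = v$. Making this precise requires care with the endpoint inclusions of the cylinder and with naturality of the homotopies rel $U$, but it is routine once the cylinder bookkeeping is set up; there are no genuinely new ideas beyond what is already used in the surrounding material on $\Pr_{t!}$ and the maps $\zeta(h)$.
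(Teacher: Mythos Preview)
The paper itself does not prove this; it simply cites \cite[Lemma~7.5.1]{Simpson_HTHC}. So the only question is whether your sketch stands on its own.

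It does not: the rigidification step contains a genuine gap, and in fact the conclusion you reach---that $g$ is a trivial fibration---is strictly stronger than the lemma and false in general. To ``lift the homotopy $f:Z\to B$ along $g$ starting from $r$'' you must solve
\[
\xy
(0,15)*+{V}="A";
(20,15)*+{A}="B";
(0,0)*+{Z}="C";
(20,0)*+{B}="D";
{\ar@{->}^{r}"A";"B"};
{\ar@{->}_{a i_1}"A";"C"};
{\ar@{->}^{g}"B";"D"};
{\ar@{->}^{f}"C";"D"};
\endxy
\]
and this requires $g$ to have the RLP against the \emph{trivial cofibration} $a i_1 : V \hookrightarrow Z$. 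Nothing in the hypotheses supplies that: you are only given the RHLP against the maps of $\I$, and fibrancy of $A$ merely lets you extend $r$ over $Z$ with no control on the composite with $g$. Concretely, take $\M$ to be topological spaces with the Quillen structure and $g$ the inclusion $\{0\}\hookrightarrow[0,1]$. Both objects are fibrant, and $g$ has the RHLP against every $S^{n-1}\hookrightarrow D^n$ (a map $D^n\to[0,1]$ sending the boundary to $0$ is linearly homotopic rel boundary to the constant map $0$); yet $g$ is not surjective, hence not a trivial fibration. So your argument cannot be ``routine cylinder bookkeeping'': it is aiming at a false target.

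The usual fix is to interpose a factorization $g = p\circ j$ with $j:A\hookrightarrow A'$ a trivial cofibration and $p:A'\twoheadrightarrow B$ a fibration. Because $A$ is fibrant, $j$ has a retraction, and one uses this together with the covering-homotopy property of the \emph{fibration} $p$ to transfer the RHLP from $g$ to $p$; for $p$ your rigidification is now legitimate (the displayed square has a lift since $p$ is a fibration and $a i_1$ is a trivial cofibration), so $p\in\inj(\I)$ is a trivial fibration and $g=p\circ j$ is a weak equivalence. This is essentially the argument behind the cited lemma; the tractability and left-properness hypotheses are used in the homotopy bookkeeping of this transfer step, not---as you suggest---in passing from generating to arbitrary cofibrations.
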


\begin{proof}
This is Lemma 7.5.1 in \cite{Simpson_HTHC}.
\end{proof}

\begin{nota}\ \
\begin{enumerate}
\item We will denote for short  $\zeta(\I)= \{ \zeta(h)  \}_{U \xrightarrow{h} V \in \I}$.
\item For $t \in \sx$ we will write $\Pr_{t!} \zeta(\I)$ the image of $\zeta(I)$ by $\Pr_{t!}$.
\item Let $\kbproj$ be the set $\Ja_{\msxproj} \cup ( \coprod_{t \in \sx, \degb(t) >1} \Pr_{t!} \zeta(\I))$.
\end{enumerate}
\end{nota}
\ \\
Recall that $\Rc$ is the subcategory of Co-Segal categories. 
Under the hypothesis \ref{left-proper-hypo}, we have by virtue of Theorem \ref{smith-local} that

\begin{thm}\label{classical-local}
There exists a combinatorial model structure on $\msxproj$ such that the fibrant objects are Co-Segal categories ie object of $\Rc$. This model structure is moreover left proper and is the left Bousfield localization\index{Localization!Bousfield}  with respect to $\kbproj$.
\end{thm}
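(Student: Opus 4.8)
The plan is to verify the hypotheses of Theorem \ref{smith-local} (Smith's localization theorem) with $\Mb = \msxproj$ and $\kb$ the set of homotopy classes of maps in $\kbproj$. The model category $\msxproj$ is combinatorial by Theorem \ref{main-proj-msx}, and under Hypothesis \ref{left-proper-hypo} it is left proper (left properness of the projective structure follows from left properness of $\msxinj$, since the two share the same weak equivalences, and $\msxinj$-cofibrations include the projective ones; alternatively one argues directly). The set $\kbproj$ is small since $\sx$ has a small set of $1$-morphisms and $\I$ is a small set. Hence all the hypotheses of Theorem \ref{smith-local} are met, and the left Bousfield localization $\Lb_{\kbproj}\msxproj$ exists, is combinatorial and left proper, has the same underlying category and the same cofibrations as $\msxproj$, and its fibrant objects are precisely the fibrant $\kbproj$-local objects.

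The substantive point — exactly as in the injective case — is to identify the fibrant objects as Co-Segal categories. First I would observe that a projectively fibrant object of $\msxproj$ is one whose underlying diagram is object-wise fibrant in $\M$; under the running assumption that all objects of $\M$ are cofibrant, this is automatic, so fibrancy in $\msxproj$ is a mild condition. The key is then to show: an object $\Fa \in \msx$ is $\kbproj$-local (in the fibrant replacement sense) if and only if $\Fa \in \Rc$, i.e. $\Fa(u_t)$ is a weak equivalence in $\M$ for every $1$-morphism $t$ of degree $>1$. By the adjunction $\Pr_{t!}\dashv \Ev_{u_t}$ (which is a Quillen pair, as established before Lemma \ref{cof-rectification}), being right-orthogonal to $\Pr_{t!}\zeta(h)$ translates, via Proposition \ref{horn-fil-eq}, into the map $\Fa(u_t)$ having the right homotopy lifting property with respect to the generating cofibration $h$ of $\M$. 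Applying this for all $h \in \I$ and invoking Lemma \ref{homotop-lifting} (RHLP against all generating cofibrations between fibrant objects forces a weak equivalence), we conclude that $\Fa(u_t)$ is a weak equivalence for every such $t$; assembling over all $t$ gives $\Fa \in \Rc$. Conversely, if $\Fa(u_t)$ is already a weak equivalence then $\Fa$ is right-orthogonal to the maps $\Pr_{t!}\zeta(h)$ up to homotopy (again by Proposition \ref{horn-fil-eq} together with the definition of RHLP, which only requires a homotopy lift when the target map is a weak equivalence), and it is right-orthogonal to $\Ja_{\msxproj}$ since it is projectively fibrant; hence $\Fa$ is $\kbproj$-local. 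This establishes that the fibrant objects of the localized structure are Co-Segal categories.

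The main obstacle I expect is the careful bookkeeping around the homotopy lifting property: Proposition \ref{horn-fil-eq} gives the equivalence between $\{\zeta(h)\}$-injectivity and the RHLP, but one must be attentive to the fibrancy hypotheses in Lemma \ref{homotop-lifting} (both source and target of $\Fa(u_t)$ must be fibrant in $\M$, which is where ``all objects cofibrant'' plus object-wise fibrancy of $\Fa$ enters), and to the fact that the RHLP is invariant under the choice of relative cylinder (Remark \ref{invariance-hlift}), so that the class $\kbproj$ built from one functorial choice of cylinder genuinely detects the Co-Segal condition. A secondary subtlety is the passage from ``local object'' in the sense of Smith's theorem (fibrant $\kbproj$-local objects) to the concrete statement about $\Fa(u_t)$; one must use that $\Ja_{\msxproj} \subset \kbproj$ so that $\kbproj$-locality subsumes ordinary fibrancy, exactly as the analogous condition $J \subset \tx{K}$ appears in the direct-localizing setup. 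Modulo these points, the proof is a direct citation of Theorem \ref{smith-local} combined with the orthogonality computation above.

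\begin{proof}
We apply Theorem \ref{smith-local} with $\Mb = \msxproj$ and $\kb$ the set of homotopy classes of the maps in $\kbproj$. By Theorem \ref{main-proj-msx} the model category $\msxproj$ is combinatorial, and by Hypothesis \ref{left-proper-hypo} it is left proper. The set $\kbproj= \Ja_{\msxproj} \cup ( \coprod_{t \in \sx, \degb(t) >1} \Pr_{t!} \zeta(\I))$ is small, since $\sx$ has only a small set of $1$-morphisms and $\I$ is small. Hence Theorem \ref{smith-local} applies: the left Bousfield localization $\lkm$ of $\msxproj$ along $\kbproj$ exists, is combinatorial and left proper, has $\msx$ as underlying category, and its cofibrations coincide with those of $\msxproj$. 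Its fibrant objects are the fibrant $\kbproj$-local objects of $\msxproj$, and its weak equivalences are the $\kbproj$-local equivalences.

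It remains to identify the fibrant objects with the subcategory $\Rc$ of Co-Segal categories. Since all objects of $\M$ are cofibrant, a projectively fibrant object $\Fa$ of $\msxproj$ is simply one which is object-wise fibrant in $\M$, and for such $\Fa$ each $\Fa(A,B)$, $\Fa t$ is fibrant in $\M$. Because $\Ja_{\msxproj}\subset \kbproj$, a $\kbproj$-local object is in particular fibrant in $\msxproj$. Now let $\Fa$ be fibrant in $\msxproj$ and let $t$ be a $1$-morphism of degree $>1$. For every generating cofibration $h\in\I$, being right-orthogonal (up to homotopy) to $\Pr_{t!}\zeta(h)$ is, by the adjunction $\Pr_{t!}\dashv \Ev_{u_t}$, equivalent to $\Fa(u_t)$ being $\{\zeta(h)\}$-injective, which by Proposition \ref{horn-fil-eq} is equivalent to $\Fa(u_t)$ having the right homotopy lifting property with respect to $h$. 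If $\Fa$ is $\kbproj$-local, then $\Fa(u_t)$ has the RHLP with respect to every $h\in\I$; since $\Fa(A,B)$ and $\Fa t$ are fibrant in $\M$, Lemma \ref{homotop-lifting} forces $\Fa(u_t)$ to be a weak equivalence in $\M$. Assembling over all $t$ with $\degb(t)>1$, and using Observation \ref{Co-Segal-final-cond}, we obtain $\Fa\in\Rc$. Conversely, if $\Fa\in\Rc$ is projectively fibrant, then each $\Fa(u_t)$ is a weak equivalence, so by Proposition \ref{horn-fil-eq} (the RHLP requiring an actual lift only up to relative homotopy, which is available since $\Fa(u_t)$ is a weak equivalence between fibrant objects) $\Fa(u_t)$ is $\{\zeta(h)\}$-injective for all $h\in\I$, whence $\Fa$ is right-orthogonal to $\coprod_{t}\Pr_{t!}\zeta(\I)$; it is also right-orthogonal to $\Ja_{\msxproj}$ by fibrancy, so $\Fa$ is $\kbproj$-local. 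This shows that the fibrant objects of $\lkm$ are exactly the Co-Segal categories, and the theorem follows.
\end{proof}
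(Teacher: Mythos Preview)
Your approach matches the paper's—apply Smith's theorem, then identify fibrant objects via the adjunction $\Pr_{t!}\dashv\Ev_{u_t}$, Proposition \ref{horn-fil-eq}, and Lemma \ref{homotop-lifting}—but there is a gap in the passage from locality to injectivity. You assert that ``being right-orthogonal (up to homotopy) to $\Pr_{t!}\zeta(h)$ is, by the adjunction, equivalent to $\Fa(u_t)$ being $\{\zeta(h)\}$-injective''; however, $\kbproj$-locality is a derived mapping-space condition, while $\{\zeta(h)\}$-injectivity in Proposition \ref{horn-fil-eq} is a \emph{strict} extension property, and the underived adjunction only transports strict lifting problems. The paper inserts the missing step: each map in $\kbproj$ is an old cofibration and becomes a weak equivalence in the localization, hence a trivial cofibration there, so any fibrant object of the localized structure has the \emph{strict} RLP against $\kbproj$, i.e.\ is $\kbproj$-injective. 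From strict $\kbproj$-injectivity the adjunction does give strict $\zeta(\I)$-injectivity of $\Fa(u_t)$, and then your citations of Proposition \ref{horn-fil-eq} and Lemma \ref{homotop-lifting} finish correctly.

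Two smaller points. The converse (Co-Segal $\Rightarrow$ $\kbproj$-local) is not part of the theorem and the paper does not prove it; your justification via Proposition \ref{horn-fil-eq} is incomplete, since that proposition does not assert that weak equivalences between fibrant objects have the RHLP against cofibrations (this is true but requires a separate argument). And the remark in your plan that level-wise fibrancy is ``automatic'' because all objects of $\M$ are cofibrant confuses cofibrant with fibrant—though in the formal proof you correctly derive level-wise fibrancy from $\Ja_{\msxproj}\subset\kbproj$, so this slip does no damage.
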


\begin{proof}
Take $\msxproj$ with the Bousfield localization with respect to $\kbproj$ which exists by Theorem \ref{smith-local}.  The maps in $\kbproj$ are weak equivalences in the Bousfield localization and since they are old cofibrations, they become trivial cofibrations.
It follows that if $\Fa$ is fibrant, it is then $\kbproj$-injective.\ \\

Let $\Fa$ be a $\kbproj$-injective object. In one hand as $\jmsxproj \subset \kbproj$, we have that $\Fa$ is fibrant in the old model structure, which means that  $\Fa$ is level-wise fibrant. On the other hand since $\Fa$ is $\{\Pr_{t!} \zeta(\I)\}$-injective for all $t$, it follows  by the adjunction $\Pr_{t!} \dashv \Ev_{u_t}$, that $\Fa(u_t)$ is $\zeta(\I)$-injective. Combining Proposition \ref{horn-fil-eq} and  Lemma \ref{homotop-lifting}, we deduce that $\Fa(u_t)$ is a weak equivalence, thus  $\Fa \in \Rc$.  
\end{proof}

\subsubsection{Direct localizing the projective model structure.}

Let's consider the injective localized model structure $\msxinjp$ constructed with the direct localizing system $(\Rc, \kbinj)$. As we mentioned before it's the left Bousfield localization of the original model structure with respect to $\kbinj$.This is the same Bousfield localization we will have using Theorem \ref{classical-local}.\ \\

Since every map  in $\kbinj$ is a weak equivalence in $\msxinjp$ , we have in particular that for all $h \in I$, the map $\Pr_{t!}(h_{/V}): \Pr_{t!}(h) \to \Pr_{t!}(\Id_V)$ is a weak equivalence in $\msxinjp$. Recall that each map $\zeta(h)$ is constructed out of $h_{/V}$  and we have a factorization of  $h_{/V}$:
$$ h \xhookrightarrow{\zeta(h)} ai_1 \xtwoheadrightarrow{\ell} \Id_V.$$
The factorization is displayed below as:

\[
\xy
(0,20)*+{U }="A";
(20,20)*+{V}="B";
(0,0)*+{V}="C";
(20,0)*+{Z}="D";
(40,20)*+{V}="E";
(40,0)*+{V}="F";
{\ar@{->}^-{h}"A";"B"};
{\ar@{_{(}->}_-{h}"A"+(0,-3);"C"};
{\ar@{_{(}->}_-{ai_0}"C"+(3,0);"D"};
{\ar@{_{(}->}^-{ai_1}_{\wr}"B"+(0,-3);"D"};
{\ar@{->}^-{\Id}"B";"E"};
{\ar@{->>}_-{q}^-{\sim}"D";"F"};
{\ar@{->}^-{\Id}"E";"F"};
\endxy
\] 

As $q$ is a trivial fibration,  $\ell: ai_1 \to \Id$ is a level-wise trivial fibration in $\M^{[\1]}$ in particular a weak equivalence therein. Since we've assumed that all the objects of $\M$ are cofibrant, then both the source and target of $\ell$ are cofibrant in $\arproj$. From Ken Brown lemma $\Pr_{t!}$ preserves weak equivalences between cofibrant objects (as any left Quillen functor); thus  $\Pr_{t!}(\ell)$ is an old weak equivalence, hence a new weak equivalence (= a weak equivalence in $\msxinjp$). \ \\

From the equality $\Pr_{t!}(h_{/V}) = \Pr_{t!}(\ell) \circ \Pr_{t!}\zeta(h)$ we deduce by $3$ for $2$ in the model category $\msxinjp$, that $\Pr_{t!}\zeta(h)$ is a weak equivalence therein; moreover since projective cofibrations are also injective cofibration, then $\Pr_{t!}\zeta(h)$ is an old cofibration, hence a new cofibration. Putting these together one has that every $\Pr_{t!}\zeta(h)$ is a trivial cofibration in $\msxinjp$. \ \\

\paragraph{New projective data} Recall that weak equivalences in $\msxinjp$ are those maps $\sigma$ such that $\siminj(\Cr \sigma)$ is a weak equivalence in the original model structure $\msxinj$. Here  $\siminj$ is a weak monadic projection from $\msxinj$ to $\Rc$ and $\Cr$ is a cofibrant replacement functor in $\msxinj$. As pointed out in \cite[Sec 9.3]{Simpson_HTHC}, the notion of new weak equivalence depends only on $\Rc$ and doesn't depend neither on  $\kbinj$ nor on $\Cr$.\ \\

Using again the fact that projective cofibrations are injective ones, and since old weak equivalences in $\msxinj$ and $\msxproj$ are the same,  we clearly have that a cofibrant replacement functor in $\msxproj$ is also a cofibrant replacement for $\msxinj$. Therefore we can assume that $\Cr$ is a projective cofibrant replacement functor. \ \\

As mentioned in the Remark \ref{mpr-inj-proj} we can extract from the weak monadic projection $\siminj$ a weak monadic projection $\simproj$ from $\msxproj$ to $\Rc$. This is obtained by applying the functorial factorization of the type $(\cof, \fib \cap \we)$ to the natural transformation $\eta: \Id_{\msx} \to \siminj$ in the model category $\msxproj$. In particular there is trivial projective fibration $\simproj \to \siminj$. 

\begin{warn}
In the upcoming paragraphs we will precise `new injective' or `new projective' to avoid confusion when saying `new weak equivalence'.  We will remove this disctinction later since the new weak equivalences will be the same.
\end{warn}

Let $\Cr$ be cofibrant replacement functor on $\msxproj$.\ \\
\ \\
Define a map $\sigma : \Fa \to \Ga$ to be:
\begin{itemize}
\item a \textbf{new projective weak equivalence} if the map $\simproj(\Cr\sigma): \simproj(\Cr\Fa) \to \simproj(\Cr\Ga)$ is a weak equivalence in $\msx$ ;
\item a \textbf{new cofibration} if $\sigma$ is a cofibration;
\item a \textbf{new trivial cofibration} if $\sigma$ is a cofibration and a new (projective) weak equivalence;
\item a \textbf{new fibration} if it has the RLP with respect to all new trivial cofibrations;  and
\item a \textbf{new trivial fibration} if it's a new fibration and  a new weak equivalence.
\end{itemize} 

\begin{prop}\label{new-we-same}
The class of new projective weak equivalences and and new injective weak equivalences coincide. 
\end{prop}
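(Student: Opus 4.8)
The plan is to show that the two notions of new weak equivalence are literally the same by exploiting the fact that both are defined \emph{only through $\Rc$}, and not through the auxiliary data ($\kbinj$, $\kbproj$, or the particular cofibrant-replacement functor). First I would recall the crucial remark: Simpson's construction of a weak monadic projection to $\Rc$ — whether carried out inside $\msxinj$ to produce $\siminj$, or inside $\msxproj$ to produce $\simproj$ — yields functors whose \emph{only} homotopical content is ``fibrant replacement into $\Rc$'', and the induced notion of $\Rc$-local equivalence depends solely on $\Rc$ (this is exactly the point quoted from \cite[Sec.~9.3]{Simpson_HTHC}). Since the underlying category $\msx$ is the same, the old cofibrations in $\msxinj$ and $\msxproj$ are comparable (every projective cofibration is an injective one), and most importantly the old weak equivalences in $\msxinj$ and $\msxproj$ coincide, a cofibrant replacement in $\msxproj$ is also a cofibrant replacement in $\msxinj$; so one may use the \emph{same} $\Cr$ on both sides, which removes the dependence on $\Cr$.

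The key step is then to compare $\siminj(\Cr\sigma)$ and $\simproj(\Cr\sigma)$ for an arbitrary map $\sigma$. By the construction of $\simproj$ from $\siminj$ recalled just above the statement (apply the functorial $(\cof,\fib\cap\we)$-factorization in $\msxproj$ to $\eta\colon\Id_{\msx}\to\siminj$), there is a natural trivial projective fibration $p\colon\simproj\twoheadrightarrow\siminj$, so for every object $\Aa$ the component $p_{\Aa}\colon\simproj(\Aa)\to\siminj(\Aa)$ is a weak equivalence in $\msx$. Naturality gives, for $\sigma\colon\Fa\to\Ga$, a commuting square
\[
\xy
(0,20)*+{\simproj(\Cr\Fa)}="A";
(40,20)*+{\simproj(\Cr\Ga)}="B";
(0,0)*+{\siminj(\Cr\Fa)}="C";
(40,0)*+{\siminj(\Cr\Ga)}="D";
{\ar@{->}^-{\simproj(\Cr\sigma)}"A";"B"};
{\ar@{->}_-{p}"A";"C"};
{\ar@{->}^-{p}"B";"D"};
{\ar@{->}_-{\siminj(\Cr\sigma)}"C";"D"};
\endxy
\]
in which both vertical maps are weak equivalences in $\msx$. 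By the $3$-out-of-$2$ property of weak equivalences in $\msx$ (which holds for both model structures since they share their weak equivalences), $\simproj(\Cr\sigma)$ is a weak equivalence if and only if $\siminj(\Cr\sigma)$ is. That is precisely the assertion that $\sigma$ is a new projective weak equivalence iff it is a new injective weak equivalence.

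The main (only real) obstacle is bookkeeping: one must be careful that $\Cr$ can legitimately be chosen to serve both model structures and that the natural transformation $p\colon\simproj\Rightarrow\siminj$ is genuinely natural and object-wise a weak equivalence — both of which follow from the functoriality of the factorizations and from the coincidence of the classes of weak equivalences in $\msxinj$ and $\msxproj$, exactly as in Remark \ref{mpr-inj-proj}. No new homotopical input is needed beyond $3$-out-of-$2$; the statement is essentially a consistency check that the localization is intrinsic to the target subcategory $\Rc$. Once this is in hand, the ``new projective/injective'' distinction can be dropped in all subsequent discussion, as announced in the warning preceding the proposition.
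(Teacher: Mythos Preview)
Your proof is correct and follows essentially the same approach as the paper: draw the naturality square for the trivial fibration $p\colon\simproj\Rightarrow\siminj$ applied to $\Cr\sigma$, observe that the vertical maps are weak equivalences in $\msx$, and conclude by $3$-out-of-$2$. Your additional care in justifying that the same $\Cr$ serves both model structures and that $p$ is natural is fine but the paper simply takes these for granted from the preceding discussion.
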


\begin{proof}
For any morphism $\sigma: \Fa \to \Ga$ we have by construction the commutativity of:
 \[
\xy
(0,20)*+{\simproj(\Cr \Fa)}="A";
(45,20)*+{\simproj(\Cr \Ga)}="B";
(0,0)*+{\siminj(\Cr \Fa)}="C";
(45,0)*+{\siminj(\Cr \Ga)}="D";
{\ar@{->}^{\simproj(\Cr \sigma)}"A";"B"};
{\ar@{->>}_-{}^-{\wr}"A";"C"};
{\ar@{->}_-{\siminj(\Cr \sigma)}"C";"D"};
{\ar@{->>}_-{}^-{\wr}"B";"D"};
\endxy
\] 
\\
with all the above vertical maps being weak equivalences in $\msx$.  Therefore if one of two maps $\simproj(\Cr \sigma)$, $ \siminj(\Cr \sigma)$ is a weak equivalence in $\msx$ then by $3$ for $2$ of weak equivalence in $\msx$ the other one is also a weak equivalence. 
\end{proof}

\begin{rmk}
\begin{enumerate}
\item It follows from the proposition that the new projective weak equivalences are closed under retract, composition and satisfy the $3$ for $2$ property. We leave the reader to check that the class of new projective (trivia) cofibrations and (trivial) fibrations are also so closed under retract and composition.
\item Since the old projective cofibrations are also old injective cofibrations, by the proposition we deduce that the new projective trivial cofibrations are also a new injective trivial cofibrations.   
\end{enumerate}
\end{rmk}

We remind the reader that the functor $\siminj$ is a $\kbinj$-injective replacement functor.  The following proposition tells us that
\begin{prop}
The functor $\simproj$ is a $\kbproj$-injective replacement functor, that is  there is a lift to any diagram:
\[
\xy
(0,20)*+{\Aa}="A";
(20,20)*+{\simproj(\Fa)}="B";
(0,0)*+{\Ba}="C";
(20,0)*+{\ast}="D";
{\ar@{->}^{}"A";"B"};
{\ar@{->}_{\alpha}"A";"C"};
{\ar@{->}^{!}"B";"D"};
{\ar@{->}^{!}"C";"D"};
{\ar@{.>}^{}"C";"B"};
\endxy
\] 

for any morphism $\alpha: \Aa \to \Ba$ in $\kbproj$.
\end{prop}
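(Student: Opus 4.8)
The plan is to deduce this from the corresponding injective statement. Recall that $\siminj$ is a $\kbinj$-injective replacement functor, which by definition means that for every object $\Fa$ the morphism $\eta_{\Fa}: \Fa \to \siminj(\Fa)$ lies in $\mathrm{cell}(\kbinj)$ and $\siminj(\Fa)$ is $\kbinj$-injective; in particular for any $\alpha: \Aa \to \Ba$ in $\kbinj$ and any map $\Aa \to \siminj(\Fa)$ there is a lift over $\Ba$. I want the same for $\simproj$ and $\kbproj$. The key point is that by construction $\simproj$ comes with a trivial projective fibration $p_{\Fa}: \simproj(\Fa) \twoheadrightarrow \siminj(\Fa)$, functorially in $\Fa$, and a projective cofibration $\tld{\eta}_{\Fa}: \Fa \hookrightarrow \simproj(\Fa)$ with $\eta_{\Fa} = p_{\Fa}\circ\tld{\eta}_{\Fa}$.

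First I would reduce, exactly as in the proof of Theorem \ref{classical-local}, to the two families of maps generating $\kbproj$. If $\alpha \in \Ja_{\msxproj}$, then $\alpha$ is a trivial cofibration in $\msxproj$ and $\simproj(\Fa)$ is in particular level-wise fibrant (since $\simproj(\Fa) \in \Rc$ and every object of $\Rc$ is level-wise fibrant, as it is fibrant in $\msxproj$), so the lift exists by the ordinary lifting axiom in $\msxproj$. The remaining case is $\alpha = \Pr_{t!}\zeta(h)$ for $h \in \I$ and $t \in \sx$ of degree $>1$. By the adjunction $\Pr_{t!} \dashv \Ev_{u_t}$, a lifting problem of $\Pr_{t!}\zeta(h)$ against $\simproj(\Fa) \to \ast$ is equivalent to a horn-filling problem for $\zeta(h)$ against $\simproj(\Fa)(u_t)$; that is, it suffices to show $\simproj(\Fa)(u_t)$ is $\{\zeta(h)\}$-injective for all $h\in\I$. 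By Proposition \ref{horn-fil-eq} this is equivalent to $\simproj(\Fa)(u_t)$ having the RHLP with respect to every generating cofibration of $\M$, and by Lemma \ref{homotop-lifting} it is enough to know that $\simproj(\Fa)(u_t)$ is a weak equivalence between fibrant objects of $\M$. But $\simproj(\Fa) \in \Rc$, so $\simproj(\Fa)(u_t)$ is a weak equivalence by definition of $\Rc$; and $\simproj(\Fa)$ is level-wise fibrant as noted, so both endpoints $\simproj(\Fa)(A,B)$ and $\simproj(\Fa)(t)$ are fibrant in $\M$. Hence $\simproj(\Fa)(u_t)$ is $\{\zeta(h)\}$-injective, and the required lift exists.

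Alternatively — and this is the cleaner route I would actually present — one can argue directly from the injective case plus the comparison map. Given $\alpha: \Aa \to \Ba$ in $\kbproj$ and a map $f: \Aa \to \simproj(\Fa)$, compose with $p_{\Fa}$ to get $p_{\Fa}\circ f: \Aa \to \siminj(\Fa)$. Since $\kbproj$ and $\kbinj$ have the same left Bousfield localization (they localize $\msxinj \simeq \msxproj$ at the same class $\Rc$), the map $\alpha$ is a trivial cofibration in $\msxinjp$, hence a $\kbinj$-local trivial cofibration; more concretely $\Pr_{t!}\zeta(h)$ is a trivial cofibration in $\msxinjp$ (this was established in the paragraph preceding the statement), so there is a lift $g: \Ba \to \siminj(\Fa)$ of $p_{\Fa}\circ f$ because $\siminj(\Fa)$ is $\kbinj$-injective. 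Now I need to lift $g$ through $p_{\Fa}$ compatibly with $f$. Here I use that $p_{\Fa}$ is a trivial fibration in $\msxproj$ and that $\alpha$ is a projective cofibration: the square with $f$ on top, $\alpha$ on the left, $g$ on the bottom and $p_{\Fa}$ on the right commutes, and a lift $\Ba \to \simproj(\Fa)$ exists by the lifting axiom in $\msxproj$ against a trivial fibration. That lift solves the original problem.

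The main obstacle is purely bookkeeping: making sure that the maps in $\kbproj$ really are (trivial) cofibrations in the relevant model structure whenever they need to be — specifically that $\Pr_{t!}\zeta(h)$ is a projective cofibration (true since $\Pr_{t!}\zeta$ is a composite of left Quillen functors and $\zeta(h)$ is a projective cofibration in $\M^{[\1]}$) and that it becomes trivial in the localization (the $3$-for-$2$ argument with $\Pr_{t!}(h_{/V}) = \Pr_{t!}(\ell)\circ\Pr_{t!}\zeta(h)$ from the preceding paragraph). Once these are in place, both arguments above go through; I would write up the second one, as it avoids re-invoking the homotopy-lifting lemma and instead reuses the already-proved injective case together with the functorial comparison $p: \simproj \Rightarrow \siminj$.
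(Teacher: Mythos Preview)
Your second approach is essentially the paper's proof. The paper also composes with the projective trivial fibration $p:\simproj(\Fa)\twoheadrightarrow\siminj(\Fa)$, produces a lift into $\siminj(\Fa)$ using $\kbinj$-injectivity, and then lifts back through $p$ using that $\alpha$ is a projective cofibration and $p$ a projective trivial fibration. The only difference is how the lift into $\siminj(\Fa)$ is produced: the paper does a small case split --- for $\alpha\in\Ja_{\msxproj}\subset\Ja_{\msxinj}\subset\kbinj$ it uses $\kbinj$-injectivity directly, and for $\alpha=\Pr_{t!}\zeta(h)$ it extends by $\Pr_{t!}(\ell)$ so as to use $\Pr_{t!}h_{/V}\in\kbinj$, obtaining a lift from $\Pr_{t!}(\Id_V)$ and then restricting. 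You instead argue uniformly that every $\alpha\in\kbproj$ is a trivial cofibration in $\msxinjp$ and that $\siminj(\Fa)$ is fibrant there; this is fine and was established just before the proposition.

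Your first approach, however, has two genuine problems you should not leave in a write-up. First, the assertion that ``every object of $\Rc$ is level-wise fibrant, as it is fibrant in $\msxproj$'' is unjustified and false in general: membership in $\Rc$ only says the structure maps $\Fa(u_t)$ are weak equivalences, which has nothing to do with level-wise fibrancy. (It is true that $\simproj(\Fa)$ is level-wise fibrant, but for a different reason: $\siminj(\Fa)$ is $\kbinj$-injective, hence fibrant in $\msxinj$, hence fibrant in $\msxproj$, and $p$ is a projective fibration.) Second, you invoke Lemma~\ref{homotop-lifting} in the wrong direction: that lemma concludes \emph{weak equivalence} from the RHLP, not the other way round. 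You need the converse --- that a weak equivalence between fibrant objects has the RHLP with respect to cofibrations --- which is a separate statement requiring its own argument. As written, the first route does not go through; stick with the second.
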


\begin{proof}[Sketch of proof]
Such lifting problem is simply an extension problem:
\[
\xy
(0,20)*+{\Aa}="A";
(20,20)*+{\simproj(\Fa)}="B";
(0,0)*+{\Ba}="C";
{\ar@{->}^{}"A";"B"};
{\ar@{->}_{}"A";"C"};
{\ar@{.>}^{\exists ?}"C";"B"};
\endxy
\] 
If $\alpha \in \Ja_{\msxproj}\subset \Ja_{\msxinj} \subset \kbinj$, by extending the above diagram using the projective trivial fibration  $p: \simproj(\Fa) \xtwoheadrightarrow{\sim} \siminj(\Fa)$, we find  a lift $\circled{1}$ in the diagram

\[
\xy
(0,20)*+{\Aa}="A";
(20,20)*+{\simproj(\Fa)}="B";
(0,0)*+{\Ba}="C";
(50,20)*+{\siminj(\Fa)}="E";
{\ar@{->}^{}"A";"B"};
{\ar@{->}_{\alpha}"A";"C"};
{\ar@{->>}^{p}_{\sim}"B";"E"};
{\ar@{.>}|{\circled{1}}"C";"E"};
{\ar@{.>}|{\circled{2}}"C";"B"};
\endxy
\] 

With the map $\circled{1}$  we have a commutative square which gives a lifting problem defined by $\alpha$ and $p$; by the lifting axiom in the model category $\msxproj$ we can find  a lift $\Ba \xrightarrow{\circled{2}} \simproj(\Fa)$ making everything commutative, in particular we have the desired lift of the original extension problem.\ \\
\ \\
Assume now that $\alpha= \Pr_{t!}\zeta(h)$, then by extending the diagram with the map $p$ and the map $\Pr_{t!}(\ell)$  we end up with the following diagram
\[
\xy
(0,20)*+{\Pr_{t!}h}="A";
(20,20)*+{\simproj(\Fa)}="B";
(0,5)*+{ai_1}="C";
(50,20)*+{\siminj(\Fa)}="E";
(0,-10)*+{\Id_V}="F";
{\ar@{->}^{}"A";"B"};
{\ar@{->}^{\Pr_{t!}\zeta(h)}"A";"C"};
{\ar@{->>}^{p}_{\sim}"B";"E"};
{\ar@{->}^{\Pr_{t!} \ell}"C";"F"};
{\ar@{.>}|{\circled{1}}"F";"E"};
{\ar@/_1.5pc /_{\Pr_{t!}h_{/V}}"A";"F"};
{\ar@{.>}_{\circled{2}}"C";"B"};
\endxy
\] 

Since $\Pr_{t!}h_{/V} \in \kbinj$, there is a lift $ \Id_{V} \xrightarrow{\circled{1}} \siminj(\Fa)$; the commutative square we get is a lifting problem defined by the projective cofibration  $\Pr_{t!}\zeta(h)$ and the projective trivial fribration $p$. By the lifting axiom in $\msxproj$ there is a lift $ai_1 \xrightarrow{\circled{2}}  \simproj(\Fa)$ making everything commutative; in particular we have a lift to the original extension problem.  
\end{proof}

\paragraph{Pushout by  new projective trivial cofibrations}
We need a last piece of ingredient in order to apply Smith recognition theorem for combinatorial model category as stated for  example by Barwick \cite[Proposition 2.2 ]{Barwick_localization}. 
\begin{lem}
For a pushout square in $\msxproj$ 
\[
\xy
(0,20)*+{\Aa }="A";
(30,20)*+{\Fa}="B";
(0,0)*+{\Ba}="C";
(30,0)*+{\Ga}="D";
{\ar@{->}^{}"A";"B"};
{\ar@{->}_-{f}"A";"C"};
{\ar@{->}_-{}"C";"D"};
{\ar@{->}^-{g}"B";"D"};
\endxy
\] 
If $f$ is a new projective trivial cofibration then $g$ is a new projective trivial cofibration.  
\end{lem}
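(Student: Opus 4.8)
The plan is to reduce the statement to the corresponding elementary fact in the already–constructed injective localization $\msxinjp$, where the cobase change of a trivial cofibration along any map is automatically a trivial cofibration.

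First I would unwind the definitions. By hypothesis $f$ is a \textbf{new projective trivial cofibration}, so in particular $f$ is an (old) projective cofibration; since cofibrations are stable under cobase change in $\msxproj$, the pushout $g$ is again an (old) projective cofibration. Thus it remains only to prove that $g$ is a \emph{new projective weak equivalence}, i.e.\ that $\simproj(\Cr g)$ is a weak equivalence in $\msx$.

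For this, I would pass to the injective side. By Proposition \ref{new-we-same} the classes of new projective and new injective weak equivalences coincide, so it suffices to show that $g$ is a weak equivalence in $\msxinjp$. Now every old projective cofibration is an old injective cofibration, so $f$ is an old injective cofibration; together with the fact that $f$ is a new injective weak equivalence (again Proposition \ref{new-we-same}), this says that $f$ is a trivial cofibration in the model category $\msxinjp$ (Theorem \ref{model-msxinjp}). Since $\msxproj$, $\msxinj$ and $\msxinjp$ all have the same underlying category $\msx$, in which the pushout is computed, the given square is also a pushout square in $\msxinjp$; hence its cobase change $g$ of the trivial cofibration $f$ is again a trivial cofibration in $\msxinjp$. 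In particular $g$ is a weak equivalence in $\msxinjp$, i.e.\ a new injective --- hence a new projective --- weak equivalence. Combining with the first paragraph, $g$ is an old projective cofibration and a new projective weak equivalence, that is, a new projective trivial cofibration, as claimed.

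The only delicate point I anticipate is bookkeeping: verifying that ``new projective trivial cofibration'' genuinely unwinds to ``(old) projective cofibration \emph{and} new projective weak equivalence'', and that the identification of new projective with new injective weak equivalences (Proposition \ref{new-we-same}) is applied correctly to both $f$ and $g$. Everything substantive is then a one-line appeal to stability of (trivial) cofibrations under pushout inside the genuine model category $\msxinjp$ --- no further use of left-properness (Hypothesis \ref{left-proper-hypo}) is needed at this step, since pushouts of trivial cofibrations are trivial cofibrations in any model category; left-properness was only needed earlier to guarantee the existence of $\msxinjp$.
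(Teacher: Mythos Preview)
Your proof is correct and follows essentially the same route as the paper: reduce to showing $g$ is a new weak equivalence, use Proposition \ref{new-we-same} to pass to the injective side, observe that $f$ is a trivial cofibration in the genuine model category $\msxinjp$, and invoke stability of trivial cofibrations under pushout there. The paper's argument is identical, only more tersely stated.
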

\begin{proof}
Since $g$ is already an old projective cofibration it suffices to show that $g$ is a new projective weak equivalence. By the  Proposition \ref{new-we-same}, it's the same thing as being a new injective weak equivalence.\ \\

As pointed out above $f$ is also a new injective trivial cofibration, and since (trivial) cofibrations are closed by pushout in any model category, it follows that $g$ is also new injective trivial cofibration in $\msxinjp$.  In particular $g$ is a new injective weak equivalence, thus a new projective weak equivalence. 
\end{proof}

The new weak equivalences form an accessible subcategory of $\Ar(\msxproj)$ because they are the weak equivalences of the combinatorial model category $\msxinjp$. A map in $\inj(\I_{\msx})$ is a trivial fibration, in particular an old weak equivalence. But old weak equivalence are also new weak equivalence, therefore any map in $\inj(\I_{\msx})$  is a new weak equivalence.\ \\
 
By virtue of Smith theorem we have that:

\begin{thm}\label{msxprojp}
The classes of original cofibrations, new weak equivalences, and new fibrations defined above provide $\msx$ with a structure of closed model category, cofibrantly generated and combinatorial. It is indeed left proper.\\
This model structure is the left Bousfield localization\index{Localization!Bousfield}  of $\msxproj$ with respect to $\kbproj$. Fibrant objects are Co-Segal categories.

We will denote by $\msxprojp$ this new model strucuture on $\msx$. 
\end{thm}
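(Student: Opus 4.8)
The plan is to obtain $\msxprojp$ directly from Jeff Smith's recognition theorem for combinatorial model categories (in the form of \cite[Prop. 2.2]{Barwick_localization}), applied to the underlying category $\msx$ with the class $\mathbf{C}$ of \emph{original} projective cofibrations and the class $\mathbf{W}$ of new projective weak equivalences. All the required inputs are by now available. First, $\msx$ is locally presentable by Theorem \ref{MX-local-pres}. Next, $\mathbf{W}$ satisfies the two-out-of-three property and is an accessible subcategory of $\Ar(\msx)$: indeed, by Proposition \ref{new-we-same} the new projective weak equivalences coincide with the new injective ones, hence with the weak equivalences of the combinatorial model category $\msxinjp$ of Theorem \ref{model-msxinjp}. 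Third, $\mathbf{C}=\cof(\I_{\msx})$ already holds in $\msxproj$, so $\mathbf{C}$ is cofibrantly generated by $\I_{\msx}$, and $\inj(\I_{\msx})\subset\mathbf{W}$ because every such map is a trivial fibration in $\msxproj$, hence an old — and therefore a new — weak equivalence. The remaining hypothesis, that $\mathbf{W}\cap\cof(\I_{\msx})$ is closed under pushout and transfinite composition, is precisely handled by the pushout lemma immediately above (a pushout of a new projective trivial cofibration is one again); closure under transfinite composition follows from accessibility of $\mathbf{W}$ together with the fact that new projective trivial cofibrations are injective trivial cofibrations in the model category $\msxinjp$. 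Smith's theorem then produces the desired combinatorial, cofibrantly generated model structure with generating cofibrations $\I_{\msx}$, cofibrations $\mathbf{C}$ and weak equivalences $\mathbf{W}$.

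Second, I would record left properness. The category $\msxproj$ is itself left proper under Hypothesis \ref{left-proper-hypo}: a projective cofibration is an injective cofibration and the weak equivalences of $\msxproj$ and $\msxinj$ agree, so a pushout of a weak equivalence along a projective cofibration is a weak equivalence in $\msxinj$, hence in $\msxproj$. A left Bousfield localization of a left proper combinatorial model category is again left proper (and Smith's theorem as quoted in fact delivers this; one may also cite \cite{Hirsch-model-loc}).

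Third, I would identify the new model structure with the left Bousfield localization of $\msxproj$ at $\kbproj$. Every element of $\kbproj$ is an original projective cofibration: for $\Ja_{\msxproj}$ this is clear, and for $\Pr_{t!}\zeta(h)$ with $h\in\I$ it holds because $\Pr_{t!}\dashv\Ev_{u_t}$ is a Quillen pair for the projective structures and $\zeta(h)$ is a projective cofibration of $\M^{[\1]}$. Moreover each element of $\kbproj$ is a new weak equivalence: $\Ja_{\msxproj}$ consists of old trivial cofibrations, while $\Pr_{t!}\zeta(h)$ was shown above to be a weak equivalence of $\msxinjp$, hence a new injective — so by Proposition \ref{new-we-same} a new projective — weak equivalence. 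Thus in the new structure the maps of $\kbproj$ are trivial cofibrations, which exhibits it as a left Bousfield localization at $\kbproj$; since such a localization of $\msxproj$ is unique once its cofibrations (the original ones) and its weak equivalences (the $\kbproj$-local equivalences) are fixed, and the new weak equivalences are those of $\msxinjp$ — itself the left Bousfield localization along the set $\kbinj$ detecting the same local objects $\Rc$ — the new structure coincides with the one of Theorem \ref{classical-local}.

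Finally, for the fibrant objects: a new-fibrant $\Fa$ has the right lifting property against all new trivial cofibrations, in particular against every map of $\kbproj$, so $\Fa$ is $\kbproj$-injective (equivalently this is the content of the proposition that $\simproj$ is a $\kbproj$-injective replacement functor, applied to a fibrant $\Fa$). Since $\Ja_{\msxproj}\subset\kbproj$, $\Fa$ is fibrant in $\msxproj$, hence level-wise fibrant; and since $\Fa$ is $\{\Pr_{t!}\zeta(\I)\}$-injective for every $t$, adjunction shows $\Fa(u_t)$ is $\zeta(\I)$-injective, so Proposition \ref{horn-fil-eq} and Lemma \ref{homotop-lifting} give that $\Fa(u_t)$ is a weak equivalence of $\M$ for all $t$, i.e. $\Fa\in\Rc$ is a Co-Segal category; the description of fibrations into a fibrant object carries over verbatim from Theorem \ref{classical-local}. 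The only genuinely delicate point is the identification of the new weak equivalences with the $\kbproj$-local equivalences — that the localizations built from $\kbinj$, from $\kbproj$, and from the weak monadic projection onto $\Rc$ all agree — which is supplied by the general theory of direct localizing systems of \cite[Chap. 9]{Simpson_HTHC} and Theorem \ref{direct-local}; everything else is bookkeeping already prepared in the preceding lemmas.
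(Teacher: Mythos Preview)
Your argument is correct and follows the same plan as the paper: verify the hypotheses of Smith's recognition theorem (local presentability, accessibility of the new weak equivalences via Proposition~\ref{new-we-same} and the combinatoriality of $\msxinjp$, $\inj(\I_{\msx})\subset\mathbf{W}$, closure of $\mathbf{W}\cap\mathbf{C}$ under pushout), then identify the fibrant objects as Co-Segal categories exactly as in Theorem~\ref{classical-local}.

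The one place where the paper is more explicit is the identification with the left Bousfield localization at $\kbproj$. You argue that the new structure coincides with $\msxprojc$ by saying $\kbinj$ and $\kbproj$ ``detect the same local objects $\Rc$'' and invoking Theorem~\ref{direct-local} together with Simpson's general theory. That is not quite enough: Theorem~\ref{direct-local} and the statement that the new weak equivalences depend only on $\Rc$ are about the \emph{injective} setting, while what you need is that the new projective weak equivalences coincide with the $\kbproj$-local equivalences computed in $\msxproj$. The issue is that $\kbinj$-local objects are required to be fibrant in $\msxinj$ whereas $\kbproj$-local objects are only required to be fibrant in $\msxproj$, so passing between the two classes of local equivalences needs a fibrant-replacement argument. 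The paper closes this gap with Lemma~\ref{equiv-of weak-equiv}, which shows directly (using the factorization $\Pr_{t!}(h_{/V})=\Pr_{t!}(\ell)\circ\Pr_{t!}\zeta(h)$ and an injective fibrant replacement) that $\kbproj$-local equivalences and $\kbinj$-local equivalences agree; once that is in hand, the two model structures $\msxprojp$ and $\msxprojc$ have the same cofibrations and weak equivalences, hence coincide.
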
  

\begin{proof}
The model structure is guaranteed by Smith's theorem.  A fibrant object $\Fa$ is by definition an $\alpha$-injective injective object for every new trivial cofibration $\alpha$.\\
 
From the previous observations the maps $\Pr_{t!}\zeta(h)$ become new trivial cofibrations, and since the maps in $\Ja_{\msxproj}$ are already new trivial cofibrations, all the elements of $\kbproj$ are new trivial cofibrations. Consequently if  $\Fa$ is fibrant, it's in particular $\kbproj$-injective and one proceeds as in the proof of Theorem \ref{classical-local} to conclude that $\Fa$ is a Co-Segal category which is fibrant in the old model structure (= level-wise fibrant).\ \\

It remains to prove that this model structure is the left Bousfield localization of $\msxproj$ with respect to $\kbproj$. To prove  this we will simply show that the model structure $\msxprojp$ and the one of Theorem \ref{classical-local} are the same; that is we have the same class of  cofibrations and fibrations on the underlying category $\msxproj$.\ \\

For the notations we will denote by  $\msxprojc$ the `classical' localized model structure of Theorem \ref{classical-local}.
In both $\msxprojp$ and $\msxprojc$ the cofibrations are the old cofibrations in $\msxproj$ so we only have to show that we have the same fibrations.

 In the two model structures the fibrations are defined to be the maps having the RLP with respect to all maps which are both cofibration and new weak equivalences. It follows that the fibrations will be the same as soon as we show that we have the same weak equivalences. Thanks to the lemma below we know that the weak equivalences are indeed the same.
\end{proof}

\begin{lem}\label{equiv-of weak-equiv}
Given a map $\sigma: \Fa \to \Ga$ in $\msx$ the following are equivalent

\begin{enumerate}
\item $\sigma$ is a weak equivalence in $\msxprojc$ i.e a $\kbproj$-local equivalence;
\item $\sigma$ is a weak in $\msxprojp$, that is $\simproj(\Cr \sigma)$ is an old weak equivalence;
\item $\sigma$ is a weak in $\msxinjp$, that is $\siminj(\Cr \sigma)$ is an old weak equivalence or equivalenty a $\kbinj$-local equivalence.
\end{enumerate}
\end{lem}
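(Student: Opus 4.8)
The plan is to prove the three-way equivalence by establishing $(1)\Leftrightarrow(3)$ and $(2)\Leftrightarrow(3)$, exploiting the fact that the localized model structure $\msxprojp$ was \emph{defined} so that a map $\sigma$ is a new weak equivalence iff $\simproj(\Cr\sigma)$ is an old weak equivalence in $\msx$, and similarly $\msxinjp$ was defined via $\siminj(\Cr\sigma)$. The implication $(2)\Leftrightarrow(3)$ is essentially Proposition \ref{new-we-same}: we already have a commutative square relating $\simproj(\Cr\sigma)$ and $\siminj(\Cr\sigma)$ whose vertical legs are the trivial fibrations $\simproj \to \siminj$ applied to $\Cr\Fa$ and $\Cr\Ga$; since these legs are old weak equivalences in $\msx$, the $3$-out-of-$2$ property for old weak equivalences forces $\simproj(\Cr\sigma)$ and $\siminj(\Cr\sigma)$ to be old weak equivalences simultaneously. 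So the bulk of the work is identifying ``$\kbproj$-local equivalence'' with ``$\kbinj$-local equivalence''.

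For that identification, the cleanest route I would take is: both $\msxprojc$ (Theorem \ref{classical-local}) and $\msxinjp$ (Theorem \ref{model-msxinjp}) are left Bousfield localizations, and I claim they have the same class of local objects, namely $\Rc$ up to the fibrant condition. First, any $\kbinj$-local (fibrant) object is a Co-Segal category — this is condition (A5), verified in the proof of Theorem \ref{direct-local} — and conversely a fibrant object of $\msxinjp$ is a Co-Segal category. On the other side, a fibrant object of $\msxprojc$ is $\kbproj$-injective, hence (by the argument using $\Pr_{t!}\dashv\Ev_{u_t}$, Proposition \ref{horn-fil-eq} and Lemma \ref{homotop-lifting}) a Co-Segal category, and conversely. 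Since the weak equivalences of a left Bousfield localization are detected by mapping into fibrant local objects (i.e.\ the $\kb$-local equivalences are intrinsic to the subcategory of local objects, as recalled in \cite[Sec.\ 9.3]{Simpson_HTHC} and used in the proof of Theorem \ref{msxprojp}), and since the two localizations share the same local objects, they share the same weak equivalences. Thus $(1)\Leftrightarrow$ ($\kbinj$-local equivalence) $\Leftrightarrow(3)$.

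Concretely I would argue: a map $\sigma$ is a $\kbinj$-local equivalence iff for every fibrant $\kbinj$-local object $Z$ the induced map on homotopy function complexes $\Map(\Cr\Ga, Z)\to\Map(\Cr\Fa,Z)$ is a weak equivalence; the same characterization holds for $\kbproj$-local equivalences with fibrant $\kbproj$-local objects $Z$. Because both notions of ``fibrant local object'' pick out exactly the fibrant Co-Segal categories (using that the fibrations to a fibrant object in either localization are the maps in $\inj(\kbinj)$, resp.\ $\inj(\kbproj)$, which by the adjunction arguments both amount to the Co-Segal condition plus old fibrancy), the two test classes coincide, hence the two classes of local equivalences coincide. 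Then $\simproj(\Cr\sigma)$ being an old weak equivalence is, by Proposition \ref{global-cosegalification}, precisely the statement that $\sigma$ is inverted after Co-Segalification, which is the $\kb$-local condition. Assembling: $(1)\Leftrightarrow(3)$ from the coincidence of local equivalences, $(2)\Leftrightarrow(3)$ from Proposition \ref{new-we-same}.

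The main obstacle I anticipate is making the phrase ``the two localizations have the same local objects, hence the same local equivalences'' airtight: one must be careful that the set $\kbproj$ (built from $\Pr_{t!}\zeta(h)$) and the set $\kbinj$ (built from $\Pr_{t!}(h_{/V})$) generate the same localization even though the generating maps differ. The key input is already in the text just before Theorem \ref{msxprojp}: each $\Pr_{t!}\zeta(h)$ is a trivial cofibration in $\msxinjp$ (via the factorization $h_{/V}=\ell\circ\zeta(h)$ and Ken Brown's lemma applied to the left Quillen functor $\Pr_{t!}$), and conversely each $\Pr_{t!}(h_{/V})$ is a weak equivalence in the localization generated by $\kbproj$ by the symmetric argument; combined with $\Ja_{\msxproj}$ and $\Ja_{\msxinj}$ giving the same underlying trivial cofibrations, the two localizations invert the same sets up to the $2$-out-of-$3$-closed, saturated class they generate. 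Once that mutual-inversion is in hand, the equality of weak equivalences — and therefore the whole lemma — follows formally.
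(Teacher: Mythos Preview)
Your overall strategy matches the paper's: $(2)\Leftrightarrow(3)$ via Proposition~\ref{new-we-same}, and $(1)\Leftrightarrow(3)$ via the factorization $\Pr_{t!}(h_{/V}) = \Pr_{t!}(\ell)\circ \Pr_{t!}\zeta(h)$ with $\Pr_{t!}(\ell)$ an old weak equivalence. That is the heart of the argument in both cases.

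However, your claim that ``both notions of fibrant local object pick out exactly the fibrant Co-Segal categories'' is not correct as stated, and this is a genuine gap. The two localizations are of \emph{different} underlying model categories: $\kbproj$-local objects are by definition fibrant in $\msxproj$, whereas $\kbinj$-local objects are fibrant in $\msxinj$, and injective fibrancy is strictly stronger. So the two test classes do \emph{not} literally coincide. Relatedly, your remark that ``$\Ja_{\msxproj}$ and $\Ja_{\msxinj}$ give the same underlying trivial cofibrations'' is false: the trivial cofibrations in the two model structures differ (only the weak equivalences agree).

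The paper repairs this as follows: given a $\kbproj$-local object $\Ea$ (which is only projectively fibrant), take an injective fibrant replacement $\tld{\Ea}$; since $\kbproj$-locality is invariant under weak equivalence of fibrant objects, $\tld{\Ea}$ is still $\kbproj$-local, and then the factorization argument shows $\tld{\Ea}$ is also $\kbinj$-local. One then runs the $\Map$-comparison square with $\tld{\Ea}$ in place of $\Ea$ and uses $3$-out-of-$2$. The converse direction is symmetric. Your ``mutual inversion'' paragraph contains the right ingredient (the factorization), but you still need this fibrant-replacement step to pass between the two notions of local object; without it the argument does not close.
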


\begin{proof}[Proof of the lemma]
The equivalence between $(2)$ and $(3)$ is clear; we mentioned it there just for a reminder.  We will show that $(3)$ is equivalent to $(1)$. 

The general picture is that $\msxprojc$ is obtained by turning the maps $\Pr_{t!}\zeta(h)$ to weak equivalences while $\msxinjp$ is obtained by turning the maps $\Pr_{t!}h_{/V}$ into weak equivalences. The two type of maps are related by the equality $\Pr_{t!}(h_{/V}) = \Pr_{t!}(\ell) \circ \Pr_{t!}\zeta(h)$ where $\Pr_{t!}(\ell)$ is already a weak equivalence; thus if we turn one of them into a weak equivalence then the other one also become a weak equivalence by $3$ for $2$. This is the general philosophy; we shall now present a proof. \ \\

In the follwoing we use the language of classical Bousfield localization of a model category with respect to a set of maps. This requires the notion of \emph{local object} and \emph{local equivalence}. These definitions can be found for example in \cite{Barwick_localization}, \cite{DHKS}, \cite{DKFC}, \cite{Hirsch-model-loc}, \cite{Hov-model} , \cite{Lurie_HTT}. We will denote by $\Map(-,-)$ a homotopy function complex on $\msx$; the homotopy type of $\Map(-,-)$ depends only on the weak equivalences, so we can use the same for $\msxinj$ and $\msxproj$.\ \\
\ \\
We start with the direction $(3) \Rightarrow (1)$. Let $\sigma: \Fa \to \Ga$ be a $\kbinj$-local equivalences, that is a weak equivalence in $\msxinjp$. Then $\sigma$ will be a weak equivalence in $\msxprojc$, by definition, if for all $\kbproj$-local object  $\Ea$ the induced map of simplicial sets $\sigma^{\star}: \Map(\Ga, \Ea) \to \Map(\Fa,\Ea)$ is weak equivalence.\ \\

Let $\Ea$ be a $\kbproj$-local object. By definition $\Ea$ is fibrant in $\msxproj$ and for any $\alpha: \Aa \to \Ba$ in $\kbproj$ then 
 $\alpha^{\star}: \Map(\Ba, \Ea) \to \Map(\Aa,\Ea)$ is a weak equivalence. As $\Ea$ is not fibrant in $\msxinj$ we have to  introduce a fibrant replacement $\tld{\Ea}$ in  $\msxinj$; we have a weak equivalence $q: \Ea \to  \tld{\Ea}$. But fibrant objects in $\msxinj$ are also fibrant in $\msxproj$, therefore $\tld{\Ea}$ is fibrant in $\msxproj$. 
 \begin{claim}
$\tld{\Ea}$ is also $\kbproj$-local and $\kbinj$-local.
 \end{claim}
The fact that $\tld{\Ea}$ is $\kbproj$-local is a classical thing: $\kbproj$-locality is invariant under weak equvalences of fibrant objects. In fact for any $\alpha: \Aa \to \Ba$ in $\kbproj$,   the following commutes:
\[
\xy
(0,20)*+{\Map(\Ba, \Ea) }="A";
(30,20)*+{\Map(\Aa, \Ea)}="B";
(0,0)*+{\Map(\Ba, \tld{\Ea})}="C";
(30,0)*+{\Map(\Aa, \tld{\Ea})}="D";
{\ar@{->}^{\alpha^{\star}}_-{\sim}"A";"B"};
{\ar@{->}_-{q^{\star}}^-{\wr}"A";"C"};
{\ar@{->}_-{\alpha^{\star}}"C";"D"};
{\ar@{->}^-{q^{\star}}_-{\wr}"B";"D"};
\endxy
\] 

The maps $\alpha^{\star}:\Map(\Ba, \Ea) \to \Map(\Aa, \Ea)$ is a weak equivalences of simplicial sets by hypothesis; the two vertical maps $q^{\star}$ are also weak equivalences (see \cite[Thm 17.7.7]{Hirsch-model-loc}). Therefore  by $3$ for $2$ in the model category of simplicial sets the other map  $\alpha^{\star}:\Map(\Ba,\tld{\Ea}) \to \Map(\Aa, \tld{\Ea})$ is also a weak equivalence which proves that $\tld{\Ea}$ is $\kbproj$-local.\ \\

To prove that $\tld{\Ea}$ is $\kbinj$-local we have to show that for all $\alpha \in \kbinj$ then \\ $\alpha^{\star}:\Map(\Ba,\tld{\Ea}) \to \Map(\Aa, \tld{\Ea})$ is a weak equivalence i.e $\tld{\Ea}$ is $\alpha$-local. If $\alpha \in \Ja_{\msxinj}$ then $\alpha$ is in particular an old weak equivalence so  $\tld{\Ea}$ is $\alpha$-local by  \cite[Thm 17.7.7]{Hirsch-model-loc} (any object is local with respect to any weak equivalence). It remains the case where $\alpha= \Pr_{t!}h_{/V}$. \ \\

In one hand since $\Pr_{t!}(\ell)$ is a weak equivalence, we have by  \cite[Thm 17.7.7]{Hirsch-model-loc} again that $\tld{\Ea}$ is $\Pr_{t!}(\ell)$-local. On the other hand since $\tld{\Ea}$ is $\kbproj$-local, it's in particular $\Pr_{t!}\zeta(h)$-local. It follows that $\tld{\Ea}$ is $\{ \Pr_{t!}(\ell) \circ \Pr_{t!}\zeta(h)\}$-local  (as weak equivalence of simplicial sets are closed by compositions); but 
$\Pr_{t!}(\ell) \circ \Pr_{t!}\zeta(h)= \Pr_{t!}h_{/V}$, thus $\tld{\Ea}$ is $\Pr_{t!}h_{/V}$-local. 
Summing up this one has that $\tld{\Ea}$ is also $\kbinj$-local.\ \\

\begin{claim}
A $\kbinj$-local equivalence is also a $\kbproj$-local equivalence.
\end{claim}

By the above if $\sigma$ is a $\kbinj$-local equivalence then $\sigma^{\star}: \Map(\Ga, \tld{\Ea}) \to \Map(\Fa,\tld{\Ea})$ is a weak equivalence of simplicial sets. If we put this in the commutative diagram below:
\[
\xy
(0,20)*+{\Map(\Ga, \Ea) }="A";
(30,20)*+{\Map(\Fa, \Ea)}="B";
(0,0)*+{\Map(\Ga, \tld{\Ea})}="C";
(30,0)*+{\Map(\Fa, \tld{\Ea})}="D";
{\ar@{->}^{\sigma^{\star}}_-{}"A";"B"};
{\ar@{->}_-{q^{\star}}^-{\wr}"A";"C"};
{\ar@{->}_-{\sigma^{\star}}^-{\sim}"C";"D"};
{\ar@{->}^-{q^{\star}}_-{\wr}"B";"D"};
\endxy
\]
\\
All the three maps are weak equivalences, therefore by $3$ for $2$ the map $\sigma^{\star}: \Map(\Ga, \Ea) \to \Map(\Fa,\Ea)$ is also a weak equivalence. Consequently $\sigma$ is also a $\kbproj$-local equivalence as claimed.
\ \\

For the direction $(1)\Rightarrow (3)$ the proof is the same. Let $\sigma: \Fa \to \Ga$ be a $\kbproj$-local equivalence. By definition for any $\kbproj$-local object $\Ea$, $\sigma^{\star}: \Map(\Ga, \Ea) \to \Map(\Fa,\Ea)$ is a weak equivalence.\ \\
\begin{claim}
If $\Ea$ is a $\kbinj$-local object then it is also a $\kbproj$-local object. 
\end{claim} 
In fact if $\Ea$ is $\kbinj$-local, $\Ea$ is fibrant in $\msxinj$ (hence in $\msxproj$) and the map
 $$\alpha^{\star}:\Map(\Ba,\tld{\Ea}) \to \Map(\Aa, \tld{\Ea})$$ is a weak equivalence for all $\alpha \in \kbinj$.\ \\

Recall that $\kbproj=\Ja_{\msxproj} \cup ( \coprod_{t \in \sx, \degb(t) >1} \Pr_{t!} \zeta(\I))$.\ \\

If $\alpha \in \Ja_{\msxproj}$ then $\alpha$ is in particular an old weak equivalence in $\msxproj$ and $\msxinj$ so $\Ea$ is automatically $\alpha$-local by \cite[Thm 17.7.7]{Hirsch-model-loc}. 

Assume that $\alpha=\Pr_{t!}\zeta(h)$. By applying $\Map(-,\Ea)$ to the equality $\Pr_{t!}(\ell) \circ \Pr_{t!}\zeta(h)= \Pr_{t!}h_{/V}$ we get $\Pr_{t!}\zeta(h)^{\star} \circ  \Pr_{t!}(\ell)^{\star} = \Pr_{t!}h_{/V}^{\star}$; and since $\Ea$ is $\kbinj$-local, it is  in particular $\Pr_{t!}\zeta(h)$-local, thus $\Pr_{t!}\zeta(h)^{\star}$ is a weak equivalence.

 As $\Pr_{t!}(\ell)$ is an old weak equivalence then $\Ea$ is $\Pr_{t!}(\ell)$-local by \cite[Thm 17.7.7]{Hirsch-model-loc} which means that $\Pr_{t!}(\ell)^{\star}$ is a weak equivalence; puting these together we conclude by $3$-out-of-$2$ that  $\Pr_{t!}\zeta(h)^{\star}$ is a weak equivalence and $\Ea$ is $\alpha$-local.\\
 
Now if $\sigma$ is a $\kbproj$-local equivalence, by the above for any $\kbinj$-local object $\Ea$ the map  $$\sigma^{\star}: \Map(\Ga, \Ea) \to \Map(\Fa,\Ea)$$ is also a weak equivalence which means precisely that $\sigma$ is a $\kbinj$-local equivalence.
\end{proof}

\subsection{A new fibered model structure on $\mset$}
In the following we want to vary the set $X$ when $\msx$ is equipped with the Bousfield localization with respect to $\kbproj$ constructed previously. \ \\
We will use the following notations.
\begin{nota}\ \\
From now we will specify by $\kbxproj$, $\kbyproj$  the corresponding sets. \\
$\msxprojp=$ the bousfield localization of $\msxproj$ with respect to $\kbxproj$, for a set $X$.\\ 
$\Le_X: \msxproj \to \msxprojp=$ the canonical left Quillen functor.\\ 
\end{nota}

Recall that for any function $f: X \to Y$ there is a Quillen adjunction $$\fex: \msxproj \rightleftarrows \msyproj: \fstar$$
 where $\fex$ is left Quillen and $\fstar$ is right Quillen. 
 \begin{prop}
 For any sets $X, Y$ and any function $f:  X \to Y$ there is an induced Quillen adjunction denoted again $\fex \dashv \fstar$:
 $$\fex: \msxprojp \leftrightarrows \msyprojp: \fstar$$
  and the following commutes:
\[
\xy
(0,20)*+{\msxproj}="A";
(40,20)*+{\msyproj}="B";
(0,0)*+{\msxprojp}="C";
(40,0)*+{\msyprojp}="S";
{\ar@{->}_{\fex}"A"+(10,-1);"B"+(-10,-1)};
{\ar@{<-}^{\fstar}"A"+(10,1);"B"+(-10,1)};
{\ar@{->}^{\Le_X}"A"+(1,-3);"C"+(1,3)};
{\ar@{->}^{\Le_Y}"B"+(1,-3);"S"+(1,3)};
{\ar@{->}_{\fex}"C"+(10,-1);"S"+(-10,-1)};
{\ar@{<-}^{\fstar}"C"+(10,1);"S"+(-10,1)};
\endxy
\] 

 \end{prop}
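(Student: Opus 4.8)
The plan is to deduce this from the unlocalized case together with the universal property of left Bousfield localization. Recall that, by Theorem \ref{msxprojp}, $\msxprojp$ (resp.\ $\msyprojp$) is the left Bousfield localization of the combinatorial model category $\msxproj$ (resp.\ $\msyproj$) at the set $\kbxproj$ (resp.\ $\kbyproj$), has the same underlying category and the same cofibrations as $\msxproj$ (resp.\ $\msyproj$), and is left proper; left properness of $\msxproj$ itself follows from Hypothesis \ref{left-proper-hypo}, since every projective cofibration is an injective one, so that a pushout of a weak equivalence along a projective cofibration is computed exactly as the corresponding pushout in $\msxinj$. The functors $\Le_X$ and $\Le_Y$ are the identity on underlying categories, so the adjunction $\fex \dashv \fstar$ of underlying categories constructed earlier is unchanged, and the square in the statement commutes strictly: $\Le_Y \circ \fex = \fex \circ \Le_X$ and $\fstar \circ \Le_Y = \Le_X \circ \fstar$, both sides being the same pair of functors. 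Hence the only real content is that $\fex \dashv \fstar$ remains a Quillen adjunction after localizing.

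For that I would invoke the standard criterion for descent of a Quillen adjunction to left Bousfield localizations (see \cite{Hirsch-model-loc}, \cite{Barwick_localization}): since $\fex : \msxproj \rightleftarrows \msyproj : \fstar$ is already a Quillen adjunction of left proper combinatorial model categories, it suffices to check that $\fex$ carries every map in $\kbxproj$ to a $\kbyproj$-local equivalence, that is, to a weak equivalence of $\msyprojp$. Because all objects of $\M$ are cofibrant, every map in $\kbxproj$ is a map between cofibrant objects of $\msxproj$ — the generators $\Pr_{t!}^X\zeta(h)$ are images under the left Quillen functor $\Pr_{t!}^X$ of (projectively) cofibrant objects of $\M^{[\1]}$, and the maps in $\Ja_{\msxproj}$ likewise have cofibrant domain and codomain — so no cofibrant approximation is needed and the criterion concerns $\fex$ applied directly to these maps. (Throughout, $\Pr_{t!}^X$ denotes the functor $\Pr_{t!}$ associated with the set $X$, and similarly for $Y$.)

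The verification then follows the decomposition $\kbxproj = \Ja_{\msxproj} \cup \big(\coprod_{t \in \sx,\ \degb(t) > 1} \Pr_{t!}^X\zeta(\I)\big)$. Maps in $\Ja_{\msxproj}$ are sent by the left Quillen functor $\fex$ to trivial cofibrations of $\msyproj$, hence to weak equivalences of $\msyprojp$. For a generator $\Pr_{t!}^X\zeta(h)$ the key point — and the main, though still routine, obstacle — is the identity $\Pr_t^X \circ \fstar = \Pr_{\S_f(t)}^Y$ of functors $\msy \to \M^{[\1]}$, which is immediate from the definitions: $\Pr_t^X(\fstar\Ga) = (\fstar\Ga)(u_t) = \Ga(\S_f(u_t)) = \Ga(u_{\S_f(t)}) = \Pr_{\S_f(t)}^Y(\Ga)$. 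Passing to left adjoints gives a natural isomorphism $\fex \circ \Pr_{t!}^X \cong \Pr_{\S_f(t)!}^Y$; and since $\S_f$ only relabels the chain underlying $t$ it is compatible with the length functors, so $\degb(\S_f(t)) = \degb(t) > 1$, i.e.\ $\S_f(t)$ is one of the indices occurring in $\kbyproj$. Therefore $\fex(\Pr_{t!}^X\zeta(h)) \cong \Pr_{\S_f(t)!}^Y\zeta(h) \in \kbyproj$, which becomes a trivial cofibration, in particular a weak equivalence, in $\msyprojp$. With both families handled, the criterion applies and yields the Quillen adjunction $\fex : \msxprojp \rightleftarrows \msyprojp : \fstar$; together with the commutativity already noted, this completes the argument.
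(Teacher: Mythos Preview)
Your proof is correct and follows essentially the same approach as the paper: both arguments show that $\fex$ carries the localizing set $\kbxproj$ into $\kbyproj$ (up to isomorphism) and then invoke the universal property of left Bousfield localization. The only minor difference is in the treatment of $\Ja_{\msxproj}$: the paper computes explicitly that $\fex(\Ja_{\msxproj}) \subset \Ja_{\msyproj}$ via the identity $\fex\Fb^{t}_{m} \cong \Fb^{f(t)}_{m}$, whereas you use the cleaner observation that $\fex$ is already left Quillen on the unlocalized structures, so it sends generating trivial cofibrations to trivial cofibrations, which suffices.
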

 
 \begin{proof}[Sketch of proof]
 We  will show that $\fex(\kbxproj) \subset \kbyproj$. The new `$\fex$' will be induced by universal property of the Bousfield localization\index{Localization!Bousfield} . \ \\
Consider $\alpha$ in  $\kbxproj=\Ja_{\msxproj} \cup ( \coprod_{t \in \sx, \degb(t) >1} \Pr_{t!} \zeta(\I))$.

\begin{claim}
If $\alpha \in \Ja_{\msxproj}$ then $\fex(\alpha) \in \Ja_{\msyproj} \subset \kbyproj$.
\end{claim}

To see this first recall that  $\Ja_{\msxproj}= \Gamma \Ja_{\kxproj}$, where $\kx= \prod_{(A,B) \in X^{2}} \Hom[\S_{\ol{X}}(A,B)^{op}, \M]$. Furthermore there is also a Quillen adjunction $\fex: \kxproj \leftrightarrows \kyproj: \fstar$; and by definition  we have: 
 $$\Ja_{\kxproj}= \coprod_{t \in \sx} \{ \Fb^{t}_{h}, h \in \Ja\},$$
where $\Fb^{t}$ is the left adjoint of the evaluation at $t$. It suffices then to show that $\fex(\Ja_{\kxproj}) \subset \Ja_{\kyproj}$. \ \\

Using the adjunction one establishes that for any $m \in \M$ and any $\Ga \in \ky$:
\begin{equation*}
\begin{split}
\Hom(\fex \Fb^{t}_{m}, \Ga)
& \cong \Hom(\Fb^{t}_{m}, \fstar \Ga)\\
& \cong \Hom(m, (\fstar \Ga)(t))\\
& = \Hom(m, \Ga f(t)) \\
&\cong \Hom(\Fb^{f(t)}_{m}, \Ga)\\
\end{split}
\end{equation*}
Consequently $\fex \Fb^{t}_{m} \cong \Fb^{f(t)}_{m}$; similarly $\fex \Fb^{t}_{\alpha} \cong  \Fb^{f(t)}_{\alpha}$ (we can actually assume that we have an equality). One clearly has that $\fex(\Ja_{\kxproj}) \subset \Ja_{\kyproj}$ and the claim holds.

 \begin{claim}
For every $t$ and every $h \in \I$ then $\fex\Pr_{t!} \zeta(h) \cong \Pr_{f(t)!} \zeta(h)$
\end{claim} 
This also holds by the adjunction. For any $h$ and any $\Ga \in \msy$ one has:
\begin{equation*}
\begin{split}
\Hom(\fex \Pr_{t!} \zeta(h), \Ga)
& \cong \Hom(\Pr_{t!} \zeta(h), \fstar \Ga)\\
& \cong \Hom(\zeta(h), (\fstar \Ga)(u_t))\\
& = \Hom(\zeta(h), \Ga \underbrace{f(u_t)}_{= u_{f(t)}}) \\
& = \Hom(\zeta(h), \Ga u_{f(t)}) \\
&\cong \Hom(\Pr_{f(t)!} \zeta(h), \Ga)\\
\end{split}
\end{equation*}
and the claim follows. If we combine the two claims we have the desired result 
\end{proof}

By virtue of Roig-Stanculescu result (Theorem \ref{Stanculescu_bifib}) and under the hypothesis (\ref{left-proper-hypo}) we have 
\begin{thm}\label{fibered-model-mset-plus}
For a symmetric closed monoidal model category $\M$ whose objects are all cofibrant, the category $\mset$ has a Quillen model structure where a map $\sigma=(f,\sigma):\Fa \to \Ga $ is 
\begin{enumerate}
\item  a weak equivalence if $f: X \to Y$ is an isomorphism of sets and  $ \sigma: \Fa \to \fstar \Ga $ is a weak equivalence in $\msxprojp$,
\item a cofibration if the adjoint map $ \tld{\sigma}: \fex \Fa \to \Ga $ is a cofibration in $\msyprojp$,
\item a fibration if $ \sigma: \Fa \to \fstar \Ga $ is a fibration in $\msxprojp$.
\item fibrant objects are Co-Segal categories
\end{enumerate} 
We will denote by $\msetprojp$ the new model structure on $\mset$. There is a canonical left Quillen functor 
$$\Le: \msetproj \to \msetprojp $$
whose component over $X$ is $\Le_X: \msxproj \to \msxprojp$. 
\end{thm}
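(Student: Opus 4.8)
The plan is to derive this from Stanculescu's theorem (Theorem~\ref{Stanculescu_bifib}), applied to the cloven Grothendieck bifibration $\Pf : \mset \to \Set$ exactly as in the proof of Theorem~\ref{fibered-model-mset}, but with each fibre $\msx$ now carrying the localized projective model structure $\msxprojp$ of Theorem~\ref{msxprojp} in place of $\msxproj$. The first step is to verify the six hypotheses of Theorem~\ref{Stanculescu_bifib} with $\e = \mset$, $\bz = \Set_{min}$ and fibre model structures $\msxprojp$. Condition (i) is Theorem~\ref{mset-loc-pres} (a locally presentable category is complete and cocomplete); condition (ii) holds since $\Set_{min}$ is a model category; condition (iii) is Theorem~\ref{msxprojp}, which uses Hypothesis~\ref{left-proper-hypo}, in force throughout this section; and condition (iv), that $(\fex,\fstar)$ is a Quillen pair from $\msxprojp$ to $\msyprojp$ for every function $f : X \to Y$, is the Proposition immediately preceding the present statement.

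Next I would handle conditions (v) and (vi). A weak equivalence, respectively a trivial cofibration, in $\Set_{min}$ is an isomorphism $f : X \to Y$; then $\S_f$ is an isomorphism of semi-$2$-categories, $\fstar : \msyprojp \to \msxprojp$ is an isomorphism of categories with inverse $\fex = (f^{-1})^{\star}$, and the unit $\Id \to \fstar\fex$ is the identity. Hence $\fstar$ preserves and reflects weak equivalences, and the unit is componentwise an identity, in particular a weak equivalence; this is verbatim the argument of the proof of Theorem~\ref{fibered-model-mset}. Theorem~\ref{Stanculescu_bifib} then produces a model structure on $\mset$, which we call $\msetprojp$, whose weak equivalences, cofibrations and fibrations are exactly the three classes in the statement. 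For the fibrant objects: by the description of fibrations in Theorem~\ref{Stanculescu_bifib}, an object $(X,\Fa)$ is fibrant iff $X$ is fibrant in $\Set_{min}$ (automatic) and $\Fa$ is fibrant in $\msxprojp$; by Theorem~\ref{msxprojp} the latter are Co-Segal categories, which is assertion (4).

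Finally, for the left Quillen functor $\Le : \msetproj \to \msetprojp$: both model structures sit on the same underlying category $\mset$, so $\Le$ is the identity functor, self-adjoint via $\Le \dashv \Id$. A left Bousfield localization does not change cofibrations, so cofibrations in $\msxprojp$ agree with those in $\msxproj$; comparing the descriptions of cofibrations in Theorem~\ref{fibered-model-mset} and in the present theorem shows that $\msetproj$ and $\msetprojp$ have the same cofibrations, and every weak equivalence of $\msxproj$ is one of $\msxprojp$, hence every weak equivalence of $\msetproj$ is one of $\msetprojp$. Thus $\Le$ preserves cofibrations and trivial cofibrations, so it is left Quillen, and by construction its restriction to the fibre over $X$ is $\Le_X$. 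I do not expect a genuine obstacle: all the real work, namely the existence of $\msxprojp$ and the compatibility of $(\fex,\fstar)$ with the localizations, has already been done, and the only point requiring a little care is the verification of (v)--(vi), which is precisely why one localizes over the minimal model structure $\Set_{min}$, where weak equivalences and trivial cofibrations are mere isomorphisms.
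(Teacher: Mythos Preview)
Your proposal is correct and follows essentially the same route as the paper: apply Stanculescu's Theorem~\ref{Stanculescu_bifib} to the bifibration $\Pf:\mset\to\Set_{min}$ with the localized fibre model structures $\msxprojp$, verifying (v)--(vi) using that weak equivalences and trivial cofibrations in $\Set_{min}$ are isomorphisms, and then identify fibrant objects via Theorem~\ref{msxprojp}. You supply more detail than the paper (in particular the explicit check that $\Le$ is left Quillen, which the paper only asserts), but the strategy is identical.
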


\begin{proof}
The proof is the same as for Theorem \ref{fibred-model-mset}. 
On $\Set$ we take the minimal model structure: cofibrations and fibrations are all maps, weak equivalences are isomorphisms. One can check easily that all the conditions of Theorem \ref{Stanculescu_bifib} are fulfilled; this gives the model structure described above.\ \\

For $\Fa \in \msxprojp$, $\Fa$ is fibrant if the canonical map $\jmath: \Fa \to \ast$ is a fibration $\msetprojp$, where $\ast$ is the terminal object therein. By definition this is equivalent to have that $\jmath: \Fa \to \jmath^{\star} \ast$ is fibration is $\msxprojp$; and since $\jmath^{\star}$ is a right adjoint then  it preserves the terminal object (as it preserves any limit). \ \\

Summing up this one has that  $\Fa$ is fibrant in $\msetprojp$ if and only if it is fibrant in $\msxprojp$, therefore $\Fa$ is a Co-segal category by Theorem \ref{msxprojp}.
\end{proof}

\subsubsection{Cofibrantly generated}
Since the  cofibrations on $\msetproj$ and $\msetprojp$ are the same, the set $\I_{\msetproj}$ constitutes also a generating set for the cofibrations in $\msetprojp$. Using the fact that $\msetprojp$ is already a model category which is locally presentable, one can easily check that the set  $\I_{\msetproj}$ and the class of weak equivalences of $\msetprojp$ satisfy the conditions Smith's recognition theorem (\cite[Proposition 2.2]{Barwick_localization}). \ \\

By Smith's theorem we have a combinatorial model structure on $\msetproj$ with the same cofibrations and weak equivalences of $\msetprojp$. The set $\I_{\msetproj}$ constitutes a generating set for the cofibrations and there exists a set $\Ja_{\msetprojp}$ which is a set of generating trivial cofibrations.\ \\

But since this new model category has the same cofibrations and weak equivalences (hence the same fibrations) as $\msetprojp$ we deduce that this new model structure is in fact isomorphic to $\msetproj$; thus $\msetprojp$ is combinatorial and in particular cofibrantly generated.

\section*{The monoidal category $(\mset, \otimes_{\S}, \Un )$ }
Given a small category $\Ca$, by construction there is a degree (or length) strict $2$-functor $\degb : \S_{\Ca} \to \S_{\1}$ where $\1$ is the unit category and $\S_{\1} \cong (\Delta_{epi}, +,\0)$.  If $\D$ is another category we can form the genuine fiber product of $2$-categories $\S_{\Ca} \times_{\S_{\1}} \S_{\D}$. 

\begin{prop}
There is an isomorphism of $2$-categories: $ \S_{\Ca \times \D} \cong \S_{\Ca} \times_{\S_{\1}} \S_{\D}$.
\end{prop}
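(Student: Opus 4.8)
The plan is to unwind the definition of $\S_{\Ca}$ as the fibered product of strict semi-$2$-categories given in Proposition-Definition \ref{path-bicat}, namely the pullback of $\le : \P_{\Ca} \to (\Delta,+,\0)$ along the inclusion $(\Delta_{\tx{epi}},+) \hookrightarrow (\Delta,+,\0)$, and to reduce the claimed isomorphism to the already-known compatibility of the $2$-path-category construction $\Ca \mapsto \P_{\Ca}$ with products. First I would recall (or invoke) the fact that $\P_{[-]}$ sends products to (honest) products of $2$-categories: $\P_{\Ca \times \D} \cong \P_{\Ca} \times \P_{\D}$, and that under this identification the degree $2$-functor $\le_{\Ca\times\D}$ corresponds to the addition map $\le_{\Ca} \boxplus \le_{\D} : \P_{\Ca}\times\P_{\D} \to (\Delta,+,\0)\times(\Delta,+,\0) \xrightarrow{+} (\Delta,+,\0)$; equivalently, the length of a composable string in $\Ca\times\D$ is the length of its two coordinate strings (which in $\P$ have the \emph{same} underlying simplicial degree, so the two notions of ``length'' agree and there is no ambiguity). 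This step is essentially the statement that on $\P$-level a pair of composable chains in $\Ca\times\D$ of common length $n$ is the same as a single composable chain of length $n$ in $\Ca\times\D$ — a bookkeeping identity at the level of objects, $1$-cells and $2$-cells.

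Next I would assemble the pullback diagram. On one side, $\S_{\Ca\times\D}$ is by definition $\P_{\Ca\times\D} \times_{(\Delta,+,\0)} (\Delta_{\tx{epi}},+)$. On the other side, $\S_{\Ca}\times_{\S_{\1}}\S_{\D}$ is the fibered product over $\S_{\1}\cong(\Delta_{\tx{epi}},+)$ of $\S_{\Ca} = \P_{\Ca}\times_{(\Delta,+,\0)}(\Delta_{\tx{epi}},+)$ and $\S_{\D} = \P_{\D}\times_{(\Delta,+,\0)}(\Delta_{\tx{epi}},+)$. Using associativity/commutativity of fibered products in the (cartesian monoidal) $1$-category of strict semi-$2$-categories, I would rewrite
\[
\S_{\Ca}\times_{\S_{\1}}\S_{\D} \;\cong\; \bigl(\P_{\Ca}\times\P_{\D}\bigr)\times_{(\Delta,+,\0)}(\Delta_{\tx{epi}},+),
\]
where the map $\P_{\Ca}\times\P_{\D}\to(\Delta,+,\0)$ is $\le_{\Ca}\boxplus\le_{\D}$ as above; here one uses that pulling both $\S_{\Ca}$ and $\S_{\D}$ back to $(\Delta_{\tx{epi}},+)$ and then taking the fibered product is the same as pulling the single product $\P_{\Ca}\times\P_{\D}$ (with the added degree) back to $(\Delta_{\tx{epi}},+)$, because $(\Delta_{\tx{epi}},+)\hookrightarrow(\Delta,+,\0)$ is a full subcategory stable under the sum (so $m+n$ lies in $\Delta_{\tx{epi}}$ iff both $m$ and $n$ do). Combining with the $\P$-level identification $\P_{\Ca\times\D}\cong\P_{\Ca}\times\P_{\D}$ compatible with $\le$, both expressions are the same pullback, giving the desired isomorphism $\S_{\Ca\times\D}\cong\S_{\Ca}\times_{\S_{\1}}\S_{\D}$, and I would check it is a strict $2$-functor respecting the semi-$2$-category structure (composition is concatenation of chains on each factor, which matches on both sides) and the functoriality in $\Ca$ and $\D$ from Proposition-Definition \ref{path-bicat}.

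The main obstacle I anticipate is the careful verification of the ``degree'' compatibility: one must check that the $2$-functor $\degb:\S_{\Ca}\to\S_{\1}$ used to form the fibered product $\S_{\Ca}\times_{\S_{\1}}\S_{\D}$ really is induced by $\le$, and that under the product identification on $\P$-level the two degree functors glue to the sum functor — equivalently, that a $1$-morphism of $\S_{\Ca\times\D}$ (an equivalence class of a composable string in $\Ca\times\D$) is determined precisely by the pair of its coordinate strings \emph{together with} the common length, with no extra data lost or gained. This is the point where one has to be precise about what ``genuine fibred product of semi-$2$-categories'' means hom-category by hom-category, since the hom-categories of $\S_{\Ca}$ are themselves pullbacks $\S_{\Ca}(A,B) = \P_{\Ca}(A,B)\times_{\Delta}\Delta_{\tx{epi}}$ and one needs the canonical comparison $\bigl(\P_{\Ca}(A,B)\times_{\Delta}\Delta_{\tx{epi}}\bigr)\times_{\Delta_{\tx{epi}}}\bigl(\P_{\D}(C,E)\times_{\Delta}\Delta_{\tx{epi}}\bigr)\cong \bigl(\P_{\Ca}(A,B)\times_{\Delta}\P_{\D}(C,E)\bigr)\times_{\Delta}\Delta_{\tx{epi}}$ to be an isomorphism of Reedy categories, which it is since all maps in sight are over $\Delta$. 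Once this is in hand the rest is a formal manipulation of pullback squares, so I would not expect to grind through it in detail.
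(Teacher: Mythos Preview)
The paper's own proof is simply the word ``Obvious'', so your proposal is far more detailed than what appears there. The overall strategy you outline --- express $\S_{\Ca}$ as a pullback of $\P_{\Ca}$ along $\Delta_{\tx{epi}}\hookrightarrow\Delta$ and reduce to a formal manipulation of fibered products --- is the natural one and matches the spirit of the paper's construction.

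However, there is a genuine confusion in the middle of your argument. The claim that $\P_{[-]}$ sends products to honest products, i.e.\ $\P_{\Ca\times\D}\cong\P_{\Ca}\times\P_{\D}$, is false as stated: a $1$-morphism of $\P_{\Ca\times\D}$ from $(A,C)$ to $(B,E)$ is a composable chain in $\Ca\times\D$, hence a pair of chains in $\Ca$ and $\D$ of the \emph{same} length, whereas a $1$-morphism of the plain product $\P_{\Ca}\times\P_{\D}$ is a pair of chains of possibly different lengths. Correspondingly, the degree of a chain in $\Ca\times\D$ is \emph{equal} to the (common) degree of each coordinate chain, not their sum; the ``addition map'' $\le_{\Ca}\boxplus\le_{\D}$ you invoke plays no role here. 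The correct statement at the $\P$-level is already a fibered product: $\P_{\Ca\times\D}\cong\P_{\Ca}\times_{(\Delta,+,\0)}\P_{\D}$, taken hom-category by hom-category over $\Delta$.

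You essentially arrive at this in your final paragraph, where you correctly write $\P_{\Ca}(A,B)\times_{\Delta}\P_{\D}(C,E)$ and carry out the right pullback comparison. So the argument can be salvaged by deleting the incorrect product claim and the addition map, and working directly with the fibered identification $\P_{\Ca\times\D}\cong\P_{\Ca}\times_{\Delta}\P_{\D}$; then pulling back along $\Delta_{\tx{epi}}\hookrightarrow\Delta$ and using the elementary isomorphism $(A\times_{\Delta}\Delta_{\tx{epi}})\times_{\Delta_{\tx{epi}}}(B\times_{\Delta}\Delta_{\tx{epi}})\cong(A\times_{\Delta}B)\times_{\Delta}\Delta_{\tx{epi}}$ gives the result immediately.
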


\begin{proof}
Obvious.
\end{proof}
\subsection*{Tensor product of $\S$-diagrams}
Let $\M=(\ul{M}, \otimes , I)$ be a symmetric monoidal category. Let $\Un: (\Delta_{epi}, +)^{op} \to \M$ be the  constant lax functor
of value $I$ and $\Id_{I}$;  the laxity maps are the obvious natural isomorphism $I \otimes I \cong I$. $\Un$ exhibits  $I$ in a tautological way as a (semi) monoid. \ \\

Given $\Fa: (\S_{\Ca})^{\tx{$2$-op}}  \to \M$   and $\Ga:(\S_{\D})^{\tx{$2$-op}}  \to \M$ we define 
$\Fa \otimes_{\S} \Ga: (\S_{\Ca \times \D})^{\tx{$2$-op}} \to \M $ to be the lax functor given as follows. 
\renewcommand{\theenumi}{\arabic{enumi}}
\begin{enumerate}
\item For $1$-morphisms $(s,s') \in (\S_{\Ca \times \D})$ we set $(\Fa \otimes_{\S} \Ga) (s,s'):= \Fa(s) \otimes \Ga(s')$,
\item The laxity maps $\varphi_{\Fa \otimes_{\S} \Ga}: (\Fa \otimes_{\S} \Ga) (s,s') \otimes (\Fa \otimes_{\S} \Ga) (t,t') \to (\Fa \otimes_{\S} \Ga) (s \otimes t,s' \otimes t')$ are obtained as the composite:
$$\Fa(s) \otimes \Ga(s') \otimes \Fa(t) \otimes  \Ga(t')\xrightarrow{ \Id \otimes sym \otimes \Id} \Fa(s) \otimes \Fa(t) \otimes \Ga(s') \otimes  \Ga(t') \xrightarrow{\varphi_{\Fa} \otimes \varphi_{\Ga}} \Fa(s\otimes t) \otimes \Ga(s' \otimes t')$$
where $sym$ is the symmetry isomorphism in $\M$ (we have $sym:\Ga(s') \otimes \Fa(t) \xrightarrow{\cong}  \Fa(t) \otimes \Ga(s')$).
\item One easily see that if $f: \Ca' \to \Ca  $ and $g: \D' \to \D$ then $(f\times g)^{\star} \Fa \otimes_{\S} \Ga \cong \fstar \Fa \os \gstar \Ga$.
\item If $\sigma=(\sigma,f) \in \Hom_{\mset}(\Fa, \Ga)$ and $\gamma=(\gamma,g) \in \Hom(\Fa',\gstar \Ga')$ we define \\
$\sigma \os \gamma=(\sigma \otimes \gamma, f \times g)   \in \Hom_{\mset}[\Fa \os \Ga, \Fa' \os \Ga']$ to be the morphism whose component at $(s,s')$ is $\sigma_s \otimes \sigma_{s'}$. 
\end{enumerate}
\ \\
We leave the reader to check that:
\begin{enumerate}
\item $\os$ is a bifunctor and is associative,
\item we have a canonical symmetry: $\Fa \os \Ga \cong \Ga \os \Fa$, 
\item for any $\Fa$ we have a natural isomorphism $\Fa \os \Un \cong  \Fa$.
\end{enumerate}

\section{A model structure for $\M$-$\Cat$ for a $2$-category $\M$ } \label{mode-mcat-deux-cat}
In the following $\M$ is a $2$-category. We will use capital letters $U,V, W$ for the objects of $\M$ and $f,g,h,k$ for $1$-morphisms and $\alpha, \beta, \gamma$ for $2$-morphisms. For $U,V \in \Ob(\M)$ we will write $\M_{UV}$ the category of morphisms from $U$ to $V$; when $U=V$ we will simply write $\M_U$.  If $f,g$ are composable $1$-morphisms, we will denote by $g\otimes f$ the horizontal composition. Similarly for $2$-morphisms $\alpha, \beta$ we will write $\beta \otimes \alpha$ the horizontal composition while $\beta \circ \alpha$ will represents the vertical composition. 
\begin{df}\label{model-2-cat}
Let $\M$ be $2$-category. We will say that $\M$ is \textbf{locally a model category} if the following conditions holds:
\begin{enumerate}
\item Each $\M_{UV}$ is a model category in the usual sense.
\item $\M$ is a biclosed that is: for every $1$-morphism $f$  the following functors have  a right adjoint:
$$f \otimes -  \ \ \ \ \ \ \ \ \ \ \ \  \ - \otimes f $$ 
\item The pushout-product axiom holds: given two cofibrations $\alpha: f \to g  \in \Ar(\M_{UV})$ and  $\beta: k \to h \in \Ar(\M_{VW})$ then the induced map $\beta \square \alpha : h \otimes  f \cup_{k\otimes f} k \otimes g \to h \otimes g$ is a cofibration in $\M_{UW}$ which is moreover a trivial cofibration if either $\alpha$ or $\beta$ is . 
\item For every $U$ and any $1$-morphism $f \in \M_{UV}$, if $Q(\Id_U) \to \Id_U$ is a cofibrant replacement then the induced map  $ Q(\Id_U) \otimes f \to f $ is a weak equivalence in $\M_{UV}$.
\end{enumerate}
\end{df} 
As one can see this is a straightforward generalization of a monoidal model category considered as a $2$-category with one object. The condition $(2)$ allows to distribute colimits with respect to the composition on each factor. \ \\

We recall briefly below the definition of a category enriched over a $2$-category $\M$. \ \\

An $\M$-category $\X$ consists roughly speaking of: 
\begin{enumerate}
\item for each object $U$ of $\M$, a set $\X_U$ of objects \emph{over} $U$;
\item for objects $A,B$ over $U,V$, respectively an arrow $\X(A,B): U \to V \in \M_{UV}$;
\item for each object $A$ over $U$, a $2$-cell $I_A: \Id_U \Longrightarrow \X(A,A) \in \M_U$;
\item for object $A,B,C$ over $U,V,W$, respectively, a $2$-cell $c_{ABC} :\X(B,C)\otimes \X(A,B) \Longrightarrow \X(A,C) \in \M_{UW}$
satisfying the obvious three axioms of left and right identities and associativity.
\end{enumerate} 
\ \\
Equivalently $\X$ can be defined as a lax morphism $\X : \ol{X} \to \M$ or a strict homomorphism $\X: \P_{\ol{X}} \to \M$ (see \cite{SEC1}). Note that for each $U$, we have a category $\X_{|U}$  enriched over the monoidal category $\M_{U}$; the set of objects of $\X_{|U}$  is $\X_U$. \ \\

Given two $\M$-categories $\X$ and $\Y$   an $\M$-functor is given the following data:

\begin{enumerate}
\item a function $\Phi: \Ob(\X) \to \Ob(\Y)$, $\Phi = \coprod_{U} \Phi_U$ with $\Phi_U : \X_U  \to \Y_U$;
\item for  $A,B$  in $\X$ over $U,V$, respectively we have a morphism   $\Phi_{AB}: \X(A,B) \to \Y(\Phi A, \Phi B)$ in $\M_{UV}$
\item for each object $A$ over $U$ we have $I_{\Phi A}= \Phi_{AA} \circ I_A$;
satisfying the obvious compatibility with respect to the composition on both sides.
\end{enumerate} 

$\M$-categories with $\M$-functors form the category $\M$-$\Cat$. There is an obvious category $\M$-$\Graph$ whose objects are $\M$-graphs and morphisms are just the natural ones.  \ \\

We have a forgetful functor just like in the monoidal case $\Ub : \M\tx{-$\Cat$} \to \M\tx{-$\Graph$}$. 
\begin{rmk}
As usual there is a restriction $\Phi_{|U}: \X_{|U} \to \Y_{|U}$ of $\Phi$, which is a $\M_U$-functor. And any $\M$-functor has an underlying morphism between the corresponding $\M$-graphs. 
\end{rmk}
\begin{prop}
Let $\M$ be a biclosed $2$-category which is locally \textbf{locally-presentable} that is: each $\M_{UV}$ is locally presentable in the usual sense. Then $\M$-$\Cat$ is locally presentable. 
\end{prop}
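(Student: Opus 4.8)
The plan is to follow the same template used throughout this paper for $\msx$ and $\mset$: exhibit $\M\text{-}\Cat$ as the category of algebras for a monad on a locally presentable category, and then invoke the standard fact (\cite[Remark 2.78]{Adamek-Rosicky-loc-pres}) that the Eilenberg--Moore category of a finitary monad on a locally presentable category is again locally presentable. Concretely, I would first establish that $\M\text{-}\Graph$ is locally presentable. Since an $\M$-graph with a fixed object-set decomposition $X = \coprod_U X_U$ is just an object of $\prod_{(A,B)} \M_{p(A)p(B)}$, and each $\M_{UV}$ is locally presentable by hypothesis, each such fibre is a (small) product of locally presentable categories, hence locally presentable; letting $X$ vary and using the same Grothendieck-construction argument as in the proof surrounding Theorem \ref{mset-loc-pres} (colimits of graphs are computed by pushing forward along the colimit of the underlying object-sets, exactly as in \cite[Lemma 3.2, Thm 3.3]{Kelly-Lack-loc-pres-vcat}), one gets that $\M\text{-}\Graph$ is locally presentable.

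Next I would produce the free--forgetful adjunction $\Fr \dashv \Ub : \M\text{-}\Cat \rightleftarrows \M\text{-}\Graph$. The free $\M$-category on an $\M$-graph $\G$ is built, just as in the monoidal (one-object) case, by taking $\Fr(\G)(A,B) = \coprod_{n \geq 0} \coprod_{A = A_0, \ldots, A_n = B} \G(A_{n-1},A_n) \otimes \cdots \otimes \G(A_0,A_1)$, where the horizontal composition $\otimes$ is the one in the $2$-category $\M$ (the $n=0$ summand being the identity $1$-cell, which exists because $\M$ is a $2$-category with identities). The biclosedness hypothesis (condition $(2)$ of Definition \ref{model-2-cat}, which is what we assume here) guarantees that $f \otimes -$ and $- \otimes f$ preserve all colimits on each factor, so this formula is well-behaved and $\Fr$ is genuinely left adjoint to $\Ub$. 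Monadicity of $\Ub$ follows from Beck's theorem: $\Ub$ creates those coequalizers that are split after applying $\Ub$, by the usual argument for enriched categories (the composition operations are defined on iterated tensors, which the colimits in question reflect). Thus $\M\text{-}\Cat \simeq \Ta\text{-alg}$ for $\Ta = \Ub\Fr$.

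Finally I would check that the monad $\Ta$ is finitary, i.e.\ $\Ub\Fr$ preserves filtered (equivalently directed) colimits. This reduces, exactly as in Proposition \ref{monad-finitary}, to the fact that filtered colimits in $\M\text{-}\Graph$ are computed level-wise on each hom-category $\M_{UV}$ together with the fact that, since $\M$ is biclosed, each functor $- \otimes f$ and $f \otimes -$ preserves colimits, so that an $(n+1)$-fold tensor $\G^k(A_{n-1},A_n)\otimes\cdots\otimes\G^k(A_0,A_1)$ commutes with the filtered colimit in $k$ (one uses the cofinality of the diagonal $\lambda \to \lambda^{n+1}$ as in that proof). Hence $\Ta\Fa(A,B) = \coprod_n \coprod_{(A_\bullet)} (\cdots)$ commutes with directed colimits, being a coproduct of such. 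With $\M\text{-}\Graph$ locally presentable, $\Ta$ finitary, and $\Ub$ monadic, \cite[Remark 2.78]{Adamek-Rosicky-loc-pres} gives that $\M\text{-}\Cat$ is locally presentable. The main obstacle is bookkeeping rather than mathematics: making the varying-object-set Grothendieck construction for $\M\text{-}\Graph$ precise (so that colimits over directed systems with changing object-sets are genuinely level-wise after push-forward), and verifying Beck's monadicity conditions carefully in the $2$-categorical setting; both are direct adaptations of arguments already carried out in this paper and in Kelly--Lack.
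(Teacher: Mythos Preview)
Your proposal is correct and follows essentially the same route as the paper's own proof: show $\M\text{-}\Graph$ is locally presentable, establish the monadic adjunction $\Fr \dashv \Ub$ via Beck's theorem (using Wolff's construction of the free $\M$-category), verify the monad is finitary using biclosedness to commute tensors with filtered colimits, and invoke \cite[Remark 2.78]{Adamek-Rosicky-loc-pres}. The paper additionally cites Linton's result to record cocompleteness of $\M\text{-}\Cat$ as an intermediate step, but this is subsumed by local presentability in your version.
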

\begin{proof}
All is proved in the same manner as for a monoidal category $\M$. Below we list the different steps:
\begin{enumerate}
\item First  one shows that $\M\tx{-$\Graph$}$ is cocomplete, this is easy, just apply the same method as Wolff \cite{Wo}
\item $\Ub$ is monadic:  construct a left adjoint of $\Ub$ with the same formula given in \cite{Wo}. Then show that $\M$-$\Cat$ has coequalizer of parallel $\Ub$-split pair, again using the same idea in \emph{loc. cit}. As $\Ub$ clearly reflect isomorphisms, it follows by Beck monadicity theorem that $\Ub$ is monadic.
\item Linton's result \cite[Corollary 2]{Linton-cocomplete} applies and one has that $\M$-$\Cat$ is cocomplete as well.
\item Following the same method as Kelly and Lack \cite{Kelly-Lack-loc-pres-vcat} one has that the monad induced by $\Ub$ preserves filtered colimits and $\M$-$\Graph$ is locally presentable. From this we apply \cite[Remark 2.78]{Adamek-Rosicky-loc-pres} to establish that $\M$-$\Cat$ is also locally presentable. 
\end{enumerate}
Note that being biclosed is essential in order to permute (filtered) colimits and $\otimes$. 
\end{proof}

\begin{term}
Let $W$ be a class of $2$-morphisms in $\M$. An $\M$-functor  $\Phi : \X \to  \Y$ is said to be locally in $W$ if for every pair of objects $A,B$ of $\X$ the $2$-morphism $\Phi_{AB}: \X(A,B) \to \Y(\Phi A, \Phi B)$ is in $W$. We will say the same thing for a morphism between  $\M$-graphs.
\end{term}

\subsection*{The category $\mcatx$}
 For each $U \in \M$ let's fix a set $X_U$ of objects over $U$ and consider $X= \coprod_U X_U$. 
Denote by $\mcatx$ the category of $\M$-categories with fixed set of objects $X$ and $\M$-functors which fixe $X$. Similarly there is a category $\mgraphx$ of $\M$-graphs with vertices $X$ and morphisms fixing $X$. \ \\

Just like in the case where $\M$ is  a monoidal category there is a tensor product in $\mgraphx$ defined as follows. If $\X, \Y \in \mgraphx$ one defines $\X \otimes_X \Y$ by:
$$(\X \otimes_{X} \Y) (A,B)= \coprod_{Z \in X} \X(A,Z)  \otimes \Y(Z,B).$$ 
The unity of this product is the $\M$-graph $\In$ given by
 \begin{equation*}
\In(A,B) = \begin{cases}
 \Id_U  &  \tx{if $A$ and $B$ are over the same object $U$} \\
 \varnothing = \tx{initial object in $\M_{UV}$} & \tx{If $A$ over $U$, and $B$ over $V$ with  $U \neq V$}   \\
  \end{cases}
\end{equation*}

As usual it's not hard to see that $\mcatx$ is the category of monoids of $(\mgraphx, \otimes_X, \In)$. We have an obvious isomorphism of categories:
$$\mgraphx \cong \prod_{(U,V) \in \Ob(\M)^2}  \M_{UV}^{(X_U \times X_V)}$$ 
where $ \M_{UV}^{(X_U \times X_V)}= \Hom(X_U \times X_V, \M_{UV})$. 
From this we can endow $\mgraphx$ with the product model structure. In this model structure, fibrations, cofibrations and weak equivalences are simply component wise such morphism. \ \\

\begin{thm}
Let $\M$ be a $2$-category  which is locally a model category and locally cofibrantly generated. 
Assume moreover that all  the objects of $\M$ are cofibrant. 
Then we have: 
\begin{enumerate}
\item the category $\mcatx$ admits a model structure which is cofibrantly generated. 
\item if $\M$ is combinatorial, then so is  $\mcatx$
\end{enumerate}

\end{thm}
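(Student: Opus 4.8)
The plan is to transfer the product model structure on $\mgraphx$ along the monadic adjunction $\Ub : \mcatx \rightleftarrows \mgraphx : \Gamma$, exactly as was done for $\msx$ in the proof of Theorem \ref{main-proj-msx} and for $\Laxalg(\Ca\dt,\Ma\dt)$ in Theorem \ref{model-laxalg}. The relevant transfer principle is the lemma of Schwede--Shipley recalled as Lemma \ref{transfer-model-str}. By the preceding proposition $\mcatx$ is locally presentable (hence complete and cocomplete) when $\M$ is locally locally-presentable; the product model structure on $\mgraphx \cong \prod_{(U,V)} \M_{UV}^{(X_U\times X_V)}$ is cofibrantly generated because each $\M_{UV}$ is, with generating (trivial) cofibrations $\I_{\bullet}$ (resp.\ $\Ja_{\bullet}$) the coproduct over pairs of objects and vertices of the generating (trivial) cofibrations of the factors, pulled back along the evaluation functors' left adjoints. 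The underlying functor $\Ub\Gamma$ commutes with directed colimits since $\otimes_X$ is built from the composition bifunctor of $\M$, which preserves colimits in each variable (biclosedness, condition (2) of Definition \ref{model-2-cat}); this is the same computation as in Proposition \ref{monad-finitary}. Smallness of the domains of $\Gamma\I_{\bullet}$ and $\Gamma\Ja_{\bullet}$ relative to the respective cell complexes follows from local presentability of $\mcatx$.

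First I would set up the adjunction and verify its monadicity: construct the free $\M$-category functor $\Gamma$ on an $\M$-graph by the usual formula (tensor powers with respect to $\otimes_X$, summed), following Wolff \cite{Wo}, check that $\Ub$ creates coequalizers of $\Ub$-split pairs and reflects isomorphisms, and invoke Beck's monadicity theorem; this is routine and parallels the monoidal case and the discussion already given for $\mcatx$ above. Next I would record that the three candidate classes of maps in $\mcatx$ are: weak equivalences (resp.\ fibrations) those $\M$-functors $\Phi$ with $\Ub\Phi$ a level-wise weak equivalence (resp.\ fibration) in $\mgraphx$, and cofibrations those with the LLP against trivial fibrations. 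With $\Gamma(\I_{\bullet})$ and $\Gamma(\Ja_{\bullet})$ as generating (trivial) cofibrations, everything reduces to condition (1) of Lemma \ref{transfer-model-str}: every regular $\Gamma(\Ja_{\bullet})$-cofibration is a weak equivalence.

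The main obstacle, and the one step requiring real work, is precisely this verification that pushouts of $\Gamma\alpha$ along arbitrary maps out of $\M$-categories remain level-wise weak equivalences when $\alpha$ is a generating trivial cofibration of $\mgraphx$. The strategy is the standard filtration of such a pushout: given $\mathX \to \mathX[\alpha]$ obtained by attaching a cell, one filters $\mathX[\alpha]$ by a $\omega$-sequence whose successive layers are pushouts of explicit maps built out of $\alpha$ and the composition of $\M$, and one shows each layer map is a trivial cofibration in each hom-category using the pushout-product axiom (condition (3) of Definition \ref{model-2-cat}) together with the hypothesis that all objects of $\M$ are cofibrant --- so that tensoring with the hom-objects of $\mathX$ preserves trivial cofibrations. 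This is the exact analogue of Proposition \ref{prop-cube-cof} and Lemma \ref{pushout-MX}: indeed the ``$3$-ary coherent system'' and ``semi-cube'' arguments of Section \ref{section-model-msx}, or equivalently Lemma \ref{lem-pushout-laxalg} of the appendix applied to the operad for (unital) $\M$-categories, carry over verbatim once one notes that $\mcatx$ is an $\iro$-algebra and the pair $(\text{the relevant algebra},\M)$ is $\hco$-compatible because $\Gamma$ preserves level-wise trivial cofibrations. Granting this, Lemma \ref{transfer-model-str} delivers the cofibrantly generated model structure of assertion (1); for assertion (2), when $\M$ is combinatorial each $\M_{UV}$ is combinatorial, hence $\mgraphx$ is, and combinatoriality is inherited through a transferred model structure on a locally presentable category, giving that $\mcatx$ is combinatorial.
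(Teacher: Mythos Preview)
Your proposal is correct and follows essentially the same approach as the paper. The paper's own proof is a one-liner: it observes that $\mcatx$ is $\Laxalg(\ol{X},\M)$ for the operad $\O=\O_X$ of Example~\ref{operad-principal} with $\Ca\dt=\ol{X}$ and $\Ma\dt=\M$, and then invokes Theorem~\ref{model-laxalg} directly; you unpack exactly that argument (transfer via Schwede--Shipley, monadicity, and the pushout analysis of Lemma~\ref{lem-pushout-laxalg}). One small slip of phrasing: where you write ``$\mcatx$ is an $\iro$-algebra'' you mean that the domain algebra $\ol{X}$ is $\iro$ (its hom-categories are one-point, so composition trivially reflects identities), not $\mcatx$ itself.
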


\begin{proof}
This is a special case of theorem \ref{model-laxalg} where $\O= \O_{X}$, $\Ca\dt= \ol{X}$ and $\Ma\dt= \M$
\end{proof}

\begin{rmk}
One can remove the hypothesis  `all the objects of $\M$ are cofibrant' by using an analogue of the monoid axiom of \cite{Sch-Sh-Algebra-module}. In fact one can use the  method in \emph{loc. cit} to establish the theorem.  Lurie \cite{Lurie_HTT} also presented a nice description of the model structure for the case where $\M$ is the monoidal category of simplicial sets. It seems obvious that we can adapt his method to calculate the pushout of interest in our case.
\end{rmk}

\appendix
\section{Some classical lemma}\label{section-classical-lema}
We present here the supplying material required in the other sections.\ \\

The first lemma we present is a classical result of category theory concerning the universal property of a pushout diagram. We include this part for completeness.


\begin{df}
Let $\C$ be a small category and $f:A \to B$, $g:A\to C$ be two morphisms of $\C$ with the same source $A$.\\

A \emph{pushout} of $\langle f,g\rangle$ is a commutative square:
\begin{tabular}{c}
\xy
(0,15)*+{A}="A";
(15,15)*+{B}="B";
(0,0)*+{C}="C";
(15,0)*+{R}="R";
{\ar@{->}^{f}"A";"B"};
{\ar@{->}_{g}"A";"C"};
{\ar@{->}^{u}"B";"R"};
{\ar@{->}^{v}"C";"R"};
\endxy
\end{tabular}

such that for any other commutative square
 \begin{tabular}{c}
\xy
(0,15)*+{A}="A";
(15,15)*+{B}="B";
(0,0)*+{C}="C";
(15,0)*+{S}="S";
{\ar@{->}^{f}"A";"B"};
{\ar@{->}_{g}"A";"C"};
{\ar@{->}^{h}"B";"S"};
{\ar@{->}^{k}"C";"S"};
\endxy
 \end{tabular}
 
\ \\ 
 there exists a unique morphism $t:R \to S$ such that $h=t \circ u$ and $k=t \circ v$. 
\end{df}
\begin{nota}
 To stress the fact a commutative square is a pushout square we will put the symbol `$\lrcorner$' at the center of the diagram: 
\[
\xy
(0,15)*+{A}="A";
(15,15)*+{B}="B";
(0,0)*+{C}="C";
(15,0)*+{R}="R";
(7.5,7.5)*+{\LARGE{\tx{$\lrcorner$}}}="G"; 
{\ar@{->}^{f}"A";"B"};
{\ar@{->}_{g}"A";"C"};
{\ar@{->}^{u}"B";"R"};
{\ar@{->}^{v}"C";"R"};
\endxy
\]
\end{nota}

\begin{obs}\label{unicity_po}
It follows from the universal property of the pushout that if a commutative square 
\[
\xy
(0,15)*+{A}="A";
(15,15)*+{B}="B";
(0,0)*+{C}="C";
(15,0)*+{S}="S";
{\ar@{->}^{f}"A";"B"};
{\ar@{->}_{g}"A";"C"};
{\ar@{->}^{h}"B";"S"};
{\ar@{->}^{k}"C";"S"};
\endxy
\]
is also a pushout of $\langle f,g \rangle$ then the unique map $t:R \to S$ we get from the definition is an isomorphism.\  \\

In this situation, up to a composition by the morphism $t$, we can assume that $R=S$, $h=u$   and $k=v$.  
\end{obs}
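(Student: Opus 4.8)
The plan is to run the standard two-sided universal-property argument: construct an inverse to $t$ and then invoke the \emph{uniqueness} clause of each pushout twice. Write $u:B\to R$, $v:C\to R$ for the structure maps of the first pushout square (the one coming from the Definition) and $h:B\to S$, $k:C\to S$ for those of the second square, both lying over $f:A\to B$ and $g:A\to C$. By hypothesis both squares are pushouts of $\langle f,g\rangle$, and $t:R\to S$ is the unique morphism satisfying $t\circ u=h$ and $t\circ v=k$, obtained by feeding the cocone $(h,k)$ into the universal property of $R$.

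First I would produce a candidate inverse. Applying the universal property of the \emph{second} pushout $S$ to the cocone $(u,v)$ on $R$ yields a unique morphism $s:S\to R$ with $s\circ h=u$ and $s\circ k=v$. This is the only nontrivial construction step, and it is immediate once one notices that $(u,v)$ is itself a cocone under $\langle f,g\rangle$.

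Next I would check that $s\circ t=\Id_R$ and $t\circ s=\Id_S$, which is where the uniqueness half of the universal property enters. Indeed $s\circ t:R\to R$ satisfies $(s\circ t)\circ u=s\circ(t\circ u)=s\circ h=u$ and likewise $(s\circ t)\circ v=s\circ(t\circ v)=s\circ k=v$; but $\Id_R$ is another morphism $R\to R$ with $\Id_R\circ u=u$ and $\Id_R\circ v=v$. Since $R$ is a pushout and $(u,v)$ is a cocone on $R$, there is exactly one morphism $R\to R$ compatible with $u$ and $v$, so $s\circ t=\Id_R$. The symmetric computation, with the roles of $R$ and $S$ and of the pairs $(u,v)$ and $(h,k)$ exchanged, gives $t\circ s=\Id_S$. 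Hence $t$ is an isomorphism with inverse $s$, which is the first assertion.

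Finally, for the closing remark I would note that because $t$ is invertible, precomposing the first cocone with $t$ identifies the two squares: replacing $(R,u,v)$ by the isomorphic square $(S,\,t\circ u,\,t\circ v)=(S,h,k)$ we may, without loss of generality, take $R=S$, $u=h$ and $v=k$. There is no genuine obstacle here, as the statement is a purely formal consequence of the definition; the single point requiring care is to apply the \textbf{uniqueness} part of the universal property (not merely existence) when collapsing $s\circ t$ and $t\circ s$ to the respective identities.
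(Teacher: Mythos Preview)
Your argument is correct and is exactly the standard two-sided universal-property proof; the paper itself does not spell out any proof here, treating the statement as an immediate observation from the definition, so your write-up simply supplies the details the author leaves implicit.
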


\begin{term}
The map $v:C \to R$ is said to be `the pushout of $f$ along $g$' and by symmetry $v$ is the pushout of $g$ along $f$.
\end{term}

Using again the universal property of the pushout, we get the following lemma which says that `\emph{a pushout of a pushout is a pushout}'.
\begin{lem}\label{collapse_po}
Let $\C$ be a small category. Given two commutative squares in $\C$:

\[
\xy
(0,15)*+{A}="A";
(15,15)*+{B}="B";
(0,0)*+{C}="C";
(15,0)*+{R}="R";
{\ar@{->}^{f}"A";"B"};
{\ar@{->}_{g}"A";"C"};
{\ar@{->}^{u}"B";"R"};
{\ar@{->}^{v}"C";"R"};
\endxy
\xy
(0,15)*+{B}="A";
(15,15)*+{D}="B";
(0,0)*+{R}="C";
(15,0)*+{U}="S";
{\ar@{->}^{p}"A";"B"};
{\ar@{->}_{u}"A";"C"};
{\ar@{->}^{w}"B";"S"};
{\ar@{->}^{q}"C";"S"};
\endxy
\] 
\\

If the two are pushout squares then the `composite square': 
\begin{tabular}{c}
\xy
(0,15)*+{A}="A";
(15,15)*+{D}="B";
(0,0)*+{C}="C";
(15,0)*+{U}="R";
{\ar@{->}^{p\circ f}"A";"B"};
{\ar@{->}_{g}"A";"C"};
{\ar@{->}^{w}"B";"R"};
{\ar@{->}^{q \circ v}"C";"R"};
\endxy
\end{tabular}

is also a pushout square.
\end{lem}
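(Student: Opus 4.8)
The statement to prove is Lemma \ref{collapse_po}, the classical ``pasting law'' for pushouts: given two pushout squares sharing the edge $\langle u \colon B \to R\rangle$, with the first built on $\langle f, g\rangle$ and the second on $\langle p, u\rangle$, the outer rectangle on $\langle p \circ f, g\rangle$ is again a pushout. The plan is to verify directly the universal property of the pushout for the composite square, using only the two given universal properties in sequence. This is a purely formal diagram-chase, so the work is entirely in organizing the maps and checking commutativity and uniqueness.

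First I would set up the data. Suppose we are given a competing cocone on $\langle p \circ f, g\rangle$, that is an object $S$ together with morphisms $h \colon D \to S$ and $k \colon C \to S$ satisfying $h \circ (p \circ f) = k \circ g$. The goal is to produce a unique $t \colon U \to S$ with $t \circ w = h$ and $t \circ (q \circ v) = k$. The key is to factor the construction through the intermediate object $R$.

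The main steps, in order, are as follows. Consider the maps $h \circ p \colon B \to S$ and $k \colon C \to S$. Rewriting the compatibility hypothesis as $(h \circ p) \circ f = h \circ (p \circ f) = k \circ g$ shows that the pair $(h \circ p, k)$ is a cocone on $\langle f, g\rangle$. Since the first square is a pushout, there is a unique $s \colon R \to S$ with $s \circ u = h \circ p$ and $s \circ v = k$. Now consider the pair $(h, s) \colon \langle D, R\rangle \to S$; I must check it is a cocone on $\langle p, u\rangle$, i.e.\ that $h \circ p = s \circ u$, which is exactly the first equation just produced. Applying the universal property of the second pushout square yields a unique $t \colon U \to S$ with $t \circ w = h$ and $t \circ q = s$. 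The first of these is one of the two equations we need; for the other, compute $t \circ (q \circ v) = (t \circ q) \circ v = s \circ v = k$, as required. Thus $t$ exists.

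For uniqueness I would argue that any $t'$ satisfying $t' \circ w = h$ and $t' \circ (q \circ v) = k$ forces $t' = t$: setting $s' := t' \circ q$, one checks $(s', h)$ is the unique cocone factorization through the second square, so $t' \circ q = s = t \circ q$ and $t' \circ w = h = t \circ w$; uniqueness in the second pushout then gives $t' = t$. There is no genuine obstacle here — the only thing requiring care is bookkeeping: keeping straight which universal property is invoked at which stage and verifying each intermediate pair really is a compatible cocone before invoking the relevant pushout. I would present this as a short, self-contained verification and note at the end that it is the standard pasting lemma, referencing that the same conclusion appears in the cited literature.
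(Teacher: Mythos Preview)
Your argument is correct and is exactly the standard verification of the pasting law for pushouts; the paper itself simply writes ``Obvious'' for this lemma, so your proposal supplies the details the author omits. One minor wording issue: in the uniqueness step you should say that $s' := t' \circ q$ is checked (using $q \circ u = w \circ p$) to satisfy the equations characterizing $s$ via the \emph{first} pushout, hence $s' = s$, and then uniqueness in the \emph{second} pushout gives $t' = t$ --- your phrase ``$(s',h)$ is the unique cocone factorization through the second square'' conflates the two steps.
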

\ \\
\begin{proof} 
Obvious
\end{proof}
\ \\
\begin{rmk}
If $\kappa$ is a regular cardinal and if the set Mor($\C$) of all morphism of $\C$ is $\kappa$-small, then the previous lemma can be applied for any set of consecutive pushout squares indexed by an ordinal $\beta$ with $\beta < \kappa$. 
\end{rmk}

\begin{df}
 Let $K$ be set of cardinality $|K| <\kappa$ and $\uc=\{\o,\o \xrightarrow{Id_{\o}} \o \}$ be the unit category.
We identify $K$ with the discrete category whose set of objects is $K$.\\
The \textbf{cone} associated to the set $K$ is the category $\co(K)$ described as follows.\ 
\ \\
$Ob(\co(K))= K \sqcup \{ \o \}$ and for $x,y \in Ob(\co(K))$ we have
\begin{equation*}
\co(K)(x,y) =
  \begin{cases}
     \{ (\o,y) \} & \text{if $x=\o$ and $y \in K$} \\
     \{ \Id_x \}  & \text{if $x=y$ }\\
     \varnothing & \text{otherwise}
  \end{cases}
\end{equation*}
The composition is the obvious one.
\end{df}
\
\begin{rmk}
Our notation `$\co(K)$' is inspired from the category $\co(n)$ used by Simpson (see \cite{Simpson_HTHC} 13.1). In fact if $K$ is a set of cardinality $n$ then $\co(K)$ is isomorphic to $\co(n)$.\
\ \\ 
Following the terminology in \cite{SEC1}, $\co(K)$ is the \underline{thin} brige from the $\uc$ to $K$ which was denoted therein by  `$\uc < K$'. 
\end{rmk}

With the category $\co(K)$ we can give a general definition.

\begin{df}
Let $\M$ be a category and $K$ be a set of cardinality $|K| <\kappa$.\\
A cone of $\M$ indexed by $K$ is a functor $\tau: \co(K) \to \M$.\\
The ordinal $|K|$ is said to be the \emph{size} of $\tau$.
\end{df}
\

Concretely a cone of $\M$ corresponds to a $K$-indexed family $\{ A \to B_k \}_{k\in K}$ of morphisms of $\M$ having the same domain. We can write 
 $$\tau= \{ A \to B_k \}_{k\in K}$$ with $A=\tau(\o)$, $B_k=\tau(k)$ and $\tau[(\o,k)]= A \to B_k$.

\begin{term}\ \
\begin{itemize}
 \item If $\M$ is a model category then a cone $\tau= \{ A \to B_k \}_{k\in K}$ is said to be a \textbf{cone of cofibrations} if every morphism $A \to B_k$ is a cofibration.\\
\item More generally given a class of maps $\bf{I}$ of a category $\M$, a cone $\tau= \{ A \to B_k \}_{k\in K}$ is said to be a cone of $\bf{I}$ if every map $A \to B_k$ is a member of $\bf{I}$.\\
\item A cone $\tau: \co(K) \to \M$ is said to be small if the index set $K$ is such that $|K| <\kappa$ for some regular cardinal $\kappa$. 
\end{itemize}
\end{term}
\
\begin{df}
Let $\M$ be a category. \\
 A \textbf{generalized pushout diagram} in $\M$ is a colimit of a cone $\tau$ of $\M$. Here the colimit is the colimit of the functor $\tau$.
\end{df}

One can check that for a cone $\tau$ associated to a set $K$ of cardinality $2$, then the colimit of the diagram $\tau$ is given by a classical pushout square.\
\ \\

Using the fact that in a model category, the pushout of a cofibration is again a cofibration we have the following lemma.

\begin{lem}\label{cone-cofib}
Let $\kappa$ be regular cardinal. For any  $\kappa$-small model category $\M$ the following hold.
\begin{enumerate}
 \item Every small cone $\tau$ of $\M$ has a colimit.
 \item If $\tau= \{ A \hookrightarrow B_k \}_{k\in K}$ if a cone of (trivial) cofibrations then all the canonical maps:
 $$B_k \to \colim( \tau)$$
$$A \to \colim (\tau) $$  
are also (trivial) cofibrations.
\end{enumerate}
\end{lem}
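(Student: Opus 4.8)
The plan is to realize $\colim(\tau)$ as a transfinite iteration of ordinary pushouts and then propagate the cofibration hypotheses through that construction, using the two standard closure properties of (trivial) cofibrations in a model category: stability under cobase change and under transfinite composition.

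For (1), write $\tau=\{A\to B_k\}_{k\in K}$ and fix a well-ordering $K=\{k_\beta:\beta<\lambda\}$ with $\lambda=|K|<\kappa$. I would build a $\lambda$-indexed sequence $(P_\beta)_{\beta\leq\lambda}$ together with canonical maps $a_\beta:A\to P_\beta$: set $P_0=A$, $a_0=\Id_A$; at a successor let $P_{\beta+1}$ be the pushout of $B_{k_\beta}\longleftarrow A\xrightarrow{a_\beta}P_\beta$, with $a_{\beta+1}$ the composite $A\xrightarrow{a_\beta}P_\beta\to P_{\beta+1}$; at a limit ordinal $\gamma$ set $P_\gamma=\colim_{\beta<\gamma}P_\beta$, with $a_\gamma$ the induced map. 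Only pushouts and $\lambda$-indexed chain colimits occur, and these are available because $\M$ is a $\kappa$-small model category (so the needed $\kappa$-small colimits exist; pinning down exactly this is the one place the hypothesis is genuinely used, and I would state it explicitly at the outset). A routine check of the universal property then shows that $P_\lambda$, equipped with the maps $B_k\to P_{\beta+1}\to P_\lambda$ for $k=k_\beta$ and with $a_\lambda:A\to P_\lambda$, is a colimit of the functor $\tau:\co(K)\to\M$; this proves (1).

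For (2), suppose each $A\to B_k$ is a (trivial) cofibration. First, transfinite induction on $\beta$ shows that $a_\beta:A\to P_\beta$ is a (trivial) cofibration: the base case is an identity; at a successor, $P_\beta\to P_{\beta+1}$ is a pushout of the (trivial) cofibration $A\to B_{k_\beta}$, hence a (trivial) cofibration, and precomposing with $a_\beta$ preserves this; at a limit $\gamma$, $a_\gamma$ is the transfinite composition of the $\gamma$-sequence $(P_\beta\to P_{\beta+1})_{\beta<\gamma}$ of (trivial) cofibrations, hence a (trivial) cofibration. Taking $\beta=\lambda$ gives that $A\to\colim(\tau)$ is a (trivial) cofibration. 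For the map out of a given $B_k$ with $k=k_\beta$: the leg $B_k\to P_{\beta+1}$ of the pushout square is a pushout of $a_\beta:A\to P_\beta$ — which is a (trivial) cofibration by the induction just done — along $A\to B_k$, hence a (trivial) cofibration; and $P_{\beta+1}\to P_\lambda$, being a transfinite composition of (trivial) cofibrations (the tail of the sequence is again a continuous chain), is a (trivial) cofibration; composing yields that $B_k\to\colim(\tau)$ is a (trivial) cofibration.

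The only nontrivial inputs are the pushout- and transfinite-composition-stability of the classes of cofibrations and of trivial cofibrations, which hold in any model category, together with the existence of the colimits involved. There is therefore no real obstacle: the work is entirely bookkeeping — arranging the well-ordered construction so that the inductive statement ``$a_\beta$ is a (trivial) cofibration'' is precisely what feeds both the successor and the limit steps, and checking that the natural cocone on $P_\lambda$ genuinely agrees with the colimit of $\tau$ viewed as a functor on $\co(K)$. Since the paper files this under ``some classical lemma'', I would keep the write-up short and cite the two closure properties rather than reprove them.
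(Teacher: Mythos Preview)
Your proposal is correct. Both your argument and the paper's build the colimit by iterated pushouts, but the organization differs: you well-order $K$ and run a transfinite sequence $(P_\beta)$ starting from $A$, adding one $B_k$ at a time via pushout; the paper instead argues by induction on $|K|$, removing a single element $k_0$, applying the inductive hypothesis to $\tau'=\tau|_{\co(K\setminus\{k_0\})}$, and then recovering $\colim(\tau)$ as the pushout of $A\hookrightarrow B_{k_0}$ along $A\hookrightarrow\colim(\tau')$. Your presentation has two advantages: it makes the appeal to closure under transfinite composition explicit (the paper only alludes to this in the remark following the proof), and it is uniformly valid for infinite $K$, where removing a single element does not lower the cardinality and the paper's cardinal induction, read literally, would need to be rephrased as exactly the well-ordered construction you give. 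The paper's version is terser because it packages the whole transfinite bookkeeping into the inductive hypothesis, and it handles part~(1) in one line by invoking cocompleteness of model categories rather than constructing the colimit by hand.
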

\ 
\begin{proof}[Sketch of proof]
The assertion (1) follows from the fact that $\M$ has all small colimits by definition of a model category.\
\ \\

For the assertion (2) it suffices to prove that the canonical map $B_k \to \colim( \tau)$ is a (trivial) cofibration for every $k$. The map 
$A \to \colim (\tau) $ which is the composite of $A \hookrightarrow B_k $ and $B_k \to \colim( \tau)$, will be automatically a (trivial) cofibration since (trivial) cofibrations are stable by composition. For the rest of the proof we will simply treat the trivial cofibration case; the other case is implicitely proved by the same method.\ \\
 
We proceed by induction on the cardinality of $K$.
\begin{itemize}
\item If $|K|=1$, there is nothing to prove.
\item  If $|K|=2$, $K=\{k_1,k_2 \}$, the colimit of $\tau$ is a pushout diagram and the result is well known.
\item Let $K$ be an arbitrary $\kappa$-small set and assume that the assertion is true for any subset $J \subset K$ with $|J|<|K|$.\\

 Let's now choose $k_0 \in K$ and set $J=K-\{k_0\}$ and $\tau':=\tau_{|\co(J)}$.\\
As $|J|<|K|$ the assertion is true for $\tau'$ and we have that the canonical maps:
\begin{equation*}
  \begin{cases}
     A \to \colim(\tau') &  \\
     B_k \to \colim(\tau') & \text{for all $k \in J$}\\
  \end{cases}
\end{equation*}
 are trivial cofibrations .
\ \\
Consider in $\M$ the following pushout square:
\[
\xy
(0,15)*+{A}="A";
(20,15)*+{B_{k_{0}}}="B";
(0,0)*+{\colim (\tau')}="C";
(20,0)*+{S}="S";
{\ar@{^{(}->}^{}"A";"B"};
{\ar@{_{(}->}_{}"A";"C"};
{\ar@{->}^{}"B";"S"};
{\ar@{->}^{}"C";"S"};
\endxy
\]

Since trivial cofibrations are closed under pushout, we know that the canonical maps
\begin{equation*}
  \begin{cases}
   \colim (\tau') \to S&  \\
     B_{k_{0}} \hookrightarrow  S & \\
  \end{cases}
\end{equation*}
are trivial cofibrations.  We deduce that the following maps are also trivial cofibrations:
\begin{equation*}
  \begin{cases}
     B_k \hookrightarrow S= [\colim (\tau') \hookrightarrow S] \circ [B_k \hookrightarrow \colim(\tau')] & k \in J \\
    A \hookrightarrow  S. & \\
  \end{cases}
\end{equation*}

Finally one can easily verify that the object $S$ equipped with the morphisms: 
\begin{equation*}
  \begin{cases}
     A \hookrightarrow  S &  \\
     B_{k_{0}} \hookrightarrow  S & \\
     B_k \hookrightarrow S & \text{for all $k \in J.$}\\
  \end{cases}
\end{equation*}
is a colimit of the functor $\tau$, that is, it satisfies the universal property of `the' colimit of $\tau$. So we can actually take $\colim (\tau)=S$, and the assertion follows. 
\end{itemize}
\end{proof}

\begin{rmk}
If $\bf{I}$ is the set of cofibrations of $\M$ then what we've just showed can be rephrased in term of  \emph{relative $\bf{I}$-cell complex}. We refer the reader to \cite[Ch. 2.1.2]{Hov-model}or \cite[Ch. 8.7]{Simpson_HTHC} and references therein for the definition of relative cell complex.

In this terminology we've just showed that each map $B_k \to \colim(\tau)$ is a \emph{relative $\bf{I}$-cell complex}. Now it's well known that a relative $\bf{I}$-cell complex is an element of some set $\bf{I}\tx{-cof}$ (see \cite[Lemma 2.1.10]{Hov-model}). In general for an arbitrary class of maps $\bf{I}$  we have an inclusion  $\bf{I} \subset \bf{I}\tx{-cof}$,  but in a model category with $\bf{I}$ is the set of cofibrations of $\M$ we have an equality  $\bf{I}\tx{-cof}=\bf{I}$ (see \cite[Ch. 2.1.2]{Hov-model}).
\end{rmk}

\section{Adjunction Lemma}\label{section-adjunction-lemma}
\subsection{Lemma 1}\label{lemme_adjunction_1}
In the following we fix $\M$ a cocomplete  a symmetric closed monoidal category.\ \\
We remind the reader that being symmetric closed implies that the tensor product commutes with colimits on both sides. In particular  for every (small) diagram $D: J \to \M$, with $J$  a discrete category i.e a set, then we have:
$$ [\coprod_{j \in J}D(j) ] \otimes P \cong \coprod_{j \in J}[D(j) \otimes P]$$ 
$$ P \otimes  [\coprod_{j \in J}D(j)]  \cong \coprod_{j \in J}[P \otimes D(j)].$$
\ \\
Let $X$ nonempty $\kappa$-small set and $\Ub: \M_{\S}(X) \to \prod_{(A,B) \in X^{2}} \Hom[\S_{\ol{X}}(A,B)^{op}, \M]$ be the functor  defined as follows.\\

\begin{equation*}
  \begin{cases}
 \Ub(F)= \{F_{AB} \}_{(A,B) \in X^{2} } & \tx{for $F=\{F_{AB},\varphi_{s,t} \}_{(A,B) \in X^{2}} \in \M_{\S}(X)$} \\
 \Ub(\sigma)=  \{\sigma_{AB}: F_{AB} \to G_{AB} \}_{(A,B) \in X^2} & \tx{for $ F \xrightarrow{\sigma} G$}\\
  \end{cases}
\end{equation*}
\\
So concretely the functor $\Ub$ forgets the laxity maps `$\varphi_{s,t}$'.
\ \\

Our goal is to prove the following lemma.
\begin{lem}\label{adjoint-ub}
The functor $\Ub$ has a left adjoint, that is there exists a functor  
$$\Gamma :  \prod_{(A,B) \in X^{2}} \Hom[\S_{\ol{X}}(A,B)^{op}, \M] \to \M_{\S}(X)$$ 
\\
such that for all $F \in \M_{\S}(X)$ and all $\X \in \prod_{(A,B) \in X^{2}} \Hom[\S_{\ol{X}}(A,B)^{op}, \M]$,   we have an isomorphism of set: 

$$ \Hom[\Gamma[\X], F] \cong \Hom[\X,\Ub(F)]$$
\\
which is natural in $F$ and $\X$.
\end{lem}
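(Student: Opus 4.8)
The plan is to construct $\Gamma$ explicitly, mimicking the classical free-monoid / free-category construction but working level-wise over $\Depi$. The key observation is that giving a lax $\O_X$-morphism out of $\Gamma[\X]$ into $F$ should be the same as giving, for each pair $(A,B)$, a natural transformation $\X_{AB}\to \Ub(F)_{AB}=F_{AB}$; so $\Gamma[\X]$ must be the ``free $\S$-diagram generated by the family of functors $\{\X_{AB}\}$''. Concretely, for a $1$-morphism $t\in\S_{\ol X}(A,B)$, recall (as in the Observation preceding Definition~\ref{lr-simple} and the discussion of $\int\cb$ in Section~\ref{lr-category}) that $t$ is a chain $(E_0,\dots,E_n)$ and its decompositions $\mathrm{Dec}(t)$ are the ways of writing $t$ as a concatenation $t=t_0\otimes\cdots\otimes t_l$ of subchains. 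First I would set
\[
\Gamma[\X](t)\ :=\ \coprod_{(t_0,\dots,t_l)\in\mathrm{Dec}(t)}\ \X(t_0)\otimes\cdots\otimes\X(t_l),
\]
functorially in $t$ (a morphism $u:t\to t'$ in $\S_{\ol X}(A,B)$ deletes letters, hence refines/maps decompositions, giving $\Gamma[\X](u)$ as a coproduct of tensor products of the $\X(u_i)$; this is exactly the formula already written down in the proof of Proposition~\ref{monad-finitary}). The laxity maps $\Gamma[\X](s)\otimes\Gamma[\X](t)\to\Gamma[\X](s\otimes t)$ are induced by concatenation of decompositions together with the fact that $\otimes$ distributes over coproducts (using that $\M$ is symmetric closed), landing in the summand indexed by the concatenated decomposition; this is the standard ``free'' laxity, and the coherence conditions of Definition~\ref{o-morphism} hold because concatenation of chains is associative.

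Next I would define $\Gamma$ on morphisms: a natural transformation $\gamma_{AB}:\X_{AB}\to\Ya_{AB}$ induces $\Gamma[\gamma]:\Gamma[\X]\to\Gamma[\Ya]$ summand-wise by $\gamma(t_0)\otimes\cdots\otimes\gamma(t_l)$ on the summand indexed by $(t_0,\dots,t_l)$; one checks this is compatible with the structure maps $\Gamma[\X](u)$ and with the laxity maps, so $\Gamma$ is a functor. Then I would exhibit the unit $\eta_\X:\X\to\Ub\Gamma[\X]$ as the inclusion of the summand indexed by the trivial decomposition $(t)$ (for each $t$), which is visibly natural in $\X$. Finally, to prove the adjunction isomorphism I would construct the inverse bijections directly: given a lax morphism $\psi:\Gamma[\X]\to F$, restrict along $\eta_\X$ to get $\Ub\psi\circ\eta_\X:\X\to\Ub(F)$; conversely, given $\phi:\X\to\Ub(F)$, define $\psi$ on the summand $\X(t_0)\otimes\cdots\otimes\X(t_l)$ of $\Gamma[\X](t)$ as the composite
\[
\X(t_0)\otimes\cdots\otimes\X(t_l)\ \xrightarrow{\ \phi_{t_0}\otimes\cdots\otimes\phi_{t_l}\ }\ F(t_0)\otimes\cdots\otimes F(t_l)\ \xrightarrow{\ \varphi^F\ }\ F(t_0\otimes\cdots\otimes t_l)=F(t),
\]
where $\varphi^F$ is the (iterated) laxity map of $F$; by the universal property of the coproduct these assemble to a well-defined map out of $\Gamma[\X](t)$, and its compatibility with structure maps and laxity maps is forced by naturality of $\phi$ and the coherence axioms of $F$ (this is the ``left Kan extension creates laxity maps'' phenomenon mentioned in the Observation in Section~\ref{lr-category}). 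The two assignments are mutually inverse by a straightforward check on summands, and naturality in both $F$ and $\X$ is immediate.

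The main obstacle — the only place requiring genuine care rather than bookkeeping — is verifying that the two proposed maps $\{\psi\mapsto \Ub\psi\circ\eta_\X\}$ and $\{\phi\mapsto\psi\}$ really are inverse to each other, i.e.\ that an arbitrary lax morphism $\psi$ out of $\Gamma[\X]$ is \emph{determined} by its restriction to the trivial-decomposition summands. This is where one uses that $\psi$ is a \emph{lax} morphism: the value of $\psi$ on a nontrivial summand $\X(t_0)\otimes\cdots\otimes\X(t_l)$ is, via the coherence/compatibility with the laxity maps of $\Gamma[\X]$, forced to equal the composite of $\psi$ on the trivial summands of the $t_i$ followed by $\varphi^F$ — exactly the formula used to define $\psi$ from $\phi$. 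Making this argument clean requires organizing the induction over the length of the decomposition and invoking the coherence axioms of Definition~\ref{o-morphism} in the right order; everything else (cocompleteness giving the coproducts, symmetric closedness giving distributivity of $\otimes$ over them, functoriality, naturality) is routine.
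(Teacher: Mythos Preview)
Your proposal is correct and follows essentially the same approach as the paper: both construct $\Gamma[\X](t)$ as a coproduct over decompositions $\mathrm{Dec}(t)$ of tensor products of the $\X(t_i)$, with laxity maps given by concatenation of decompositions and the unit by inclusion of the trivial-decomposition summand. The only cosmetic difference is that the paper first gives an inductive definition $\Gamma[\X](t)=\X(t)\coprod_{l>0}\Gamma[\X](t_0)\otimes\cdots\otimes\Gamma[\X](t_l)$ and then remarks that it agrees with your closed-form formula, whereas you start from the closed form directly; the paper then proves the adjunction bijection by an induction on $\deg(t)$ showing injectivity and surjectivity of $\theta(\sigma)=\Ub(\sigma)\circ\eta$, which is exactly the ``lax morphism out of $\Gamma[\X]$ is determined on trivial summands'' argument you identify as the main point.
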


We will  adopt the following conventions.
\begin{conv}\ \
\begin{itemize}
\item If $(U_1,\cdots,U_n)$ is a $n$-tuple of objects of $\M$ we will write $U_1 \otimes \cdots \otimes U_n$ for the tensor product  of $U_1,\cdots,U_n$ with all pairs of parentheses starting in front. 
\item For  a nonempty set $J$  and $J_1, J_2$ two nonempty subsets of $J$ such that $J_1 \bigsqcup J_2=J$ then for every family $(U_j)_{j \in J}$ of objects of  $\M$ we will freely identify the two objects $\coprod_{j \in J} U_j$ and $(\coprod_{j \in J_1}U_j) \coprod (\coprod_{j \in J_2} U_j) $ and we will call it ``the'' coproduct of the $U_j$. 

In particular for each $k \in  J_1$, the  three canonical maps
\begin{equation*}
  \begin{cases}
 i_k: U_k \to  \coprod_{j \in J} U_j  \\
  i_{J_1} : \coprod_{j \in J_1} U_j \to  \coprod_{j \in J} U_j \\
 i_{k,J_1}: U_k \to  \coprod_{j \in J_1} U_j  \\
  \end{cases}
\end{equation*}
are linked by the equality: $ i_k = i_{J_1} \circ  i_{k,J_1}$. 
\end{itemize}
\end{conv}
\ \\
Before giving the proof we make some few observations.

\begin{obs}
Let $(A,B)$ be a pair of elements of $X$ and $t \in \S_{\ol{X}}(A,B)$. Denote by  $\d$ the degree (or length) of $t$.\ \\ 

Consider the set $\tx{Dec}(t)$ of all decompositions or `presentations' of $t$ given by:
$$\tx{Dec}(t)= \coprod_{0 \leq l \leq \d-\1 } \{ (t_0, \cdots ,t_l), \tx{with $t_0 \otimes \cdots \otimes t_l = t$} \}$$

where for $l=0$ we have $t_0=t_l=t$. \\

Given $t' \in  \S_{\ol{X}}(A,B)$ of length $\d'$ and a morphism $u: t \to  t'$ (hence $\d'\leq \d$) then for any $(t'_0, \cdots ,t'_l) \in \tx{Dec}(t')$, there exists a unique  $(t_0, \cdots ,t_l) \in \tx{Dec}(t)$ together with a unique $(l+1)$-tuple of morphisms $(u_0, \cdots ,u_l)$ with $u_i : t_i \to t'_i$ such that: 
$$u= u_0 \otimes \cdots \otimes u_l.$$ 

This follows from the fact in  $\S_{\ol{X}}$ the composition is a concatenation of chains `side by side' which is a generalization of the ordinal addition in $(\Delta_{epi}, +,0)$. In fact by construction each $\S_{\ol{X}}(A,B)$ is a category of elements of a functor from $\Delta_{epi}$  to the category of sets. In particular the morphisms in $\S_{\ol{X}}(A,B)$ are parametrized by the morphism of $\Delta_{epi}$ and we clearly have this property of decomposition of morphisms in $(\Delta_{epi}, +)$. \ \\

It follows that any map $u: t \to  t'$ of $\S_{\ol{X}}(A,B)$ determines a unique function $\tx{Dec}(u): \tx{Dec}(t') \to \tx{Dec}(t)$. Moreover it's not hard to see that if we have two composable maps $u:t \to t'$, $u': t' \to t''$ then $\tx{Dec}(u' \circ u)= \tx{Dec}(u') \circ \tx{Dec}(u)$.\ \\
\end{obs}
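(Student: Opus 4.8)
The plan is to reduce the assertion to an elementary fact about surjections in $\Depi$ together with the ordinal-sum monoidal structure, using the pullback description of $\S_{\ol{X}}$ from Proposition-Definition~\ref{path-bicat}. First I would set up the dictionary. By construction $\S_{\ol{X}}(A,B) = \P_{\ol{X}}(A,B) \times_{\Delta} \Depi$, and since $\ol{X}$ is the coarse category (a unique morphism between any two objects), the labelling data of a morphism in $\P_{\ol{X}}(A,B)$ is uniquely determined by its underlying order map; consequently a morphism $u : t \to t'$ of $\S_{\ol{X}}(A,B)$ is the same datum as its image $\phi := \le_{AB}(u)$, a surjection $\n \to \mathbf{n'}$ of $\Depi$, where $n = \degb(t)$ and $n' = \degb(t')$ count the edges of the chains $t=(E_0,\dots,E_n)$, $t'=(E'_0,\dots,E'_{n'})$. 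Under this dictionary $\phi$ records which consecutive edges of $t$ are merged to form each edge of $t'$: the monotone surjection cuts $\n = \{0,\dots,n-1\}$ into the $n'$ consecutive nonempty blocks $\phi^{-1}(0),\dots,\phi^{-1}(n'-1)$, and $t'$ is recovered by contracting the block $\phi^{-1}(k)$ into the single edge $k$. In the same language an element $(t'_0,\dots,t'_l)$ of $\tx{Dec}(t')$ is exactly an ordered partition of the edge set $\mathbf{n'}$ into $l+1$ consecutive nonempty blocks $B'_0,\dots,B'_l$, where $B'_j$ is the edge set of $t'_j$; the identity $t'_0 \otimes \cdots \otimes t'_l = t'$ corresponds on edge sets to $B'_0 + \cdots + B'_l = \mathbf{n'}$.

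For existence, given such a partition I would set $B_j := \phi^{-1}(B'_j)$ for $0 \le j \le l$. Because $\phi$ is monotone these are consecutive intervals of $\n$, because $\phi$ is surjective and each $B'_j$ is nonempty each $B_j$ is nonempty, and they partition $\n$ in order; hence they are the edge sets of sub-chains $t_0,\dots,t_l$ of $t$ with $t_0 \otimes \cdots \otimes t_l = t$, i.e. $(t_0,\dots,t_l) \in \tx{Dec}(t)$ with the same number of pieces. The restriction $\phi_j := \phi|_{B_j} : B_j \to B'_j$ is again a monotone surjection, and since globally $\phi$ contracts $t$ onto $t'$, $\phi_j$ contracts $t_j$ onto $t'_j$; by the coarseness of $\ol{X}$ it is therefore the length of a unique morphism $u_j : t_j \to t'_j$ of $\S_{\ol{X}}$. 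As $\phi = \phi_0 + \cdots + \phi_l$ is the block decomposition of $\phi$, additivity of $\le_{AB}$ and faithfulness give $u = u_0 \otimes \cdots \otimes u_l$.

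For uniqueness I would argue that any competing data $t = s_0 \otimes \cdots \otimes s_l$, $u = v_0 \otimes \cdots \otimes v_l$ with $v_j : s_j \to t'_j$ is forced to agree with the above. Indeed the edge set of $s_j$ is a block $C_j$ of an ordered partition $C_0 + \cdots + C_l = \n$, and $v_j : s_j \to t'_j$ forces $\phi|_{C_j}$ to surject $C_j$ onto $B'_j$; monotonicity of $\phi$ then forces $C_j = \phi^{-1}(B'_j) = B_j$, whence $s_j = t_j$ and $v_j = u_j$. This simultaneously shows that $(t'_0,\dots,t'_l) \mapsto (t_0,\dots,t_l)$ is a well-defined map $\tx{Dec}(u) : \tx{Dec}(t') \to \tx{Dec}(t)$, and the functoriality $\tx{Dec}(u'\circ u) = \tx{Dec}(u)\circ \tx{Dec}(u')$ follows from $(\psi\phi)^{-1} = \phi^{-1}\psi^{-1}$ applied blockwise. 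The degenerate case $l=0$ is the tautology $t = t$, $u = u$.

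The main obstacle I expect is purely bookkeeping: pinning down the bijection between morphisms of $\S_{\ol{X}}(A,B)$ and endpoint-preserving surjections of edge sets in $\Depi$, in particular keeping the vertex/edge off-by-one straight (a chain with $n+1$ vertices has degree $n$) and checking that contracting the block $\phi^{-1}(k)$ really is the operation inverse to concatenation $\otimes$. Once the coarseness of $\ol{X}$ is invoked to discard the labelling data, everything reduces to the observation that preimages under a monotone surjection refine an ordered block partition of the target into an ordered block partition of the source of the same length, which is exactly the promised generalization of the decomposition of morphisms in $(\Depi,+,\mathbf{0})$.
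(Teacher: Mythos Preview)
Your argument is correct and follows exactly the line the paper indicates: the observation is not given a separate proof in the paper, its justification is the paragraph embedded in the statement itself, which simply points out that morphisms of $\S_{\ol{X}}(A,B)$ are parametrized by morphisms of $\Depi$ and that the decomposition property is clear there. You have spelled this out explicitly---identifying a morphism with a monotone surjection of edge sets, taking preimages of the target's block partition, and invoking coarseness of $\ol{X}$ to recover the $u_j$---which is precisely the elementary fact the paper is alluding to. One minor remark: your functoriality equation $\tx{Dec}(u'\circ u) = \tx{Dec}(u)\circ \tx{Dec}(u')$ has the composites in the correct (contravariant) order, whereas the paper's displayed formula appears to have them swapped; your version is the one that type-checks.
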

\begin{rmk}
One can observe that for $t \in \S_{\ol{X}}(A,B)$ and $s \in \S_{\ol{X}}(B,C)$ we have a canonical map of sets
 \begin{equation*}
\tx{Dec}(s) \times \tx{Dec}(t) \to \tx{Dec}(s \otimes t)
 \end{equation*}
which is injective, so that we can view $\tx{Dec}(s) \times \tx{Dec}(t)$ as a subset of $\tx{Dec}(s \otimes t)$.\ \\
And more generally for each $(t_0, \cdots ,t_l) \in \tx{Dec}(t)$ with $l>0$ we can identify $\tx{Dec}(t_0) \times \cdots \times  \tx{Dec}(t_l)$ with a subset of $\tx{Dec}(t)$.
\end{rmk} 
\
\subsubsection{Proof of lemma \ref{adjoint-ub} }
Let $\X=(\X_{AB})$ be an object of   $\prod_{(A,B) \in X^{2}}\Hom[\S_{\ol{X}}(A,B)^{op}, \M]$.\\
To prove the lemma we will proceed as follows.
\begin{itemize}
\item[$\ast$] First we give the construction of the components $\Gamma[\X]_{AB}$.
\item[$\ast$] Then we define the laxity maps $\xi_{s,t}$.
\item[$\ast$] Finally we check that we have the universal property i.e that the functor $F \mapsto \Hom[\X, \Ub(F)]$ is co-represented by $\Gamma[\X]$.
\end{itemize}
\paragraph{The components $\Gamma[\X]_{AB}$}\ \\
\begin{enumerate}[label=\arabic*., align=left, leftmargin=*, noitemsep]
\item We define $\Gamma[\X](t)$  by induction on  degree of $t$ by:\\
\begin{itemize}
\item $\Gamma[\X](t)= \X(t)$  if $deg(t)=\1$, i.e $t=(A,B)$ for some $(A,B) \in X^2$ \\
\item And if $deg(t)>\1$ then we set $$\Gamma[\X](t) = \X(t) \coprod_{l > 0, (t_0, \cdots ,t_l) \in \tx{Dec}(t)} \Gamma[\X](t_0) \otimes \cdots \otimes \Gamma[\X](t_l).$$  
\end{itemize}
\ \\
This formula is  well defined since for every $(t_i)_{0 \leq i \leq l} \in \tx{Dec}(t)$  with $l>0$ we have $deg(t_i)< deg(t)$ and therefore each $\Gamma[\X](t_i)$ is already defined by the induction hypothesis.\\

\item Note that by construction we have the following canonical maps:
\begin{equation}\label{eta-t}
\begin{cases}
\Gamma[\X](t_0) \otimes \cdots \otimes \Gamma[\X](t_l) \xrightarrow{\xi_{(t_0,\cdots,t_l)}} \Gamma[\X](t) & \tx{with $l>0$}\\
 \X(t) \xrightarrow{\eta_{t}} \Gamma[\X](t) & 
\end{cases}
\end{equation}
\\
\item Given a map $u: t \to t'$ of $\S_{\ol{X}}(A,B)$, we also define the map $\Gamma[\X](u): \Gamma[\X](t') \to \Gamma[\X](t)$ by induction. \ \\ 
 \begin{itemize}
\item If $t$ is of degree $\1$  we take $\Gamma[\X](u)=\X(u)$.\\
\item If the degree of $t$ is $ > \1$, for  each  $(t_0, \cdots ,t'_l) \in \tx{Dec}(t')$  we have a unique $(t_0, \cdots ,t_l) \in \tx{Dec}(t)$ together with maps
 $u_i : t_i \to t'_i$ such that $u= u_0 \otimes \cdots \otimes u_l$.\ \\
\end{itemize}
By the induction hypothesis all the maps $ \Gamma[\X](u_i) : \Gamma[\X](t'_i) \to \Gamma[\X](t_i) $ are defined, and we can consider the maps: 
 \begin{equation}\label{prod_x(u)}
 \begin{cases}
 \Gamma[\X](t'_0) \otimes \cdots \otimes \Gamma[\X](t'_l) \xrightarrow{\otimes \Gamma[\X](u_i) } \Gamma[\X](t_0) \otimes \cdots \otimes \Gamma[\X](t_l) & \tx{with $l>0$}\\
\X(u): \X(t') \to \X(t) & \\
\end{cases}
\end{equation}
The composite of the maps in \eqref{prod_x(u)} followed by the maps in \eqref{eta-t} gives the following
\begin{equation}\label{x(u)}
\begin{cases}
\Gamma[\X](t'_0) \otimes \cdots \otimes \Gamma[\X](t'_l)  \xrightarrow{\xi_{(t_0,\cdots ,t_l)} \circ (\otimes \Gamma[\X](u_i)) } \Gamma[\X](t)  &  \tx{with $l>0$} \\
\X(t') \xrightarrow{\eta_{t} \circ \X(u)} \Gamma[\X](t) & 
\end{cases}
\end{equation}
Finally using the universal property of the coproduct, we know that the maps in \eqref{x(u)} determines  a unique map:
$$\Gamma[\X]_{AB}(u): \Gamma[\X]_{AB}(t') \to \Gamma[\X]_{AB}(t).$$\\
\item It's not hard to check that these data determine a functor $\Gamma[\X]_{AB}: \S_{\ol{X}}(A,B)^{op} \to \M$. \\
\item We leave the reader to check that all the maps in \eqref{eta-t} are natural in all variable $t_i$ including $t$. In particular the maps $\eta_t$ determine a natural transformation 
$$\eta_{AB}: \X_{AB} \to \Gamma[\X]_{AB}$$ 
These natural transformations $\{ \eta_{AB} \}_{(A,B) \in X^2}$ will constitute the unity of the adjunction.
\end{enumerate}
\ \\
\begin{rmk}\label{equiv-deux-def}
We can define alternatively $\Gamma[\X]$ without using induction by the formula:
$$\Gamma[\X](t) =  \coprod_{(t_0, \cdots ,t_l) \in \tx{Dec}(t)} \X(t_0) \otimes \cdots \otimes \X(t_l).$$
where we include also the case $l=0$ to have $\X(t)$ in the coproduct. \ \\

It's not hard to check that the $\Gamma[\X]$  we get by this formula and the previous one are naturally isomorphic. But for simplicity we will work with the definition by induction. 
\end{rmk}
\paragraph{The laxity maps} 
The advantage of the definition by induction is that we have `on the nose' the  laxity maps which correspond to the canonical maps :
 $$\Gamma[\X](s) \otimes \Gamma[\X](t)  \xrightarrow[(s,t) \in \tx{Dec}(s \otimes t)]{\xi_{s,t}} \Gamma[\X](s \otimes t)$$
for all pair of composable morphisms $(s,t)$. And one can check that these laxity maps satisfy the coherence axioms of a lax morphism, so that $\Gamma[\X]$ is indeed an  $\S_{\ol{X}}$-diagram. \ \\ 

Given two objects $\X$, $\X'$ with a morphism $\delta: \X  \to \X'$, one defines $\Gamma(\delta)=\{ \Gamma(\delta)_t\}$ with:
 $$\Gamma(\delta)_t= \delta_t \coprod_{(t_0, \cdots ,t_l) \in \tx{Dec}(t),l>0} \Gamma(\delta)_{t_0} \otimes \cdots \otimes \Gamma(\delta)_{t_l}.$$ 
 
We leave the reader check that $\Gamma$ is a functor.
\begin{rmk}\label{gamma-preserve-we}
If $\M$ is a symmetric monoidal model category such that all the objects are cofibrant, then by the pushout-product axiom one has that the class of (trivial) cofibrations is closed under tensor product. It's also well known that  (trivial) cofibrations are also closed under coproduct. If we combine these two fact one clearly see that if $\delta: \X \to \X'$ is a level-wise (trivial) cofibration, then $\Gamma\delta$ is also a level-wise (trivial) cofibrations.\ \\

The level-wise (trivial) cofibrations are precisely the (trivial) cofibration on $\kxinj$ (=  $\kx$ equipped with the  \emph{injective model structure}).
We have also by  Ken Brown lemma  (see \cite[Lemma 1.1.12]{Hov-model}) that both $\otimes$ and $\coprod$  preserve the weak equivalences between cofibrant objects; thus if $\delta$ is a level-wise weak equivalence then so is  $\Gamma \delta$. 
\end{rmk}

\paragraph{The universal property}
It remains to show that we have indeed an isomorphism of set: 
$$ \Hom[\Gamma[\X], F] \cong \Hom[\X,\Ub(F)]$$
\ \\
Recall that the functor $\Ub$ forgets only the laxity maps, so for any  $F, G \in \M_{\S}(X)$ we have:
\begin{itemize}
\item[-] $(\Ub F)_{AB}= F_{AB}$,
\item[-] For any  $\sigma \in \Hom_{\M_{\S}(X)}(F,G)$  then  $\Ub[\sigma] = \sigma.$
\end{itemize} 
So when there is no confusion, we will write $F$ instead of $\Ub(F)$ and $\sigma$ instead of $\Ub(\sigma)$. 
\ \\

Consider $\eta: \X \to \Gamma[\X]$ the canononical map appearing in the construction of $\Gamma[\X]$. $\eta$ is actually a map from $\X$ to $\Ub(\Gamma[\X])$. \ \\
\ \\
Let $\theta :  \Hom[\Gamma[\X], F ] \to \Hom[\X,\Ub(F)] $ be the function defined by:
$$\theta(\sigma)=  \Ub(\sigma) \circ \eta$$ 
with $\theta(\sigma)_t= \sigma_t \circ \eta_t$. By abuse of notations we will write $\theta(\sigma)= \sigma \circ \eta$.  \ \\

\begin{claim}
 $\theta$ is one-to-one and onto, hence an isomorphism of sets
\end{claim}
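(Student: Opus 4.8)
The plan is to show that the function $\theta\colon \Hom[\Gamma[\X],F]\to \Hom[\X,\Ub F]$, $\sigma\mapsto \Ub(\sigma)\circ\eta$, is a bijection by constructing an explicit inverse. Given $\psi\in\Hom[\X,\Ub F]$, i.e. a family of natural transformations $\psi_{AB}\colon \X_{AB}\to F_{AB}$, I would build a morphism $\ol\psi\colon \Gamma[\X]\to F$ in $\msx$ by induction on the degree of the $1$-morphism $t$, exactly mirroring the inductive definition of $\Gamma[\X](t)$. For $\deg(t)=\1$ (so $t=(A,B)$) set $\ol\psi_t:=\psi_{AB,t}$. For $\deg(t)>\1$, recall $\Gamma[\X](t)=\X(t)\coprod_{l>0,(t_0,\dots,t_l)\in\tx{Dec}(t)}\Gamma[\X](t_0)\otimes\cdots\otimes\Gamma[\X](t_l)$; by the induction hypothesis each $\ol\psi_{t_i}\colon\Gamma[\X](t_i)\to F(t_i)$ is defined, and composing the tensor product $\ol\psi_{t_0}\otimes\cdots\otimes\ol\psi_{t_l}$ with the laxity map $\varphi_{t_0,\dots,t_l}\colon F(t_0)\otimes\cdots\otimes F(t_l)\to F(t)$ of $F$ gives a map out of each summand; together with $\psi_{AB,t}\colon\X(t)\to F(t)$ on the remaining summand, the universal property of the coproduct yields a unique $\ol\psi_t\colon\Gamma[\X](t)\to F(t)$.

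The key steps, in order: (1) check that the collection $\{\ol\psi_t\}$ is natural in $t$, i.e. defines a natural transformation $\Gamma[\X]_{AB}\to F_{AB}$ for each pair $(A,B)$ — this is an induction on degree using the inductive description of $\Gamma[\X](u)$ on morphisms and the naturality of the laxity maps $\varphi$ of $F$; (2) check that $\ol\psi$ is compatible with laxity maps, i.e. $\ol\psi_{s\otimes t}\circ\xi_{s,t}=\varphi_{s,t}\circ(\ol\psi_s\otimes\ol\psi_t)$, so that $\ol\psi\in\Hom_{\msx}[\Gamma[\X],F]$; this is essentially immediate from the construction since $\xi_{s,t}$ is the canonical coproduct inclusion $\Gamma[\X](s)\otimes\Gamma[\X](t)\hookrightarrow\Gamma[\X](s\otimes t)$ indexed by $(s,t)\in\tx{Dec}(s\otimes t)$, and $\ol\psi_{s\otimes t}$ was defined on that summand precisely as $\varphi_{s,t}\circ(\ol\psi_s\otimes\ol\psi_t)$ (here one uses associativity/coherence of the laxity maps of $F$ to match nested decompositions); (3) verify $\theta(\ol\psi)=\psi$, i.e. $\ol\psi_t\circ\eta_t=\psi_{AB,t}$ for all $t$ — this holds because $\eta_t\colon\X(t)\to\Gamma[\X](t)$ is exactly the coproduct inclusion of the $l=0$ summand and $\ol\psi_t$ restricts to $\psi_{AB,t}$ there; (4) verify $\ol{\theta(\sigma)}=\sigma$ for any $\sigma\colon\Gamma[\X]\to F$ — this is an induction on degree: on the degree-$\1$ part both sides agree by definition, and on higher-degree summands one uses that $\sigma$ is a morphism in $\msx$ (hence commutes with the laxity maps $\xi$ and $\varphi$) to identify its restriction to the summand $\Gamma[\X](t_0)\otimes\cdots\otimes\Gamma[\X](t_l)$ with $\varphi_{t_0,\dots,t_l}\circ(\sigma_{t_0}\otimes\cdots\otimes\sigma_{t_l})$, which by induction equals $\varphi_{t_0,\dots,t_l}\circ(\ol{\theta(\sigma)}_{t_0}\otimes\cdots\otimes\ol{\theta(\sigma)}_{t_l})$. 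Finally one notes naturality of $\theta$ in both $\X$ and $F$, which follows formally from naturality of $\eta$ and functoriality of $\Ub$ and $\Gamma$.

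The main obstacle I expect is step (2) — or rather, the bookkeeping needed to make steps (1) and (4) rigorous: the summands of $\Gamma[\X](t)$ are indexed by \emph{all} decompositions, not just length-$2$ ones, so one must be careful that the two natural ways of evaluating $\ol\psi$ on a summand $\Gamma[\X](t_0)\otimes\cdots\otimes\Gamma[\X](t_l)$ — either directly via $\varphi_{t_0,\dots,t_l}$, or by first regrouping into a coarser decomposition and iterating — agree. This is exactly where the coherence axioms for the lax morphism $F$ (the associativity pentagon for the laxity maps, already recorded in Definition \ref{o-morphism} and in the ``$3$-ary coherent system'' discussion) are used, together with the fact, noted in the remarks preceding the proof, that $\tx{Dec}(t_0)\times\cdots\times\tx{Dec}(t_l)$ embeds compatibly into $\tx{Dec}(t)$ and that $\tx{Dec}$ is functorial in $u$. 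Once that coherence is invoked, everything else is a routine induction on degree, and I would present it as such rather than writing out every diagram.
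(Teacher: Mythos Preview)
Your proposal is correct and follows essentially the same route as the paper: an induction on the degree of $t$ using the universal property of the coproduct defining $\Gamma[\X](t)$, together with the coherence of the laxity maps of $F$. The only organizational difference is that the paper splits the argument into a separate injectivity proof (showing by induction that $\sigma_t$ is forced by $\theta(\sigma)$ and the $\sigma_{w_i}$ via the coproduct universal property) and a surjectivity proof (your construction of $\ol\psi$), rather than packaging both as ``construct an inverse and check both compositions''; the underlying computations are identical, and the coherence issue you flag in step~(2) is exactly the point the paper handles via the diagrams of type~\eqref{eq-lax-sigma-t}.
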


\paragraph{Injectivity of $\theta$}\ \\
Suppose we have $\sigma, \sigma' \in \Hom[\Gamma[\X], F]$ such that $\theta(\sigma)= \theta(\sigma')$ . We proceed by induction on the degree of $t$ to show that for all $t$, $\sigma_t=\sigma'_t$.\\

$\bullet$ For $t$ of degree $\1$ we have $\Gamma[\X](t)= \X(t)$ and  $\eta_t= \Id_{\Gamma[\X](t)}$ therefore $\theta(\sigma)_t= \sigma_t$ and $\theta(\sigma')_t= \sigma'_t$. The assumption $\theta(\sigma)_t= \theta(\sigma')_t$ gives $\sigma_t=\sigma'_t$.\ \\

$\bullet$  Let $t$ be of degree $\d > \1$ and assume that  $\sigma_w=\sigma'_w$ for all $w$ of degree $\leq \d-\1$. We will denote for short by $\tx{Dec}(t)^*$ the set $\tx{Dec}(t) - \{t \}$ which is exacltly the set of all $(w_0, \cdots ,w_p)$ in $\tx{Dec}(t)$  with $p>0$. \ \\ 

For each $(w_0, \cdots ,w_p) \in \tx{Dec}(t)^*$ we have two canonical maps: 
\begin{equation}
 \begin{cases}
\Gamma[\X](w_0) \otimes \cdots \otimes \Gamma[\X](w_p) \xrightarrow{\xi_{(w_0,\cdots,w_p)}} \Gamma[\X](w_0 \otimes \cdots  \otimes w_p)=\Gamma[\X](t) & \\
F(w_0) \otimes \cdots \otimes F(w_p) \xrightarrow{\varphi_{(w_0,\cdots,w_p)}}F(w_0 \otimes \cdots  \otimes w_p)=F(t) & \\
 \end{cases}
\end{equation}
\\
The map $\xi_{(w_0,\cdots,w_p)}$ is the one in \eqref{eta-t} and the map $\varphi_{(w_0,\cdots,w_p)}$ is uniquely determined by  laxity maps  of $F$ and their coherences,  together with the bifunctoriality of the product $\otimes$ and it associativity. We remind the reader that the choice and order of composition of these maps (laxity, associativity, identities)  we use to build $\varphi_{(w_0,\cdots,w_p)}$  doesn't matter (Mac Lane coherence theorem \cite[Ch. 7]{Mac}).\ \\

Now by definition of a morphism of $\S_{\ol{X}}$-diagrams, for any $(w_0,w_1)$ with $w_0 \otimes w_1= t$ the following diagram commutes:

\[
\xy
(-10,20)*+{ \Gamma[\X](w_0) \otimes  \Gamma[\X](w_1)}="A";
(30,20)*+{ \Gamma[\X](t)}="B";
(-10,0)*+{F(w_0) \otimes F(w_1)}="C";
(30,0)*+{F(t)}="D";
{\ar@{->}^{\ \ \ \ \ \xi_{w_0,w_1}}"A";"B"};
{\ar@{->}_{\sigma_{w_0} \otimes \sigma_{w_1}}"A";"C"};
{\ar@{->}^{\sigma_{t}}"B";"D"};
{\ar@{->}^{\varphi_{w_0,w_1}}"C";"D"};
\endxy
\]
\\
And using again a `Mac Lane coherence style' argument we have a general commutative diagram for each $(w_0, \cdots ,w_p) \in \tx{Dec}(t)$:

\begin{equation}\label{eq-lax-sigma-t}
\xy
(-10,20)*+{  \Gamma[\X](w_0) \otimes \cdots \otimes \Gamma[\X](w_p)}="A";
(40,20)*+{ \Gamma[\X](t)}="B";
(-10,0)*+{F(w_0) \otimes \cdots \otimes F(w_p)}="C";
(40,0)*+{F(t)}="D";
{\ar@{->}^{\ \ \ \ \ \ \ \xi_{(w_0,\cdots,w_p)}}"A";"B"};
{\ar@{->}_{\bigotimes\sigma_{w_i}}"A";"C"};
{\ar@{->}^{\sigma_{t}}"B";"D"};
{\ar@{->}^{\varphi_{(w_0,\cdots,w_p)}}"C";"D"};
\endxy
\end{equation}
\\
If we replace $\sigma$ by $\sigma'$ everywhere we get a commutative diagram of the same type.\ \\
\ \\ 
Let's denote by $\tx{Diag}_t(\sigma)$ and $\tx{Diag}_t(\sigma')$ the set of maps:
$$\tx{Diag}_t(\sigma)= \left\lbrace \varphi_{(w_0,\cdots,w_p)} \circ (\otimes\sigma_{w_i}), (w_0, \cdots ,w_p) \in \tx{Dec}(t)^* \right\rbrace \sqcup \left\lbrace \theta(\sigma)_t \right\rbrace $$
$$\tx{Diag}_t(\sigma')=\left\lbrace \varphi_{(w_0,\cdots,w_p)} \circ (\otimes\sigma'_{w_i}), (w_0, \cdots ,w_p) \in \tx{Dec}(t)^* \right\rbrace  \sqcup \left\lbrace \theta(\sigma')_t \right\rbrace .$$
\\
Using the induction hypothesis $\sigma_{w_i} =\sigma'_{w_i}$  and the fact that $\theta(\sigma)=\theta(\sigma')$ we have:
\begin{equation*}
\begin{cases}
\varphi_{(w_0,\cdots,w_p)} \circ (\otimes\sigma_{w_i}) = \varphi_{(w_0,\cdots,w_p)} \circ (\otimes\sigma'_{w_i}) & \tx{for all $(w_0, \cdots ,w_p) \in \tx{Dec}(t)^* $}\\
\theta(\sigma)_t = \theta(\sigma')_t & \\
\end{cases}
\end{equation*}
so that $\tx{Diag}_t(\sigma) = \tx{Diag}_t(\sigma')$.\ \\
\ \\
The universal property of the coproduct says that there exists a \textbf{unique} map $\delta_t:  \Gamma[\X](t) \to F(t)$ such that:
\begin{equation*}
\begin{cases}
\varphi_{(w_0,\cdots,w_p)} \circ (\otimes\sigma_{w_i}) =  \delta_t \circ \xi_{(w_0,\cdots,w_p)}& \\
\varphi_{(w_0,\cdots,w_p)} \circ (\otimes\sigma'_{w_i}) = \delta_t \circ \xi_{(w_0,\cdots,w_p)} &\\
\theta(\sigma)_t = \delta_t \circ \eta_t & \\
\theta(\sigma')_t= \delta_t \circ \eta_t & \\
\end{cases}
\end{equation*}
But we know from the commutative diagrams \eqref{eq-lax-sigma-t} that both  $\sigma_t$ and $\sigma'_t$ satisfy these relations so by unicity we have $\sigma_t=\delta_t=\sigma'_t$ .\\ 

By induction we have the equality $\sigma_t= \sigma'_t$ for all $t$ which means that $\sigma=\sigma'$ and $\theta$ is injective. \ \\
\begin{rmk}\label{rmk-co-unit}
The set of maps:
\begin{equation*}
\begin{cases}
\varphi_{(w_0,\cdots,w_p)}: F(w_0) \otimes \cdots \otimes F(w_p) \to F(t) & \tx{for all  $(w_0,\cdots,w_p) \in \tx{Dec}(t)^*$} \\
\Id_{F(t)}: F(t) \to F(t) &\\
\end{cases}
\end{equation*}
determines by the universal property of the coproduct a unique map :
$$\varepsilon'_{t}: \coprod_{\tx{Dec}(t) } F(w_0) \otimes \cdots \otimes F(w_p)  \to F(t).$$ 

Note that the source of that map is a coproduct taken on $\tx{Dec}(t)$ i.e including $t$. We have the obvious factorizations of $\varphi_{(w_0,\cdots,w_p)}$ and $\Id_{F(t)}$ through 
$\varepsilon'_{t}$. From  the Remark \ref{equiv-deux-def} we have
 $$\Gamma[\Ub F](t) \cong  \coprod_{\tx{Dec}(t) } F(w_0) \otimes \cdots \otimes F(w_p)$$ and $\varepsilon'_t$ gives a map $\varepsilon_t: \Gamma[\Ub F](t) \to F(t)$.  That map $\varepsilon_t$ will constitute the co-unit of the adjunction. 
\end{rmk}

\paragraph{Surjectivity of $\theta$}\ \\
Let $\pi: \X \to \Ub(F)$ be an element of $\Hom[\X,\Ub(F)]$. In the following we construct by induction a morphism of $\S_{\ol{X}}$-diagrams $\sigma: \Gamma[\X] \to F$ such that $\theta(\sigma)= \pi$.\ \\
Let $t$ be a morphism in $\S_{\ol{X}}$ of degree $\d$.\ \\
\ \\
$\bullet$ For $\d=\1$ since $\Gamma[\X](t)= \X(t)$ and $\eta_t= \Id_{\Gamma[\X](t)}$, we set $\sigma_t= \pi_t$. We  have: 
$$\theta(\sigma)_t=  \sigma_t \circ \eta_t = \pi_t \circ \Id_{\Gamma[\X](t)}= \pi_t.$$ 
\\
$\bullet$ For $\d > \1$,  let's assume that we've construct $\sigma_{w_i}: \Gamma[\X](w_i) \to F(w_i)$ for all $w_i$ of degree $\leq \d-\1$ such that:
$$ \theta(\sigma)_{w_i}=  \sigma_{w_i} \circ \eta_{w_i} = \pi_{w_i}.$$ 
\\
Using the  the universal property of the copruduct with respect to the following set of maps: 

 \begin{equation*}
\begin{cases}
\varphi_{(w_0,\cdots,w_p)} \circ (\otimes\sigma_{w_i})  :\Gamma[\X](w_0) \otimes \cdots \otimes \Gamma[\X](w_p) \to F(t) & \tx{for all  $(w_0,\cdots,w_p) \in \tx{Dec}(t)^*$} \\
\pi_t : \X(t) \to F(t) &\\
\end{cases}
\end{equation*}
we have a unique map $\sigma_t: \Gamma[\X](t) \to F(t) $ such that the following factorizations hold:
 \begin{equation*}
\begin{cases}
\varphi_{(w_0,\cdots,w_p)} \circ (\otimes\sigma_{w_i})= \sigma_t \circ \xi_{(w_0,\cdots,w_p)}  & \tx{for all  $(w_0,\cdots,w_p) \in \tx{Dec}(t)^*$} \\
\pi_t= \sigma_t \circ \eta_t. &\\
\end{cases}
\end{equation*}
\\
These factorizations implies that all the diagrams of type \eqref{eq-lax-sigma-t} are commutative. So by induction  for all $t$ we have these commutative diagrams which gives the required axioms for transformations of lax-morphisms (see Definition \ref{mor-s-diag}).\ \\

Moreover we clear have by construction that $\theta(\sigma)_t= \sigma_t \circ \eta_t= \pi_t$ for all $t$, that is $\theta(\sigma)= \pi$ and $\theta$ is surjective. \ \\

We leave the reader to check that all the constructions considered in the previous paragraphs are natural in both $\X$ and $F$. $\qed$

\subsection{Evaluations on morphism have left adjoint}\label{lemme_adjunction_2}
In the following $\M$ represents a cocomplete category.  Given a small category $\Ca$ and a morphism $\alpha$ of $\Ca$ we will denote by $\Ev_{\alpha}: \Hom(\Ca, \M) \to \M^{[\1]}$ the functor that takes $\Fa \in \Hom(\Ca, \M)$ to $\Fa(\alpha)$.\ \\
 
Let $[\1]=(0 \to 1)$ be the interval category. A morphism $\alpha$ of $\Ca$ can be identified with a functor denoted again $\alpha : [\1] \to \Ca$, that takes $0$ to the source of $\alpha$ and $1$ to the target. \ \\
Then we can identify $\Ev_{\alpha}$ with the pullback functor $\alpha^{\star}: \Hom(\Ca, \Ma) \xrightarrow{\alpha \circ } \Hom([\1],\M)=\M^{2}$. With this identification one easily establish that 

\begin{lem}
For a small category $\Ca$ and any morphism $\alpha \in \Ca$ the functor  $\Ev_{\alpha}$ has a left adjoint $\Fb^{\alpha}: \M^{[\1]} \to \Hom(\Ca, \M)$. 
\end{lem}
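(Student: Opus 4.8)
The statement to prove is that for a small category $\Ca$ and any morphism $\alpha \in \Ca$, the evaluation functor $\Ev_\alpha : \Hom(\Ca,\M) \to \M^{[\1]}$ has a left adjoint $\Fb^\alpha$. The plan is to reduce everything to a left Kan extension along the functor $\alpha : [\1] \to \Ca$ determined by the morphism $\alpha$, where $[\1]$ is the interval category with objects $0,1$ and a single non-identity arrow $0 \to 1$. As noted in the paragraph immediately preceding the statement, the identification is $\Ev_\alpha \cong \alpha^\star$, the precomposition (restriction) functor $\alpha^\star : \Hom(\Ca,\M) \to \Hom([\1],\M) = \M^{[\1]}$. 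So the whole claim becomes the assertion that the restriction functor $\alpha^\star$ has a left adjoint.

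First I would invoke the general existence theorem for left Kan extensions: since $\M$ is cocomplete and $[\1]$ is a small category, the functor $\alpha^\star : \Hom(\Ca,\M) \to \Hom([\1],\M)$ admits a left adjoint $\Lan_\alpha$, namely pointwise left Kan extension along $\alpha$. Explicitly, for $h : U \to V$ an object of $\M^{[\1]}$, one has $(\Lan_\alpha h)(c) = \colim_{(\alpha \downarrow c)} h \circ \pr$, the colimit over the comma category, and these colimits exist because $\M$ is cocomplete and each comma category $(\alpha \downarrow c)$ is small (it has at most two objects since $[\1]$ has two). Set $\Fb^\alpha := \Lan_\alpha$. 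The adjunction isomorphism
$$
\Hom_{\Hom(\Ca,\M)}[\Fb^\alpha h, \Fa] \;\cong\; \Hom_{\M^{[\1]}}[h, \Fa(\alpha)]
$$
natural in $h$ and $\Fa$ is then exactly the defining property of the left Kan extension as left adjoint to restriction.

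Alternatively, and perhaps more in the elementary spirit of the paper's appendix, I would give the explicit formula directly and check the universal property by hand. Writing $s$ and $t$ for the source and target of $\alpha$, one defines, for $h : U \to V$,
$$
\Fb^\alpha h \,(c) \;=\; \coprod_{\Hom_\Ca(s,c)} U \;\cup\; \coprod_{\Hom_\Ca(t,c)} V,
$$
glued along the map induced by $h$ and by postcomposition with $\alpha$: more precisely $\Fb^\alpha h\,(c)$ is the pushout of $\coprod_{\Hom_\Ca(s,c)} U \xleftarrow{\coprod h} \coprod_{\Hom_\Ca(s,c)} U \xrightarrow{\alpha_*} \coprod_{\Hom_\Ca(t,c)} V$ (the second map reindexing by $g \mapsto g\circ\alpha$). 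On morphisms $c \to c'$ of $\Ca$ one uses functoriality of the coproducts and of the pushout. A morphism $\Fb^\alpha h \to \Fa$ is then, by the universal property of coproducts and pushouts, the same data as a commutative square from $h$ to $\Fa(\alpha)$, which gives the bijection of hom-sets; naturality in both arguments is routine. This is structurally identical to the argument used for $\Gamma$ in Lemma \ref{adjoint-ub}: build the value object by a universal (colimit) construction out of the generators, then co-represent $\Fa \mapsto \Hom[h,\Fa(\alpha)]$.

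I do not expect a genuine obstacle here: the result is a standard consequence of cocompleteness of $\M$ together with smallness of $[\1]$, and the only thing requiring care is bookkeeping in the explicit pushout description — tracking the reindexing map $g \mapsto g\circ\alpha$ correctly and verifying that the assignment $c \mapsto \Fb^\alpha h\,(c)$ is functorial. The mildly delicate point, if one wants the explicit formula rather than the abstract Kan extension, is checking that the comma category $(\alpha\downarrow c)$ really does produce the stated pushout (one must observe that its objects are pairs $(0,g : s\to c)$ and $(1,g' : t\to c)$, with a connecting morphism $(0,g'\circ\alpha) \to (1,g')$), but this is immediate from the definition of $[\1]$. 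Hence the cleanest writeup is: cite the general Kan extension / adjoint existence theorem, set $\Fb^\alpha = \Lan_\alpha$, and note the explicit pushout formula as a remark for the reader who wants it concretely.
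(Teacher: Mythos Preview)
Your main argument is correct and is exactly the paper's proof: identify $\Ev_\alpha$ with the restriction $\alpha^\star$ along $\alpha:[\1]\to\Ca$ and take $\Fb^\alpha=\Lan_\alpha$, which exists because $\M$ is cocomplete and $[\1]$ is small. The paper gives precisely this one-line argument and defers to standard references.

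One small correction to your optional explicit description: the pushout should be indexed by $\Hom_\Ca(t,c)$ in the middle, not $\Hom_\Ca(s,c)$. The comma category $(\alpha\downarrow c)$ has one connecting arrow for each $g'\in\Hom_\Ca(t,c)$, so the span is
\[
\coprod_{\Hom_\Ca(s,c)} U \;\xleftarrow{\;(-\circ\alpha)_*\;}\; \coprod_{\Hom_\Ca(t,c)} U \;\xrightarrow{\;\coprod h\;}\; \coprod_{\Hom_\Ca(t,c)} V,
\]
with the left map sending the $g'$-summand to the $(g'\circ\alpha)$-summand. (Your reindexing $g\mapsto g\circ\alpha$ only type-checks for $g:t\to c$.) This is exactly the bookkeeping you flagged as delicate; the paper itself unwinds a closely related coproduct formula in the proof of the next lemma for the special case of direct categories.
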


\begin{proof}
This is a special case of a more general situation where we have a functor $\alpha:  \Ba \to \Ca$;  the left adjoint of $\alpha^{\star}$ is $\alpha_{!}$ which corresponds to the left Kan extension functor along $\alpha$. The reader can find a detailed proof for example in  \cite[A.2.8.7]{Lurie_HTT}, \cite[Ch. 7.6.1]{Simpson_HTHC}, \cite[Ch. 11.6]{Hirsch-model-loc}. 
\end{proof}

\begin{cor}\label{adjoint-preserve-proj-cof}
$\Fb^{\alpha}$ is a left Quillen functor between the corresponding projective model structures. 
\end{cor}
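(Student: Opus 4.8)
The plan is to deduce that $\Fb^{\alpha}$ is left Quillen from the dual property of its right adjoint, rather than checking cofibrations directly. Recall from the previous lemma that $\Fb^{\alpha}$ is left adjoint to $\Ev_{\alpha}$, and that under the identification of a morphism $\alpha$ of $\Ca$ with a functor $\alpha:[\1]\to\Ca$ (resp. in the general case a functor $\alpha:\Ba\to\Ca$), the functor $\Ev_{\alpha}$ is precisely the restriction (pullback) functor $\alpha^{\star}:\Hom(\Ca,\M)\to\Hom([\1],\M)=\M^{[\1]}$. By \cite[Lemma 1.3.4]{Hov-model}, for an adjoint pair between model categories it suffices to verify that the \emph{right} adjoint preserves fibrations and trivial fibrations; so the whole statement reduces to showing that $\alpha^{\star}$ preserves objectwise fibrations and objectwise weak equivalences.

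First I would recall the description of the projective model structure: on $\Hom(\Ca,\M)$ (and likewise on $\M^{[\1]}$) a natural transformation $\sigma:\Fa\to\Ga$ is a fibration (resp. weak equivalence, resp. trivial fibration) if and only if for every object $c\in\Ca$ the component $\sigma_c:\Fa c\to\Ga c$ is a fibration (resp. weak equivalence, resp. trivial fibration) in $\M$; this uses only that $\M$ is a cofibrantly generated model category, which is our standing hypothesis. Then the argument is immediate: for $\sigma:\Fa\to\Ga$ in $\Hom(\Ca,\M)$ and any object $b$ of the source category $[\1]$ (resp. $\Ba$), one has $(\alpha^{\star}\sigma)_b=\sigma_{\alpha(b)}$. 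Hence if $\sigma$ is an objectwise fibration then each $\sigma_{\alpha(b)}$ is a fibration in $\M$, so $\alpha^{\star}\sigma$ is an objectwise fibration; the same reasoning applies verbatim with ``fibration'' replaced by ``trivial fibration''. Therefore $\alpha^{\star}=\Ev_{\alpha}$ preserves fibrations and trivial fibrations, and the adjunction $\Fb^{\alpha}\dashv\Ev_{\alpha}$ is a Quillen adjunction, with $\Fb^{\alpha}$ left Quillen and $\Ev_{\alpha}$ right Quillen.

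There is essentially no hard step here; the only points requiring care are bookkeeping ones. One should note that the projective model structure on $\M^{[\1]}$ (the target of $\Fb^{\alpha}$ for a single morphism) agrees with the Reedy one, as already observed in the body of the paper, so no ambiguity arises about which model structure is meant. One should also record explicitly that the identification $\Ev_{\alpha}\cong\alpha^{\star}$ is the content of the proof of the preceding lemma, so that the reference to objectwise fibrations on the functor category $\Hom([\1],\M)$ is legitimate. Finally, in the full generality $\alpha:\Ba\to\Ca$ the same proof works unchanged, since $\Fb^{\alpha}=\alpha_!$ remains left adjoint to $\alpha^{\star}$ and the projective fibrations and trivial fibrations on both $\Hom(\Ba,\M)$ and $\Hom(\Ca,\M)$ are detected objectwise.
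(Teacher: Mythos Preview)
Your proof is correct and follows essentially the same approach as the paper: both argue via \cite[Lemma 1.3.4]{Hov-model} that it suffices to show the right adjoint $\Ev_{\alpha}=\alpha^{\star}$ preserves (trivial) fibrations, which is immediate since projective (trivial) fibrations are defined objectwise. Your write-up is somewhat more detailed (making explicit the componentwise identity $(\alpha^{\star}\sigma)_b=\sigma_{\alpha(b)}$ and noting the generality for arbitrary $\alpha:\Ba\to\Ca$), but the argument is the same.
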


\begin{proof}
A left adjoint is a left Quillen functor if and only if its right adjoint is a right Quillen functor (see lemma \cite[1.3.4]{Hov-model}); consequently $\Fb^{\alpha}$ is a left Quillen functor if and only if $\Ev_{\alpha}$ is a right Quillen functor. 

But in the respective projective model structure on $\Hom(\Ca, \M)$ and $\M^{[\1]} =\Hom([\1], \M)$, the (trivial) fibrations are the level-wise ones, so $\Ev_{\alpha}$ clearly preserves them. Thus $\Ev_{\alpha}$ is a right Quillen functor and the result follows. 
\end{proof}

\begin{rmk}
From Ken Brown lemma (\cite[Lemma 1.1.12]{Hov-model}), $\Fb^{\alpha}$ preserves weak equivalences between cofibrant objects. Recall that a cofibrant object in $\arproj$ is  a cofibration in $\M$ with a cofibrant domain. In particular if all the objects of $\M$ are cofibrant, then the cofibrant objects in $\arproj$ are simply all the cofibrations of $\M$.  Therefore if all the objects of $\M$ are cofibrant then $\Fb^{\alpha}$ preserves weak equivalence between any two cofibrations. 
\end{rmk}

Our main interest in the above lemma is when $\Ca=  \S_{\ol{X}}(A,B)^{op}$ and $\alpha= u_t$, the unique morphism in $\S_{\ol{X}}(A,B)^{op}$ from $(A,B)$ to $t$ (see Observation \ref{Co-Segal-final-cond}).\ \\

Each category $\S_{\ol{X}}(A,B)^{op}$ is an example of \emph{direct category}, that is a a category $\Ca$ equiped with a \emph{linear extension functor} $\degb: \Ca \to \lambda$, where $\lambda$ is an ordinal. One requires furthermore that $\degb$ takes nonidentiy maps to nonidentiy maps; this way any nonidentity map raises the degree. Note that $\S_{\ol{X}}(A,B)^{op}$ has an initial object $e$ which corresponds to $(A,B)$. \ \\

For such categories $\Ca$ one has the following
\begin{lem}\label{adjoint_ev_inj}
Let $\Ca$ be a direct category. Then for any model category $\M$, the adjunction 
$$\Fb^{\alpha}: \M^{[\1]} \rightleftarrows \Hom(\Ca,\M): \Ev_{\alpha}$$
 is also a Quillen adjunction with the injective model structures on each side.
\end{lem}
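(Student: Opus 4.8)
\textbf{Plan of proof for Lemma \ref{adjoint_ev_inj}.}

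The plan is to deduce the injective statement from the projective one already established in the previous lemma, together with the structural features of a direct category $\Ca$. The key point is that, unlike for a general indexing category, the evaluation functor $\Ev_\alpha$ at a morphism $\alpha\colon e \to t$ whose source is the initial object $e$ of $\Ca$ is compatible with the injective model structure because injective fibrations in $\Hom(\Ca,\M)$ are in particular level-wise fibrations, so $\Ev_\alpha$ preserves fibrations; the delicate direction is that $\Ev_\alpha$ also preserves \emph{trivial} fibrations. Here is where directness enters: in a direct category the matching objects are trivial (matching categories are empty, since all non-identity maps raise degree), so the injective fibrations are detected level-wise exactly as for the projective/Reedy structure, and in fact for a direct category the injective model structure coincides with the Reedy model structure on $\Hom(\Ca,\M)$. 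Consequently the classes of fibrations and trivial fibrations in $\Hom(\Ca,\M)_{\tx{inj}}$ agree with those in $\Hom(\Ca,\M)_{\tx{proj}}=\Hom(\Ca,\M)_{\tx{Reedy}}$, and since $\Ev_\alpha$ is right Quillen for the projective structure (Corollary \ref{adjoint-preserve-proj-cof}) it is automatically right Quillen for the injective structure as well.

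More concretely, first I would recall that $\Ca$ being a direct category means $\Hom(\Ca,\M)$ carries a Reedy model structure in which cofibrations are the relative latching maps and fibrations are the level-wise fibrations; by \cite[Ch. 5]{Hov-model} (cf. the discussion in Section \ref{section-model-msx} for the inverse case) this Reedy structure coincides with the injective one when $\Ca$ is direct, because the matching objects are all terminal. Next I would observe that on $\M^{[\1]}$ the injective model structure is likewise the Reedy structure for the direct category $[\1]$, with fibrations the level-wise fibrations and cofibrations the maps $(f,g)\colon h\to h'$ with $f$ a cofibration and the induced map $h'\cup_h (\text{target of } f)\to (\text{target of } h')$ a cofibration. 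Then it suffices to check $\Ev_\alpha$ preserves (trivial) fibrations: given a level-wise (trivial) fibration $\sigma\colon \Fa\to\Ga$ in $\Hom(\Ca,\M)_{\tx{inj}}$, the square $\Ev_\alpha(\sigma)=(\sigma_e,\sigma_t)$ has both vertical maps $\sigma_e$ and $\sigma_t$ (trivial) fibrations in $\M$, which is exactly the condition for $(\sigma_e,\sigma_t)$ to be a (trivial) fibration in $\M^{[\1]}_{\tx{inj}}$. Therefore $\Ev_\alpha$ is right Quillen and its left adjoint $\Fb^\alpha$ (which exists by the preceding lemma) is left Quillen, as claimed.

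The step I expect to require the most care is the identification of the injective model structure on $\Hom(\Ca,\M)$ with the Reedy model structure for a direct category, or equivalently the statement that in this case injective fibrations are precisely the level-wise fibrations; this is where one must use that $\Ca$ has trivial matching objects, and one should cite the appropriate reference (e.g.\ \cite[Ch.\ 5]{Hov-model}, \cite[Ch.\ 15]{Hirsch-model-loc}). Once that identification is in hand the remainder is the routine adjointness argument of Corollary \ref{adjoint-preserve-proj-cof} transported verbatim. I would also remark, as in the surrounding text, that by Ken Brown's lemma $\Fb^\alpha$ then preserves weak equivalences between cofibrant objects of $\M^{[\1]}_{\tx{inj}}$, which for $\M$ with all objects cofibrant means between arbitrary cofibrations of $\M$.
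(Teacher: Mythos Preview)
Your argument contains a genuine error: for a \emph{direct} category $\Ca$, the Reedy model structure on $\Hom(\Ca,\M)$ coincides with the \emph{projective} model structure, not the injective one. Indeed, when matching objects are terminal the Reedy fibration condition reduces to level-wise fibrations, which characterizes the projective structure; it is for \emph{inverse} categories that Reedy $=$ injective. (The paper itself recalls this just before Section~\ref{section-model-msx}: since $\S_{\ol{X}}(A,B)^{\op}$ is direct, projective $=$ Reedy on $\Hom[\S_{\ol{X}}(A,B)^{\op},\M]$.) Consequently your key claim---that injective fibrations in $\Hom(\Ca,\M)$ and in $\M^{[\1]}$ are precisely the level-wise ones---is false. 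For example, in $\M^{[\1]}_{\tx{inj}}$ a map $(p_0,p_1)\colon f\to g$ is a fibration iff $p_1$ is a fibration \emph{and} the induced map to the pullback $F(0)\to G(0)\times_{G(1)} F(1)$ is a fibration; merely having $p_0,p_1$ fibrations is not sufficient. So even if you start from an injective fibration $\sigma$ in $\Hom(\Ca,\M)$ (which is indeed level-wise a fibration), you cannot conclude that $\Ev_\alpha(\sigma)$ is an injective fibration in $\M^{[\1]}$. The right-adjoint route therefore does not go through.

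The paper proceeds instead on the left-adjoint side: it computes $\Fb^{\alpha}_h(c)$ explicitly via the left Kan extension formula. Directness of $\Ca$ enters here because $\Hom(c_1,c_0)=\varnothing$ forces the comma category $\alpha_{/c}$ to be simple enough that the colimit collapses to a coproduct of copies of $U$ and $V$. One then reads off that for an injective (trivial) cofibration $\theta=(a,b)\colon h\to h'$, each component $\Fb^{\alpha}_{\theta,c}$ is a coproduct of copies of $a$ and $b$, hence again a (trivial) cofibration. This shows directly that $\Fb^\alpha$ preserves level-wise (trivial) cofibrations, i.e.\ is left Quillen for the injective structures. The moral is that for the injective case one must work with the left adjoint and exploit the concrete shape of the Kan extension, rather than try to characterize injective fibrations.
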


\begin{cor}\label{lem-pres-level-wise-cof}
For any $t \in \S_{\ol{X}}(A,B)$  the functor 
 $$\Fb^{u_t}: \M^{[\1]}_{\tx{inj}} \to \Hom[\S_{\ol{X}}(A,B)^{op} ,\M]_{\tx{inj}}$$ is a left Quillen functor.
\end{cor}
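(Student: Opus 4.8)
The final statement, Corollary \ref{lem-pres-level-wise-cof}, asserts that for any $t \in \S_{\ol{X}}(A,B)$ the functor $\Fb^{u_t} : \M^{[\1]}_{\tx{inj}} \to \Hom[\S_{\ol{X}}(A,B)^{op},\M]_{\tx{inj}}$ is left Quillen. The plan is to deduce this directly from Lemma \ref{adjoint_ev_inj}, so the real work is in proving that lemma first; the corollary is then just an instantiation.

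First I would set up the instantiation. Recall from the discussion preceding the statement that each category $\S_{\ol{X}}(A,B)^{op}$ is a direct category: it carries the linear extension $\degb : \S_{\ol{X}}(A,B)^{op} \to \lambda$ coming from the length functor $\le_{AB}$ to $\Upsilon$ composed with an ordinal-valued degree on $\Upsilon = (\Delta_{\tx{epi}},+)$, and this functor sends nonidentity maps to nonidentity maps since a surjection in $\Depi$ that is not an identity strictly lowers the ordinal (equivalently, morphisms of $\S_{\ol{X}}(A,B)$ delete letters, so the opposite direction raises degree). Moreover $(A,B)$, i.e. the chain of length $\1$, is the initial object $e$ of $\S_{\ol{X}}(A,B)^{op}$, and $u_t : e \to t$ is the unique map. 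So $u_t$ is a morphism of the direct category $\Ca := \S_{\ol{X}}(A,B)^{op}$, and $\Fb^{u_t}$ together with $\Ev_{u_t}$ is exactly the adjunction named in Lemma \ref{adjoint_ev_inj}. Hence the corollary follows the instant the lemma is available. I would write the corollary's proof as a single sentence: apply Lemma \ref{adjoint_ev_inj} with $\Ca = \S_{\ol{X}}(A,B)^{op}$ and $\alpha = u_t$.

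The substance, then, is Lemma \ref{adjoint_ev_inj}: for a direct category $\Ca$ with a morphism $\alpha : [\1] \to \Ca$ and any model category $\M$, the pair $\Fb^{\alpha} \dashv \Ev_{\alpha}$ is Quillen when both $\M^{[\1]}$ and $\Hom(\Ca,\M)$ carry their injective model structures. By \cite[Lemma 1.3.4]{Hov-model}, it suffices to show $\Ev_{\alpha} : \Hom(\Ca,\M)_{\tx{inj}} \to \M^{[\1]}_{\tx{inj}}$ is right Quillen, i.e. preserves fibrations and trivial fibrations. Unwinding definitions, a fibration $\sigma : \Fa \to \Ga$ in the injective structure on $\Hom(\Ca,\M)$ is a map with the RLP against injective trivial cofibrations; the subtlety is that injective fibrations are \emph{not} detected levelwise, so I cannot simply say "$\Ev_{\alpha}$ preserves levelwise properties." The clean route is to characterize injective fibrations of diagrams on a direct category via matching objects: since $\Ca$ is direct, $\Hom(\Ca,\M)$ with the injective model structure coincides with the Reedy model structure on $\Ca$ (a direct category is Reedy with only a direct part, so the matching objects are trivial and Reedy fibrations are levelwise fibrations)—wait, that would make injective fibrations levelwise, which conflicts with injective generally being coarser. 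The correct statement is the dual: for an \emph{inverse} category the injective and Reedy structures agree; for a \emph{direct} category it is the \emph{projective} and Reedy structures that agree. So for direct $\Ca$ the injective structure is genuinely the hard one, and I should instead argue as follows: by \cite[Theorem 15.6.27]{Hirsch-model-loc} (or the analogue in \cite{Lurie_HTT} A.2.8) an injective fibration of $\Ca$-diagrams restricts along any functor $[\1] \to \Ca$ to an injective fibration of $[\1]$-diagrams, provided the functor $[\1] \to \Ca$ is appropriately "exhibited" — concretely, one uses that $\alpha^\star$ has a left adjoint $\alpha_!$ which preserves injective (trivial) cofibrations, and dually $\alpha^\star = \Ev_\alpha$ preserves injective (trivial) fibrations. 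This is the step I expect to be the main obstacle: verifying that $\alpha_! = \Fb^{\alpha}$ preserves injective cofibrations and injective trivial cofibrations, equivalently (by adjunction) that $\Ev_\alpha$ preserves injective fibrations and injective trivial fibrations, directly from the pointwise description of $\Fb^\alpha$ as a left Kan extension. I would carry this out by checking that $\Fb^\alpha$ applied to a levelwise (trivial) cofibration of arrows is a levelwise (trivial) cofibration of $\Ca$-diagrams — which holds because the left Kan extension of a diagram along $\alpha : [\1] \to \Ca$ is computed as a coproduct of copies of the arrow's values indexed by hom-sets of $\Ca$, and coproducts of (trivial) cofibrations are (trivial) cofibrations — and since levelwise (trivial) cofibrations are precisely the injective (trivial) cofibrations, this gives that $\Fb^\alpha$ is left Quillen for the injective structures. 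With Lemma \ref{adjoint_ev_inj} in hand the corollary is immediate.

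I should remark that for the purposes of the paper only the projective version (Corollary \ref{adjoint-preserve-proj-cof}, already proved via right Quillen-ness of $\Ev_\alpha$ for levelwise fibrations) and this injective version are needed, and the latter is used in Proposition \ref{coseg-t-1} and the construction of $\Pr_{t!}$; so it is worth stating Corollary \ref{lem-pres-level-wise-cof} as the clean consequence that $\Fb^{u_t}$ preserves levelwise cofibrations and levelwise trivial cofibrations, which is exactly what "left Quillen for the injective structures" unpacks to.
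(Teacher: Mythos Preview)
Your approach is essentially the paper's: reduce the corollary to Lemma \ref{adjoint_ev_inj} by taking $\Ca = \S_{\ol{X}}(A,B)^{op}$ and $\alpha = u_t$, and prove the lemma by computing the pointwise left Kan extension $\Fb^{\alpha}_h(c)$ explicitly as a coproduct of copies of $U$ and $V$, so that on a levelwise (trivial) cofibration $\theta=(a,b)$ the map $\Fb^{\alpha}_{\theta,c}$ is a coproduct of copies of $a$ and $b$. The paper writes out this coproduct formula and the description of the structure maps, whereas you only gesture at it (and your detour through trying to show $\Ev_\alpha$ is right Quillen, with the direct/inverse self-correction, is unnecessary once you commit to checking the left adjoint preserves injective cofibrations directly), but the method is the same.
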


\begin{proof}[Proof of Lemma \ref{adjoint_ev_inj}]
We assume that $\alpha$ is an nonidentity map since the assertion is obvious when $\alpha$ is an identity. 
Let $c_0= \alpha(0)$ and $c_1= \alpha(1)$ be respectively the source and target of $\alpha$, and  $h : U \to V$  be an object in  $\M^{[\1]}= \Hom([\1],\M)$. Note that since $\alpha$ is an nonidentity map we have $\degb(c_0)< \degb(c_1)$, thus $\Hom(c_1, c_0)= \varnothing$.\ \\

By definition of the left Kan extension along $\alpha$ one defines $\Fb^{\alpha}_{h}(c)$ as:
$$\Fb^{\alpha}_{h}(c)= \colim_{(\alpha(i)\xrightarrow{k} c) \in \alpha_{/c}} \alpha(i)$$
 
where $\alpha_{/c}$ represents the over category (see \cite{Mac}, \cite[7.6.1]{Simpson_HTHC}). Let $D_{\alpha}(c_0,c) \subset \Hom(c_0,c)$ be the subset of morphisms that factorizes through $\alpha$. One can check that the previous colimit  is the following coproduct:
$$\Fb^{\alpha}_{h}(c)= (\coprod_{f \in D_{\alpha}(c_0,c)} V) \coprod  (\coprod_{f \notin D_{\alpha}(c_0,c) }  U) \coprod (\coprod_{f \in \Hom(c_1,c)} V) .$$

In the above expression, when the set indexing the coproduct is empty, then the coproduct is the initial object of $\M$.\ \\

Given $g: c \to c'$ the structure map $\Fb^{\alpha}_{h}(g): \Fb^{\alpha}_{h}(c) \to  \Fb^{\alpha}_{h}(c')$ is given as follows:
\begin{enumerate}
\item  On $\coprod_{f \in D_{\alpha}(c_0,c)} V$, one sends the $V$-summand corresponding to $f: c_0 \to c$ to the $V$-summand corresponding to $gf: c_0 \to c'$ by the identity $\Id_V$. Note that this is well defined since $gf$ factorizes also through $\alpha$.
\item  On $\coprod_{f \notin D_{\alpha}(c_0,c) }  U$, one sends the $U$-summand corresponding to $f: c_0 \to c$ to: 
  \begin{itemize}[label=$-$, align=left, leftmargin=*, noitemsep]
  \item the $V$-summand corresponding to $gf$ by the morphism $h: U \to V$, if $gf \in D_{\alpha}(c_0,c')$. 
  \item the $U$-summand corresponding to $gf$, if $gf \notin D_{\alpha}(c_0,c')$ by the morphism $\Id_U$.
  \end{itemize}
\item On $\coprod_{f \in \Hom(c_1,c)} V$ we send the $V$-summand indexed by $f$ to the $V$-summand in $\Fb^{\alpha}_{h}(c')$ corresponding to $gf$ by the morphism $\Id_V$. 
\item If one of the coproduct vanish to the initial object $\varnothing$ of $\M$ then one use simply the unique map out of it.
\end{enumerate}

It follows that  given an injective (trivial) cofibration $\theta=(a,b): h \to h'$:
\[
\xy
(0,20)*+{U}="A";
(20,20)*+{U'}="B";
(0,0)*+{V}="C";
(20,0)*+{V'}="D";
{\ar@{^{(}->}^{a}"A";"B"};
{\ar@{->}_{h}"A";"C"};
{\ar@{^{(}->}_{b}"C";"D"};
{\ar@{->}^{h'}"B";"D"};
\endxy
\]

the $c$-component of $\Fb^{\alpha}_{\theta}$ is the coproduct
$$\Fb^{\alpha}_{\theta,c}= (\coprod_{f \in D_{\alpha}(c_0,c)} b) \coprod  (\coprod_{f \notin D_{\alpha}(c_0,c) }  a) \coprod (\coprod_{f \in \Hom(c_1,c)} b).$$
Since (trivial) cofibrations are closed under coproduct we deduce that $\Fb^{\alpha}_{\theta,c}$ is a (trivial) cofibration if $\theta$ is so, thus $\Fb^{\alpha}$ is a left Quillen functor as desired.
\end{proof}

\section{$\M_{\S}(X)$ is cocomplete if $\M$  is so } \label{proof-MX-cocomplete}
In this section we want to prove the following

\begin{thm}
Given a co-complete symmetric monoidal category $\M$, for any nonempty set $X$ the category  $\M_{\S}(X)$ is co-complete.
\end{thm}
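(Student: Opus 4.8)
The plan is to prove cocompleteness of $\M_{\S}(X)$ by exhibiting it as the category of algebras for a suitable monad on a cocomplete category, and then appealing to the standard fact that such algebra categories inherit colimits whenever the monad preserves them along the relevant diagrams — or, more robustly, by building colimits "by hand" following the free–forgetful adjunction. Concretely, I would use the forgetful functor
$$\Ub : \M_{\S}(X) \to \K_X = \prod_{(A,B)\in X^2} \Hom[\S_{\ol X}(A,B)^{op},\M]$$
together with its left adjoint $\Gamma$ from Lemma \ref{adjoint-ub} (Appendix \ref{section-adjunction-lemma}). Since $\M$ is cocomplete and symmetric monoidal, each presheaf category $\Hom[\S_{\ol X}(A,B)^{op},\M]$ is cocomplete (colimits computed pointwise), hence so is the product $\K_X$. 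The category $\M_{\S}(X)$ is the category of algebras for the monad $\T = \Ub\Gamma$ on $\K_X$.

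First I would recall (or reprove quickly) the standard construction of colimits in a category of $\T$-algebras over a cocomplete base. For coproducts and coequalizers — hence all small colimits — one uses Linton's result: a category of algebras over a cocomplete category is cocomplete as soon as it has coequalizers of reflexive pairs (or, as is automatic here, one uses that $\Ub$ is monadic and creates limits, so $\M_{\S}(X)$ is complete, and then constructs colimits via the adjunction). The cleanest route: given a small diagram $\J : \D \to \M_{\S}(X)$, form the colimit $\colim \Ub\J$ in $\K_X$, equip $\Gamma\colim\Ub\J$ with its canonical $\T$-algebra structure, and present the desired colimit as a coequalizer in $\M_{\S}(X)$ of the two maps
$$\Gamma\Ub\big(\colim_{\D} \Ub\J\big) \rightrightarrows \Gamma\big(\colim_{\D}\Ub\J\big)$$
(one induced by the $\T$-algebra structure maps of the $\J(d)$, the other by the counit), exactly as in the Schwede–Shipley / Borceux treatment of algebras over a monad. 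This reduces everything to constructing reflexive coequalizers in $\M_{\S}(X)$.

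The key step — and the main obstacle — is therefore the explicit construction of coequalizers (equivalently, pushouts along maps $\Gamma\alpha$) in $\M_{\S}(X)$. A pair of parallel maps $\sigma,\tau : \Fa \to \Ga$ of $\S$-diagrams does not have its coequalizer computed levelwise, because forming $\coeq(\Ub\sigma,\Ub\tau)$ in $\K_X$ destroys the laxity structure; one must then "re-create" the laxity maps freely and re-coequalize. I would carry this out following the same divide-and-conquer strategy used in Section \ref{section-model-msx} for pushouts: build the underlying levelwise coequalizer, apply $\Gamma$ to generate all composable-chain tensor combinations, and take the further quotient identifying the freely-generated laxity maps with the honest ones, iterating (a $\kappa$-sequence) if necessary and using that $\otimes$ distributes over colimits in $\M$ because $\M$ is symmetric monoidal with $-\otimes A$ cocontinuous. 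The coherence bookkeeping — checking that the resulting object genuinely satisfies the lax-morphism axioms and has the correct universal property among $\S$-diagrams — is the delicate part; this is essentially the "direct proof" the authors advertise, parallel to the appendix argument for Theorem \ref{MX-cocomplete}, and I would organize it exactly as in Appendix \ref{proof-MX-cocomplete} (decomposition $\tx{Dec}(t)$ of each $1$-morphism into composable chains, free laxity maps $\xi_{s,t}$, and verification of the universal property via the coproduct decomposition). Once coequalizers of reflexive pairs and coproducts are in hand, cocompleteness follows formally.
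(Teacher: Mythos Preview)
Your overall architecture matches the paper's exactly: forgetful $\Ub:\M_{\S}(X)\to\K_X$, left adjoint $\Gamma$, reduce cocompleteness to reflexive coequalizers, and invoke Linton. The divergence is in how you handle that last step, and there you make the job much harder than it is.

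You assert that a parallel pair $\sigma,\tau:\Fa\rightrightarrows\Ga$ in $\M_{\S}(X)$ ``does not have its coequalizer computed levelwise'' and propose a transfinite free--quotient iteration to rebuild the laxity. For an \emph{arbitrary} pair that is true, but for a \emph{reflexive} pair (or, what suffices for Beck and for Linton, a $\Ub$-split pair) it is false: the levelwise coequalizer in $\K_X$ already carries a unique $\S$-diagram structure making the quotient map a morphism in $\M_{\S}(X)$. This is the paper's Lemma~\ref{precong-lemma} together with Lemma~\ref{coeq-split-pair}, which follow Wolff's argument for $\M$-$\Cat$. The point is that when $\sigma_1,\sigma_2$ are honest morphisms of lax diagrams and there is a common section $p$ with $\sigma_1 p=\sigma_2 p=\Id$, one can rewrite $\Id_{F(s)}\otimes\sigma_i(t)=(\sigma_i\otimes\sigma_i)\circ(p\otimes\Id)$, use compatibility of $\sigma_i$ with the laxity maps, and then use $L\circ\sigma_1=L\circ\sigma_2$ to verify directly the congruence criterion; the laxity maps on the quotient then arise from the universal property of the levelwise coequalizer tensored up, with no iteration needed.

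So your proposal is not wrong in spirit, but it bypasses the one genuinely clean step in the argument and replaces it with the heavy machinery that the paper reserves for \emph{general} pushouts (Appendix~\ref{pushout-laxalg}). Note also that you state monadicity as a fact before constructing the split coequalizers; in the paper's order, the construction of $\Ub$-split coequalizers is exactly what feeds into Beck's theorem to \emph{establish} monadicity (Theorem~\ref{MX-monadic-KX}), after which Linton gives cocompleteness.
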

The proof of this theorem follows exactly the same ideas as the proof  of the co-completeness of  $\M$-$\Cat$ given by  Wolff \cite{Wo}. \ \\

We will proceed as follows. 
\begin{itemize}
\item We will show first that $\M_{\S}(X)$ is monadic over $\K_X= \prod_{(A,B) \in X^{2}} \Hom[\S_{\ol{X}}(A,B)^{op}, \M]$ using the Beck monadicity theorem \cite{Beck_monadicity}. 
\item As the adjunction is monadic we know that $\M_{\S}(X)$ has a coequalizers of $\Ub$-split pair of morphisms and preserve them.
\item Since  $\K_X$ is cocomplete  by a result of Linton \cite[Corollary 2]{Linton-cocomplete} the cateogry of algebra of the monad, which is equivalent to $\M_{\S}(X)$, is cocomplete.
\end{itemize} 

\subsection{$\M_{\S}(X)$ has coequalizers of reflexive pairs}

The question of existence of coequalizers in $\M_{\S}(X)$ is similar to that of coequalizers in $\M$-$\Cat$ which was treated by Wolff \cite{Wo}. For our needs we only treat  the question of coequalizer of reflexive pairs. \ \\ 
Given a parallel pair of morphisms in $\M_{\S}(X)$  $(\sigma_1,\sigma_2): D \rightrightarrows F$ one can view it as defining a `relation' `$\Ra= \Im(\sigma_1 \times \sigma_2) \subset F \times F$' on $F$ . We will call such relation `precongruence'. In this situation the question is to find out when a quotient object `$E=F / \Ra$' (`the coequalizer') exists in $\M_{\S}(X)$.  We will proceed in the same maner as Wolff; below we outline the different steps before going to details. 

\begin{enumerate}
\item We will start by giving a criterion which says under which conditions the coequalizer computed in  $\K_X$ lifts to a coequalizer in $\M_{\S}(X)$. 
\item When a parallel pair of morphisms is a reflexive pair  (=the analogue of  the relation $\Ra$ to be reflexive) we will show that the conditions of the criterion are fullfiled and the result will follow.
\end{enumerate}

\subsubsection{Lifting of coequalizer} \label{lifting-coeq}
\begin{df}
Let $F$ be an object of $\M_{\S}(X)$. 
\begin{enumerate}
\item A \textbf{precongruence} in $F$ is a pair of parallel morphisms in $\K_X$
\[
\xy
(0,0)*++{A}="X";
(20,0)*++{\Ub F}="Y";
{\ar@<-0.5ex>@{->}_{\sigma_1}"X";"Y"};
{\ar@<0.5ex>@{->}^{\sigma_2}"X";"Y"};
\endxy
\]
for some object $A \in \K_X$.\ \\

\item Let $E$ be a coequalizer in $\K_X$ of $(\sigma_1, \sigma_2)$,  with $L: \Ub F \to E$ the canonical map. We say that the precongruence is a \textbf{congruence} if:
\begin{itemize}
\item $E= \Ub(\tld{E})$ for some $\tld{E} \in \M_{\S}(X)$  and 
\item $L = \Ub (\tld{L})$ for a (unique) morphism $\tld{L}: F \to \tld{E}$ in  $\M_{\S}(X)$.
\end{itemize} 
\end{enumerate}
When there is no confusion we simply write $E$ for $\tld{E}$ and $L$ for $\tld{L}$. 
\end{df}
\begin{lem}\label{precong-lemma}
Let $F$ be an object of $\M_{\S}(X)$ and consider a precongruence:
\[
\xy
(0,0)*++{A}="X";
(20,0)*++{\Ub F}="Y";
(40,0)*++{E}="Z";
{\ar@<-0.5ex>@{->}_{\sigma_1}"X";"Y"};
{\ar@<0.5ex>@{->}^{\sigma_2}"X";"Y"};
{\ar@{->}^{L}"Y";"Z"};
\endxy
\]
\\
Denote by $\varphi_{s,t}: F(s) \otimes F(t) \to F(s \otimes t)$ be the laxity maps of $F$. \ \\
\\
Then the precongruence is a congruence if and only if for any pair $(s,t$) of composable $1$-morphisms in $\S_{\ol{X}}$ the following equalities hold:
$$L_{s \otimes t} \circ [\varphi_{s,t}\circ (\Id_{F(s)} \otimes \sigma_1(t))]= L_{s \otimes t} \circ [\varphi_{s,t}\circ (\Id_{F(s)} \otimes \sigma_2(t))]$$
$$L_{s \otimes t} \circ [\varphi_{s,t}\circ (\sigma_1(s) \otimes \Id_{F(t)})]= L_{s \otimes t} \circ [\varphi_{s,t}\circ (\sigma_2(s) \otimes \Id_{F(t)})].$$

In this case the structure of $\S_{\ol{X}}$-diagram on $E$ is unique. 
\end{lem}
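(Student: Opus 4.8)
\textbf{Proof sketch plan for Lemma \ref{precong-lemma}.}

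The plan is to analyze precisely what extra data is needed to promote the coequalizer $E$, computed level-wise in $\K_X = \prod_{(A,B)} \Hom[\S_{\ol X}(A,B)^{\op},\M]$, into an object of $\M_{\S}(X)$ --- that is, to equip $E$ with laxity maps $\psi_{s,t}\colon E(s)\otimes E(t)\to E(s\otimes t)$ making $L$ a morphism of $\S$-diagrams. First I would recall that since $\M$ is cocomplete and symmetric \emph{closed} (or at least that $-\otimes-$ preserves colimits on each factor, which is the only thing used), the functor $E(s)\otimes E(t)$ is itself a coequalizer: applying $-\otimes E(t)$ to the coequalizer $A(s)\rightrightarrows F(s)\to E(s)$ and then $E(s)\otimes -$ to $A(t)\rightrightarrows F(t)\to E(t)$, and using that a composite of coequalizers is a coequalizer, one gets that $E(s)\otimes E(t)$ is the coequalizer of the two evident parallel maps out of $F(s)\otimes A(t)$ and $A(s)\otimes F(t)$ (or a single object built from these) into $F(s)\otimes F(t)$, with quotient map $L_s\otimes L_t$. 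This is the categorical engine behind the whole argument and is exactly the device Wolff uses for $\M$-$\Cat$.

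Next I would use this universal property. We are looking for $\psi_{s,t}$ with $\psi_{s,t}\circ(L_s\otimes L_t) = L_{s\otimes t}\circ\varphi_{s,t}$. By the universal property of $E(s)\otimes E(t)$ as a coequalizer, such a $\psi_{s,t}$ exists (and is then unique) if and only if the composite $L_{s\otimes t}\circ\varphi_{s,t}\colon F(s)\otimes F(t)\to E(s\otimes t)$ coequalizes the relevant parallel pair --- and one checks that the parallel pair coequalized by $L_s\otimes L_t$ is generated precisely by the maps $\Id_{F(s)}\otimes\sigma_i(t)$ and $\sigma_i(s)\otimes\Id_{F(t)}$ (the ``relation tensored with the identity on one side''). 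So the displayed equalities in the statement are exactly the necessary and sufficient conditions for each individual $\psi_{s,t}$ to be \emph{defined}. Conversely, if a congruence structure exists, i.e. $L=\Ub(\tld L)$ is a morphism of $\S$-diagrams into $\tld E=\Ub^{-1}(E)$, then $L_{s\otimes t}\circ\varphi_{s,t} = \psi_{s,t}\circ(L_s\otimes L_t)$ factors through $L_s\otimes L_t$, which forces the displayed equalities since $\sigma_1,\sigma_2$ become equal after postcomposing with $L$. That handles the ``if and only if'' at the level of the existence of the maps $\psi_{s,t}$.

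The remaining work --- and the step I expect to be the only mildly delicate one --- is to verify that the family $\{\psi_{s,t}\}$ so obtained automatically satisfies the coherence axioms of a lax morphism (associativity of laxity maps, compatibility with the morphisms $E(u)$ for $u$ a $2$-morphism, unitality in the sense of B\'enabou). The idea is a standard ``diagram chase up the quotient'': each coherence diagram for $E$ becomes, after precomposition with the appropriate epimorphism $L_s\otimes L_t\otimes L_u$ (again an epimorphism, being a composite of coequalizer maps, hence itself a coequalizer and in particular epic), the image under $L$ of the corresponding coherence diagram for $F$, which commutes because $F$ is an $\S$-diagram; since the precomposed maps are epic, the diagrams for $E$ commute too. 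The same reasoning shows naturality of $\psi_{s,t}$ and the fact that $L$ is then genuinely a morphism in $\M_{\S}(X)$, so that $E=\Ub(\tld E)$ and $L=\Ub(\tld L)$; and uniqueness of the $\S$-diagram structure on $E$ is immediate from uniqueness in the universal property. I would write this last paragraph briefly, noting that the verifications are routine once one has the ``epi cancellation'' principle, and that the whole scheme is the precise analogue of Wolff's treatment of coequalizers of congruences in $\M$-$\Cat$ \cite{Wo}, the only novelty being the bookkeeping over pairs $(s,t)$ of composable $1$-morphisms of $\S_{\ol X}$ rather than composable pairs of objects.
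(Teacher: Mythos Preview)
Your proposal is correct and follows essentially the same strategy as the paper: use that $\otimes$ preserves coequalizers in each variable to identify $E(s)\otimes E(t)$ as a suitable colimit with quotient map $L_s\otimes L_t$, invoke its universal property to produce the unique $\psi_{s,t}$ with $\psi_{s,t}\circ(L_s\otimes L_t)=L_{s\otimes t}\circ\varphi_{s,t}$, and leave the coherence verification to an epi-cancellation chase. Your formulation of the universal property (via the two one-sided parallel pairs $\sigma_i(s)\otimes\Id_{F(t)}$ and $\Id_{F(s)}\otimes\sigma_i(t)$) is in fact slightly cleaner than the paper's, which instead presents $E(s)\otimes E(t)$ as the colimit of the four maps $\sigma_i(s)\otimes\sigma_j(t)$ from $A(s)\otimes A(t)$ and then runs a short chain computation to show all four composites with $L_{s\otimes t}\circ\varphi_{s,t}$ coincide; your version makes the displayed equalities \emph{literally} the coequalization conditions and so bypasses that computation.
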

\
\begin{rmk}
If $F$ was (the nerve of) an $\M$-category, taking $s=(A,B)$, and $t=(B,C)$, we have $s \otimes t =(B,C)$ and the laxity maps correpond to the composition: $c_{ABC}: F(A,B) \otimes F(B,C) \to F(A,C)$. One can check that the previous conditions are the same as in \cite[Lemma 2.7]{Wo}. 
\end{rmk}

\begin{proof}[Sketch of proof]
The fact that having a congruence implies the equalities is easy  and follows from the fact that $L$ is a morphism in $\M_{\S}(X)$ and that $L$ is a coequalizer of $\sigma_1$ and $\sigma_2$. We will then only  prove that the equalities force a congruence. \ \\

To prove the statement we need to provide the laxity maps for $E$: $\phi_{s,t}: E(s) \otimes E(t) \to E(s \otimes t)$. \ \\
By definition of $E$, for any $1$-morphism $s \in \S_{\ol{X}}$, $E(s)$ is a coequalizer of $(\sigma_1(s), \sigma_2(s))$, which is a particular case of colimit in $\M$. Since $\M$ is a symmetric monoidal closed, colimits commute on each factor with the tensor product $\otimes$ of $\M$. It follows that given a pair $(s,t)$ of composable morphisms then $E(s) \otimes E(t)$ is the colimit of the `diagram':
$$\epsilon(s,t)= \{ A(s) \otimes A(t) \xrightarrow{\sigma_i(s) \otimes \sigma_j(t)} F(s) \otimes F(t) \}_{i,j \in \{1,2 \} }.$$

\ \\
Now  we claim that all the composite 
$L_{s \otimes t} \circ \varphi_{s,t} \circ  [\sigma_i(s) \otimes \sigma_j(t)]: A(s) \otimes A(t) \to E(s\otimes t)$ are equal.\\
 This equivalent to say that the diagram 
$$ \epsilon'(s,t)= L_{s \otimes t} \circ \varphi_{s,t} \circ \epsilon(s,t):=  \{L_{s \otimes t} \circ \varphi_{s,t} \circ  [\sigma_i(s) \otimes \sigma_j(t)] \}_{i,j \in \{1,2 \} }$$
is a compatible cocone. Before telling why this is true let's see how we get the laxity maps for $E$. For that it suffices to observe that since $\epsilon'(s,t):=L_{s \otimes t} \circ \varphi_{s,t} \circ \epsilon(s,t)$ is a compatible cocone, by the universal property of the colimit of $\epsilon(s,t)$ there exists a unique map  $\psi_{s,t}: E(s) \otimes E(t) \to E(s \otimes t)$ making the obvious diagrams commutative. In particular for any $s,t$ the following is commutative:
\[
\xy
(0,20)*+{F(s) \otimes F(t)}="A";
(30,20)*+{F(s \otimes t)}="B";
(0,0)*+{E(s) \otimes E(t)}="C";
(30,0)*+{E(s \otimes t)}="D";
{\ar@{->}^{\varphi_{s,t}}"A";"B"};
{\ar@{->}_{L_s \otimes L_t}"A";"C"};
{\ar@{->}^{L_{s \otimes t}}"B";"D"};
{\ar@{-->}^{\psi_{s,t}}"C";"D"};
\endxy
\]

The fact that the morphism $\psi_{s,t}$ fit coherently is left to the reader as it's straightforward: it suffices to introduce a cocone $\epsilon(s,t,u)$ whose colimit `is' $E(s) \otimes E(t) \otimes E(u)$ and use the universal property of the colimit. This shows that $(E,\psi_{s,t})$ is an object of $\M_{\S}(X)$ and that $L$ extends to a morphism in $\M_{\S}(X)$.\ \\

Now with some easy but tedious computations one gets successively the desired equalities:\ \\

\begin{equation*}
\begin{split}
L_{s \otimes t} \circ \varphi_{s,t} \circ  [\sigma_1(s) \otimes \sigma_2(t)]
&=L_{s \otimes t} \circ \varphi_{s,t} \circ  [(\Id_{F(s)} \otimes \sigma_2(t))  \circ (\sigma_1(s) \otimes \Id_{A(t)})]  \\
&=\underbrace{L_{s \otimes t} \circ  [\varphi_{s,t} \circ  (\Id_{F(s)} \otimes \sigma_2(t))]}_{=L_{s \otimes t} \circ [\varphi_{s,t} \circ  (\Id_{F(s)} \otimes \sigma_1(t))]} \circ (\sigma_1(s) \otimes \Id_{A(t)}) \\
(1)&=L_{s \otimes t} \circ [\varphi_{s,t} \circ  (\Id_{F(s)} \otimes \sigma_1(t))] \circ (\sigma_1(s) \otimes \Id_{A(t)}) \\
&=\ul{ L_{s \otimes t} \circ \varphi_{s,t} \circ  [\sigma_1(s) \otimes \sigma_1(t)]} \\
\tx{From $(1)$ $\rightsquigarrow$}&= L_{s \otimes t} \circ \varphi_{s,t} \circ  [(\Id_{F(s)} \otimes \sigma_1(t)) \circ (\sigma_1(s) \otimes \Id_{A(t)})] \\
&= L_{s \otimes t} \circ \varphi_{s,t} \circ  [(\sigma_1(s) \otimes \Id_{F(t)}) \circ (\Id_{A(s)} \otimes \sigma_1(t) )] \\
&= \underbrace{L_{s \otimes t} \circ [\varphi_{s,t} \circ  (\sigma_1(s) \otimes \Id_{F(t)})]}_{=L_{s \otimes t} \circ [\varphi_{s,t} \circ  (\sigma_2(s) \otimes \Id_{F(t)})]} \circ (\Id_{A(s)} \otimes \sigma_1(t) ) \\
(2)&= L_{s \otimes t} \circ [\varphi_{s,t} \circ  (\sigma_2(s) \otimes \Id_{F(t)})] \circ (\Id_{A(s)} \otimes \sigma_1(t) ) \\
&=\ul{ L_{s \otimes t} \circ \varphi_{s,t} \circ  [\sigma_2(s) \otimes \sigma_1(t)]} \\
\tx{From $(2)$ $\rightsquigarrow$}&= L_{s \otimes t} \circ [\varphi_{s,t} \circ  (\sigma_2(s) \otimes \Id_{F(t)})] \circ (\Id_{A(s)} \otimes \sigma_1(t) ) \\
&= L_{s \otimes t} \circ \varphi_{s,t} \circ  [(\Id_{F(s)} \otimes \sigma_1(t)) \circ (\sigma_2(s)  \otimes \Id_{A(t)} )] \\
&= \underbrace{L_{s \otimes t} \circ [\varphi_{s,t} \circ  (\Id_{F(s)} \otimes \sigma_1(t))}_{=L_{s \otimes t} \circ [\varphi_{s,t} \circ  (\Id_{F(s)} \otimes \sigma_2(t))} \circ (\sigma_2(s)  \otimes \Id_{A(t)}) \\
&=  L_{s \otimes t} \circ [\varphi_{s,t} \circ  (\Id_{F(s)} \otimes \sigma_2(t))  \circ (\sigma_2(s)  \otimes \Id_{A(t)}) \\
&=\ul{ L_{s \otimes t} \circ \varphi_{s,t} \circ  [\sigma_2(s) \otimes \sigma_2(t)] } \\
\end{split}
\end{equation*}
\end{proof}

Following Linton \cite{Linton-cocomplete} we introduce the
\begin{df}
Let $\sigma_1, \sigma_2: A \rightrightarrows \Ub F$ be a parallel pair of morphisms in $\K_X$ i.e a precongruence in $F$. We will say that a morphism $L: F \to E$ in $\M_{\S}(X)$ is a \textbf{coequalizer relative to $\Ub$} if:
\begin{enumerate}
\item If $\Ub L \circ \sigma_1= \Ub L \circ \sigma_2$ and 
\item if for any morphism  $L': F \to Z$  in $\M_{\S}(X)$ which satisfies  $\Ub L' \circ \sigma_1= \Ub  L' \circ \sigma_2$ then there exists a unique morphism $H: E \to Z$ in   $\M_{\S}(X)$ such that $L'= H \circ L$.
\end{enumerate}
\end{df}

\begin{lem}\label{rel-coeq}
If a precongruence 
\[
\xy
(0,0)*++{A}="X";
(20,0)*++{\Ub F}="Y";
(40,0)*++{E}="Z";
{\ar@<-0.5ex>@{->}_{\sigma_1}"X";"Y"};
{\ar@<0.5ex>@{->}^{\sigma_2}"X";"Y"};
{\ar@{->}^{L}"Y";"Z"};
\endxy
\]
 is a congruence then the morphism $L: F \to E$ is a coequalizer rel. to  $\Ub$.
\end{lem}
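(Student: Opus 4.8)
The plan is to prove Lemma \ref{rel-coeq} by directly unwinding the two universal properties involved. We have a congruence, which by Lemma \ref{precong-lemma} means precisely that $E$ carries a (unique) $\S_{\ol{X}}$-diagram structure making $L : F \to E$ a morphism in $\M_{\S}(X)$, and on underlying objects $\Ub L : \Ub F \to \Ub E$ is the coequalizer in $\K_X$ of the pair $(\sigma_1, \sigma_2)$. The first point, $\Ub L \circ \sigma_1 = \Ub L \circ \sigma_2$, is immediate from $\Ub L$ being a coequalizer of that pair in $\K_X$; so the whole content is the second (lifting) condition.

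So suppose $L' : F \to Z$ is a morphism in $\M_{\S}(X)$ with $\Ub L' \circ \sigma_1 = \Ub L' \circ \sigma_2$. First I would apply the universal property of the coequalizer $\Ub L$ \emph{in $\K_X$}: since $\Ub L' : \Ub F \to \Ub Z$ in $\K_X$ equalizes $\sigma_1$ and $\sigma_2$, there is a unique morphism $H_0 : \Ub E \to \Ub Z$ in $\K_X$ with $H_0 \circ \Ub L = \Ub L'$. The heart of the argument is then to show that $H_0$ underlies a morphism $H : E \to Z$ in $\M_{\S}(X)$, i.e.\ that $H_0$ is compatible with the laxity maps $\psi_{s,t}$ of $E$ (constructed in the proof of Lemma \ref{precong-lemma}) and those $\theta_{s,t}$ of $Z$. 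Concretely one must check, for each composable pair $(s,t)$, that $H_{0,s\otimes t} \circ \psi_{s,t} = \theta_{s,t} \circ (H_{0,s} \otimes H_{0,t})$. The trick is the same one used throughout: $E(s)\otimes E(t)$ is a colimit (because $\otimes$ preserves colimits on each factor in a closed monoidal $\M$), with structure maps factoring through $L_s \otimes L_t$; so it suffices to verify the equality after precomposition with $L_s \otimes L_t$. Precomposing, the left side becomes $H_{0,s\otimes t}\circ L_{s\otimes t}\circ \varphi_{s,t} = L'_{s\otimes t}\circ \varphi_{s,t}$ (using $H_0\circ \Ub L = \Ub L'$ and the defining square for $\psi_{s,t}$), and the right side becomes $\theta_{s,t}\circ (H_{0,s}L_s \otimes H_{0,t}L_t) = \theta_{s,t}\circ (L'_s \otimes L'_t)$, and these agree precisely because $L'$ is a morphism in $\M_{\S}(X)$. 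Hence $H_0 = \Ub H$ for a unique $H : E \to Z$ in $\M_{\S}(X)$, and $\Ub(H\circ L) = H_0 \circ \Ub L = \Ub L'$; since $\Ub$ is faithful (it is injective on objects and morphisms), $H\circ L = L'$. Uniqueness of $H$ with this property follows from uniqueness of $H_0$ in $\K_X$ together with faithfulness of $\Ub$.

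The main obstacle — really the only nontrivial step — is the verification that $H_0$ respects the laxity structure, and the key technical input there is that $\otimes$ commutes with the relevant colimits on each factor, so that the compatibility condition can be tested after precomposing with the (jointly epic) structure maps $L_s\otimes L_t$. Everything else is a bookkeeping exercise with the two universal properties and the faithfulness of $\Ub$. I would present the colimit-testing step carefully and leave the surrounding diagram-chases to the reader, in the spirit of the proof of Lemma \ref{precong-lemma}.
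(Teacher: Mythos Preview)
Your argument is correct and is exactly the content the paper intends: the paper's own proof is the single line ``Obvious: follows from the construction of $(E,\psi_{s,t})$'', and what you have written is precisely the natural unwinding of that claim. The key step you isolate---that $H_0$ respects the laxity maps because $L_s\otimes L_t$ is (jointly) epimorphic and $\psi_{s,t}$ was \emph{defined} via the square $\psi_{s,t}\circ(L_s\otimes L_t)=L_{s\otimes t}\circ\varphi_{s,t}$---is exactly the construction referred to, so there is no difference in approach, only in level of detail.
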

\begin{proof}
Obvious: follows from the construction of $(E, \psi_{s,t})$.

\end{proof}

\begin{lem}[Linton]\label{Linton-relative-coeq}
Let $U: D \to C$ be  a \ul{faithful} functor and $\tld{f},\tld{g}: A \to B$  be a pair of parallel of morphisms  in $D$. Denote by  $f= U\tld{f}$ and $g= U\tld{g}$. Then for $p: A \to E$ in $D$,  the following are equivalent:
\begin{itemize}
\item $p$ is a coequalizer  of $(\tld{f},\tld{g})$,
\item $p$ is a coequalizer rel. to $U$ of $(f,g)$. 
\end{itemize}
\end{lem}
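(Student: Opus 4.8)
The statement to prove is Linton's Lemma (Lemma~\ref{Linton-relative-coeq}): for a faithful functor $U: D \to C$ and a parallel pair $\tld{f}, \tld{g}: A \to B$ in $D$ with images $f = U\tld{f}$, $g = U\tld{g}$, a morphism $p: A \to E$ in $D$ is a coequalizer of $(\tld{f}, \tld{g})$ if and only if it is a coequalizer relative to $U$ of $(f,g)$.

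The plan is to unwind both definitions and check the two universal properties match, using faithfulness exactly once. First I would recall what each side says. Being a coequalizer of $(\tld{f},\tld{g})$ in $D$ means: $p \circ \tld{f} = p \circ \tld{g}$, and for every $q: B \to Z$ in $D$ with $q \circ \tld{f} = q \circ \tld{g}$ there is a unique $h: E \to Z$ in $D$ with $q = h \circ p$. Being a coequalizer relative to $U$ means: $U p \circ f = U p \circ g$ in $C$, and for every $q: B \to Z$ in $D$ with $Uq \circ f = Uq \circ g$ there is a unique $h: E \to Z$ in $D$ with $q = h \circ p$. So the two statements have the \emph{same} conclusion (existence and uniqueness of a lift $h$ in $D$); they differ only in which equations the test morphism $q$ (and the definitional condition on $p$) is required to satisfy --- equality in $D$ versus equality of the $U$-images in $C$.

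The key observation is that for morphisms $q_1, q_2: B \to Z$ in $D$, faithfulness of $U$ gives $q_1 = q_2 \iff Uq_1 = Uq_2$. Applying this with $q_1 = q \circ \tld{f}$, $q_2 = q \circ \tld{g}$ (noting $U(q\circ\tld{f}) = Uq \circ f$ and $U(q\circ\tld{g}) = Uq \circ g$ by functoriality), we get that the condition ``$q \circ \tld f = q \circ \tld g$'' is equivalent to ``$Uq \circ f = Uq \circ g$'' for any $q$ in $D$; similarly ``$p\circ\tld f = p\circ\tld g$'' is equivalent to ``$Up\circ f = Up\circ g$''. Hence the class of admissible test morphisms is literally the same on both sides, and the defining condition on $p$ is the same on both sides. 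Therefore the two universal properties are word-for-word identical once this translation is made, and the equivalence follows immediately in both directions. I would write this out as: (i) translate the condition on $p$; (ii) translate the condition on $q$ for an arbitrary $q$; (iii) conclude the existence/uniqueness clauses coincide.

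I do not expect any real obstacle here --- the lemma is essentially a tautology given faithfulness, and the only thing to be careful about is cleanly applying $U$'s faithfulness to the \emph{composites} $q\circ\tld f$ and $q\circ\tld g$ (which requires functoriality of $U$ to identify $U(q\circ\tld f)$ with $Uq\circ f$) rather than to $\tld f$ and $\tld g$ themselves, which need not be related to $f,g$ by anything beyond $U\tld f = f$, $U\tld g = g$. Once that is said, the proof is a one-line appeal to the definitions, so in the write-up I would simply state ``Obvious'' or give the two-sentence argument above, matching the terse style already used for Lemmas~\ref{precong-lemma} and~\ref{rel-coeq}.
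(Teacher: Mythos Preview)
Your argument is correct. The paper does not actually prove this lemma: it simply cites Linton's original paper \cite[Lemma 1]{Linton-cocomplete}. Your direct unwinding of the definitions is the natural proof, and faithfulness of $U$ is used exactly where it must be --- to identify the equalizing condition $q\circ\tld f = q\circ\tld g$ in $D$ with $Uq\circ f = Uq\circ g$ in $C$ (and likewise for $p$), after which the two universal properties are literally the same. One cosmetic point: the statement as written has $p:A\to E$, but a coequalizer of $\tld f,\tld g:A\to B$ must have domain $B$; your argument silently (and correctly) treats $p$ as a map $B\to E$.
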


\begin{proof}
See \cite[Lemma 1]{Linton-cocomplete}.
\end{proof}

For a given $F \in \M_{\S}(X)$, among the precongruences defined in $F$  we have the ones coming from parallel pair of morphisms in $\M_{\S}(X)$ namely:
\[
\xy
(0,0)*++{\Ub D}="X";
(20,0)*++{\Ub F}="Y";
{\ar@<-0.5ex>@{->}_{\Ub \tld{\sigma}_1}"X";"Y"};
{\ar@<0.5ex>@{->}^{\Ub \tld{\sigma}_2}"X";"Y"};
\endxy
\]

for some $D \in \M_{\S}(X)$.\ \\

The following lemma tells about these congruences.

\begin{lem}\label{coeq-split-pair}
Let $F$ be an object of $\M_{\S}(X)$. Consider a precongruence in $F$

\[
\xy
(0,0)*++{\Ub D}="X";
(20,0)*++{\Ub F}="Y";
(40,0)*++{E}="Z";
{\ar@<-0.5ex>@{->}_{\Ub \tld{\sigma}_1}"X";"Y"};
{\ar@<0.5ex>@{->}^{\Ub\tld{\sigma}_2}"X";"Y"};
{\ar@{->}^{L}"Y";"Z"};
\endxy.
\]

If  there exists a  \textbf{split} i.e a morphism $p: \Ub F \to \Ub D$ in $\K_X$  such that $ \Ub \tld{\sigma}_1 \circ p= \Ub \tld{\sigma}_2 \circ p = \Id_{\Ub F}$
then:
\begin{itemize}
\item the precongruence is a congruence and hence
\item  $L: F \to E$ is a coequalizer in $\M_{\S}(X)$ of the pair $(\tld{\sigma}_1,\tld{\sigma}_2): D \rightrightarrows F$ (and is obviously preserved by $\Ub$).
\end{itemize} 
\end{lem}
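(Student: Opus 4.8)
\textbf{Proof plan for Lemma \ref{coeq-split-pair}.}

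The plan is to reduce the statement to Lemma \ref{precong-lemma} (the criterion for a precongruence to be a congruence) and then to Lemma \ref{rel-coeq} together with Linton's Lemma \ref{Linton-relative-coeq}. First I would verify the hypotheses of Lemma \ref{precong-lemma}: given a pair $(s,t)$ of composable $1$-morphisms in $\S_{\ol{X}}$ with laxity maps $\varphi_{s,t}$ of $F$, I must check the two equalities
$$L_{s \otimes t} \circ [\varphi_{s,t}\circ (\Id_{F(s)} \otimes \Ub\tld{\sigma}_1(t))]= L_{s \otimes t} \circ [\varphi_{s,t}\circ (\Id_{F(s)} \otimes \Ub\tld{\sigma}_2(t))]$$
and the symmetric one on the left factor. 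The key observation is that since $\tld{\sigma}_1$ and $\tld{\sigma}_2$ are morphisms \emph{in} $\M_{\S}(X)$, they are compatible with the laxity maps of $D$ and $F$; moreover the split $p$ satisfies $\Ub\tld{\sigma}_i \circ p = \Id_{\Ub F}$ for $i=1,2$. Precomposing the relevant component maps with $p(s) \otimes \Id$ (resp. $\Id \otimes p(t)$) and using that $L$ coequalizes $\Ub\tld{\sigma}_1$ and $\Ub\tld{\sigma}_2$ level-wise, both sides collapse to the same map $L_{s\otimes t}\circ \varphi_{s,t}$; this is the computation I would carry out, modelled on Wolff's argument for $\M$-$\Cat$.

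Once the equalities hold, Lemma \ref{precong-lemma} gives that the precongruence is a congruence, with a unique $\S_{\ol{X}}$-diagram structure on $E$ (the laxity maps $\psi_{s,t}$ induced by the universal property of the coequalizer, using that $-\otimes-$ commutes with colimits on each factor in the symmetric closed monoidal $\M$) and a unique lift $\tld{L}: F \to \tld{E}$ of $L$ to $\M_{\S}(X)$. Then Lemma \ref{rel-coeq} tells us that $\tld{L}: F \to \tld{E}$ is a coequalizer of $(\tld{\sigma}_1,\tld{\sigma}_2)$ \emph{relative to} $\Ub$. Finally, since $\Ub$ is faithful, Linton's Lemma \ref{Linton-relative-coeq} (applied with $U = \Ub$, $\tld{f}=\tld{\sigma}_1$, $\tld{g}=\tld{\sigma}_2$, $p = \tld{L}$) upgrades this to the statement that $\tld{L}$ is a genuine coequalizer of $(\tld{\sigma}_1,\tld{\sigma}_2)$ in $\M_{\S}(X)$, and it is preserved by $\Ub$ because $\Ub\tld{L}=L$ was the coequalizer in $\K_X$ by construction.

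The main obstacle is the diagram chase establishing the two equalities of Lemma \ref{precong-lemma}: one has to keep careful track of which tensor factor is being varied, use naturality of the laxity maps of $F$ together with the intertwining squares expressing that $\tld{\sigma}_1,\tld{\sigma}_2$ are lax morphisms, and insert the split $p$ at the right place so that the level-wise identity $L_s\circ\Ub\tld{\sigma}_1(s) = L_s\circ\Ub\tld{\sigma}_2(s)$ can be invoked. Everything else — the passage through Lemma \ref{precong-lemma}, Lemma \ref{rel-coeq}, and Lemma \ref{Linton-relative-coeq} — is then formal, and the uniqueness of the $\S_{\ol{X}}$-diagram structure on $E$ is already part of Lemma \ref{precong-lemma}.
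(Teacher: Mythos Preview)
Your proposal is correct and follows essentially the same route as the paper: verify the equalities of Lemma~\ref{precong-lemma} by factoring $\Id_{F(s)}\otimes \sigma_i(t)$ as $(\sigma_i(s)\otimes\sigma_i(t))\circ(p(s)\otimes\Id_{D(t)})$, then use the lax-morphism compatibility $\varphi_F\circ(\sigma_i\otimes\sigma_i)=\sigma_i\circ\varphi_D$ together with $L\circ\sigma_1=L\circ\sigma_2$, and conclude via Lemma~\ref{rel-coeq} and Linton's Lemma~\ref{Linton-relative-coeq}. One small clarification: the common value the two sides reduce to is not literally $L_{s\otimes t}\circ\varphi_{s,t}$ (the domains do not match) but rather $L_{s\otimes t}\circ\sigma_i(s\otimes t)\circ\varphi_{D,s,t}\circ(p(s)\otimes\Id_{D(t)})$, which is independent of $i$ precisely because $L$ coequalizes.
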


\begin{proof}
We will simply need to show that the equalities of Lemma \ref{precong-lemma} holds. We will reduce the proof to  the first equalities since the second ones are treated in the same manner by simply permuting $\Id_F \otimes \sigma_i$ to $\sigma_i \otimes \Id_F$. \ \\

Setting $\sigma_1 =  \Ub \tld{\sigma}_1$ and $\sigma_2= \Ub \tld{\sigma}_2$, these equalities become:
$$L_{s \otimes t} \circ [\varphi_{s,t}\circ (\Id_{F(s)} \otimes \sigma_1(t))]= L_{s \otimes t} \circ [\varphi_{s,t}\circ (\Id_{F(s)} \otimes \sigma_2(t))]$$

And to simplify the notations we will remove the letters `$s \otimes t, s, t$' but will mention `$\varphi_{_F}$' or `$\varphi_{_D}$' for the laxity maps of $F$ and $D$ repsectively. The previous equality will be then written, when there is no confusion, as follows: 
$$L \circ [\varphi_{_F} \circ (\Id_F \otimes \sigma_1)]= L \circ [\varphi_{_F} \circ (\Id_F \otimes \sigma_2)].$$
\ \\

Since $\sigma_1 \circ p= \Id_F = \sigma_2 \circ p$, we have that 
\begin{equation*}
\begin{split}
\Id_F \otimes \sigma_1
&= (\sigma_1 \circ p) \otimes (\sigma_1 \circ \Id_D)  \\
&=(\sigma_1 \otimes \sigma_1) \circ (p \otimes \Id_D). \\
\end{split}
\end{equation*}

Similarly for $\sigma_2$: $\Id_F \otimes \sigma_2=(\sigma_2 \otimes \sigma_2) \circ (p \otimes \Id_D)$. \\

With these observations we can compute
\begin{equation*}
\begin{split}
L \circ [\varphi_{_F} \circ (\Id_F \otimes \sigma_1)]
&=L \circ \varphi_{_F} \circ (\sigma_1 \otimes \sigma_1) \circ (p \otimes \Id_D) \\
 &= L \circ \sigma_1 \circ \varphi_{_D} \circ (p \otimes \Id_D) \ \ \ \ \ \ \  \ \ \ \ \ \ \ \ \ \ \ \ \ \ \ \  \ \ \ \ \ \ \  \ \ \ \  \ \ \ \ \ (a) \\
 &= L \circ \sigma_2 \circ \varphi_{_D} \circ (p \otimes \Id_D) \ \ \ \ \ \ \  \ \ \ \ \ \ \ \ \ \ \ \ \ \ \ \  \ \  \ \ \ \ \ \ \ \ \ \ \ \ \ \ (\ast) \\
&=L \circ \varphi_{_F} \circ (\sigma_2 \otimes \sigma_2) \circ (p \otimes \Id_D) \ \ \ \ \ \ \  \ \ \ \ \ \ \ \ \ \ \ \ \ \ \ \  \ \ \ \ \ \ \  (b) \\
&=L \circ [\varphi_{_F} \circ (\Id_F \otimes \sigma_2)]\\
\end{split}
\end{equation*}
This gives the desired equalities. We justify the different steps below.\ \\
\begin{itemize}
	
\item In $(a)$ and $(b)$ we've used the fact that $\sigma_1$ and $\sigma_2$ are \textbf{morphisms in $\M_{\S}(X)$}, which implies in particluar that 
$$\varphi_{_F} \circ (\sigma_i \otimes \sigma_i) = \sigma_i \circ \varphi_{_D}.$$
This last equality is equivalent to say that the following diagram commutes:
\[
\xy
(0,20)*+{D(s) \otimes D(t)}="A";
(30,20)*+{D(s \otimes t)}="B";
(0,0)*+{F(s) \otimes F(t)}="C";
(30,0)*+{F(s \otimes t)}="D";
{\ar@{->}^{\varphi_{_D}}"A";"B"};
{\ar@{->}_{\sigma_i \otimes \sigma_i}"A";"C"};
{\ar@{->}^{\sigma_i}"B";"D"};
{\ar@{-->}^{\varphi_{_F}}"C";"D"};
\endxy
\]

\item In $(\ast)$ we've used the fact that $L$ is a coequalizer  of $\sigma_1$ and $\sigma_2$ therefore: $L \circ \sigma_1=  L \circ \sigma_2$.
\end{itemize}
\ \\
Now from the lemma \ref{rel-coeq}, we know that $L$ is a coequalizer of $(\Ub \tld{\sigma}_1, \Ub \tld{\sigma}_2)$ rel. to $\Ub$, since $\Ub$ is clearly faithful Lemma \ref{Linton-relative-coeq} applies, hence $L$ is a coequalizer in $\M_{\S}(X)$ of $(\tld{\sigma}_1,\tld{\sigma}_2)$ (and is obviously preserved by $\Ub$).
\end{proof}

\begin{cor}\label{corol-coeq-split}
$\M_{\S}(X)$ has coequalizers of reflexive pairs and $\Ub$ preserves them.
\end{cor}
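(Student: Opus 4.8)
\textbf{Plan of proof for Corollary \ref{corol-coeq-split}.}

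The statement follows by combining Lemma \ref{coeq-split-pair} with a standard trick for passing from split pairs to reflexive pairs. First, let me recall the situation: given a reflexive pair $(\tld{\sigma}_1,\tld{\sigma}_2): D \rightrightarrows F$ in $\M_{\S}(X)$, by definition there is a common section $d: F \to D$ with $\tld{\sigma}_1 \circ d = \tld{\sigma}_2 \circ d = \Id_F$. The plan is to observe that, after applying the faithful functor $\Ub$, the pair $(\Ub\tld{\sigma}_1, \Ub\tld{\sigma}_2): \Ub D \rightrightarrows \Ub F$ in $\K_X$ is exactly a reflexive pair in $\K_X$, with split $p := \Ub d : \Ub F \to \Ub D$ satisfying $\Ub\tld{\sigma}_1 \circ p = \Ub\tld{\sigma}_2 \circ p = \Id_{\Ub F}$. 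Since $\K_X = \prod_{(A,B) \in X^2} \Hom[\S_{\ol{X}}(A,B)^{\op},\M]$ is cocomplete (being a product of diagram categories of the cocomplete category $\M$), this parallel pair has a coequalizer $L : \Ub F \to E$ in $\K_X$. This is the precongruence setup of Lemma \ref{coeq-split-pair}.

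Next I would invoke Lemma \ref{coeq-split-pair} directly: because $p = \Ub d$ is a split for the pair $(\tld{\sigma}_1, \tld{\sigma}_2)$ in the precise sense required by the hypothesis of that lemma, we conclude that the precongruence is in fact a \emph{congruence} — that is, $E$ lifts to an object $\tld{E} \in \M_{\S}(X)$ and $L$ lifts to a morphism $\tld{L} : F \to \tld{E}$ in $\M_{\S}(X)$ — and moreover $\tld{L}: F \to \tld{E}$ is a coequalizer in $\M_{\S}(X)$ of the pair $(\tld{\sigma}_1,\tld{\sigma}_2)$, preserved by $\Ub$. This gives precisely the two assertions of the corollary. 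The only genuine content beyond Lemma \ref{coeq-split-pair} is the bookkeeping that a reflexive pair is a special case of the hypothesis of that lemma; this is immediate since "reflexive" means exactly that a common section exists in $\M_{\S}(X)$, and $\Ub$ being a functor carries that section to the required split $p$ in $\K_X$.

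I do not expect a serious obstacle here — the heavy lifting was done in Lemma \ref{precong-lemma} (constructing the laxity maps $\psi_{s,t}$ on $E$ by the universal property of colimits and the fact that $\otimes$ commutes with colimits in each variable in the symmetric monoidal closed category $\M$) and in Lemma \ref{coeq-split-pair} (verifying the equalities of Lemma \ref{precong-lemma} using that $\tld{\sigma}_1,\tld{\sigma}_2$ are morphisms of $\S$-diagrams and that $L$ coequalizes them). The remaining delicate point, which I would state explicitly for the reader, is that the coequalizer $E$ computed in $\K_X$ is computed level-wise — i.e. $E(s)$ is the coequalizer in $\M$ of $(\sigma_1(s),\sigma_2(s))$ for each $1$-morphism $s$ of $\S_{\ol{X}}$ — since this is what allows colimits to interact with $\otimes$ factor-wise and hence makes the construction of $\psi_{s,t}$ in Lemma \ref{precong-lemma} go through. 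With that remark in place the corollary is just an application of the preceding lemmas.
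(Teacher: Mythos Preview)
Your proposal is correct and follows essentially the same approach as the paper: take the common section $d$ of the reflexive pair, set $p = \Ub d$, and apply Lemma \ref{coeq-split-pair} directly. The paper's proof is terser but identical in content; your additional remarks about level-wise computation of coequalizers in $\K_X$ and the role of Lemma \ref{precong-lemma} are accurate elaborations rather than a different argument.
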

\begin{proof}
Given a pair of parallel morphisms $(\tld{\sigma}_1,\tld{\sigma}_2): D \rightrightarrows F$ with a split  $\tld{p}$ in $\M_{\S}(X)$ , setting $p= \Ub \tld{p}$, by definition of morphism in  $\M_{\S}(X)$,  $p$ is a split of $(\Ub \tld{\sigma}_1, \Ub \tld{\sigma}_2)$ and we conclude by the Lemma \ref{coeq-split-pair}.
\end{proof}

\subsection{Monadicity and Cocompleteness}

\begin{thm}\label{MX-monadic-KX}
If $\M$ is cocomplete then $\M_{\S}(X)$ is monadic over $\K_X$.
\end{thm}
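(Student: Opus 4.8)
The plan is to verify the hypotheses of Beck's monadicity theorem \cite{Beck_monadicity} for the forgetful functor $\Ub\colon\msx\to\kx$. There are three things to check: (i) $\Ub$ has a left adjoint; (ii) $\Ub$ reflects isomorphisms; and (iii) $\msx$ has coequalizers of $\Ub$-split pairs and $\Ub$ preserves them. Once monadicity is established, cocompleteness of $\msx$ follows immediately from cocompleteness of $\kx$ by Linton's theorem \cite[Corollary 2]{Linton-cocomplete}: the Eilenberg--Moore category of a monad on a cocomplete category that has coequalizers of reflexive pairs is cocomplete, and $\msx\simeq\T\text{-alg}$.

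First I would invoke Lemma~\ref{adjoint-ub} from Appendix~\ref{lemme_adjunction_1}, which constructs the left adjoint $\Gamma$ of $\Ub$ explicitly (the ``free lax morphism'' functor built from decompositions $\tx{Dec}(t)$). This gives (i) and sets up the monad $\T=\Ub\Gamma$ on $\kx$. For (ii), reflection of isomorphisms is essentially tautological: a morphism $\sigma\colon\Fa\to\Ga$ in $\msx$ is determined by its underlying family of natural transformations $\{\sigma_{AB}\}$ (the laxity compatibility is a property, not extra data), so if each component $\sigma_{AB}$ is an isomorphism in $\Hom[\S_{\ol{X}}(A,B)^{\op},\M]$ then the componentwise inverses automatically assemble into a morphism in $\msx$ inverse to $\sigma$; hence $\Ub\sigma$ an isomorphism forces $\sigma$ to be an isomorphism.

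The substance is (iii). Here I would use the material already developed in Appendix~\ref{proof-MX-cocomplete}, Section~\ref{lifting-coeq}: a $\Ub$-split pair $(\tld\sigma_1,\tld\sigma_2)\colon D\rightrightarrows F$ in $\msx$ is in particular a reflexive pair (any split coequalizer situation gives a common section), and the split $p$ is a morphism in $\kx$ with $\Ub\tld\sigma_1\circ p=\Ub\tld\sigma_2\circ p=\Id_{\Ub F}$. Compute the coequalizer $E(s)$ of $(\sigma_1(s),\sigma_2(s))$ in $\M$ for each $1$-morphism $s$; since $\M$ is symmetric monoidal closed, $\otimes$ preserves these colimits in each variable, so $E$ acquires laxity maps $\psi_{s,t}$ by the universal property. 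The key verification is Lemma~\ref{precong-lemma}: the precongruence is a congruence precisely when the two displayed equalities hold, and Lemma~\ref{coeq-split-pair} shows that the existence of the split $p$ forces exactly these equalities (the computation using $\sigma_i\circ p=\Id$ together with $L\circ\sigma_1=L\circ\sigma_2$ and the fact that $\sigma_i$ are morphisms in $\msx$). Thus $E$ lifts to $\msx$, $L\colon F\to E$ is the coequalizer there, and it is preserved by $\Ub$ by construction. This is genuinely just Corollary~\ref{corol-coeq-split} applied in the split (rather than merely reflexive) case.

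The main obstacle is bookkeeping rather than conceptual: one must be careful that Lemma~\ref{coeq-split-pair} is stated for $\Ub$-split pairs with the split living in $\kx$ (which is exactly what Beck's theorem requires), not only for pairs split in $\msx$; a quick check shows the proof there only uses that $p$ is a $\kx$-morphism splitting both $\Ub\tld\sigma_i$, so it applies verbatim. After that, assembling (i)--(iii) into Beck's criterion and then citing Linton gives cocompleteness of $\msx$, completing the proof of Theorem~\ref{MX-cocomplete}.

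\begin{proof}
We verify the hypotheses of Beck's monadicity theorem for $\Ub\colon\msx\to\kx$.

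\emph{$\Ub$ has a left adjoint.} This is Lemma~\ref{adjoint-ub}, which produces $\Gamma\colon\kx\to\msx$ with $\Hom_{\msx}[\Gamma\X,\Fa]\cong\Hom_{\kx}[\X,\Ub\Fa]$ naturally. Set $\T=\Ub\Gamma$, the induced monad on $\kx$.

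\emph{$\Ub$ reflects isomorphisms.} A morphism $\sigma\colon\Fa\to\Ga$ in $\msx$ consists of a family of natural transformations $\sigma_{AB}\colon\Fa_{AB}\to\Ga_{AB}$ subject to the compatibility with laxity maps of Definition~\ref{simple-trans}. If $\Ub\sigma$ is an isomorphism in $\kx$, each $\sigma_{AB}$ is invertible; the inverses $\sigma_{AB}^{-1}$ satisfy the same compatibility (invert the commuting square), so they define a morphism $\sigma^{-1}\colon\Ga\to\Fa$ in $\msx$ with $\sigma\circ\sigma^{-1}=\Id$, $\sigma^{-1}\circ\sigma=\Id$. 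Hence $\sigma$ is an isomorphism.

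\emph{Coequalizers of $\Ub$-split pairs exist and are preserved.} Let $(\tld\sigma_1,\tld\sigma_2)\colon D\rightrightarrows F$ be a pair in $\msx$ which is $\Ub$-split, i.e.\ there is $p\colon\Ub F\to\Ub D$ in $\kx$ with $\Ub\tld\sigma_1\circ p=\Ub\tld\sigma_2\circ p=\Id_{\Ub F}$ (together with the usual section data, which in particular makes the pair reflexive). Writing $\sigma_i=\Ub\tld\sigma_i$, this is precisely the situation of Lemma~\ref{coeq-split-pair}: the split $p$ exists in $\kx$, so by that lemma the precongruence
\[
\xy
(0,0)*++{\Ub D}="X";
(20,0)*++{\Ub F}="Y";
(40,0)*++{E}="Z";
{\ar@<-0.5ex>@{->}_{\sigma_1}"X";"Y"};
{\ar@<0.5ex>@{->}^{\sigma_2}"X";"Y"};
{\ar@{->}^{L}"Y";"Z"};
\endxy
\]
is a congruence; here $E$ is the componentwise coequalizer in $\kx$, which carries a canonical $\S_{\ol{X}}$-diagram structure by Lemma~\ref{precong-lemma} since the tensor product of $\M$ preserves colimits in each variable ($\M$ being symmetric monoidal closed), and $L\colon F\to E$ is a coequalizer of $(\tld\sigma_1,\tld\sigma_2)$ in $\msx$ which is preserved by $\Ub$.

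By Beck's theorem, $\Ub$ is monadic, so $\msx$ is equivalent to the category $\T$-alg of algebras for $\T$ on $\kx$. Moreover, by Corollary~\ref{corol-coeq-split}, $\msx$ has coequalizers of reflexive pairs. Since $\kx$ is cocomplete (a small product of diagram categories of the cocomplete category $\M$), Linton's result \cite[Corollary 2]{Linton-cocomplete} shows that $\T$-alg, hence $\msx$, is cocomplete.
\end{proof}
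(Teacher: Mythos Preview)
Your proof is correct and follows essentially the same route as the paper: verify Beck's hypotheses using Lemma~\ref{adjoint-ub} for the left adjoint, the evident reflection of isomorphisms, and Lemma~\ref{coeq-split-pair}/Corollary~\ref{corol-coeq-split} for the coequalizer clause. You add more detail than the paper (and append the Linton cocompleteness step, which in the paper is the separate theorem immediately following), but the argument is the same.
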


\begin{proof}
We use Beck's monadicity theorem (see \cite[Chap.6, Sec.7, Thm.1]{Mac})  for $\Ub: \M_{\S}(X) \to \K_X$  since:
\begin{itemize}
\item $\Ub$ has a left adjoitn $\Gamma$ ( Lemma \ref{adjoint-ub}), 
\item $\Ub$ clearcly reflect isomorphisms since by definition a morphism $\sigma$ is an isomorphism in $\M_{\S}(X)$ if $\Ub(\sigma)$ is so, 
\item $\M_{\S}(X)$  has has coequalizers of $\Ub$-split parallel pair (= reflexive pair) and $\Ub$ preserves them (Corollary \ref{corol-coeq-split}).
\end{itemize}
\end{proof}

\begin{thm}
For a cocomplete symmetric closed monoidal category $\M$,  the category $\M_{\S}(X)$ is also cocomplete. 
\end{thm}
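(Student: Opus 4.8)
The plan is to deduce cocompleteness of $\M_{\S}(X)$ entirely from the monadicity result just established, namely Theorem \ref{MX-monadic-KX}, together with Linton's theorem on cocompleteness of categories of algebras. First I would recall that $\K_X = \prod_{(A,B) \in X^2} \Hom[\S_{\ol{X}}(A,B)^{\op}, \M]$ is cocomplete whenever $\M$ is: each functor category $\Hom[\S_{\ol{X}}(A,B)^{\op},\M]$ has all colimits computed pointwise since $\M$ does, and a (small) product of cocomplete categories is cocomplete with colimits computed factor-wise. Then, since $\Ub : \M_{\S}(X) \to \K_X$ is monadic by Theorem \ref{MX-monadic-KX}, the category $\M_{\S}(X)$ is equivalent to the Eilenberg-Moore category $\K_X^{\T}$ of $\T$-algebras for the monad $\T = \Ub\Gamma$.

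Next I would invoke the general principle, due to Linton \cite{Linton-cocomplete}, that if $\K$ is a cocomplete category and $\T$ is a monad on $\K$ such that $\K^{\T}$ has coequalizers of reflexive pairs (equivalently, coequalizers of $\Ub$-split pairs, which follows from monadicity), then $\K^{\T}$ is cocomplete. More precisely, the category of $\T$-algebras always has all limits that $\K$ has, and it has a colimit of a diagram $J$ as soon as it has coequalizers of reflexive pairs together with the coproducts obtained by applying the free functor $\Gamma$ to the coproducts of $\K$; Linton's argument constructs the colimit of $J$ in $\K^{\T}$ as a coequalizer (in $\K^{\T}$) of a reflexive pair built from the $\Gamma$-image of two coproducts in $\K$. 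In our situation all three ingredients are in place: $\K_X$ is cocomplete (hence has all coproducts), $\Gamma$ exists as the left adjoint of $\Ub$ by Lemma \ref{adjoint-ub}, and $\M_{\S}(X)$ has coequalizers of reflexive pairs with $\Ub$ preserving them by Corollary \ref{corol-coeq-split}.

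So the proof would read: $\K_X$ is cocomplete; by Theorem \ref{MX-monadic-KX} the functor $\Ub$ is monadic, so $\M_{\S}(X) \simeq \K_X^{\T}$ with $\T = \Ub\Gamma$; by Corollary \ref{corol-coeq-split} this category has coequalizers of reflexive pairs; hence by \cite[Corollary 2]{Linton-cocomplete} it is cocomplete. I do not expect any serious obstacle here — the real work was already done in establishing the left adjoint (Lemma \ref{adjoint-ub}), the coequalizers of reflexive pairs (the material in Section \ref{lifting-coeq}, culminating in Corollary \ref{corol-coeq-split}), and Beck monadicity (Theorem \ref{MX-monadic-KX}). The only point requiring a modicum of care is the clean statement of what Linton's result needs as hypotheses and checking each against what we have proved; once that bookkeeping is done, the conclusion is immediate. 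I would present this as a short three-line proof citing the relevant earlier results rather than reproving Linton's construction.
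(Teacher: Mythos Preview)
Your proposal is correct and follows essentially the same approach as the paper: monadicity of $\Ub$ over the cocomplete base $\K_X$ (Theorem \ref{MX-monadic-KX}), existence of coequalizers of reflexive pairs in $\M_{\S}(X)$ (Corollary \ref{corol-coeq-split}), and then Linton's theorem \cite[Corollary 2]{Linton-cocomplete}. The paper's proof is the three-line version you anticipated; your additional remarks on why $\K_X$ is cocomplete and what Linton's hypotheses require are accurate elaborations but not new ingredients.
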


\begin{proof}
By the previous theorem $\M_{\S}(X)$ is equivalent to the category of algebra of the monad $\T= \Ub \Gamma$ on $\K_X$. Since $\K_X$ is cocomplete and $\M_{\S}(X)$ (hence $\T$-alg) has coequalizer of reflexive pair, a result of Linton \cite[Corollary 2]{Linton-cocomplete} implies that $\T$-alg (hence $\M_{\S}(X)$) is cocomplete.
\end{proof}

\section{$\Laxalg(\Ca\dt, \Ma\dt)$ is locally presentable} \label{Proof-local-present-O-alg}
Our goal in this section is to prove the following 
\begin{thm}
Let $\Ma\dt$ be a locally presentable $\O$-algebra. For any $\O$-algebra $\Ca\dt$ the category of lax $\O$-morphisms $\Laxalg(\Ca\dt, \Ma\dt)$ is locally presentable.  
\end{thm}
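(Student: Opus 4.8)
The plan is to realize $\Laxalg(\Ca\dt,\Ma\dt)$ as the category of algebras for an accessible (indeed finitary-over-a-suitable-base) monad on a locally presentable category, and then invoke the standard fact (\cite[Remark 2.78]{Adamek-Rosicky-loc-pres}) that the Eilenberg--Moore category of an accessible monad on a locally presentable category is locally presentable. The base category will be $\kc = \prod_{i \in C} \Hom(\Ca_i, \Ma_i)$, the category obtained by forgetting the laxity data $\varphi_{_{i\dt|j}}$; this is a (small, since $C$ is a set) product of functor categories $\Hom(\Ca_i,\Ma_i)$ with $\Ca_i$ small and $\Ma_i$ locally presentable, hence each factor and the product are locally presentable by \cite[Corollary 1.54]{Adamek-Rosicky-loc-pres} and the closure of locally presentable categories under small products.

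\textbf{Key steps.} First I would recall, or cite from the Appendix (\ref{ub-has-left-adjoint}), the forgetful functor $\Ub : \Laxalg(\Ca\dt,\Ma\dt) \to \kc$ and its left adjoint $\Fb$, obtained by a left-Kan-extension construction that ``creates laxity maps'' exactly as in the $\S$-diagram case (the observation after Proposition~\ref{f-int} and Lemma~\ref{adjoint-ub} give the template: one freely generates the required $\varphi$'s by Kan-extending along the embedding $\coprod \cb(\n) \hookrightarrow \int\cb$, here in the operadic guise). Second, I would verify that $\Ub$ is monadic: it has a left adjoint by the first step, it reflects isomorphisms (a map of lax $\O$-morphisms is invertible iff each component functor is, by definition), and $\Laxalg(\Ca\dt,\Ma\dt)$ has and $\Ub$ preserves coequalizers of $\Ub$-split pairs --- the same congruence argument as in Appendix~\ref{proof-MX-cocomplete} (Lemmas~\ref{precong-lemma}, \ref{coeq-split-pair}, \ref{corol-coeq-split}, Theorem~\ref{MX-monadic-KX}) goes through verbatim once one checks that the only obstruction to lifting a level-wise coequalizer is compatibility with the action maps $\theta_{_{i\dt|j}}$, and that this is automatic for split pairs because $\theta_{_{i\dt|j}}$ preserves colimits in each $\Ma_i$-variable (here the hypothesis that $\Ma\dt$ is a \emph{locally presentable} $\O$-algebra, which builds in exactly this colimit-preservation on each factor, is what makes the argument work). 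By Beck's theorem, $\Laxalg(\Ca\dt,\Ma\dt) \simeq \T\text{-alg}$ for $\T = \Ub\Fb$. Third, I would show the monad $\T$ is accessible, i.e.\ preserves $\lambda$-filtered colimits for some regular $\lambda$: filtered colimits in $\kc$ are computed factor-wise (small products and functor categories with small source preserve filtered colimits), and the free algebra on $\Fa = (\Fa_i)$ is built from iterated applications of the operations $\theta_{_{i\dt|j}}(x;-)$, coproducts over operation-objects $x \in \Ob(\O(i_1,\dots,i_n;j))$, and colimits; since each $\theta_{_{i\dt|j}}(x;-)$ preserves filtered colimits in each variable (by local presentability of $\Ma\dt$, reducing as in Proposition~\ref{monad-finitary} from filtered to directed colimits and using that the relevant diagonal functor is cofinal), and $C$ and each $\Ob(\O(i_1,\dots,i_n;j))$ are small, the formula for $\T\Fa$ commutes with $\lambda$-filtered colimits for $\lambda$ larger than the accessibility ranks involved --- this is the exact analogue of the computation $\colim_k \Gamma\Fa^k(t) = \Gamma\Fa^\infty(t)$ in the proof of Proposition~\ref{monad-finitary}. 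Finally, applying \cite[Remark 2.78]{Adamek-Rosicky-loc-pres} to the accessible monad $\T$ on the locally presentable category $\kc$ gives that $\T\text{-alg} \simeq \Laxalg(\Ca\dt,\Ma\dt)$ is locally presentable.

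\textbf{Main obstacle.} The routine parts are the identification of $\kc$ as locally presentable and the citation at the end; the genuine work is twofold. The first delicate point is producing the left adjoint $\Fb$ and checking $\Ub$-monadicity, where one must be careful that ``free laxity maps'' can actually be constructed coherently --- the operadic associativity and unit axioms (Definition~\ref{o-morphism}) must be matched against the substitution $\gamma$ of $\O$, and this is where an explicit Kan-extension / transfinite-iteration description is needed rather than an abstract adjoint-functor-theorem invocation (though, since everything is locally presentable and $\Ub$ preserves limits and filtered colimits, the adjoint functor theorem \cite[1.66]{Adamek-Rosicky-loc-pres} could be used as a shortcut to merely get existence of $\Fb$). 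The second, and I expect harder, point is the accessibility of $\T$: one must pin down a single regular cardinal $\lambda$ that simultaneously bounds the presentability ranks of all the $\Ma_i$, the sizes of all the $\Ca_i$ and all the operation-categories $\O(i_1,\dots,i_n;j)$, and controls the (generally transfinite) construction of the free algebra; showing the free-algebra functor halts and commutes with $\lambda$-filtered colimits is the technical heart, carried out exactly in the spirit of Proposition~\ref{monad-finitary} but with the bookkeeping of colored operations replacing the decomposition sets $\tx{Dec}(t)$. Since I may assume all results stated earlier, I would simply cite Theorem~\ref{MX-monadic-KX} and Proposition~\ref{monad-finitary} as the model and indicate that the operadic generalization is ``the same argument,'' deferring the full details to the Appendix as the excerpt already promises.
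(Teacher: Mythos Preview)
Your proposal is correct and follows essentially the same approach as the paper: the appendix proof (culminating in Corollary~\ref{cor_monadicity}) constructs the left adjoint $\Fb$ explicitly via a free-algebra/Kan-extension process, verifies Beck's hypotheses for $\Ub$ using the operadic analogue of the congruence lemmas you cite (Lemmas~\ref{coeq-laxalg} and~\ref{coeq-usplit-laxalg}), observes in Remark~\ref{fb-preserves-filtered-colim} that the monad preserves the $\lambda$-directed colimits appearing in the construction of $\Fb$, and then invokes \cite[Remark~2.78]{Adamek-Rosicky-loc-pres}. Your identification of the two delicate points---the coherent construction of free laxity maps and the accessibility of $\T$---matches exactly where the paper spends its effort, and your suggestion that the adjoint functor theorem \cite[1.66]{Adamek-Rosicky-loc-pres} could shortcut the existence of $\Fb$ is a valid alternative the paper does not take (it builds $\Fb$ by hand, presumably because the explicit description is needed later for the model-structure transfer).
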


The proof of this theorem is technical and a bit long so we will divide it into small pieces, but for the moment we give hereafter an outline of what we will do. 

\begin{enumerate}
\item We will construct a left adjoint $\Fb$, of the forgetful functor $\Ub:  \Laxalg(\Ca\dt, \Ma\dt) \to \prod_{i \in C} \Hom(\Ca_i, \Ma_i)$. 
\item We will show that $\Ub$ is monadic that is $\Laxalg(\Ca\dt, \Ma\dt)$ is equivalent to the category of algebra of the induced monad $\Ub \Fb$. We will transfer the local-presentability by monadic adjunction following the same idea as Kelly and Lack \cite{Kelly-Lack-loc-pres-vcat} who proved that $\M$-$\Cat$ is locally presentable if $\M$ is so. All we need will be to check  that the monad is finitary i.e preserve filtered colimits and the result will follow by a classical argument.
\end{enumerate}

\begin{prop}\label{prop-laxalg-cocomplete}
Let $\O$ be a multisorted operad and $\Ma\dt$ a cocomplete $\O$-algebra. Then for any $\O$-algebra $\Ca\dt$ the category $\Laxalg(\Ca\dt, \Ma\dt)$ is cocomplete. 
\end{prop}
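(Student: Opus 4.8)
\textbf{Proof strategy for Proposition~\ref{prop-laxalg-cocomplete}.}
The plan is to mimic exactly the argument already carried out for $\msx$ in Section~\ref{proof-MX-cocomplete}, which is itself modelled on Wolff's proof that $\M$-$\Cat$ is cocomplete. First I would introduce the forgetful functor $\Ub : \Laxalg(\Ca\dt,\Ma\dt) \to \kc = \prod_{i\in C}\Hom(\Ca_i,\Ma_i)$ which discards the laxity natural transformations $\varphi_{_{i\dt|j}}$, and recall from Appendix~\ref{ub-has-left-adjoint} (or construct directly, in the spirit of Lemma~\ref{adjoint-ub}) its left adjoint $\Fb$: the point is that $\Ma\dt$ being cocomplete, with $\theta_{_{i\dt|j}}$ preserving colimits in each variable, lets one build the free lax $\O$-morphism on a family $\{\Fa_i\}$ by a coproduct-over-decompositions formula, exactly as $\Gamma$ was built from iterated tensor products. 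Since $\kc$ is a finite product of functor categories $\Hom(\Ca_i,\Ma_i)$ into a cocomplete category, $\kc$ is cocomplete with colimits computed level-wise.

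Next I would prove that $\Ub$ is \emph{monadic} using Beck's monadicity theorem, in three steps: (i) $\Ub$ has a left adjoint $\Fb$ (just established); (ii) $\Ub$ reflects isomorphisms, which is immediate since a map in $\Laxalg(\Ca\dt,\Ma\dt)$ is invertible iff its family of underlying natural transformations is; (iii) $\Laxalg(\Ca\dt,\Ma\dt)$ has coequalizers of $\Ub$-split (in particular reflexive) pairs and $\Ub$ preserves them. For step (iii) I would transcribe Lemmas~\ref{precong-lemma}, \ref{coeq-split-pair} and Corollary~\ref{corol-coeq-split}: given a parallel pair $(\sigma_1,\sigma_2): \Da\dt \rightrightarrows \Fa\dt$ with a common section $p$ in $\kc$, form the level-wise coequalizer $E$ in $\kc$; because each $\theta_{_{i\dt|j}}(x,-)$ preserves colimits in every slot, the objects $\otimes_x(E_{i_1},\dots,E_{i_n})$ are the colimits of the corresponding diagrams of $\otimes_x(\sigma_{k}(-),\dots)$, so the universal property produces unique laxity transformations $\psi_{_{i\dt|j}}$ on $E$, and the section $p$ together with the fact that $\sigma_1,\sigma_2$ are morphisms of lax $\O$-morphisms forces the compatibility equalities $L\circ[\varphi\circ(\sigma_1\dots)] = L\circ[\varphi\circ(\sigma_2\dots)]$. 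Then Linton's relative-coequalizer lemma (Lemma~\ref{Linton-relative-coeq}) upgrades this to a genuine coequalizer in $\Laxalg(\Ca\dt,\Ma\dt)$ preserved by $\Ub$.

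Finally, with monadicity in hand, $\Laxalg(\Ca\dt,\Ma\dt)$ is equivalent to the category $\T$-alg of algebras for the monad $\T = \Ub\Fb$ on the cocomplete category $\kc$; since $\T$-alg has coequalizers of reflexive pairs (from step (iii)), Linton's theorem \cite[Corollary~2]{Linton-cocomplete} gives that $\T$-alg, hence $\Laxalg(\Ca\dt,\Ma\dt)$, is cocomplete. The main obstacle I anticipate is step (iii): the coherence bookkeeping needed to check that the induced $\psi_{_{i\dt|j}}$ on the coequalizer satisfy the $n$-ary coherence axioms of Definition~\ref{o-morphism} (the analogue of the $3$-ary coherent system computations in Lemma~\ref{precong-lemma}), which must be handled by a Mac~Lane-style coherence argument using that $\theta_{_{i\dt|j}}$ preserves colimits separately in each variable so that $\otimes_x(E,\dots,E)$ really is the colimit of the expected diagram; everything else is formal.
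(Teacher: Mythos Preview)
Your proposal is correct and follows essentially the same approach as the paper: the paper proves the proposition via Corollary~\ref{cor_monadicity}, establishing the left adjoint $\Fb$ of $\Ub$ (Section~\ref{ub-has-left-adjoint}), proving monadicity by Beck's theorem using the general-$\O$ versions of the precongruence lemmas (Lemmas~\ref{coeq-laxalg} and~\ref{coeq-usplit-laxalg}, which the paper explicitly flags as the generalizations of Lemmas~\ref{precong-lemma} and~\ref{coeq-split-pair}), and then invoking Linton's theorem exactly as you describe. Your anticipated ``main obstacle'' is handled in the paper precisely by the computation in the proof of Lemma~\ref{coeq-usplit-laxalg}, which is the $n$-ary version of the manipulation you sketch.
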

We will denote by $\K_{\Ca\dt}=\prod_{i \in C} \Hom(\Ca_i, \Ma_i)$ and $\K_{\Ca_i}= \Hom(\Ca_i, \Ma_i)$ the $i$th-factor.\ \\
Consider  $\Ub: \Laxalg(\Ca\dt, \Ma\dt) \to  \K_{\Ca\dt}$ the functor which forgets the laxity maps. \ \\
For the proof of the proposition we will establish first the following lemma. 

\begin{lem}\label{lem-interm-cocomp}
Let $\O$ be a multisorted operad and $\Ma\dt$ a cocomplete $\O$-algebra. Then   the functor 
$$\Ub : \Laxalg(\Ca\dt, \Ma\dt) \to  \K_{\Ca\dt} $$  
\\
has a left adjoint.
\end{lem}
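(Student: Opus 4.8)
\textbf{Plan of proof for Lemma \ref{lem-interm-cocomp}.}

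The plan is to build the left adjoint $\Fb$ to $\Ub$ by the standard ``free lax $\O$-morphism'' construction, which mimics the free algebra functor for algebras over an operad, but now the operations are parametrized by the slice operad $\O_{\Ca\dt}=\int_{\Ca\dt}$ introduced earlier in the paper. First I would recall the alternative description proven above: a prelax $\O$-morphism $\Ca\dt\to\Ma\dt$ is the same as a lax $\O_{\Ca\dt}$-morphism $\1\dt\to\pistar\Ma\dt$. This reduces the problem to the terminal-source case and makes the combinatorics of laxity maps tractable: the colimit that creates laxity maps is indexed by the categories of operations $\O_{\Ca\dt}(c_1,\dots,c_n,c_j)$, exactly as the ``generalized latching / Grothendieck construction'' picture in Section \ref{lr-category}. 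Given a family $\Fa\dt=(\Fa_i)_{i\in C}\in\K_{\Ca\dt}$, I would define $\Fb(\Fa\dt)_i$ on an object $c\in\Ca_i$ by a colimit over all formal decompositions of $c$ into an operation of $\O$ applied to objects lying below, i.e. a sum over $x\in\O(i_1,\dots,i_n;j)$ and morphisms $\theta(x,c_1,\dots,c_n)\to c$ of the tensors $\otimes_x(\Fa_{i_1}c_1,\dots,\Fa_{i_n}c_n)$, iterated; this is literally the left Kan extension of $\coprod\Fa_n$ along $\iota\colon\coprod\cb(\n)\to\int\cb$ in the earlier notation, and it exists because $\Ma\dt$ is a cocomplete $\O$-algebra (so each $\Ma_i$ is cocomplete and the functors $\theta_{i\dt|j}$ preserve colimits in each factor, which is exactly what is needed for the colimit to interact well with the tensors). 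The laxity maps on $\Fb(\Fa\dt)$ are then the tautological structure maps of this colimit, and their coherence is forced by the associativity of $\gamma_{\Ca\dt}$ in $\O_{\Ca\dt}$.

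Next I would verify the adjunction $\Hom_{\Laxalg(\Ca\dt,\Ma\dt)}[\Fb(\Fa\dt),\Ga\dt]\cong\Hom_{\K_{\Ca\dt}}[\Fa\dt,\Ub\Ga\dt]$. One direction sends a morphism of lax $\O$-morphisms to its underlying family of natural transformations precomposed with the unit $\eta\colon\Fa\dt\to\Ub\Fb(\Fa\dt)$; the other direction extends a family of natural transformations over the decomposition colimit using the universal property of the colimit, coherence checked by a Mac Lane–style argument (as in the proof of Lemma \ref{adjoint-ub}). The unit $\eta$ is the inclusion of the ``length-one / trivial decomposition'' summand, and the counit is assembled from the laxity maps of $\Ga\dt$ by the universal property of the colimit, exactly parallel to Remark \ref{rmk-co-unit} in the appendix on $\msx$. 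In fact the cleanest write-up is to say: this lemma is the operad-theoretic generalization of Lemma \ref{adjoint-ub}, and the same proof works verbatim with $\S_{\ol X}(A,B)^{\op}$ replaced by $\Ca_i$ and the concatenation of chains replaced by the composition $\gamma_{\Ca\dt}$ of the slice operad $\O_{\Ca\dt}$.

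The main obstacle I anticipate is purely bookkeeping: writing down the decomposition colimit precisely so that it is manifestly functorial in $\Fa\dt$ and so that iterating the construction (decompositions of decompositions) is governed by a single colimit over $\O_{\Ca\dt}$-operations rather than an ad hoc transfinite process. The reduction to $\O_{\Ca\dt}$-algebras with terminal source $\1\dt$ is precisely what tames this, since there the free-algebra colimit has the familiar ``sum over trees'' shape. Once $\Fb$ is constructed, the remaining assertions of the proof proper (Proposition \ref{prop-laxalg-cocomplete} and then Theorem \ref{laxalg-local-pres}) follow the Kelly--Lack template: $\Ub$ reflects isomorphisms and, by an argument identical to Corollary \ref{corol-coeq-split}, $\Laxalg(\Ca\dt,\Ma\dt)$ has coequalizers of reflexive pairs preserved by $\Ub$, so Beck monadicity gives that $\Laxalg(\Ca\dt,\Ma\dt)\simeq\T$-alg for $\T=\Ub\Fb$; cocompleteness is then Linton's theorem, and for local presentability one checks that $\T$ is finitary exactly as in Proposition \ref{monad-finitary}, using that in a locally presentable $\O$-algebra the functors $\theta_{i\dt|j}$ preserve filtered colimits in each variable and that the diagonal $\lambda\to\lambda^{n+1}$ is cofinal, whence $\T$-alg is locally presentable by \cite[Remark 2.78]{Adamek-Rosicky-loc-pres}.
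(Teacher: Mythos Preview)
Your plan differs substantively from the paper's, and the assertion that Lemma~\ref{adjoint-ub} transports ``verbatim'' is where it falters. For an arbitrary $\O$-algebra $\Ca\dt$ the paper does \emph{not} define $\Fb(\Fa\dt)$ by a single decomposition colimit: it first sets $\Fa^1_j=\coprod_{n}\coprod_{(i_1,\dots,i_n)}\Lan_j(\O,\Fa_{i_{\dt}})$, which carries provisional laxity maps $\varepsilon$ but no coherence, and then forces the coherences by iterated pushouts (the objects $\Ra$, $Q$, $\Za$ and the construction $\Ta$ of Definition~\ref{construction-t}), finally taking a transfinite colimit $\Fa^\infty=\colim_k\Fa^k$. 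A closed one-step formula appears in the paper only when $\Ca\dt$ is \emph{free}, because there every $c\in\Ca_j$ has a unique presentation and the key identity of iterated Kan extensions holds on the nose. So the ``ad hoc transfinite process'' you hope to sidestep is exactly the paper's argument in the general case.

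Lemma~\ref{adjoint-ub} does not generalize in the way you suggest because its inductive definition of $\Gamma[\X](t)$ rests on two special features of $\S_{\ol X}$: a degree function on $1$-cells (so each piece of a nontrivial decomposition has strictly smaller degree and the induction terminates in finitely many steps) and a \emph{unique} factorization of every $2$-cell $u:t\to t'$ as $u_0\otimes\dots\otimes u_l$ compatible with a chosen decomposition of $t'$. A general $\Ca\dt$ has neither. Your reduction via the slice operad $\O_{\Ca\dt}$ to lax morphisms out of $\1\dt$ is a reasonable route to a tree-indexed free-algebra formula and could likely be made to work, but writing down that colimit and verifying its coherence is the content of the proof, not bookkeeping; note also that the paper's stated equivalence through $\O_{\Ca\dt}$ lands in \emph{prelax} morphisms, so the functoriality of each component $\Fb(\Fa)_i$ on $\Ca_i$ still requires an argument. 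Your final paragraph on Beck monadicity, Linton cocompleteness, and the monad being finitary is correct and matches the paper (Corollary~\ref{cor_monadicity} and Remark~\ref{fb-preserves-filtered-colim}).
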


As the proof is long we dedicate the next section to it

\subsection{The functor $\Ub$ has a left adjoint}\label{ub-has-left-adjoint}

In the following we give a `free algebra construction process' which associates to any family of functors $\Fa\dt= (\Fa_i)_{i \in C}$, a lax $\O$-morphism  $\Fb \Fa\dt$. One can consider it as the analogue of  the `monadification' of a classical (one-sorted) operad (see for example \cite[Part I, Section 3]{Kriz_May_OAMM}.
For the operad $\O_X$ it will cover the process which associates an $\M$-graph to the corresponding  free $\M$-category.\ \\ 
\begin{nota}\ \
If $(x, c_1, ...,c_n) \in \O(i_1,...,i_n; j) \times \Ca_{i_1} \times \cdots \times \Ca_{i_n}$, we will write $\otimes_x(c_1,...c_n)= \rho_{_{i_{\dt}|j}}(x,c_1,..,c_n)$.\ \\
$\rho_{_{i_{\dt}|j}}^{-1}(c)=$ the subcategory of $\O(i_1,...,i_n; j) \times \Ca_{i_1} \times \cdots \times \Ca_{i_n}$ whose objects are $(x, c_1, ...,c_n)$ such that $\otimes_x(c_1,...c_n)=c$ and morphisms $(f, u_1, ..., u_n)$ such that $\rho_{_{i_{\dt}|j}}(f, u_1, ..., u_n)= \Id_c$. \ \\
$\Fb_{-}^{c}: \Ma_j \to \Hom(\Ca_j, \Ma_j) =$ the left adjoint of the evaluation functor at $c$: $\Ev_c: \Hom(\Ca_j, \Ma_j) \to \Ma_j$. \\
\end{nota}

Informally from a family of (abstract) objects $(\L_i)_{i \in C}$, one defines the associated free $\O$-algebra $(\Fb \L_i)_{i \in C}$ as follows
$$\Fb \L_j= \coprod_{n \in \N}( \coprod_{(i_1, ..., i_n)} \O(i_1,...,i_n; j) \boxtimes [\L_{i_1} \otimes \cdots \otimes \L_{i_n}]).$$

Here `$\boxtimes$' is the action of $\O$ on the category containing the object $\L_i$ and $\otimes$ is the internal product of that category.
The algebra structure is simply given by the multiplication of $\O$ and shuffles; the reader can find a description of the algebra structure, for the one sorted case, in \emph{loc. cit}.\ \\

In our case we want to use the description presented previously, according to which we view a lax $\O$-morphism as an $\O$-algebra of some category $\Ar(\Cat)_+$ (see \ref{coherences-lax}).  The action of $\O$ on $\Ar(\Cat)_+$ was denoted by `$\odot$'.\ \\

So if we start with a family of functors $(\Fa_i: \Ca_i \to \Ma_i)_{i \in C}$ we would like to  define the associated  free algebra by:
$$\tx{ ` $\Fb \Fa_j= \coprod_{n \in \N}( \coprod_{(i_1, ..., i_n)} \O(i_1,...,i_n; j) \odot  [\Fa_{i_1} \otimes \cdots \otimes \Fa_{i_n}])$ '} $$

But as one can see in this coproduct the functors are not defined over the same category, which we want to be $\Ca_j$, so the previous expression actually doesn't make sense in general. But still it guides us to the correct object which is some left Kan extension of something similar. \ \\
\ \\
For each $(i_1, ..., i_n) \in C^n$ introduce $\Lan_j (\O,\Fa_{i_{\dt}}) $ the left Kan extension of the functor 
$$\rho_{_{i_{\dt}|j}} \circ [ \O(i_1,...,i_n; j) \odot \prod \Fa_i] =  \rho_{_{i_{\dt}|j}} \circ [\Id_{\O(i_1,...,i_n; j)} \times \prod \Fa_i]$$ along the functor $\theta_{_{i_{\dt}|j}}$. This  left Kan extension exists since $\Ma_j$ is cocomplete and we have the following diagram 
\[
\xy
(-20,20)*+{ \O(i_1,...,i_n; j) \times \Ca_{i_1} \times \cdots \times \Ca_{i_n}}="X";
(30,20)*+{\Ca_j}="Y";
(-20,0)*+{\O(i_1,...,i_n; j) \times \Ma_{i_1} \times \cdots \times \Ma_{i_n}}="A";
(30,0)*+{\Ma_j}="B";
{\ar@{->}^{\ \ \ \ \ \ \ \ \ \ \ \ \ \ \ \ \ \ \  \theta_{_{i_{\dt}|j}}}"X";"Y"};
{\ar@{->}_{ \ \\ \ \ \ \ \ \ \ \ \ \ \ \ \ \ \ \ \ \ \rho_{_{i_{\dt}|j}}}"A";"B"};
{\ar@{->}^{\Lan_j (\O,\Fa_{i_{\dt}})}"Y";"B"};
{\ar@{->}_{\Id_{\O(i_1,...,i_n; j)} \times \Fa_{i_1} \times \cdots \times \Fa_{i_n}}"X";"A"};
{\ar@{=>}^{\varepsilon_{_{i_{\dt}|j}}}(21,2);(28,9)};
\endxy
 \]

Here $\varepsilon_{_{i_{\dt}|j}}$ is the universal natural transformation arising in the construction of the left Kan extension (see \cite[Ch. X]{Mac}). This diagram represents a morphism  in $\Ar(\Cat)_+$ 
$$\Phi(i_1,...,i_n; j): \O(i_1,...,i_n; j) \odot [\Fa_{i_1} \times \cdots \times \Fa_{i_n}] \to \Lan_j (\O,\Fa_{i_{\dt}})$$
 which, according to the notations in section (\ref{coherences-lax}), is $(\theta_{_{i_{\dt}|j}} ; \rho_{_{i_{\dt}|j}} ; \varepsilon_{_{i_{\dt}|j}})$. \ \\

\subsubsection{Definition of $\Fb$ is $\Ca\dt$ is a free $\O$-algebra} 
Let $\Ca\dt$ be a free $\O$-algebra.  $\Fa\dt \in \K_{\Ca\dt}$. Then we define $\Fb \Fa\dt$ by 
$$\Fb (\Fa)_j= \coprod_{n \in \N} \coprod_{(i_1, ..., i_n)} \Lan_j (\O,\Fa_{i_{\dt}}).$$

Note that in this coproduct there is a hidden term for $n=0$ which is just $\Fa_j$ itself, since  $\Fa_j$ is the left Kan extension of itself along the identity $\Id_{\Ca_j}$. The inclusion in the coproduct yields a natural transformation:
$$\eta_j: \Fa_j \to \Fb (\Fa)_j.$$
We have to specify the morphisms: $ \O(i_1,...,i_n; j) \odot [\Fb(\Fa)_{i_1} \times \cdots \times \Fb(\Fa)_{i_n}] \to \Fb(\Fa)_j $. Before doing this we need to outline some important fact about free $\O$-algebras:
\begin{rmk}
Since $\Ca\dt$ is a \ul{free} algebra, $\Ca\dt$ is a defined by a collections of categories $(L_i)_{i\in C}$ and one has 
$$ \Ca_j= \coprod_{n \in \N}( \coprod_{(i_1, ..., i_n)} \O(i_1,...,i_n; j) \times [\L_{i_1} \times \cdots \times \L_{i_n}]).$$
It follows that each multiplication $\theta_{_{i\dt|j}}: \O(i_1,...,i_n; j) \times \Ca_{i_1} \times \cdots \times \Ca_{i_n} \to   \Ca_j$ is an inclusion to a coproduct, one view it as a `grafting trees' operation; it's image defines a connected component of $\Ca_j$.
Therefore any $c_j \in \Im(\theta_{_{i\dt|j}})$ has a unique presentation $(x,c_1,...c_n) \in \O(i_1,...,i_n; j) \times \Ca_{i_1} \times \cdots \times \Ca_{i_n}$.\ \\
Consequently  the Kan extension $\Lan_j (\O,\Fa_{i_{\dt}})$ consists to take the image by $(\Fa_i)$ of the presentation i.e : $\Lan_j(\O, \Fa_{i_{\dt}}) c= \rho_{_{i\dt|j}}(x,\Fa_{i_1}c_1,..., \Fa_{i_n}c_n).$
\end{rmk}
\ \\
With this description  we define the morphism: $ \O(i_1,...,i_n; j) \odot [\Fb(\Fa)_{i_1} \times \cdots \times \Fb(\Fa)_{i_n}] \to \Fb(\Fa)_j $ as follows.
\begin{itemize}[label=$-$, align=left, leftmargin=*, noitemsep]
\item First if we expand  $\O(i_1,...,i_n; j) \odot [\Fb(\Fa)_{i_1} \times \cdots \times \Fb(\Fa)_{i_n}]$ we have: 
\begin{equation*}
\begin{split}
\O(i_{\dt}; j) \odot [\Fb(\Fa)_{i_1} \times \cdots \times \Fb(\Fa)_{i_n}] 
&= \coprod_{n \in \N} \coprod_{(i_1, ..., i_n)} \coprod_{(h_{1,1}, ..., h_{n,k_n})}  \Id_{\O(i_{\dt}; j)} \times \prod_i \Lan_i(\O(h_{_{i,\dt}}|i),\Fa_{h_{i,l_{\dt}}}) \\
\end{split}
\end{equation*}
\item Then introduce $\Lan_j[\O(i\dt|j),  \Lan_i(\O(h_{_{i,\dt}}|i), \Fa_{h_{i,l_{\dt}}}) ]$, the left Kan extension of
 the summand \\  $\Id_{\O(i_{\dt}; j)} \times \prod_i \Lan_i(\O(h_{_{i,\dt}}|i),\Fa_{h_{i,l_{\dt}}})$ along $\theta_{_{i_{\dt}|j}}$. This left Kan extension comes equipped with a natural  transformation: 
 $$\delta:\Id_{\O(i_1,...,i_n; j)} \times \prod_i \Lan_i(\O(h_{_{i,\dt}}|i),\Fa_{h_{i,l_{\dt}}}) \to \Lan_j[\O(i\dt|j),  \Lan_i(\O(h_{_{i,\dt}}|i), \Fa_{h_{i,l_{\dt}}}) ].$$   
\end{itemize} 
\begin{claim}
We have an equality 
$\Lan_j[\O(i\dt|j),  \Lan_i(\O(h_{_{i,\dt}}|i), \Fa_{h_{i,l_{\dt}}}) ] = \Lan_j[\O(h_{\dt,l_{\dt}}|j),  \Fa_{h_{\dt,l_{\dt}}}].$
\end{claim}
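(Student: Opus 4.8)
The claim to be proved is the equality
\[
\Lan_j[\O(i\dt|j),\ \Lan_i(\O(h_{_{i,\dt}}|i),\Fa_{h_{i,l_{\dt}}})] \ =\ \Lan_j[\O(h_{\dt,l_{\dt}}|j),\ \Fa_{h_{\dt,l_{\dt}}}],
\]
which is essentially an instance of the compatibility of left Kan extensions with composition (``Kan extensions compose'') combined with the associativity axiom of the operad $\O$.

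The plan is to identify both sides as left Kan extensions along a single composite functor and then invoke uniqueness (up to canonical isomorphism) of left Kan extensions. First I would unwind the definitions: the outer extension on the left-hand side is taken along $\theta_{_{i_\dt|j}}\colon \O(i_1,\dots,i_n;j)\times\Ca_{i_1}\times\cdots\times\Ca_{i_n}\to\Ca_j$, while each inner extension $\Lan_i(\O(h_{i,\dt}|i),\Fa_{h_{i,l_\dt}})$ is taken along $\theta_{_{h_{i,\dt}|i}}$. Composing, the left-hand side is (by the standard iterated-Kan-extension lemma, e.g. Mac Lane \cite{Mac}, Ch. X) the left Kan extension of the base functor $\Id_{\O(i_\dt;j)}\times\prod_i\big(\rho_{_{h_{i,\dt}|i}}\circ(\Id\times\prod\Fa_{h_{i,l}})\big)$ along the composite
\[
\theta_{_{i_\dt|j}}\circ\big(\Id_{\O(i_\dt;j)}\times\textstyle\prod_i\theta_{_{h_{i,\dt}|i}}\big).
\]
The associativity axiom of the $\Cat$-operad $\O$ (condition (4) in the definition of a $C$-multisorted operad, together with the compatibility of the $\O$-action $\theta$ on $\Ca\dt$ with the substitution $\gamma$) says precisely that this composite equals $\theta_{_{h_{\dt,l_\dt}|j}}$ precomposed with the isomorphism
\[
\O(i\dt|j)\times\textstyle\prod_i\O(h_{i,\dt}|i)\ \xrightarrow{\ \gamma\ \times\ \Id\ }\ \O(h_{\dt,l_\dt}|j),
\]
up to the canonical shuffle/reindexing isomorphism identifying $\prod_i(\prod_l\Ca_{h_{i,l}})$ with $\prod_{\dt,\dt}\Ca_{h_{\dt,l_\dt}}$. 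Under these identifications the base functor also matches the base functor defining the right-hand side, namely $\Id_{\O(h_{\dt,l_\dt}|j)}\times\prod\Fa_{h_{\dt,l_\dt}}$ followed by $\rho_{_{h_{\dt,l_\dt}|j}}$.

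Having rewritten both sides as left Kan extensions of the same functor along the same functor (after applying the associativity isomorphism of $\O$ and the shuffle isomorphism, both of which are genuine isomorphisms of categories and hence preserve Kan extensions), the equality follows from the uniqueness of left Kan extensions. Concretely, I would note that an isomorphism of source categories $\phi$ gives $\Lan_{G\phi}H \cong \Lan_G(H)$ canonically, and that the associativity axiom provides exactly such a $\phi$; chasing the universal natural transformations $\varepsilon$, $\delta$ through shows the identification is compatible with the structure maps, so the equality (rather than mere isomorphism) that the author asserts holds under the bookkeeping conventions already adopted.

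The main obstacle is bookkeeping rather than conceptual: one must carefully track the shuffle isomorphisms permuting the many factors $\Ca_{h_{i,l}}$ and $\Ma_{h_{i,l}}$, and verify that the universal 2-cell $\delta$ composed with the inner universal 2-cells $\varepsilon_{_{h_{i,\dt}|i}}$ agrees with the universal 2-cell $\varepsilon_{_{h_{\dt,l_\dt}|j}}$ for the right-hand side. This is where the coherence of the operad composition (the associativity and unit axioms, plus the pentagon-type compatibility of $\theta$ with $\gamma$) is genuinely used; everything else is formal Kan-extension calculus. I expect the cleanest write-up simply cites the iterated left Kan extension lemma and the operad associativity axiom, leaving the shuffle verification to the reader, consistent with the level of detail elsewhere in this appendix.
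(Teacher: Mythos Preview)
Your general Kan-extension argument is on the right conceptual track (associativity of the operad action is indeed the heart of the matter), but it diverges from the paper's proof and, as written, has a technical gap.

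The paper's proof is entirely concrete and pointwise, and crucially exploits the hypothesis in force in this subsection: $\Ca\dt$ is a \emph{free} $\O$-algebra. By the Remark immediately preceding the claim, each $\theta_{_{i_\dt|j}}$ is a coproduct inclusion, so every $c\in\Im(\theta)$ has a \emph{unique} presentation. Hence all the left Kan extensions here are trivial: one simply evaluates at the unique preimage. The paper then computes both sides at an object $c$ with presentation $[x,(x_i,d_{i,\dt})]$: the left side gives $\otimes_x[\otimes_{x_1}(\Fa d_{1,\dt}),\dots,\otimes_{x_n}(\Fa d_{n,\dt})]$, the right side gives $\otimes_{\gamma(x,x_i)}(\Fa d_{1,1},\dots,\Fa d_{n,k_n})$, and these coincide by the associativity of the $\O$-action on $\Ma\dt$. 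That is the whole argument.

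Your proposal never invokes the freeness of $\Ca\dt$, and instead appeals to the iterated-Kan-extension lemma $\Lan_G\circ\Lan_F\cong\Lan_{GF}$. But the situation here is not of that form: the outer extension is of $\rho_{_{i_\dt|j}}\circ(\Id\times\prod_i L_i)$, with the functor $\rho$ sitting between the outer extension and the inner $L_i$'s, and the inner $L_i$'s appearing inside a product. So the standard composition lemma does not apply directly; you would additionally need that post-composition with each $\otimes_x(\dots,-,\dots)$ preserves pointwise left Kan extensions (using that $\Ma\dt$ is a cocomplete $\O$-algebra) and that Kan extensions commute with the product decomposition. These steps can be carried out, but they are exactly the ``bookkeeping'' your proposal waves away, and without freeness they are not trivial. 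In the free case all of this collapses, which is why the paper's two-line pointwise check suffices.
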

Before telling why the claim holds, one defines the desired map by sending the summand\\ $\Id_{\O(i_{\dt}; j)} \times \prod_i \Lan_i(\O(h_{_{i,\dt}}|i),\Fa_{h_{i,l_{\dt}}})$ of  $ \O(i_1,...,i_n; j) \odot [\Fb(\Fa)_{i_1} \times \cdots \times \Fb(\Fa)_{i_n}]$ to the summand \\
$\Lan_j[\O(h_{\dt,l_{\dt}}|j),  \Fa_{h_{\dt,l_{\dt}}}]$ of $\Fb(\Fa)_j$ by the composite:
$$\Id_{\O(i_{\dt}; j)} \times \prod_i \Lan_i(\O(h_{_{i,\dt}}|i),\Fa_{h_{i,l_{\dt}}}) \xrightarrow{\delta} \Lan_j[\O(h_{\dt,l_{\dt}}|j),  \Fa_{h_{\dt,l_{\dt}}}] \hookrightarrow \Fb(\Fa)_j.$$

By the universal property of the coproduct  we have a unique map:
$$ \Phi_{_{i_{\dt}|j}}: \O(i_1,...,i_n; j) \odot [\Fb(\Fa)_{i_1} \times \cdots \times \Fb(\Fa)_{i_n}] \to \Fb(\Fa)_j .$$ 
\ \\
To see that the claim holds one proceeds as follows. Consider  $[x,(x_i, d_{i,1},..., d_{i,k_i})_{1\leq i \leq n}] $  an object of $ \O(i\dt|j) \times [\O(h_{_{1,\dt}}|i_1) \times \Ca_{_{1,\dt}}] \times \cdots \times [\O(h_{_{n,\dt}}|i_n) \times \Ca_{_{n,\dt}}]$. Such presentation defines a unique object $c \in \Ca_j$, and each $(x_i, d_{i,1},..., d_{i,k_i})$ defines a unique object $c_i \in  \Ca_i$.\ \\

In the free algebra $\Ca$, one declares that the following objects are equal to $c \in \Ca_j$
\begin{itemize}[label=$-$, align=left, leftmargin=*, noitemsep]
\item $(\gamma(x,x_i);d_{1,1},...,d_{n,k_n}) = c$,
\item  $(x,c_1,..., c_n)=c$,  
\end{itemize} 
Here $\gamma$ is the substitution in $\O$ and $(x_i; d_{i,1},..., d_{i,k_i})= c_i$.  Then one computes in one hand 
$$\Lan_j[\O(h_{\dt,l_{\dt}}|j),  \Fa_{h_{\dt,l_{\dt}}}](c)= \otimes_{\gamma(x,x_i)}(\Fa d_{1,1},...,\Fa d_{n,k_n}).$$
On the other hand one has:
$$\Lan_j[\O(i\dt|j),  \Lan_i(\O(h_{_{i,\dt}}|i), \Fa_{h_{i,l_{\dt}}}) ](c)= \otimes_x [\otimes_{x_i}(\Fa (d_{1 \dt}),...,\otimes_{x_n}(\Fa d_{n \dt})] .$$ 
\\
Now as $\Ma\dt$ is an $\O$-algebra one has the equality:
$$\otimes_{\gamma(x,x_i)}(\Fa d_{1,1},...,\Fa d_{n,k_n})= \otimes_x [\otimes_{x_i}(\Fa (d_{1 \dt}),...,\otimes_{x_n}(\Fa d_{n \dt})]$$  
which means that the two functors are equal as claimed.\ \\ 

By virtue of the previous discussion we have the
\begin{prop}\label{laxity-f-infini}\ \
\begin{enumerate}
\item The family $\Fb (\Fa_i)_{i \in C}$ forms a lax $\O$-morphism of algebra. Equivalently $\Fb (\Fa_i)_{i \in C}$ is an $\O$-algebra of $\Ar(\Cat)_+$.
\item For any $\Ga\dt=(\Ga_i)_{i \in C} \in \Laxalg(\Ca\dt,\Ma\dt)$ we have a functorial isomorphism of sets:
$$ \Hom[\Fb (\Fa_i)_{i \in C} , \Ga\dt] \cong \prod_i \Hom[\Fa_i,\Ga_i]$$
\end{enumerate}
\end{prop}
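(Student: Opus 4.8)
The plan is to prove the two assertions in sequence; the first is a coherence verification and the second a formal adjunction argument, both exploiting crucially that $\Ca\dt$ is a free $\O$-algebra. For part (1), I would check that the maps $\Phi_{_{i_{\dt}|j}}$ constructed above make $(\Fb\Fa_i)_{i\in C}$ into an $\O$-algebra in $(\Ar(\Cat)_+,\times,\Id_{\1})$, which by the paragraph on Coherences (\ref{coherences-lax}) is the same data as a lax $\O$-morphism. The unit axiom is immediate: since $\O(0,i)=\1$ and the $n=0$ summand of $\Fb(\Fa)_i$ is $\Fa_i$ itself (the left Kan extension of $\Fa_i$ along $\Id_{\Ca_i}$), the structure map $\O(0,i)\odot\Fa_0\to\Fb(\Fa)_i$ is the inclusion $\eta_i$ of that summand. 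For the associativity/substitution axiom of Definition \ref{o-morphism}, I would expand $\O(i_1,\dots,i_n;j)\odot[\Fb(\Fa)_{i_1}\times\cdots\times\Fb(\Fa)_{i_n}]$ into the double coproduct displayed above and compare, summand by summand, the two iterated applications of $\Phi$. The heart of the matter is the identification of functors established in the ``claim'' above — that the left Kan extension obtained by first applying $\theta$ along the inner operations $\O(h_{i,\dt}|i)$ and then along the outer one $\O(i_{\dt}|j)$ coincides with the single left Kan extension along the substituted operation $\gamma(x,x_{\dt})$; this holds because freeness of $\Ca\dt$ means every object of $\Ca_j$ in the image of a multiplication has a unique presentation, so the two Kan extensions can be compared pointwise, and because the $\O$-algebra axioms for $\Ma\dt$ say exactly that $\otimes_{\gamma(x,x_{\dt})}=\otimes_x(\otimes_{x_1},\dots,\otimes_{x_n})$. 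Feeding this into the universal properties of the coproduct and of the left Kan extensions then forces the two composites to agree, which is the required $2$-dimensional coherence.

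For part (2), I would exhibit the bijection explicitly. The forward map sends $\sigma\dt:\Fb(\Fa)\to\Ga\dt$ to the family $(\sigma_j\circ\eta_j)_{j}\in\prod_i\Hom[\Fa_i,\Ga_i]$, where $\eta_j:\Fa_j\to\Fb(\Fa)_j$ is the canonical inclusion of the $n=0$ summand. For the inverse, given a family $\pi\dt=(\pi_i:\Fa_i\to\Ga_i)$ I would construct $\sigma_j:\Fb(\Fa)_j\to\Ga_j$ summand by summand: on the $n=0$ summand set $\sigma_j|_{\Fa_j}=\pi_j$, and on the summand $\Lan_j(\O,\Fa_{i_{\dt}})$ use the universal property of the left Kan extension to obtain the unique map whose whiskering along $\theta_{_{i_{\dt}|j}}$ equals $\rho_{_{i_{\dt}|j}}\circ(\Id\times\prod_k\pi_{i_k})$ postcomposed with the laxity map $\psi_{_{i_{\dt}|j}}$ of $\Ga\dt$; the universal property of the coproduct then assembles these into a single functor $\sigma_j$. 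One checks, once more by uniqueness of maps out of Kan extensions and of coproducts, that the $\sigma_j$ are compatible with the laxity maps $\Phi_{_{i_{\dt}|j}}$ of $\Fb(\Fa)$ and $\psi_{_{i_{\dt}|j}}$ of $\Ga\dt$, so $\sigma\dt$ is indeed a morphism of lax $\O$-morphisms. That the two assignments are mutually inverse follows from the defining property of $\eta\dt$ together with the uniqueness just invoked, and naturality in $\Ga\dt$ and $\Fa\dt$ is clear from the construction. Since $\prod_i\Hom[\Fa_i,\Ga_i]=\Hom_{\K_{\Ca\dt}}[\Fa\dt,\Ub\Ga\dt]$, this is precisely the adjunction $\Fb\dashv\Ub$ asserted in Lemma \ref{lem-interm-cocomp} in the free case.

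The main obstacle I expect is not an isolated difficulty but the sheer amount of coherence bookkeeping in part (1): one must match up operad associativity, the $\O$-algebra axioms for $\Ma\dt$, and the pasting calculus of iterated left Kan extensions, while keeping track of shuffles. Freeness of $\Ca\dt$ is exactly what tames this, since it reduces every comparison to a statement about unique presentations rather than about a quotient. With the free case in hand, the passage to an arbitrary $\O$-algebra $\Ca\dt$ — which is what Lemma \ref{lem-interm-cocomp}, hence Proposition \ref{prop-laxalg-cocomplete} and Theorem \ref{laxalg-local-pres}, actually needs — would proceed by writing $\Ca\dt$ as a reflexive coequalizer of free $\O$-algebras and transporting the construction of $\Fb$ along it; but that lies beyond the present statement.
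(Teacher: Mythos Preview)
Your proof of the proposition itself is correct and follows essentially the same route as the paper: for (1) you reduce the coherence to the ``claim'' that the iterated left Kan extension agrees with the single one along the substituted operation, exactly as the paper does; for (2) the paper only remarks that the verification is ``tedious but straightforward'', and your explicit description of the bijection via $\eta$ and the universal property of the Kan extension is the natural way to unpack that.

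One comment on your closing paragraph, which strictly lies outside the present statement but is worth flagging: you propose to pass from free $\Ca\dt$ to arbitrary $\Ca\dt$ by writing $\Ca\dt$ as a reflexive coequalizer of free algebras and transporting $\Fb$ along it. The paper does \emph{not} do this. Instead, for a general $\Ca\dt$ it builds $\Fb(\Fa)_j$ by a transfinite iteration: starting from $\Fa^1_j=\coprod\Lan_j(\O,\Fa_{i_\dt})$, it repeatedly forces the coherences via pushouts (the construction `$\Ta$' of Definition~\ref{construction-t}) to obtain a $\lambda$-sequence $\Fa=\Fa^0\to\Fa^1\to\cdots$ and sets $\Fb(\Fa)_j=\colim_k\Fa^k_j$. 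Your coequalizer idea would likely work too and is closer in spirit to Wolff's treatment of $\M$-categories, but the paper's iterative approach has the advantage that it makes the homotopical analysis of pushouts along $\Fb\alpha$ (Appendix~\ref{pushout-laxalg}) more transparent, since one can track (trivial) cofibrations step by step through the construction.
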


\begin{proof}[Proof of Proposition \ref{laxity-f-infini}]

We will simply give a proof of the assertion $(1)$. The statement $(2)$ is tedious but straightforward to check. \ \\
The only thing we need to check is the fact that the natural transformations $\Phi_{_{i_{\dt}|j}}$ fit coherently. 
We are asked to say if for any $(h_{1,1},...,h_{1,l_1}; i_1);  \cdots ;  (h_{n,1},...,h_{n,l_n}; i_n)$ the following is commutative:

\begin{equation}\label{cohe-lax-free}
\xy
(-40,20)*+{\O(i\dt|j) \times \O(h_{_{1,\dt}}|i_1) \times \cdots \times \O(h_{_{n,\dt}}|i_n) \times \prod\Fb( \Fa_{h_{\dt,l_{\dt}}})}="X";
(40,20)*+{\O(h_{\dt,l_{\dt}}|j) \times \prod\Fb( \Fa_{h_{\dt,l_{\dt}}})}="Y";
(-40,0)*+{\O(i\dt|j) \times \prod \Fb (\Fa_i) }="A";
(40,0)*+{ \Fb (\Fa_j)}="B";
{\ar@{->}^-{\gamma \times \Id}"X";"Y"};
{\ar@{->}_-{ \Phi_{_{i_{\dt}|j}}}"A";"B"};
{\ar@{->}^-{ \Phi_{h_{\dt,l_{\dt}}|j}}"Y";"B"};
{\ar@{->}_-{(\Id_{\O(i\dt|j)} \times \prod \Phi_{_{h_{i,l_{\dt}}|i}}) \circ \tx{shuffle} }"X";"A"};
\endxy
\end{equation}

But the commutativity of that diagram boils down to check that the following maps are identities:
$$\Lan_j[\O(h_{\dt,l_{\dt}}|j),  \Fa_{h_{\dt,l_{\dt}}}] \xrightarrow{\tx{canonical}} \Lan_j[\O(i\dt|j),  \Lan_i(\O(h_{_{i,\dt}}|i), \Fa_{h_{i,l_{\dt}}}) ] $$

But this follows from the previous discussion. Consequently the maps  $\Phi_{_{i_{\dt}|j}}$ fit coherently and $(\Fb \Fa_i)_{i \in C}$ equipped with $ \Phi_{_{i_{\dt}|j}}$ is a lax $\O$-morphism of algebra.\ \\ 
\end{proof}

\subsubsection{Definition of $\Fb$ for an arbitrary $\O$-algebra $\Ca\dt$} 
Let $\Ca\dt$ be an arbitrary $\O$-algebra and   $\Fa\dt \in \K_{\Ca\dt}$. For each $j \in C$ consider,
$$\Fa^1_j= \coprod_{n \in \N} \coprod_{(i_1, ..., i_n)} \Lan_j (\O,\Fa_{i_{\dt}}).$$
This is the same type of expression as before;  the inclusion in the coproduct yields a natural transformation:
$$e_j: \Fa_j \to \Fa^1_j.$$
\\
For each $c$ and each $(x,c_1,..., c_n) \in \rho^{-1}c$, we have a map:
$$\varepsilon: \otimes_x(\Fa c_1,..., \Fa c_n) \to \Lan_j (\O,\Fa_{i_{\dt}})(c)  \hookrightarrow \Fa^1 c .$$ 
By the adjunction $\Fb^c \dashv \Ev_c$, the map $\varepsilon$ corresponds to a unique map in $\K_j= \Hom(\Ca_j, \Ma_j)$: 
$$\Fb_{\otimes_x(\Fa c_1,..., \Fa c_n)}^{c} \to \Fa^1. $$
Let $\Ra(e;x,c_1,...,c_n)$ be the object defined by the pushout diagram in $\K_j$:
\[
\xy
(-10,20)*+{\Fb_{\otimes_x(\Fa c_1,..., \Fa c_n)}^{c}}="A";
(30,20)*+{\Fa^1}="B";
(-10,0)*+{\Fb_{\otimes_x(\Fa^1 c_1,..., \Fa^1 c_n)}^{c}}="C";
(30,0)*+{\Ra(e;x,c_1,...,c_n)}="D";
{\ar@{->}^{}"A";"B"};
{\ar@{->}_-{\Fb_{\otimes_x(e_1,...e_n)}^{c}}"A"+(0,-3);"C"};
{\ar@{-->}^-{p(e;x,c_1,...,c_n)}"B";"D"};
{\ar@{-->}^{}"C";"D"};
\endxy
\]

\paragraph{An intermediate coherence}
\ \\
Let  $[x,(x_i, d_{i,1},..., d_{i,k_i})_{1\leq i \leq n}] $ be an object of $ \O(i\dt|j) \times [\O(h_{_{1,\dt}}|i_1) \times \Ca_{_{1,\dt}}] \times \cdots \times [\O(h_{_{n,\dt}}|i_n) \times \Ca_{_{n,\dt}}]$ 
such that:
\begin{itemize}[label=$-$, align=left, leftmargin=*, noitemsep]
\item $\otimes_{x_i} (d_{i,1},..., d_{i,k_i})= c_i$,
\item $\otimes_{\gamma(x,x_i)}(d_{1,1},...,d_{n,k_n}) = c$, and
\item  $\otimes_x(c_1,..., c_n)=c$; here $\gamma$ is the substitution in $\O$. 
\end{itemize}

From the map 
$$\eta \varepsilon: \otimes_{\gamma(x,x_i)}(\Fa d_{1,1},..., \Fa d_{n,k_n}) \to \Fa^{1,1} (\otimes_{\gamma(x,x_i)}(d_{1,1},...,d_{n,k_n})) = \Fa^{1,1} c.$$
Using the adjunction $\Fb^c \dashv \Ev_c$, we define the object $Q(x,x_i;d_{1,1},..., d_{n,k_n})$ given by the pushout diagram in $\K_j$:
\[
\xy
(-10,20)*+{\Fb_{\otimes_{\gamma(x,x_i)}(\Fa d_{1,1},..., \Fa d_{n,k_n})}^{c}}="A";
(40,20)*+{\Fa^1}="B";
(-10,0)*+{\Fb_{\otimes_x(\Ra c_1,..., \Ra  c_n)}^{c}}="C";
(40,0)*+{Q(x,x_i;d_{1,1},..., d_{n,k_n})}="D";
{\ar@{->}^{}"A";"B"};
{\ar@{->}_-{\Fb_{\otimes_x [(p \varepsilon)_1, ..., (p \varepsilon)_n]}^{c}}"A"+(0,-4);"C"};
{\ar@{.>}^-{g(x,x_i;d_{1,1},..., d_{n,k_n})}"B";"D"};
{\ar@{.>}^{}"C";"D"};
\endxy
\]
Introduce $\Za(x,x_i;d_{1,1},..., d_{n,k_n})$ to be the object obtained from the pushout: \ \\
\[
\xy
(-10,20)*+{\Fa^1}="A";
(40,20)*+{\Ra(e,\gamma(x,x_i);d_{1,1},..., d_{n,k_n})}="B";
(-10,0)*+{Q(x,x_i;d_{1,1},..., d_{n,k_n})}="C";
(40,0)*+{\Za(x,x_i;d_{1,1},..., d_{n,k_n})}="D";
{\ar@{->}^-{p}"A";"B"};
{\ar@{->}_-{g(x,x_i;d_{1,1},..., d_{n,k_n})}"A"+(0,-4);"C"};
{\ar@{.>}^-{g'(x,x_i;d_{1,1},..., d_{n,k_n})}"B";"D"};
{\ar@{.>}^{}"C";"D"};
\endxy
\]
\\
Denote by $\Za_{h\dt, i\dt, j}(c): \rho^{-1}c \to \Fa^{1}_{/ \K_j}$, the  functor that takes $[x,(x_i, d_{i,1},..., d_{i,k_i})]$ to natural transformation:
 $$\Fa^1 \to  \Za(x,x_i;d_{1,1},..., d_{n,k_n}).$$ 
Let $\Fa^{1,c}_{h\dt, i\dt, j}$  be the colimit of $\Za_{h\dt, i\dt, j}(c)$ and denote by $\eta_{h\dt, i\dt, j}: \Fa^1 \to \Fa^{1,c}_{h\dt, i\dt, j}$ the canonical map. 

\begin{df}\label{construction-t}
Define $\Fa^{1,1}_{h\dt, i\dt, j}$ to be the object obtained by the generalized pushout diagram in $\K_j$ as $c$ runs through $\Ca_j$:
$$ \Fa^{1,1}_{h\dt, i\dt, j}= \colim_{c \in \Ca_j} \{ \Fa^1 \xrightarrow{\eta_{1,c}} \Fa^{1,c}_{h\dt, i\dt, j} \}.$$
We have canonical maps $\eta: \Fa^1 \to \Fa^{1,1}_{h\dt, i\dt, j}$ and $\delta_{+}(x,(x_i, d_{i,1},..., d_{i,k_i})): \Za(x,x_i;d_{1,1},..., d_{n,k_n}) \to  \Fa^{1,1}_{h\dt, i\dt, j}$.\ \\
Define $\Ta(e,\Fa, \Fa^1)_j$ to be the object defined also by the generalized pushout:
 $$ \Ta(e,\Fa, \Fa^1)_j = \colim \coprod_{n \in \N} \coprod_{(i_1, ..., i_n)} \coprod_{(h_{1,1}, ..., h_{n,k_n})} \{ \eta_{h\dt, i\dt, j}: \Fa^1 \to \Fa^{1,1}_{h\dt, i\dt, j}  \}$$ 
where $e: \Fa \to \Fa^1$ is the original morphism from the left Kan extension which gives  the first laxity maps $\varepsilon$.
\end{df}

We will write $\Fa^2= \Ta(e,\Fa, \Fa^1)$ and $\eta^1: \Fa^1 \to \Fa^2$ the canonical map. By construction we end up with new laxity maps $\varepsilon_1: \otimes_x(\Fa^1 c_1,..., \Fa^1 c_n) \to \Fa^2(c)$ which are not coherent, but  we can iterate the process to build an object  $\Fa^3= \Ta(\Fa^1, \Fa^2)$ which `bring the coherences of $\varepsilon_1$'.  But the new laxity maps are not coherent so we have to repeat the process an infinite number of time.\ \\

Proceeding by induction we define for $k \in \lambda$, an object $\Fa^k$ wiht maps $\eta^k: \Fa^k \to \Fa^{k+1}$ by:
\begin{enumerate}
\item $\Fa^0:= \Fa$ and $\eta^0= e$. 
\item $\Fa^{k+1}:= \Ta(\eta^{k-1}, \Fa^{k-1}, \Fa^k)$, we have a map $\eta^k: \Fa^k \to \Fa^{k+1}$ from the construction `$\Ta$'.
\end{enumerate}
We therefore have a $\lambda$-sequence in $\K_j$: 
$$\Fa= \Fa^0 \to \Fa^1 \to \cdots \to \Fa^k \to \Fa^{k+1} \to \cdots$$
and we can take the colimit $\Fa^{\infty}=  \colim_{k \in \lambda} \{ \Fa^k \xrightarrow{\eta^k} \Fa^{k+1} \}$. 
\begin{df}
For a family $\Fa\dt=(\Fa_j)_{j \in C}$ we define $\Fb(\Fa\dt)$ by setting:
 $$\Fb(\Fa\dt)_j:= \Fa^{\infty}.$$
We have a canonical map $\eta: \Fa_j \to \Fb(\Fa\dt)_j$.  
\end{df}

We have also sequences of objects $\Ra^k, Q^k$ and $\Za^k$ which are created step by step and there are also maps induced by universal property from $\Ra^k \to \Ra^{k+1}$ and similarly for $Q^k$ and $\Za^k$. \ \\  

\begin{prop}\label{free-laxalg-functor}
For a cocomplete $\O$-algebra $\Ma\dt$ we have that:
\begin{enumerate}
\item the family $\Fb(\Fa\dt)$ is a lax morphism from $\Ca\dt$ to $\Ma\dt$ and 
\item  the functor $\Fb$ is  left adjoint of $\Ub$. 
\end{enumerate}

\end{prop}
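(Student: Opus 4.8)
\textbf{Proof strategy for Proposition \ref{free-laxalg-functor}.}
The plan is to verify the two assertions in sequence, exploiting the explicit transfinite construction of $\Fb(\Fa\dt)_j = \Fa^{\infty}$ given above together with the special case already handled in Proposition \ref{laxity-f-infini} (free $\O$-algebras) and the Kan-extension bookkeeping that produced each $\Fa^{k}$. The whole point of the iterated construction $\Ta(\eta^{k-1},\Fa^{k-1},\Fa^{k})$ is that at each stage we adjoin the data needed to make the previously-built laxity maps coherent, so the work is to confirm that the colimit over $k\in\lambda$ actually achieves coherence, and then to match the universal property of $\Fa^{\infty}$ against the hom-set $\prod_i\Hom[\Fa_i,\Ga_i]$.

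First I would establish assertion $(1)$, that $\Fb(\Fa\dt)$ is a lax $\O$-morphism. One has, for each $c\in\Ca_j$ and each $(x,c_1,\dots,c_n)\in\rho^{-1}_{i_\dt|j}(c)$, a family of structure maps $\varepsilon_k\colon \otimes_x(\Fa^k c_1,\dots,\Fa^k c_n)\to \Fa^{k+1}c$ coming from the pushouts defining $\Ra^{k}$; passing to the colimit these assemble, using that $\otimes_x$ commutes with the filtered colimit along $k$ on each factor (this is where cocompleteness of $\Ma\dt$ as an $\O$-algebra — preservation of colimits in each slot by $\theta_{i\dt|j}$ — is used), into a candidate laxity map $\varphi(x,c_1,\dots,c_n)\colon \otimes_x(\Fa^{\infty}c_1,\dots,\Fa^{\infty}c_n)\to \Fa^{\infty}c$. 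The coherence equality required in Definition \ref{o-morphism} — namely $\varphi(\gamma(x,x_1,\dots,x_n),d_{\dt\dt}) = \varphi(x,c_1,\dots,c_n)\circ[\otimes_x(\varphi_1,\dots,\varphi_n)]$ — is exactly what the auxiliary objects $Q^k$ and $\Za^k$ in Definition \ref{construction-t} were built to force: the pushout producing $\Za(x,x_i;d_{\dt\dt})$ identifies the ``substituted'' laxity map with the ``composed'' one after one more stage, so in the colimit over $k$ the two become literally equal. Thus I would check that the maps $\delta_{+}$ witness this identification and that no uncoherent pairs survive in $\Fa^{\infty}$; the functoriality/naturality of each $\varphi(x,-)$ in the $c_i$ is inherited from the naturality of the Kan-extension counits $\varepsilon_{i\dt|j}$ and is routine. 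Equivalently, by the discussion in \ref{coherences-lax}, one checks that $(\Fb(\Fa\dt)_i)_{i\in C}$ equipped with the $(\theta,\rho,\varphi)$-triples is an $\O$-algebra in $\Ar(\Cat)_+$ — the verification there is the commuting square \eqref{cohe-lax-free}, now in the colimit.

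Next I would prove assertion $(2)$, the adjunction $\Fb\dashv\Ub$. Given $\Ga\dt\in\Laxalg(\Ca\dt,\Ma\dt)$ and a family $f_i\colon\Fa_i\to\Ga_i$ in $\K_{\Ca\dt}$, I construct the unique extension $\widehat f\colon\Fb(\Fa\dt)\to\Ga\dt$ stage by stage: on $\Fa^0=\Fa$ it is $f$; assuming $\widehat f^{k}\colon\Fa^k\to\Ga\dt$ defined (and compatible with laxity maps up to the coherence already available at stage $k$), the maps out of each $\Fb^{c}_{\otimes_x(\Fa^k c_1,\dots)}$ are forced by adjointness $\Fb^{c}\dashv\Ev_c$ from the composite $\otimes_x(\Ga c_1,\dots,\Ga c_n)\xrightarrow{\psi^{\Ga}}\Ga c$ precomposed with $\otimes_x(\widehat f^k)$, so the universal property of the pushouts defining $\Ra^{k},Q^{k},\Za^{k}$ yields a unique $\widehat f^{k+1}\colon\Fa^{k+1}\to\Ga\dt$; taking the colimit over $k$ gives $\widehat f$. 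Uniqueness follows because any morphism of lax $\O$-morphisms restricting to $f$ on $\Fa$ is forced on every $\Fa^k$ by the same universal properties (an induction on $k$). Naturality in $\Fa\dt$ and $\Ga\dt$ is then immediate from the pointwise definitions. I would remark that when $\Ca\dt$ is free this reduces to Proposition \ref{laxity-f-infini}$(2)$, which serves as a consistency check.

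The main obstacle I anticipate is the bookkeeping in assertion $(1)$: showing that the transfinite colimit $\Fa^{\infty}$ is genuinely coherent rather than merely ``coherent up to one more stage'' — i.e. that every instance of the pentagon-type identity for the $\varphi$'s is realized as an actual equality, not just a higher cell. This requires care that the indexing of the generalized pushouts in Definition \ref{construction-t} (over all tuples $(h_{1,1},\dots,h_{n,k_n})$ and all $c\in\Ca_j$) is exhaustive enough that no coherence is omitted, and that the length $\lambda$ of the sequence is large enough (regular, exceeding the size of all the operations categories $\O(i_\dt;j)$ and all the $\Ca_i$) for the process to stabilize; cocompleteness of $\Ma\dt$ and the colimit-preservation of the $\theta_{i\dt|j}$ on each factor are exactly what make the stabilization argument go through. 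The adjunction half, by contrast, is formal once $(1)$ is in place.
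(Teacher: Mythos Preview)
Your proposal is correct and follows essentially the same route as the paper's proof: you obtain the laxity maps $\varphi^{\infty}(x,c_1,\dots,c_n)$ from the universal property of $\colim_k \otimes_x(\Fa^k c_1,\dots,\Fa^k c_n)\cong \otimes_x(\Fa^{\infty}c_1,\dots,\Fa^{\infty}c_n)$ (using that each $\theta_{i_\dt|j}$ preserves colimits in each slot), and you derive the coherence identity by taking the colimit of the diagrams involving $Q^k$ and $\Za^k$, exactly as the paper does. Your treatment of assertion $(2)$ is in fact more detailed than the paper's, which simply declares it ``tedious but straightforward''; your stage-by-stage extension via the universal properties of the pushouts $\Ra^k,Q^k,\Za^k$ is the natural way to unpack that remark, and your worry about ``stabilization'' is slightly over-stated: the paper does not need the sequence to stabilize, only that the cocone into $\Fa^{\infty}$ is compatible, which the construction guarantees at every finite stage.
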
 

\begin{proof}[Sketch of proof] The assertion $(2)$ is straightforward so we leave it to the reader. For the assertion $(1)$ we need to specify the laxity maps and check that they satisfy the coherence conditions. \ \\
 
By construction the following diagram involving $\Ra^k$ commutes

\[
\xy
(-40,20)*+{\Fb_{\otimes_x(\Fa^k c_1,..., \Fa c_n)}^{c}}="A";
(10,20)*+{\Fa^{k+1}}="B";
(-40,0)*+{\Fb_{\otimes_x(\Fa^{k+1} c_1,..., \Fa^{k+1} c_n)}^{c}}="C";
(10,0)*+{\Ra^{k+1}(\eta^k;x,c_1,...,c_n)}="D";
(50,0)*+{\Fa^{k+2}}="E";
(-40,-20)*+{\Fb_{\otimes_x(\Fa^{k+2} c_1,..., \Fa^{k+2} c_n)}^{c}}="X";
(50,-20)*+{\Ra^{k+2}(\eta^{k+1};x,c_1,...,c_n)}="Y";
{\ar@{->}^{}"A";"B"};
{\ar@{->}_{\Fb_{\otimes_x(\eta^k_1,...\eta^k_n)}^{c}}"A"+(0,-3);"C"};
{\ar@{-->}_{p(\eta^k;x,c_1,...,c_n)}"B";"D"};
{\ar@{-->}^{}"C";"D"};
{\ar@{..>}^{}"D";"E"};
{\ar@{->}_{\Fb_{\otimes_x(\eta^{k+1}_1,...\eta^{k+1}_n)}^{c}}"C"+(0,-3);"X"};
{\ar@{-->}^{}"X";"Y"};
{\ar@{-->}^{}"E";"Y"};
{\ar@{.>}^{}"D";"Y"};
{\ar@{.>}^{\eta^{k+1}}"B";"E"};
\endxy
\]

From this diagram it's easy to see that the sequences $\Ra^k$ and $\Fa^k$ `converge' to the same object, that is they have the same colimit object. A simple analyse shows that also $Q^k$ and $\Za^k$ have as colimit object $\Fa^{\infty}$. \ \\

Since $\lambda$ is a regular cardinal for any $\lambda$-small cardinal $\mu$  the diagonal functor $d: \lambda \to \prod_{\mu} \lambda$ from $\lambda$ to the product of $\mu$ copies of $\lambda$ is cofinal: a consequence is that diagrams indexed by $\lambda$ and $\prod_{\mu} \lambda$  have the same colimits. It follows, in particular, that for any $(i_1,...,i_n) \in C^{n}$ the following colimits are the same
\begin{equation*}
\begin{cases}
\colim_{(k_1, ...,  k_n) \in \lambda^n} \{ \otimes_x(\Fa^{k_1}_{i_1}  c_1,..., \Fa^{k_n}_{i_n}  c_n) \}\\
\\
  \colim_{k \in \lambda}  \{ \otimes_x(\Fa^k_{i_1}  c_1,..., \Fa^k_{i_n}  c_n) \} \\
  \end{cases}
\end{equation*}

One of the assumptions on the algebra  $\Ma\dt=(\Ma_i)_{i \in C}$ is  the possibilty to commute colimits computed in $\Ma\dt$ and the  tensor products `$\otimes_x$'.\ \\
\ \\
If we put these together the first colimit is easily computed as:
$$\colim_{(k_1, ...,  k_n) \in \lambda^n} \{ \otimes_x(\Fa^{k_1}_{i_1}  c_1,..., \Fa^{k_n}_{i_n}  c_n) \} =  \otimes_x( \Fb (\Fa_{i_1} )c_1,...,   \Fb (\Fa_{i_n} )c_n) .$$ 
\\
And we deduce that: 
$$\colim_{k  \in \lambda} \{ \otimes_x(\Fa^{k}_{i_1}  c_1,..., \Fa^{k}_{i_n}  c_n) \} =  \otimes_x( \Fb (\Fa_{i_1} )c_1,...,   \Fb (\Fa_{i_n} )c_n) .$$ 
\\
All these are natural in $(x, c_1,.., c_n)$.  One gets the laxity maps by the universal property of the colimit with respect to the compatible cocone which ends at $\Fb(\Fa_j)c$
\[
\xy
(-60,20)*+{\otimes_x(\Fa^k c_1,..., \Fa c_n)}="A";
(-10,20)*+{\Fa^{k+1} c}="B";
(-60,0)*+{\otimes_x(\Fa^{k+1} c_1,..., \Fa^{k+1} c_n)}="C";
(-10,0)*+{\Ra^{k+1}(\eta^k;x,c_1,...,c_n)c}="D";
(40,0)*+{\Fa^{k+2} c}="E";
(40,-20)*+{\Ra^{k+2}(\eta^{k+1};x,c_1,...,c_n) c}="Y";
(80,-20)*+{\Fb(\Fa_j)c}="Z";
{\ar@{->}^{}"A";"B"};
{\ar@{->}_{\otimes_x(\eta^k_1,...\eta^k_n)}"A"+(0,-3);"C"};
{\ar@{-->}_{p(\eta^k;x,c_1,...,c_n)c}"B";"D"};
{\ar@{-->}^{}"C";"D"};
{\ar@{..>}^{}"D";"E"};
{\ar@{-->}^{}"E";"Y"};
{\ar@{.>}^{}"D";"Y"};
{\ar@{.>}^{\eta^{k+1}}"B";"E"};
{\ar@{.>}^{\tx{canonical}}"E";"Z"};
{\ar@{.>}^{\ \ \ \ \  \ \  \ \ \tx{canonical}}"Y";"Z"};
{\ar@{.>}^{}"C";"C"+(0,-20)};
{\ar@/_1.5pc/@{.>}"C"+(1,-20); "Z"};
\endxy
\]

So we get a unique map $\varphi^{\infty}(x,c_1, ..., c_n): \otimes_x( \Fb (\Fa_{i_1} )c_1,...,   \Fb (\Fa_{i_n} )c_n) \to \Fb(\Fa_j)c$  which makes the bovious diagram commutative. As usual the maps $\varphi^{\infty}(x,c_1, ..., c_n)$ are natural in $(x,c_1, ..., c_n)$. \ \\

The fact that these maps $\varphi^{\infty}(x,c_1, ..., c_n)$ satisfy the coherence conditions is easy bu tedious to check. One use the diagram involving $Q^k$ and $\Za^k$ and take the colimit everywhere; the universal property of the colimit will force (by unicity)  the equality between the two maps out of $ \otimes_{\gamma(x,x_i)}( \Fb (\Fa_{h_{1,1}} )d_{1,1},...,   \Fb (\Fa_{h_{n,k_n}} )d_{n,k_n})$ and going to $\Fb(\Fa_j)c$. 
 For the record these maps are: 
\begin{itemize}
\item[-] $\varphi^{\infty}(\gamma(x, x_i),d_{1,1}, ..., d_{n,k_n})$
\item[-] $ \varphi^{\infty}(x,c_1, ..., c_n) \circ [\otimes_x (\varphi^{\infty}(x_1,d_{1,1}, ..., d_{1,k_1}), \cdots, \varphi^{\infty}(x_n,d_{n,1}, ..., d_{n,k_n}))].$ 
\end{itemize}
\end{proof}
\begin{rmk}\label{fb-preserves-filtered-colim}
As $\Ub$ has a left adjoint $\Fb$ we have an induced monad $\T= \Ub \Fb$. It's not hard to see that $\T$ automatically preserves the colimits appearing in the definition of $\Fb$ namely the $\lambda$-directed ones. And since directed colimits are the same as  filtered ones we deduce that $\T$ preserves filtered colimits as well. 
\end{rmk} 

\subsection{$\Laxalg(\Ca\dt, \Ma\dt)$ is monadic over $\Hom(\Ca\dt, \Ma\dt)$ }

Let $\sigma_1, \sigma_2: \Fa\dt \to \Ga\dt$ be a pair of parallel morphisms between two lax-morphisms in $\Laxalg(\Ca\dt,\Ma\dt)$. Denote by $L: \Ga\dt \to \Ea\dt$ the coequalizer of $\sigma_1, \sigma_2$ in $\K_{\Ca\dt}=\prod_{i \in C} \Hom(\Ca_i, \Ma_i)$:

\[
\xy
(0,0)*++{\Ub \Fa\dt}="X";
(20,0)*++{\Ub \Ga\dt}="Y";
(40,0)*++{\Ea\dt}="Z";
{\ar@<-0.5ex>@{->}_{\sigma_1}"X";"Y"};
{\ar@<0.5ex>@{->}^{\sigma_2}"X";"Y"};
{\ar@{->}^{L}"Y";"Z"};
\endxy
\]
Note that we've freely identified $\sigma_i$ and it's image $\Ub \sigma_i$.  The following lemma is the general version of lemma \ref{precong-lemma} except that we do not use the language of precongruences. 

\begin{lem}\label{coeq-laxalg}
Consider $\Fa\dt, \Ga\dt , \Ea\dt$ with $\sigma_1, \sigma_2$ and $L$ as before. Assume that 
 for every $(x,c_1, ..., c_n)  \in \O(i_1,...,i_n; j) \times \Ca_{i_1} \times \cdots \times \Ca_{i_n}$ with $c= \otimes_x(c_1,...,c_n)$ and any $l \in \{1,...,n\}$ the following equality holds:
$$ L_c \circ \varphi_{_{\Ga} }(x,c_1, ...,  c_n) \circ [\otimes_x(\Id_{\Ga c_1}, ...,\sigma_1(c_{l}), ..., \Id_{\Ga c_n}) ]=  L_c \circ \varphi_{_{\Ga} }(x,c_1, ...,  c_n) \circ [\otimes_x(\Id_{\Ga c_1}, ...,\sigma_2 (c_{l}), ..., \Id_{\Ga c_n})].$$
 Then we have:
\begin{enumerate}
\item $\Ea\dt$ becomes a lax morphism and 
\item $L$ is the coequalizer of $\sigma_1$ , $\sigma_2$ in $\Laxalg(\Ca\dt,\Ma\dt)$.
\end{enumerate}
\end{lem}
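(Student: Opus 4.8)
The plan is to imitate the proof of Lemma \ref{precong-lemma} in the appendix, which in turn follows Wolff's treatment of $\M$-$\Cat$. First I would record that, since $\Ma\dt$ is cocomplete, colimits in $\K_{\Ca\dt}=\prod_{i\in C}\Hom(\Ca_i,\Ma_i)$ are computed factorwise and objectwise; in particular, for each $j\in C$ and each $c\in\Ca_j$ the object $\Ea_j(c)$ is the coequalizer in $\Ma_j$ of $\sigma_1(c),\sigma_2(c)\colon\Fa_j(c)\rightrightarrows\Ga_j(c)$, with structure map $L_c\colon\Ga_j(c)\to\Ea_j(c)$. The whole of part (1) then amounts to producing laxity maps $\varphi_{_{\Ea}}(x,c_1,\dots,c_n)\colon\otimes_x(\Ea c_1,\dots,\Ea c_n)\to\Ea c$ for $(x,c_1,\dots,c_n)\in\O(i_1,\dots,i_n;j)\times\Ca_{i_1}\times\cdots\times\Ca_{i_n}$, with $c=\otimes_x(c_1,\dots,c_n)$, and checking the coherences of Definition \ref{o-morphism}.

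The key input is that $\Ma\dt$, being a cocomplete $\O$-algebra, has the property that each functor $\otimes_x$ preserves colimits in every factor. Hence $\otimes_x(\Ea c_1,\dots,\Ea c_n)$ is the colimit of the evident diagram $\epsilon(x;c_1,\dots,c_n)$ obtained by applying $\otimes_x$ to the $n$ coequalizer diagrams $\Fa c_k\rightrightarrows\Ga c_k\to\Ea c_k$ one factor at a time, the comparison cocone being $\otimes_x(Lc_1,\dots,Lc_n)$. I would then verify that the composite $L_c\circ\varphi_{_{\Ga}}(x,c_1,\dots,c_n)$ is a compatible cocone on $\epsilon(x;c_1,\dots,c_n)$: that it coequalizes each parallel pair $\otimes_x(\dots,\sigma_1(c_l),\dots)\rightrightarrows\otimes_x(\dots,\sigma_2(c_l),\dots)$ is precisely the hypothesis of the lemma, while compatibility with the remaining maps follows from bifunctoriality of $\otimes_x$ and the naturality of the $\varphi_{_{\Ga}}$'s — the same short algebraic computation already carried out in the proof of Lemma \ref{precong-lemma}. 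The universal property then yields a unique $\varphi_{_{\Ea}}(x,c_1,\dots,c_n)$ with $\varphi_{_{\Ea}}(x,c_1,\dots,c_n)\circ\otimes_x(Lc_1,\dots,Lc_n)=L_c\circ\varphi_{_{\Ga}}(x,c_1,\dots,c_n)$, i.e. making the naturality square for $L$ commute.

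Next I would check naturality of $\varphi_{_{\Ea}}$ in $(x,c_1,\dots,c_n)$, the unitary condition, and the associativity coherence; this is the main technical obstacle, although it is routine. Naturality and unitarity follow by uniqueness of maps out of the relevant colimits, using the corresponding facts for $\Ga\dt$ together with the fact that $L$ is levelwise epimorphic onto $\Ea$. For associativity, given $(x,x_i)$ and $(d_{\dt,\dt})$ one forms the diagram whose colimit is $\otimes_{\gamma(x,x_i)}(\Ea d_{1,1},\dots,\Ea d_{n,k_n})$ — again a colimit since the $\Ea d_{\dt}$ are coequalizers and $\otimes$ preserves such — and shows that the two composites $\varphi_{_{\Ea}}(\gamma(x,x_i),d_{\dt,\dt})$ and $\varphi_{_{\Ea}}(x,c_\dt)\circ[\otimes_x(\varphi_{_{\Ea}}(x_1,d_{1,\dt}),\dots,\varphi_{_{\Ea}}(x_n,d_{n,\dt}))]$ agree after precomposition with the comparison map from $\otimes_{\gamma(x,x_i)}(\Ga d_{1,1},\dots,\Ga d_{n,k_n})$, because there the equality is exactly the coherence already satisfied by $\varphi_{_{\Ga}}$; uniqueness of the map out of the colimit then forces the two composites to coincide. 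This proves (1), and the commuting naturality squares say precisely that $L\colon\Ga\dt\to\Ea\dt$ is a morphism in $\Laxalg(\Ca\dt,\Ma\dt)$.

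Finally, for (2): by construction $L$ is a coequalizer of $\sigma_1,\sigma_2$ relative to $\Ub$, and $\Ub$ is faithful, so Lemma \ref{Linton-relative-coeq} gives that $L$ is a genuine coequalizer in $\Laxalg(\Ca\dt,\Ma\dt)$; alternatively one argues directly that any $L'\colon\Ga\dt\to\Za\dt$ in $\Laxalg$ with $L'\sigma_1=L'\sigma_2$ factors through a unique map $\Ea\dt\to\Ub\Za\dt$ in $\K_{\Ca\dt}$, which respects laxity maps because $L$ does and is levelwise epimorphic. I expect the coherence verification in part (1) to be the only delicate point; everything else is bookkeeping with universal properties, strictly parallel to the appendix proof of Lemma \ref{precong-lemma} and to Wolff's treatment of $\M$-$\Cat$.
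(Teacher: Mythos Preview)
Your proposal is correct and follows essentially the same approach as the paper's sketch: identify $\otimes_x(\Ea c_1,\dots,\Ea c_n)$ as the colimit of the diagram obtained from the $n$ coequalizers via preservation of colimits by $\otimes_x$ in each slot, use the hypothesis together with bifunctoriality to show that $L_c\circ\varphi_{_{\Ga}}$ defines a compatible cocone, and extract $\varphi_{_{\Ea}}$ by universal property. Your treatment of part~(2) via Lemma~\ref{Linton-relative-coeq} (or the direct epimorphism argument) makes explicit what the paper leaves implicit in ``assertion~(2) will follow from the proof of~(1)''; your coherence checks likewise spell out what the paper dismisses as straightforward. One small remark: there is no separate unitary condition to verify in Definition~\ref{o-morphism}, so that item can be dropped.
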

When there is no confusion we will simply write $\varphi_{_{\Ga} }$ instead of $ \varphi_{_{\Ga} }(x,c_1, ...,  c_n) $. 
\begin{proof}[Sketch of proof]
The assertion $(2)$ will follow from the proof of $(1)$. To prove $(1)$ we will simply give the laxity maps; the coherence conditions are straightforward.

As mentioned before, the assumptions on $\Ma\dt$ allow to distribute (factor wise) colimits over each tensor product $\otimes_x$. It follows that $\otimes_x(\Ea c_1, ... \Ea c_n)$ equipped with the maps $\otimes_x(L_{c_1},...,L_{c_n})$ is the colimit of the diagram 
$$ \epsilon(\sigma_1,\sigma_2;c_1,...,c_n)= \coprod_{ (\tau_1, ..., \tau_n) \in \{1,2\}^n } \{\otimes_x( \sigma_{\tau_1} c_1, ..., \sigma_{\tau_n} c_n): \otimes_x( \Fa c_1, ...,\Fa c_n) \to \otimes_x( \Ga c_1, ...,\Ga c_n)  \}.$$
(For each $l$, $\sigma_{\tau_l}$ is either $\sigma_1$ or $\sigma_2$). \ \\

For each $(\tau_1, ..., \tau_n) \in \{1,2\}^n$ let  $\Theta(\tau_1, ..., \tau_n)= L_c \circ \varphi_{_{\Ga} }(x,c_1, ...,  c_n) \circ \otimes_x( \sigma_{\tau_1} c_1, ..., \sigma_{\tau_n} c_n)$  be the map illustrated in the diagram below:

\[
\xy
(-10,15)*+{\otimes_x( \Fa c_1, ...,\Fa c_n)}="X";
(30,15)*+{\Fa c}="Y";
(-10,0)*+{\otimes_x( \Ga c_1, ...,\Ga c_n)}="A";
(30,0)*+{\Ga c}="B";
{\ar@{->}_{\ \ \ \ \ \ \ \varphi_{_{\Ga} }}"A";"B"};
{\ar@{->}_{\otimes_x( \sigma_{\tau_1} c_1, ..., \sigma_{\tau_n} c_n)}"X";"A"};
(30,-15)*+{\Ea c}="E";
{\ar@{->}_{L_c}"B";"E"};
\endxy
 \]
 
Now we claim that $\Theta(\tau_1, ..., \tau_n)= \Theta(\tau'_1, ..., \tau'_n)$ for all $(\tau_1, ..., \tau_n)$ , $(\tau'_1, ..., \tau'_n)$ in $\{1,2\}^n$. The claim will holds as soon as we show that   for every $l \in  \{1,...,n\}$ we have $\Theta(\tau_1, ...,\tau_l, ... \tau_n)= \Theta(\tau_1, ...,\tau'_l,...,  \tau_n)$ where $\tau_l$ and $\tau'_l$ are `conjugate' that is: if $\tau_l=1$ then $\tau'_l=2$ and vice versa. \ \\
Let's assume that $\tau_l=1$ (hence $\tau'_l=2$) that is $\sigma_{\tau_l}=\sigma_1$.

We establish successively the following equalities:

\begin{equation*}
\begin{split}
\Theta(\tau_1, ..., \tau_n)
&= L_c \circ \varphi_{_{\Ga} } \circ \otimes_x( \sigma_{\tau_1} c_1, ..., \sigma_1 c_l,..., \sigma_{\tau_n} c_n)  \\
&=L_c \circ \varphi_{_{\Ga} } \circ [\otimes_x(\Id_{\Ga c_1}, ...,\sigma_1c_{l} , ..., \Id_{\Ga c_n})  \circ \otimes_x( \sigma_{\tau_1} c_1, ..., \Id_{\Fa c_l},..., \sigma_{\tau_n} c_n) ]\\
&=\{L_c \circ \varphi_{_{\Ga} } \circ [\otimes_x(\Id_{\Ga c_1}, ...,\sigma_1c_{l} , ..., \Id_{\Ga c_n}) ] \} \circ \otimes_x( \sigma_{\tau_1} c_1, ..., \Id_{\Fa c_l},..., \sigma_{\tau_n} c_n) \\
(\ast) &=\{L_c \circ \varphi_{_{\Ga} } \circ [\otimes_x(\Id_{\Ga c_1}, ...,\sigma_2(c_{l}), ..., \Id_{\Ga c_n}) ]\} \circ \otimes_x( \sigma_{\tau_1} c_1, ..., \Id_{\Fa c_l},..., \sigma_{\tau_n} c_n) \\
&=L_c \circ \varphi_{_{\Ga} } \circ [\otimes_x(\Id_{\Ga c_1}, ...,\sigma_2 c_{l}, ..., \Id_{\Ga c_n})  \circ \otimes_x( \sigma_{\tau_1} c_1, ..., \Id_{\Fa c_l},..., \sigma_{\tau_n} c_n)] \\
&=L_c \circ \varphi_{_{\Ga} } \circ \otimes_x( \sigma_{\tau_1} c_1, ..., \sigma_2 c_l,..., \sigma_{\tau_n} c_n) \\
&=\Theta(\tau_1, ..., \tau'_l,..., \tau_n)
\end{split}
\end{equation*}

(In $(\ast)$ we use the hypothesis to switch $\sigma_1$ and $\sigma_2$ in the expression contained in `\{ \}'.)
\end{proof}

\begin{rmk}
If $\Fa\dt$ is simply an object of $\K_{\Ca\dt}$ but not necessarily a lax morphism but $\Ga\dt$ is a lax morphism, we will have a precongruence in $\Ga\dt$ and the first assertion of the lemma will hold. The proof will exactly be the same. 
\end{rmk}

The next lemma tells about the existence of coequalizer of a parallel $\Ub$-split pair. This is the generalization of lemma \ref{coeq-split-pair}.

\begin{lem}\label{coeq-usplit-laxalg}
Consider $\Fa\dt, \Ga\dt , \Ea\dt$ with $\sigma_1, \sigma_2$ and $L$ as before. Assume that there is a \textbf{$\Ub$-split} i.e a morphism $p: \Ub \Ga\dt \to \Ub \Fa\dt$ in $\K_{\Ca\dt}$  such that $ \sigma_1 \circ p= \sigma_2 \circ p = \Id_{\Ub \Ga\dt}.$\ \\
Then  we have:
\begin{enumerate}
\item $\Ea\dt$ becomes a lax $\O$-morphism,
\item $L: \Ga\dt \to \Ea\dt$ is a coequalizer in $\Laxalg(\Ca\dt,\Ma\dt)$ of the pair $(\sigma_1,\sigma_2)$  and $\Ub$ obviously preserves it (as a coequalizer). 
\end{enumerate} 
\end{lem}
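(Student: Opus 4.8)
The plan is to deduce this directly from Lemma \ref{coeq-laxalg}, so that the only genuine work is to check that lemma's hypothesis. Concretely, Lemma \ref{coeq-laxalg} reduces everything to the following: for every operation $x\in\O(i_1,\dots,i_n;j)$ and every compatible tuple $(c_1,\dots,c_n)\in\Ca_{i_1}\times\cdots\times\Ca_{i_n}$, writing $c=\otimes_x(c_1,\dots,c_n)$, and for every slot $l$, the two composites
\[
L_c\circ\varphi_{_{\Ga}}(x,c_1,\dots,c_n)\circ\otimes_x(\Id_{\Ga c_1},\dots,\sigma_k(c_l),\dots,\Id_{\Ga c_n}),\qquad k=1,2,
\]
agree. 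Once this is established, Lemma \ref{coeq-laxalg} yields assertion (1), that $\Ea\dt$ carries a (unique) lax $\O$-morphism structure making $L$ a morphism of lax $\O$-morphisms, and assertion (2), that $L\colon\Ga\dt\to\Ea\dt$ is the coequalizer of $(\sigma_1,\sigma_2)$ in $\Laxalg(\Ca\dt,\Ma\dt)$. Preservation by $\Ub$ is then automatic: $\Ea\dt$ was by construction the coequalizer of $(\sigma_1,\sigma_2)$ computed in $\K_{\Ca\dt}$, so $\Ub$ of the lifted coequalizer is exactly the one we started from; if one prefers, faithfulness of $\Ub$ together with a Linton-type argument, as in Lemmas \ref{rel-coeq}--\ref{Linton-relative-coeq} of the fixed-object case, gives the same conclusion.

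To verify the hypothesis I would repeat, in operadic notation, the computation from the proof of Lemma \ref{coeq-split-pair}. Write the $\Ub$-split as a family $p=(p_i)_{i\in C}$ with $p_i\colon\Ga_i\to\Fa_i$; the hypothesis $\sigma_k\circ p=\Id_{\Ga\dt}$ means $\sigma_k(c_m)\circ p(c_m)=\Id_{\Ga c_m}$ at every object, for $k=1,2$. First I would use functoriality of the operation $\otimes_x(-,\dots,-)$ to factor, for $k=1,2$,
\[
\otimes_x(\Id_{\Ga c_1},\dots,\sigma_k(c_l),\dots,\Id_{\Ga c_n})=\otimes_x(\sigma_k c_1,\dots,\sigma_k c_n)\circ\otimes_x(p c_1,\dots,\Id_{\Fa c_l},\dots,p c_n),
\]
the right-hand factor being independent of $k$. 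Then, since $\sigma_k$ is a $2$-morphism of lax $\O$-morphisms, its defining commutative square gives $\varphi_{_{\Ga}}(x,c_1,\dots,c_n)\circ\otimes_x(\sigma_k c_1,\dots,\sigma_k c_n)=\sigma_k(c)\circ\varphi_{_{\Fa}}(x,c_1,\dots,c_n)$; and since $L$ coequalizes $\sigma_1,\sigma_2$ in $\K_{\Ca\dt}$ one has $L_c\circ\sigma_1(c)=L_c\circ\sigma_2(c)$. Feeding these three identities into the composite collapses the $k=1$ expression to $L_c\circ\sigma_1(c)\circ\varphi_{_{\Fa}}(x,c_1,\dots,c_n)\circ\otimes_x(p c_1,\dots,\Id_{\Fa c_l},\dots,p c_n)$, then swaps $\sigma_1(c)$ for $\sigma_2(c)$, then runs the chain backwards to reach the $k=2$ expression; this is exactly the required equality. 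An arbitrary slot $l$ is handled identically after relabelling the tensor factors.

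The hard part is entirely bookkeeping. One must check that the displayed factorization type-checks, the common domain being $\otimes_x(\Ga c_1,\dots,\Fa c_l,\dots,\Ga c_n)$, with functoriality of $\otimes_x$ used in the ``composition of tuples'' direction $\otimes_x(g_1\circ f_1,\dots,g_n\circ f_n)=\otimes_x(g_1,\dots,g_n)\circ\otimes_x(f_1,\dots,f_n)$; and one must apply the defining square of a $2$-morphism of lax $\O$-morphisms with the laxity maps $\varphi_{_{\Fa}}$ and $\varphi_{_{\Ga}}$ in their correct positions. No ingredient beyond Lemma \ref{coeq-laxalg} and the $\Ub$-split hypothesis is needed; in particular the cocompleteness of $\Ma\dt$ enters only implicitly, through Lemma \ref{coeq-laxalg}, in forming the colimits along each $\otimes_x$ that produce $\Ea\dt$.
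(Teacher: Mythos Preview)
Your proposal is correct and follows essentially the same route as the paper's own proof: reduce to the hypothesis of Lemma~\ref{coeq-laxalg}, then verify that hypothesis via the factorization $\otimes_x(\Id,\dots,\sigma_k c_l,\dots,\Id)=\otimes_x(\sigma_k,\dots,\sigma_k)\circ\otimes_x(p,\dots,\Id_{\Fa c_l},\dots,p)$, the laxity square for each $\sigma_k$, and the coequalizing identity $L_c\circ\sigma_1 c=L_c\circ\sigma_2 c$. The paper's proof is the same computation, presented as a direct generalization of Lemma~\ref{coeq-split-pair}.
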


\begin{proof}
All is proved in the same manner as for lemma \ref{coeq-split-pair}. We simply have to show that the equalities of lemma \ref{coeq-laxalg} holds. 

Since for each $ \tau \in \{1,2 \}$), $\sigma_\tau \circ p= \Id_{\Ga}$, for every  $(c_1,..., c_n)$ and every $l \in \{1,...,n \}$  we have that:
$$ \otimes_x(\Id_{\Ga c_1}, ...,\sigma_\tau c_{l}, ..., \Id_{\Ga c_n})=  \otimes_x( \sigma_{\tau} c_1, ..., \sigma_{\tau} c_l,..., \sigma_{\tau} c_n) \circ  \otimes_x( p c_1, ..., \Id_{\Fa c_l},..., p c_n). $$

Moreover as $\sigma_{\tau}$ is a morphism of lax-morphis the following commutes:
\[
\xy
(-10,15)*+{\otimes_x( \Fa c_1, ...,\Fa c_n)}="X";
(30,15)*+{\Fa c}="Y";
(-10,0)*+{\otimes_x( \Ga c_1, ...,\Ga c_n)}="A";
(30,0)*+{\Ga c}="B";
{\ar@{->}^{\ \ \ \ \ \ \ \varphi_{_{\Fa}}}"X";"Y"};
{\ar@{->}_{\ \ \ \ \ \ \ \varphi_{_{\Ga} }}"A";"B"};
{\ar@{->}^{\sigma_{\tau}c}"Y";"B"};
{\ar@{->}_{\otimes_x( \sigma_{\tau} c_1, ..., \sigma_{\tau} c_n)}"X";"A"};
\endxy
 \] 
 which means that we have an equality:
 $\sigma_{\tau}c \circ \varphi_{_{\Fa}}= \varphi_{_{\Ga} } \circ \otimes_x( \sigma_{\tau} c_1, ..., \sigma_{\tau} c_n).$
 If we combine all the previous discussion we establish successively the following.

\begin{equation*}
\begin{split}
L_c \circ \varphi_{_{\Ga} } \circ [\otimes_x(\Id_{\Ga c_1}, ...,\sigma_1 c_{l}, ..., \Id_{\Ga c_n}) ]
&= L_c \circ \varphi_{_{\Ga} } \circ [\otimes_x( \sigma_1 c_1, ..., \sigma_{1} c_l,..., \sigma_1 c_n) \circ  \otimes_x( p c_1, ..., \Id_{\Fa c_l},..., p c_n)]  \\
&=[L_c \circ \underbrace{ \varphi_{_{\Ga} } \circ \otimes_x( \sigma_1 c_1, ..., \sigma_{1} c_l,..., \sigma_1 c_n)}_{=\sigma_{1}c \circ \varphi_{_{\Fa}}}] \circ  \otimes_x( p c_1, ..., \Id_{\Fa c_l},..., p c_n)  \\ \\
&=[\underbrace{L_c  \circ \sigma_{1}c}_{=L_c \circ \sigma_{2}c} \circ \varphi_{_{\Fa}}] \circ   \otimes_x( p c_1, ..., \Id_{\Fa c_l},..., p c_n) \\ \\
&=[L_c \circ \underbrace{ \sigma_{2}c \circ \varphi_{_{\Fa}}}_{=\varphi_{_{\Ga} } \circ \otimes_x( \sigma_2 c_1, ..., \sigma_{2} c_l,..., \sigma_2 c_n)}] \circ   \otimes_x( p c_1, ..., \Id_{\Fa c_l},..., p c_n)  \\ \\
&=[L_c \circ \varphi_{_{\Ga} } \circ \otimes_x( \sigma_2 c_1, ..., \sigma_{2} c_l,..., \sigma_2 c_n)] \circ  \otimes_x( p c_1, ..., \Id_{\Fa c_l},..., p c_n)  \\ 
&=L_c \circ \varphi_{_{\Ga} } \circ [\otimes_x( \sigma_2 c_1, ..., \sigma_{2} c_l,..., \sigma_2 c_n) \circ  \otimes_x( p c_1, ..., \Id_{\Fa c_l},..., p c_n)]  \\
&=L_c \circ \varphi_{_{\Ga} } \circ [\otimes_x(\Id_{\Ga c_1}, ...,\sigma_2 c_{l}, ..., \Id_{\Ga c_n}) ].
\end{split}
\end{equation*}

\end{proof}

\begin{cor}\label{cor_monadicity}
Let $\Ma\dt$ be a cocomplete $\O$-algebra. Then we have
\begin{enumerate}
\item The functor $\Ub: \Laxalg(\Ca\dt,\Ma\dt) \to \K_{\Ca\dt}$ is monadic.
\item  $\Laxalg(\Ca\dt,\Ma\dt)$ is cocomplete.
\item If moreover $\Ma\dt$ is locally presentable then so is $\Laxalg(\Ca\dt,\Ma\dt)$.
\end{enumerate}
\end{cor}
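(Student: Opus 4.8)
The plan is to deduce all three assertions from the machinery just established, using Beck's monadicity theorem for (1), Linton's cocompleteness criterion for (2), and the standard transfer of local presentability along a finitary monad for (3).

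For (1), I would check the hypotheses of Beck's monadicity theorem for $\Ub\colon \Laxalg(\Ca\dt,\Ma\dt)\to\K_{\Ca\dt}$. By Proposition \ref{free-laxalg-functor}, $\Ub$ has a left adjoint $\Fb$. The functor $\Ub$ reflects isomorphisms: by construction a morphism $\sigma\dt$ of $\Laxalg(\Ca\dt,\Ma\dt)$ is invertible exactly when each component $\sigma_i$ is, i.e. when $\Ub\sigma\dt$ is invertible. And by Lemma \ref{coeq-usplit-laxalg}, any parallel pair in $\Laxalg(\Ca\dt,\Ma\dt)$ whose image under $\Ub$ admits a $\Ub$-split has a coequalizer in $\Laxalg(\Ca\dt,\Ma\dt)$ which is preserved by $\Ub$. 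Hence $\Ub$ is monadic, so $\Laxalg(\Ca\dt,\Ma\dt)$ is equivalent to the category $\T$-alg of algebras for the monad $\T=\Ub\Fb$ on $\K_{\Ca\dt}$.

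For (2), since $\Ma\dt$ is a cocomplete $\O$-algebra each $\Ma_i$ is cocomplete, so each diagram category $\K_{\Ca_i}=\Hom(\Ca_i,\Ma_i)$ is cocomplete, and hence so is the product $\K_{\Ca\dt}=\prod_{i\in C}\K_{\Ca_i}$. A reflexive pair in $\Laxalg(\Ca\dt,\Ma\dt)$ carries a common section, and the $\Ub$-image of that section is a $\Ub$-split for the $\Ub$-image of the pair; so Lemma \ref{coeq-usplit-laxalg} shows $\Laxalg(\Ca\dt,\Ma\dt)\simeq\T$-alg has coequalizers of reflexive pairs. By Linton's theorem \cite[Corollary 2]{Linton-cocomplete}, the algebras of a monad on a cocomplete category having coequalizers of reflexive pairs form a cocomplete category; therefore $\Laxalg(\Ca\dt,\Ma\dt)$ is cocomplete.

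For (3), if moreover each $\Ma_i$ is locally presentable, then each $\Hom(\Ca_i,\Ma_i)$ is locally presentable (a small diagram category of a locally presentable category is locally presentable, \cite[Corollary 1.54]{Adamek-Rosicky-loc-pres}), and the product $\K_{\Ca\dt}$ is again locally presentable. By Remark \ref{fb-preserves-filtered-colim} the monad $\T=\Ub\Fb$ preserves filtered colimits, i.e. is finitary, and a finitary monad on a locally presentable category has a locally presentable category of algebras (\cite[Remark 2.78]{Adamek-Rosicky-loc-pres}); via the equivalence $\Laxalg(\Ca\dt,\Ma\dt)\simeq\T$-alg this gives the claim. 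The substance of the argument was carried out earlier — in constructing $\Fb$ (Proposition \ref{free-laxalg-functor}) and the $\Ub$-split coequalizers (Lemma \ref{coeq-usplit-laxalg}) — so the only point needing real attention here is the bookkeeping of hypotheses, in particular making sure that reflexive pairs furnish $\Ub$-split pairs so that Lemma \ref{coeq-usplit-laxalg} applies; that is where I would be most careful.
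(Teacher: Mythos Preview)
Your proof is correct and follows essentially the same route as the paper: Beck's monadicity theorem for (1), Linton's criterion for (2), and the finitary-monad transfer of local presentability for (3). The only small remark is that Proposition~\ref{free-laxalg-functor} as stated covers only the case where $\Ca\dt$ is a free $\O$-algebra; for the general existence of the left adjoint you should point to Lemma~\ref{lem-interm-cocomp} (whose proof occupies the whole of \S\ref{ub-has-left-adjoint}).
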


\begin{proof}[Sketch of proof]
The assertion $(1)$ follows from Beck monadicity theorem since:
\begin{itemize}
\item $\Ub$ has a left adjoint $\Fb$ (Proposition \ref{prop-laxalg-cocomplete}),
\item $\Ub$ clearly reflect isomorphisms,
\item $\Laxalg(\Ca\dt,\Ma\dt)$ has coequalizers of parallel $\Ub$-split pairs and $\Ub$ preserves them (Lemma \ref{coeq-usplit-laxalg}).
\end{itemize}
It follows that $\Laxalg(\Ca\dt,\Ma\dt)$ is equivalent to the category $\T$-alg for the monad $\T= \Ub \Fb$.  The assertion $(2)$ follows from Lintons's theorem \cite{Linton-cocomplete} since $\T$ is defined on $\K_{\Ca\dt}$ which is cocomplete and  $\T$-alg has coequalizer of reflexive pair. \ \\
From the Remark \ref{fb-preserves-filtered-colim} we know that $\T$ preserves filtered colimits, and since $\K_{\Ca\dt}$ is locally presentable we know from \cite{Adamek-Rosicky-loc-pres} that $\T$-alg (hence $\Laxalg(\Ca\dt,\Ma\dt)$) is automatically locally presentable and the assertion $(3)$ follows. 
\end{proof}

\section{Pushout in $\Laxalg(\Ca\dt, \Ma\dt)$}  \label{pushout-laxalg}

In this section we want to show that for a trivial cofibration $\alpha \in \kc$ then the pushout of $\Fb \alpha$ is a weak equivalence in $\Laxalg(\Ca\dt, \Ma\dt)$, when $\Ma\dt$ is a special Quillen $\O$-algebra (Definition \ref{quillen-alg}). On $\kc$ we will consider the \emph{injective and projective} model structures; these are product model structures of the ones on each $\K_j= \Hom(\Ca_j, \Ma_j)$.  \ \\

Given a  diagram in $\Laxalg(\Ca\dt, \Ma\dt)$ 
\[
\xy
(0,20)*+{\Fb \Aa}="A";
(20,20)*+{\Fa}="B";
(0,0)*+{\Fb \Ba}="C";
{\ar@{->}^{\sigma}"A";"B"};
{\ar@{->}^-{\Fb\alpha}"A";"C"};
\endxy
\]

with $\alpha$ is a trivial cofibrationin $\kc$; if we want to calculate the pushout, then the first thing to do is to consider the pushout in $\kc$ then build the laxity map etc. But the left adjoint $\Fb$ we've constructed previously, when considered as an endofunctor on $\kc$,  may not preserve weak equivalences for arbitrary $\O$-algebra $\Ca\dt$ and $\Ma\dt$. In particular it may not preserve trivial cofibrations. So the pushout of $\Fb \alpha$ will hardly be a weak equivalence. 
\ \\

The obstruction of $\Fb$ to be a left Quillen functor can be seen in the following phenomena:
\begin{enumerate}
\item first the left Kan extension we've considered to define $\Fa^1$ may not  in general preserves level-wise (trivial) cofibrations:
\[
\xy
(-20,20)*+{ \O(i_1,...,i_n; j) \times \Ca_{i_1} \times \cdots \times \Ca_{i_n}}="X";
(30,20)*+{\Ca_j}="Y";
(-20,0)*+{\O(i_1,...,i_n; j) \times \Ma_{i_1} \times \cdots \times \Ma_{i_n}}="A";
(30,0)*+{\Ma_j}="B";
{\ar@{->}^-{\theta_{_{i_{\dt}|j}}}"X";"Y"};
{\ar@{->}_-{\rho_{_{i_{\dt}|j}}}"A";"B"};
{\ar@{->}^{\Lan_j (\O,\Fa_{i_{\dt}})}"Y";"B"};
{\ar@{->}_{\Id_{\O(i_1,...,i_n; j)} \times \Fa_{i_1} \times \cdots \times \Fa_{i_n}}"X";"A"};
{\ar@{=>}^{\varepsilon_{_{i_{\dt}|j}}}(21,2);(28,9)};
\endxy
 \]
 \item second the  $ \Fa^{1,c}_{h\dt, i\dt, j}$ appearing in the construction of $\Fb$  may not be left Quillen functor . In fact  $\Fa^{1,c}_{h\dt, i\dt, j}$ is a colimit of a functor:
$$\Za_{h\dt, i\dt, j}(c): \rho^{-1}c \to \Fa^{1}/ \K_j$$
where the source $\rho^{-1}c$ can \emph{a priori} be any category; so the colimit may not preserve (trivial) cofibrations. 
\end{enumerate}

These two facts lead us to some restrictions on our statements, for the moment. \ \\
We will reduce our statement to the $\O$-algebra $\Ca$ such that $\rho^{-1}c$ is a discrete category i.e a set. This way the colimit of $\Za_{h\dt, i\dt, j}(c)$ is a generalized pushout diagram in $\K_j$; and pushouts interact nicely with (trivial) cofibrations. \ \\  
\ \\
So rather than trying to figure out under which conditions $\Fb$ preserves the level-trivial cofibration as endofunctor on $\kc$, we will work by assuming that it is.\ \\ 

This reduction may appear to be too restrictive, but hopefully the cases we encounter in `the nature' will be in this situation.  Usually this will be the case for all the `simple objects' we use to built complicated ones eg: the operad $\O_X$, $\Delta$, $\ol{X}$, $\S_{\ol{X}}$, every $1$-categorie $\D$, free $\O$-algebra, etc.  
\ \\
\ \\
Recall that we introduced previously the
\begin{df}\label{iro-hco}
Let $(\Ca\dt,\rho)$ and $(\Ma\dt, \theta)$ be two $\O$-algebra.

\begin{enumerate}
\item Say that $\Ca\dt$ is  \textbf{$\O$-well-presented}, or \textbf{$\O$-identity-reflecting} if:\\
for every $n+1$-tuple $(i_1,...,i_n; j)$ the following functor reflects identities

$$\rho: \O(i_1,...,i_n; j) \times \Ca_{i_1} \times \cdots \times \Ca_{i_n} \to \Ca_j.$$
This means that the image of $(u, f_1, ...,f_n) \in \O(i_1,...,i_n; j) \times \Ca_{i_1} \times \cdots \times \Ca_{i_n} $ is an identity morphism in $\Ca_j$ (if and) only if all $u, f_1,...,f_n$ are simultaneously identities.
\item Say that $(\Ca\dt, \Ma\dt)$ is an \textbf{$\O$-homotopy-compatible pair} if $\Fb: \kc \to \kc$ preserves level-wise trivial cofibrations, where $\kc$ is endowed with the injective model structure. 
\end{enumerate}
\end{df}

\begin{rmk}\ \
\begin{enumerate}
\item A consequence of the definition is  that  if $\Ca\dt$ is an $\O$-identity-reflecting  algebra (henceforth $\iro$-algebra), then the fiber $\rho^{-1}c = \rho^{-1}\{\Id_c\}$ is a set. 
\item Any free $\O$-algebra $\Ca\dt$ is an $\iro$-algebra; and for any special Quillen $\O$-algebra $\Ma\dt$ having all its objects cofibrant, the pair $(\Ca\dt, \Ma\dt)$ is $\O$-homotopy compatible (henceforth $\hco$ pair).
\end{enumerate}
\end{rmk}

With the previous material we can announce the main result:
\begin{lem}\label{lem-pushout-laxalg}
Let $\Ma\dt$ be a special Quillen $\O$-algebra such that all objects of $\Ma\dt$ are cofibrant. Let $\Ca\dt$ be $\iro$-algebra such that the $(\Ca\dt, \Ma\dt)$ is an $\hco$ pair. Then for any pushout square  in $\Laxalg(\Ca\dt, \Ma\dt)$ 
\[
\xy
(0,20)*+{\Fb \Aa}="A";
(20,20)*+{\Fa}="B";
(0,0)*+{\Fb \Ba}="C";
(20,0)*+{\Ga}="D";
{\ar@{->}^{\sigma}"A";"B"};
{\ar@{->}^-{\Fb\alpha}"A";"C"};
{\ar@{->}^-{H_{\alpha}}"B";"D"};
{\ar@{->}^{}"C";"D"};
\endxy
\]
$H_{\alpha}: \Fa \to \Ga$ is a level-wise trivial cofibration if $\alpha$ is so. 
\end{lem}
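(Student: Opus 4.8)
The strategy is to analyze the pushout of $\Fb\alpha$ by unwinding the explicit construction of the left adjoint $\Fb$ given in Appendix \ref{ub-has-left-adjoint}, using the fact that $\Fb$ is built as a transfinite composition of steps $\Fa^k \to \Fa^{k+1}$, each of which is itself obtained from left Kan extensions and generalized pushouts in the fibers $\K_j = \Hom(\Ca_j,\Ma_j)$. The key point is that the monadic adjunction $\Fb \dashv \Ub$ presents $\Laxalg(\Ca\dt,\Ma\dt)$ as the category of algebras for the monad $\T = \Ub\Fb$ on $\kc$ (Corollary \ref{cor_monadicity}), so a pushout of $\Fb\alpha$ in $\Laxalg(\Ca\dt,\Ma\dt)$ is computed by the standard formula for pushouts of free maps in a category of algebras: $\Ub\Ga$ is a transfinite colimit of objects obtained from $\Ub\Fa$ by iterated pushouts along maps built out of $\alpha$ and the tensor operations $\otimes_x$ of $\Ma\dt$. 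Thus one reduces the statement to: each such elementary step is a level-wise trivial cofibration, and trivial cofibrations are closed under the transfinite compositions and coproducts involved.

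\textbf{Key steps.} First I would recall, from the proof of Proposition \ref{free-laxalg-functor}, the inductive description of $\Fb$: starting from $\Fb\Aa$ mapping to $\Fa$, the pushout $\Ga$ is obtained by running the same ``$\Ta$''-construction but relative to $\Fa$ instead of relative to a free algebra on a family of functors. Concretely, at each stage one forms pushouts in $\K_j$ of maps of the form $\Fb^c_{\otimes_x(\cdots)}(\text{something})$ along maps induced by $\alpha$; here the hypothesis that $(\Ca\dt,\Ma\dt)$ is an $\hco$ pair guarantees precisely that these building-block maps, being images under $\Fb$ (applied level-wise) of the level-wise trivial cofibration $\alpha$, are themselves level-wise trivial cofibrations. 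Second, the hypothesis that $\Ca\dt$ is an $\iro$-algebra ensures each fiber $\rho^{-1}(c)$ is a discrete set (a Remark after Definition \ref{iro-hco}), so the colimits $\Fa^{1,c}_{h\dt,i\dt,j}$ in the construction are genuine generalized pushout diagrams in the model categories $\K_j$; by Lemma \ref{cone-cofib} the canonical maps into such generalized pushouts of cones of trivial cofibrations are again trivial cofibrations. Third, since all objects of $\Ma\dt$ are cofibrant and $\Ma\dt$ is a special Quillen $\O$-algebra, Proposition \ref{prop-cube-cof} (and its variants) applies: tensoring with cofibrant objects preserves trivial cofibrations, and the ``semi-cubical'' colimit diagrams that appear when one combines a trivial cofibration in one slot with identities in the others produce trivial cofibrations. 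Finally I would assemble these: $H_\alpha$ restricted to each color $j$ is a transfinite composition (indexed by $\lambda$) of pushouts and coproducts of level-wise trivial cofibrations, hence a level-wise trivial cofibration, since trivial cofibrations in each $\K_j$ are closed under pushout, coproduct, and transfinite composition.

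\textbf{Main obstacle.} The hard part will be bookkeeping the compatibility of the pushout in $\Laxalg(\Ca\dt,\Ma\dt)$ with the multi-step construction of $\Fb$ — that is, verifying that forming the pushout of $\Fb\alpha$ along $\sigma$ ``commutes'' with the $\lambda$-indexed tower defining $\Fb$, so that $\Ub\Ga$ really is the colimit of a tower each of whose layers is a trivial cofibration built from $\alpha$ via the operations controlled by the $\iro$ and $\hco$ hypotheses. This is the analogue, in the operadic/lax setting, of the classical ``pushout of a free algebra map'' analysis (as in Schwede--Shipley), and the combinatorial heart is to check that the only new cells attached at each stage are tensor products $\otimes_x(\cdots)$ in which exactly one factor has moved along $\alpha$ while the others are cofibrant objects of $\Ma\dt$ — precisely the situation handled by Proposition \ref{prop-cube-cof}. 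Once that identification is in place, the closure properties of trivial cofibrations finish the argument without further difficulty.
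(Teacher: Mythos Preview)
Your proposal is correct and follows essentially the same route as the paper: build $\Ga$ as the colimit of a transfinite tower starting from the pushout $\Ea$ in $\kc$, at each stage attaching laxity and coherence data via pushouts along maps $\Fb^c_{\otimes_x(p_1,\dots,p_n)}$, use the $\iro$ hypothesis so that $\rho^{-1}c$ is discrete and Lemma~\ref{cone-cofib} applies, use the special Quillen hypothesis (with all objects cofibrant) so that $\otimes_x(p_1,\dots,p_n)$ is a trivial cofibration, and conclude by closure of trivial cofibrations under the operations involved. One small remark: the paper invokes Lemma~\ref{semi-cub-cof} directly at the coherence step rather than Proposition~\ref{prop-cube-cof}, but these serve the same purpose and either works.
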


\subsection*{Proof of the lemma}
By the adjunction $\Fb \dashv \Ub$ the map $\sigma : \Fb \Aa \to \Fa$ in the pushout is induced by a unique map $\Aa \to \Ub \Fa$ in $\kc$. Similarly the map $\Fb \Ba \to \Ga$ is also induced by a map $\Ba \to \Ub \Ga$. We will construct $\Ga$ out of $\Fa$  and focus our analyse on the construction of the map $H_{\alpha}: \Fa \to \Ga$; the map $\Ba \to \Ub \Ga$ will follow automatically.
The first thing to do is to consider the pushout square in $\kc$:
\[
\xy
(0,20)*+{\Ub \Fb \Aa}="A";
(20,20)*+{\Ub \Fa}="B";
(0,0)*+{\Ub \Fb \Ba}="C";
(20,0)*+{\Ea}="D";
{\ar@{->}^{\Ub \sigma}"A";"B"};
{\ar@{->}^-{\Ub\Fb\alpha}"A";"C"};
{\ar@{->}^-{p}"B";"D"};
{\ar@{->}^{}"C";"D"};
\endxy
\]

Since we assumed that $\Fb \alpha$ is a level-wise trivial cofibration, the map $p: \Fa \to \Ea$ is automatically a level-wise trivial cofibration as well.

\paragraph{Intermediate laxity maps}

Let $(x,c_1, ..., c_n)$ be an object in $\O(i_1,...,i_n; j) \times \Ca_{i_1} \times \cdots \times \Ca_{i_n}$ with $c= \otimes_x(c_1,...,c_n) \in \Ca_j$. \ \\
Using the adjunction $\Ev_c: \Hom(\Ca_j,\Ma_j) \leftrightarrows \Ma_j : \Fb^c$ for the laxity map 
$$\otimes_x(\Fa c_1,..., \Fa c_n) \to \Fa(\otimes_x(c_1,...,c_n))= \Fa(c)$$
define $\Ra(p;x,c_1,...,c_n)$ to be the object we get from the pushout diagram in $\K_j$:
\begin{center}
$L_1=$
\begin{tabular}{c}
\xy
(-10,20)*+{\Fb_{\otimes_x(\Fa c_1,..., \Fa c_n)}^{c}}="A";
(30,20)*+{\Fa}="B";
(-10,0)*+{\Fb_{\otimes_x(\Ea c_1,..., \Ea c_n)}^{c}}="C";
(30,0)*+{\Ra(p;x,c_1,...,c_n)}="D";
{\ar@{->}^{}"A";"B"};
{\ar@{->}_-{\Fb_{\otimes_x(p_1,...p_n)}^{c}}"A"+(0,-3);"C"};
{\ar@{.>}^-{h(x,c_1,...,c_n)}"B";"D"};
{\ar@{.>}^-{q}"C";"D"};
\endxy
\end{tabular}
\end{center}

As each $p_{k}$ is a trivial cofibration (with cofibrant domain)  and since $\M$ is a special Quillen $\O$-algebra, we have that 
$\otimes_x(p_1,...p_n)$ is trivial cofibration in $\Ma_j$. Applying the left Quillen functor $\Fb^c$, we have  that $\Fb_{\otimes_x(p_1,...p_n)}^{c}$ is  a projective (hence an injective) trivial cofibration. It follows that $h(x,c_1,...,c_n)$ is also a projective trivial cofibration (as a puhout of such morphism) and therefore a level-wise trivial cofibration.\\
When the context is clear we will simply write $\Ra(x,c\dt)$ or $\Ra_{c\dt}$, and $p(x,c\dt)$, etc.

\paragraph{Intermediate coherences} With the `temporary' laxity maps we need to have a `temporary coherence' as well. We start with the objects on the fiber $\rho^{-1}c= \otimes^{-1} \{ c \}$.\ \\

Let  $[x,(x_i, d_{i,1},..., d_{i,k_i})_{1\leq i \leq n}] $ be an object of $ \O(i\dt|j) \times [\O(h_{_{1,\dt}}|i_1) \times \Ca_{_{1,\dt}}] \times \cdots \times [\O(h_{_{n,\dt}}|i_n) \times \Ca_{_{n,\dt}}]$ 
such that:
\begin{itemize}[label=$-$, align=left, leftmargin=*, noitemsep]
\item $\otimes_{x_i} (d_{i,1},..., d_{i,k_i})= c_i$,
\item $\otimes_{\gamma(x,x_i)}(d_{1,1},...,d_{n,k_n}) = c$, and
\item  $\otimes_x(c_1,..., c_n)=c$.
\end{itemize}

The coherence condition on the lax morphism $\Fa$ is equivalent to say that the upper face of the semi-cube below is commutative.

\[
\xy
(-60,20)*+{\otimes_{\gamma(x,x_i)}(\Fa d_{1,1},..., \Fa d_{n,k_n}) }="A";
(20,30)+(-20,0)+(10,0)*+{\Fa c}="B";
(-20,-10)+(0,20)*+{ \otimes_x(\Fa c_1,..., \Fa c_n)}="C";
(60,0)+(0,20)+(-20,0)+(10,0)*+{\Fa c}="D";
{\ar@{->}^-{\varphi}"A";"B"};
{\ar@{->}_{\otimes_x(\varphi_1,...,\varphi_n) }"A";"C"};
{\ar@{=}^-{\Id}"B";"D"};
{\ar@{->}^{\varphi}"C";"D"};
(-60,-10)*+{\otimes_{\gamma(x,x_i)}(\Ea d_{1,1},..., \Ea d_{n,k_n})}="X";
(20,0)+(-20,0)+(10,0)*+{\Ra(\gamma(x,x_i),d_{1,1},...,d_{n,k_n})}="Y";
(-20,-40)+(0,20)*+{\otimes_x(\Ra_{d_{1\dt}}c_1,..., \Ra_{ d_{n\dt}}c_n)}="Z";
{\ar@{.>}^{}"X";"Y"};
{\ar@{->}_{\otimes_{\gamma(x,x_i)} p}"A";"X"};
{\ar@{->}^>>>>>>>{\otimes_x(h_{c_i})}"C";"Z"};
{\ar@{.>}_{}"B";"Y"};
{\ar@{->}|{\circled{1}}"X";"Z"};
\endxy
\]
Here $\circled{1}$ represents the map: 
$$\otimes_{\gamma(x,x_i)}(\Ea d_{1,1},..., \Ea d_{n,k_n}) = \otimes_x[\otimes_{x_1}(\Ea d_{1\dt}),...,\otimes_{x_n}(\Ea d_{n\dt})] \xrightarrow{\otimes_x[q_{d_{1\dt}},..., q_{d_{n\dt}}]} \otimes_x(\Ra_{d_{1\dt}}c_1,..., \Ra_{ d_{n\dt}}c_n)$$
with $q_{d_{i\dt}}: \otimes_{x_i}(\Ea d_{i,1}, ..., \Ea d_{i,k_i}) \to \Ra(x_i, d_{i,1},...,d_{i,k_i}).$\ \\

Extend the upper face by the commutative square $(L_1)$ above;  then extend the face on the right by taking the pushout of the trivial cofibration $\otimes_x(h_{c_i})$ along the trivial cofibration $\otimes_x(p_{c_i})$. We get a new semi-cube $C(x,x_i, d_i)$ where the face in the back is unchanged. \\
Since the face in the back is a pushout square and the vertical map in the front is a trivial cofibration,  we are in the situation of the Reedy style lemma \ref{semi-cub-cof}.\ \\

Introduce $O(x,x_i,d_i)$ to be the colimit of the semi-cube $C(x,x_i, d_i)$ . By virtue of lemma \ref{semi-cub-cof}, the canonical map $\beta: \Fa c  \to O(x,x_i,d_i)$ is a trivial cofibration.  Applying the left Quillen functor $\Fb^c$ we get a projective trivial cofibration $\Fb^c_{\beta}: \Fb^{c}_{\Fa c} \to \Fb^{c}_{O(x,x_i,d_i)}$.\ \\

The co-unit of the adjunction $\Fb^c \dashv \Ev_c$ corresponds to a map $e: \Fb^{c}_{\Fa c} \to \Fa$. Define $Q(x,x_i, d_i)$ to be the functor we get by the pushout of $\Fb^c_{\beta}$ along $e$ in $\K_j$:
\[
\xy
(-10,20)*+{\Fb^{c}_{\Fa c}}="A";
(20,20)*+{\Fa_j}="B";
(-10,0)*+{\Fb^{c}_{O(x,x_i,d_i)}}="C";
(20,0)*+{Q(x,x_i, d_i)}="D";
{\ar@{->}^{e}"A";"B"};
{\ar@{_{(}->}_-{\Fb^{c}_{\beta}}^-{\wr}"A"+(0,-5);"C"};
{\ar@{.>}^-{p_{(x,x_i,d_i)}}"B";"D"};
{\ar@{.>}^{}"C";"D"};
\endxy
\]

Define $\Fa^{1,c}_j$ to be $ \colim_{(x,x_i, d_i) \in \rho^{-1}c} \ \{ \Fa_j \xrightarrow{p_{(x,x_i,d_i)}} Q(x,x_i, d_i) \}$, where:
$$\rho^{-1}c= \coprod_{n \in \N^{\ast}} \coprod_{(i_1, ..., i_n)} \coprod_{(h_{1,1}, ..., h_{n,k_n})}  \rho_{h_{\dt},i_{\dt},j}^{-1}  \{\Id_c \} .$$

Since we assumed that $\Ca$ is an $\iro$-algebra then $\rho^{-1}c$ is a \ul{set}, therefore the colimit is a generalized pushout diagram in $\K_j$. This is  what we called a cone of trivial cofibrations in the model category $\K_{j\tx{-inj}}$.  By Lemma \ref{cone-cofib} we deduce that all the canonical maps going to the colimit are trivial cofibrations in $\K_{j\tx{-inj}}$; in particular the map 
 $\iota_c: \Fa_j \to \Fa^{1,c}_j$ is an injective trivial cofibration. \ \\
 
\paragraph{The construction `$\Po$' } Recall that all the previous construction are obtained from the map $p: \Fa \to \Ea$ which is an object of the under category $\Fa_{/ \kc}$. It's not hard to see that these constructions are functorial in $p$. 

\begin{df}
For each $j$, define $\Po(j,p,\Fa, \Ea)$ to be the colimit of the cone of trivial cofibrations  in $\K_j$:
$$\Po(j,p,\Fa, \Ea)= (\coprod_{c \in \Ob(\Ca_j)}   \iota_c : \Fa_j \to \Fa^{1,c}_j) \cup \{ p_j: \Fa \to \Ea_j \} .$$ 
Denote by $\eta^1_j: \Fa_j \to \Po(j,p,\Fa, \Ea)$ and $\delta^1_j: \Ea_j \to \Po(j,p,\Fa, \Ea)$ the canonical trivial cofibrations. 
\end{df} 
 
By the above remark one clear see that $\Po$ is an endofunctor of $\Fa_{/ \kc}$, that takes $p$ to $\eta^1$. Moreover for any $j$ the following commutes:

\[
\xy
(-20,0)*+{\Fa_j}="X";
(20,0)*+{\Po(j, p, \Fa,\Ea)}="Y";
(0,-10)*+{\Ea_j}="E";
{\ar@{->}^-{\eta^1_j}"X";"Y"};
{\ar@{->}_-{p_j}"X";"E"};
{\ar@{->}_-{\delta^1_j}"E";"Y"};
\endxy
\]   
As $\Po$ is an endofunctor, we can repeat the process and apply the previous construction  to \\ $\eta^1= \{ \Fa_j \xrightarrow{\eta^1_j} \Po(j,p,\Fa, \Ea) \}$ and repeat again and so forth.\ \\ 

Let $\kappa$ be a regular cardinal. For each $j$ we define a $\kappa$-sequence $(\Fa^{k}_j)_{k \in \kappa }$ in $\K_j$ as follows.

\begin{enumerate}
\item $\Fa_j = \Fa^{0}_j$,
\item $\Fa_j^{1} = \Ea_j$,
\item $\Fa^{k}_j= \Po(j,\eta^{k-1},\Fa, \Fa^{k-1})$ for $k \geq 2$,
\item  there are canonical maps $\delta^k : \Fa^{k-1}_j \to \Fa^{k}_j $ and $\eta^k: \Fa_j \to \Fa_j^k$ such that 
$\eta^k= \delta^k \circ   \eta^{k-1}$; with  $\eta_j^0= p_j$ 
\end{enumerate}
We end up with a $\kappa$-directed diagram in $\K_j$:
$$  \Fa_j = \Fa^{0}_j \xrightarrow{p_j} \Fa^{1}_j \xrightarrow{\delta^{1}_j} \cdots \xrightarrow{\delta^{k-1}_j} \Fa^{k}_j \xrightarrow{\delta^{k}_j} \Fa^{k+1}_j \xrightarrow{\delta^{k+1}_j} \cdots$$

Define $\Fa^{\infty}_j$ to be the colimit in $\K_j$ of that $\kappa$-sequence and denote by  $\eta^{\infty}_j: \Fa_j \to \Fa^{\infty}_j$ the canonical map.

\begin{rmk}
Since both $\delta^k$ and $\eta^k$ are   trivial cofibrations, it follows that  $\eta^{\infty}_j$  is also a trivial cofibration.
Furthermore we have a factorization of $\eta^{\infty}_j$  as:
$\Fa_j \xhookrightarrow{p_j} \Ea_j \xhookrightarrow{\delta^{\infty}_j} \Fa^{\infty}_j.$ By construction we have also other $\kappa$-sequences $(\Ra^k)_{k \in \kappa}$,  $(O^k)_{k \in \kappa}$ and $(Q^k)_{k \in \kappa}$; $\Ra^k$ bring the laxity maps and $Q^k$ bring the coherences. These objects interact in the semi-cubes  $C^k(x,x_i,d_i)$.\\
For each $j$ and each $c \in \Ca_j$,  all the three sequences $\{\Ra^k(c)\}_{k \in \kappa}$,  $\{O^k(c)\}_{k \in \kappa}$ and $\{Q^k(c)\}_{k \in \kappa}$ have the same colimit object which is $\Fa^{\infty}_j(c)$.
\end{rmk}

We complete the proof with the following
\begin{prop}\ \
\begin{enumerate}
\item For every  laxity map $\otimes_x(\Fa c_1,..., \Fa c_n) \to \Fa(c)$ we have a map
$\otimes_x(\Fa^{\infty} c_1,..., \Fa^{\infty} c_n) \to  \Fa^{\infty}(c)$
 and the following commutes:
\[
\xy
(0,15)*+{\otimes_x(\Fa c_1,..., \Fa c_n)}="A";
(45,15)*+{\Fa(c)}="B";
(0,0)*+{\otimes_x(\Fa^{\infty} c_1,..., \Fa^{\infty} c_n)}="C";
(45,0)*+{\Fa^{\infty}(c)}="S";
{\ar@{->}^-{\varphi}"A";"B"};
{\ar@{->}_-{\otimes_x(\eta^{\infty} c_1,..., \eta^{\infty} c_n)}"A";"C"};
{\ar@{->}^-{\eta^{\infty}}"B";"S"};
{\ar@{->}^-{\varphi^{\infty}}"C";"S"};
\endxy
\] 
\item The maps $\varphi^{\infty}$ fit coherently and $(\Fa^{\infty}_j)_j$ equipped with $\varphi^{\infty}$ is a lax $\O$-morphism i.e an object of $\Laxalg(\Ca\dt,\Ma\dt)$.
\item The map $\eta^{\infty}= (\eta^{\infty}_j): \Fa \to \Fa^{\infty}$ is the pushout in $\Laxalg(\Ca\dt,\Ma\dt)$ of $\Fb \alpha$ along $\sigma$. 
\item  $\Ub(\eta^{\infty})$ is also a level-wise trivial cofibration, so in particular a weak equivalence.
\end{enumerate}
\end{prop}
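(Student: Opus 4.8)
The plan is to exploit the two structural facts built into the construction: that each $\otimes_x$ commutes with colimits in every slot (since $\Ma\dt$ is a cocomplete, in fact special Quillen, $\O$-algebra), and that for a regular cardinal $\kappa$ the diagonal $d\colon\kappa\to\prod_\mu\kappa$ is cofinal for every $\mu$-small $\mu$, so that a colimit of an $n$-fold tensor along a $\kappa$-sequence agrees with the $n$-fold tensor of the termwise colimits. Concretely, for fixed $(x,c_1,\dots,c_n)$ with $c=\otimes_x(c_1,\dots,c_n)$ these give
$$\colim_{k\in\kappa}\otimes_x(\Fa^k c_1,\dots,\Fa^k c_n)\;\cong\;\otimes_x(\Fa^{\infty}c_1,\dots,\Fa^{\infty}c_n).$$
For part (1) I would then observe that the pushout squares $L_1$ defining the $\Ra^k(x,c_\dt)$, together with the canonical maps $\Ra^k(x,c_\dt)\to\Fa^{k+1}(c)$ and the structural maps $\otimes_x(\Fa^k c_1,\dots,\Fa^k c_n)\to\Fa^k(c)$, assemble into a compatible cocone with vertex $\Fa^{\infty}(c)$; the induced map out of the colimit on the left is the desired $\varphi^{\infty}(x,c_1,\dots,c_n)$, and it is natural in $(x,c_1,\dots,c_n)$. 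The commutativity of the square with $\eta^{\infty}$ is inherited from the finite-level squares $L_1$ by passing to the colimit.

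For part (2) I would run the same passage-to-the-colimit argument on the semi-cubes $C^k(x,x_i,d_i)$ and the pushout objects $Q^k(x,x_i,d_i)$, using the Reedy-style Lemma \ref{semi-cub-cof} at each finite stage. For a fixed object $[x,(x_i,d_{i,1},\dots,d_{i,k_i})]$ lying over a common $c$, both sequences $\{Q^k(x,x_i,d_i)\}_k$ and $\{O^k(x,x_i,d_i)\}_k$ have colimit $\Fa^{\infty}_j(c)$; chasing the colimit of the corresponding diagrams shows that the two maps
$$\otimes_{\gamma(x,x_i)}(\Fa^{\infty}d_{1,1},\dots,\Fa^{\infty}d_{n,k_n})\longrightarrow\Fa^{\infty}_j(c),$$
namely $\varphi^{\infty}(\gamma(x,x_i),d_{1,1},\dots,d_{n,k_n})$ and $\varphi^{\infty}(x,c_1,\dots,c_n)\circ\otimes_x\bigl(\varphi^{\infty}(x_1,d_{1,\dt}),\dots,\varphi^{\infty}(x_n,d_{n,\dt})\bigr)$, both satisfy the universal property characterising the unique map out of the colimit of the relevant semi-cube diagram, hence coincide. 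This is the tedious step, but it is exactly the ``temporary coherence'' bookkeeping of the construction taken to the limit; once it is in place, $(\Fa^{\infty}_j)_j$ with the family $\varphi^{\infty}$ is by definition an object of $\Laxalg(\Ca\dt,\Ma\dt)$.

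For part (3) I would use the adjunction $\Fb\dashv\Ub$ to reduce the universal property of the pushout to a statement about maps in $\kc$: a cocone on $\Fb\Ba\xleftarrow{\Fb\alpha}\Fb\Aa\xrightarrow{\sigma}\Fa$ in $\Laxalg(\Ca\dt,\Ma\dt)$ amounts to a lax morphism $\Ga'$, a map $\tau\colon\Fa\to\Ga'$ in $\Laxalg$, and a map $\Ba\to\Ub\Ga'$ whose composites with $\alpha$ and with $\Ub\sigma$ agree. Proceeding by induction along the $\kappa$-sequence, one uses at each stage the universal properties of the pushout squares $L_1$ defining $\Ra^k$, of the pushouts defining $Q^k$ out of the semi-cube colimits $O^k$, and of the cones defining $\Po(j,\eta^{k-1},\Fa,\Fa^{k-1})$, together with the laxity of $\Ga'$, to produce a unique extension $\Fa^k\to\Ga'$ compatible with the transition maps; passing to the colimit yields the unique factorisation through $\eta^{\infty}$, so $\eta^{\infty}\colon\Fa\to\Fa^{\infty}$ is the pushout of $\Fb\alpha$ along $\sigma$.

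Finally, part (4) is immediate from the construction: each transition map $\delta^k_j$ and the initial map $p_j$ is an injective trivial cofibration in $\K_j$ (by the special Quillen axioms applied to the $\otimes_x$, the $\hco$ hypothesis making $\Fb$ preserve level-wise trivial cofibrations, and Lemma \ref{cone-cofib} applied to the cones of trivial cofibrations, which is legitimate precisely because $\Ca\dt$ being an $\iro$-algebra makes each fibre $\rho^{-1}c$ a set), so $\Ub(\eta^{\infty})_j=\eta^{\infty}_j$ is a transfinite composition of trivial cofibrations, hence a level-wise trivial cofibration and in particular a weak equivalence. The main obstacle I expect is part (2): keeping the bookkeeping of the semi-cubes $C^k(x,x_i,d_i)$ and their colimits organised well enough that the two coherence composites are genuinely forced to agree by uniqueness of the map out of a colimit.
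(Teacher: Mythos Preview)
Your proposal is correct and follows essentially the same approach as the paper's sketch: the cofinality-plus-commutation argument for (1), passing the semi-cubes $C^k(x,x_i,d_i)$ to the colimit for (2), the adjunction $\Fb\dashv\Ub$ (i.e.\ freeness of $\Fb\Aa$ and $\Fb\Ba$) for (3), and transfinite composition of trivial cofibrations for (4). Your treatment is in fact more detailed than the paper's, which leaves (3) and (4) almost entirely to the reader; the only point you might make more explicit is the paper's remark that the map $\Fb\Ba\to\Fa^{\infty}$ is determined by the composite $\Ba\to\Ea\to\Fa^{\infty}$ in $\kc$, which is the concrete content of the freeness reduction you invoke.
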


\begin{proof}[Sketch of proof]
The proof of $(1)$ is exactly the same for the Proposition \ref{free-laxalg-functor}. One gets the laxity maps by the universal property of the colimit of $\{\otimes_x(\Fa^k c_1,..., \Fa^k c_n)\}_{k \in \kappa}$,  with respect to the following compatible cocone which ends at $\Fa^{\infty}(c)$ (and starts from $\otimes_x(\Fa c_1,..., \Fa c_n) \to \Fa(c)$):
\[
\xy
(-60,20)*+{\otimes_x(\Fa^k c_1,..., \Fa^k c_n)}="A";
(-10,20)*+{\Fa^{k+1} c}="B";
(-60,0)*+{\otimes_x(\Fa^{k+1} c_1,..., \Fa^{k+1} c_n)}="C";
(-10,0)*+{\Ra^{k+1}(x,c_1,...,c_n)c}="D";
(40,0)*+{\Fa^{k+2} c}="E";
(40,-20)*+{\Ra^{k+2}(x,c_1,...,c_n) c}="Y";
(80,-20)*+{\Fa^{\infty}(c)}="Z";
{\ar@{->}^{}"A";"B"};
{\ar@{->}_-{\otimes_x(\delta^k_1,...\delta^k_n)}"A"+(0,-3);"C"};
{\ar@{-->}_-{p(x,c_1,...,c_n)_c}"B";"D"};
{\ar@{-->}^{}"C";"D"};
{\ar@{..>}^{}"D";"E"};
{\ar@{-->}^{}"E";"Y"};
{\ar@{.>}^{}"D";"Y"};
{\ar@{.>}^-{\delta^{k+1}}"B";"E"};
{\ar@{.>}^{\tx{canonical}}"E";"Z"};
{\ar@{.>}^-{ \tx{canonical}}"Y";"Z"};
{\ar@{.>}^{}"C";"C"+(0,-20)};
{\ar@/_1.5pc/@{.>}"C"+(1,-20); "Z"};
\endxy
\]
\ \\

One computed the colimit of $\{\otimes_x(\Fa^k c_1,..., \Fa^k c_n)\}_{k \in \kappa}$ by the same method explained in the proof of Proposition \ref{free-laxalg-functor}.  The map $\varphi^{\infty}(x,c_1, ..., c_n): \otimes_x(\Fa^{\infty} c_1,..., \Fa^{\infty} c_n) \to  \Fa^{\infty}(c)$ is the unique map  which makes everything commutative.\ \\

The coherence condition follows by construction; one takes the colimit everywhere in the universal cube defined by the semi-cubes  $C^k(x,x_i,d_i)$. The coherence is given by `the cube at the infinity'. 
The assertion $(3)$ is easily checked and follows by construction:  $\Fa^{\infty}$ with the obvious maps satisfies the universal property of the pushout. It's important to notice that this is valide because both $\Fb \Aa$ and $\Fb \Ba$ are free objects, therefore the map $\Fb \Ba \to \Fa^{\infty}$ is induced by the composite $\Ba \to \Fb \Ba \to  \Ea \to \Fa^{\infty}$.  

The assertion $(4)$ is obvious. 
\end{proof}

\bibliographystyle{plain}
\bibliography{Bibliography_These}

\begin{thebibliography}{10}

\bibitem{Adamek-Rosicky-loc-pres}
Ji{\v{r}}{\'{\i}} Ad{\'a}mek and Ji{\v{r}}{\'{\i}} Rosick{\'y}.
\newblock {\em Locally presentable and accessible categories}, volume 189 of
  {\em London Mathematical Society Lecture Note Series}.
\newblock Cambridge University Press, Cambridge, 1994.

\bibitem{Angel_Reedy}
Vigleik Angeltveit.
\newblock Enriched {R}eedy categories.
\newblock {\em Proc. Amer. Math. Soc.}, 136(7):2323--2332, 2008.

\bibitem{SEC1}
H.~V. {Bacard}.
\newblock {Segal Enriched Categories I}.
\newblock http://arxiv.org/abs/1009.3673.

\bibitem{Barwick_localization}
Clark Barwick.
\newblock On left and right model categories and left and right {B}ousfield
  localizations.
\newblock {\em Homology, Homotopy Appl.}, 12(2):245--320, 2010.

\bibitem{Ben2}
Jean B{\'e}nabou.
\newblock Introduction to bicategories.
\newblock In {\em Reports of the {M}idwest {C}ategory {S}eminar}, pages 1--77.
  Springer, Berlin, 1967.

\bibitem{Ber-Moer_Reedy_cat}
C.~{Berger} and I.~{Moerdijk}.
\newblock {On an extension of the notion of Reedy category}.
\newblock http://arxiv.org/abs/0809.3341.

\bibitem{Berger_Moer_htpy_cat}
C.~{Berger} and I.~{Moerdijk}.
\newblock {On the homotopy theory of enriched categories}.
\newblock http://arxiv.org/abs/1201.2134.

\bibitem{Ber_Moer_multisorted}
Clemens Berger and Ieke Moerdijk.
\newblock Resolution of coloured operads and rectification of homotopy
  algebras.
\newblock In {\em Categories in algebra, geometry and mathematical physics},
  volume 431 of {\em Contemp. Math.}, pages 31--58. Amer. Math. Soc.,
  Providence, RI, 2007.

\bibitem{Bonnin}
F.~{Bonnin}.
\newblock {Les groupements}.
\newblock http://arxiv.org/abs/math/0404233.

\bibitem{Cisinski-prefaisceau-model}
Denis-Charles Cisinski.
\newblock Les pr\'efaisceaux comme mod\`eles des types d'homotopie.
\newblock {\em Ast\'erisque}, (308):xxiv+390, 2006.

\bibitem{Cisinski-cat-derivables}
Denis-Charles Cisinski.
\newblock {Cat{\'e}gories d{\'e}rivables}.
\newblock {\em Bulletin de la soci{\'e}t{\'e} math{\'e}matique de France},
  138(3):317--393, 2010.

\bibitem{DKFC}
W.~G. Dwyer and D.~M. Kan.
\newblock Function complexes in homotopical algebra.
\newblock {\em Topology}, 19(4):427--440, 1980.

\bibitem{Dwyer_Spalinski}
W.~G. Dwyer and J.~Spali{\'n}ski.
\newblock Homotopy theories and model categories.
\newblock In {\em Handbook of algebraic topology}, pages 73--126.
  North-Holland, Amsterdam, 1995.

\bibitem{DHKS}
William~G. Dwyer, Philip~S. Hirschhorn, Daniel~M. Kan, and Jeffrey~H. Smith.
\newblock {\em Homotopy limit functors on model categories and homotopical
  categories}, volume 113 of {\em Mathematical Surveys and Monographs}.
\newblock American Mathematical Society, Providence, RI, 2004.

\bibitem{Fukaya_1}
Kenji Fukaya, Yong-Geun Oh, Hiroshi Ohta, and Kaoru Ono.
\newblock {\em Lagrangian intersection {F}loer theory: anomaly and obstruction.
  {P}art {I}}, volume~46 of {\em AMS/IP Studies in Advanced Mathematics}.
\newblock American Mathematical Society, Providence, RI, 2009.

\bibitem{Fukaya_2}
Kenji Fukaya, Yong-Geun Oh, Hiroshi Ohta, and Kaoru Ono.
\newblock {\em Lagrangian intersection {F}loer theory: anomaly and obstruction.
  {P}art {II}}, volume~46 of {\em AMS/IP Studies in Advanced Mathematics}.
\newblock American Mathematical Society, Providence, RI, 2009.

\bibitem{Hirsch-model-loc}
Philip~S. Hirschhorn.
\newblock {\em Model categories and their localizations}, volume~99 of {\em
  Mathematical Surveys and Monographs}.
\newblock American Mathematical Society, Providence, RI, 2003.

\bibitem{Hov-model}
Mark Hovey.
\newblock {\em Model categories}, volume~63 of {\em Mathematical Surveys and
  Monographs}.
\newblock American Mathematical Society, Providence, RI, 1999.

\bibitem{Jacobs_cat_type}
Bart Jacobs.
\newblock {\em Categorical logic and type theory}, volume 141 of {\em Studies
  in Logic and the Foundations of Mathematics}.
\newblock North-Holland Publishing Co., Amsterdam, 1999.

\bibitem{Ke}
G.~M. Kelly.
\newblock Basic concepts of enriched category theory.
\newblock {\em Repr. Theory Appl. Categ.}, (10):vi+137, 2005.
\newblock Reprint of the 1982 original [Cambridge Univ. Press, Cambridge;
  MR0651714].

\bibitem{Kelly-Lack-loc-pres-vcat}
G.~M. Kelly and Stephen Lack.
\newblock {$\mathscr{V}$}-{C}at is locally presentable or locally bounded if
  {$\mathscr{V}$} is so.
\newblock {\em Theory Appl. Categ.}, 8:555--575, 2001.

\bibitem{Kon_Soi_NCG1}
M.~{Kontsevich} and Y.~{Soibelman}.
\newblock {Notes on A-infinity algebras, A-infinity categories and
  non-commutative geometry. I}.
\newblock June 2006.
\newblock http://arxiv.org/abs/math/0606241.

\bibitem{Kriz_May_OAMM}
Igor K{\v{r}}{\'{\i}}{\v{z}} and J.~P. May.
\newblock Operads, algebras, modules and motives.
\newblock {\em Ast\'erisque}, (233):iv+145pp, 1995.

\bibitem{Lack_icons}
Stephen Lack.
\newblock Icons.
\newblock {\em Appl. Categ. Structures}, 18(3):289--307, 2010.

\bibitem{Lei1}
T.~{Leinster}.
\newblock {Basic Bicategories}.
\newblock http://arxiv.org/abs/math/9810017.

\bibitem{Lei3}
T.~{Leinster}.
\newblock {Up-to-Homotopy Monoids}.
\newblock http://arxiv.org/abs/math/9912084.

\bibitem{Leinster_HOHC}
Tom Leinster.
\newblock {\em Higher operads, higher categories}, volume 298 of {\em London
  Mathematical Society Lecture Note Series}.
\newblock Cambridge University Press, Cambridge, 2004.

\bibitem{Linton-cocomplete}
F.~E.~J. Linton.
\newblock Coequalizers in categories of algebras.
\newblock In {\em Sem. on {T}riples and {C}ategorical {H}omology {T}heory
  ({ETH}, {Z}\"urich, 1966/67)}, pages 75--90. Springer, Berlin, 1969.

\bibitem{Lurie_HTT}
Jacob Lurie.
\newblock {\em Higher topos theory}, volume 170 of {\em Annals of Mathematics
  Studies}.
\newblock Princeton University Press, Princeton, NJ, 2009.

\bibitem{Lyub_A_inf}
Volodymyr Lyubashenko.
\newblock Category of {$A_\infty$}-categories.
\newblock {\em Homology Homotopy Appl.}, 5(1):1--48, 2003.

\bibitem{Mac}
Saunders Mac~Lane.
\newblock {\em Categories for the working mathematician}, volume~5 of {\em
  Graduate Texts in Mathematics}.
\newblock Springer-Verlag, New York, second edition, 1998.

\bibitem{Quillen_HA}
Daniel~G. Quillen.
\newblock {\em Homotopical algebra}.
\newblock Lecture Notes in Mathematics, No. 43. Springer-Verlag, Berlin, 1967.

\bibitem{Roig_bifib}
Agust{\'{\i}} Roig.
\newblock Model category structures in bifibred categories.
\newblock {\em J. Pure Appl. Algebra}, 95(2):203--223, 1994.

\bibitem{Sch-Sh-Algebra-module}
Stefan Schwede and Brooke~E. Shipley.
\newblock Algebras and modules in monoidal model categories.
\newblock {\em Proc. London Math. Soc. (3)}, 80(2):491--511, 2000.

\bibitem{Simpson_HTHC}
Carlos Simpson.
\newblock {\em Homotopy theory of higher categories}, volume~19 of {\em New
  Mathematical Monographs}.
\newblock Cambridge University Press, Cambridge, 2012.

\bibitem{Smith_unpub}
J.~Smith.
\newblock Combinatorial model categories.
\newblock {\em Unpublished}.

\bibitem{Stanculescu_bifib}
Alexandru Stanculescu.
\newblock Bifibrations and weak factorisation systems.
\newblock {\em Applied Categorical Structures}.

\bibitem{Str}
Ross Street.
\newblock Enriched categories and cohomology.
\newblock {\em Repr. Theory Appl. Categ.}, (14):1--18, 2005.
\newblock Reprinted from Quaestiones Math. 6 (1983), no. 1-3, 265--283
  [MR0700252], with new commentary by the author.

\bibitem{Wo}
Harvey Wolff.
\newblock {$V$}-cat and {$V$}-graph.
\newblock {\em J. Pure Appl. Algebra}, 4:123--135, 1974.

\end{thebibliography}

\end{document}